\begin{document}

\title{Vector-Valued Banach Limits and Vector-Valued Almost Convergence}
\author{F. J. Garc\'ia-Pacheco and F. J. P\'erez-Fern\'andez}
\date{}
\maketitle

\tableofcontents

\frontmatter

\chapter{Preface}

This book was intended to be Rafael Armario's Ph.D. thesis dissertation for obtaining his doctoral degree in Pure Mathematics at the University of C\'adiz (Spain, EU). Unfortunately he passed away on January $28^{th}$, 2013, right after he started writing this manuscript. His two Ph.D. advisors at the time of his death (the authors of this book) decided then to finish his work in book format and this is how this manuscript was given birth.

\section{Rafael Armario's life}

Rafael Armario, better known as ``Rafa'' by his friends, was a joyful and warm person, a beloved son and friend, a very hard-working graduate student, an excellent high-school teacher, and an extremely brilliant mathematician.

\subsection{Childhood and early life}

Rafa was born and raised in Cadiz, the oldest European city, within a humble middle-class family. He quickly developed a strong interest in the Cadiz culture and folklore, in particular, in the worldwide famous Cadiz carnival. Other local feasts such as the El Puerto fair also grew on him in a very profound way.

\subsection{University of Cadiz undergraduate period}

Rafa decided to register as a math major in the university of Cadiz after graduating from his neighborhood high school. He met a lot of friends there among students and professors, including the one who left the strongest imprint on him, Prof. Antonio Aizpuru, who eventually became his Ph.D. advisor. He also met a bunch of friends a buddies like Fernandito, Mari, Sole, Marina, Marimar, el Gafas, etc.

\section{Rafael Armario's work}

During his working period as a top-level mathematician, Rafa accomplished many objectives and obtained many different results. Indeed, all the theorems, definitions, examples, etc., which are anonymous in this book (including those of the framework) are due to his joint work with Prof. Aizpuru and the authors of this manuscript.

\subsection{University of Cadiz graduate period}

Rafa's graduate period was undoubtedly conditioned by the death of his Ph.D. advisor at the time, Prof. Antonio Aizpuru, who passed away in March 2008 after Rafa and him published their first paper (see \cite{AizArPer}), which was also co-authored by his Ph.D. co-advisor at the time, Prof. Francisco Javier P\'erez-Fern\'andez.

In September 2010, Prof. Garc\'ia-Pacheco, better known as Paquito, was hired at the level of Assistant Professor by the Mathematics Department of the University of Cadiz after spending nine semesters (three American academic years) at Kent State University (Ohio, USA) and ten semesters at Texas A\&M University (Texas, USA). One of the first assignments of Prof. Garc\'ia-Pacheco was to co-advice Rafael Armario together with Prof. P\'erez-Fern\'andez, continuing this way the work once started by Prof. Antonio Aizpuru. The fruit of this co-advising was the co-authorship of the papers \cite{AAGPPF, AAGPPF2, AGPPF, AGPPF2, GPPF}. The present book consists of those papers.

Many anecdotes occurred during his graduate school period. For instance, in one occasion, Rafa and Paquito shared a hotel room, due to a lack of research funding, while attending a conference held in the university of Almeria (Spain, EU). The hotel workers will never forget that week.

\mainmatter
\pagestyle{fancy}
\chapter{Framework}

All the vector spaces treated in this book will be considered to be REAL and NON-ZERO (although most of the results also work for complex spaces and the zero vector space). The general notation employed for the vector spaces will be $X,Y,Z,\dots$ (including the topological and the normed vector spaces) and $K$ for the Hausdorff compact topological spaces.

Throughout the mainmatter chapters (the first five after the framework) we will be working exclusively with topological vector spaces and normed spaces, which will not be assumed complete unless explicitly stated. In the backmatter chapter, as the title says, $G$-spaces will be the objects to deal with.

Particularly throughout the whole of this second frontmatter chapter (the framework), $X$ will stand for a topological vector space or a normed space, unless otherwise explicitly stated, except for the previous section to the very last one of this framework in which we will deal with pre-ordered spaces and nets and filters in sets and in topological spaces.

The reader should not consider this frontmatter framework as the regular framework of any book. The reason for this is that we have accomplished plenty of original results, some of them we have gathered in this framework for the sake of mathematical chronology.

$\Po(X)$ {\dotfill the power set of $X$}\\
$\Po^{\times}(X)$ {\dotfill the set of non-empty subsets of $X$}\\
$\Po_{\mathtt{f}}(X)${\dotfill the set of finite subsets of $X$}\\
$\Po_{\mathtt{f}}^{\times}(X)${\dotfill the set of non-empty finite subsets of $X$}\\
$\Po_k(X)${\dotfill the set of subsets of $X$ with cardinality less than or equal to $k$}\\
$\Po_k^{\times}(X)${\dotfill the set of non-empty subsets of $X$ with cardinality less than or equal to $k$}\\
$\card\left(A\right)$ {\dotfill the cardinality of $A$}\\
$\den\left(A\right)$ {\dotfill the density character of $A$}\\
$\Ne_x$ {\dotfill the neighborhood filter of $x$}\\
$\inte\left(M\right)$ {\dotfill the topological interior of $M$}\\
$\inte_A\left(M\right)$ {\dotfill the topological interior of $M$ relative to $A$}\\
$\bd\left(M\right)$ {\dotfill the topological boundary of $M$}\\
$\bd_A\left(M\right)$ {\dotfill the topological boundary of $M$ relative to $A$}\\
$\cl\left(M\right)$ {\dotfill the topological closure of $M$}\\
$\cl_A\left(M\right)$ {\dotfill the topological closure of $M$ relative to $A$}\\
$\co\left(M\right)$ {\dotfill the convex hull of $M$}\\
$\cco\left(M\right)$ {\dotfill the closed convex hull of $M$}\\
$\ba\left(M\right)$ {\dotfill the balanced hull of $M$}\\
$\cba\left(M\right)$ {\dotfill the closed balanced hull of $M$}\\
$\aco\left(M\right)$ {\dotfill the absolutely convex hull of $M$}\\
$\caco\left(M\right)$ {\dotfill the closed absolutely convex hull of $M$}\\
$\spa\left(M\right)$ {\dotfill the linear span of $M$}\\
$\cspan\left(M\right)$ {\dotfill the closed linear span of $M$}\\
$\mathrm{drop}\left(M,n\right)$ {\dotfill the drop of $M$ and $n$}\\
$\suppv_f(M)${\dotfill the exposed face of the vectors of $M$ supported by $f$}\\
$\ext\left(M\right)$ {\dotfill the set of extreme points of $M$}\\
$\exp\left(M\right)$ {\dotfill the set of exposed points of $M$}\\
$\exp^{w^*}\left(M\right)$ {\dotfill the set of $w^*$-exposed points of $M$}\\
$\smo\left(M\right)$ {\dotfill the set of smooth points of $M$}\\
$\B_X\left(x,r\right)$ {\dotfill the closed ball of center $x$ and radius $r$ in $X$}\\
$\U_X\left(x,r\right)$ {\dotfill the open ball of center $x$ and radius $r$ in $X$}\\
$\E_X\left(x,r\right)$ {\dotfill the sphere of center $x$ and radius $r$ in $X$}\\
$\B_X$ {\dotfill the closed unit ball in $X$}\\
$\U_X$ {\dotfill the open unit ball in $X$}\\
$\E_X$ {\dotfill the unit sphere in $X$}\\
$\J_X$ {\dotfill the dual map of $X$}\\
$\I_X$ {\dotfill the identity map of $X$}\\
$\mathsf{NA}\left(X\right)$ {\dotfill the set of norm-attaining functionals on $X$}\\
$X^*$ {\dotfill the topological dual of $X$}\\
$X^{**}$ {\dotfill the topological bidual of $X$}\\
$c_{00}(X)$ {\dotfill the space of eventually null sequences of $X$}\\
$\ell_{1}(X)$ {\dotfill the space of absolutely summable sequences of $X$}\\
$bps(X)$ {\dotfill the space of sequences with bounded partial sums of $X$}\\
$c_{0}(X)$ {\dotfill the space of null sequences of $X$}\\
$c(X)$ {\dotfill the space of convergent sequences of $X$}\\
$ac_0(X)$ {\dotfill the space of null almost convergent sequences of $X$}\\
$ac(X)$ {\dotfill the space of almost convergent sequences of $X$}\\
$wac(X)$ {\dotfill the space of weakly almost convergent sequences of $X$}\\
$w^*ac(X^*)$ {\dotfill the space of weakly-star almost convergent sequences of $X^*$}\\
$sac(X)$ {\dotfill the space of almost summable sequences of $X$}\\
$wsac(X)$ {\dotfill the space of weakly almost summable sequences of $X$}\\
$w^*sac(X^*)$ {\dotfill the space of weakly-star almost summable sequences of $X^*$}\\
$\ell_\infty(X)$ {\dotfill the space of bounded sequences of $X$}\\
$\mathcal{S}(\sum x_n)$ {\dotfill the summing multiplier space of $\sum x_n$}\\
$\mathcal{S}_w(\sum x_n)$ {\dotfill the weakly summing multiplier space of $\sum x_n$}\\
$\mathcal{S}_{w^*}(\sum x^*_n)$ {\dotfill the weakly-star summing multiplier space of $\sum x^*_n$}\\
$\mathcal{S}_{\AC}(\sum x_n)$ {\dotfill the almost summing multiplier space of $\sum x_n$}\\
$\mathcal{S}_{\wAC}(\sum x_n)$ {\dotfill the weakly almost summing multiplier space of $\sum x_n$}\\
$\mathcal{S}_{\wsAC}(\sum x^*_n)$ {\dotfill the weakly-star almost summing multiplier space of $\sum x^*_n$}\\
$X\left(\mathcal{S}\right)$ {\dotfill the ${\cal S}$-multiplier space of summable sequences}\\
$X_w\left(\mathcal{S}\right)$ {\dotfill the ${\cal S}$-multiplier space of weakly summable sequences}\\
$X^*_{w^*}\left(\mathcal{S}\right)$ {\dotfill the ${\cal S}$-multiplier space of weakly-star summable sequences}\\
$X_{\AC}\left(\mathcal{S}\right)$ {\dotfill the ${\cal S}$-multiplier space of almost summable sequences}\\
$X_{\wAC}\left(\mathcal{S}\right)$ {\dotfill the ${\cal S}$-multiplier space of weakly almost summable sequences}\\
$X^*_{\wsAC}\left(\mathcal{S}\right)$ {\dotfill the ${\cal S}$-multiplier space of weakly-star almost summable sequences}\\
$\lim$ {\dotfill the limit function on $c(X)$}\\
$\AClim$ {\dotfill the almost convergent limit function on $ac(X)$}\\
$\wAClim$ {\dotfill the weakly almost convergent limit function on $wac(X)$}\\
$\wsAClim$ {\dotfill the weakly-star almost convergent limit function on $w^*ac(X^*)$}\\
$\ACS x_n$ {\dotfill the almost sum of $(x_n)_{n\in\N}$}\\
$\wACS x_n$ {\dotfill the weakly almost sum of $(x_n)_{n\in\N}$}\\
$\wsACS x_n$ {\dotfill the weakly-star almost sum of $(x_n)_{n\in\N}$}\\
$\mathcal{L}\left(X,Y\right)$ {\dotfill the space of continuos and linear operators from $X$ to $Y$}\\
$\mathcal{N}_X$ {\dotfill the space of invariant operators under the forward shift}\\
$\mathcal{L}_X$ {\dotfill the set of extensions of the limit function}\\
$\BL(X)$ {\dotfill the set of vector-valued Banach limits}\\

\section{Analytical geometry}

There are many different versions of geometry, each of them with its corresponding category. For example, in differential geometry the relevant category is the one of differential manifolds, in algebraic geometry we find the category of algebraic manifolds, in metric geometric the category of metric spaces comes into play, etc. The category of normed spaces is also a typical and characteristic category of a particular version of geometry, the analytical geometry.

Analytical geometry is then the field of geometry taking care of the geometrical aspects of topological vector spaces. Sometimes, the geometrical aspects of topological modules, often similar to those of topological vector spaces, are also included in analytical geometry. One of the most important and impacting branches of analytical geometry is the extremal theory.

\subsection{Convexity}

In a (topological) vector space, a subset is said to be convex provided that the segment joining any two points of the subset lies entirely in the subset. The non-empty intersection of any family of convex subsets is also convex, so the convex hull of any subset is defined as the intersection of all the convex subsets containing it. The convex hull is usually denoted as $\co(A)$. It is very easy to show that $$\co(A)=\left\{\sum_{i=1}^nt_ia_i:\sum_{i=1}^nt_i=1,t_i\geq 0,a_i\in A,1\leq i \leq n\right\}.$$

The closed convex hull, $\cco(A)$, is the intersection of all the closed convex subsets containing a given set $A$. Since the closure of any convex set is convex, we have that $\cco(A)=\cl\left(\co(A)\right)$. However, the convex hull of a closed set does not necessarily be convex (for instance, the canonical basis of $c_0$ is closed but not so is its convex hull).

A subset $A$ is said to be balanced provided that $[-1,1]A=A$. The intersection of any family of balanced subsets is also balanced, so the balanced hull, $\ba(A)$, is defined as the intersection of all the balanced subsets containing a given set $A$. It can be easily shown that $$\ba(A)=[-1,1]A=\{ta:t\in[-1,1],a\in A\}\subseteq \co\left(A \cup \left\{0\right\}\right) \cup \co\left(-A \cup \left\{0\right\}\right).$$

The closed balanced hull, $\cba(A)$, is the intersection of all the closed balanced subsets containing a given set $A$. Since the closure of any balanced set is balanced and the balanced hull of any closed set is closed, we have that $\cba(A)=\cl\left(\ba(A)\right)=\ba\left(\cl(A)\right)$.

By absolutely convex we mean a subset which is balanced and convex. The absolutely convex hull, $\aco(A)$, is, as expected, the intersection of all the absolutely convex subsets containing a given set $A$. Since the convex hull of a balanced set is balanced, we have that $$\aco(A)=\co\left(\ba(A)\right)=\left\{\sum_{i=1}^nt_ia_i:\sum_{i=1}^n|t_i|\leq 1,a_i\in A,1\leq i \leq n\right\}=\co(A\cup -A).$$

It can be shown that $$\ba(\co(A))= \co\left(A \cup \left\{0\right\}\right) \cup \co\left(-A \cup \left\{0\right\}\right),$$ formula that can be used to show that the balanced hull of a convex set need not necessarily be convex (it suffices to take $A$ to be any segment not aligned with $0$ in $\mathbb{R}^2$). The sets of the form $\co(A\cup\{b\})$ with $b\notin A$ are usually called drops and denoted by $\mathrm{drop}(A,b)$.

The closed absolutely convex hull, $\caco(A)$, is the intersection of all the closed absolutely convex subsets containing a given set $A$. Since the closure of any absolutely convex set is absolutely convex, we have that $\caco(A)=\cl\left(\aco(A)\right)$. However, the absolutely convex hull of a closed set does not necessarily be convex (for instance, the canonical basis of $c_0$ is closed but not so is its absolutely convex hull).

The reader may have noticed that all the hulls previously defined verify the conditions of a hull operator, that is, a function $T:\Po(Z)\to\Po(Z)$ verifying the following three conditions:
\begin{itemize}
\item Extensive: $A\subseteq T(A)$.
\item Increasing: $A\subseteq B\Rightarrow T(A)\subseteq T(B)$.
\item Idempotent: $T(T(A))=T(A)$.
\end{itemize}

On the other hand, a subset $A$ is called absorbing provided that for all $x\in X$ there exists $\lambda_x>0$ with $\left[-\lambda_x x,\lambda_x x\right]\subseteq A$. Keep in mind that there is no sense in defining the absorbing hull since every set containing an absorbing set is itself absorbing.

Any topological vector space has a basis of balanced and absorbing neighborhoods of $0$. A topological vector space with a basis of absolutely convex and absorbing neighborhoods of $0$ is called locally convex. A notable convex subset of topological vector spaces is shown in the next result.

\begin{theorem}\label{n}
If $X$ is a topological vector space, then the set $$\bigcap \mathcal{N}_0=\left\{x\in X: x\text{ belongs to every neighborhood of }0\right\}$$ is a closed bounded vector subspace of $X$ whose induced topology is the trivial topology. Furthermore, it is always topologically complemented with every subspace with which it is algebraically complemented.
\end{theorem}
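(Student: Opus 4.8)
The plan is to write $N:=\bigcap\mathcal{N}_0$ and verify the listed properties one at a time, saving the complementation claim for the end. First I would check that $N$ is a linear subspace. Visibly $0\in N$. For additivity, given $x,y\in N$ and $U\in\mathcal{N}_0$, continuity of addition supplies $V,W\in\mathcal{N}_0$ with $V+W\subseteq U$; since $x\in V$ and $y\in W$ we get $x+y\in U$, so $x+y\in N$. For homogeneity I would use that for each scalar $\lambda\neq 0$ the homothety $z\mapsto\lambda z$ is a self-homeomorphism of $X$, hence permutes $\mathcal{N}_0$ and satisfies $\lambda N=N$; combined with $0\cdot x=0\in N$ this gives closure under scalars.

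The next three properties all flow from one observation that I would isolate first: every open set $W\subseteq X$ is $N$-saturated, i.e. $W+N=W$. Indeed, for $w\in W$ one has $W-w\in\mathcal{N}_0$, so $N\subseteq W-w$ and $w+N\subseteq W$; ranging over $w\in W$ yields $W+N\subseteq W$, the reverse inclusion being clear from $0\in N$. From this, closedness follows by identifying $N=\cl(\{0\})$: a point $x$ lies in $\cl(\{0\})$ exactly when $0$ belongs to every neighborhood $x+V$ of $x$, i.e. when $x\in\bigcap_V(-V)=N$. For the induced topology, if $N\cap W\neq\emptyset$ for some open $W$, choosing $x\in N\cap W$ gives $N=x+N\subseteq W$ (using $x\in N$ and saturation), so $N\cap W=N$; thus the only relatively open sets are $\emptyset$ and $N$, the trivial topology. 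Boundedness is then immediate, since $sU\in\mathcal{N}_0$ for every $U\in\mathcal{N}_0$ and $s>0$ forces $N\subseteq sU$ for all $s>0$.

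For the furthermore, suppose $M$ is a subspace with $X=N\oplus M$ algebraically, and let $\pi\colon X\to X/N$ be the (continuous, open) quotient map. The plan is to prove that $\pi|_M\colon M\to X/N$ is a topological isomorphism; granting this, $Q:=(\pi|_M)^{-1}\circ\pi$ is a continuous projection of $X$ onto $M$, $P:=\I_X-Q$ is a continuous projection onto $N$, and the sum map $N\times M\to X$ becomes a homeomorphism with inverse $x\mapsto(Px,Qx)$--- precisely topological complementation. Since $\ker\pi=N$, $M\cap N=\{0\}$ and $M+N=X$, the map $\pi|_M$ is already a continuous linear bijection, so only openness remains. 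This is where the saturation property pays off: given $W$ open in $X$ (hence $W+N=W$), writing any $w\in W$ as $w=m+n$ with $m\in M$, $n\in N$ gives $m=w-n\in W+N=W$ and $\pi(m)=\pi(w)$, so $\pi(M\cap W)=\pi(W)$ is open in $X/N$; therefore $\pi|_M$ sends relatively open sets to open sets and is a homeomorphism.

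The hard part will be this last assertion: the passage from the trivial subspace topology on $N$ to the $N$-saturation of all open sets of $X$, and then the use of that saturation to upgrade the algebraic splitting $X=N\oplus M$ to a topological one via the identity $\pi(M\cap W)=\pi(W)$. Everything preceding it is essentially a direct unwinding of the definition of $N$ together with the continuity of the vector-space operations.
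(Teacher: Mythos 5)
Your proof is correct, and its final part takes a genuinely different route from the paper's. For the complementation, the paper's argument is disarmingly short: since the induced topology on $N$ is trivial, \emph{every} map into $N$ is continuous, so the linear projection $m+n\mapsto n$ is continuous for free, and the complementary projection onto $M$ and the product decomposition follow at once (the paper also sketches an equivalent net argument). You instead pass to the quotient $\pi\colon X\to X/N$ and use your saturation lemma ($W+N=W$ for every open $W$) to get $\pi(M\cap W)=\pi(W)$, so that $\pi|_M$ is a topological isomorphism from which the projections are recovered. Your route is longer but buys more: it isolates the reusable observation that open sets are $N$-saturated, equivalently that $N=\mathrm{cl}(\{0\})$ --- which also gives your cleaner closedness proof, where the paper instead argues directly with a neighborhood $V$ satisfying $V+V\subseteq U$ --- it yields the extra structural fact that $M$ is topologically isomorphic to $X/N$, and the same saturation identity instantly gives the paper's closing remark that $M$ is dense in $X$ whenever $N\neq\{0\}$. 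What the paper's approach buys is precisely the step you flagged as ``the hard part'': triviality of the topology on $N$ makes continuity of the $N$-projection automatic, collapsing your entire quotient argument to one line. The remaining items (subspace via continuity of the operations, with your homothety argument that $\lambda N=N$ replacing the paper's choice of neighborhoods $\alpha W_1,\beta W_2\subseteq V$; boundedness from $N\subseteq sU$ for all $s>0$; triviality of the relative topology) match the paper's proof in substance.
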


\begin{proof}
Let us denote the above set by $N$. Then:
\begin{enumerate}
\item $N$ is a vector subspace of $X$: Indeed, let $U$ be any neighborhood of $0$ and let $n,m\in N$ and $\alpha,\beta \in \mathbb{R}$. There exists a neighborhood $V$ of $0$ such that $V+V\subseteq U$. Now, there are $W_1$ and $W_2$ neighborhoods of $0$ such that $\alpha W_1 , \beta W_2 \subseteq V$. Observe that $n \in W_1$ and $m\in W_2$. Therefore $$\alpha n + \beta m \in \alpha W_1 + \beta W_2 \subseteq V + V\subseteq U.$$
\item $N$ is closed: Indeed, let $x\in X\setminus N$. There exists a neighborhood $U$ of $0$ such that $x\notin U$. There exists another neighborhood $V$ of $0$ such that $V+V\subseteq U$. Finally, $x+V$ is a neighborhood of $x$ such that $\left(x+V\right) \cap N = \varnothing$.
\item $N$ is bounded: Obvious since $$N=\bigcap\left\{U\subseteq X: U\text{ is a neighborhood of }0\right\}.$$
\item The relative topology of $N$ is the trivial topology: Obvious from the above equality.
\item $N$ is complemented in $X$: Indeed, let $M$ be another vector subspace of $X$ such that $N \cap M=\left\{0\right\}$ and $X=M+N$. Observe that the linear projection
\begin{equation*}
\begin{array}{rcl}
X & \to & N\\
m + n & \mapsto & n
\end{array}
\end{equation*}
is continuous since the induced topology on $N$ is the trivial topology. Another way to show that the topology on $X$ coincides with the product topology on $M\oplus N$ is by means of nets. If $\left(m_i + n_i\right)_{i \in I}$ is a net of $M\oplus N$ converging to $m+n\in M\oplus N$, then $\left(n_i\right)_{i\in I}$ converges to $n$ (again, because the topology on $N$ is trivial). Therefore, $\left(m_i\right)_{i\in I}$ converges to $m$.
On the other hand, observe that $M$ is not closed (unless $N=\left\{0\right\}$). Indeed, $0\in M\subseteq \mathrm{cl}\left(M\right)$, therefore $N \subseteq \mathrm{cl}\left(M\right)$ and hence $M$ is dense in $X$.
\end{enumerate}
\end{proof}

Observe that in semi-normed spaces $\cap \;\mathcal{N}_0=\left\{x\in X: \|x\|=0\right\}$.

\subsection{Extremal theory}\label{geometry}

A point $x$ in the unit sphere $\mathsf{S}_X$ of a normed space $X$ is said to be a smooth point of $\mathsf{B}_X$ provided that there is only one functional in $\mathsf{S}_{X^*}$ attaining its norm at $x$. This unique functional is usually denoted by $\mathsf{J}_X\left(x\right)$. The set of smooth points of the (closed) unit ball $\mathsf{B}_X$ of $X$ is usually denoted as $\mathrm{smo}\left(\mathsf{B}_X\right)$. This way $X$ is said to be smooth provided that $\mathsf{S}_X=\mathrm{smo}\left(\mathsf{B}_X\right)$.

In case $x\in\mathsf{S}_X$ is not a smooth point then $\mathsf{J}_X\left(x\right)$ is defined as $x^{-1}\left(1\right)\cap\mathsf{B}_{X^*}$, that is, the set $\left\{x^*\in\mathsf{B}_{X^*}:x^*\left(x\right)=1\right\}$. 

If $X$ is a smooth normed space, then the dual map of $X$ is defined as the map $\mathsf{J}_X: X \to X^*$ such that $\left\|\mathsf{J}_X\left(x\right)\right\|=\left\|x\right\|$ and $\mathsf{J}_X\left(x\right)\left(x\right)=\left\|x\right\|^2$ for all $x\in X$. It is well known that the dual map is $\left\|\cdot\right\|$-$w^*$ continuous and verifies that $\mathsf{J}_X\left(\lambda x\right)= \lambda\mathsf{J}_X\left(x\right)$ for all $\lambda\in \mathbb{R}$ and all $x\in X$.

We refer the reader to \cite{DGZ, D1, Me} for a better perspective on these concepts. The following definition is very well known amid the analytical geometers.

\begin{definition}\label{face}
Consider a convex subset $M$ of a topological vector space $X$. A convex subset $C$ of $M$ is said to be
\begin{itemize}
\item a face of $M$ provided that $C$ verifies the extremal condition with respect to $M$, that is, if $x,y\in M$ and $t\in\left(0,1\right)$ are so that $tx+\left(1-t\right)y\in C$, then $x,y\in C$;
\item an exposed face of $M$ provided that there exists a supporting functional on $C$, that is, $f\in X^*\setminus \{0\}$ so that $C=\suppv_f(M):=\{m\in M:f(m)=\sup f(M)\}$.
\end{itemize}
\end{definition}

It is immediate that every exposed face is a face, and every proper face must be contained in the boundary of $M$. A face or an exposed face which is a singleton is called an extreme point or an exposed point, respectively. The set of extreme points of $M$ is usually denoted by $\ext(M)$, and the set of exposed points, $\exp(M)$.

In dual spaces, $w^*$-exposed faces refer to exposed faces relative to the $w^*$-topology, in other words, the supporting functional must be $w^*$-continuous. For instance, notice that $\mathsf{J}_X\left(x\right)=\suppv_x\left(\mathsf{B}_{X^*}\right)$ for every $x\in\mathsf{S}_X$, that is, $\mathsf{J}_X\left(x\right)$ is a $w^*$-exposed face of $\mathsf{B}_{X^*}$. The set of $w^*$-exposed points of $M$ is usually denoted by $\wsexp(M)$.

On the other hand, it is well known folklore that
\begin{itemize}
\item $\mathrm{ext}\left(\mathsf{B}_{\ell_{\infty}}\right)=\left\{\left(\varepsilon_n\right)_{n\in \mathbb{N}}\in \mathbb{R}^{\mathbb{N}}:\left|\varepsilon_n\right|=1\text{ for all }n\in \mathbb{N}\right\}$,
\item $\mathrm{ext}\left(\mathsf{B}_{c}\right)=\mathrm{ext}\left(\mathsf{B}_{\ell_{\infty}}\right)\cap c$,
\item $\mathrm{ext}\left(\mathsf{B}_{c_0}\right)=\mathrm{ext}\left(\mathsf{B}_{c_{00}}\right)=\varnothing$, and
\item $\ext\left(\B_{\ell_1}\right)=\{e_n:n\in\mathbb{N}\}$.
\end{itemize}

Notable extremal properties involving the concepts of extreme point, exposed point, $w^*$-exposed point, and smooth point  follow (see \cite{GPbuch}):

\begin{itemize}
\item If $x$ is a smooth point, then $\mathsf{J}_X\left(x\right)$ is a $w^*$-exposed point.
\item $\mathrm{int}_{\mathsf{S}_X}\left(C\right)\subseteq \mathrm{smo}\left(\mathsf{B}_X\right)$ for every proper face $C$ of $\mathsf{B}_X$.
\item The faces of a face of a convex set are also faces of the convex set.
\item The non-empty intersection of faces is a face.
\end{itemize}

\begin{lemma}\label{polla1}
Let $Y$ be a closed subspace of $X$ and consider $y^*\in \mathsf{S}_{Y^*}$ and $y \in \mathsf{S}_Y$ with $y^*\left(y\right)=1$. The following conditions are equivalent:
\begin{enumerate}
\item $y$ is a smooth point of $\mathsf{B}_Y$.
\item The set of all norm-$1$ Hahn-Banach extensions of $y^*$ to $X$ is the $w^*$-exposed face of $\mathsf{B}_{X^*}$ given by $y^{-1}\left(1\right) \cap \mathsf{B}_{X^*}.$
\end{enumerate}
\end{lemma}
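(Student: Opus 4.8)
The plan is to translate both conditions into statements about the dual map $\mathsf{J}_Y(y)$ and then exploit an inclusion that holds with no hypothesis. Write $H$ for the set of norm-$1$ Hahn--Banach extensions of $y^*$ to $X$, i.e. $H=\{x^*\in\mathsf{B}_{X^*}:x^*|_Y=y^*\}$ (a norm-preserving extension of the norm-$1$ functional $y^*$ automatically has norm exactly $1$, and any norm-$1$ extension lies in $\mathsf{B}_{X^*}$), and write $F=y^{-1}(1)\cap\mathsf{B}_{X^*}$. Since $y\in\mathsf{S}_Y\subseteq\mathsf{S}_X$, the framework already records that $F=\mathsf{J}_X(y)=\suppv_y(\mathsf{B}_{X^*})$ is a $w^*$-exposed face of $\mathsf{B}_{X^*}$; hence condition (2) is precisely the set equality $H=F$, and the whole content of the lemma is the equivalence between $y$ being smooth in $\mathsf{B}_Y$ and $H=F$.

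First I would record two observations requiring no hypothesis. One: $H\subseteq F$ always, since any $x^*\in H$ satisfies $x^*(y)=y^*(y)=1$ and $\|x^*\|\le 1$. Two: restriction to $Y$ maps $F$ into $\mathsf{J}_Y(y)$; indeed, for $x^*\in F$ one has $(x^*|_Y)(y)=x^*(y)=1$ and $\|x^*|_Y\|\le\|x^*\|\le 1$, while $\|x^*|_Y\|\ge (x^*|_Y)(y)=1$ because $\|y\|=1$, so $\|x^*|_Y\|=1$ and $x^*|_Y$ attains $1$ at $y$, i.e. $x^*|_Y\in\mathsf{J}_Y(y)$. Since $y^*\in\mathsf{J}_Y(y)$, the point $y$ is smooth in $\mathsf{B}_Y$ exactly when $\mathsf{J}_Y(y)=\{y^*\}$.

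For (1) $\Rightarrow$ (2), assume $\mathsf{J}_Y(y)=\{y^*\}$. Given $x^*\in F$, the second observation forces $x^*|_Y\in\mathsf{J}_Y(y)=\{y^*\}$, hence $x^*|_Y=y^*$ and $x^*\in H$; combined with the always-true $H\subseteq F$ this yields $H=F$. For (2) $\Rightarrow$ (1), assume $H=F$ and suppose toward a contradiction that $\mathsf{J}_Y(y)$ contained some $z^*\ne y^*$. Extend $z^*$ by Hahn--Banach to a norm-$1$ functional $\widetilde{z^*}\in\mathsf{B}_{X^*}$; then $\widetilde{z^*}(y)=z^*(y)=1$, so $\widetilde{z^*}\in F=H$, which forces $\widetilde{z^*}|_Y=y^*$, contradicting $\widetilde{z^*}|_Y=z^*\ne y^*$. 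Hence $\mathsf{J}_Y(y)=\{y^*\}$ and $y$ is smooth.

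I do not expect a genuine obstacle; the argument is bookkeeping around the inclusion $H\subseteq F$. The one point demanding care is the direction (2) $\Rightarrow$ (1), where I must manufacture a competing extension via Hahn--Banach and verify it lands inside $F$ — that is, the computations $\|\widetilde{z^*}\|=1$ and $\widetilde{z^*}(y)=1$ must be checked so that the hypothesis $H=F$ can actually be invoked to reach the contradiction.
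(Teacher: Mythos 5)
Your proof is correct and follows essentially the same route as the paper's: the trivial inclusion $H\subseteq F$, smoothness forcing every $x^*\in F$ to restrict to $y^*$, and in the converse direction a norm-preserving Hahn--Banach extension of a second supporting functional $z^*\in\mathsf{J}_Y(y)\setminus\{y^*\}$ landing in $F\setminus H$. Your added bookkeeping (checking $\|x^*|_Y\|=1$ so that $x^*|_Y\in\mathsf{J}_Y(y)$, and identifying $F=\mathsf{J}_X(y)$ as the $w^*$-exposed face) only makes explicit what the paper leaves implicit.
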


\begin{proof}
In the first place, assume that $y$ is a smooth point of $\mathsf{B}_Y$. On the one hand, $y^{-1}\left(1\right) \cap \mathsf{B}_{X^*}$ clearly contains all norm-$1$ Hahn-Banach extensions of $y^*$ to $X$. On the other hand, if $x^*\in y^{-1}\left(1\right) \cap \mathsf{B}_{X^*}$, then the smoothness of $y$ in $\mathsf{B}_Y$ assures that $x^*|_Y=y^*$. Conversely, assume that the set of all norm-$1$ Hahn-Banach extensions of $y^*$ to $X$ is the $w^*$-closed exposed face of $\mathsf{B}_{X^*}$ given by $y^{-1}\left(1\right) \cap \mathsf{B}_{X^*}.$ If $y$ is not a smooth point of $\mathsf{B}_Y$, then we can find $y_0^* \in \mathsf{S}_{Y^*}\setminus \left\{y^*\right\}$ such that $y_0^*\left(y\right) = 1$. By the Hahn-Banach Extension Theorem, $y_0^*$ may be extended to the whole of $X$ preserving its norm. Denote this extension by $x_0^*$. Observe that $x_0^* \in y^{-1}\left(1\right) \cap \mathsf{B}_{X^*}$ but $x_0^*|_Y = y_0^* \neq y^*$.
\end{proof}

The following well-known result (see \cite[p. 168]{Banach}) will be called on in the mainmatter chapters.

\begin{theorem}[Banach, 1932; \cite{Banach}]\label{bsmo}
The smooth points of $\mathsf{B}_{\mathcal{C}\left(K\right)}$ are exactly the elements of $\mathsf{S}_{\mathcal{C}\left(K\right)}$ that attain their absolute maximum value on $K$ at only one point of $K$. 
\end{theorem}

We mean to finish this subsection with the concept of convex component, which is nothing but a maximal convex subset. Convex components allow us to extend the concept of extreme point to non-convex sets. We refer the reader to \cite{GPafa} for a meticulous study on convex components and multi-slices.

\begin{definition}[Garc\'ia-Pacheco, 2015; \cite{GPafa}]
An element $m$ of a non-empty subset $M$ of a vector space $X$ is said to be an extreme point of $M$ provided that the following conditions holds: if $C$ is a segment of $M$ containing $m$, then $m$ is an extreme of $C$. The set of extreme points of $M$ is usually denoted by $\mathrm{ext}\left(M\right)$.
\end{definition}

The previous definition matches the usual concept of extreme point in convex sets.

\begin{theorem}[Garc\'ia-Pacheco, 2015; \cite{GPafa}]\label{ei}
Let $M$ be a non-empty subset of a vector space $X$ and let $\left\{C_i\right\}_{i\in I}$ be the set of convex components of $X$.
\begin{enumerate}
\item $\mathrm{ext}\left(M\right)\subseteq \bigcup_{i\in I}\mathrm{ext}\left(C_i\right)$.
\item If $C_i\cap C_j=\varnothing$ for all $i\neq j\in I$, then $\mathrm{ext}\left(M\right)= \bigcup_{i\in I}\mathrm{ext}\left(C_i\right)$.
\end{enumerate}
\end{theorem}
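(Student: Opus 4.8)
The plan is to lean on a single structural fact about convex components: since the union of a chain of convex subsets of $M$ is again convex, Zorn's lemma guarantees that \emph{every} convex subset of $M$ is contained in at least one convex component $C_i$. Applying this to singletons yields $M=\bigcup_{i\in I}C_i$, and applying it to segments is what will let me transfer the extremal condition back and forth between $M$ and its components. I would record this remark first, as both parts of the theorem rest on it.

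For part (1), I would argue by contraposition inside a fixed component. Fix $m\in\mathrm{ext}(M)$ and, using $M=\bigcup_{i\in I}C_i$, choose $i\in I$ with $m\in C_i$; the claim is that $m\in\mathrm{ext}(C_i)$. If not, then $m$ fails to be extreme in the convex set $C_i$, so there are $x,y\in C_i$ with $x\neq y$ and $t\in(0,1)$ satisfying $m=tx+(1-t)y$. By convexity the whole segment $[x,y]$ lies in $C_i\subseteq M$, so $[x,y]$ is a segment of $M$ through $m$ on which $m$ is not an endpoint, contradicting $m\in\mathrm{ext}(M)$. Hence $m\in\mathrm{ext}(C_i)\subseteq\bigcup_{i\in I}\mathrm{ext}(C_i)$.

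For part (2), the inclusion $\subseteq$ is exactly part (1), so only $\supseteq$ remains, and this is where the disjointness hypothesis does its work. I would take $m\in\mathrm{ext}(C_i)$ and suppose, for contradiction, that $m\notin\mathrm{ext}(M)$; then some segment $[x,y]\subseteq M$ with $x\neq y$ has $m=tx+(1-t)y$ for some $t\in(0,1)$. Now $[x,y]$ is a convex subset of $M$, so by the opening remark it is contained in some convex component $C_j$, whence $m\in C_i\cap C_j$. Pairwise disjointness of the components forces $j=i$, so $x,y\in C_i$, which contradicts $m\in\mathrm{ext}(C_i)$. Therefore $m\in\mathrm{ext}(M)$.

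The main obstacle is conceptual rather than computational: recognizing that the only way a point can be extreme in its own component yet fail to be extreme in $M$ is to sit in the relative interior of a segment belonging to a \emph{different} component that also passes through it. This is precisely what disjointness rules out, which explains why part (2) needs a hypothesis absent from part (1); without it, a point shared by two components and lying in the interior of a segment of the neighboring component would break the reverse inclusion.
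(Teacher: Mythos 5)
Your proof is correct and follows essentially the same route as the paper's: both parts rest on the single key fact that every segment of $M$, being convex, is contained in some convex component, with the disjointness hypothesis forcing that component to coincide with $C_i$ in part (2). The only differences are cosmetic — you make the Zorn's lemma step and the identity $M=\bigcup_{i\in I}C_i$ explicit where the paper leaves them implicit, and you argue by contraposition where the paper argues directly.
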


\begin{proof}
\mbox{}
\begin{enumerate}
\item Obvious since $C_i\subseteq M$ for every $i\in I$.
\item Let $x\in \mathrm{ext}\left(C_i\right)$ for some $i\in I$. Let $C$ be a segment of $M$ containing $x$. Since $C$ is convex, there exists $j\in I$ such that $C\subseteq C_j$. Therefore $x\in C_i\cap C_j$ which means by hypothesis that $C_i=C_j$. Since $x\in \mathrm{ext}\left(C_i\right)$, we deduce that $x$ is an extreme of $C$.
\end{enumerate}
\end{proof}

We would like the reader to be aware of the following observations about {\em 2} of Theorem \ref{ei}:
\begin{itemize}

\item The hypothesis that the convex components be disjoint cannot be removed from {\em 2} of Theorem \ref{ei}. Indeed, if $N$ is a non-empty convex subset of any vector space such that $0\notin N$, then $M:=\co(N\cup\{0\})\cup\co(-N\cup\{0\})$ verifies that $0$ is an extreme point of a convex component of $M$ but it is not an extreme point of $M$.

\item The converse to {\em 2} of Theorem \ref{ei} does not hold true as shown in the next example. Indeed, consider the $2$-dimensional Euclidean space, that is, $\ell_2^2$. Take $M:=2\mathsf{B}_{\ell_2^2}\setminus \mathsf{U}_{\ell_2^2}$. It is not difficult to observe that the convex components of $M$ are the sets $$C_f:=M\cap f^{-1}\left(\left[1,+\infty\right)\right)$$ where $f\in \mathsf{S}_{\left(\ell_2^2\right)^*}$. Finally, it can be checked that $$\mathrm{ext}\left(M\right)=\bigcup_{f\in  \mathsf{S}_{\left(\ell_2^2\right)^*}}\mathrm{ext}\left(C_f\right).$$ However, the convex components of $M$ are not pairwise disjoint.
\end{itemize}

We spare to the reader the details of the proof of the following technical lemma.

\begin{lemma}[Garc\'ia-Pacheco, 2015; \cite{GPafa}]\label{superhelp}
Let $M$ be a convex subset of a vector space $X$.
\begin{enumerate}
\item If $x\in X\setminus M$, then $\mathrm{ext}\left(\mathrm{co}\left(M \cup \left\{x\right\}\right)\right)\setminus \left\{x\right\} \subseteq \mathrm{ext}\left(M\right)$.
\item If $0\notin M$, then $$\ext\left(\ba\left(M\right)\right) =  \mathrm{ext}\left(\mathrm{co}\left(M \cup \left\{0\right\}\right)\right)\setminus \left\{0\right\} \cup \mathrm{ext}\left(\mathrm{co}\left(-M \cup \left\{0\right\}\right)\right)\setminus \left\{0\right\}.$$
\item $\mathrm{ext}\left(\ba\left(M\right)\right) \subseteq \mathrm{ext}\left(M\right)\cup \mathrm{ext}\left(-M\right)$.
\end{enumerate}
\end{lemma}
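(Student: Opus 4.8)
The plan is to prove the three items together, reading 1 and 3 off the analysis of 2. Throughout I set $P:=\co\left(M\cup\left\{0\right\}\right)$ and $Q:=\co\left(-M\cup\left\{0\right\}\right)=-P$; since $M$ is convex, the identity $\ba(\co(A))=\co\left(A\cup\left\{0\right\}\right)\cup\co\left(-A\cup\left\{0\right\}\right)$ recorded above gives $\ba(M)=P\cup Q$. I would dispatch item 1 first. As $M$ is convex, every point of the drop $\co\left(M\cup\left\{x\right\}\right)$ has the form $(1-t)x+tm$ with $t\in[0,1]$, $m\in M$. If $e\in\ext\left(\co\left(M\cup\left\{x\right\}\right)\right)$ and $e\neq x$, then $e$ lies on the segment $[x,m]\subseteq\co\left(M\cup\left\{x\right\}\right)$, so extremality forces $t\in\left\{0,1\right\}$; since $x\notin M$ excludes $t=0$ (i.e. $e=x$), this leaves $e=m\in M$. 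Finally, any segment of $M$ through $e$ is a segment of $\co\left(M\cup\left\{x\right\}\right)$ through $e$, so $e$ stays extreme in $M$; this is item 1.

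For item 2 I would record two facts. First, $P\cap Q=\left\{0\right\}$: if $p\in P\cap Q$ were nonzero, writing $p=tm=-sn$ with $t,s\in(0,1]$ and $m,n\in M$ gives $m=-(s/t)n$, whence the segment $[m,n]\subseteq M$ passes through $0$, contradicting $0\notin M$. Second, $0\notin\ext\left(\ba(M)\right)$: for any $m\in M$ (so $m\neq 0$) the nondegenerate segment $[-m,m]=\left\{tm:t\in[-1,1]\right\}$ lies in $[-1,1]M=\ba(M)$ with $0$ in its interior. The inclusion $\ext\left(\ba(M)\right)\subseteq\left(\ext(P)\setminus\left\{0\right\}\right)\cup\left(\ext(Q)\setminus\left\{0\right\}\right)$ is then immediate: an extreme point $e$ of $P\cup Q$ is nonzero by the second fact and lies in $P$ or $Q$, say $e\in P$, and since every segment of $P$ through $e$ is a segment of $P\cup Q$ through $e$, $e$ is extreme in $P$.

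The reverse inclusion is the crux. Given $e\in\ext(P)\setminus\left\{0\right\}$ and a segment $[a,b]\subseteq P\cup Q$ with $e$ in its relative interior, I parametrize $\gamma(s)=(1-s)a+sb$ and set $I_P=\gamma^{-1}(P)$, $I_Q=\gamma^{-1}(Q)$; these are subintervals of $[0,1]$ covering it, with $I_P\cap I_Q=\gamma^{-1}\left(\left\{0\right\}\right)$ by the first fact. If $0\in[a,b]$, then $I_P,I_Q$ meet in one point $s^\ast$ and split $[0,1]$ as $[0,s^\ast]$ and $[s^\ast,1]$ in some order; as $\gamma^{-1}(e)\neq s^\ast$ is interior to whichever is $I_P$, the point $e$ would be interior to a nondegenerate segment of $P$, contradicting $e\in\ext(P)$. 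The only surviving possibility is $0\notin[a,b]$ with $I_P,I_Q$ disjoint and the transition occurring exactly at $e$, i.e. $\gamma([0,s_0])\subseteq P$ and $\gamma((s_0,1])\subseteq Q$ with $\gamma(s_0)=e$. \textbf{The main obstacle is to exclude this crossing configuration}, namely to show that \emph{a segment joining a point of $P$ to a point of $Q$ inside $P\cup Q$ must contain $0$}. I would settle this separately by choosing a linear functional $f$ with $f>0$ on $M$ (hence $f>0$ on $P\setminus\left\{0\right\}$ and $f<0$ on $Q\setminus\left\{0\right\}$): since $f(a)\geq 0\geq f(b)$, the intermediate value theorem yields $s$ with $f(\gamma(s))=0$, and then $\gamma(s)\in(P\cup Q)\cap\ker f=\left\{0\right\}$ forces $0\in[a,b]$, the required contradiction. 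Such an $f$ exists by strictly separating $M$ from $\left\{0\right\}$, which is automatic in finite dimensions and in the normed setting of the mainmatter; in a fully general vector space one instead argues directly from the disjoint-interval dichotomy together with $P\cap Q=\left\{0\right\}$.

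Finally I would deduce item 3 from items 1 and 2. When $0\notin M$, item 2 gives $\ext\left(\ba(M)\right)\subseteq\left(\ext(P)\setminus\left\{0\right\}\right)\cup\left(\ext(Q)\setminus\left\{0\right\}\right)$, and item 1 applied with $x=0$ to $M$ and to $-M$ bounds these respectively by $\ext(M)$ and $\ext(-M)$, yielding $\ext\left(\ba(M)\right)\subseteq\ext(M)\cup\ext(-M)$. The case $0\in M$ is handled directly: then $\co\left(M\cup\left\{0\right\}\right)=M$ and $\ba(M)=M\cup(-M)$, and the same segment-lifting argument used in the first inclusion of item 2 shows each extreme point of $\ba(M)$ is extreme in $M$ or in $-M$.
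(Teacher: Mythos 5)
The paper itself spares the proof of this lemma, so your proposal can only be judged on its own terms. Items 1 and 3, the inclusion $\subseteq$ of item 2, and your two preparatory facts ($P\cap Q=\{0\}$ and $0\notin\ext\left(\ba(M)\right)$) are all correct, modulo two harmless slips: in item 1 it is the assumption $e\neq x$, not $x\notin M$, that rules out $t=0$; and in your Case A the two intervals need not split as $[0,s^{\ast}]$ and $[s^{\ast},1]$ (one of them may degenerate to $\{s^{\ast}\}$), although your contradiction survives in that sub-case too. You have also correctly isolated the crux of $\supseteq$ in item 2: excluding a segment of $P\cup Q$ that switches from $P$ to $Q$ exactly at $e$. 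This is where the proposal fails. A linear functional with $f>0$ on the whole of $M$ need not exist when $0\notin M$: strict separation is \emph{not} automatic in finite dimensions (only $f\geq 0$ is), nor in normed spaces unless $M$ is closed, and the lemma assumes a bare convex subset of a bare vector space.

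Worse, the gap cannot be closed by any ``direct argument from the dichotomy,'' because the crossing configuration genuinely occurs: the inclusion $\supseteq$ of item 2 is false at this level of generality. In $X=\mathbb{R}^{2}$ take $M:=\left([-1,0)\times[-1,1]\right)\cup\left\{(0,1)\right\}$, a convex set with $0\notin M$. Then $P=\co\left(M\cup\{0\}\right)=\left([-1,0)\times[-1,1]\right)\cup\left(\{0\}\times[0,1]\right)$, and $e:=(0,1)\in\ext(P)\setminus\{0\}$: since $P\subseteq\{x\leq 0\}$ and $P\cap\{x=0\}=\{0\}\times[0,1]$, any segment of $P$ containing $e$ in its interior must lie in $\{0\}\times[0,1]$, of which $e$ is an endpoint. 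Yet $(s,1)=-(-s,-1)\in -M\subseteq Q$ for every $s\in(0,1]$, so the segment joining $(-1,1)$ to $(1,1)$ lies in $\ba(M)$ (which here equals $[-1,1]^{2}$) with $e$ as its midpoint; hence $e\notin\ext\left(\ba(M)\right)$, and your forbidden segment switches from $P$ to $Q$ exactly at the extreme point $e$. Consistently, no $f=(\alpha,\beta)$ with $f>0$ on $M$ exists: $\alpha x-\beta>0$ for all $x\in[-1,0)$ forces $\beta\leq 0$, while $f(e)=\beta>0$. What rescues the statement, and your proof, is a topological hypothesis making $Q=[0,1](-M)$ closed --- for instance $M$ closed and bounded in a Hausdorff topological vector space, which is precisely the setting in which the book applies the lemma. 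Then your Case B dies with no functional at all: $e=\lim_{s\to s_{0}^{+}}\gamma(s)\in\cl(Q)=Q$, so $e\in P\cap Q=\{0\}$, a contradiction. As written, however, the final step of your item 2 is both unproved and unprovable.
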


\section{Sequences and series}

The most general setting in which summability takes place is the category of topological additive groupoids, that is, additive groupoids endowed with a topology that makes the addition continuous.

\subsection{Spaces of sequences}

The reader should be familiar with the usual sequence spaces such as $$c_{00}\left(X\right)\subset \ell_1(X)\subset c_0\left(X\right)\subset c\left(X\right)\subset \ell_\infty\left(X\right).$$ If $X$ is complete, then so is each of them when $\ell_\infty\left(X\right)$ is endowed with the sup norm, except for the very first one, $c_{00}\left(X\right)$, which is dense in $c_0\left(X\right)$.

We recall the reader that $\overline{X}$ stands for the completion of $X$. It is fairly trivial that $c_{00}(X)$, $\ell_1(X)$, $c_0(X)$, and $\ell_\infty(X)$ are dense in $c_{00}\left(\overline{X}\right)$, $\ell_1\left(\overline{X}\right)$, $c_0\left(\overline{X}\right)$, and $\ell_\infty\left(\overline{X}\right)$, respectively. The density of $c(X)$ in $c\left(\overline{X}\right)$ is not that trivial.

\begin{lemma}\label{bixic}
$c\left(X\right)$ is dense in $c\left(\overline{X}\right)$.
\end{lemma}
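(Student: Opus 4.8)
The plan is to prove density in the sup norm: given an arbitrary $(y_n)_{n\in\mathbb{N}}\in c\left(\overline{X}\right)$ and $\varepsilon>0$, I will exhibit $(x_n)_{n\in\mathbb{N}}\in c(X)$ with $\sup_{n}\|x_n-y_n\|\leq\varepsilon$. The tempting first move is to use the density of $X$ in $\overline{X}$ to pick, for each $n$, a point $x_n\in X$ with $\|x_n-y_n\|\leq 1/n$. This does produce a sequence in $X$ close to $(y_n)$, but here lies exactly the difficulty flagged in the text: such a sequence is only guaranteed to be Cauchy, and since $y:=\lim_n y_n$ may well lie in $\overline{X}\setminus X$, the sequence $(x_n)$ would converge to $y$ in $\overline{X}$ but to no point of $X$, so it would fail to belong to $c(X)$. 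Membership in $c(X)$ demands convergence \emph{within} $X$, not mere Cauchyness, and this is the main obstacle to overcome.

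To circumvent it, I would decouple the approximation of the tail from that of the finitely many initial terms. First, set $y:=\lim_n y_n$ and, using the density of $X$ in $\overline{X}$, fix a \emph{single} point $x\in X$ with $\|x-y\|<\varepsilon/2$. Next, choose $N\in\mathbb{N}$ so large that $\|y_n-y\|<\varepsilon/2$ for every $n\geq N$. Then I would define the candidate sequence by $x_n:=x$ for all $n\geq N$, and, for the finitely many indices $n<N$, pick $x_n\in X$ with $\|x_n-y_n\|\leq\varepsilon$ (again possible by density). The crucial structural feature is that $(x_n)$ is \emph{eventually constant}, equal to $x$.

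To verify the two required properties, I would note that because $(x_n)$ is eventually constant it converges to $x\in X$, whence $(x_n)\in c(X)$, which is precisely what the naive construction failed to secure. For the error estimate, on the head $n<N$ one has $\|x_n-y_n\|\leq\varepsilon$ by construction, while on the tail $n\geq N$ the triangle inequality gives $\|x_n-y_n\|=\|x-y_n\|\leq\|x-y\|+\|y-y_n\|<\varepsilon/2+\varepsilon/2=\varepsilon$; hence $\sup_n\|x_n-y_n\|\leq\varepsilon$, and since $\varepsilon>0$ was arbitrary, $c(X)$ is dense in $c\left(\overline{X}\right)$. I expect the only genuine content to be the realization that the tail must be approximated by one fixed element of $X$ rather than by a vanishing sequence of errors; once the approximating sequence is forced to be eventually constant, everything else is routine.
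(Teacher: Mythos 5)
Your proof is correct and is essentially the paper's own argument: both constructions produce an eventually constant sequence in $c(X)$, with the constant tail being a single point of $X$ close to the tail of $(y_n)_{n\in\mathbb{N}}$. The only cosmetic difference is that you anchor the tail at an approximant $x$ of the limit $y=\lim_n y_n$, whereas the paper anchors it at an approximant $x_{n_0}$ of $y_{n_0}$ and invokes the Cauchy property of $(y_n)_{n\in\mathbb{N}}$ instead of naming the limit.
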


\begin{proof}
Let $\left(y_n\right)_{n\in\mathbb{N}}\in c\left(\overline{X}\right)$ and fix an arbitrary $\varepsilon >0$. For every $n\in\mathbb{N}$ let $x_n\in X$ such that $\left\|y_n-x_n\right\|\leq \varepsilon/2$. Take $n_0\in\mathbb{N}$ such that $1/n_0 < \varepsilon/2$ and $\left\|y_p-y_q\right\|<\varepsilon/2$ for all $p,q\geq n_0$. Then $\left(x_1,x_2,\dots,x_{n_0-1},x_{n_0},x_{n_0},x_{n_0},\dots\right)\in c\left(X\right)$ and $$\left\|\left(x_1,x_2,\dots,x_{n_0-1},x_{n_0},x_{n_0},x_{n_0},\dots\right)-\left(y_n\right)_{n\in\mathbb{N}}\right\|_\infty <\varepsilon$$ since $$\left\|x_{n_0}-y_n\right\|\leq \left\|x_{n_0}-y_{n_0}\right\| + \left\|y_{n_0}-y_n\right\| < \frac{\varepsilon}{2}+  \frac{\varepsilon}{2} = \varepsilon$$ for all $n\geq n_0$.
\end{proof}

It is easy to check that $c(X)$ is never dense in $\ell_\infty(X)$. When $X=\mathbb{R}$ a separability argument can be applied. In general, one can see that a sequence like $\left((-1)^nx\right)_{n\in\mathbb{N}}$ can never be approximated by convergent sequences in the sup norm if $x\neq 0$.

\begin{theorem}\label{jopac}
The following conditions are equivalent:
\begin{enumerate}
\item $c\left(\overline{X}\right)\cap \ell_\infty\left(X\right)=c\left(X\right)$.
\item $c\left(X\right)$ is a closed subspace of $\ell_\infty\left(X\right)$.
\item $X$ is complete.
\end{enumerate}
\end{theorem}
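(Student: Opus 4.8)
The plan is to prove the cycle $(3)\Rightarrow(1)\Rightarrow(2)\Rightarrow(3)$, resting on the single structural observation that a member of $c\left(\overline{X}\right)\cap\ell_\infty(X)$ is precisely a bounded sequence of $X$ that is Cauchy in $X$ (boundedness being automatic, since convergent sequences are bounded), because convergence in the complete space $\overline{X}$ amounts to being Cauchy. I would also record at the outset the always-valid inclusion $c(X)\subseteq c\left(\overline{X}\right)\cap\ell_\infty(X)$: a sequence converging to a point of $X$ is bounded and converges in $\overline{X}$ to that same point. Thus throughout only the reverse inclusion is ever at issue.

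For $(3)\Rightarrow(1)$, assuming $X$ complete, it remains to check that reverse inclusion; but any $\left(x_n\right)_{n\in\mathbb{N}}\in c\left(\overline{X}\right)\cap\ell_\infty(X)$ is a Cauchy sequence of $X$, which by completeness converges to a point of $X$, so it lies in $c(X)$. For $(1)\Rightarrow(2)$, I would exploit that $\ell_\infty(X)$ sits isometrically as a linear subspace of $\ell_\infty\left(\overline{X}\right)$ and that $c\left(\overline{X}\right)$ is a \emph{closed} subspace of $\ell_\infty\left(\overline{X}\right)$: indeed $\overline{X}$ is complete, so by the completeness of the convergent-sequence space over a complete space, $c\left(\overline{X}\right)$ is complete and hence closed. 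Consequently $c\left(\overline{X}\right)\cap\ell_\infty(X)$ is closed in $\ell_\infty(X)$ for the relative (norm) topology; by hypothesis $(1)$ this intersection equals $c(X)$, which is therefore closed in $\ell_\infty(X)$.

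The genuine content lies in $(2)\Rightarrow(3)$, which I expect to be the main obstacle. Given a Cauchy sequence $\left(x_n\right)_{n\in\mathbb{N}}$ of $X$, I would view it as an element of $\ell_\infty(X)$ and approximate it in the sup norm by the eventually constant sequences $s_k:=\left(x_1,\dots,x_{k-1},x_k,x_k,x_k,\dots\right)$, each of which lies in $c(X)$. Since $\left\|\left(x_n\right)_{n\in\mathbb{N}}-s_k\right\|_\infty=\sup_{n\geq k}\left\|x_n-x_k\right\|\to 0$ as $k\to\infty$ by the Cauchy condition, the sequence $\left(x_n\right)_{n\in\mathbb{N}}$ belongs to $\cl\left(c(X)\right)$. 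Closedness of $c(X)$ in $\ell_\infty(X)$, hypothesis $(2)$, then forces $\left(x_n\right)_{n\in\mathbb{N}}\in c(X)$, i.e. it converges in $X$. As the Cauchy sequence was arbitrary, $X$ is complete.

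The only delicate points to get right are the uniform estimate $\sup_{n\geq k}\left\|x_n-x_k\right\|\to0$ (a direct rephrasing of the Cauchy property, where one must take $k$ beyond the Cauchy threshold so that the supremum is controlled) and the harmless isometric identifications $X\subseteq\overline{X}$ and $\ell_\infty(X)\subseteq\ell_\infty\left(\overline{X}\right)$ invoked in $(1)\Rightarrow(2)$. Everything else is formal, so the proof is essentially the eventually-constant approximation of a Cauchy sequence together with the closedness of convergent sequences over the complete space $\overline{X}$.
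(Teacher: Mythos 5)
Your proposal is correct, and its skeleton --- the cycle $(3)\Rightarrow(1)\Rightarrow(2)\Rightarrow(3)$, with $(1)\Rightarrow(2)$ resting on the closedness of $c\left(\overline{X}\right)$ in $\ell_\infty\left(\overline{X}\right)$ --- is the same as the paper's; the implications $(3)\Rightarrow(1)$ and $(1)\Rightarrow(2)$ match the paper's essentially verbatim. The genuine divergence is in $(2)\Rightarrow(3)$. The paper dispatches it in one line: a Cauchy sequence of $X$ ``obviously'' lies in $c\left(\overline{X}\right)\cap\ell_\infty\left(X\right)=c\left(X\right)$; but that displayed equality is condition $(1)$, which at that point of the cycle has not yet been derived from $(2)$, so the printed step is, read literally, circular --- it must be understood as a tacit appeal to the density Lemma \ref{bixic} combined with hypothesis $(2)$, which is exactly how the paper argues explicitly at the corresponding step of the analogous Theorem \ref{jopa} for $ac(X)$. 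Your argument supplies precisely this missing content, inlined: the eventually constant truncations $s_k$ of the Cauchy sequence lie in $c(X)$ and converge to it in the sup norm, since $\left\|\left(x_n\right)_{n\in\mathbb{N}}-s_k\right\|_\infty=\sup_{n\geq k}\left\|x_n-x_k\right\|$ is controlled past the Cauchy threshold, and the closedness of $c(X)$ in $\ell_\infty(X)$ finishes. This computation is the one inside the proof of Lemma \ref{bixic}, specialized to a sequence whose terms already lie in $X$ (so no approximation of individual terms is needed). In short, your route is self-contained and strictly more careful than the printed proof; what the paper's phrasing buys is brevity, at the cost of leaning silently on Lemma \ref{bixic}.
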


\begin{proof}
\mbox{}
\begin{enumerate}
\item[{\em 1} $\Rightarrow$ {\em 2}] Immediate if taken into account that $c\left(\overline{X}\right)$ is closed in $\ell_\infty\left(\overline{X}\right)$ since $\overline{X}$ is complete.
\item[{\em 2} $\Rightarrow$ {\em 3}] Consider a Cauchy sequence $\left(x_n\right)_{n\in\mathbb{N}}\subset X$. It is obvious that $\left(x_n\right)_{n\in\mathbb{N}}\in  c\left(\overline{X}\right)\cap \ell_\infty\left(X\right)= c\left(X\right)$. So $\left(x_n\right)_{n\in\mathbb{N}}$ is convergent in $X$.
\item[{\em 3} $\Rightarrow$ {\em 1}] Obvious.
\end{enumerate}
\end{proof}

The reader should realize that Theorem \ref{jopac} does not hold if we substitute $c\left(X\right)$ with $c_{00}(X)$, $\ell_1(X)$, or $c_0(X)$, as $c_{00}(X)=c_{00}\left(\overline{X}\right)\cap \ell_\infty(X)$, $\ell_1(X)=\ell_1\left(\overline{X}\right)\cap \ell_\infty(X)$, and $c_0(X)=c_0\left(\overline{X}\right)\cap \ell_\infty(X)$. As a consequence, $\ell_1(X)$ and $c_0(X)$ are always closed in $\ell_\infty(X)$ even if $X$ is not complete. On the other hand, $c_0(X)=\ker\left(\lim\right)$, where $\lim$ clearly denotes the limit function on $c(X)$.

A not so usual sequence space is the one of all sequences in $X$ with bounded partial sums, usually denoted by $bps\left(X\right)$. It is clear that $$c_{00}\left(X\right)\subset \ell_1\left(X\right)\subset bps\left(X\right)\subset \ell_\infty\left(X\right).$$ As expected, $bps(X)$ is dense in $bps\left(\overline{X}\right)$ and $bps(X)=bps\left(\overline{X}\right)\cap \ell_\infty(X)$. However, $bps\left(X\right)$ is not closed in $\ell_\infty\left(X\right)$ even if $X$ is complete, since $c_{00}\left(X\right)$ is dense in $c_0\left(X\right)$ and $c_0\left(X\right)$ contains sequences with unbounded partial sums. On the other hand, it can also be seen without employing an enormous effort that $bps(X)$ is never dense in $\ell_\infty(X)$. The following lemma clarifies the nature of $bps\left(X\right)$ (see \cite[Lemma 2.2]{AAGPPF}).

\begin{lemma}\label{bps}
$bps\left(X\right)=\left\{\left(z_{n+1}-z_n\right)_{n\in\mathbb{N}}:\left(z_n\right)_{n\in\mathbb{N}}\in \ell_\infty\left(X\right)\right\}.$
\end{lemma}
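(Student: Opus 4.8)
The plan is to establish the two set inclusions separately, both of which rest on the elementary observation that the partial sums of a sequence of consecutive differences telescope.

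For the inclusion $\supseteq$, I would start with a bounded sequence $(z_n)_{n\in\mathbb{N}}\in\ell_\infty(X)$, say with $\|z_n\|\le M$ for all $n$, and set $x_n:=z_{n+1}-z_n$. The $N$-th partial sum then collapses to
$$\sum_{n=1}^N x_n=\sum_{n=1}^N\left(z_{n+1}-z_n\right)=z_{N+1}-z_1,$$
so that $\left\|\sum_{n=1}^N x_n\right\|\le 2M$ for every $N\in\mathbb{N}$, whence $(x_n)_{n\in\mathbb{N}}\in bps(X)$.

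For the reverse inclusion $\subseteq$, I would take $(x_n)_{n\in\mathbb{N}}\in bps(X)$ and denote its partial sums by $S_N:=\sum_{n=1}^N x_n$, which form a bounded sequence by hypothesis. Setting $S_0:=0$ and $z_n:=S_{n-1}$ for every $n\in\mathbb{N}$ produces a sequence $(z_n)_{n\in\mathbb{N}}$ that is bounded (it is, up to the index shift, the very sequence of partial sums) and therefore lies in $\ell_\infty(X)$. A direct computation gives $z_{n+1}-z_n=S_n-S_{n-1}=x_n$ for all $n\in\mathbb{N}$, exhibiting $(x_n)_{n\in\mathbb{N}}$ in the required form.

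There is no genuine obstacle here: the entire content is the telescoping identity, and the only point demanding a little care is the index bookkeeping in the second inclusion, namely choosing $z_n=S_{n-1}$ (equivalently, fixing $z_1=0$ and recovering $z_{n+1}=z_1+\sum_{k=1}^n x_k$) so that the consecutive differences reproduce $(x_n)_{n\in\mathbb{N}}$ exactly rather than a shifted copy.
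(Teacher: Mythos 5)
Your proof is correct and follows essentially the same route as the paper's: both inclusions come down to the telescoping identity, and your choice $z_n:=S_{n-1}$ (i.e.\ $z_1=0$) is just the paper's construction $z_{n+1}:=z_1+\sum_{i=1}^n x_i$ with the arbitrary base point $z_1$ fixed at $0$.
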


\begin{proof}
If $\left(a_n\right)_{n\in\N}\in bps\left(X\right)$, then choose any $z_1 \in X$ and (necessarily) define $$z_{n+1}:=z_1 + \sum_{i=1}^na_i$$ for all $n\in \mathbb{N}$. It is immediate that $\left(z_n\right)_{n\in\mathbb{N}}\in \ell_\infty\left(X\right)$ and $z_{n+1}-z_n=a_n$ for all $n\in\N$. Conversely, let $\left(z_n\right)_{n\in\mathbb{N}}\in \ell_\infty\left(X\right)$. Simply notice that for all $p\in \mathbb{N}$ we have that $$\left\|\sum_{k=1}^p\left(z_{k+1}-z_{k}\right)\right\|=\left\|z_{p+1}-z_{1}\right\|\leq 2\left\|\left(z_n\right)_{n\in \mathbb{N}}\right\|_{\infty}.$$
\end{proof}

On the other hand, the following linear operator $$\begin{array}{rcl} bps\left(X\right)&\to&\ell_\infty\left(X\right)\\
 \left(x_i\right)_{i\in\mathbb{N}}&\mapsto&\left(x_1,x_1+x_2,x_1+x_2+x_3,\dots\right),
\end{array}$$ whose inverse is precisely (see Lemma \ref{bps} above) $$\begin{array}{rcl} \ell_\infty\left(X\right)&\to&bps\left(X\right)\\
 \left(z_n\right)_{n\in\mathbb{N}}&\mapsto&\left(z_1,z_2-z_1,z_3-z_2,\dots\right),
\end{array}$$ induces the following norm on $bps\left(X\right)$:
\begin{equation}\label{99}
\left\| \left(x_i\right)_{i\in\mathbb{N}} \right\|=\sup_{n\in\mathbb{N}} \left\| \sum_{i=1}^n x_i \right\|=\left\|\left(\sum_{i=1}^{n}x_i\right)_{n\in\mathbb{N}}\right\|_{\infty},
\end{equation}
which will make it complete if $X$ is so. Notice that when endowed with the norm given in \eqref{99}, $bps\left(X\right)$ is linearly isometric to $\ell_\infty\left(X\right)$.

If $x\in X$, then ${\bf x}$ denotes the constant sequence of term equal to $x$, and ${\bf X}$ denotes the set of all constant sequences. It is clear that ${\bf X}$ is a closed subspace of $\ell_\infty\left(X\right)$ and contained in $c(X)$.

\begin{lemma}\label{pollazo}
Let $x\in \mathsf{S}_X$ and $x^*\in \mathsf{S}_{X^*}$ with $x^*\left(x\right)=1$ and fix any arbitrary $0<\varepsilon < 1$.
\begin{enumerate}
\item $x^*\left(\mathsf{U}_X\left(x,\varepsilon\right)\right)\subset \left(1-\varepsilon,+\infty\right)$.
\item If $\left(x_n\right)_{n\in\mathbb{N}}\in bps\left(X\right)$, then $\left(x_n\right)_{n\in\mathbb{N}}\nsubseteq \mathsf{U}_X\left(x,\varepsilon\right)$.
\item $d\left({\bf x},bps\left(X\right)\right)= 1$.
\end{enumerate}
\end{lemma}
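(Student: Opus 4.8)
The plan is to prove the three items in the order stated, since each one feeds into the next, and the engine throughout is that $x^*$ attains its norm at $x$. For item 1, I would take $y\in\mathsf{U}_X\left(x,\varepsilon\right)$ and simply split $x^*\left(y\right)=x^*\left(x\right)+x^*\left(y-x\right)=1+x^*\left(y-x\right)$. Bounding $x^*\left(y-x\right)\geq-\left\|x^*\right\|\left\|y-x\right\|>-\varepsilon$ via $\left\|x^*\right\|=1$ and the strict open-ball inequality $\left\|y-x\right\|<\varepsilon$ yields $x^*\left(y\right)>1-\varepsilon$ at once, which is exactly the claimed inclusion into $\left(1-\varepsilon,+\infty\right)$.

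For item 2 I would argue by contradiction. Suppose $\left(x_n\right)_{n\in\mathbb{N}}\subseteq\mathsf{U}_X\left(x,\varepsilon\right)$. Then item 1 gives $x^*\left(x_n\right)>1-\varepsilon$ for every $n$, and summing together with $\left\|x^*\right\|=1$ produces
$$\left\|\sum_{i=1}^n x_i\right\|\geq x^*\left(\sum_{i=1}^n x_i\right)=\sum_{i=1}^n x^*\left(x_i\right)>n\left(1-\varepsilon\right).$$
Because $0<\varepsilon<1$ forces $1-\varepsilon>0$, the right-hand side diverges to $+\infty$, so the partial sums of $\left(x_n\right)_{n\in\mathbb{N}}$ are unbounded, contradicting membership in $bps\left(X\right)$.

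For item 3 I would estimate the distance from both sides. The zero constant sequence ${\bf 0}$ lies in $bps\left(X\right)$ and satisfies $\left\|{\bf x}-{\bf 0}\right\|_\infty=\left\|x\right\|=1$, giving $d\left({\bf x},bps\left(X\right)\right)\leq 1$. For the reverse inequality, suppose $d\left({\bf x},bps\left(X\right)\right)<1$; then some $\left(y_n\right)_{n\in\mathbb{N}}\in bps\left(X\right)$ has $\sup_n\left\|x-y_n\right\|<1$, and picking $\varepsilon$ strictly between $\sup_n\left\|x-y_n\right\|$ and $1$ places the whole sequence inside $\mathsf{U}_X\left(x,\varepsilon\right)$, contradicting item 2. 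Hence $d\left({\bf x},bps\left(X\right)\right)\geq 1$ and equality follows.

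The argument is short and the only genuine idea, and thus the part deserving care, is the observation in item 2 that a norm-attaining functional forces the partial sums along any sequence trapped in $\mathsf{U}_X\left(x,\varepsilon\right)$ to grow at least linearly, which is precisely what rules out bounded partial sums; everything else is the triangle inequality and an infimum estimate. The two points I must not overlook are that item 1 genuinely needs the strict open-ball inequality to deliver the strict bound $x^*\left(y\right)>1-\varepsilon$, and that in item 3 choosing $\varepsilon$ \emph{strictly} between $\sup_n\left\|x-y_n\right\|$ and $1$ is exactly what lets me invoke item 2.
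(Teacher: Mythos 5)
Your proof is correct and follows essentially the same route as the paper: the heart of the argument, item 2, is identical (the norm-one functional $x^*$ forces $x^*\left(\sum_{i=1}^n x_i\right)>n\left(1-\varepsilon\right)$, ruling out bounded partial sums), and item 3 likewise combines item 2 with an explicit witness in $bps\left(X\right)$ for the upper bound, your ${\bf 0}$ versus the paper's $\left(x,0,0,\dots\right)$ being interchangeable. The only divergence is item 1, where the paper argues via convexity of $x^*\left(\mathsf{U}_X\left(x,\varepsilon\right)\right)$ together with the impossibility of the value $1-\varepsilon$, whereas your direct estimate $x^*\left(y\right)\geq 1-\left\|y-x\right\|>1-\varepsilon$ is shorter and yields the same strict inclusion.
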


\begin{proof}
\mbox{}
\begin{enumerate}
\item Because of the convexity of $x^*\left(\mathsf{U}_X\left(x,\varepsilon\right)\right)$ it suffices to show that no element $y\in \mathsf{U}_X\left(x,\varepsilon\right)$ verifies that $x^*\left(y\right)=1-\varepsilon$. Indeed, assume to the contrary that such element $y$ exists. Then $$\varepsilon=\left|x^*\left(x\right)-x^*\left(y\right)\right|=\left|x^*\left(x-y\right)\right|\leq \left\|x-y\right\|<\varepsilon,$$ which is a contradiction. 

\item Suppose that $\left(x_n\right)_{n\in\mathbb{N}}\subset \mathsf{U}_X\left(x,\varepsilon\right)$. Then $x^*\left(x_n\right)>1-\varepsilon$ for all $n\in\mathbb{N}$. Let $M>0$ be such that $$\left\|\sum_{n=1}^m x_n\right\|\leq M$$ for all $m\in\mathbb{N}$. Finally notice that $$m\left(1-\varepsilon\right)<x^*\left(\sum_{n=1}^m x_n\right)\leq \left\|\sum_{n=1}^m x_n\right\|\leq M$$ for all $m\in \mathbb{N}$ which is impossible. 

\item By {\em 2} we know that $\left\|\left(x_n\right)_{n\in\mathbb{N}}-\mathbf{x}\right\|_\infty \geq \varepsilon$ for all $\left(x_n\right)_{n\in\mathbb{N}}\in bps\left(X\right)$. Thus, $d\left({\bf x},bps\left(X\right)\right)\geq 1$. In order to see that $d\left({\bf x},bps\left(X\right)\right)= 1$ it only suffices to realize that $$\left(x,0,0,\dots,0,\dots\right)\in bps\left(X\right)$$ and $$\left\|\left(x,0,0,\dots,0,\dots\right)-\mathbf{x}\right\|_\infty =1.$$

\end{enumerate}
\end{proof}

A class of spaces with special convergence properties is the class of all Grothendieck spaces. Recall (see \cite{AizGuPer}) that a vector subspace $M$ of the dual $X^{\ast\ast}$ of a normed space $X$ is called a $M$-Grothendieck space if every sequence in $X^\ast$ which is $\sigma(X^\ast,X)$ convergent is also $\sigma (X^\ast,M)$ convergent. As expected, $X$ is said to be Grothendieck if it is $X^{\ast\ast}$-Grothendieck. In other words, a normed space $X$ has the Grothendieck property if every weakly-star convergent sequence in $X^*$ is weakly convergent.

If $\mathcal{S}$ is a vector subspace of $\ell_\infty$ containing $c_0$, then $\ell_\infty$ can be identified with a vector subspace of $\mathcal{S}^{\ast\ast}$ via the bidual map corresponding to the natural inclusion of $c_0 $ into $\mathcal{S}$. This bidual map is an isometry from $c_0^{\ast\ast} \equiv \ell_\infty$ into $\mathcal{S}^{\ast\ast}$. Therefore, we can identify a bounded sequence $\left(a_j\right)_{j\in \mathbb{N}} \in \ell_\infty$ with the map
\begin{equation*}
\begin{array}{rcl}
\mathcal{S}^\ast &\to& \mathbb{R}\\
g&\mapsto & \sum_{j=1}^\infty a_jg\left(e^j\right),
\end{array}
\end{equation*}
where $\left(e^j\right)$ stands for the canonical $c_0$-basis. As a consequence, it makes sense to ask whether $\mathcal{S}$ is $\ell_\infty$-Grothendieck. This observation is relevant towards the statement of Theorem \ref{teor}.

The end of this subsection is due to introduce a new and different concept of closure defined in $\ell_\infty(X)$. It is trivial that if $A$ is a non-empty subset of $\ell_\infty(X)$, then $$\cl(A)=\bigcap_{\varepsilon >0} \left(A+\varepsilon\B_{\ell_\infty(X)}\right).$$ By taking advantage of this obvious fact, we define the $\ell_p$-closure as: $$\cl_p(A)=\bigcap_{\varepsilon >0} \left(A+\varepsilon\B_{\ell_p(X)}\right).$$ Since $\|\cdot\|_\infty\leq \|\cdot\|_p$, we have that $$A\subseteq \cl_p(A)\subseteq \cl_\infty(A)=\cl(A).$$ Finally, a set $A$ such that $A=\cl_p(A)$ is called $p$-closed.

\subsection{Classifications of series}\label{vectorseries}

The reader of this book oughts to be necessarily familiar with the usual convergence concepts related to series, such as unconditional, conditional, absolute, and subseries convergence (uc, cc, ac, and sc, respectively). The concepts of unconditionally Cauchy and weakly unconditionally Cauchy series (uC and wuC, respectively) should also sound familiar to the reader as well as their usual characterizations:
\begin{itemize}
\item A normed space is finite-dimensional if and only if every uc series is ac (this is the famous Dvoretzky-Roger Theorem).
\item A normed space is complete if and only if every ac series is uc (see Theorem \ref{incomplete} for a new different proof without having involving the Baire Category Theorem).
\item A series $\sum x_i$ in a normed space $X$ is wuC if and only if $\sum |x^*(x_i)|< \infty$ for each $x^* \in X^*$.
\item A series $\sum x_i^*$ in $X^*$ is wuC if and only if $\sum |x^*_i(x)|< \infty$ for each $x \in X$.
\end{itemize}

We refer the reader to \cite{D2} for a magnificent perspective on these concepts. Precisely in \cite{D2} the following characterization of wuC series can be found, on which we will rely several times in this book.

\begin{theorem}[Diestel, 1984; \cite{D2}]\label{DiestelwuC}
A series $\sum x_i$ in a normed space $X$ is wuC if and only if there exists $H>0$ such that
\begin{eqnarray*}
  H &=& \sup \left\{\left \|\sum_{i=1}^n a_i x_i\right\|: n \in \mathbb{N}, |a_i| \leqslant 1, i \in \{1, \dots, n\} \right\}  \\
   &=& \sup \left\{ \left\|\sum_{i=1}^n \varepsilon_i x_i\right\|: n \in \mathbb{N}, \varepsilon_i \in \{-1,1\}, i \in \{1, \dots, n \}\right \}  \\
   &=&  \sup \left\{ \sum_{i=1}^n\left |f(x_i)\right| : f \in \B_{X^*} \right\}.
\end{eqnarray*}
\end{theorem}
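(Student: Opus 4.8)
The plan is to prove slightly more than stated: the three suprema appearing in the theorem coincide as elements of $[0,+\infty]$ for \emph{every} series $\sum x_i$, and then to show that their common value is finite exactly when $\sum x_i$ is wuC. Once both are established the equivalence is immediate, since the existence of a finite $H>0$ realizing all three suprema is precisely the finiteness of their (always equal) common value. For the wuC side I would rely on the folklore characterization recalled just above, namely that $\sum x_i$ is wuC if and only if $\sum_i |f(x_i)|<\infty$ for every $f\in X^*$.

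First I would establish the equality of the three suprema, writing $H_1,H_2,H_3$ for the first, second, and third respectively. The inequality $H_2\le H_1$ is trivial, since each $\varepsilon_i\in\{-1,1\}$ satisfies $|\varepsilon_i|\le 1$. For $H_1\le H_2$ I would fix $n$ and observe that $(a_1,\dots,a_n)\mapsto \left\|\sum_{i=1}^n a_i x_i\right\|$ is convex on the cube $[-1,1]^n$, hence attains its maximum at an extreme point of the cube, i.e.\ at a vertex $(\varepsilon_1,\dots,\varepsilon_n)$ with $\varepsilon_i\in\{-1,1\}$; taking the supremum over $n$ gives $H_1=H_2$. For $H_1=H_3$ I would use $\left\|\sum_{i=1}^n a_i x_i\right\|=\sup_{f\in\B_{X^*}}\left|\sum_{i=1}^n a_i f(x_i)\right|$ and interchange the order-independent suprema over $(a_i)$ and over $f$; since for fixed $f$ and $n$ one has $\sup_{|a_i|\le 1}\left|\sum_{i=1}^n a_i f(x_i)\right|=\sum_{i=1}^n |f(x_i)|$, attained at $a_i=\operatorname{sign} f(x_i)$, this yields $H_1=\sup_{f\in\B_{X^*}}\sum_{i=1}^\infty |f(x_i)|=H_3$.

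Finally I would treat the finiteness of the common value $H:=H_3$. If $H<\infty$, then $\sum_i |f(x_i)|\le H$ for every $f\in\B_{X^*}$, so by homogeneity $\sum_i |f(x_i)|<\infty$ for every $f\in X^*$, and $\sum x_i$ is wuC by the folklore characterization. The reverse implication is the main obstacle, and is where completeness of the dual enters. Assuming $\sum_i |f(x_i)|<\infty$ for every $f\in X^*$, I would introduce, for each $n$, the bounded linear operator $T_n:X^*\to\ell_1$ given by $T_n(f)=(f(x_1),\dots,f(x_n),0,0,\dots)$, so that $\|T_n(f)\|_{\ell_1}=\sum_{i=1}^n |f(x_i)|$. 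For each fixed $f$ the numbers $\|T_n(f)\|$ increase to the finite limit $\sum_i |f(x_i)|$, whence $\sup_n \|T_n(f)\|<\infty$ pointwise on $X^*$. Since $X^*$ is complete, being a dual space regardless of whether $X$ is, the Banach--Steinhaus theorem gives $\sup_n \|T_n\|=:H<\infty$, and then $\sum_{i=1}^n |f(x_i)|=\|T_n(f)\|\le H$ for all $n$ and all $f\in\B_{X^*}$, so $H_3\le H<\infty$. The only delicate point is recognizing that Banach--Steinhaus applies on $X^*$ even when $X$ itself is incomplete.
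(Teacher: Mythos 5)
Your proof is correct, and there is nothing in the paper to compare it against step by step: the book states this theorem without proof, quoting it from Diestel's monograph \cite{D2}, so any argument here is necessarily your own. All three of your ingredients are sound. The equality of the first two suprema via convexity works because $[-1,1]^n$ is the convex hull of its sign vertices and $(a_1,\dots,a_n)\mapsto\left\|\sum_{i=1}^n a_ix_i\right\|$ is convex, so its supremum over the cube equals its maximum over the vertices. The identification with the third supremum via Hahn--Banach norming functionals and the interchange of independent suprema in $[0,+\infty]$ is also fine in the real setting of this book, where $\sup_{|a_i|\leq 1}\left|\sum_{i=1}^n a_if(x_i)\right|=\sum_{i=1}^n\left|f(x_i)\right|$ is attained at $a_i=\operatorname{sign}f(x_i)$; note that you are, correctly, reading the third supremum as ranging over $n$ as well, i.e.\ as $\sup_{f\in\mathsf{B}_{X^*}}\sum_{i=1}^\infty\left|f(x_i)\right|$, which agrees with the printed expression in $[0,+\infty]$. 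Your key step, Banach--Steinhaus applied to the operators $T_n:X^*\to\ell_1$, is the standard functional-analytic route (Diestel's own argument is of the same flavor, a uniform boundedness or closed graph argument applied to $f\mapsto(f(x_i))_{i\in\mathbb{N}}$), and you were right to single out the delicate point: $X^*$ is complete even when $X$ is not, and that is exactly what makes the theorem usable in this book, which explicitly refuses to assume completeness of its normed spaces. Two small remarks. First, your claim that existence of a finite $H>0$ is \emph{precisely} the finiteness of the common value fails in the degenerate case $x_i=0$ for all $i$, where the common value is $0$, no $H>0$ exists, and yet the series is trivially wuC; this is a defect of the statement as printed rather than of your argument, but it deserves a sentence. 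Second, you take as your starting point the characterization that $\sum x_i$ is wuC if and only if $\sum_i\left|f(x_i)\right|<\infty$ for every $f\in X^*$; since the paper lists this among the standard characterizations immediately before the theorem, that is a legitimate hypothesis here, though a fully self-contained treatment would derive it from the weakly-unconditionally-Cauchy definition.
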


Now, we remind the reader about the famous and classic Orlicz-Pettis Theorem, which will be versioned in the last subsection of the third chapter for the almost summability.

\begin{theorem}[Orlicz, 1929; \cite{O}]\label{OP}
If $\sum x_i$ is a series in a Banach space $X$ such that $w\sum_{i \in M} x_i$ exists for each $M \subset \mathbb{N}$, then $\sum x_i$ is $uc$.
\end{theorem}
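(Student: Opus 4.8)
The plan is to prove the contrapositive-free version directly, reducing unconditional convergence to its Cauchy criterion and then blocking the series to manufacture a forbidden sequence. Recall that $\sum x_i$ is uc if and only if it is norm subseries convergent, which is in turn equivalent to the Cauchy condition: for every $\varepsilon>0$ there is $N\in\mathbb{N}$ with $\left\|\sum_{i\in F}x_i\right\|<\varepsilon$ for every finite $F\subset\{N+1,N+2,\dots\}$. So I would argue by contradiction: assuming this fails, there is $\varepsilon_0>0$ such that for every $N$ one finds a finite $F$ with $\min F>N$ and $\left\|\sum_{i\in F}x_i\right\|\geq\varepsilon_0$. Iterating, I build consecutive finite blocks $F_1<F_2<\cdots$ (meaning $\max F_k<\min F_{k+1}$) and set $y_k:=\sum_{i\in F_k}x_i$, so that $\|y_k\|\geq\varepsilon_0$ for all $k$.

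The first key observation is that the block sequence $(y_k)_{k\in\mathbb{N}}$ inherits weak subseries convergence. Indeed, for any $A\subseteq\mathbb{N}$ put $M_A:=\bigcup_{k\in A}F_k$; by hypothesis $w\sum_{i\in M_A}x_i$ exists, and since the blocks are consecutive, the weak partial sums of $\sum_{k\in A}y_k$ form a subsequence of the weak partial sums of $\sum_{i\in M_A}x_i$, whence $w\sum_{k\in A}y_k$ exists. In particular, for each $f\in X^*$ the scalar series $\sum_k f(y_k)$ is subseries convergent, hence absolutely convergent, so $\sum_k|f(y_k)|<\infty$. By Theorem \ref{DiestelwuC} this makes $\sum_k y_k$ a wuC series; consequently $(y_k)$ is bounded and $f(y_k)\to 0$ for every $f\in X^*$, i.e. $(y_k)$ is weakly null. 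Thus $(y_k)$ is a seminormalized ($\varepsilon_0\leq\|y_k\|\leq\sup_k\|y_k\|<\infty$) weakly null sequence that is weakly subseries convergent.

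The crux is to show no such sequence can exist, and this is the step I expect to be the main obstacle, since it is exactly where the Banach-space structure must be exploited. I would invoke the Bessaga--Pe\l czy\'nski selection principle to extract a basic subsequence $(y_{k_j})_{j\in\mathbb{N}}$ of the seminormalized weakly null sequence $(y_k)$. This subsequence is still weakly subseries convergent, hence still wuC, so by Theorem \ref{DiestelwuC} we have $\sup\left\{\left\|\sum_{j=1}^n\varepsilon_j y_{k_j}\right\|:n\in\mathbb{N},\,\varepsilon_j\in\{-1,1\}\right\}<\infty$. A seminormalized basic sequence enjoying this uniform bound is equivalent to the canonical unit vector basis of $c_0$ (the upper estimate comes from the wuC bound and the lower estimate from the basis constant). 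The equivalence yields an isomorphism $T$ of $\overline{\spa}\{e_j\}\subset c_0$ onto $\overline{\spa}\{y_{k_j}\}\subset X$ carrying $e_j$ to $y_{k_j}$; being an into-isomorphism, $T$ is weak-to-weak continuous, as is $T^{-1}$ on the range.

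Finally I close the contradiction. Taking $A=\{k_j:j\in\mathbb{N}\}$, weak subseries convergence forces $\sum_j y_{k_j}$ to converge weakly in $X$. Applying $T^{-1}$ on the closed span, $\sum_j e_j$ would then converge weakly in $c_0$. But this is false: the partial sums of $\sum_j e_j$ converge in the $w^*$-topology of $\ell_\infty=c_0^{**}$ to the constant sequence $\mathbf{1}\notin c_0$, so no weak limit in $c_0$ exists. This contradiction defeats the assumption, establishing the Cauchy criterion and hence that $\sum x_i$ is uc. The only place completeness of $X$ is genuinely used is in the selection principle and in passing from the weak Cauchy/subseries data to an actual weak sum; I would make sure to state these dependencies explicitly, as they are what make the theorem fail outside the complete setting.
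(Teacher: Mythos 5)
Your proof is correct, but there is nothing in the paper to compare it with: Theorem \ref{OP} is quoted as classical background from Orlicz \cite{O} and the paper supplies no proof. What you have written is essentially the standard modern argument (the one in Diestel \cite{D2}): negate the unconditional Cauchy criterion, block the series into $y_k=\sum_{i\in F_k}x_i$ with $\|y_k\|\geq\varepsilon_0$, observe that weak subseries convergence passes to the blocks (the block partial sums are a subsequence of the partial sums of $\sum_{i\in M_A}x_i$), deduce via scalar subseries convergence and Theorem \ref{DiestelwuC} that $\sum y_k$ is wuC and $(y_k)$ is seminormalized weakly null, extract a basic subsequence by Bessaga--Pe\l czy\'nski, and use the wuC upper bound together with the basis-constant lower bound to get equivalence with the $c_0$ basis, against which weak convergence of $\sum_j e_j$ in $c_0$ fails since the partial sums $\sigma(\ell_\infty,\ell_1)$-converge to $\mathbf{1}\notin c_0$. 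All of these steps check out. Two small points deserve attention. First, when you apply $T^{-1}$ to the weak limit $s$ of the partial sums $\sum_{j=1}^{n}y_{k_j}$, you need $s$ to belong to $\overline{\mathrm{span}}\{y_{k_j}\}$; this is true, but it requires Mazur's theorem (the closed span is convex and norm closed, hence weakly closed), and it is worth one explicit sentence. Second, your closing remark about completeness is slightly misplaced: the selection of a basic subsequence is a finite-dimensional perturbation argument that does not genuinely need completeness, whereas completeness of $X$ is essential precisely in the step you treat as routine, namely passing from the unconditional Cauchy criterion to unconditional convergence at the very end, and again in extending $e_j\mapsto y_{k_j}$ from $c_{00}$ to an isomorphism of $c_0$ onto a closed (hence complete) span. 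With those two clarifications your argument is a complete and self-contained proof of the theorem the paper only cites.
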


In more simple words, the Orlicz-Pettis Theorem establishes that w-sc implies uc in the category of Banach spaces.

\begin{theorem}[Schur, 1929; \cite{O}]\label{HS}
For each $i\in \N$, let $\sum_j t_{ij}$ be an ac series of scalars. Assume that $\lim_i\sum_js_jt_{ij}$ exists for every $(s_j)_{j\in\N}\in\ell_\infty$ and let $t_j:=\lim_it_{ij}$ for every $j\in\N$. Then:
\begin{itemize}
\item $\sum_j t_j$ is ac and $\lim_i\sum_js_jt_{ij}=\sum_js_jt_j$.
\item $\lim_i\sum_j|t_{ij}-t_j|=0$.
\item $\sum_j|t_{ij}|$ converges uniformly on $i\in\N$.
\end{itemize}
\end{theorem}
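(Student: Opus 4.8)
The plan is to recognise this statement as the Hahn--Schur theorem and to reduce everything to the single fact that a weakly Cauchy sequence in $\ell_1$ is norm convergent. First I would observe that the absolute convergence of each row means $a_i:=(t_{ij})_{j\in\mathbb{N}}\in\ell_1$, and that the pairing $\langle a_i,s\rangle=\sum_j s_j t_{ij}$ identifies $a_i$ with a bounded functional on $\ell_\infty$ of norm exactly $\|a_i\|_1$ (the supremum being attained at the $\pm 1$-valued sequence $s_j=\mathrm{sign}(t_{ij})$). Since $\lim_i\langle a_i,s\rangle$ exists for every $s\in\ell_\infty$, the family $\{a_i\}$ is pointwise bounded on the Banach space $\ell_\infty$, so Banach--Steinhaus gives $M:=\sup_i\|a_i\|_1<\infty$. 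Testing against the coordinate sequences $e^j$ yields $t_j=\lim_i t_{ij}$, and for every $N$ we get $\sum_{j=1}^N|t_j|=\lim_i\sum_{j=1}^N|t_{ij}|\le M$; hence $a:=(t_j)_{j\in\mathbb{N}}\in\ell_1$, which is precisely the claim that $\sum_j t_j$ is ac.

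The heart of the matter is the second bullet, $\lim_i\|a_i-a\|_1=0$. Writing $b_i:=a_i-a$, the hypotheses become $b_{ij}\to 0$ for each fixed $j$, $\sup_i\|b_i\|_1\le 2M$, and $\lim_i\langle b_i,s\rangle$ exists for every $s$. I would argue by contradiction via a gliding-hump construction: assuming $\|b_i\|_1\ge\delta$ along a subsequence, I would inductively select indices $i_1<i_2<\cdots$ and consecutive finite blocks $B_1,B_2,\dots$ of integers so that $b_{i_k}$ carries at least $\tfrac{3\delta}{5}$ of its mass on $B_k$, while its mass to the left of $B_k$ and to the right of $B_k$ is each below $\tfrac{\delta}{10}$ (the left estimate uses $b_{ij}\to 0$ on the finitely many earlier coordinates, the right estimate uses $b_{i_k}\in\ell_1$). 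Setting $s^{(l)}_j=\mathrm{sign}(b_{i_l,j})$ on $B_l$ and $0$ elsewhere, I would form the scalar matrix $c_{kl}:=\langle b_{i_k},s^{(l)}\rangle$. Because each $s^{(l)}$ is finitely supported and $b_{i_k,j}\to 0$ coordinatewise, the columns satisfy $\lim_k c_{kl}=0$; because any subseries $\sum_r s^{(l_r)}$ is again a bounded ($\pm 1/0$-valued) element of $\ell_\infty$, the hypothesis guarantees that $\lim_k\sum_r c_{k,l_r}$ exists. These are exactly the premises of the Antosik--Mikusi\'nski basic matrix theorem, whose conclusion $\lim_k c_{kk}=0$ contradicts $c_{kk}=\sum_{j\in B_k}|b_{i_k,j}|\ge\tfrac{3\delta}{5}$. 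Equivalently, one may simply quote that $\ell_1$ is weakly sequentially complete, so the weakly Cauchy sequence $(a_i)$ converges weakly to an element of $\ell_1$, necessarily $a$ by the coordinatewise limits, and then invoke the Schur property of $\ell_1$ to upgrade weak convergence to norm convergence.

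Finally the two remaining assertions fall out of norm convergence. For any $s\in\ell_\infty$ we have $\bigl|\sum_j s_j t_{ij}-\sum_j s_j t_j\bigr|=|\langle b_i,s\rangle|\le\|s\|_\infty\|b_i\|_1\to 0$, giving $\lim_i\sum_j s_j t_{ij}=\sum_j s_j t_j$ and completing the first bullet. For the uniform convergence of $\sum_j|t_{ij}|$, given $\varepsilon>0$ I would pick $i_0$ with $\|b_i\|_1<\varepsilon/2$ for $i\ge i_0$ and then $N$ large enough that $\sum_{j>N}|t_j|<\varepsilon/2$ and $\sum_{j>N}|t_{ij}|<\varepsilon$ for each of the finitely many $i<i_0$; for $i\ge i_0$ the triangle inequality gives $\sum_{j>N}|t_{ij}|\le\|b_i\|_1+\sum_{j>N}|t_j|<\varepsilon$, so the tails are uniformly small. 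The main obstacle is the contradiction in the gliding-hump step: the block construction by itself only shows the limit functional is nonzero on some $\pm 1$-valued $s\notin c_0$, so one genuinely needs the matrix theorem (or weak sequential completeness together with the Schur property) to turn ``the limit exists'' into the diagonal vanishing that forces the contradiction.
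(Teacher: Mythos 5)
The paper offers no proof of Theorem \ref{HS} to compare against: it is stated in the framework chapter as classical background, with only the citation \cite{O}, and the text moves directly on to biorthogonal systems. So the only question is whether your argument stands on its own, and it does: it is a correct, essentially standard proof of the Hahn--Schur theorem in the Antosik--Mikusi\'nski style. Each step checks out. The rows $a_i=(t_{ij})_{j\in\mathbb{N}}$ lie in $\ell_1$ and act on the Banach space $\ell_\infty$ with norm exactly $\left\|a_i\right\|_1$, so Banach--Steinhaus legitimately yields $M=\sup_i\left\|a_i\right\|_1<\infty$; the finite-section estimate $\sum_{j=1}^N|t_j|=\lim_i\sum_{j=1}^N|t_{ij}|\leq M$ puts $a=(t_j)_{j\in\mathbb{N}}$ in $\ell_1$; the block construction is feasible (the mass of $b_{i_k}$ to the left of $B_k$ is small by coordinatewise convergence on finitely many coordinates, the mass to the right is small since $b_{i_k}\in\ell_1$, leaving at least $4\delta/5\geq 3\delta/5$ on $B_k$); the interchange $\langle b_{i_k},\sum_r s^{(l_r)}\rangle=\sum_r c_{k,l_r}$ is justified by absolute convergence and disjointness of the blocks; and the basic matrix theorem's hypotheses (null columns, convergence in $k$ of every subseries sum) are verified, so $c_{kk}\to 0$ contradicts $c_{kk}=\sum_{j\in B_k}|b_{i_k,j}|\geq 3\delta/5$. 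Your closing diagnosis is also exactly right: the hypothesis gives only \emph{existence} of $\lim_i\langle b_i,s\rangle$, not its vanishing, so a single hump vector $s$ produces no contradiction, and the diagonal refinement supplied by the matrix theorem is genuinely needed. The deduction of the three bullets from $\left\|a_i-a\right\|_1\to 0$ is routine and correctly executed.

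Two remarks. First, your parenthetical alternative---weak sequential completeness of $\ell_1$ plus the Schur property---should be presented as a quotation of the theorem rather than a second proof: those two facts about $\ell_1$ are classically established by precisely the hump argument you run, so invoking them is circular in spirit, though logically admissible if taken as known. Second, there is a classical variant that avoids the matrix theorem altogether: if $(a_i)$ were not norm-Cauchy, choose $m_k,n_k\to\infty$ with $\left\|a_{m_k}-a_{n_k}\right\|_1\geq\delta$; these differences are weakly \emph{null} (not merely weakly Cauchy) because $(a_i)$ is weakly Cauchy, so the single test vector $s=\sum_l s^{(l)}$ built from their humps gives $\langle a_{m_k}-a_{n_k},s\rangle\geq 3\delta/5$, contradicting $\langle a_{m_k}-a_{n_k},s\rangle\to 0$; norm-Cauchyness and the completeness of $\ell_1$ then finish, with the limit identified as $a$ by the coordinate functionals. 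That route trades your matrix lemma for the observation that passing to differences upgrades ``the limit exists'' to ``the limit is zero,'' which is the same obstacle you identified, resolved by other means.
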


\subsection{Biorthogonal systems}

In a vector space a biorthogonal system is a pair $\left(e_i,e_i^*\right)_{i\in I}\subseteq X\times X^*$ such that $e_i^*(e_j)=\delta_{ij}$ where $\delta_{ij}$ is the Kronecker $\delta$. A biorthogonal system is said to be:
\begin{itemize}
\item expanding if $X=\spa\{e_i:i\in I\}$;
\item fundamental if $X=\cspan\{e_i:i\in I\}$ (provided that $X$ is endowed with a vector topology);
\item total if $X^*=\cspan^{w^*}\{e_i^*:i\in I\}$.
\end{itemize}

We refer the reader to \cite{Bior} for a complete perspective on biorthogonal systems in normed spaces.

A very well known fact in the theory of Banach spaces states that the infinite dimensional Banach spaces must have uncountable dimension. This fact involves the Baire Category Theorem. We will provide a much more simple proof of this fact without involving strong machinery.

\begin{theorem}\label{ex2}
If $X$ is a Hausdorff locally convex topological vector space of infinite countable dimension, then there exists a expanding and total biorthogonal system $\left(e_n,e^*_n\right)_{n\in \mathbb{N}}\subseteq X \times X^*$.
\end{theorem}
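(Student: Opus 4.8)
The plan is to fix a Hamel basis $\{b_n\}_{n\in\mathbb{N}}$ of $X$ (which exists and is countably infinite by hypothesis), set $F_n:=\spa\{b_1,\dots,b_n\}$, and build the biorthogonal system by an inductive Gram--Schmidt--type biorthogonalization that at every stage maintains $\spa\{e_1,\dots,e_n\}=F_n$. This control on the spans hands us the expanding property for free, since $\bigcup_n F_n=X$ forces $\spa\{e_n:n\in\mathbb{N}\}=X$.

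For the base step I would take $e_1:=b_1$ and, using that $X$ is Hausdorff and locally convex, invoke Hahn--Banach to produce a continuous functional not vanishing at $b_1$, renormalized to give $e_1^*\in X^*$ with $e_1^*(e_1)=1$. For the inductive step, assuming $e_1,\dots,e_n$ and $e_1^*,\dots,e_n^*\in X^*$ have been constructed biorthogonally with $\spa\{e_1,\dots,e_n\}=F_n$, I would set
\[
e_{n+1}:=b_{n+1}-\sum_{i=1}^n e_i^*(b_{n+1})\,e_i .
\]
Then $e_i^*(e_{n+1})=0$ for $i\le n$, and $e_{n+1}\notin F_n$ (because $b_{n+1}\notin F_n$ while the correction term lies in $F_n$), so $e_{n+1}\neq 0$ and $\spa\{e_1,\dots,e_{n+1}\}=F_{n+1}$. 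To obtain $e_{n+1}^*$, I would first define a linear functional on the finite-dimensional space $F_{n+1}$ by $\phi(e_j):=\delta_{(n+1)j}$; since every linear functional on a finite-dimensional subspace of a Hausdorff topological vector space is continuous, $\phi$ is continuous on $F_{n+1}$, and the Hahn--Banach extension theorem for locally convex spaces lets me extend it to $e_{n+1}^*\in X^*$ without disturbing the already-fixed relations. This closes the induction and yields a biorthogonal system $(e_n,e_n^*)_{n\in\mathbb{N}}\subseteq X\times X^*$ which is expanding.

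It remains to verify totality, that is, $X^*=\cspan^{w^*}\{e_n^*:n\in\mathbb{N}\}$. I would reduce this to a separation statement via the duality $(X,X^*)$: for $S\subseteq X^*$ the equality $\cspan^{w^*}(S)=X^*$ holds if and only if $S$ separates the points of $X$, because the $w^*$-continuous dual of $X^*$ is exactly $X$ and hence, by the bipolar theorem, $\cspan^{w^*}(S)=({}^\perp S)^\perp$, which is all of $X^*$ precisely when ${}^\perp S=\{0\}$. For our system this separation is immediate from biorthogonality together with the expanding property: if $x\in X$ satisfies $e_n^*(x)=0$ for all $n$, pick $m$ with $x\in F_m=\spa\{e_1,\dots,e_m\}$, write $x=\sum_{j=1}^m c_j e_j$, and read off $c_i=e_i^*(x)=0$, so $x=0$.

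The main obstacle is the continuity of the coordinate functionals $e_n^*$: in a general topological vector space the dual may be too small to support such a construction, so it is precisely local convexity (through the Hahn--Banach extension theorem) that does the essential work, while Hausdorffness guarantees both that finite-dimensional subspaces carry their Euclidean topology---making $\phi$ automatically continuous---and that the dual separates points. The only other point requiring care is the duality characterization of totality, which is standard once one identifies $(X^*,w^*)^*=X$.
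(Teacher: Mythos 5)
Your proposal is correct and follows essentially the same route as the paper: an inductive Gram--Schmidt-type biorthogonalization of a Hamel basis (with the identical correction formula $e_{n+1}=b_{n+1}-\sum_{i=1}^{n}e_i^*(b_{n+1})e_i$), Hahn--Banach to produce the functionals, and totality via the observation that $\mathrm{span}\left\{e_n^*:n\in\mathbb{N}\right\}$ separates the points of $X$. The only cosmetic divergence is in how $e_{n+1}^*$ is obtained---the paper separates $e_{n+1}$ from $\mathrm{span}\left\{e_1,\dots,e_n\right\}$ via a Hahn--Banach separation argument, whereas you define the functional on the finite-dimensional subspace $F_{n+1}$ (automatically continuous there since $X$ is Hausdorff) and then extend it---and your bipolar-theorem discussion simply fills in the totality step that the paper leaves as an observation.
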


\begin{proof}
Let $\left(u_n\right)_{n\in \mathbb{N}}\subset X$ be a Hamel basis for $X$. We will construct the biorthogonal system inductively:
\begin{enumerate}
\item[{\bf Step 1}] Choice of $e_1$:
\begin{enumerate}
\item[{\bf Step 1.1}] Take $e_1:= u_1$. Obviously, $\mathrm{span}\left\{e_1\right\}=\mathrm{span}\left\{u_1\right\}$.
\item[{\bf Step 1.2}] The Hahn-Banach Theorem allows us to find $e_1^*\in X^*$ such that $e^*_1\left(e_1\right)=1$.
\end{enumerate}
\item[{\bf Step 2}] Choice of $e_2$:
\begin{enumerate}
\item[{\bf Step 2.1}] Take $e_2:= u_2-e^*_1\left(u_2\right)u_1$. Note that $\mathrm{span}\left\{e_1,e_2\right\}=\mathrm{span}\left\{u_1,u_2\right\}$.
\item[{\bf Step 2.2}] The Hahn-Banach Theorem allows us to find $e_2^*\in X^*$ such that $1=e_2^*\left(e_2\right) > \sup e_2^*\left(\mathbb{R}e_1\right).$ Therefore $e_2^*\left(e_1\right)=0$.
\end{enumerate}
\item[{\bf Step 3}] Choice of $e_3$:
\begin{enumerate}
\item[{\bf Step 3.1}] Take $e_3:= u_3-e_1^*\left(u_3\right)e_1-e_2^*\left(u_3\right)e_2$. Observe that $\mathrm{span}\left\{e_1,e_2,e_3\right\}=\mathrm{span}\left\{u_1,u_2,u_3\right\}$.
\item[{\bf Step 3.2}] The Hahn-Banach Theorem allows us to find $e_3^*\in X^*$ such that $1=e_3^*\left(e_3\right) > \sup e_3^*\left(\mathbb{R}e_1\oplus \mathbb{R}e_2\right).$ Therefore $e_3^*\left(e_1\right)=e_3^*\left(e_2\right)=0$.
\end{enumerate}
\end{enumerate}
We omit the rest of the steps. To see that $X^*=\overline{\mathrm{span}}^{\hspace{.1cm}w^*}\left\{e^*_n:n\in\mathbb{N}\right\}$ it suffices to realize that $\mathrm{span}\left\{e^*_n:n\in\mathbb{N}\right\}$ separates points of $X$.
\end{proof}

\begin{corollary}\label{incomplete}
If $X$ is a normed space of infinite countable dimension, then there exists an absolutely convergent series in $X$ which is non-convergent, in other words, $X$ is not complete.
\end{corollary}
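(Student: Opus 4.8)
The plan is to invoke the biorthogonal machinery of Theorem \ref{ex2} and then exploit the crucial distinction between a \emph{Hamel} (algebraic) expansion, which is necessarily finite, and an \emph{infinite} absolutely convergent series. Since every normed space is a Hausdorff locally convex topological vector space, and $X$ has infinite countable dimension by hypothesis, Theorem \ref{ex2} supplies an expanding and total biorthogonal system $\left(e_n,e_n^*\right)_{n\in\mathbb{N}}\subseteq X\times X^*$. The key features I would extract are two: first, since the system is expanding, $X=\spa\{e_n:n\in\mathbb{N}\}$, so $(e_n)_{n\in\mathbb{N}}$ is a Hamel basis and every vector of $X$ is a \emph{finite} linear combination of the $e_n$; second, each $e_n^*$ lies in $X^*$ and is therefore norm-continuous, with $e_n^*(e_m)=\delta_{nm}$.

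First I would exhibit the candidate series. Consider $\sum_{n\in\mathbb{N}} x_n$ where $x_n:=\tfrac{1}{2^n}\tfrac{e_n}{\|e_n\|}$. This series is absolutely convergent because $\sum_{n\in\mathbb{N}}\|x_n\|=\sum_{n\in\mathbb{N}}\tfrac{1}{2^n}=1<\infty$. It has infinitely many nonzero terms, each lying ``along'' a distinct basis vector, and this is exactly what will clash with the finiteness of any Hamel expansion of a putative limit.

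Next I would argue by contradiction. Suppose the partial sums $S_p:=\sum_{n=1}^p x_n$ converge in norm to some $x\in X$. Because $(e_n)_{n\in\mathbb{N}}$ is a Hamel basis, there are a finite set $F\subset\mathbb{N}$ and scalars $(c_n)_{n\in F}$ with $x=\sum_{n\in F}c_n e_n$, whence $e_m^*(x)=0$ for every $m\in\mathbb{N}\setminus F$, and there are infinitely many such $m$. Fix any one of them. On the one hand, continuity of $e_m^*$ forces $e_m^*(S_p)\to e_m^*(x)=0$. On the other hand, biorthogonality gives $e_m^*(S_p)=\tfrac{1}{2^m\|e_m\|}$ for every $p\geq m$, a nonzero constant independent of $p$, so its limit is $\tfrac{1}{2^m\|e_m\|}\neq 0$. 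This contradiction shows the series does not converge, and since an absolutely convergent series would converge in any complete space, $X$ cannot be complete.

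The argument is essentially immediate once Theorem \ref{ex2} is in hand, so there is no serious analytic obstacle; the only conceptual point that must be handled with care is the interplay between the two senses in which the $e_n$ ``generate'' $X$. The hard part is simply to recognize that the expanding property yields a genuine algebraic (finite-expansion) basis, and then to pit this finiteness against the continuity of the coordinate functionals $e_m^*$ applied to an honestly infinite series.
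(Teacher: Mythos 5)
Your proposal is correct and follows essentially the same route as the paper's proof: both invoke Theorem \ref{ex2} to obtain the biorthogonal system, form the absolutely convergent series $\sum 2^{-n}e_n$ (with your explicit normalization replacing the paper's ``we may assume $(e_n)\subset\mathsf{S}_X$''), and derive the contradiction by applying a continuous coordinate functional whose index lies outside the finite Hamel expansion of the putative sum. Your write-up merely makes explicit the continuity step and the finiteness of the Hamel expansion that the paper's computation with $e_{p+1}^*$ uses implicitly.
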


\begin{proof}
By Theorem \ref{ex2}, there exists a biorthogonal system $\left(e_n,e^*_n\right)_{n\in \mathbb{N}}\subseteq X \times X^*$ such that $X= \mathrm{span}\left\{e_n:n\in \mathbb{N}\right\}$ and $X^*=\overline{\mathrm{span}}^{\hspace{.1cm}w^*}\left\{e^*_n:n\in\mathbb{N}\right\}$. We may assume that $\left(e_n\right)_{n\in \mathbb{N}}\subset \mathsf{S}_X$. Note that the series $\sum_{n=1}^\infty \frac{1}{2^n}e_n$ is absolutely convergent. Assume that $\sum_{n=1}^\infty \frac{1}{2^n}e_n$ is convergent in $X$. There are $\lambda_1,\dots,\lambda_p\in \mathbb{R}$ such that $\sum_{n=1}^\infty \frac{1}{2^n}e_n = \sum_{n=1}^p \lambda_n e_n$. Finally, $$\frac{1}{2^{p+1}}=e^*_{p+1}\left(\sum_{n=1}^\infty \frac{1}{2^n}e_n\right)=e^*_{p+1}\left(\sum_{n=1}^p \lambda_n e_n\right)=0,$$ which is impossible.
\end{proof}

\subsection{Schauder bases}

A topological vector space is said to have a Schauder basis $(e_n)_{n\in\mathbb{N}}\subset X$ provided that for every $x\in X$ there exists a unique sequence $(\lambda_n)_{n\in\mathbb{N}}\subset \mathbb{K}$ in such a way that:
\begin{itemize}
\item $\sum_{n=1}^\infty\lambda_ne_n$ converges to $x$. 
\item The coefficient functionals $$\begin{array}{rrcl} e_n^* :&X&\to &\mathbb{K}\\ & x &\mapsto &\lambda_n\end{array}$$ are continuous for every $n\in\N$.
\end{itemize}

Schauder bases were introduced for the first time in \cite{S}. Observe that $(e_n,e^*_n)_{n\in\N}$ is a fundamental and total biorthogonal system (the totalness follows from the fact that $\{e_n^*:n\in\mathbb{N}\}$ is separating). We would like to recall the reader that in case $X$ is a Banach space, it is not necessary to include in the definition of Schauder basis the continuity of the coefficient functionals $e_n^*$'s as it can be derived by using the completeness.

The $n$-th canonical projections $$\begin{array}{rrcl} p_n:&X&\to & \mathrm{span}\{e_1,\dots,e_n\}\\ & x&\mapsto & \sum_{i=1}^n\lambda_ie_i\end{array}$$ are continuous linear projections. Notice that the set $\{p_n:n\in\mathbb{N}\}$ is pointwise bounded since $(p_n(x))_{n\in\mathbb{N}}$ is a convergent sequence to $x$. Therefore if $X$ is a barrelled normed space, then the set $\{p_n:n\in\mathbb{N}\}$ is bounded. In barrelled normed spaces, $\mathrm{bc}\left((e_n)_{n\in\mathbb{N}}\right):=\sup\{\|p_n\|:n\in\mathbb{N}\}$ is called the basis constant.

A normalized Schauder basis in a barrelled normed space is said to be monotone provided that the canonical projections have norm $1$. It is very simple to verify that under barrelledness \begin{equation*}\label{m}|||x|||_m:=\sup\{\|p_n(x)\|:n\in\mathbb{N}\}\end{equation*} is an equivalent norm on $X$ that makes a normalized Schauder basis monotone. We refer the reader to \cite{GH} where it is shown that a uniformly Frechet smooth Banach space with a Schauder basis can be equivalently renormed to remain uniformly Frechet smooth and to make the basis monotone.

 For simplicity purposes, we convey that $p_0$ is the null projection. Notice that a Schauder basis always verify that $$p_n(x)-p_{n-1}(x)=e_n^*(x)e_n $$ for all $n\in\mathbb{N}$ and all $x\in X$. Therefore $$ \|p_n-p_{n-1}\|=\|e_n^*\|^*$$ for all $n\in\mathbb{N}$.
 
 We also recall the reader that another norm $|||\cdot|||$ on a normed space is said to be left-comparable to the original norm provided that there exists a positive constant $c>0$ in such a way that $$|||\cdot|||\leq c\|\cdot\|.$$ 
 
 In a similar way, the concept of right-comparability for norms can be defined.

\begin{lemma}\label{du}
If $X$ is an infinite dimensional barrelled normed space with a normalized Schauder basis $(e_n)_{n\in\mathbb{N}}$, then \begin{equation}\label{d}|||x|||_d:= \sup\{|e_n^*(x)|:n\in\mathbb{N}\}\end{equation} is a norm on $X$ left-comparable to its original norm verifying that $|||e_n^*|||_d^*=1$ for all $n\in\mathbb{N}$.
\end{lemma}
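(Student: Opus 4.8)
The plan is to establish the three assertions in the order in which they are stated: first that $|||\cdot|||_d$ takes finite values (so it is a genuine real-valued function) together with its left-comparability, then that it satisfies the norm axioms, and finally the computation of the dual norm of each $e_n^*$.

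First I would settle finiteness and left-comparability simultaneously. Write $M:=\mathrm{bc}\left((e_n)_{n\in\mathbb{N}}\right)=\sup\{\|p_n\|:n\in\mathbb{N}\}$, which is finite precisely because $X$ is barrelled: the canonical projections $p_n$ are pointwise bounded, and barrelledness upgrades this to norm-boundedness. Using the identity $\|e_n^*\|^*=\|p_n-p_{n-1}\|$ recorded above, together with $\|p_n-p_{n-1}\|\leq\|p_n\|+\|p_{n-1}\|\leq 2M$, I obtain $|e_n^*(x)|\leq 2M\|x\|$ for every $n\in\mathbb{N}$ and every $x\in X$. Taking the supremum over $n$ gives $|||x|||_d\leq 2M\|x\|<\infty$, which shows at once that $|||\cdot|||_d$ is real-valued and left-comparable to $\|\cdot\|$ with constant $c=2M$.

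Next I would verify the norm axioms. Absolute homogeneity and the triangle inequality are immediate from the elementary behaviour of suprema of absolute values of the linear functionals $e_n^*$. For positive definiteness, suppose $|||x|||_d=0$; then $e_n^*(x)=0$ for every $n\in\mathbb{N}$, and since the Schauder expansion $x=\sum_{n=1}^\infty e_n^*(x)e_n$ then forces $x=0$, definiteness follows. Finally I would compute the dual norm $|||e_n^*|||_d^*=\sup\{|e_n^*(x)|:|||x|||_d\leq 1\}$. The inequality $|||e_n^*|||_d^*\leq 1$ is immediate, since $|||x|||_d\leq 1$ entails $|e_n^*(x)|\leq\sup_m|e_m^*(x)|=|||x|||_d\leq 1$. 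For the reverse inequality it suffices to test against the basis vector $e_n$ itself: biorthogonality gives $|||e_n|||_d=\sup_m|e_m^*(e_n)|=\sup_m\delta_{mn}=1$ while $e_n^*(e_n)=1$, whence $|||e_n^*|||_d^*\geq 1$. Combining the two estimates yields $|||e_n^*|||_d^*=1$ for every $n\in\mathbb{N}$.

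The only genuinely load-bearing step is the finiteness of $M$, that is, the uniform norm-boundedness of the canonical projections; this is exactly where the barrelledness hypothesis (equivalently, a Banach--Steinhaus argument) is indispensable, for without it the supremum defining $|||\cdot|||_d$ could diverge. Every other step is routine.
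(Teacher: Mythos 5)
Your proof is correct and follows essentially the same route as the paper: the load-bearing step is exactly the paper's chain $|||x|||_d\leq \sup_n\|p_n-p_{n-1}\|\,\|x\|\leq 2\,\mathrm{bc}\left((e_n)_{n\in\mathbb{N}}\right)\|x\|$, resting on the identity $\|e_n^*\|^*=\|p_n-p_{n-1}\|$ and on barrelledness to make the basis constant finite. The paper records only this inequality and spares the remaining verifications (norm axioms, $|||e_n^*|||_d^*=1$), which you have correctly supplied in the routine way.
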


\begin{proof}
By the observation prior to this lemma, we have that $$|||x|||_d\leq \sup\{\|p_n-p_{n-1}\|:n\in\mathbb{N}\}\|x\|\leq 2\mathrm{bc}\left((e_n)_{n\in\mathbb{N}}\right)\|x\|.$$
\end{proof}

In the settings of the previous lemma, since $(e_n)_{n\in\mathbb{N}}$ is normalized we deduce that $(e_n^*(x))_{n\in\mathbb{N}}$ converges to $0$ for all $x\in X$, therefore $X$ endowed with the norm $|||\cdot|||_d$ given in Equation \eqref{d} is isometric to a subspace of $c_0$ according to the embedding $$\begin{array}{rcl} X & \to & c_0 \\ x&\mapsto&(e_n^*(x))_{n\in\mathbb{N}} \end{array}$$

 This fact leads us to the following corollary.

\begin{corollary}
If $X$ is an infinite dimensional barrelled normed space with a normalized Schauder basis $(e_n)_{n\in\mathbb{N}}$ such that $|||\cdot|||_d$ is equivalent to its original norm, then $X$ is isomorphic to a subspace of $c_0$.
\end{corollary}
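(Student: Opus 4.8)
The plan is to read off the conclusion from the coordinate embedding introduced in the discussion immediately preceding this statement. Recall that, because $(e_n)_{n\in\mathbb{N}}$ is normalized, the sequence $(e_n^*(x))_{n\in\mathbb{N}}$ is null for every $x\in X$, so the linear map
$$T:X\to c_0,\qquad x\mapsto\left(e_n^*(x)\right)_{n\in\mathbb{N}}$$
is well defined and satisfies $\|T(x)\|_\infty=\sup\{|e_n^*(x)|:n\in\mathbb{N}\}=|||x|||_d$ for all $x\in X$. In other words, $T$ is already a linear isometry from $(X,|||\cdot|||_d)$ onto its image $T(X)\subseteq c_0$; the whole point of the corollary is to convert this into an isomorphism for the \emph{original} norm.

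First I would invoke the standing hypothesis that $|||\cdot|||_d$ is equivalent to $\|\cdot\|$ to produce constants $c,C>0$ with $c\|x\|\leq|||x|||_d\leq C\|x\|$ for every $x\in X$. Substituting the isometric identity $|||x|||_d=\|T(x)\|_\infty$ then sandwiches the operator as
$$c\|x\|\leq\|T(x)\|_\infty\leq C\|x\|\qquad(x\in X),$$
which says precisely that $T$ is bounded and bounded below, hence a linear isomorphism onto the subspace $T(X)$ of $c_0$. Injectivity is automatic from the lower estimate, and in any case follows from the fact that the coefficient functionals of a Schauder basis separate the points of $X$.

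I do not expect any genuine obstacle here. The substantive content---that the coordinate map actually lands in $c_0$ rather than merely in $\ell_\infty$---was already secured in the remark before the statement by exploiting normalization of the basis, and the left-comparability half of the norm inequality is exactly Lemma \ref{du}. The only thing that remains is the soft, purely formal observation that norm equivalence upgrades a $|||\cdot|||_d$-isometry into a $\|\cdot\|$-isomorphism onto a subspace, so the argument should reduce to a one-line deduction from the material already developed.
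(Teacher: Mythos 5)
Your proposal is correct and follows exactly the route the paper intends: the paper gives no separate proof of this corollary precisely because it is the one-line upgrade, via norm equivalence, of the isometric embedding $x\mapsto\left(e_n^*(x)\right)_{n\in\mathbb{N}}$ of $\left(X,|||\cdot|||_d\right)$ into $c_0$ established in the remark preceding the statement. Your two-sided estimate $c\|x\|\leq\|T(x)\|_\infty\leq C\|x\|$ is the formalization of that deduction, so nothing is missing.
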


We are now at the right point to find an equivalent norm to make a normalized Schauder basis monotone and to make its dual basis normalized.

\begin{theorem}\label{myd}
Let $X$ be a barrelled normed space with a normalized Schauder basis $(e_n)_{n\in\mathbb{N}}$. Then \begin{equation}\label{md}|||x|||_{md}:=\max\left\{ |||x|||_m,|||x|||_d\right\}\end{equation} verifies the following properties:
\begin{enumerate}
\item It is an equivalent norm on $X$.
\item $|||e_n|||_{md}=|||e_n^*|||_{md}^*=1$ for all $n\in\mathbb{N}$.
\item For every $n\in\mathbb{N}$, $p_n$ has $|||\cdot|||_{md}$-norm $1$.
\end{enumerate}
\end{theorem}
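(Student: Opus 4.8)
The plan is to treat the three assertions in sequence, leaning on two facts from the excerpt: that $|||\cdot|||_m$ is an equivalent norm making the basis monotone, and that $|||\cdot|||_d$ from Lemma \ref{du} is left-comparable to the original norm, i.e. $|||x|||_d\le 2\mathrm{bc}\left((e_n)_{n\in\mathbb{N}}\right)\|x\|$. Before anything else I would record one structural observation that powers two of the three parts: both constituent norms decrease under the canonical projections. Using the composition rule $p_k\circ p_n=p_{\min(k,n)}$ and the biorthogonality $e_k^*(e_i)=\delta_{ki}$, one computes $|||p_n(x)|||_m=\sup_{k\le n}\|p_k(x)\|\le|||x|||_m$ and $|||p_n(x)|||_d=\sup_{k\le n}|e_k^*(x)|\le|||x|||_d$ for all $n$ and all $x\in X$.

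For \emph{1}, I would first note that the pointwise maximum of two norms is again a norm: homogeneity and the triangle inequality pass to the maximum, and positive-definiteness is inherited from either summand. Equivalence then follows by squeezing. The lower estimate is $\|x\|\le|||x|||_m\le|||x|||_{md}$, where $\|x\|\le|||x|||_m$ holds because $p_n(x)\to x$ forces $\|x\|=\lim\|p_n(x)\|\le\sup_n\|p_n(x)\|$. The upper estimate is $|||x|||_{md}\le 2\mathrm{bc}\left((e_n)_{n\in\mathbb{N}}\right)\|x\|$, obtained by feeding the monotone-norm bound $|||x|||_m\le\mathrm{bc}\left((e_n)_{n\in\mathbb{N}}\right)\|x\|$ and the left-comparability of $|||\cdot|||_d$ into the maximum.

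For \emph{2}, I would evaluate the two constituent norms at $e_n$ directly. Since $p_k(e_n)=e_n$ when $k\ge n$ and $p_k(e_n)=0$ otherwise, normalization gives $|||e_n|||_m=\|e_n\|=1$; likewise $|||e_n|||_d=\sup_k|\delta_{kn}|=1$, so $|||e_n|||_{md}=1$. For the dual norm of $e_n^*$, the chain $|||x|||_{md}\ge|||x|||_d\ge|e_n^*(x)|$ yields $|||e_n^*|||_{md}^*\le 1$, and testing against $x=e_n$ (where $e_n^*(e_n)=1$ and $|||e_n|||_{md}=1$) gives the matching lower bound.

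For \emph{3}, the monotonicity observation finishes the job: taking the maximum of the two displayed inequalities gives $|||p_n(x)|||_{md}\le|||x|||_{md}$, hence $|||p_n|||_{md}\le 1$, while $p_n(e_n)=e_n$ with $|||e_n|||_{md}=1$ forces equality. I expect no deep obstacle in any of this; the only place demanding care is the initial monotonicity step, where a misread of $p_k(p_n(x))$ or $e_k^*(p_n(x))$ through an index slip would derail parts \emph{1} and \emph{3}, so I would verify those two evaluations cleanly first.
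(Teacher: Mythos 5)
Your proposal is correct and follows essentially the same route as the paper: the squeeze $\|x\|\leq|||x|||_m\leq|||x|||_{md}\leq 2\mathrm{bc}\left((e_n)_{n\in\mathbb{N}}\right)\|x\|$ (combining convergence of $p_n(x)\to x$ with Lemma \ref{du}) for part \emph{1}, direct evaluation at $e_n$ together with $|e_n^*(x)|\leq|||x|||_d\leq|||x|||_{md}$ for part \emph{2}, and the canonical projections for part \emph{3}. If anything, your part \emph{3} is more careful than the paper's, which only records $\|p_n(x)\|\leq|||x|||_m\leq|||x|||_{md}$ and leaves implicit the identities $p_k\circ p_n=p_{\min(k,n)}$ and $e_k^*\circ p_n=e_k^*$ for $k\leq n$ that you spell out to obtain the genuine operator bound $|||p_n(x)|||_{md}\leq|||x|||_{md}$, as well as the attainment of norm $1$ at $e_n$.
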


\begin{proof}
\mbox{}
\begin{enumerate}
\item It is trivial that $|||\cdot|||_{md}$ defines a norm on $X$. Notice in first place that $|||x|||_m\leq \mathrm{bc}\left((e_n)_{n\in\mathbb{N}}\right)\|x\|$ and, in virtue of Lemma \ref{du}, $|||x|||_d\leq 2\mathrm{bc}\left((e_n)_{n\in\mathbb{N}}\right)\|x\|$, thus $|||x|||_{md}\leq 2\mathrm{bc}\left((e_n)_{n\in\mathbb{N}}\right)\|x\|$. Now let $x\in X$. Observe that $(p_n(x))_{n\in\mathbb{N}}$ converges to $x$, therefore $\|x\|\leq |||x|||_m\leq |||x|||_{md}$.
\item Obviously, $|||e_n|||_{md}=\|e_n\|=1$ for all $n\in\mathbb{N}$. Thus, it suffices to show that $|||e_n^*|||_{md}^*\leq 1$ for all $n\in\mathbb{N}$. Indeed, if $x\in X$, then $|e_n^*(x)| \leq |||x|||_d\leq|||x|||_{md}$ for all $n\in\mathbb{N}$.
\item If $x\in X$, then $\|p_n(x)\|\leq |||x|||_m\leq |||x|||_{md}$ for all $n\in\mathbb{N}$.
\end{enumerate}
\end{proof}

In the settings of the previous theorem, we have the following chain of inequalities $$\|\cdot\|\leq|||\cdot|||_m\leq|||\cdot|||_{md}\leq 2\mathrm{bc}\left((e_n)_{n\in\mathbb{N}}\right)\|\cdot\|.$$ Therefore in terms of unit balls $$ \mathsf{B}_{|||\cdot|||_{md}}\subseteq \mathsf{B}_{|||\cdot|||_m}\subseteq\mathsf{B}_X\subseteq 2\mathrm{bc}\left((e_n)_{n\in\mathbb{N}}\right)\mathsf{B}_{|||\cdot|||_{md}}.$$

The norm $|||\cdot|||_{d}$ can actually be generalized in order to achieve a right-comparable norm. 
\begin{theorem}\label{norma2}
Let $X$ be a barrelled normed space with a normalized Schauder basis $(e_n)_{n\in\mathbb{N}}$. Fix an arbitrary $k\in\mathbb{N}$ and consider \begin{equation}\label{k} |||x|||_{k}:=\sup\left\{\left\|\sum_{i\in A}\lambda_ie_i\right\|:x=\sum_{i=1}^\infty\lambda_ie_i,\;A\in\mathcal{P}_k^{\times}(\mathbb{N})\right\}.\end{equation} Then:
\begin{enumerate}
\item $|||\cdot|||_1=|||\cdot|||_d$.
\item $\|p_k(x)\|\leq |||x|||_k$ for all $k\in\mathbb{N}$ and all $x\in X$.
\item $|||\cdot|||_k\leq 2k\mathrm{bc}\left((e_n)_{n\in\mathbb{N}}\right)\|\cdot\|$ for all $k\in\mathbb{N}$.
\item $|||\cdot|||_{k_1}\leq|||\cdot|||_{k_2}$ if $k_1\leq k_2$ and thus $ \mathsf{B}_{|||\cdot|||_{k_2}}\subseteq\mathsf{B}_{|||\cdot|||_{k_1}}$.
\item $|||\cdot|||_k$ is a left-comparable norm on $X$ such that for every $A\in\mathcal{P}_k^{\times}(\mathbb{N})$, $\sum_{i\in A}p_i-p_{i-1}$ has $|||\cdot|||_k$-norm $1$. 
\end{enumerate}
In particular, when $X$ is endowed with the norm $|||\cdot|||_k$ both the Schauder basis $(e_n)_{n\in\mathbb{N}}$ and its dual basis are normalized.
\end{theorem}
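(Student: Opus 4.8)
The plan is to treat the five items roughly in the order in which they build on one another, extracting finiteness and the norm axioms along the way. Throughout I write $\lambda_i=e_i^*(x)$, so that $x=\sum_i\lambda_ie_i$ and, by the identity $p_i(x)-p_{i-1}(x)=e_i^*(x)e_i$ recorded in the discussion preceding Lemma \ref{du}, one has $\lambda_ie_i=(p_i-p_{i-1})(x)$ for every $i$. For each $A\in\mathcal{P}_k^{\times}(\mathbb{N})$ the map $x\mapsto\left\|\sum_{i\in A}\lambda_ie_i\right\|$ is a seminorm, being the composition of the continuous linear operator $T_A:=\sum_{i\in A}(p_i-p_{i-1})$ with $\|\cdot\|$; hence $|||\cdot|||_k$, as a supremum of seminorms, is automatically positively homogeneous and subadditive, and it only remains to check finiteness and definiteness.

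For item 1, when $k=1$ the only admissible index sets are the singletons $\{n\}$, and since the basis is normalized $\left\|\lambda_ne_n\right\|=|\lambda_n|=|e_n^*(x)|$; taking the supremum over $n$ yields $|||\cdot|||_1=|||\cdot|||_d$. Item 2 is immediate because $\{1,\dots,k\}\in\mathcal{P}_k^{\times}(\mathbb{N})$ and $\sum_{i=1}^k\lambda_ie_i=p_k(x)$, so $\|p_k(x)\|$ is one of the quantities over which the defining supremum is taken. For item 3 I would expand $\sum_{i\in A}\lambda_ie_i=\sum_{i\in A}(p_i-p_{i-1})(x)$, apply the triangle inequality, and bound each $\|p_i-p_{i-1}\|\le\|p_i\|+\|p_{i-1}\|\le 2\,\mathrm{bc}\left((e_n)_{n\in\mathbb{N}}\right)$; since $|A|\le k$ this gives $\left\|\sum_{i\in A}\lambda_ie_i\right\|\le 2k\,\mathrm{bc}\left((e_n)_{n\in\mathbb{N}}\right)\|x\|$, and passing to the supremum produces the stated bound (which simultaneously guarantees finiteness and left-comparability). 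Item 4 follows at once from the inclusion $\mathcal{P}_{k_1}^{\times}(\mathbb{N})\subseteq\mathcal{P}_{k_2}^{\times}(\mathbb{N})$ for $k_1\le k_2$, a larger index family producing a larger supremum, and the unit-ball inclusion is then purely formal.

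To settle item 5 I first upgrade $|||\cdot|||_k$ to a genuine norm: by item 4, $|||x|||_1\le|||x|||_k$, and $|||x|||_1=|||x|||_d$ vanishes only when every $e_n^*(x)=0$, i.e.\ when $x=0$, so $|||\cdot|||_k$ is definite; combined with item 3 this makes it a left-comparable norm. The substantive claim is that each $T_A$ has $|||\cdot|||_k$-operator norm exactly $1$. For the upper bound, observe that the coefficients of $T_A(x)$ are $\lambda_i$ on $A$ and $0$ elsewhere, so for any $B\in\mathcal{P}_k^{\times}(\mathbb{N})$ the relevant sum is $\sum_{i\in B\cap A}\lambda_ie_i$; if $B\cap A=\varnothing$ this is $0$, and otherwise $B\cap A$ is again a non-empty set of cardinality $\le k$, whence $\left\|\sum_{i\in B\cap A}\lambda_ie_i\right\|\le|||x|||_k$. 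Taking the supremum over $B$ gives $|||T_A(x)|||_k\le|||x|||_k$. For the matching lower bound I choose any $j\in A$ and evaluate at $e_j$: its only nonzero coefficient is a $1$ in position $j$, so $|||e_j|||_k=\|e_j\|=1$ while $T_A(e_j)=e_j$, giving $|||T_A(e_j)|||_k=1$ and hence $|||T_A|||_k^*=1$.

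The final clause then drops out: the same evaluation shows $|||e_n|||_k=\|e_n\|=1$, while the case $A=\{n\}$ of item 5, together with $(p_n-p_{n-1})(x)=e_n^*(x)e_n$ and $|||e_n|||_k=1$, yields $|||e_n^*|||_k^*=|||p_n-p_{n-1}|||_k^*=1$, so both bases are normalized. I expect item 5 — specifically the intersection argument establishing $|||T_A(x)|||_k\le|||x|||_k$, where one must notice that $B\cap A$ either drops out entirely or remains an admissible index set — to be the only genuinely delicate point; everything else is bookkeeping with the Schauder expansion and the basis constant.
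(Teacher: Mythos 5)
Your proof is correct and takes essentially the same route as the paper: the key identity $\sum_{i\in A}\lambda_ie_i=\sum_{i\in A}\left(p_i-p_{i-1}\right)(x)$ and the choice $A=\{1,\dots,k\}$ for item \emph{2} are exactly the paper's argument, the rest of whose details the paper explicitly spares to the reader. Your $B\cap A$ intersection argument for $|||T_A(x)|||_k\leq|||x|||_k$ (noting that $B\cap A$ is either empty or again an admissible index set) correctly supplies the step the paper dismisses as easy, and the remaining items are verified exactly as one would expect.
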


\begin{proof}
First off, notice that $$\sum_{i\in A}\lambda_ie_i=\sum_{i\in A}p_i(x)-p_{i-1}(x) $$ for all $A\in\mathcal{P}_k^{\times}(\mathbb{N})$. Then $$|||x|||_k=\sup\left\{\left\|\sum_{i\in A}p_i(x)-p_{i-1}(x)\right\|:x=\sum_{i=1}^\infty\lambda_ie_i,\;A\in\mathcal{P}_k^{\times}(\mathbb{N})\right\}$$ and from here it is easy to understand that $\sum_{i\in A}p_i-p_{i-1}$ has $|||\cdot|||_k$-norm $1$ for every $A\in\mathcal{P}_k^{\times}(\mathbb{N})$. We will only show (2), we spare the rest of the details of the proof to the reader. Observe that $A=\{1,\dots,k\}\in \mathcal{P}_k^{\times}(\mathbb{N})$ and $$|||x|||_k\geq \left\|\sum_{i\in A} p_i(x)-p_{i-1}(x)\right\|=\left\|p_k(x)\right\|.$$
\end{proof}

We refer the reader to \cite{PBA} for several interesting characterizations of completeness of normed spaces through weakly unconditionally Cauchy series.

We recall the reader that a weakly unconditionally Cauchy Schauder basis is a Schauder basis $(e_n)_{n\in\mathbb{N}}$ in which the series $x=\sum_{n=1}^\infty\lambda_n e_n$ is weakly unconditionally Cauchy. Unconditional Schauder bases are examples of weakly unconditionally Cauchy Schauder bases.

\begin{corollary}
Let $X$ be a barrelled normed space with a normalized weakly unconditionally Cauchy Schauder basis $(e_n)_{n\in\mathbb{N}}$. Then $$|||\cdot |||_\infty=\sup_{k\to\infty}|||\cdot|||_k$$ is a right-comparable norm on $X$ such that for every $A\in\mathcal{P}_{\mathtt{f}}^{\times}(\mathbb{N})$, $\sum_{i\in A}p_i-p_{i-1}$ has $|||\cdot|||_\infty$-norm $1$. In particular, when $X$ is endowed with the norm $|||\cdot|||_\infty$ the Schauder basis $(e_n)_{n\in\mathbb{N}}$ is monotone and its dual basis is normalized.
\end{corollary}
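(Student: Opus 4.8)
The plan is to identify $|||\cdot|||_\infty$ with a supremum taken over all non-empty finite subsets of $\mathbb{N}$, and then to read off each asserted property from Theorem \ref{norma2}, the single non-trivial input being the finiteness of the norm, for which Diestel's characterization of weakly unconditionally Cauchy series (Theorem \ref{DiestelwuC}) is invoked. First I would observe that, by {\em 4} of Theorem \ref{norma2}, for each fixed $x\in X$ the sequence $(|||x|||_k)_{k\in\mathbb{N}}$ is non-decreasing, so $\sup_{k\to\infty}|||x|||_k=\sup_{k\in\mathbb{N}}|||x|||_k$. Letting $x=\sum_{i=1}^\infty\lambda_ie_i$ be the (unique) basis expansion of $x$ and using $\bigcup_{k\in\mathbb{N}}\mathcal{P}_k^{\times}(\mathbb{N})=\mathcal{P}_{\mathtt{f}}^{\times}(\mathbb{N})$, this yields
$$|||x|||_\infty=\sup\left\{\left\|\sum_{i\in A}\lambda_ie_i\right\|:A\in\mathcal{P}_{\mathtt{f}}^{\times}(\mathbb{N})\right\}.$$

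The crux is that this supremum is finite, and this is exactly where the hypothesis is used. Since the basis is weakly unconditionally Cauchy, the series $\sum_i\lambda_ie_i$ is wuC for every $x$, so Theorem \ref{DiestelwuC} provides $H_x>0$ dominating $\left\|\sum_{i=1}^na_i\lambda_ie_i\right\|$ for all $n\in\mathbb{N}$ and all $|a_i|\leq 1$; taking the $a_i$ to be the indicator of a finite set $A$ gives $\left\|\sum_{i\in A}\lambda_ie_i\right\|\leq H_x$, whence $|||x|||_\infty\leq H_x<\infty$. Thus $|||\cdot|||_\infty$ is finite-valued. Homogeneity and the triangle inequality pass to the supremum from each $|||\cdot|||_k$, while positive definiteness and right-comparability both follow from {\em 2} of Theorem \ref{norma2}: from $\|p_k(x)\|\leq|||x|||_k\leq|||x|||_\infty$ and $p_k(x)\to x$ in $\|\cdot\|$ one gets $\|x\|\leq|||x|||_\infty$, so $|||\cdot|||_\infty$ is a norm with $\|\cdot\|\leq|||\cdot|||_\infty$, i.e. right-comparable (the reverse domination need not hold, which is why equivalence is not claimed).

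For the projection statement, given $A\in\mathcal{P}_{\mathtt{f}}^{\times}(\mathbb{N})$ put $P_A:=\sum_{i\in A}(p_i-p_{i-1})$, so that $P_A(x)=\sum_{i\in A}\lambda_ie_i$ (using $p_i(x)-p_{i-1}(x)=e_i^*(x)e_i$), whose coefficient sequence agrees with that of $x$ on $A$ and vanishes off $A$. Then for every $B\in\mathcal{P}_{\mathtt{f}}^{\times}(\mathbb{N})$ the corresponding partial sum of $P_A(x)$ equals $\sum_{i\in B\cap A}\lambda_ie_i$, whose norm is at most $|||x|||_\infty$; taking the supremum gives $|||P_A(x)|||_\infty\leq|||x|||_\infty$, so the $|||\cdot|||_\infty$-norm of $P_A$ is at most $1$. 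Evaluating at $e_{i_0}$ for any $i_0\in A$, where $P_A(e_{i_0})=e_{i_0}$ and $|||e_{i_0}|||_\infty=\|e_{i_0}\|=1$, shows it is at least $1$, hence exactly $1$.

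Finally, the ``in particular'' clause is immediate: taking $A=\{1,\dots,n\}$ gives $P_A=p_n$, so every canonical projection has $|||\cdot|||_\infty$-norm $1$ and the (normalized) basis is monotone, while $A=\{n\}$ gives $P_{\{n\}}(x)=e_n^*(x)e_n$ with $|||e_n|||_\infty=1$, so $|||e_n^*|||_\infty^*=\sup_{|||x|||_\infty\leq1}|e_n^*(x)|=1$ and the dual basis is normalized. The only genuinely delicate step is the finiteness argument, which is precisely where weak unconditional Cauchyness of the basis is indispensable.
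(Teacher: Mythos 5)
Your proposal is correct and follows essentially the same route as the paper's own proof: right-comparability is obtained from {\em 2} of Theorem \ref{norma2} via $\|p_k(x)\|\leq|||x|||_k\leq|||x|||_\infty$ and $p_k(x)\to x$, and finiteness of the supremum is exactly the paper's appeal to the wuC hypothesis through Theorem \ref{DiestelwuC} to produce the bound $H_x$. You additionally supply, correctly, the details the paper spares to the reader, namely the norm-one estimate for $\sum_{i\in A}p_i-p_{i-1}$ (via the observation that partial sums of $P_A(x)$ over $B$ reduce to sums over $B\cap A$, together with evaluation at $e_{i_0}$) and the resulting monotonicity of the basis and normalization of the dual basis.
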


\begin{proof}
According to (2) of Theorem \ref{norma2} we deduce that $\|\cdot\|\leq |||\cdot|||_\infty$. It only remains to show that $|||\cdot|||_\infty$ is well defined. Fix an arbitrary $x=\sum_{i=1}^\infty\lambda_ie_i\in X$. Since the previous series is weakly unconditionally Cauchy, Theorem \ref{DiestelwuC} allows us to deduce that there exists a constant $H_x>0$ such that
\begin{eqnarray*}
|||x|||_\infty&=&\sup\left\{\left\|\sum_{i\in A}\lambda_ie_i\right\|:x=\sum_{i=1}^\infty\lambda_ie_i,\;A\in\mathcal{P}_{\mathtt{f}}^{\times}(\mathbb{N})\right\}\\
&=& \sup\left\{\left\|\sum_{i\in A}p_i(x)-p_{i-1}(x)\right\|:x=\sum_{i=1}^\infty\lambda_ie_i,\;A\in\mathcal{P}_{\mathtt{f}}^{\times}(\mathbb{N})\right\}\\
&\leq & H_x
\end{eqnarray*}
\end{proof}

\section{Operator theory}

Operator theory takes care of studying the morphisms of a given algebraic and topological category. In our case, we will consider operators between normed spaces, or more generally, between topological vector spaces.

As usual, $\mathcal{L}\left(X,Y\right)$ is meant to be the space of all continuous and linear operators from $X$ to $Y$. When $X$ and $Y$ are simply topological vector spaces, we can just consider the pointwise convergence topology on $\mathcal{L}\left(X,Y\right)$. If they are normed spaces, then $\mathcal{L}\left(X,Y\right)$ can be endowed with the strong convergence topology, which turns $\mathcal{L}\left(X,Y\right)$ into a normed space. Note that if $Y$ is complete, then so is $\mathcal{L}\left(X,Y\right)$.

When $X=Y$, then we simply write $\mathcal{L}(X)$. It is clear that $\mathcal{L}\left(X\right)$ is an associative algebra with unity. If $X$ is a normed space, then $\mathcal{L}\left(X\right)$ becomes a unital normed algebra, and if $X$ is complete, then $\mathcal{L}\left(X\right)$ is a unital Banach algebra.

\subsection{Spaces of continuous and linear operators}\label{contlin}

Notable subsets of $\mathcal{L}\left(\ell_\infty\left(X\right),X\right)$, which we will be working with, are:

\begin{itemize}

\item $\mathcal{N}_X$, that is, the set of all continuous and linear operators from $\ell_{\infty}\left(X\right)$ to $X$ which are invariant under the forward shift operator on $\ell_{\infty}\left(X\right)$. In other words,
\begin{eqnarray*}
\mathcal{N}_{X}&:=&\left\{T\in \mathcal{L}\left(\ell_\infty\left(X\right),X\right): T\left(\left(x_n\right)_{n \in\mathbb{N}}\right)=T\left(\left(x_{n+1}\right)_{n \in\mathbb{N}}\right)\right.\\
&&\left.\text{for all }\left(x_n\right)_{n \in\mathbb{N}}\in \ell_\infty\left(X\right)\right\}.
\end{eqnarray*} 

\item $\mathcal{L}_X$, that is, the set of all continuous and linear operators from $\ell_{\infty}\left(X\right)$ to $X$ which are extensions of the limit function on $c\left(X\right)$. In other words, $$\mathcal{L}_{X}:=\left\{T\in \mathcal{L}\left(\ell_\infty\left(X\right),X\right): T|_{c\left(X\right)}=\lim \right\}.$$

\end{itemize}

Easy verifiable properties of the above sets are described in the following proposition, the details of its proof we spare to the reader.

\begin{proposition}\label{prim}
\mbox{}
\begin{enumerate}
\item $\mathcal{N}_X$ is a closed vector subspace of $\mathcal{L}\left(\ell_\infty\left(X\right),X\right)$ when $\mathcal{L}\left(\ell_\infty\left(X\right),X\right)$ is endowed with the pointwise convergence topology.
\item $\mathcal{N}_{X}=\left\{T\in \mathcal{L}\left(\ell_\infty\left(X\right),X\right): bps\left(X\right)\subseteq \ker\left(T\right) \right\}$.
\item If $T \in \mathcal{N}_{X}$, then $c_0\left(X\right)\subseteq \ker\left(T\right)$.
\item $\mathcal{L}_X$ is an affine subset of $\mathcal{L}\left(\ell_\infty\left(X\right),X\right)$, that is, if $T,S\in\mathcal{L}_X$ and $t\in \mathbb{R}$, then $tT+\left(1-t\right)S\in\mathcal{L}_X$.
\item $\mathcal{L}_X$ is closed in $\mathcal{L}\left(\ell_\infty\left(X\right),X\right)$
when $\mathcal{L}\left(\ell_\infty\left(X\right),X\right)$ is endowed with the pointwise convergence topology.
\item If $T\in\mathcal{L}_X$, then $\left\|T\right\|\geq \left\|T|_{c\left(X\right)}\right\|=\left\|\lim\right\|=1$.
\item $ \mathcal{L}_X =\left\{T\in \mathcal{L}\left(\ell_\infty\left(X\right),X\right): c_0\left(X\right)\subseteq \ker\left(T\right) \text{ and }T|_{\bf X}=\lim \right\}.$
\item $\mathcal{N}_X \cap \mathcal{L}_X =\left\{T\in \mathcal{L}\left(\ell_\infty\left(X\right),X\right): bps\left(X\right)\subseteq \ker\left(T\right) \text{ and }T|_{\bf X}=\lim\right\}.$
\item $\mathcal{HB}\left(\lim\right)=\mathcal{L}_X\cap \B_{\mathcal{L}\left(\ell_\infty\left(X\right),X\right)}=\mathcal{L}_X\cap \E_{\mathcal{L}\left(\ell_\infty\left(X\right),X\right)}$.
\end{enumerate}
\end{proposition}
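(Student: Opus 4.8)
The plan is to verify all nine items by unwinding the definitions of $\mathcal{N}_X$ and $\mathcal{L}_X$, the single nontrivial input being Lemma \ref{bps}. For (1) I would rewrite the shift-invariance condition as the requirement that $T$ annihilate every sequence of the form $(x_n)_{n\in\mathbb{N}}-(x_{n+1})_{n\in\mathbb{N}}$; since for each fixed sequence the assignment $T\mapsto T((x_n)_{n\in\mathbb{N}})-T((x_{n+1})_{n\in\mathbb{N}})$ is a pointwise-continuous linear functional on $\mathcal{L}(\ell_\infty(X),X)$, the set $\mathcal{N}_X$ is an intersection of kernels of such functionals, hence a pointwise-closed subspace. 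For (2) the crucial observation is that, by Lemma \ref{bps}, every element of $bps(X)$ has the form $(z_{n+1}-z_n)_{n\in\mathbb{N}}$ with $(z_n)_{n\in\mathbb{N}}\in\ell_\infty(X)$; shift-invariance then gives $T((z_{n+1}-z_n)_{n\in\mathbb{N}})=T((z_{n+1})_{n\in\mathbb{N}})-T((z_n)_{n\in\mathbb{N}})=0$, and conversely $bps(X)\subseteq\ker(T)$ yields $T((x_n)_{n\in\mathbb{N}})-T((x_{n+1})_{n\in\mathbb{N}})=T((x_n-x_{n+1})_{n\in\mathbb{N}})=0$ since $(x_n-x_{n+1})_{n\in\mathbb{N}}\in bps(X)$.

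Item (3) is where the only genuine idea is needed, and I expect it to be the main obstacle: note that $c_0(X)$ is not contained in $bps(X)$, so (3) does not follow from (2). Instead, I would iterate the shift-invariance to obtain $T((x_n)_{n\in\mathbb{N}})=T((x_{n+k})_{n\in\mathbb{N}})$ for every $k\in\mathbb{N}$, and observe that for $(x_n)_{n\in\mathbb{N}}\in c_0(X)$ one has $\|(x_{n+k})_{n\in\mathbb{N}}\|_\infty=\sup_{m>k}\|x_m\|\to 0$ as $k\to\infty$; the continuity of $T$ then forces $T((x_n)_{n\in\mathbb{N}})=0$. Items (4), (5), and (6) are routine: affineness in (4) holds because an affine combination of operators restricting to $\lim$ on $c(X)$ again restricts to $\lim$; closedness in (5) holds because $T|_{c(X)}=\lim$ is an intersection over $(x_n)_{n\in\mathbb{N}}\in c(X)$ of the pointwise-closed conditions $T((x_n)_{n\in\mathbb{N}})=\lim x_n$; and (6) follows from the fact that restriction cannot increase operator norm together with the elementary computation $\|\lim\|=1$ (the upper bound from $\|\lim x_n\|\leq\|(x_n)_{n\in\mathbb{N}}\|_\infty$, the lower bound from any norming constant sequence ${\bf x}$).

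For (7), the forward implication is immediate by restricting $T|_{c(X)}=\lim$ to $c_0(X)\subseteq c(X)$ and to ${\bf X}\subseteq c(X)$; for the converse I would decompose an arbitrary $(x_n)_{n\in\mathbb{N}}\in c(X)$ with limit $x$ as $(x_n-x)_{n\in\mathbb{N}}+{\bf x}$, where $(x_n-x)_{n\in\mathbb{N}}\in c_0(X)\subseteq\ker(T)$ and $T({\bf x})=x$, so that $T((x_n)_{n\in\mathbb{N}})=x=\lim x_n$. Item (8) then combines (2), (3), and (7): membership in $\mathcal{N}_X$ is equivalent to $bps(X)\subseteq\ker(T)$ by (2), and by (3) this already forces $c_0(X)\subseteq\ker(T)$, so the characterization of $\mathcal{L}_X$ in (7) collapses to the single extra requirement $T|_{\bf X}=\lim$. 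Finally (9) is a consequence of (6): any $T\in\mathcal{L}_X\cap\B_{\mathcal{L}(\ell_\infty(X),X)}$ satisfies both $\|T\|\leq 1$ and $\|T\|\geq 1$, hence $\|T\|=1$, which shows $\mathcal{L}_X\cap\B_{\mathcal{L}(\ell_\infty(X),X)}=\mathcal{L}_X\cap\E_{\mathcal{L}(\ell_\infty(X),X)}$; and these norm-one extensions of $\lim$ are precisely its norm-preserving Hahn--Banach extensions, i.e.\ $\mathcal{HB}(\lim)$.
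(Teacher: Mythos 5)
Your proposal is correct, and it follows exactly the route the paper intends (the paper explicitly spares the details of this proof to the reader): Lemma \ref{bps} with the telescoping identity handles item \emph{2}, and your iterated-shift-plus-continuity argument for item \emph{3} is precisely the trick the paper itself uses elsewhere, e.g.\ in the proof of Corollary \ref{BLcomp}, with the remaining items reduced to the routine verifications you give. One cosmetic remark: in item \emph{1} the maps $T\mapsto T\left(\left(x_n\right)_{n\in\mathbb{N}}\right)-T\left(\left(x_{n+1}\right)_{n\in\mathbb{N}}\right)$ are $X$-valued continuous linear maps rather than functionals, but their kernels are still pointwise-closed subspaces, so your argument stands unchanged.
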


The reader may have noticed that in the last item of the previous proposition we make use of $\mathcal{HB}\left(\lim\right)$, which stands for all the norm-$1$ Hahn-Banach extensions of the limit function on $c(X)$ to the whole of $\ell_\infty(X)$. In general, $\mathcal{HB}\left(T\right)$ consists of all norm-$1$ Hahn-Banach extensions of $T$ to a (given) superspace.

\begin{corollary}\label{BLcomp}
If $\mathcal{N}_X \cap \mathcal{L}_X \neq \varnothing$, then $X$ is complete.
\end{corollary}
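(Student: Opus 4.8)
The plan is to prove completeness via the classical characterization that a normed space is complete if and only if every absolutely convergent series in it converges. So fix $T\in\mathcal{N}_X\cap\mathcal{L}_X$ and recall its relevant features: $T$ is continuous; its membership in $\mathcal{N}_X$ gives the one-step shift-invariance $T((v_n)_{n\in\mathbb{N}})=T((v_{n+1})_{n\in\mathbb{N}})$, which upon iteration yields $T((v_n)_{n\in\mathbb{N}})=T((v_{n+k})_{n\in\mathbb{N}})$ for every $k\in\mathbb{N}$ and every $(v_n)_{n\in\mathbb{N}}\in\ell_\infty(X)$ (equivalently, $bps(X)\subseteq\ker(T)$ by item \emph{2} of Proposition~\ref{prim}); and its membership in $\mathcal{L}_X$ gives $T|_{c(X)}=\lim$, so in particular $T(\mathbf{z})=z$ for every constant sequence $\mathbf{z}$ with $z\in X$.

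Now let $\sum x_n$ be an absolutely convergent series in $X$, that is $\sum_n\|x_n\|<\infty$, and write $s_n:=\sum_{i=1}^n x_i$ for its partial sums. Absolute convergence makes $(s_n)_{n\in\mathbb{N}}$ bounded, hence $(s_n)_{n\in\mathbb{N}}\in\ell_\infty(X)$, and we may set $a:=T((s_n)_{n\in\mathbb{N}})\in X$. Passing to the completion $\overline{X}$, where the series does converge, let $s\in\overline{X}$ be its sum, so that $s_n\to s$ in $\overline{X}$. The goal is to show $a=s$; since $a\in X$ this forces $s\in X$ and thus the convergence of $\sum x_n$ in $X$.

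Fix $\varepsilon>0$ and choose $N\in\mathbb{N}$ with $\sum_{i>N}\|x_i\|<\varepsilon$. Using the shift-invariance of $T$ we get $a=T((s_n)_{n\in\mathbb{N}})=T((s_{n+N})_{n\in\mathbb{N}})$, and then by linearity together with $T(\mathbf{s_N})=s_N$ (valid since $s_N\in X$) we obtain $a-s_N=T\big((s_{n+N}-s_N)_{n\in\mathbb{N}}\big)$. Because $s_{n+N}-s_N=\sum_{i=N+1}^{n+N}x_i$ has norm at most $\sum_{i>N}\|x_i\|<\varepsilon$ for every $n$, the sequence $(s_{n+N}-s_N)_{n\in\mathbb{N}}$ has $\|\cdot\|_\infty<\varepsilon$, whence $\|a-s_N\|\le\|T\|\,\varepsilon$. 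Combined with $\|s-s_N\|\le\varepsilon$ this gives $\|a-s\|\le(\|T\|+1)\varepsilon$; letting $\varepsilon\to 0$ yields $a=s\in X$, so $\sum x_n$ converges in $X$ and $X$ is complete.

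The only genuine obstacle is the mismatch between the domain of $T$, whose arguments are sequences valued in $X$, and the natural habitat of the limit $s$, namely the completion $\overline{X}$: a priori there is no reason for $T((s_n)_{n\in\mathbb{N}})$ to coincide with the sum $s$. This is precisely what the $\mathcal{N}_X$-membership overcomes, for shifting the partial-sum sequence by $N$ replaces its tail by terms that are uniformly small in norm, and then the continuity of $T$ controls $\|a-s_N\|$. Note that membership in $\mathcal{L}_X$ alone (a pure extension of $\lim$, without shift-invariance) would not deliver this uniform smallness, so both conditions are used essentially.
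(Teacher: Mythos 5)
Your proof is correct, and its core computation is the same as the paper's, but the outer framework is genuinely different. The paper argues directly with an arbitrary Cauchy sequence $(x_n)_{n\in\mathbb{N}}$: using iterated shift-invariance and $T(\mathbf{x_p})=x_p$ it writes $T\left((x_n)_{n\in\mathbb{N}}\right)-x_p=T\left((x_{n+p}-x_p)_{n\in\mathbb{N}}\right)$, bounds this by $\|T\|\sup_{n}\|x_{n+p}-x_p\|$, and concludes that the Cauchy sequence converges to $T\left((x_n)_{n\in\mathbb{N}}\right)$ --- no completion, no series, and no external characterization of completeness are needed. Your estimate $a-s_N=T\left((s_{n+N}-s_N)_{n\in\mathbb{N}}\right)$ is exactly this computation applied to the partial-sum sequence of an absolutely convergent series, which is of course itself a Cauchy sequence; so the mechanism (shift by $p$ or $N$, subtract the constant sequence, invoke continuity) is identical, while your wrapper routes through the classical equivalence ``$X$ is complete if and only if every absolutely convergent series in $X$ converges'' together with an excursion into the completion $\overline{X}$ to name the candidate limit $s$. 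What your detour buys is a concrete limit identified in advance (the sum $s\in\overline{X}$, shown a posteriori to lie in $X$); what it costs is reliance on that standard but here unproved characterization of completeness, plus the completion machinery, neither of which the paper needs. The paper's direct route is shorter and yields the slightly sharper conclusion that \emph{every} Cauchy sequence converges precisely to $T$ of that sequence, which your argument recovers only for sequences of partial sums. Your closing remark that both memberships are used essentially is accurate and matches the paper's usage: $\mathcal{N}_X$ supplies the shift, $\mathcal{L}_X$ supplies $T(\mathbf{z})=z$ on constant sequences.
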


\begin{proof}
Let $(x_n)_{n\in\N}$ be a Cauchy sequence in $X$. Consider any $T\in \mathcal{N}_X \cap \mathcal{L}_X$. We will show that $(x_n)_{n\in\N}$ converges to $T\left((x_n)_{n\in\N}\right)\in X$. Fix an arbitrary $\varepsilon >0$. There exists $n_{\varepsilon}\in\N$ such that if $p,q\geq n_\varepsilon$, then $\|x_p-x_q\|<\frac{\varepsilon}{\|T\|}$. Now, for all $p\geq n_\varepsilon$ we have that 
\begin{eqnarray*}
\left\|T\left((x_n)_{n\in\N}\right)-x_p\right\|&=& \left\|T\left((x_{n+p})_{n\in\N}\right)-x_p\right\|\\
&=& \left\|T\left((x_{n+p})_{n\in\N}\right)-T\left({\bf x_p}\right)\right\|\\
&=& \left\|T\left((x_{n+p}-x_p)_{n\in\N}\right)\right\|\\
&\leq & \|T\|\left\| \left(x_{n+p}-x_p\right)_{n\in\N}\right\|_\infty\\
&\leq&\varepsilon.
\end{eqnarray*}
\end{proof}

Later on, we will show that $\mathcal{N}_X\neq \{0\}$. Sufficient conditions for the non-emptiness of $\mathcal{L}_X$ will be given when assuring the existence of Banach limits, since every Banach limit is an extension of the limit function.

\begin{corollary}\label{prim3}
If $T\in\mathsf{S}_{\mathcal{N}_X}$, then $$\left\|T|_{c_0\left(X\right)\oplus \mathbb{R}\mathbf{x}}\right\|=\frac{\left\|T\left(\mathbf{x}\right)\right\|}{\left\|x\right\|}$$ for all $x\in X\setminus \left\{0\right\}$.
\end{corollary}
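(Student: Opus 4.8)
The plan is to reduce everything to the single fact, recorded in item \emph{3} of Proposition \ref{prim}, that every $T\in\mathcal{N}_X$ annihilates $c_0(X)$; the normalization $\|T\|=1$ coming from $T\in\mathsf{S}_{\mathcal{N}_X}$ will play no essential role beyond placing $T$ in $\mathcal{N}_X$. First I would observe that since $x\neq 0$ the constant sequence $\mathbf{x}$ does not lie in $c_0(X)$, so the algebraic sum $c_0(X)\oplus\mathbb{R}\mathbf{x}$ is genuinely direct and each of its elements is uniquely of the form $(y_n)_{n\in\mathbb{N}}+\lambda\mathbf{x}$ with $(y_n)_{n\in\mathbb{N}}\in c_0(X)$ and $\lambda\in\mathbb{R}$. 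Because $c_0(X)\subseteq\ker(T)$, I would then compute $T\left((y_n)_{n\in\mathbb{N}}+\lambda\mathbf{x}\right)=\lambda T(\mathbf{x})$, so the restriction collapses to a scalar multiple of the fixed vector $T(\mathbf{x})$.

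With this, I would write the operator norm of the restriction as the supremum of $|\lambda|\,\|T(\mathbf{x})\|/\|(y_n)_{n\in\mathbb{N}}+\lambda\mathbf{x}\|_\infty$. The terms with $\lambda=0$ contribute $0$ and may be discarded, and for $\lambda\neq 0$ I would factor $\lambda$ out of the denominator, writing $(y_n)_{n\in\mathbb{N}}+\lambda\mathbf{x}=\lambda\left((y_n/\lambda)_{n\in\mathbb{N}}+\mathbf{x}\right)$, so that the ratio becomes $\|T(\mathbf{x})\|/\|(y_n/\lambda)_{n\in\mathbb{N}}+\mathbf{x}\|_\infty$; as $(y_n)_{n\in\mathbb{N}}$ ranges over $c_0(X)$, the rescaled $(y_n/\lambda)_{n\in\mathbb{N}}$ still ranges over all of $c_0(X)$. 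This yields
$$\left\|T|_{c_0(X)\oplus\mathbb{R}\mathbf{x}}\right\|=\frac{\|T(\mathbf{x})\|}{\inf\left\{\|\mathbf{x}-(z_n)_{n\in\mathbb{N}}\|_\infty:(z_n)_{n\in\mathbb{N}}\in c_0(X)\right\}}=\frac{\|T(\mathbf{x})\|}{d(\mathbf{x},c_0(X))},$$
where I have used that $c_0(X)$ is a subspace, so replacing the $c_0(X)$-component by its opposite is immaterial.

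The only remaining point, and the one needing a genuine (if short) argument, is the evaluation $d(\mathbf{x},c_0(X))=\|x\|$. For the upper bound I would simply take the zero sequence, obtaining $\|\mathbf{x}\|_\infty=\|x\|$. For the lower bound I would use that any $(z_n)_{n\in\mathbb{N}}\in c_0(X)$ satisfies $z_n\to 0$, hence $\|x-z_n\|\to\|x\|$ and therefore $\|\mathbf{x}-(z_n)_{n\in\mathbb{N}}\|_\infty=\sup_n\|x-z_n\|\geq\|x\|$. Substituting $d(\mathbf{x},c_0(X))=\|x\|$ into the displayed formula produces exactly $\|T|_{c_0(X)\oplus\mathbb{R}\mathbf{x}}\|=\|T(\mathbf{x})\|/\|x\|$, completing the argument. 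I do not anticipate any real obstacle: the computation of the distance from the constant sequence to $c_0(X)$ is the most delicate step, and it reduces to the elementary observation about the asymptotic behaviour of null sequences.
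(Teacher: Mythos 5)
Your proof is correct and follows essentially the same route as the paper: both arguments rest on $c_0(X)\subseteq\ker(T)$ from Proposition \ref{prim} together with the estimate $\sup_{n\in\mathbb{N}}\left\|y_n+\lambda x\right\|\geq\left|\lambda\right|\left\|x\right\|$ for null $(y_n)_{n\in\mathbb{N}}$, which is exactly your identity $d\left(\mathbf{x},c_0(X)\right)=\left\|x\right\|$ after rescaling by $\lambda$. The only difference is cosmetic: the paper bounds $\|T\|$ directly on unit-ball elements and exhibits $\frac{\mathbf{x}}{\left\|x\right\|}\in\mathsf{S}_{c_0(X)\oplus\mathbb{R}\mathbf{x}}$ for attainment, whereas you package the same two bounds as a distance computation with the zero sequence playing the role of the norming element.
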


\begin{proof}
In the first place, recall that $c_0\left(X\right)\subseteq \ker\left(T\right)$ as pointed out in Proposition \ref{prim}. Fix an arbitrary $x\in X\setminus \left\{0\right\}$. Let $\left(y_n\right)_{n\in\mathbb{N}}\in c_0\left(X\right)$ and $\lambda \in \mathbb{R}$. Since $\left(y_n\right)_{n\in\mathbb{N}}$ converges to $0$, we trivially have that
\begin{equation}\label{ozua}
\left\|\left(y_n+\lambda x\right)_{n\in\mathbb{N}}\right\|_\infty=\sup_{n\in\mathbb{N}}\left\|y_n+\lambda x\right\|\geq \left\|\lambda x\right\|.
\end{equation}
Therefore, if $\left(y_n+\lambda x\right)_{n\in\mathbb{N}}\in\mathsf{B}_{c_0\left(X\right)\oplus\mathbb{R}\mathbf{x}}$, in virtue of Equation \eqref{ozua} we have that
\begin{equation*}
\left\|T\left(\left(y_n+\lambda x\right)_{n\in\mathbb{N}}\right)\right\| = \left|\lambda\right|\left\|T\left(\mathbf{x}\right)\right\|\\
\leq  \frac{\left\|T\left(\mathbf{x}\right)\right\|}{\left\|x\right\|}.
\end{equation*} In order to finish the proof it suffices to note that $\frac{\mathbf{x}}{\left\|x\right\|}\in \mathsf{S}_{c_0\left(X\right)\oplus\mathbb{R}\mathbf{x}}$.
\end{proof}

It is not hard to observe that $bps(X)\oplus \mathbf{X}$ is never dense in $\ell_\infty(X)$.

\begin{corollary}\label{prim2}
There exists $T\in \mathsf{S}_{\mathcal{N}_X}$ such that $\left\|T\left(\mathbf{x}\right)\right\|=0$ for all $x\in X$. If, in addition, $\dim\left(X\right)>1$, then $T$ can be constructed not to be onto.
\end{corollary}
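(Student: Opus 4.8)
The plan is to reduce to the scalar case and then tensor up to an $X$-valued operator. First I would produce a norm-one scalar functional on $\ell_\infty$ that is invariant under the forward shift and, in addition, annihilates the constant sequences; then I would transport it to $\ell_\infty(X)$ by pairing against a fixed norm-one vector together with a supporting functional.

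For the scalar ingredient, recall that by item 2 of Proposition \ref{prim} (applied with $X=\mathbb{R}$) a continuous functional on $\ell_\infty$ is invariant under the forward shift if and only if it annihilates $bps$. Since the text has just recorded that $bps(X)\oplus\mathbf{X}$ is never dense in $\ell_\infty(X)$, the scalar instance tells us that $bps+\mathbb{R}\mathbf 1$ is not dense in $\ell_\infty$, where $\mathbf 1$ denotes the constant sequence of value $1$. Hence its closure is a proper closed subspace, and the Hahn--Banach theorem supplies $\phi\in\mathsf{S}_{\ell_\infty^*}$ with $\phi|_{bps}=0$ --- so $\phi$ is shift-invariant --- and $\phi(\mathbf 1)=0$.

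Next I would fix $x_0\in\mathsf{S}_X$ and, by Hahn--Banach, $x_0^*\in\mathsf{S}_{X^*}$ with $x_0^*(x_0)=1$, and define $T\colon\ell_\infty(X)\to X$ by $T\big((x_n)_{n\in\mathbb{N}}\big):=\phi\big((x_0^*(x_n))_{n\in\mathbb{N}}\big)\,x_0$. I would then verify the required properties one by one. Linearity is clear; continuity with $\|T\|\le 1$ follows from $|x_0^*(x_n)|\le\|x_n\|$ and $\|\phi\|=1$; shift-invariance is inherited from $\phi$, since $(x_0^*(x_{n+1}))_{n}$ is the forward shift of $(x_0^*(x_n))_{n}$, so $T\in\mathcal{N}_X$; and $T(\mathbf x)=x_0^*(x)\,\phi(\mathbf 1)\,x_0=0$ for every $x\in X$ because $\phi$ kills constants, which is the asserted $\|T(\mathbf x)\|=0$. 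To upgrade $\|T\|\le 1$ to $\|T\|=1$ (so that $T\in\mathsf{S}_{\mathcal{N}_X}$), I would test $T$ on sequences $(a_n x_0)_{n}$ with $(a_n)\in\mathsf{B}_{\ell_\infty}$: then $x_0^*(a_n x_0)=a_n$ and $\|(a_n x_0)\|_\infty=\|(a_n)\|_\infty$, so $\|T((a_n x_0))\|=|\phi((a_n))|$, whose supremum over $\mathsf{B}_{\ell_\infty}$ equals $\|\phi\|=1$.

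For the last assertion I would not modify the construction at all: the range of this $T$ lies in the line $\mathbb{R}x_0$, so as soon as $\dim(X)>1$ the operator is automatically not onto. The main obstacle is the opening step, namely obtaining a single functional that is at once of norm one, shift-invariant, and trivial on the constants. This is exactly where the non-density of the whole sum $bps+\mathbb{R}\mathbf 1$ is indispensable: separating $\mathbf 1$ from $\overline{bps}$ alone would yield a functional with $\phi(\mathbf 1)\neq 0$, the reverse of what is wanted, so one genuinely needs a point bounded away from $bps+\mathbb{R}\mathbf 1$ rather than merely from $bps$.
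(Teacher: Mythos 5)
Your proof is correct and follows essentially the same route as the paper's: both rest on the non-density of $bps(X)\oplus\mathbf{X}$ in $\ell_\infty(X)$, a Hahn--Banach separation yielding a norm-one shift-invariant functional annihilating the constants, and a rank-one operator of the form $f(\cdot)\,x_0$, whose range $\mathbb{R}x_0$ makes the non-surjectivity claim automatic when $\dim(X)>1$. The only difference is cosmetic: the paper separates directly in $\ell_\infty(X)$ (so $\left\|T\right\|=1$ is immediate from $\left\|f\right\|=1$), whereas you separate in scalar $\ell_\infty$ and transport through the norming pair $\left(x_0,x_0^*\right)$, which is precisely why you need the extra test on the sequences $\left(a_nx_0\right)_{n\in\mathbb{N}}$ to recover norm one --- a step you carry out correctly.
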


\begin{proof}
Fix an arbitrary $\left(z_n\right)_{n\in\mathbb{N}}\in\ell_\infty(X)$ such that $$d\left(\left(z_n\right)_{n\in\mathbb{N}},bps(X)\oplus \mathbf{X}\right)>0.$$ By a corollary of the Hahn-Banach Extension Theorem we can find $f\in\mathsf{S}_{\ell_\infty(X)^*}$ such that $$f\left(bps(X)\oplus \mathbf{X}\right)=\{0\}$$ and $$f\left(\left(z_n\right)_{n\in\mathbb{N}}\right)=d\left(\left(z_n\right)_{n\in\mathbb{N}},bps(X)\oplus \mathbf{X}\right).$$ Now fix an arbitrary $y\in\mathsf{S}_X$ and define $$\begin{array}{rrcl} T:&\ell_\infty(X) & \to & X \\ & \left(y_n\right)_{n\in\mathbb{N}}&\mapsto &T\left(\left(y_n\right)_{n\in\mathbb{N}}\right)=f\left(\left(y_n\right)_{n\in\mathbb{N}}\right)y. \end{array}$$ Notice that $T$ is a norm-$1$ continuous linear operator such that $T|_{bps\left(X\right)}=0$ (so $T\in\mathsf{S}_{\mathcal{N}_X}$ in virtue of Proposition \ref{prim}) and $T|_{\mathbf{X}}=0$.
\end{proof}

The group of surjective linear isometries on $X$ will be denoted by $\mathcal{G}_X$. A natural left action can be defined of $\mathcal{G}_X$ on $\E_X$ which turns $\E_X$ into a $\mathcal{G}_X$-space (we refer the reader to the backmatter chapter for an extended survey on $G$-spaces):

\begin{equation}\label{transitive}
\begin{array}{rcl}
\mathcal{G}_X\times \mathsf{S}_{X}& \to & \mathsf{S}_{X}\\
\left(T,x\right)&\mapsto & T\left(x\right).
\end{array}
\end{equation}

It is well known that for all Hilbert spaces the previous action is transitive. On the other hand, notice that the action \eqref{transitive} is not free if and only if there exists a surjective linear isometry other than the identity with a non-zero fixed point (this is the case, for instance, of the Hilbert spaces with dimension $3$ or more).

A normed space is said to enjoy
\begin{itemize}
\item the approximation property provided that the identity operator $I$ is in the closure of the finite-rank operators on $X$, $\F(X)$, when $\Li(X)$ is endowed with the topology of uniform convergence on compact subsets; and
\item the bounded approximation property provided that there exists $\lambda\geq 1$ such that the identity operator $I$ is in the closure of the finite-rank operators on $X$ with norm less than or equal to $\lambda$, that is, $\F(X)\cap \lambda\B_{\Li(X)}$, when $\Li(X)$ is endowed with the topology of uniform convergence on compact subsets.
\end{itemize}

\subsection{Complementation, M-ideals and injectivity}

A subspace of a topological vector space is said to be (topologically) complemented if it is the range of a continuous linear projection. We remind the reader that a continuous linear projection on $X$ is an idempotent of $\mathcal{L}(X)$. Every complemented subspace $M$ is closed and verifies that there exists another closed subspace $N$ in such a way that $M\cap N=\{0\}$ and $X=M+N$. This condition is not sufficient to assure that $M$ is complemented, unless we are working in the class of Banach spaces.

A subspace $M$ of a normed space $X$ is said to be an $\mathsf{L}_p$-summand subspace of $X$ provided that there exists another subspace $N$ of $X$ with $X=M\oplus_p N$, for $1\leq p \leq \infty$. $\mathsf{L}_\infty$-summands and $\mathsf{L}_1$-summands are sometimes called $\mathsf{M}$-summands and $\mathsf{L}$-summands, respectively. It is not hard to check that all $\mathsf{L}_p$-summand subspaces must be closed, and thus they will be complemented in Banach spaces.

A vector $x\in\mathsf{S}_X$ is called an $\mathsf{L}_p$-summand vector of $X$ when $\mathbb{R}x$ is an $\mathsf{L}_p$-summand subspace of $X$.

\begin{lemma}\label{ayuda1}
Assume that $M$ and $N$ are topologically complemented in $X$. Then:
\begin{enumerate}
\item $bps\left(M\right)$ and $bps\left(N\right)$ are topologically complemented in $bps\left(X\right)$.
\item If, in addition, $X=M\oplus_\infty N$, then $bps\left(X\right)=bps\left(N\right)\oplus_\infty bps\left(M\right)$ when $bps\left(X\right)$ is endowed with the norm \eqref{99}.
\end{enumerate}
\end{lemma}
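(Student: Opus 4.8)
The plan is to prove both statements by explicitly transferring the complementation structure from $X$ to $bps(X)$ via the natural map induced by the given projections. Let $P\colon X\to X$ be the continuous linear projection with range $M$ and kernel $N$, so that $Q:=\I_X-P$ is the complementary projection onto $N$. The key observation is that $P$ acts coordinatewise on sequences and, crucially, commutes with the formation of partial sums, so it carries sequences of bounded partial sums to sequences of bounded partial sums.

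First I would define $\widetilde{P}\colon bps(X)\to bps(X)$ by $\widetilde{P}\left((x_n)_{n\in\mathbb{N}}\right)=(P(x_n))_{n\in\mathbb{N}}$ and check that it is well defined: since $\sum_{i=1}^n P(x_i)=P\left(\sum_{i=1}^n x_i\right)$ and $P$ is continuous (hence bounded), boundedness of the partial sums $(\sum_{i=1}^n x_i)_n$ forces boundedness of $(\sum_{i=1}^n P(x_i))_n$, so indeed $\widetilde{P}\left((x_n)_{n\in\mathbb{N}}\right)\in bps(X)$. It is immediate that $\widetilde{P}$ is linear, idempotent (because $P$ is), and continuous with respect to the norm \eqref{99}, since by the partial-sum formula $\|\widetilde{P}((x_n))\|=\sup_n\|P(\sum_{i=1}^n x_i)\|\leq\|P\|\sup_n\|\sum_{i=1}^n x_i\|=\|P\|\,\|(x_n)\|$. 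Its range is exactly $bps(M)$ and its kernel is $bps(N)$, which establishes part {\em 1}: $bps(M)$ is the range of a continuous linear projection on $bps(X)$, and $bps(N)=\mathrm{ran}(\I-\widetilde{P})$ is complemented symmetrically.

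For part {\em 2}, I would additionally assume $X=M\oplus_\infty N$, meaning $\|m+n\|=\max\{\|m\|,\|n\|\}$ for $m\in M$, $n\in N$. The goal is to show $bps(X)=bps(N)\oplus_\infty bps(M)$ under the norm \eqref{99}. Algebraically the decomposition is already in hand from part {\em 1}. The remaining task is the isometric $\ell_\infty$-splitting of the norm. Given $(x_n)_{n\in\mathbb{N}}\in bps(X)$ with coordinatewise decomposition $x_n=m_n+n_n$, I would compute, for each partial sum, $\left\|\sum_{i=1}^k x_i\right\|=\left\|\sum_{i=1}^k m_i+\sum_{i=1}^k n_i\right\|=\max\left\{\left\|\sum_{i=1}^k m_i\right\|,\left\|\sum_{i=1}^k n_i\right\|\right\}$ using the $\oplus_\infty$ property of $X$ applied to the partial sums (which lie in $M$ and $N$ respectively). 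Taking the supremum over $k$ and interchanging the two suprema yields $\|(x_n)\|=\max\{\|(m_n)\|,\|(n_n)\|\}$, which is precisely the $\oplus_\infty$ decomposition of the $bps$-norm.

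The main obstacle, and the only genuinely substantive point, is the interchange of suprema in part {\em 2}: one must verify that $\sup_k\max\{a_k,b_k\}=\max\{\sup_k a_k,\sup_k b_k\}$ where $a_k=\|\sum_{i=1}^k m_i\|$ and $b_k=\|\sum_{i=1}^k n_i\|$. This identity holds for arbitrary families of reals, so it is routine, but it is where the $\max$-structure of the $\oplus_\infty$ norm must be carefully propagated from the finite partial-sum level to the supremum defining \eqref{99}. Everything else—well-definedness, continuity, idempotency—reduces to the single algebraic fact that the projections commute with finite partial sums, which is what makes the coordinatewise extension land inside $bps$ rather than merely inside $\ell_\infty$.
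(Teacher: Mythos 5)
Your proposal is correct and follows essentially the same route as the paper: the coordinatewise projection $(x_n)_{n\in\mathbb{N}}\mapsto\left(P\left(x_n\right)\right)_{n\in\mathbb{N}}$, well defined because $P$ commutes with partial sums, with kernel $bps\left(N\right)$, and for part {\em 2} the same computation pushing the $\max$ of the $\oplus_\infty$ norm through the partial sums and interchanging the supremum with the maximum. The only cosmetic difference is that you view the projection as a map $bps\left(X\right)\to bps\left(X\right)$ with range $bps\left(M\right)$, while the paper writes it as a map onto $bps\left(M\right)$; the content is identical.
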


\begin{proof}
\mbox{}
\begin{enumerate}
\item Let $P:X\to M$ be a continuous linear projection with $\ker(P)=N$. It is not hard to check that
$$
\begin{array}{rcl}
bps\left(X\right) & \to & bps\left(M\right)\\
\left(x_n\right)_{n\in\mathbb{N}}&\mapsto &\left(P\left(x_n\right)\right)_{n\in\mathbb{N}}
\end{array}
$$
is well defined and a continuous linear projection of norm $\left\|P\right\|$ whose kernel is $bps\left(N\right)$.

\item Simply notice that if $\left(x_i\right)_{i\in\mathbb{N}}\in bps\left(X\right)$ and $x_i=m_i + n_i$ with $m_i\in M$ and $n_i\in N$ for all $i\in\mathbb{N}$, then
\begin{eqnarray*}
\left\|\left(x_i\right)_{i\in\mathbb{N}}\right\|&=&\sup_{k\in\mathbb{N}}\left\|\sum_{i=1}^k x_i\right\|\\
&=&\sup_{k\in\mathbb{N}} \left\|\sum_{i=1}^k m_i +\sum_{i=1}^k n_i\right\|\\
&=& \sup_{k\in\mathbb{N}}\left(\max\left\{\left\|\sum_{i=1}^k m_i\right\|, \left\|\sum_{i=1}^k n_i\right\|\right\}\right)\\
&=&\max \left\{ \sup_{k\in\mathbb{N}}\left\|\sum_{i=1}^k m_i\right\|, \sup_{k\in\mathbb{N}} \left\|\sum_{i=1}^k n_i\right\|\right\}\\
&=& \max \left\{ \left\|\left(m_i\right)_{i\in\mathbb{N}}\right\|, \left\|\left(n_i\right)_{i\in\mathbb{N}}\right\|\right\}.
\end{eqnarray*}
\end{enumerate}
\end{proof}

\begin{lemma}\label{asinosva}
If $k_0$ is an isolated point of a compact Hausdorff topological space $K$, then $\chi_{\left\{k_0\right\}}$ is an $\mathsf{L}_\infty$-summand vector of $\mathcal{C}\left(K\right)$.
\end{lemma}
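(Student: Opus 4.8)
The plan is to exhibit an explicit $\mathsf{M}$-summand decomposition of $\mathcal{C}\left(K\right)$ having $\mathbb{R}\chi_{\left\{k_0\right\}}$ as one of its summands. First I would observe that the hypothesis that $k_0$ be isolated is exactly what guarantees $\chi_{\left\{k_0\right\}}\in\mathcal{C}\left(K\right)$: the singleton $\left\{k_0\right\}$ is then clopen, so its characteristic function is continuous, and clearly $\left\|\chi_{\left\{k_0\right\}}\right\|_\infty=1$, so $\chi_{\left\{k_0\right\}}\in\mathsf{S}_{\mathcal{C}\left(K\right)}$ as demanded by the very definition of an $\mathsf{L}_\infty$-summand vector.

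Next I would propose the candidate complement $N:=\left\{f\in\mathcal{C}\left(K\right):f(k_0)=0\right\}$, that is, the kernel of the continuous evaluation functional at $k_0$. To see that $\mathcal{C}\left(K\right)=\mathbb{R}\chi_{\left\{k_0\right\}}+N$, I would decompose an arbitrary $f$ as $f=f(k_0)\chi_{\left\{k_0\right\}}+\bigl(f-f(k_0)\chi_{\left\{k_0\right\}}\bigr)$ and note that the second summand lies in $N$, since it vanishes at $k_0$. The sum is direct because $\lambda\chi_{\left\{k_0\right\}}\in N$ forces $\lambda=\lambda\chi_{\left\{k_0\right\}}(k_0)=0$.

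The heart of the argument is the $\ell_\infty$-norm identity. Given $m=\lambda\chi_{\left\{k_0\right\}}$ and $n=g\in N$, I would split the supremum defining $\left\|m+n\right\|_\infty$ according to whether the evaluation point equals $k_0$ or not: at $k_0$ one obtains $\left|\lambda+g(k_0)\right|=\left|\lambda\right|$, while at every $k\neq k_0$ one obtains $\left|g(k)\right|$. Hence $\left\|m+n\right\|_\infty=\max\bigl\{\left|\lambda\right|,\sup_{k\neq k_0}\left|g(k)\right|\bigr\}$. Since $\left\|m\right\|=\left|\lambda\right|$ and, using $g(k_0)=0$, $\left\|n\right\|=\sup_{k\neq k_0}\left|g(k)\right|$, this gives $\left\|m+n\right\|=\max\left\{\left\|m\right\|,\left\|n\right\|\right\}$, so $\mathcal{C}\left(K\right)=\mathbb{R}\chi_{\left\{k_0\right\}}\oplus_\infty N$ and $\chi_{\left\{k_0\right\}}$ is an $\mathsf{L}_\infty$-summand vector of $\mathcal{C}\left(K\right)$.

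I do not expect a serious obstacle here; the only point requiring care is the bookkeeping in the supremum computation, namely recognizing that the two summands are concentrated on the complementary clopen sets $\left\{k_0\right\}$ and $K\setminus\left\{k_0\right\}$, which is precisely what forces the two norms to combine as a maximum rather than as a sum.
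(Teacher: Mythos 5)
Your proof is correct and follows essentially the same route as the paper's own: the paper also takes the complement $N=\ker\left(\delta_{k_0}\right)$, uses the identical decomposition $f=f\left(k_0\right)\chi_{\left\{k_0\right\}}+\left(f-f\left(k_0\right)\chi_{\left\{k_0\right\}}\right)$, and obtains the $\oplus_\infty$ identity by splitting $\max\left|f\right|\left(K\right)$ over the clopen pieces $\left\{k_0\right\}$ and $K\setminus\left\{k_0\right\}$. Your write-up merely makes explicit the directness of the sum and the role of isolatedness in ensuring $\chi_{\left\{k_0\right\}}\in\mathcal{C}\left(K\right)$, which the paper leaves implicit.
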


\begin{proof}
Let $k_0\in K$ be an isolated point. Notice that $\chi_{\left\{k_0\right\}}\in\mathsf{S}_{\mathcal{C}\left(K\right)}$, $\delta_{k_0}\in\mathsf{S}_{\mathcal{C}\left(K\right)^*}$, $\delta_{k_0}\left(\chi_{\left\{k_0\right\}}\right)=1$, and $$\mathcal{C}\left(K\right)=\mathbb{R}\chi_{\left\{k_0\right\}}\oplus \ker\left(\delta_{k_0}\right),$$ where $\chi_{\left\{k_0\right\}}$ denotes the characteristic function of $\left\{k_0\right\}$ and $\delta_{k_0}$ is the evaluation functional. Let $f\in \mathcal{C}\left(K\right)$ and write $$f=f\left(k_0\right)\chi_{\left\{k_0\right\}} + \left(f-f\left(k_0\right)\chi_{\left\{k_0\right\}} \right).$$ Then we have that
\begin{eqnarray*}
\left\|f\right\|_\infty &=& \max \left|f\right|\left(K\right)\\
&=&\max\left\{\left|f\left(k_0\right)\right|, \max \left|f\right|\left(K\setminus\left\{k_0\right\}\right)\right\}\\
&=&\max\left\{\left\|f\left(k_0\right)\chi_{\left\{k_0\right\}} \right\|_\infty, \left\|f-f\left(k_0\right)\chi_{\left\{k_0\right\}} \right\|_\infty\right\}.
\end{eqnarray*}
This shows that $\chi_{\left\{k_0\right\}}$ is an $\mathsf{L}_\infty$-summand vector of $\mathcal{C}\left(K\right)$.
\end{proof}

We remind the reader that an object $A$ of a category $\mathcal{C}$ is said to be $\mathcal{H}$-injective, where $\mathcal{H}$ is a class of morphisms of $\mathcal{C}$, provided that the following conditions holds: If $B,C\in\ob(\mathcal{C})$, $h\in\mor(B,C)\cap\mathcal{H}$ and $g\in\mor(B,A)$, then there is $f\in\mor(C,A)$ such that $f\circ h =g$. When $\mathcal{H}=\mon(\mathcal{C})$, the class of monomorphisms of $\mathcal{C}$, we call $A$ an injective object.

As expected, injective Banach spaces are injective objects of the category of Banach spaces. A Banach space $X$ is said to be $1$-injective if it is an injective Banach space and the operator $f$ preserves the norm of the operator $g$ in the definition right above.

We refer the reader to \cite{R,W} for a wide perspective on injective Banach spaces. A Banach space $X$ is $1$-injective if and only if $X$ is isometrically isomorphic to a $\mathcal{C}\left(K\right)$ for some extremally disconnected compact Hausdorff topological space $K$ (see \cite[Page 123]{D}).

\section{Measure theory}

\subsection{Spaces of bounded measurable functions}\label{vhs}

Recall that a Boolean algebra is a subset $\mathcal{A}$ of the power set $\Po(\Omega)$ of a non-empty subset $\Omega$ verifying that
\begin{itemize}
\item $\varnothing\in\mathcal{A}$,
\item if $A\in \mathcal{A}$, then $\Omega\setminus A\in \mathcal{A}$, and
\item if $\mathcal{C}\subseteq\mathcal{A}$ is finite, then $\cap\;\mathcal{C}\in\mathcal{A}$.
\end{itemize}
If the last condition holds for countable subsets of $\mathcal{A}$, then we are talking about $\sigma$-Boolean algebras.

If $\mathcal{F}$ is a $\sigma$-Boolean algebra, then $\mathcal{B}\left(\mathcal{F}\right)$ stands for the Banach space of bounded, $\mathcal{F}$-measurable, scalar-valued functions equipped with the sup norm; $\mathcal{B}_{\mathtt{s}}\left(\mathcal{F}\right)$ denote the dense subspace of  $\mathcal{B}\left(\mathcal{F}\right)$ consisting of all simple functions. Observe (see \cite{Sch}) that $\mathcal{B}\left(\mathcal{F}\right)^*$ may be represented as the space of all scalar-valued finitely additive measures on $\mathcal{F}$ with bounded variation norm (note that throughout this book we only work with real spaces and a real-valued measure has bounded variation norm if and only if it is bounded).

According to \cite{Dvm,Sch}, if $\mathcal{F}$ is a $\sigma$-Boolean algebra, then the following theorems hold for $\mathcal{F}$:

\begin{itemize}

\item {\bf Vitali-Hahn-Saks (VHS).} A sequence $\left(\mu_n\right)_{n\in \mathbb{N}}$ in $\mathcal{B}\left(\mathcal{F}\right)^*$ so that $\left(\mu_n\left(A\right)\right)_{n\in \mathbb{N}}$ converges for every $A\in \mathcal{F}$ is uniformly strongly additive.

\item {\bf Grothendieck (G).} A sequence $\left(\mu_n\right)_{n\in \mathbb{N}}$ in $\mathcal{B}\left(\mathcal{F}\right)^*$, that converges weakly-star, converges weakly. In other words, $\mathcal{B}\left(\mathcal{F}\right)$ is a Grothendieck space.

\item {\bf Nikodym (N).} A family $\mathcal{M} \subseteq \mathcal{B}\left(\mathcal{F}\right)^*$, such that $\left\{\mu\left(A\right):\mu \in \mathcal{M}\right\}$ is bounded for every $A\in \mathcal{F}$, is uniformly bounded. In other words, $\mathcal{B}_{\mathtt{s}}\left(\mathcal{F}\right)$ is barrelled.

\end{itemize}

Following Schachermayer (see \cite{Sch}), we will say that a Boolean algebra $\mathcal{F}$ verifies the properties VHS, N, or G if the corresponding theorem holds on $\mathcal{F}$. According to Schachermayer (see \cite{Sch}), Diestel, Faires, and Huff prove in 1976 that a Boolean algebra has VHS if and only if it has N and G (see \cite{DFH}).

On the other hand, note that $\mathcal{B}\left(\mathcal{F}\right)$ may be naturally identified with $\mathcal{C}\left(T\right)$, where $T$ is the Stone space of $\mathcal{F}$. This identification makes $\mathcal{B}_{\mathtt{s}}\left(\mathcal{F}\right)$ correspond to $\mathcal{C}_0\left(T\right)$. The properties N and G can be reformulated in terms of $T$ as follows:
\begin{enumerate}
\item $\mathcal{F}$ enjoys N if and only if $\mathcal{C}_0\left(T\right)$, the subspace of $\mathcal{C}\left(T\right)$ of all finite-valued functions, is barrelled.
\item $\mathcal{F}$ enjoys G if and only if $\mathcal{C}\left(T\right)$ is a Grothendieck space.
\end{enumerate}

By $\phi\left(\mathbb{N}\right)$ we intend to denote the Boolean sub-algebra of $\mathcal{P}\left(\mathbb{N}\right)$ consisting of the finite/cofinite subsets of $\mathbb{N}$. As in \cite{AizGuPer} we will say that a Boolean sub-algebra $\mathcal{F}$ of $\mathcal{P}\left(\mathbb{N}\right)$ is a natural Boolean algebra if $\phi\left(\mathbb{N}\right)\subseteq \mathcal{F}$. If we let $T$ stand for the Stone space of a natural Boolean algebra $\mathcal{F}$, then $\mathcal{C}\left(T\right)$ may be linearly and isometrically identified with a closed subspace of $\ell_{\infty}$ containing $c_0$ in virtue of \cite{Sch}.

 There exists a non-reflexive Grothendieck closed subspace of $\ell_\infty$ containing $c_0$ but failing to have a copy of $\ell_\infty$. To show this, we remind the reader that Haydon (see \cite{Haydon}) constructs, by using transfinite induction, a natural Boolean algebra $\mathcal{F}$ containing $\left\{\left\{i\right\}:i\in \mathbb{N}\right\}$ and such that the corresponding $\mathcal{C}\left(T\right)$, when identified with a closed subspace of $\ell_{\infty}$ containing $c_0$, is Grothendieck and fails to have a copy of $\ell_{\infty}$.

\section{Convergence theory}

The most general concept of convergence takes place in the category of pre-ordered spaces through two mainstream objects: nets and filters. We remind the reader that a pre-ordered space is a pair $(A,\leq)$ such that $A$ is a non-empty set and $\leq$ is a pre-order on $A$, that is, a binary relation on $A$ which is reflexive and transitive. Notable subsets of pre-ordered sets are $$\uparrow a:=\{x\in A:a\leq x\}\text{ and }\downarrow a:=\{x\in A:x\leq a\}.$$ Other notable subsets are the so called coinitial and cofinal subsets, that is, $B$ is coinital (cofinal) in $A$ provided that for all $a\in A$ we have that $B\;\cap \downarrow a\neq \varnothing$ ($B\;\cap \uparrow a\neq \varnothing$).

If $(A,\leq)$ is a pre-ordered space, then the binary relation given by $$\mathcal{R}:=\{(a,b)\in A\times A: a\leq b\text{ and }b\leq a\}$$ is an equivalence relation on $A$. It is immediate to observe that $\leq$ induces a partial order on $A/\mathcal{R}$.

\subsection{Nets and filters}

A net is a map from a directed space into a non-empty set. A directed space is a pair $(D,\leq)$ where $D$ is a non-empty set and $\leq$ is an upward directed pre-order. We remind the reader that a pre-order is upward directed provided that for all $x,y$ there exists $z$ with $z\geq x$ and $z\geq y$. Trivial examples of directed spaces are the totally ordered sets.

Given a net $\gamma: D\to X$, a subnet of $\gamma$ is a net $\lambda: E\to Y$ in such a way that for all $d\in D$ there exists $e\in E$ with $\lambda\left(\uparrow e\right)\subseteq \gamma\left(\uparrow d\right)$. Equivalently, $E_d:=\left\{e\in E:\lambda\left(\uparrow e\right)\subseteq \gamma \left(\uparrow d\right)\right\}\neq \varnothing$ for every $d\in D$.

\begin{proposition}
If $\gamma: D\to X$ is a net, $\alpha: E\to D$ is increasing, and $\alpha(E)$ is cofinal in $D$, then $\lambda:=\gamma \circ \alpha:E\to X$ is a subnet of $\gamma$.
\end{proposition}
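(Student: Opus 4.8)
The plan is to verify directly the defining condition of a subnet recalled just above the statement: for every $d\in D$ I must produce some $e\in E$ with $\lambda(\uparrow e)\subseteq\gamma(\uparrow d)$, equivalently $E_d\neq\varnothing$. As a preliminary remark I would note that, $E$ being a directed space, $\lambda=\gamma\circ\alpha$ is genuinely a net into $X$, so that asking whether it is a subnet of $\gamma$ is meaningful; no directedness of $\alpha(E)$ or compatibility beyond order-preservation is needed for this.

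The heart of the argument is a single three-term chain of the pre-order. Fix an arbitrary $d\in D$. Since $\alpha(E)$ is cofinal in $D$, by definition $\alpha(E)\cap\uparrow d\neq\varnothing$, so I may choose $e_0\in E$ with $d\leq\alpha(e_0)$. I would then claim that this $e_0$ already witnesses the subnet condition, i.e. $\lambda(\uparrow e_0)\subseteq\gamma(\uparrow d)$. To see this, take any $e\in\uparrow e_0$, so that $e_0\leq e$; because $\alpha$ is increasing, $\alpha(e_0)\leq\alpha(e)$, and combining this with $d\leq\alpha(e_0)$ through transitivity of the pre-order yields $d\leq\alpha(e)$, that is, $\alpha(e)\in\uparrow d$. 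Consequently $\lambda(e)=\gamma(\alpha(e))\in\gamma(\uparrow d)$, and since $e\in\uparrow e_0$ was arbitrary this gives the desired containment $\lambda(\uparrow e_0)\subseteq\gamma(\uparrow d)$, so $e_0\in E_d$.

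As $d\in D$ was arbitrary, this shows $E_d\neq\varnothing$ for every $d\in D$, which is precisely the definition of $\lambda$ being a subnet of $\gamma$. I do not anticipate any genuine obstacle: cofinality is used exactly once, to anchor the chain at $e_0$, and monotonicity of $\alpha$ together with transitivity is what propagates membership in $\uparrow d$ along the whole up-set $\uparrow e_0$. The only point demanding care is bookkeeping with the particular definition of subnet in force here, which requires the \emph{image} containment $\lambda(\uparrow e)\subseteq\gamma(\uparrow d)$ rather than a pointwise factorization through $\alpha$; accordingly the verification must be phrased as a containment of images of up-sets, as above, and not merely as $\alpha(e)\geq d$ for a single $e$.
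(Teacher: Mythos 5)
Your argument is correct and is essentially the paper's own proof: both anchor at an $e_0\in E$ with $\alpha(e_0)\geq d$ via cofinality and then use monotonicity of $\alpha$ plus transitivity to get $\alpha\left(\uparrow e_0\right)\subseteq\;\uparrow d$, hence $\lambda\left(\uparrow e_0\right)\subseteq\gamma\left(\uparrow d\right)$. Your version merely spells this containment out elementwise, while the paper states it as the chain $\alpha\left(\uparrow e\right)\subseteq\;\uparrow\alpha(e)\subseteq\;\uparrow d$; there is no substantive difference.
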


\begin{proof}
Fix $d\in D$. By hypothesis, there exists $e\in E$ such that $\alpha(e)\geq d$. As a consequence, $\alpha\left(\uparrow e\right)\subseteq\;\uparrow \alpha(e)\subseteq \;\uparrow d$. Finally, $\gamma\left(\alpha\left(\uparrow e\right)\right)\subseteq\gamma\left( \uparrow d\right)$.
\end{proof}

We will prove now an approach to the converse of the previous proposition. We will need to define the following sets, $E'_d:=\{e\in E_d:\lambda(e)=\gamma(d)\}$ and $D'=\{d\in D: E'_d\neq \varnothing\}$.

\begin{proposition}
Assume that $\gamma: D\to X$ is a net and $\lambda: E\to Y$ is a subnet.
\begin{enumerate}
\item If $D$ is well ordered, then $D'$ is cofinal in $D$.
\item If both $D$ and $E$ are well ordered, then there exists a subset $E'$ of $E$ and a map $\beta:D'\to E'$ such that $\lambda\circ\beta=\gamma$.
\end{enumerate}
\end{proposition}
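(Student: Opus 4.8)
The plan is to exploit the well-ordering of $D$ to extract, for each $d\in D'$, a canonical witness in $E$, and then to verify that these witnesses assemble into the desired map $\beta$ with $\lambda\circ\beta=\gamma$ on $D'$. First I would recall the definitions: $E_d=\{e\in E:\lambda(\uparrow e)\subseteq\gamma(\uparrow d)\}$ is non-empty for every $d\in D$ by the definition of subnet, $E'_d=\{e\in E_d:\lambda(e)=\gamma(d)\}$, and $D'=\{d\in D:E'_d\neq\varnothing\}$. Part (1) should come first, since part (2) depends on $D'$ being large enough (cofinal) to be useful; however, for the bare existence statement in (2) I only strictly need $E'_d\neq\varnothing$ for $d\in D'$, which holds by definition of $D'$.

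For part (1), fix $d_0\in D$; I must find $d\in D'$ with $d\geq d_0$. Choose any $e\in E_{d_0}$, which exists since $\lambda$ is a subnet. Then $\lambda(e)\in\gamma(\uparrow d_0)$, so there is some $d\in\;\uparrow d_0$ with $\lambda(e)=\gamma(d)$. The subtle point is that I want this same $e$ (or a suitable one above it) to lie in $E_d$, so that $e\in E'_d$ and hence $d\in D'$. Here is where well-ordering of $D$ enters: among all $d\geq d_0$ with $\gamma(d)=\lambda(e)$ I can take the least such $d$, and because $E_d\subseteq E_{d_0}$ fails in general, I instead argue directly — since $e\in E_{d_0}$ gives $\lambda(\uparrow e)\subseteq\gamma(\uparrow d_0)$, and $\lambda(e)=\gamma(d)$ with $d\geq d_0$, one checks that $e$ witnesses $d\in D'$ provided $e\in E_d$. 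The cleanest route is: pick $e\in E_{d_0}$ minimal in the well-ordered $E$ (if $E$ is also well ordered) or simply extract from $\lambda(\uparrow e)\subseteq\gamma(\uparrow d_0)$ a value equal to some $\gamma(d)$ with $d\geq d_0$ and verify the containment $\lambda(\uparrow e)\subseteq\gamma(\uparrow d)$ by comparing up-sets. I expect this containment-chasing to be the main obstacle in (1), since it requires relating $\gamma(\uparrow d_0)$ to $\gamma(\uparrow d)$ when $d\geq d_0$, using that $\gamma(\uparrow d)\subseteq\gamma(\uparrow d_0)$.

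For part (2), with both $D$ and $E$ well ordered I can define $\beta$ explicitly by taking least elements. For each $d\in D'$, the set $E'_d\subseteq E$ is non-empty, so by well-ordering of $E$ it has a minimum; set $\beta(d):=\min E'_d$ and let $E':=\beta(D')=\{\beta(d):d\in D'\}$. By construction $\beta(d)\in E'_d$, which means precisely $\lambda(\beta(d))=\gamma(d)$, i.e.\ $\lambda\circ\beta=\gamma$ on $D'$, as required. This makes part (2) essentially a matter of invoking well-ordering to choose canonical representatives, so no Axiom of Choice is needed beyond the two well-orderings, and the verification $\lambda\circ\beta=\gamma$ is immediate from $\beta(d)\in E'_d$. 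The only point deserving care is confirming that $\beta$ is well defined (each $E'_d$ genuinely non-empty for $d\in D'$, which is the definition of $D'$) and that $E'$ is a legitimate subset of $E$; both are routine once (1) secures that $D'$ is cofinal and hence non-empty.
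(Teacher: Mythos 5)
Your part (2) is correct and is exactly the paper's proof: set $\beta(d):=\min\left(E'_d\right)$, which exists by the well-ordering of $E$ and by $E'_d\neq\varnothing$ for $d\in D'$, and $\lambda\circ\beta=\gamma$ is then immediate; your observation that nothing beyond the two well-orderings is needed is right (the paper formally invokes the Axiom of Choice to form $E'$, though it is dispensable here). Note, however, that (2) does not ``depend on (1)'' as your closing remark suggests: nothing in it needs $D'$ to be cofinal or even non-empty. The genuine gap is in part (1): you correctly isolate the containment $\lambda\left(\uparrow e\right)\subseteq\gamma\left(\uparrow d\right)$ as the crux, but you never establish it, and the tools you propose cannot. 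Minimizing over $\{d'\geq d_0:\gamma(d')=\lambda(e)\}$ for a single fixed $e\in E_{d_0}$ only controls the preimages of the one value $\lambda(e)$, whereas the containment concerns every value $\lambda(e_1)$ with $e_1\geq e$; and the inclusion $\gamma\left(\uparrow d\right)\subseteq\gamma\left(\uparrow d_0\right)$ you invoke points the wrong way. Concretely: let $D=\{0,1,2,3\}$ and $E=\{0,1,2\}$ with their natural orders, let $a,b,c$ be distinct points, and set $\gamma(0)=b$, $\gamma(1)=a$, $\gamma(2)=\gamma(3)=c$, $\lambda(0)=a$, $\lambda(1)=b$, $\lambda(2)=c$. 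Then $\lambda$ is a subnet of $\gamma$ (for $d=0$ take $e=0$; for $d\geq 1$ take $e=2$, since $\lambda\left(\uparrow 2\right)=\{c\}$). For $d_0=0$ and $e=0\in E_{d_0}$ your recipe yields $d=\min\{d'\geq 0:\gamma(d')=a\}=1$; but $E_1=\{2\}$ and $\lambda(2)=c\neq a=\gamma(1)$, so $E'_1=\varnothing$ and your candidate $d=1$ does not lie in $D'$ at all. (Your fallback of choosing $e$ minimal in $E$ is also unavailable: in (1) only $D$ is assumed well ordered.)

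The idea you are missing --- and what the paper does --- is to minimize over witnesses coming from the whole set $E_{d_0}$ at once: put $d_1:=\min\{d\in\;\uparrow d_0:\lambda(e')=\gamma(d)\text{ for some }e'\in E_{d_0}\}$, a non-empty set since $\lambda(e')\in\gamma\left(\uparrow d_0\right)$ for any $e'\in E_{d_0}$, and pick $e\in E_{d_0}$ with $\lambda(e)=\gamma(d_1)$. The key structural fact is that $E_{d_0}$ is upward closed: if $e_1\geq e$ then $\uparrow e_1\subseteq\;\uparrow e$, so $e_1\in E_{d_0}$. Hence for any $e_1\geq e$, every $d_2\geq d_0$ with $\gamma(d_2)=\lambda(e_1)$ belongs to the minimized set, so $d_1\leq d_2$ (and well-orders are total, so $d_2\in\;\uparrow d_1$); therefore $\lambda\left(\uparrow e\right)\subseteq\gamma\left(\uparrow d_1\right)$, whence $e\in E'_{d_1}$ and $d_1\in D'$ with $d_1\geq d_0$, which is the desired cofinality. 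In the counterexample above this global minimization gives $d_1=0$ with witness $e=1$, since $\lambda(1)=b=\gamma(0)$ and $\lambda\left(\uparrow 1\right)=\{b,c\}\subseteq\gamma\left(\uparrow 0\right)$, and indeed $0\in D'$.
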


\begin{proof}
\mbox{}
\begin{enumerate}
\item Fix $d_0\in D$. By hypothesis, we can find $d_1:=\min\{d\in \;\uparrow d_0: \text{ there is }e\in E_{d_0}\text{ with }\lambda(e)=\gamma(d)\}$. It is obvious that $d_1\geq d_0$. We will show that $d_1\in D'$. Indeed, let $e\in E_{d_0}$ with $\lambda(e)=\gamma(d_1)$. Note that $\lambda\left(\uparrow e\right)\subseteq \gamma\left(\uparrow d_0\right)$. If there exists $e_1\in\;\uparrow e$ and $d_2\in\; \uparrow d_0\setminus \uparrow d_1$ with $\lambda(e_1)=\gamma(d_2)$, then, by minimality, $d_1\leq d_2$ which contradicts the fact that $d_2\notin \;\uparrow d_1$. As a consequence, $\lambda\left(\uparrow e\right)\subseteq \gamma\left(\uparrow d_1\right)$. This implies that $e\in E'_{d_1}\neq\varnothing$ and thus $d_1\in D'$.

\item For every $d\in D'$, by hypothesis, there exists $e_d:=\min(E'_d)$. By the Axiom of Choice there exists a set $E'$ whose elements are exactly all the $e_d$'s. Now it is possible to define the map $$\begin{array}{rrcl} \beta:& D'&\to & E'\\&d&\mapsto&\beta(d):=e_d.\end{array}$$ Finally, $\lambda\circ\beta=\gamma$. Indeed, $\lambda\left(\beta(d)\right)=\lambda\left(e_d\right)
=\gamma\left(d\right)$ because $e_d\in E'_d$.
\end{enumerate}
\end{proof}

We remind the reader that a subset $F$ of a partially ordered set $P$ is said to be a filter provided that
\begin{itemize}
\item $F$ is proper and non-empty;
\item $F$ is downward directed, that is, for all $x,y\in F$ there exists $z\in F$ with $z\leq x$ and $z\leq y$;
\item $F$ is upward closed, that is, if $x\in F$, $y\in P$ and $x\leq y$, then $y\in F$.
\end{itemize}

Observe that if $P$ has a minimum element, then no filter can have it. Notable examples of filters are the so called $p$-principal filter for a fixed $p\in P$, $\uparrow p$. Every filter trivially verifies that $$F=\bigcup_{p\in F}\uparrow p.$$

It is also immediate to note that if a filter is finite, then it is contained in a $p$-principal filter.

Maximal elements of the set of filters are called ultrafilters. The existence of ultrafilters is guaranteed by the Zorn's Lemma. 

\begin{proposition}
$p$ is a minimal element if and only if $\uparrow p$ is an ultrafilter.
\end{proposition}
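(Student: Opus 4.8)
The plan is to prove the two implications separately, after first confirming that $\uparrow p=\{x\in P: p\le x\}$ is genuinely a filter. Non-emptiness ($p\in\uparrow p$), upward closedness, and downward directedness (any two elements of $\uparrow p$ admit $p$ itself as a common lower bound lying inside $\uparrow p$) are immediate from the definitions; the only delicate axiom is properness, i.e. $\uparrow p\neq P$. This is precisely the point governed by the earlier observation that no filter can contain a least element, since $\uparrow p=P$ exactly when $p$ is the minimum of $P$. I therefore work under the standing understanding that the relevant principal sets are proper (equivalently, that $P$ has no least element), which is the convention making the equivalence clean.

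For the implication that $p$ minimal forces $\uparrow p$ to be an ultrafilter, I would show directly that $\uparrow p$ is maximal among filters. Suppose a filter $G$ satisfies $\uparrow p\subsetneq G$ and pick $g\in G\setminus\uparrow p$, so that $p\not\le g$. Since $p\in\uparrow p\subseteq G$, downward directedness of $G$ yields $z\in G$ with $z\le p$ and $z\le g$. Minimality of $p$ together with $z\le p$ forces $z=p$, whence $p=z\le g$ and $g\in\uparrow p$, a contradiction. Thus no filter properly contains $\uparrow p$, so $\uparrow p$ is an ultrafilter. The two ingredients here are exactly minimality (to collapse $z\le p$ to $z=p$) and the downward directedness built into the filter axioms.

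For the converse I argue by contraposition: if $p$ is \emph{not} minimal, there is $q\in P$ with $q\le p$ and $q\neq p$. Then $\uparrow p\subseteq\uparrow q$, because $p\le x$ gives $q\le p\le x$, and the inclusion is strict since $q\in\uparrow q$ while $q\notin\uparrow p$ (otherwise $p\le q\le p$ would give $q=p$). Hence $\uparrow q$ is a filter strictly larger than $\uparrow p$, so $\uparrow p$ cannot be maximal and fails to be an ultrafilter.

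The main obstacle is exactly the properness bookkeeping hidden in this last step: $\uparrow q$ is a legitimate (proper) filter only when $q$ is not the least element of $P$, and when the global minimum happens to sit below $p$ the principal filter $\uparrow p$ can genuinely be maximal without $p$ being minimal. So the crux of a rigorous write-up is to fold in the no-least-element convention (the same one underlying the earlier remark that a minimum belongs to no filter), after which every principal set appearing above is automatically proper and both implications go through verbatim.
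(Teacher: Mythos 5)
Your proof is correct and follows essentially the same route as the paper's. The forward implication is the paper's argument verbatim: take a filter $G$ containing $\uparrow p$, pick $g\in G\setminus\uparrow p$, use downward directedness to produce $z\in G$ with $z\leq p$ and $z\leq g$, and let minimality collapse $z=p$, giving the contradiction $g\in\;\uparrow p$. Your contrapositive for the converse is just a rephrasing of the paper's direct step, which from $y\leq p$ deduces $\uparrow p\subseteq\;\uparrow y$ and invokes maximality to conclude $p\leq y$.

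Your properness caveat, however, is a genuine and worthwhile addition rather than mere bookkeeping: the paper's converse silently assumes that $\uparrow y$ is a (proper) filter, which fails exactly when $y$ is the minimum of $P$. Indeed, in the two-element chain $P=\left\{m,p\right\}$ with $m<p$, the set $\uparrow p=\left\{p\right\}$ is a maximal filter even though $p$ is not minimal, so the equivalence as stated genuinely needs your no-minimum convention; likewise $\uparrow p$ itself equals $P$ and is improper when $p$ is the minimum, so the forward implication needs it too. The paper records the adjacent fact that no filter can contain a minimum of $P$ but does not fold it into this proposition; your write-up does, which makes it the more rigorous version of the same argument.
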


\begin{proof}
Let $F$ be a filter containing $\uparrow p$ and suppose to the contrary that we can find $y\in F\setminus \uparrow p$. Notice that $p,y\in F$ therefore there exists $z\in F$ with $z\leq p$ and $z\leq y$. By hypothesis, $z=p$ and thus $p\leq y$, which implies the contradiction that $y\in\;\uparrow p$. Conversely, if $\uparrow p$ is an ultrafilter and $y\leq p$, then $\uparrow p\subseteq \;\uparrow y$, so they must be equal by hypothesis and thus $y\in\;\uparrow p$, which means that $p\leq y$.
\end{proof}

As a direct consequence of the previous proposition we have that if $P$ is finite, then every ultrafilter is $p$-principal for $p$ minimal.

In case $B$ is a subset of $P$ verifying the first two conditions of the definition of filter, then we will say that $B$ is a filter base. Every filter base not containing the minimum of $P$ generates a filter: $$\mathcal{J}(B):=\{x\in P: \text{ exists }b\in B \text{ with }b\leq x\}.$$

If $X$ is a non-empty set, then the power set of $X$, $\mathcal{P}(X)$, can be partially ordered by the inclusion, so we can consider filters of $\mathcal{P}(X)$, or more simply, filters in $X$. Notice that a filter in $X$ cannot contain the empty set since otherwise it would not be a proper subset of $\mathcal{P}(X)$.

If $F$ is a filter in $X$, then $F$ is said to be free provided that $\cap F =\varnothing$. Filters which are not free are called principal. It is well known that:
\begin{itemize}
\item A filter $F$ is principal if and only if there is $A\in \Po(X)\setminus\{\varnothing\}$ so that $F\subseteq \;\uparrow A$.
\item An ultrafilter $F$ is principal if and only if there is $x\in X$ so that $F=\;\uparrow\{x\}$.
\item If $X$ is finite, then all ultrafilters are principal.
\item If $X$ is infinite, then a filter is free if and only if it contains the Frechet filter, that is, the filter given by $$\mathcal{F}_X:=\left\{A\subseteq X:X\setminus A\text{ is finite}\right\}.$$
\end{itemize}

\subsection{Convergence in topological spaces}\label{filters}

Recall that given a topological space $X$ and an element $x\in X$, the set of all neighborhoods of $x$, $\Ne_x$, is a filter in $X$ called the neighborhood filter or neighborhood system of $x$.

The most general setting convergence in topological spaces is defined in follows:
\begin{itemize}
\item A filter base $B$ in $X$ is said to be convergent to $x$ if $\Ne_x\subseteq \mathcal{J}(B)$.
\item If $f:M\to X$ is just a function and $B$ is a filter base in $M$, then $B\lim f = x$ means that the filter base $f(B)$ converges to $x$ in $X$.
\item If $\gamma : D\to X$ is a net, then $\lim \gamma$ refers to $\mathcal{B}\lim \gamma$ where $\mathcal{B}$ is the filter base given by $\mathcal{B}:=\{\uparrow d: d\in D\}$.
\end{itemize}

It is not hard to show that if $f:M\to X$ is just a function and $\mathcal{F}$ is an ultrafilter in $M$, then $\mathcal{J}\left(f\left(\mathcal{F}\right)\right)$ is an ultrafilter in $X$. On the other hand, it is almost immediate to check that a sequence in a topological space converges to a given element if and only if it converges to that element through the Frechet filter.

\begin{lemma}\label{engaya}
Let $\left(x_n\right)_{n\in\mathbb{N}}$ be a sequence in a topological space $X$ and consider an element $x\in X$.
\begin{enumerate}
\item If $\left(x_n\right)_{n\in\mathbb{N}}$ converges to $x$, then $\mathcal{U}\lim x_n = x$ for every free filter $\mathcal{U}$ of $\mathbb{N}$.
\item If $\mathcal{U}\lim x_n = x$ for some free filter $\mathcal{U}$ of $\mathbb{N}$, then $x\in \mathrm{cl}\left(\left\{x_n:n\in\mathbb{N}\right\}\right)$.
\end{enumerate}
\end{lemma}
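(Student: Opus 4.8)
The plan is to first translate the abstract notion $\mathcal{U}\lim x_n = x$ into a concrete membership condition, and then dispatch both items quickly. Recall that the sequence is a map $f\colon\mathbb{N}\to X$ with $f(n)=x_n$, and that by definition $\mathcal{U}\lim x_n=x$ means that the filter base $f(\mathcal{U})$ converges to $x$, i.e. $\Ne_x\subseteq\mathcal{J}(f(\mathcal{U}))$. Unwinding $\mathcal{J}$, this says that for every $N\in\Ne_x$ there is $U\in\mathcal{U}$ with $\{x_n:n\in U\}\subseteq N$. Since $\mathcal{U}$ is upward closed, I would record the clean reformulation
$$\mathcal{U}\lim x_n=x\iff \{n\in\mathbb{N}:x_n\in N\}\in\mathcal{U}\ \text{ for every }\ N\in\Ne_x.$$
Everything below rests on this equivalence.

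For item \emph{1}, I would use the fact established just before the lemma, namely that ordinary convergence of $(x_n)$ to $x$ is exactly convergence through the Fr\'echet filter $\mathcal{F}_{\mathbb{N}}$, so that $\{n:x_n\in N\}$ is cofinite for each $N\in\Ne_x$, hence belongs to $\mathcal{F}_{\mathbb{N}}$. Because $\mathbb{N}$ is infinite and $\mathcal{U}$ is free, the characterization of free filters gives $\mathcal{F}_{\mathbb{N}}\subseteq\mathcal{U}$; thus each such set lies in $\mathcal{U}$, and the reformulation yields $\mathcal{U}\lim x_n=x$. Equivalently, one may argue directly that $f(\mathcal{F}_{\mathbb{N}})\subseteq f(\mathcal{U})$ and that $\mathcal{J}$ is increasing, so $\Ne_x\subseteq\mathcal{J}(f(\mathcal{F}_{\mathbb{N}}))\subseteq\mathcal{J}(f(\mathcal{U}))$.

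For item \emph{2}, I would fix an arbitrary $N\in\Ne_x$ and invoke the reformulation to obtain $\{n:x_n\in N\}\in\mathcal{U}$. Since a filter is proper, it never contains $\varnothing$, so this set is non-empty; picking $n$ in it gives $x_n\in N\cap\{x_k:k\in\mathbb{N}\}$. As $N$ was arbitrary, every neighborhood of $x$ meets $\{x_n:n\in\mathbb{N}\}$, which is precisely the statement $x\in\cl(\{x_n:n\in\mathbb{N}\})$.

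The only delicate point is the bookkeeping in the opening paragraph: correctly passing from $\Ne_x\subseteq\mathcal{J}(f(\mathcal{U}))$ to the membership statement, using that $\mathcal{U}$ is upward closed. Once that equivalence is in place I expect no genuine obstacle; note in particular that freeness of $\mathcal{U}$ is used only in item \emph{1} (to secure $\mathcal{F}_{\mathbb{N}}\subseteq\mathcal{U}$), whereas item \emph{2} needs nothing beyond $\mathcal{U}$ being a proper filter.
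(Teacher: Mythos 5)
Your proposal is correct and follows essentially the same route as the paper: item \emph{1} rests on the fact that every free filter on $\mathbb{N}$ contains the Fr\'echet filter, and item \emph{2} unwinds $\mathcal{U}\lim x_n = x$ into the membership statement $\left\{n\in\mathbb{N}:x_n\in N\right\}\in\mathcal{U}$ for each $N\in\Ne_x$, exactly as the paper does. The only (harmless) deviation is in item \emph{2}, where the paper notes that this set is infinite (using freeness) while you correctly observe that mere non-emptiness of members of a proper filter already suffices for $x\in\cl\left(\left\{x_n:n\in\mathbb{N}\right\}\right)$, so freeness is needed only in item \emph{1}.
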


\begin{proof}
\mbox{}
\begin{enumerate}
\item Obvious since every free filter of $\N$ contains the Frechet filter.
\item If $W$ is a neighborhood of $x$, then $\left\{n\in\mathbb{N}:x_n\in W\right\}\in \mathcal{U}$, which means that $\left\{n\in\mathbb{N}:x_n\in W\right\}$ is infinite.
\end{enumerate}
\end{proof}

\section{The motivation for this work}

This thesis dissertation is simply an ongoing search for solutions to classical problems in operator theory and in the theory of sequences and series.

\subsection{Summing multiplier spaces with the Bade property}

Our main interests in this topic are focused on proving or disproving the following conjecture:

\begin{conjecture}\label{badeq}
A closed subspace of $\ell_\infty$ has the Bade property if and only if it is the summing multiplier space associated to some series in a Banach space.
\end{conjecture}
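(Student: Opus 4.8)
The plan is to split the biconditional and attack the two implications with different machinery, since the forward one is essentially a density statement while the reverse one is a construction problem that I expect to be the real obstacle. Throughout I read the Bade property in the form suited to this setting: a closed subspace $X$ of $\ell_\infty$ has it precisely when $X=\cspan\{\chi_A:A\subseteq\mathbb{N},\ \chi_A\in X\}$, the closed span of the characteristic functions it contains (this is what makes $c_0$, which is $\mathcal{S}\left(\sum e_n\right)$ in $c_0$, a legitimate instance). For the forward implication, that every summing multiplier space has the Bade property, I would first record that if $\mathcal{S}\left(\sum x_n\right)$ is closed in $\ell_\infty$ then $\sum x_n$ must be weakly unconditionally Cauchy, so by Theorem \ref{DiestelwuC} the finite partial sums $\sum_{i\in A}x_i$ are uniformly bounded. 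Writing $\mathcal{A}:=\{A\subseteq\mathbb{N}:\chi_A\in\mathcal{S}\left(\sum x_n\right)\}$ for the family of index sets over which the subseries converges, the crux is a layer-cake argument: given $a=(a_n)\in\mathcal{S}\left(\sum x_n\right)$ with $\|a\|_\infty\le 1$, I would produce a nested finite partition whose blocks lie in $\mathcal{A}$ and on which $a$ is nearly constant, control the tail through an Orlicz--Pettis passage (Theorem \ref{OP}), and thereby place $a$ in $\cl\left(\spa\{\chi_A:A\in\mathcal{A}\}\right)$.

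The delicate point already here is that $\mathcal{A}$ need not be a full Boolean algebra when $\sum x_n$ is only conditionally (not unconditionally) convergent, so the blocks of the partition cannot be manufactured by naive set operations; I would isolate as a lemma the statement that membership of the single sequence $a$ in $\mathcal{S}\left(\sum x_n\right)$ forces convergence of the subseries over each of its level sets. For the reverse implication I would start from a closed subspace $X$ with the Bade property, so $X=\cspan\{\chi_A:A\in\mathcal{A}\}$ with $\mathcal{A}=\{A:\chi_A\in X\}$, and invoke the identification of Subsection \ref{vhs}: $X$ is linearly isometric to $\mathcal{C}(T)$ for the Stone space $T$ of $\mathcal{A}$. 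The task is to manufacture a Banach space $Y$ and a series $\sum x_n$ in $Y$ with $\mathcal{S}\left(\sum x_n\right)=X$ exactly. The naive choice $x_n=e_n$ inside $X$ fails, since then $\mathcal{S}\left(\sum e_n\right)=X\cap c_0$; one must instead use vectors whose subseries $\sum_{n\in A}x_n$ converge for every $A\in\mathcal{A}$, infinite $A$ included, and for nothing outside $X$. My proposal is to build $Y$ from $X^{*}$ (the finitely additive measures on $\mathcal{A}$) and take the $x_n$ to be differences of a telescoping system adapted to a fixed exhaustion of $T$ by clopen sets, so that the operator $c_0\to Y,\ e_n\mapsto x_n$, has bidual imitating the canonical embedding $X\hookrightarrow X^{**}$ and returns to $Y$ precisely on $X$.

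The hard part will be the \emph{exactness} in this reverse implication: simultaneously forcing $\sum a_n x_n$ to converge for every $a\in X$ and to diverge for every $a\notin X$. This is exactly where the Grothendieck property intervenes, as the observation preceding Theorem \ref{teor} shows that whether $\mathcal{S}$ is $\ell_\infty$-Grothendieck constrains the convergence of bounded multipliers, and where the pathologies of general Boolean algebras, such as Haydon's natural algebra whose $\mathcal{C}(T)$ is Grothendieck yet contains no copy of $\ell_\infty$ (see \cite{Haydon}), make a single uniform construction implausible. I therefore expect the reverse implication either to require a construction tailored to the topological type of the Stone space $T$, or else to be genuinely false for some exotic $\mathcal{A}$; accordingly the decisive step of the programme is to test the proposed telescoping construction against Haydon's algebra, which should either certify the conjecture or deliver the counterexample.
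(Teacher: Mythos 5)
You set out to prove a statement that the paper records as a conjecture precisely in order to refute it: both implications are disproved (the paper says so explicitly right after stating it), so the proposal is built on a false target rather than containing a repairable gap. Your reverse implication --- every closed summing multiplier space has the Bade property --- already fails for your own opening example: $\mathcal{S}\left(\sum e_n\right)=c_0$ inside $c_0$, and $\ext\left(\B_{c_0}\right)=\varnothing$, so $\B_{c_0}$ is certainly not the closed convex hull of its extreme points. The source of the error is your reformulation of the Bade property as $X=\cspan\left\{\chi_A:\chi_A\in X\right\}$: by Lemma \ref{ext} the extreme points of $\B_{\mathcal{S}}$ are exactly the $\pm 1$-valued sequences lying in $\mathcal{S}$, not characteristic functions, and the Bade property demands $\B_{\mathcal{S}}=\cco\left(\ext\left(\B_{\mathcal{S}}\right)\right)$, which no closed-span condition captures. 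The paper's counterexample (Theorem \ref{sepc2}) is of the same elementary kind: in a reflexive space, telescoping a normalized weakly null sequence yields a wuC series whose multiplier space is closed (indeed complete) yet has $\ext\left(\B_{\mathcal{S}}\right)=\varnothing$ by {\em 1} of Lemma \ref{sepc}, because $\left(x_n\right)_{n\in\mathbb{N}}$ does not tend to $0$. Your layer-cake argument therefore cannot be completed: $\mathcal{S}$ may contain no $\pm 1$-valued sequences, and no characteristic functions beyond those coming from $c_{00}$, to partition against.

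The forward implication fails as well, and not at the exotic Boolean algebras where you go looking for trouble. The space $c$ has the Bade property and contains $c_{00}$ (so it passes the first-terms test you implicitly use), yet Corollary \ref{cnotB} shows that no series in any Banach space satisfies $\mathcal{S}\left(\sum x_n\right)=c$: if one did, the series would converge by Lemma \ref{ccS}, hence $x_n\to 0$, an absolutely (hence unconditionally) convergent subseries could be extracted, and {\em 2} of Theorem \ref{ccSS} would produce an extreme point $\left(\chi_N\left(n\right)-\chi_M\left(n\right)\right)_{n\in\mathbb{N}}\in\ext\left(\B_{\mathcal{S}}\right)\setminus\ext\left(\B_c\right)$, contradicting $c=\mathcal{S}$. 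In particular your telescoping Stone-space construction would have to realize $c$, so it cannot succeed in general. Your closing suspicion that the statement might be ``genuinely false for some exotic $\mathcal{A}$'' is directionally right but misaimed: Haydon's algebra is irrelevant here, since the obstructions are the elementary extremal lemmas (Lemma \ref{ext}, Lemma \ref{ccS}, Lemma \ref{sepc}) available before the conjecture is even stated. What survives of your programme is exactly the refined question the paper poses in place of the conjecture --- for which series does $\mathcal{S}$ enjoy the Bade property --- answered positively for subseries convergent series in Theorem \ref{scSS}.
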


Observe that the Corollary \ref{cnotB} already disproves the implication $\Rightarrow$ of Conjecture \ref{badeq}. And Theorem \ref{sepc2} disproves the converse implication $\Leftarrow$. Then a new question comes into play.

\begin{question}
For which series in Banach spaces does the corresponding associated summing multiplier space have the Bade property?
\end{question}

In Corollary \ref{scSS} we show that every sc series provides a positive answer to the previous question (even if the ambient space is simply a topological vector space).

\subsection{Generalizations of Banach limits}

There are mainly two questions on this topic. The first question reads as follows:

\begin{question}
Is there a conception of Banach limit for vector-valued bounded sequences that coincides with the original concept of Banach limit when restricted to scalar-valued bounded sequences?
\end{question}

We prove that the answer to the previous question is in fact positive (see Subsection \ref{motBL}). A second question then arises:

\begin{question}
Is there a vector-valued version of the Lorentz's intrinsic characterization of almost convergence?
\end{question}

With respect to this second question, we prove that one of the implications of the Lorentz's intrinsic characterization of almost convergence can be fully versioned for vector-valued sequences (see {\em 2} of Corollary \ref{aquitepillo3}).

We also show that the converse implication cannot be fully extended to the vector-valued case because there are Banach spaces free of vector-valued Banach limits (see Example \ref{kad}). This fact already implies a negative answer to this second question. However, we have accomplished partial extensions of the converse implication (see Section \ref{secLIC}).

\subsection{Almost convergence extension of the Orlicz-Pettis Theorem}

In summability theory it is of a great interest to obtain extensions/generalizations of the Orlicz-Pettis Theorem via different summability methods (see \cite{AAGPPF, AizArPer,AizPer,BuWu}). We list now several of these extensions:
\begin{itemize}
\item By means of subalgebras of $\mathcal{P} \left(\mathbb{N}\right)$ in \cite{AizGuPer}.
\item By means of a regular matrix summability method in \cite{AizGPPE,AizPESS}.
\end{itemize}

We refer the reader to \cite{Boos} for a wide perspective on general matrix summability methods. An unexpected characteristic of the almost convergence is that the almost summability is not representable by any matrix summability method (see \cite{Boos}). Thus, it is legitimate to raise the following question:

\begin{question}
Is there a version of the Orlicz-Pettis Theorem for the almost convergence?
\end{question}

We provide a positive solution to this question in Theorem \ref{ACOPT}.

\subsection{Almost convergence extension of the Hahn-Schur Theorem}

It is also of a great interest in summability theory to obtain extensions/generalizations of the Hahn-Schur Theorem via different summability methods (see \cite{Swartz1,Swartz2}). A nice generalization via uniform convergence is due to Swartz (see Subsection \ref{SwSchur}). Therefore, it is legitimate to raise the following question:

\begin{question}
Is there a version of the Hahn-Schur Theorem for the almost convergence?
\end{question}

We provide a positive solution to this question in Theorem \ref{teor} and in Theorem \ref{teor2}.

\chapter{The Krein-Milman property and the Bade property}

This chapter is a step forward on the ongoing search for the solutions to several open problems on the Krein-Milman property and the Bade property.

The most interesting problem related to the Krein-Milman property is on determining whether a Banach space enjoying the Krein-Milman property also has the Radon-Nikodym property. 

The main problem related to the Bade property is to determine when the summing multiplier space associated to a given series satisfies the Bade property.

Other problems related to these topics and similar others have been recently considered in \cite{GPbanach, GPafa,GPZ} and mostly deal with the structure of convex sets in topological vector spaces.

\section{The Krein-Milman property}

The origin of the well-known and long-standing open problem of the possible equivalence of the Krein-Milman property and the Radon-Nikodym property dates back to the Summer of 1973 when Lindenstrauss (see, for instance, \cite{FLP}) showed that every Banach space having the Radon-Nikodym property also enjoys the Krein-Milman property. In \cite{HM} the authors approach the above problem in the positive by proving that if a dual Banach space has the Krein-Milman property, then it also verifies the Radon-Nikodym property. We refer the reader to \cite[Chapter 5, Section 5.4]{Pi} for a proper discussion of the problem about the Radon-Nikodym property being equivalent to the Krein-Milman property.

Everything on this topic starts with the famous Krein-Milman Theorem (see \cite{KM}).

\begin{theorem}[Krein and Milman, 1940; \cite{KM}]
Each compact and convex subset of a Hausdorff locally convex topological vector space coincides with the closed convex hull of its extreme points.
\end{theorem}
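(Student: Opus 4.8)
The plan is to follow the classical two-stage argument, carried out entirely within the language of faces from Definition \ref{face}. Let $K$ be a non-empty compact convex subset of a Hausdorff locally convex space $X$, the empty and singleton cases being trivial. First I would establish that every non-empty compact convex set has an extreme point. Consider the collection $\mathcal{F}$ of all non-empty closed faces of $K$ ordered by inclusion. Any chain in $\mathcal{F}$ has non-empty intersection, since $K$ is compact and the chain has the finite intersection property, and this intersection is again a face (the non-empty intersection of faces is a face); thus every chain admits a lower bound in $\mathcal{F}$, and the dual form of Zorn's Lemma produces a minimal element $M\in\mathcal{F}$. I claim $M$ is a singleton. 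If not, pick distinct $x,y\in M$; since $X$ is Hausdorff and locally convex, Hahn-Banach separation provides $f\in X^*$ with $f(x)\neq f(y)$, so $f$ is non-constant on $M$. Then $\suppv_f(M)$ is non-empty (by compactness $f$ attains its supremum on $M$) and is a proper exposed, hence closed, face of $M$; since the faces of a face are faces of $K$, this contradicts the minimality of $M$. Therefore $M=\{e\}$ and $e\in\ext(K)$, so $\ext(K)\neq\varnothing$.

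Next I would prove $K=\cco(\ext(K))$. The inclusion $\cco(\ext(K))\subseteq K$ is clear since $K$ is closed and convex. For the reverse, suppose toward a contradiction that there is some $x_0\in K\setminus\cco(\ext(K))$. As $\cco(\ext(K))$ is a closed convex set in a locally convex Hausdorff space, Hahn-Banach separation gives $f\in X^*$ with $f(x_0)>\sup f(\cco(\ext(K)))$. By compactness $f$ attains its maximum on $K$, so $\suppv_f(K)$ is a non-empty compact convex face of $K$. Applying the first stage to $\suppv_f(K)$ produces an extreme point $e$ of $\suppv_f(K)$, and since the faces of a face are faces of $K$, the point $e$ is in fact an extreme point of $K$. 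But then $f(e)=\max f(K)\geq f(x_0)>\sup f(\cco(\ext(K)))\geq f(e)$, a contradiction. Hence no such $x_0$ exists and $K=\cco(\ext(K))$.

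The crux of the argument, and the place where the hypotheses are genuinely used, is the pair of separation steps. Local convexity together with the Hausdorff property is exactly what guarantees enough continuous functionals both to separate two points of the minimal face $M$ in the first stage and to strictly separate $x_0$ from $\cco(\ext(K))$ in the second; without local convexity $X^*$ could be too small and the statement fails. The remaining ingredient, compactness, is what powers the appeal to Zorn's Lemma and what guarantees that the relevant suprema are attained, so that the exposed faces $\suppv_f(M)$ and $\suppv_f(K)$ are non-empty. I expect the verification that a chain of closed faces has non-empty intersection (the finite intersection property under compactness) and that a proper supporting functional produces a strictly smaller face to be the only points requiring care.
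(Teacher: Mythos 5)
Your proof is correct: the two-stage argument (Zorn's Lemma applied to the non-empty closed faces of $K$ to produce an extreme point, followed by Hahn--Banach strict separation of a putative $x_0\in K\setminus\cco\left(\ext\left(K\right)\right)$ and extraction of an extreme point from the supporting face $\suppv_f\left(K\right)$) is the standard proof of the Krein--Milman theorem, and every auxiliary fact you invoke --- exposed faces are faces, faces of a face are faces of the whole set, the non-empty intersection of faces is a face, and that the dual of a Hausdorff locally convex space separates points --- is exactly what the paper records in its extremal-theory subsection. Note that the paper states this theorem as a classical result with a citation to Krein and Milman and gives no proof of its own, so there is no in-paper argument to compare against; your write-up correctly supplies the classical details, including the two places where compactness is genuinely used (the finite intersection property for chains of closed faces, and attainment of the suprema making the exposed faces non-empty).
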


This result motivated the definition of the Krein Milman property:
\begin{itemize}
\item A closed bounded convex set is said to have the weak Krein-Milman property if it has extreme points.
\item A closed bounded convex set is said to have the Krein-Milman property if it is the closed convex hull of its extreme points.
\item A topological vector space is said to have the (weak) Krein-Milmam property when every closed bounded convex subset enjoys the (weak) Krein-Milman property.
\end{itemize}

We remind the reader that a Hausdorff locally convex topological vector space is called subreflexive provided that for every bounded closed convex subset $C$ of $X$ the supporting functionals on $C$ are dense in $X^*$ endowed with the $C$-sup pseudo norm. Examples of subreflexive Hausdorff locally convex topological vector spaces are all the Banach spaces in virtue of the famous Bishop-Phelps Theorem.

\begin{theorem}
In the class of subreflexive Hausdorff locally convex topological vector spaces, the Krein-Milman property and the weak Krein-Milman property are equivalent.
\end{theorem}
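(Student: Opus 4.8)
The plan is to prove the two implications separately. The forward implication is essentially free: if $X$ has the Krein-Milman property and $C$ is a non-empty closed bounded convex subset, then $C=\cco(\ext(C))$, and since the closed convex hull of the empty set is empty while $C\neq\varnothing$, we must have $\ext(C)\neq\varnothing$. Thus $C$ enjoys the weak Krein-Milman property, and so does $X$.

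The content lies in the converse. Assume $X$ is subreflexive and has the weak Krein-Milman property, and let $C$ be a non-empty closed bounded convex subset. Put $D:=\cco(\ext(C))$, so that $D\subseteq C$, and argue by contradiction supposing $D\subsetneq C$. First I would choose $x_0\in C\setminus D$ and separate it from the closed convex set $D$ by the Hahn-Banach Theorem, obtaining $g\in X^*$ with $g(x_0)>\sup g(D)$; setting $2\delta:=g(x_0)-\sup g(D)>0$ records the quantitative gap. Now subreflexivity comes into play: the supporting functionals on $C$ are dense in $X^*$ for the $C$-sup pseudo norm, so I can pick a supporting functional $f$ on $C$ with $\sup_{c\in C}|f(c)-g(c)|<\delta/2$. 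A short estimate, using that $x_0\in C$ and $D\subseteq C$, gives $\sup f(C)\geq f(x_0)>\sup g(D)+3\delta/2$ while $\sup f(D)\leq \sup g(D)+\delta/2$, whence $\sup f(C)>\sup f(D)$.

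The final step exploits the weak Krein-Milman property on the exposed face. Since $f$ is supporting, $\suppv_f(C)=\{c\in C:f(c)=\sup f(C)\}$ is non-empty, and it is clearly closed, bounded and convex; by hypothesis it has an extreme point $e$. Because $\suppv_f(C)$ is an exposed face, hence a face, of $C$, and $\{e\}$ is a (singleton) face of $\suppv_f(C)$, the folklore fact that faces of a face are faces of the whole set forces $e\in\ext(C)\subseteq D$. But then $f(e)\leq\sup f(D)<\sup f(C)=f(e)$, the desired contradiction; therefore $C=D=\cco(\ext(C))$. The main obstacle I anticipate is precisely the approximation step: one must check that the supporting functional $f$ retains enough of the separating behaviour of $g$ to keep $\sup f(C)>\sup f(D)$, and this is exactly where the precise meaning of the $C$-sup pseudo norm and the boundedness of $C$ are indispensable. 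Everything else reduces to the extremal folklore already recorded above.
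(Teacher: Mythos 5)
Your proof is correct and follows essentially the same route as the paper's: separate a point of $C\setminus\cco(\ext(C))$ by Hahn--Banach, use subreflexivity to replace the separating functional by a nearby supporting one in the $C$-sup pseudo norm, apply the weak Krein--Milman property to the exposed face $\suppv_f(C)$, and invoke the faces-of-a-face fact to land the extreme point back in $\cco(\ext(C))$ for the contradiction. Your quantitative bookkeeping with $\delta$ matches the paper's $\varepsilon$-estimate (with the roles of $f$ and $g$ swapped), and your explicit checks (the trivial forward implication, and that $\suppv_f(C)$ is non-empty, closed, bounded, and convex) are sound refinements of the same argument.
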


\begin{proof}
Let $X$ be a subreflexive Hausdorff locally convex topological vector space with the weak Krein-Milman property. Let $C$ be a closed convex subset of $X$. Suppose to the contrary that there exists $c\in C\setminus \cco(\ext(C))$. By the Hahn-Banach Separation Theorem, there exists $f\in X^*$ such that $f(c)>\sup f\left(\cco(\ext(C))\right)=\sup f(\ext(C))$. By the subreflexivity of $X$, there exists $g\in X^*$ such that $\exp_g(C):=\{d\in C:g(d)=\sup g(C)\}\neq \varnothing$ and $$\left\|f|_C-g|_C\right\|_\infty<\varepsilon:=\frac{f(c)-\sup f\left(\ext(C)\right)}{2}.$$ By hypothesis, there exists $e\in\ext\left(\exp_g(C)\right)$. Notice that $\exp_g(C)$ is a face of $C$ and so $\ext\left(\exp_g(C)\right)\subseteq \ext(C)$. As a consequence, we reach the following contradiction:
\begin{eqnarray*}
g(e)&=&\sup g(C)\\
& \geq &g(c) \\
&>& f(c) - \varepsilon\\
 &=& \sup f(\ext(C))+\varepsilon\\
 & \geq & f(e)+\varepsilon\\
 & > &g(e)
\end{eqnarray*}
\end{proof}

Examples of topological vector spaces verifying the Krein-Milman property include all Banach spaces with the Radon-Nikodym property (in particular, all reflexive Banach spaces).

\subsection{Topological impact}

An immediate consequence of Theorem \ref{n} is that the Krein-Milman property forces the spaces that have it to be Hausdorff.

\begin{corollary}
If a topological vector space enjoys the weak Krein-Milman property, then it is Hausdorff.
\end{corollary}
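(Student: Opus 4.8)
The plan is to deduce Hausdorffness from Theorem \ref{n} via the classical characterization that a topological vector space $X$ is Hausdorff if and only if $N:=\bigcap\mathcal{N}_0=\{0\}$. One first records that in a topological vector space $N$ coincides with $\mathrm{cl}(\{0\})$: indeed $x\in\mathrm{cl}(\{0\})$ means $0$ lies in every neighborhood $x+U$ of $x$, i.e. $x\in -U$ for every neighborhood $U$ of $0$, which (since $-U$ ranges over all neighborhoods of $0$) is exactly the statement that $x$ belongs to every neighborhood of $0$. Hence $X$ is Hausdorff iff $\{0\}$ is closed iff $N=\{0\}$, and it suffices to force $N=\{0\}$ out of the weak Krein-Milman property.

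I would argue by contradiction. Suppose $X$ enjoys the weak Krein-Milman property yet is not Hausdorff, so that $N\neq\{0\}$. By Theorem \ref{n}, $N$ is a closed bounded vector subspace of $X$; in particular, being a subspace it is convex, and therefore $N$ is a closed bounded convex subset of $X$. The weak Krein-Milman property then guarantees that $N$ possesses an extreme point.

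The key observation is that a nontrivial vector subspace can never have extreme points. Indeed, fix any $m\in N$ and any nonzero $x\in N$, which exists since $N\neq\{0\}$. As $N$ is a subspace, both $m+x$ and $m-x$ lie in $N$, the segment $[m-x,m+x]$ is contained in $N$, and $m=\tfrac12(m-x)+\tfrac12(m+x)$ sits in its interior. Hence $m$ is not an extreme of this segment, so $m\notin\mathrm{ext}(N)$. Since $m$ was arbitrary, $\mathrm{ext}(N)=\varnothing$, contradicting the previous paragraph.

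Consequently $N=\{0\}$ and $X$ is Hausdorff. The argument is essentially immediate once Theorem \ref{n} is in hand; the only points requiring care are matching the extreme-point computation with the definition of extreme point in use and recalling that $N=\{0\}$ characterizes Hausdorffness in the category of topological vector spaces, so I do not anticipate a genuine obstacle.
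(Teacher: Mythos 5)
Your proof is correct and takes essentially the same route the paper intends: the corollary is stated there without a separate proof, as an ``immediate consequence of Theorem \ref{n}'', and the natural fleshing-out is exactly yours --- if $X$ is not Hausdorff, then $N=\bigcap\mathcal{N}_0$ is a nontrivial closed bounded convex set (being a vector subspace) with no extreme points, since every $m\in N$ is the midpoint of the segment $\left[m-x,m+x\right]\subseteq N$ for any $x\in N\setminus\left\{0\right\}$, contradicting the weak Krein-Milman property. Your preliminary identification $N=\mathrm{cl}\left(\left\{0\right\}\right)$ and the equivalence of Hausdorffness with $N=\left\{0\right\}$ are the standard facts needed, and they are handled correctly.
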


\subsection{Algebraic impact}

In \cite[Lemma 5.5]{GPZ} it is shown that if $X$ is a topological vector space and $\left(e_n\right)_{n\in \mathbb{N}}\subset X$ is a linearly independent sequence, then the set $$M:= \left\{\sum_{n=1}^{\infty}\lambda_ne_n: \left(\lambda_n\right)_{n\in \mathbb{N}}\in c_{00}\text{ and }\left|\lambda_n\right|\leq \frac{1}{2^n}\text{ for all }n\in \mathbb{N}\right\}$$ satisfies the following:
\begin{itemize}
\item $M$ is absolutely convex but free of extreme points.
\item If $\left(e_n\right)_{n\in \mathbb{N}}$ is bounded, then $M$ is bounded.
\item If there exists a biorthogonal system $\left(e_i,e_i^*\right)_{i\in I} \subset X\times X^*$ such that $\left(e_n\right)_{n\in \mathbb{N}}\subset \left(e_i\right)_{i\in I}$, then $M$ is closed in $\mathrm{span}\left\{e_i:i\in I\right\}$.
\end{itemize}

\begin{theorem}\label{kmuc}
If $X$ is a first-countable Hausdorff locally convex topological vector space with the weak Krein-Milman property, then the cardinality of a Hamel basis of $X$ is uncountable.
\end{theorem}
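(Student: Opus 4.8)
The plan is to argue by contradiction: assuming $X$ carries a countable Hamel basis (necessarily countably infinite, since the finite-dimensional case is not the one at issue here), I would manufacture a closed bounded convex subset of $X$ with no extreme points, which flatly contradicts the weak Krein-Milman property. The whole construction is engineered to feed the three hypotheses of \cite[Lemma 5.5]{GPZ} recalled just above, so that the set $M$ produced there becomes simultaneously bounded, closed in $X$, and extreme-point-free.

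First I would invoke Theorem \ref{ex2}: since $X$ is a Hausdorff locally convex topological vector space of infinite countable dimension, there is an expanding and total biorthogonal system $(e_n,e_n^*)_{n\in\mathbb{N}}\subseteq X\times X^*$; in particular $X=\spa\{e_n:n\in\mathbb{N}\}$ and each $e_n^*$ is continuous. This supplies the linearly independent spanning sequence together with the biorthogonal partner functionals that \cite[Lemma 5.5]{GPZ} requires in order to guarantee that the associated set is closed in $\spa\{e_n:n\in\mathbb{N}\}=X$.

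The step where first-countability does the real work is making the spanning sequence bounded, since Theorem \ref{ex2} offers no control on the size of the $e_n$. Fix a decreasing countable base $(U_k)_{k\in\mathbb{N}}$ of neighborhoods of $0$. By continuity of the scalar multiplication $c\mapsto c\,e_n$ at $c=0$, for each $n$ I can pick $c_n>0$ with $c_n e_n\in U_n$; since $(U_k)$ is a decreasing base this forces $c_n e_n\to 0$, and a convergent sequence in a topological vector space is bounded. Writing $f_n:=c_n e_n$ and $f_n^*:=c_n^{-1}e_n^*$, the pair $(f_n,f_n^*)_{n\in\mathbb{N}}$ is again a biorthogonal system in $X\times X^*$, the sequence $(f_n)_{n\in\mathbb{N}}$ is linearly independent and bounded, and $\spa\{f_n:n\in\mathbb{N}\}=X$.

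Finally I would apply \cite[Lemma 5.5]{GPZ} to $(f_n)_{n\in\mathbb{N}}$: the set
$$M:=\left\{\sum_{n=1}^{\infty}\lambda_n f_n:(\lambda_n)_{n\in\mathbb{N}}\in c_{00},\ |\lambda_n|\leq \frac{1}{2^n}\text{ for all }n\in\mathbb{N}\right\}$$
is absolutely convex and free of extreme points; it is bounded because $(f_n)_{n\in\mathbb{N}}$ is bounded; and it is closed in $\spa\{f_n:n\in\mathbb{N}\}=X$ because $(f_n,f_n^*)_{n\in\mathbb{N}}$ is a biorthogonal system. Thus $M$ is a closed bounded convex subset of $X$ with $\ext(M)=\varnothing$, contradicting the weak Krein-Milman property and forcing the Hamel dimension of $X$ to be uncountable. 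I expect the only delicate point to be precisely this boundedness reduction: Lemma 5.5 yields boundedness of $M$ only from boundedness of the generating sequence, and it is exactly first-countability, through the rescaling above, that lets one replace the possibly unbounded $e_n$ by a bounded, still-spanning, still-biorthogonal sequence without disturbing the other two hypotheses.
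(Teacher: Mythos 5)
Your proposal is correct and follows essentially the same route as the paper's proof: invoke Theorem \ref{ex2} for the biorthogonal system, use first-countability to rescale the $e_n$ into a bounded (indeed null) sequence, and apply \cite[Lemma 5.5]{GPZ} to contradict the weak Krein-Milman property. Your explicit verification that the rescaling $f_n:=c_ne_n$, $f_n^*:=c_n^{-1}e_n^*$ preserves biorthogonality and the span is exactly the step the paper compresses into ``we may assume that $\left(e_n\right)_{n\in\mathbb{N}}$ is convergent to $0$.''
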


\begin{proof}
Suppose to the contrary that the dimension of $X$ is countable. In accordance with Theorem \ref{ex2}, there exists a biorthogonal system $\left(e_n,e^*_n\right)_{n\in \mathbb{N}}\subseteq X \times X^*$ such that $X= \mathrm{span}\left\{e_n:n\in \mathbb{N}\right\}$. Since $X$ is first countable, there exists a countable nested basis of balanced and absorbing neighborhoods of $0$, so we may assume that $\left(e_n\right)_{n\in \mathbb{N}}$ is convergent to $0$ and hence bounded. Finally, by applying \cite[Lemma 5.5]{GPZ}, we deduce the contradiction that the set $M$ defined above does not have the Krein-Milman property.
\end{proof}

\subsection{A simplified reformulation}

The idea for an equivalent reformulation of the Krein-Milman property is not having to check all the bounded closed convex subsets (see \cite[Section 3]{GPafa}). For this we need to extend the concept of extreme point to non-convex sets.

\begin{theorem}
The following conditions are equivalent for a subreflexive Hausdorff locally convex topological vector space $X$:
\begin{enumerate}
\item $X$ has the Krein-Milman property.
\item If $M\subseteq X$ is closed, bounded, and convex and $x\in X\setminus M$, then $$\mathrm{ext}\left(\mathrm{co}\left(M \cup \left\{x\right\}\right)\right)\setminus \left\{x\right\} \neq \varnothing.$$
\item If $M\subseteq X$ is closed, bounded, and convex and $0\notin M$, then $\mathrm{ext}\left(\ba\left(M\right)\right) \neq \varnothing$.
\end{enumerate}
\end{theorem}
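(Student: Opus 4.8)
The plan is to prove the three conditions equivalent via the cycle $1 \Rightarrow 2 \Rightarrow 3 \Rightarrow 1$, leaning heavily on Lemma \ref{superhelp}, which is precisely tailored to convert statements about extreme points of drops and balanced hulls into statements about extreme points of the original set. The underlying philosophy is that having the Krein-Milman property amounts to producing \emph{some} extreme point in every nontrivial closed bounded convex set, and the real content is showing that a single such existence statement, suitably localized via these hull constructions, suffices to recover the full property.

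\textbf{The implication $1 \Rightarrow 2$.} Assume $X$ has the Krein-Milman property and let $M$ be closed, bounded, convex with $x \in X \setminus M$. First I would argue that $\mathrm{co}\left(M \cup \{x\}\right)$ is again closed, bounded, and convex: boundedness and convexity are immediate, and closedness of the drop follows because $M$ is closed and we are adjoining a single point (one must check that the segments from $x$ to $M$ sweep out a closed set, which holds since $M$ is closed and bounded). By the Krein-Milman property, $\mathrm{co}\left(M \cup \{x\}\right)$ has an extreme point, and in fact it must have more than one extreme point (since a convex set with at least two points and a single extreme point cannot be the closed convex hull of its extreme points). Hence $\mathrm{ext}\left(\mathrm{co}\left(M \cup \{x\}\right)\right) \setminus \{x\} \neq \varnothing$. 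A cleaner route: since $M \subseteq \mathrm{co}(M \cup \{x\})$ is a proper closed convex subset, the closed convex hull of the extreme points cannot be contained in $\{x\}$, forcing an extreme point outside $\{x\}$.

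\textbf{The implications $2 \Rightarrow 3$ and $3 \Rightarrow 1$.} For $2 \Rightarrow 3$, let $M$ be closed, bounded, convex with $0 \notin M$. I would apply condition \emph{2} to both $M$ and $-M$ with the distinguished point $x = 0$, obtaining that $\mathrm{ext}\left(\mathrm{co}\left(M \cup \{0\}\right)\right)\setminus \{0\}$ and $\mathrm{ext}\left(\mathrm{co}\left(-M \cup \{0\}\right)\right)\setminus \{0\}$ are each non-empty. Then \emph{2} of Lemma \ref{superhelp} gives
\begin{equation*}
\mathrm{ext}\left(\ba\left(M\right)\right) = \left(\mathrm{ext}\left(\mathrm{co}\left(M \cup \{0\}\right)\right)\setminus \{0\}\right) \cup \left(\mathrm{ext}\left(\mathrm{co}\left(-M \cup \{0\}\right)\right)\setminus \{0\}\right),
\end{equation*}
which is therefore non-empty. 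For $3 \Rightarrow 1$, I would invoke the theorem proved just above in the excerpt, namely that under subreflexivity the Krein-Milman property and the weak Krein-Milman property coincide; thus it suffices to show every closed bounded convex $C$ has an extreme point. Given such a $C$, pick any $c \in C$ and translate so that, after replacing $C$ by $C - v$ for a suitable $v$, one arranges $0 \notin C$ (if $C$ is a singleton the claim is trivial; otherwise a translation placing the origin outside $C$ exists since $C \neq X$). Apply \emph{3} to get $\mathrm{ext}\left(\ba\left(C\right)\right) \neq \varnothing$, and then \emph{3} of Lemma \ref{superhelp}, $\mathrm{ext}\left(\ba\left(C\right)\right) \subseteq \mathrm{ext}\left(C\right) \cup \mathrm{ext}\left(-C\right)$, yields an extreme point of $C$ or of $-C$; since $\mathrm{ext}(-C) = -\mathrm{ext}(C)$, either way $\mathrm{ext}(C) \neq \varnothing$.

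\textbf{Anticipated main obstacle.} I expect the delicate point to be the closedness of the drop $\mathrm{co}(M \cup \{x\})$ in $1 \Rightarrow 2$ and, more seriously, the translation argument in $3 \Rightarrow 1$ that guarantees $0 \notin C$ while preserving the relevant structure. The Krein-Milman property is translation-invariant, so replacing $C$ by a translate is harmless; the only subtlety is ensuring a translate avoiding the origin exists, which fails only if $C$ is all of $X$ — impossible for a bounded set in a Hausdorff space with more than one point. I would also need to confirm that boundedness is genuinely used (to force closedness of the drop) and that the equivalence of the Krein-Milman and weak Krein-Milman properties is legitimately available, which it is as the immediately preceding theorem in this subsection.
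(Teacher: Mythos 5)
Your proposal is correct and follows essentially the same route as the paper's own proof: the cycle \emph{1} $\Rightarrow$ \emph{2} $\Rightarrow$ \emph{3} $\Rightarrow$ \emph{1}, applying the Krein-Milman property to the drop $\mathrm{co}\left(M\cup\{x\}\right)$, then items \emph{2} and \emph{3} of Lemma \ref{superhelp}, with a translation to arrange $0\notin M$ in the last step. The only differences are that you spell out two points the paper leaves tacit --- the closedness of the drop (which indeed uses the boundedness of $M$) and the explicit appeal to the preceding theorem identifying the Krein-Milman and weak Krein-Milman properties under subreflexivity --- and both of these are handled correctly.
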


\begin{proof}
\mbox{}
\begin{enumerate}
\item[{\em 1} $\Rightarrow$ {\em 2}] Let $M\subseteq X$ be closed, bounded, and convex and consider $x\in X\setminus M$. Since $\mathrm{co}\left(M \cup \left\{x\right\}\right)$ is closed, bounded, and convex, by hypothesis we have that $$\mathrm{co}\left(M \cup \left\{x\right\}\right)=\overline{\mathrm{co}}\left( \mathrm{ext}\left(\mathrm{co}\left(M \cup \left\{x\right\}\right)\right) \right).$$ If $\mathrm{ext}\left(\mathrm{co}\left(M \cup \left\{x\right\}\right)\right)= \left\{x\right\}$, then $$\mathrm{co}\left(M \cup \left\{x\right\}\right)=\overline{\mathrm{co}}\left( \mathrm{ext}\left(\mathrm{co}\left(M \cup \left\{x\right\}\right)\right) \right)=\left\{x\right\},$$ which is impossible since $x\notin M$. As a consequence, $$\mathrm{ext}\left(\mathrm{co}\left(M \cup \left\{x\right\}\right)\right)\setminus \left\{x\right\} \neq \varnothing.$$
\item[{\em 2} $\Rightarrow$ {\em 3}] Let $M$ be a closed, bounded, and convex subset of $X$ not containing $0$. We have that $$\ba\left(M\right)= \mathrm{co}\left(M \cup \left\{0\right\}\right) \cup \mathrm{co}\left(-M \cup \left\{0\right\}\right).$$ By hypothesis, $$\mathrm{ext}\left(\mathrm{co}\left(M \cup \left\{0\right\}\right)\right)\setminus \left\{0\right\} \neq \varnothing,$$ so it only remains to apply Lemma \ref{superhelp}.
\item[{\em 3} $\Rightarrow$ {\em 1}] Let $M$ be a closed, bounded, and convex subset of $X$. By making a translation if necessary, we can assume without any loss of generality that $0\notin M$. By hypothesis we have that $\mathrm{ext}\left(\ba\left(M\right)\right) \neq \varnothing$. Finally, in virtue of Lemma \ref{superhelp}, we deduce that $\mathrm{ext}\left(M\right) \neq \varnothing$.
\end{enumerate}
\end{proof}

\section{The Bade property}

As we mentioned at the beginning of this chapter, a very interesting question is to determine for which series $\sum x_n$ does $\mathcal{S}\left(\sum x_n\right)$ have the Bade property. According to \cite{D2,DieJar}, the summing multiplier space associated to an uc series in a Banach space has the Bade property. As far as we know, these are the only known examples of series whose associated summing multiplier space has the Bade property.

We have to go back to Bade's dissertation in order to find the origins of the Bade property (see \cite{B}).

\begin{theorem}[Bade, 1950; \cite{B}]
A Hausdorff compact topological space $K$ is $0$-dimensional if and only if $\mathcal{C}\left(K\right)$ has the Bade property.
\end{theorem}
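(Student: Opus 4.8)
The plan is to work directly from the real-valued identification of the extreme points of $\mathsf{B}_{\mathcal{C}(K)}$ together with the meaning of the Bade property, namely $\mathsf{B}_{\mathcal{C}(K)}=\cco\left(\ext\left(\mathsf{B}_{\mathcal{C}(K)}\right)\right)$. First I would establish the standard characterization $\ext\left(\mathsf{B}_{\mathcal{C}(K)}\right)=\{f\in\mathcal{C}(K):|f(k)|=1\text{ for all }k\in K\}$: if $|f|$ dips below $1$ at a point it does so on an open set, and a Urysohn bump supported there exhibits $f$ as a proper midpoint, while conversely the pointwise values $\pm1$ are extreme in $[-1,1]$, forcing any midpoint decomposition to be trivial. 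Since such an $f$ is continuous and $\{-1,+1\}$ is discrete, $f^{-1}(1)$ and $f^{-1}(-1)$ form a clopen partition of $K$; thus the extreme points are exactly the sign functions attached to clopen partitions, and on any connected subset of $K$ every extreme point is constant.

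For the implication ``$0$-dimensional $\Rightarrow$ Bade'' I would first show that the locally constant functions subordinate to clopen partitions are dense in $\mathsf{B}_{\mathcal{C}(K)}$. Given $f$ in the ball and $\varepsilon>0$, cover $K$ by finitely many clopen sets on each of which $f$ oscillates by less than $\varepsilon$ (possible because a compact $0$-dimensional space has a clopen basis), disjointify them into a clopen partition $\{V_i\}_{i=1}^n$, and replace $f$ by $s=\sum_{i=1}^n c_i\chi_{V_i}$ sampling one value of $f$ per piece, so that $\|f-s\|_\infty<\varepsilon$ and $|c_i|\le 1$. Then I would write $s$ as a finite convex combination of extreme points using the elementary fact $[-1,1]^n=\co\left(\{-1,1\}^n\right)$: expressing $(c_1,\dots,c_n)=\sum_\sigma\lambda_\sigma\sigma$ over sign vectors $\sigma\in\{-1,1\}^n$ yields $s=\sum_\sigma\lambda_\sigma g_\sigma$ with each $g_\sigma=\sum_i\sigma_i\chi_{V_i}\in\ext\left(\mathsf{B}_{\mathcal{C}(K)}\right)$. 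Hence $s\in\co\left(\ext\left(\mathsf{B}_{\mathcal{C}(K)}\right)\right)$, and density upgrades this to $\mathsf{B}_{\mathcal{C}(K)}=\cco\left(\ext\left(\mathsf{B}_{\mathcal{C}(K)}\right)\right)$.

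For the converse I would argue by contraposition: if $K$ is not $0$-dimensional then, $K$ being compact Hausdorff, it fails to be totally disconnected, so some connected component contains two distinct points $a\neq b$. Every extreme point is constant on that component, hence every element of $\co\left(\ext\left(\mathsf{B}_{\mathcal{C}(K)}\right)\right)$ satisfies $h(a)=h(b)$, and this single closed affine constraint passes to the uniform closure $\cco\left(\ext\left(\mathsf{B}_{\mathcal{C}(K)}\right)\right)$ because evaluation at a point is continuous for uniform convergence. But $K$ is normal, so Urysohn's Lemma supplies $f\in\mathsf{B}_{\mathcal{C}(K)}$ with $f(a)=1\neq -1=f(b)$, and this $f$ cannot lie in the closed convex hull of the extreme points; thus $\mathcal{C}(K)$ fails the Bade property.

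The genuinely load-bearing steps are, in each direction, the interaction between connectivity and the clopen structure. In the forward direction the crux is that $0$-dimensionality is exactly what makes the clopen-partition simple functions dense, since without a clopen basis the refinement of a finite open cover into a clopen partition collapses. In the converse the decisive observation is that the condition $h(a)=h(b)$ survives both convex combinations and uniform limits, which is what lets a two-point obstruction inside a single connected component defeat the entire closed convex hull. I expect the density and refinement bookkeeping (turning a finite open cover into a clopen partition while controlling the oscillation of $f$ and staying inside the unit ball) to be the most delicate part, whereas the cube-as-convex-hull-of-its-vertices step and the Urysohn separation are routine.
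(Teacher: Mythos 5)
Your proof is correct, but note that the paper does not actually prove this statement: it quotes the result from Bade's dissertation (\cite{B}) without argument, so there is no in-paper proof to compare against. Your route is the standard classical one, and every step checks out: the characterization $\ext\left(\mathsf{B}_{\mathcal{C}(K)}\right)=\left\{f\in\mathcal{C}(K):\left|f(k)\right|=1\text{ for all }k\in K\right\}$ is valid for real scalars (which is the paper's standing convention), the Urysohn-bump midpoint argument and the pointwise extremality of $\pm 1$ in $[-1,1]$ are both sound, and the disjointification of a finite clopen cover into a clopen partition is legitimate since clopen sets form a Boolean algebra. The forward direction via $[-1,1]^n=\co\left(\{-1,1\}^n\right)$ applied to the simple functions $\sum_i c_i\chi_{V_i}$, followed by the density upgrade, is exactly the expected mechanism, and the converse via a two-point obstruction $h(a)=h(b)$ on a nondegenerate connected component, preserved by convex combinations and uniform limits and violated by a Urysohn separator, is clean. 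The one place where you silently invoke a genuinely nontrivial external fact is in the converse: you pass from ``$K$ is not $0$-dimensional'' to ``$K$ is not totally disconnected,'' i.e.\ you use that for compact Hausdorff spaces total disconnectedness implies the existence of a clopen base. That equivalence is true but is itself a classical theorem (its proof goes through the coincidence of components and quasi-components in compact Hausdorff spaces), so in a polished write-up you should either cite it or sketch it; as stated it is the only step of your argument that is not self-contained. Everything else, including the oscillation bookkeeping you flagged as delicate, is handled correctly.
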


Based upon the previous definition, a semi-normed space $X$ is said to have the Bade property provided that $\mathsf{B}_X$ satisfies the Krein-Milman property, that is, $\mathsf{B}_X=\overline{\mathrm{co}}\left(\mathrm{ext}\left(\mathsf{B}_X\right)\right)$.

An example of a Banach space enjoying the Bade property but not the Krein-Milman property is $c_0$ with an strictly convex renorming (see \cite[Chapter VII. Theorem 1(a)]{D}).

\subsection{Topological impact}

The Bade property has a similar impact on the topology as the weak Krein-Milman property does.

If $X$ is a semi-normed space and $n \in X$ is so that $\left\|n\right\|=0$, then $\left\|m+n\right\|=\left\|m\right\|$ for all $m \in X$. Indeed, observe that $$\left\|m\right\|  = \left|\left\|m\right\|-\left\|-n\right\|\right| \leq \left\|m+n\right\|\leq \left\|m\right\| + \left\|n\right\| = \left\|m\right\|.$$

\begin{theorem}
If $X$ is a non-Hausdorff semi-normed space, then $\mathsf{B}_X$ is free of extreme points.
\end{theorem}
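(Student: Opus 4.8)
The plan is to extract from non-Hausdorffness a single non-zero vector of semi-norm zero, and then use it to perturb any candidate extreme point into two distinct points of $\mathsf{B}_X$ having it as their midpoint.

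First I would recall that, as noted right after Theorem \ref{n}, in a semi-normed space the subspace $\bigcap \mathcal{N}_0$ equals $\{x\in X:\|x\|=0\}$, and by that theorem it carries the trivial topology. Hence $X$ fails to be Hausdorff precisely when $\bigcap \mathcal{N}_0\neq\{0\}$, i.e. precisely when there exists $n\in X\setminus\{0\}$ with $\|n\|=0$. Fix such an $n$.

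Next, let $x\in \mathsf{B}_X$ be arbitrary. Applying the observation immediately preceding this theorem (that $\|m+n\|=\|m\|$ whenever $\|n\|=0$) with $m=x$, and also with $-n$ in place of $n$ (legitimate since $\|-n\|=\|n\|=0$), I obtain
\begin{equation*}
\|x+n\|=\|x\|\leq 1\quad\text{and}\quad \|x-n\|=\|x\|\leq 1,
\end{equation*}
so both $x+n$ and $x-n$ lie in $\mathsf{B}_X$. Since $n\neq 0$ these two points are distinct, yet
\begin{equation*}
x=\tfrac{1}{2}(x+n)+\tfrac{1}{2}(x-n),
\end{equation*}
which exhibits $x$ as the midpoint of a non-degenerate segment contained in $\mathsf{B}_X$. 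Therefore $x$ is not an extreme point of $\mathsf{B}_X$, and as $x$ was arbitrary, $\ext(\mathsf{B}_X)=\varnothing$.

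There is essentially no hard step here: once the preceding observation is in place the argument is immediate. The only point requiring care is the initial reduction, namely confirming that the topological hypothesis of non-Hausdorffness is equivalent to the algebraic statement that some non-zero vector has zero semi-norm; this is exactly what the identification $\bigcap \mathcal{N}_0=\{x:\|x\|=0\}$ supplied by Theorem \ref{n} provides.
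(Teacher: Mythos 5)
Your proof is correct, and it takes a genuinely different (and cleaner) route than the paper's. The paper sets $N:=\{x\in X:\|x\|=0\}$, invokes Theorem \ref{n} to produce an algebraic complement $M$ of $N$, decomposes an arbitrary $x\in\mathsf{B}_X$ as $x=m+n$ with $m\in M$, $n\in N$, and writes $x=\tfrac{1}{2}m+\tfrac{1}{2}(m+2n)$. You instead fix, once and for all, a single non-zero vector $n$ with $\|n\|=0$ (supplied by non-Hausdorffness) and perturb every $x$ symmetrically to $x\pm n$. This buys you three things. First, you dispense with the complement $M$ entirely, so no Hamel-basis/choice argument is needed and only the elementary observation $\|m+n\|=\|m\|$ is used, not the full strength of Theorem \ref{n}. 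Second, your argument is uniform in $x$: the same non-degenerate segment $[x-n,\,x+n]$ works for every point of $\mathsf{B}_X$. Third, and worth noting, your version is watertight at a point where the paper's displayed segment can degenerate: if the $N$-component of $x$ happens to vanish (i.e., $x\in M$), then $m=m+2n=x$ and the paper's convex combination is trivial, exhibiting nothing for such $x$; your $x\pm n$, with $n\neq 0$ fixed in advance, is non-degenerate in all cases. So your proof is not only correct but, strictly read, repairs a small lacuna in the paper's.
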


\begin{proof}
Define $N:=\left\{x\in X: \left\|x\right\|=0\right\}$. In accordance with Theorem \ref{n}, $N$ is a bounded closed vector subspace of $X$ whose induced topology is trivial and is topologically complemented with any subspace with which it is algebraically complemented. Let $M$ be an algebraical complement for $N$ in $X$. We will show now that $\mathsf{B}_X$ is free of extreme points. Let $x \in \mathsf{B}_X$. There are $m\in M$ and $n\in N$ such that $x=m + n$. By the observation right above, $\left\|x\right\|= \left\|m\right\|=\left\|m+2n\right\|$, so $m,m+2n\in \mathsf{B}_X$. Finally, $$x= \frac{1}{2}m+\frac{1}{2}\left(m + 2n\right),$$ so $x\notin \mathrm{ext}\left(\mathsf{B}_X\right)$.
\end{proof}

\begin{corollary}
Every semi-normed space with the Bade property is Hausdorff.
\end{corollary}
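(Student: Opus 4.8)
The plan is to argue by contradiction, relying entirely on the theorem immediately preceding this corollary. Suppose that $X$ is a semi-normed space enjoying the Bade property and yet failing to be Hausdorff. By definition, the Bade property means precisely that $\mathsf{B}_X = \overline{\mathrm{co}}\left(\mathrm{ext}\left(\mathsf{B}_X\right)\right)$, so the whole weight of the argument rests on determining $\mathrm{ext}\left(\mathsf{B}_X\right)$ under the standing non-Hausdorff assumption.

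First I would invoke the preceding theorem, which asserts that in a non-Hausdorff semi-normed space the closed unit ball is free of extreme points; in our notation, $\mathrm{ext}\left(\mathsf{B}_X\right) = \varnothing$. Substituting this into the Bade identity yields $\mathsf{B}_X = \overline{\mathrm{co}}\left(\varnothing\right) = \varnothing$, using the convention that the closed convex hull of the empty set is empty.

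The final step is to observe that this conclusion is absurd, since $0 \in \mathsf{B}_X$ forces $\mathsf{B}_X \neq \varnothing$ (and, more emphatically, all vector spaces treated in this book are assumed non-zero). This contradiction establishes the contrapositive of the assertion, namely that a non-Hausdorff semi-normed space cannot have the Bade property. I expect no genuine obstacle in this argument: the entire substantive content has been front-loaded into the preceding theorem on the absence of extreme points, and the corollary merely combines that theorem with the definition of the Bade property. One could equivalently phrase it as a direct proof, remarking that the Bade property forces $\mathsf{B}_X$ to possess extreme points because it is nonempty and coincides with the closed convex hull of $\mathrm{ext}\left(\mathsf{B}_X\right)$, whence the contrapositive of the preceding theorem immediately delivers Hausdorffness.
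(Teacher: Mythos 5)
Your argument is correct and is precisely the intended proof: the paper states this corollary without proof as an immediate consequence of the preceding theorem that $\mathrm{ext}\left(\mathsf{B}_X\right)=\varnothing$ in a non-Hausdorff semi-normed space, which combined with the Bade identity $\mathsf{B}_X=\overline{\mathrm{co}}\left(\mathrm{ext}\left(\mathsf{B}_X\right)\right)$ and the fact that $0\in\mathsf{B}_X$ yields the contradiction you describe.
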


\subsection{Algebraic non-impact}

The Bade property has no algebraic impact whatsoever on the dimension like the weak Krein-Milman property does (recall Theorem \ref{kmuc}).

\begin{proposition}
Every countably infinite dimensional normed space $X$ can be equivalently renormed to enjoy the Bade property.
\end{proposition}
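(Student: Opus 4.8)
The plan is to reduce everything to a single renorming that makes $X$ strictly convex, exploiting the observation that \emph{a strictly convex normed space always has the Bade property}. Indeed, if the norm is strictly convex then every $x\in\mathsf{S}_X$ is extreme in $\mathsf{B}_X$: from $x=\frac12(y+z)$ with $y,z\in\mathsf{B}_X$ the chain $1=\|x\|\le\frac12(\|y\|+\|z\|)\le 1$ forces $\|y\|=\|z\|=1$ and $\|\frac12(y+z)\|=1$, so $y=z$ by strict convexity; hence $\mathrm{ext}(\mathsf{B}_X)=\mathsf{S}_X$. Moreover $\mathrm{co}(\mathsf{S}_X)=\mathsf{B}_X$, since $0=\frac12 u+\frac12(-u)$ for any $u\in\mathsf{S}_X$ and any $y$ with $0<\|y\|<1$ satisfies $y=\frac{1+\|y\|}{2}\frac{y}{\|y\|}+\frac{1-\|y\|}{2}\left(-\frac{y}{\|y\|}\right)$, a convex combination of two sphere points. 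Therefore $\mathsf{B}_X=\mathrm{co}(\mathrm{ext}(\mathsf{B}_X))\subseteq\overline{\mathrm{co}}(\mathrm{ext}(\mathsf{B}_X))\subseteq\mathsf{B}_X$, which is exactly the Bade property. Completeness plays no role here, so working with a mere normed space is legitimate.

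Next I would build the strictly convex equivalent norm. As $X$ has countable algebraic dimension, the rational finite combinations of a countable Hamel basis form a countable dense set, so $X$ is separable and $\mathsf{S}_X$ has a countable dense subset $\{x_n:n\in\mathbb{N}\}$. For each $n$, Hahn--Banach provides $f_n\in\mathsf{S}_{X^*}$ with $f_n(x_n)=1$, and a routine density estimate upgrades this to the \emph{norming} identity $\sup_n|f_n(z)|=\|z\|$ for all $z\in X$. I then set
$$|||z|||:=\sqrt{\|z\|^2+Q(z)},\qquad Q(z):=\sum_{n=1}^\infty 2^{-n}f_n(z)^2.$$
Since $2^{-n}f_n(z)^2\le 2^{-n}\|z\|^2$, the series converges and $\|z\|\le|||z|||\le\sqrt2\,\|z\|$, so $|||\cdot|||$ is equivalent to $\|\cdot\|$; it is a norm because $Q$ is the square of the norm attached to the symmetric positive bilinear form $\langle z,w\rangle_Q:=\sum_n 2^{-n}f_n(z)f_n(w)$.

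The hard part will be the strict convexity of $|||\cdot|||$, and the decisive gain from the norming property is that $\langle\cdot,\cdot\rangle_Q$ is a genuine inner product: if $Q(z)=0$ then $f_n(z)=0$ for all $n$, whence $\|z\|=\sup_n|f_n(z)|=0$ and $z=0$. Thus $\sqrt Q$ is a strictly convex norm, and it remains to establish the general principle that $\sqrt{\|\cdot\|^2+\sqrt Q(\cdot)^2}$ is strictly convex once $\sqrt Q$ is. I would argue by taking $x,y$ with $|||x|||=|||y|||=1$ and $|||\frac{x+y}{2}|||=1$ and comparing against the Euclidean plane through $a:=(\|x\|,\sqrt{Q(x)})$ and $b:=(\|y\|,\sqrt{Q(y)})$, so that $|||x|||=|a|$ and $|||y|||=|b|$ (Euclidean norm $|\cdot|$ on $\mathbb{R}^2$) and
$$1=|||\tfrac{x+y}{2}|||\le\sqrt{\left(\tfrac{\|x\|+\|y\|}{2}\right)^2+\left(\tfrac{\sqrt{Q(x)}+\sqrt{Q(y)}}{2}\right)^2}=\left|\tfrac{a+b}{2}\right|\le\tfrac{|a|+|b|}{2}=1.$$
Equality in the last inequality, with $|a|=|b|=1$, forces $a=b$ by strict convexity of the Euclidean norm, so $\sqrt{Q(x)}=\sqrt{Q(y)}=:r$; equality in the first inequality forces, coordinatewise, $\sqrt{Q(\frac{x+y}{2})}=\frac{\sqrt{Q(x)}+\sqrt{Q(y)}}{2}=r$. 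If $r=0$ then $x=y=0$; if $r>0$ then $x/r$ and $y/r$ are $\sqrt Q$-unit vectors whose midpoint is again $\sqrt Q$-unit, so $x=y$ by strict convexity of $\sqrt Q$. Hence $|||\cdot|||$ is strictly convex, and by the first paragraph $(X,|||\cdot|||)$ enjoys the Bade property.
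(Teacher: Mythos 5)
Your proof is correct and follows the same route as the paper's: countable dimension gives separability, separability gives an equivalent strictly convex (rotund) renorming, and rotundity yields the Bade property because then $\mathrm{ext}\left(\mathsf{B}_X\right)=\mathsf{S}_X$ and even $\mathrm{co}\left(\mathsf{S}_X\right)=\mathsf{B}_X$. The only difference is that the paper simply cites the classical rotund renorming theorem for separable spaces (Day's theorem), whereas you reprove it via the standard construction $|||z|||^2=\|z\|^2+\sum_{n}2^{-n}f_n(z)^2$ with a norming sequence $\left(f_n\right)_{n\in\mathbb{N}}$, together with the two-dimensional Euclidean comparison argument, which makes the argument self-contained (and incidentally shows the convex hull of the extreme points is already the whole ball, without taking closures).
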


\begin{proof}
Since $X$ is separable, it is well known that $X$ admits an equivalent rotund renorming (see for instance \cite[Chapter VII. Theorem 1(a)]{D} or \cite{DGZ}). As a consequence, $X$ enjoys the Bade property endowed with this equivalent norm.
\end{proof}

\chapter{Multipliers}

Multipliers in the context of series refers to bounded scalar sequences which make a vector-valued sequence summable under multiplication.

The summing multiplier spaces are involved in the extensions/generalizations of the Orlicz-Pettis Theorem. And the multiplier spaces of summable sequences are involved in the extensions/generalizations of the Hahn-Schur Theorem.

\section{Summing multiplier spaces}

These spaces consist of the multipliers associated to a given series in a topological vector space, and like we said right above will be involved in extensions/generalizations of the Orlicz-Pettis Theorem.

\subsection{The spaces $\mathcal{S}\left(\sum x_i\right)$ and $\mathcal{S}_w\left(\sum x_i\right)$}

The reader should be familiar with the following spaces of sequences associated to a given series, introduced for the first time in \cite{PBA}.

\begin{definition}[P\'erez-Fern\'andez et al., 2000; \cite{PBA}]
The summing multiplier spaces associated to a series $\sum x_i$ in a topological vector space are defined as:
\begin{itemize}
\item $\mathcal{S}\left(\sum x_i\right):=\left\{(a_i)_{i\in\mathbb{N}} \in \ell_\infty:\sum a_i x_i\text{ converges}\right\}$.
\item $\mathcal{S}_w\left(\sum x_i\right):=\left\{(a_i)_{i\in\mathbb{N}} \in \ell_\infty:\sum a_i x_i\text{ $w$-converges}\right\}.$
\end{itemize}
\end{definition}

The elements of such spaces are called multipliers and this is why these spaces are called summing multiplier spaces. Obviously, $$c_{00}\subseteq \mathcal{S}\left(\sum x_i\right)\subseteq \mathcal{S}_w\left(\sum x_i\right).$$

\begin{proposition}
If $X$ is complete and $(x_i)_{i\in\N}$ is bounded, then $\ell_1\subseteq \mathcal{S}\left(\sum x_i\right)$.
\end{proposition}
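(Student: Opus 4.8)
The plan is to show that every $(a_i)_{i\in\N}\in\ell_1$ yields an absolutely convergent series $\sum a_i x_i$, and then to invoke the completeness of $X$ to upgrade absolute convergence to genuine convergence, thereby placing $(a_i)_{i\in\N}$ in $\mathcal{S}\left(\sum x_i\right)$. First I would observe that the membership condition $(a_i)_{i\in\N}\in\ell_\infty$ required by the definition of $\mathcal{S}\left(\sum x_i\right)$ is automatic, since $\ell_1\subseteq\ell_\infty$; so the only substantive task is the convergence of $\sum a_i x_i$.

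Next I would exploit the boundedness of $(x_i)_{i\in\N}$ by setting $M:=\sup_{i\in\N}\|x_i\|<\infty$. For any $(a_i)_{i\in\N}\in\ell_1$ this gives the absolute estimate
\begin{equation*}
\sum_{i=1}^{\infty}\left\|a_i x_i\right\|=\sum_{i=1}^{\infty}|a_i|\,\|x_i\|\leq M\sum_{i=1}^{\infty}|a_i|=M\left\|(a_i)_{i\in\N}\right\|_1<\infty,
\end{equation*}
so that $\sum a_i x_i$ is absolutely convergent.

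The remaining step is to convert this absolute convergence into convergence, and this is the only place where completeness is genuinely used. I would argue via the Cauchy criterion on the partial sums $s_n:=\sum_{i=1}^{n}a_i x_i$: for $n>m$ one has
\begin{equation*}
\left\|s_n-s_m\right\|\leq\sum_{i=m+1}^{n}|a_i|\,\|x_i\|\leq M\sum_{i=m+1}^{n}|a_i|,
\end{equation*}
and the right-hand side is a tail of the convergent series $\sum|a_i|$, hence tends to $0$ as $m\to\infty$. Thus $(s_n)_{n\in\N}$ is Cauchy in $X$, and since $X$ is complete it converges; therefore $\sum a_i x_i$ converges and $(a_i)_{i\in\N}\in\mathcal{S}\left(\sum x_i\right)$.

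There is no real obstacle here: the argument is the standard fact that, in a Banach space, absolute convergence implies convergence (exactly the ``ac $\Rightarrow$ uc'' principle recorded earlier in the framework), combined with the uniform bound $M$ supplied by boundedness of $(x_i)_{i\in\N}$. The only subtlety worth flagging is that completeness cannot be dropped, since without it the Cauchy partial sums need not converge.
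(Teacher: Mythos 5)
Your proof is correct and follows essentially the same route as the paper: the paper's own argument is exactly the estimate $\sum\left\|a_i x_i\right\|\leq \left\|(a_i)_{i\in\mathbb{N}}\right\|_1\left\|(x_i)_{i\in\mathbb{N}}\right\|_\infty$ showing the series is ac, with completeness then upgrading this to convergence. The only difference is that you spell out the Cauchy-criterion step that the paper leaves implicit, which is a harmless (indeed clarifying) elaboration.
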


\begin{proof}
If suffices to notice that if $(b_i)_{i\in\N}\in \ell_1$, then $$\sum\left\|b_i x_i\right\|\leq \left\|(b_i)_{i\in\N}\right\|_1\left\|(x_i)_{i\in\N}\right\|_\infty,$$ which means that $\sum b_i x_i$ is ac.
\end{proof}

We refer the reader to \cite{D2,DieJar,APComenia,PBA} for a wider perspective on the above spaces, where the following results can be found:

\begin{itemize}
\item A normed space is complete if and only if for every series $\sum x_i$ the following conditions are equivalent:
\begin{itemize}
\item $\sum x_i$ is wuC.
\item $\mathcal{S}\left(\sum x_i\right)$ is complete.
\item $c_0\subseteq \mathcal{S}\left(\sum x_i\right)$.
\item $\mathcal{S}_w\left(\sum x_i\right)$ is complete.
\item $c_0\subseteq \mathcal{S}_w\left(\sum x_i\right)$.
\end{itemize}
\item A series $\sum x_n$ in a Banach space is cc and wuC if and only if $c\subseteq \mathcal{S}\left(\sum x_n\right) \subsetneq \ell_\infty$.
\item A series $\sum x_n$ in a Banach space is uc if and only if ${\mathcal S}\left(\sum x_n\right)=\ell_{\infty}$.
\end{itemize}

\subsection{The space $\mathcal{S}_{w^*}\left(\sum x^*_i\right)$}

If we consider the dual of a topological vector space endowed with the $w^*$-topology, then we can define $w^*$-summing multiplier spaces.

\begin{definition}[P\'erez-Fern\'andez et al., 2000; \cite{PBA}]
The summing multiplier space associated to a series $\sum x^*_i$ in the dual of a topological vector space is defined as $$\mathcal{S}_{w^*}\left(\sum x^*_i\right):=\left\{(a_i)_{i\in\mathbb{N}} \in \ell_\infty:\sum a_i x^*_i\text{ $w^*$-converges}\right\}.$$
\end{definition}

Obviously, $$ \mathcal{S}\left(\sum x^*_i\right)\subseteq \mathcal{S}_w\left(\sum x^*_i\right)\subseteq \mathcal{S}_{w^*}\left(\sum x^*_i\right).$$

\subsection{Extremal properties}

Throughout the whole of this section, $X$ will stand for a topological vector space unless otherwise stated. And if $\sum x_n$ is a series in $X$, then we will let $\mathcal{S}:=\mathcal{S}\left(\sum x_n\right)$.

\begin{lemma}\label{ext}
$\mathrm{ext}\left (\mathsf{B}_\mathcal{S}\right) = \mathsf{B}_\mathcal{S}\cap \mathrm{ext}\left (\mathsf{B}_{\ell_{\infty}}\right)$.
\end{lemma}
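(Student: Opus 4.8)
The plan is to prove the two inclusions separately, exploiting that $\mathcal{S}$ carries the norm induced from $\ell_\infty$, so that $\mathsf{B}_\mathcal{S}=\mathcal{S}\cap\mathsf{B}_{\ell_\infty}$, and that $c_{00}\subseteq\mathcal{S}$ (as noted right after the definition of the summing multiplier spaces). I would also recall from the folklore list in the framework that $\mathrm{ext}\left(\mathsf{B}_{\ell_\infty}\right)=\left\{(\varepsilon_n)_{n\in\mathbb{N}}:|\varepsilon_n|=1\text{ for all }n\in\mathbb{N}\right\}$, since both inclusions ultimately reduce to controlling the moduli of the coordinates.

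The inclusion $\mathsf{B}_\mathcal{S}\cap\mathrm{ext}\left(\mathsf{B}_{\ell_\infty}\right)\subseteq\mathrm{ext}\left(\mathsf{B}_\mathcal{S}\right)$ is the routine one. It rests on the general principle that if $C\subseteq D$ are convex and $x\in C$ is an extreme point of $D$, then $x$ is an extreme point of $C$, because any segment of $C$ through $x$ is in particular a segment of $D$ through $x$. Applying this with $C=\mathsf{B}_\mathcal{S}$ and $D=\mathsf{B}_{\ell_\infty}$ gives the claim immediately: a point of $\mathsf{B}_\mathcal{S}$ that is extreme in $\mathsf{B}_{\ell_\infty}$ is extreme in $\mathsf{B}_\mathcal{S}$.

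The substantive inclusion is $\mathrm{ext}\left(\mathsf{B}_\mathcal{S}\right)\subseteq\mathsf{B}_\mathcal{S}\cap\mathrm{ext}\left(\mathsf{B}_{\ell_\infty}\right)$, which I would establish by contraposition. Suppose $a=(a_i)_{i\in\mathbb{N}}\in\mathsf{B}_\mathcal{S}$ is \emph{not} in $\mathrm{ext}\left(\mathsf{B}_{\ell_\infty}\right)$, so there is an index $n_0$ with $|a_{n_0}|<1$. Pick $\delta:=1-|a_{n_0}|>0$ and set $b:=a+\delta e_{n_0}$ and $c:=a-\delta e_{n_0}$, where $e_{n_0}$ is the $n_0$-th canonical vector. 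The key verification is that $b,c\in\mathsf{B}_\mathcal{S}$: membership in $\mathcal{S}$ follows because $e_{n_0}\in c_{00}\subseteq\mathcal{S}$ and $\mathcal{S}$ is a vector subspace, while membership in the ball follows because $b$ and $c$ agree with $a$ off the coordinate $n_0$, and in that single coordinate $|a_{n_0}\pm\delta|\leq|a_{n_0}|+\delta=1$. Since $a=\tfrac{1}{2}(b+c)$ with $b\neq c$, the point $a$ is not extreme in $\mathsf{B}_\mathcal{S}$.

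The main (mild) obstacle is simply to confirm that the perturbation stays inside $\mathsf{B}_\mathcal{S}$, and this is exactly where the two structural facts are used: $c_{00}\subseteq\mathcal{S}$ guarantees that adjusting one coordinate keeps the sequence summing-multiplier, and the single-coordinate nature of the perturbation keeps the sup norm at most $1$. Combining both inclusions yields $\mathrm{ext}\left(\mathsf{B}_\mathcal{S}\right)=\mathsf{B}_\mathcal{S}\cap\mathrm{ext}\left(\mathsf{B}_{\ell_\infty}\right)$.
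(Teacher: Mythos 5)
Your proof is correct and follows essentially the same route as the paper: the inclusion $\mathsf{B}_\mathcal{S}\cap\mathrm{ext}\left(\mathsf{B}_{\ell_\infty}\right)\subseteq\mathrm{ext}\left(\mathsf{B}_\mathcal{S}\right)$ is dismissed as routine, and the reverse inclusion is obtained by perturbing a coordinate of modulus strictly less than $1$ by $\pm\delta e_{n_0}$, with $c_{00}\subseteq\mathcal{S}$ guaranteeing the perturbed sequences stay in $\mathcal{S}$. In fact your choice $\delta=1-\left|a_{n_0}\right|$ is slightly more careful than the paper's $\delta=\frac{1-\varepsilon_{n_0}}{2}$, which as written only keeps the perturbation inside the ball when $\varepsilon_{n_0}\geq 0$ (it should read $\frac{1-\left|\varepsilon_{n_0}\right|}{2}$).
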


\begin{proof}
Obviously, $\mathrm{ext}\left (\mathsf{B}_\mathcal{S}\right) \supseteq \mathsf{B}_\mathcal{S}\cap \mathrm{ext}\left (\mathsf{B}_{\ell_{\infty}}\right)$. Let $\left(\varepsilon_n\right)_{n\in \mathbb{N}} \in \mathrm{ext}\left (\mathsf{B}_\mathcal{S}\right)$ and assume that there exists $n_0 \in \mathbb{N}$ such that $\left|\varepsilon_{n_0}\right| < 1$. Take $\delta := \frac{1 - \varepsilon_{n_0}}{2} > 0$ and consider the sequences $\left(a_n\right)_{n\in \mathbb{N}}$ y $\left(b_n\right)_{n\in \mathbb{N}}$ defined by:
$$a_n:= \left\{ \begin{array}{lll}
                \varepsilon_n &{\rm if}& n\neq n_0,\\
                \varepsilon_{n_0}+\delta &{\rm if}& n=n_0,
                \end{array}
        \right.
                $$
and
$$b_n:= \left\{ \begin{array}{lll}
                \varepsilon_n &{\rm if}& n\neq n_0,\\
                \varepsilon_{n_0}-\delta &{\rm if}& n=n_0.
                \end{array}
        \right.
                $$Finally, observe that $\left(a_n\right)_{n\in \mathbb{N}},\left(b_n\right)_{n\in \mathbb{N}}\in \mathsf{B}_{\mathcal{S}}$ and $$\left(\varepsilon_n\right)_{n\in \mathbb{N}} = \frac{1}{2}\left(a_n\right)_{n\in \mathbb{N}}+\frac{1}{2}\left(b_n\right)_{n\in \mathbb{N}}.$$
\end{proof}


\begin{lemma}\label{ccS}
The following conditions are equivalent:
\begin{enumerate}
\item $\sum x_n$ is convergent.
\item $\ext\left(\mathsf{B}_c\right) \subseteq \ext \left(\mathsf{B}_{\mathcal{S}}\right)$.
\item $\ext\left(\mathsf{B}_c\right) \cap \ext \left(\mathsf{B}_{\mathcal{S}}\right)\neq \varnothing$.
\end{enumerate}
\end{lemma}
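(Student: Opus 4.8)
The plan is to establish the cyclic chain of implications $1 \Rightarrow 2 \Rightarrow 3 \Rightarrow 1$, relying throughout on an explicit description of the two kinds of extreme points involved. Recall from the folklore quoted earlier that $\ext(\mathsf{B}_c) = \ext(\mathsf{B}_{\ell_\infty}) \cap c$, so that the extreme points of $\mathsf{B}_c$ are precisely the sign sequences $(\varepsilon_n)_{n\in\mathbb{N}}$ with $\varepsilon_n \in \{-1,1\}$ which converge, i.e. which are eventually constant equal to some $c \in \{-1,1\}$. On the other hand, Lemma \ref{ext} gives $\ext(\mathsf{B}_{\mathcal{S}}) = \mathsf{B}_{\mathcal{S}} \cap \ext(\mathsf{B}_{\ell_\infty})$, so the extreme points of $\mathsf{B}_{\mathcal{S}}$ are exactly the sign sequences $(\varepsilon_n)_{n\in\mathbb{N}}$ for which $\sum \varepsilon_n x_n$ converges. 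The single recurring mechanism is that modifying finitely many terms of a series, or multiplying it by a fixed nonzero scalar $c = \pm 1$ (scalar multiplication being continuous in $X$), does not affect convergence.

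For $1 \Rightarrow 2$ I would take $(\varepsilon_n)_{n\in\mathbb{N}} \in \ext(\mathsf{B}_c)$, fix $N$ and $c \in \{-1,1\}$ with $\varepsilon_n = c$ for all $n \geq N$, and split $\sum \varepsilon_n x_n = \sum_{n<N}\varepsilon_n x_n + c\sum_{n\geq N} x_n$. Since $\sum x_n$ converges its tail converges, hence $\sum \varepsilon_n x_n$ converges, which places $(\varepsilon_n)_{n\in\mathbb{N}}$ in $\mathsf{B}_{\mathcal{S}} \cap \ext(\mathsf{B}_{\ell_\infty}) = \ext(\mathsf{B}_{\mathcal{S}})$ by Lemma \ref{ext}. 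The implication $2 \Rightarrow 3$ is then immediate once one observes that $\ext(\mathsf{B}_c) \neq \varnothing$: the constant sequence $(1,1,1,\dots)$ is a convergent sign sequence, hence an extreme point of $\mathsf{B}_c$, so the inclusion in $2$ forces the intersection in $3$ to be non-empty.

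For $3 \Rightarrow 1$ I would pick $(\varepsilon_n)_{n\in\mathbb{N}}$ in the common intersection. Membership in $\ext(\mathsf{B}_c)$ makes it eventually constant, say $\varepsilon_n = c \in \{-1,1\}$ for $n \geq N$, while membership in $\ext(\mathsf{B}_{\mathcal{S}})$ gives, via Lemma \ref{ext}, that $\sum \varepsilon_n x_n$ converges. Subtracting the finite initial block and multiplying by $c^{-1} = c$ then shows $\sum_{n\geq N} x_n$ converges, whence $\sum x_n$ converges. There is essentially no obstacle here beyond bookkeeping; the only points deserving care are the non-emptiness of $\ext(\mathsf{B}_c)$ used in $2 \Rightarrow 3$ and the routine verification that scalar multiplication by $\pm 1$ and finite perturbations commute with convergence in the topological vector space $X$.
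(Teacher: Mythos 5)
Your proof is correct and takes essentially the same route as the paper's: both arguments identify the extreme points of $\mathsf{B}_c$ as the sign sequences that are eventually constant (the paper phrases this as one of the sets $\left\{n\in\mathbb{N}:\varepsilon_n=1\right\}$, $\left\{n\in\mathbb{N}:\varepsilon_n=-1\right\}$ being finite), invoke Lemma \ref{ext} to translate membership in $\ext\left(\mathsf{B}_{\mathcal{S}}\right)$ into convergence of $\sum \varepsilon_n x_n$, and exploit that finite perturbations and multiplication by $\pm 1$ preserve convergence. Your explicit check that $\ext\left(\mathsf{B}_c\right)\neq\varnothing$ in the step \emph{2} $\Rightarrow$ \emph{3}, which the paper dismisses as obvious, is a welcome bit of extra care rather than a different argument.
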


\begin{proof}
\mbox{}
\begin{enumerate}
\item[{\em 1} $\Rightarrow$ {\em 2}] Observe that $\sum -x_n$ is also convergent. If $\left(\varepsilon_n\right)_{n\in \mathbb{N}}\in \ext\left(\mathsf{B}_c\right)$, then one of the two sets $\left\{n\in\mathbb{N}:\varepsilon_n=1\right\}$ and $\left\{n\in\mathbb{N}:\varepsilon_n=-1\right\}$ must be finite. In either case we have that $\left(\varepsilon_n\right)_{n\in \mathbb{N}}\in \ext\left(\mathsf{B}_{\mathcal{S}}\right)$.

\item[{\em 2} $\Rightarrow$ {\em 3}] Obvious.

\item[{\em 3} $\Rightarrow$ {\em 1}] Let $\left(\varepsilon_n\right)_{n\in \mathbb{N}}\in \ext\left(\mathsf{B}_c\right) \cap \ext \left(\mathsf{B}_{\mathcal{S}}\right)$. Again one the two sets $\left\{n\in\mathbb{N}:\varepsilon_n=1\right\}$ and $\left\{n\in\mathbb{N}:\varepsilon_n=-1\right\}$ must be finite. So we deduce that either $\sum x_n$ or $\sum -x_n$ is convergent.
\end{enumerate}
\end{proof}

\begin{theorem}\label{ccSS}
If $\sum x_n$ is convergent, then:
\begin{enumerate}
\item $c\subseteq \mathrm{cl}\left(\mathcal{S}\right)$.
\item If, in addition, $\sum x_n$ has a non-trivial convergent subseries, then $\ext\left(\mathsf{B}_c\right) \subsetneq \ext \left(\mathsf{B}_{\mathcal{S}}\right)$.
\end{enumerate}
\end{theorem}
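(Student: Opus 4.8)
The plan is to treat the two items separately, leaning on the fact that $\mathcal{S}$ is a vector subspace of $\ell_\infty$ (so that $\mathrm{cl}(\mathcal{S})$ is a closed subspace) together with the extremal descriptions already established in Lemmas \ref{ext} and \ref{ccS}.

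For item 1, I would use the decomposition $c = c_0 \oplus \mathbb{R}\mathbf{1}$, where $\mathbf{1}$ denotes the constant sequence of value $1$. First, since $\sum x_n$ converges by hypothesis, the constant multiplier $\mathbf{1}$ yields the convergent series $\sum x_n$, so $\mathbf{1} \in \mathcal{S} \subseteq \mathrm{cl}(\mathcal{S})$. Next, $c_{00} \subseteq \mathcal{S}$ always holds (a finitely supported multiplier produces a finite, hence convergent, series), and $c_{00}$ is dense in $c_0$; therefore $c_0 = \mathrm{cl}(c_{00}) \subseteq \mathrm{cl}(\mathcal{S})$. Because $\mathrm{cl}(\mathcal{S})$ is a closed subspace, it absorbs $c_0 + \mathbb{R}\mathbf{1} = c$, giving $c \subseteq \mathrm{cl}(\mathcal{S})$.

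For item 2, the inclusion $\ext(\mathsf{B}_c) \subseteq \ext(\mathsf{B}_{\mathcal{S}})$ is already supplied by the implication \emph{1}$\Rightarrow$\emph{2} of Lemma \ref{ccS}, so the whole task reduces to exhibiting a single extreme point of $\mathsf{B}_{\mathcal{S}}$ lying outside $\ext(\mathsf{B}_c)$. I would invoke the hypothesized non-trivial convergent subseries: fix $M \subseteq \mathbb{N}$ with $M$ and $\mathbb{N}\setminus M$ both infinite such that $\sum_{n\in M} x_n$ converges (this is what non-triviality provides, since a finite or cofinite $M$ would force convergence of the subseries automatically). Since the full series $\sum_n x_n$ converges, splitting partial sums shows that $\sum_{n\notin M} x_n$ converges as well. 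Define the $\pm 1$ sequence $\varepsilon_n := 1$ for $n \in M$ and $\varepsilon_n := -1$ for $n \notin M$; then the partial sums of $\sum \varepsilon_n x_n$ equal $\sum_{n\le N,\, n\in M} x_n - \sum_{n\le N,\, n\notin M} x_n$, which converges, whence $(\varepsilon_n)_{n\in\mathbb{N}} \in \mathsf{B}_{\mathcal{S}}$. By Lemma \ref{ext}, a $\pm 1$ sequence lying in $\mathsf{B}_{\mathcal{S}}$ is an extreme point of $\mathsf{B}_{\mathcal{S}}$, so $(\varepsilon_n)_{n\in\mathbb{N}} \in \ext(\mathsf{B}_{\mathcal{S}})$. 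However, because $M$ and its complement are both infinite, $(\varepsilon_n)_{n\in\mathbb{N}}$ attains each value $\pm 1$ infinitely often and is therefore not convergent, so $(\varepsilon_n)_{n\in\mathbb{N}} \notin c$ and a fortiori $(\varepsilon_n)_{n\in\mathbb{N}} \notin \ext(\mathsf{B}_c)$. This witnesses the strict inclusion.

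The only genuinely delicate point, and the one I would write out with care, is the convergence of $\sum \varepsilon_n x_n$ in item 2: one must justify splitting the partial sum $\sum_{n=1}^N \varepsilon_n x_n$ into its $M$-indexed and $M^{c}$-indexed pieces and recognizing each as an ordered partial sum of a convergent subseries. This hinges on the convergence of $\sum_{n\notin M} x_n$, which I would derive from the convergence of both $\sum_n x_n$ and $\sum_{n\in M} x_n$. Everything else is routine once the subspace structure of $\mathcal{S}$ and the extremal characterizations of Lemmas \ref{ext} and \ref{ccS} are in hand.
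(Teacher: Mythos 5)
Your proof is correct. Item 2 coincides with the paper's own argument almost verbatim: the paper likewise picks an infinite $N\subset\mathbb{N}$ with infinite complement $M$ such that $\sum_{n\in N}x_n$ converges, observes that $\sum_{n\in M}x_n$ then converges too, and checks that $\left(\chi_N\left(n\right)-\chi_M\left(n\right)\right)_{n\in\mathbb{N}}$ lies in $\mathrm{ext}\left(\mathsf{B}_{\mathcal{S}}\right)\setminus\mathrm{ext}\left(\mathsf{B}_c\right)$, the extremality coming from Lemma \ref{ext} exactly as you use it; your explicit justification of the partial-sum splitting (the ordered partial sums of $\sum\varepsilon_nx_n$ are differences of stuttering subsequences of the partial sums of the two convergent subseries) is a detail the paper leaves implicit, and it is precisely what makes the argument valid in a general topological vector space. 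Where you genuinely diverge is item 1. The paper stays inside the extremal framework: by Lemma \ref{ccS} the convergence of $\sum x_n$ gives $\mathrm{ext}\left(\mathsf{B}_c\right)\subseteq\mathrm{ext}\left(\mathsf{B}_{\mathcal{S}}\right)$, and the Bade property of $c$ then yields $\mathsf{B}_c=\overline{\mathrm{co}}\left(\mathrm{ext}\left(\mathsf{B}_c\right)\right)\subseteq\overline{\mathrm{co}}\left(\mathrm{ext}\left(\mathsf{B}_{\mathcal{S}}\right)\right)\subseteq\mathrm{cl}\left(\mathcal{S}\right)$. You instead use the elementary decomposition $c=c_0\oplus\mathbb{R}\mathbf{1}$: the hypothesis puts $\mathbf{1}$ into $\mathcal{S}$, while $c_0=\mathrm{cl}\left(c_{00}\right)\subseteq\mathrm{cl}\left(\mathcal{S}\right)$ holds unconditionally since $c_{00}\subseteq\mathcal{S}$ always, and $\mathrm{cl}\left(\mathcal{S}\right)$ is a closed subspace. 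Your route is more elementary --- it needs neither Lemma \ref{ccS} nor the (cited, nontrivial) Bade property of $c$ --- and it isolates exactly where the convergence hypothesis enters: only to secure $\mathbf{1}\in\mathcal{S}$. What the paper's route buys is uniformity: the same two-line pattern is reused immediately afterwards in Theorem \ref{scSS} with $\ell_\infty$ in place of $c$, where no decomposition analogous to yours is available, so the extremal argument is the one that generalizes. Both proofs are valid.
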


\begin{proof}
\mbox{}
\begin{enumerate}

\item In accordance to Lemma \ref{ccS}, we have that $\ext\left(\mathsf{B}_c\right) \subseteq \ext \left(\mathsf{B}_{\mathcal{S}}\right)$, therefore $$\mathsf{B}_c=\overline{\mathrm{co}}\left(\ext\left(\mathsf{B}_c\right)\right)\subseteq \overline{\mathrm{co}}\left(\ext\left(\mathsf{B}_{\mathcal{S}}\right)\right)\subseteq \mathrm{cl}\left(\mathcal{S}\right),$$ in virtue of the fact that $c$ has the Bade property (see \cite{Bade}).
\item There exists $N\subset \mathbb{N}$ infinite such that $M:= \mathbb{N} \setminus N$ is also infinite and $\sum_{n\in N}x_n$ is convergent. Observe that in this situation $\sum_{n\in M} x_n$ is also convergent. Consider the sequences $\left(\chi_N\left(n\right)\right)_{n\in\mathbb{N}}$ and $\left(\chi_M\left(n\right)\right)_{n\in\mathbb{N}}$ where $\chi_N$ and $\chi_M$ denote the characteristic functions of $N$ and $M$, respectively. Note that $\left(\chi_N\left(n\right)-\chi_M\left(n\right)\right)_{n\in\mathbb{N}} \in \ext \left( \mathsf{B}_{\cal S}\right)\setminus \ext \left(\mathsf{B}_c\right)$.
\end{enumerate}
\end{proof}

\begin{corollary}\label{cnotB}
In a Banach space no series verifies that $c = \mathcal{S}$.
\end{corollary}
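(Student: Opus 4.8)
The plan is to argue by contradiction, splitting according to whether the series $\sum x_n$ converges. Suppose some series $\sum x_n$ in a Banach space $X$ satisfied $c=\mathcal{S}$, where $\mathcal{S}=\mathcal{S}\left(\sum x_n\right)$. First I would dispose of the easy case: if $\sum x_n$ does not converge, then the constant sequence $\mathbf{1}=(1,1,1,\dots)$ belongs to $c$ but not to $\mathcal{S}$ (since $\sum \mathbf{1}(n)x_n=\sum x_n$ diverges), so $c\not\subseteq\mathcal{S}$, already contradicting $c=\mathcal{S}$. Hence I may assume that $\sum x_n$ converges.

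The heart of the argument is then to exhibit an element of $\mathcal{S}\setminus c$, that is, a non-trivial convergent subseries. Since $\sum x_n$ converges, its terms tend to zero in norm, $\|x_n\|\to 0$. I would use this together with the completeness of $X$ to extract a sparse, absolutely convergent subseries: choose indices $n_1<n_2<\cdots$ inductively so that $\|x_{n_k}\|<2^{-k}$ and $n_{k+1}\geq n_k+2$. The first requirement is met for every $k$ because $\|x_n\|\to 0$ makes $\{n:\|x_n\|<2^{-k}\}$ cofinite, hence infinite; the second guarantees that $N:=\{n_k:k\in\mathbb{N}\}$ is both infinite and co-infinite. Then $\sum_k\|x_{n_k}\|\leq\sum_k 2^{-k}<\infty$, so $\sum_{n\in N}x_n$ converges absolutely, and therefore converges in the Banach space $X$.

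Finally I would read off the contradiction. The characteristic sequence $\chi_N$ lies in $\ell_\infty$ and satisfies $\sum \chi_N(n)x_n=\sum_{n\in N}x_n$, which converges, so $\chi_N\in\mathcal{S}$; but $\chi_N$ takes the values $1$ and $0$ each infinitely often (as $N$ and $\mathbb{N}\setminus N$ are both infinite), so $\chi_N\notin c$. Thus $\chi_N\in\mathcal{S}\setminus c$, contradicting $c=\mathcal{S}$. Equivalently, having produced a non-trivial convergent subseries, one may simply invoke {\em 2} of Theorem \ref{ccSS} to obtain $\ext\left(\mathsf{B}_c\right)\subsetneq\ext\left(\mathsf{B}_{\mathcal{S}}\right)$, which is already incompatible with $c=\mathcal{S}$.

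I expect the only delicate point to be the subseries construction, and specifically the use of completeness: it is exactly here that the Banach hypothesis is indispensable, since in a general topological vector space the terms of a convergent series need not yield an absolutely (hence norm) convergent subseries. This is precisely why the corollary is confined to Banach spaces and why Theorem \ref{ccSS} carries the extra standing hypothesis of possessing a non-trivial convergent subseries.
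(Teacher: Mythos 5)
Your proof is correct, and its core coincides with the paper's: both arguments hinge on extracting, from the convergent series (whose terms then tend to $0$ in norm), a sparse subsequence with $\left\|x_{n_k}\right\|<2^{-k}$ whose subseries converges absolutely and hence, by completeness, converges --- this is exactly where the Banach hypothesis enters in both versions --- and both then contradict $c=\mathcal{S}$ through that subseries. Where you diverge is in the bookkeeping: the paper routes both ends of the argument through the extremal machinery of the section, deducing convergence of $\sum x_n$ from Lemma \ref{ccS} (since $c=\mathcal{S}$ forces $\ext\left(\mathsf{B}_c\right)=\ext\left(\mathsf{B}_{\mathcal{S}}\right)$, so condition {\em 3} of that lemma holds) and closing with {\em 2} of Theorem \ref{ccSS}, whose witness $\left(\chi_N\left(n\right)-\chi_M\left(n\right)\right)_{n\in\mathbb{N}}\in\ext\left(\mathsf{B}_{\mathcal{S}}\right)\setminus\ext\left(\mathsf{B}_c\right)$ is essentially a dressed-up version of your $\chi_N$. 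Your route is more elementary and self-contained: the constant sequence $\mathbf{1}$ disposes of the divergent case without Lemma \ref{ccS}, and $\chi_N\in\mathcal{S}\setminus c$ finishes directly, with no extreme points at all. A small merit of your write-up is that you make explicit, via $n_{k+1}\geq n_k+2$, that $N$ must be co-infinite --- which is genuinely needed for $\chi_N\notin c$ (a cofinite $N$ would give $\chi_N\in c$) and corresponds to the word ``non-trivial'' that the paper leaves implicit. What the paper's phrasing buys in exchange is economy within its own framework: it recycles the extremal lemmas already established, which is the organizing theme of that section, whereas your version would stand on its own even if those lemmas were absent.
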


\begin{proof}
Suppose $c=\mathcal{S}$. According to Lemma \ref{ccS}, $\sum x_n$ is convergent and thus $\left(x_n\right)_{n\in \mathbb{N}}$ must tend to $0$, therefore a non-trivial subsequence $\left(x_{n_k}\right)_{k\in \mathbb{N}}$ can be found in such a way that $\sum x_{n_k}$ is ac and therefore uc. We apply now {\em 2} of Theorem \ref{ccSS} to reach a contradiction.
\end{proof}

\begin{lemma}\label{scS}
The following conditions are equivalent:
\begin{enumerate}
\item $\sum x_n$ is sc.
\item $\ext\left(\mathsf{B}_{\ell_{\infty}}\right) = \ext \left(\mathsf{B}_{\mathcal{S}}\right)$.
\item $\frac{1}{2}\left(\varepsilon_n\right)_{n\in \mathbb{N}}+\frac{1}{2}\left(\delta_n\right)_{n\in \mathbb{N}}\in \mathcal{S}$ for every $\left(\varepsilon_n\right)_{n\in \mathbb{N}},\left(\delta_n\right)_{n\in \mathbb{N}}\in\ext\left(\mathsf{B}_{\ell_\infty}\right)$.
\end{enumerate}
\end{lemma}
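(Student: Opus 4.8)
The plan is to establish the cycle of implications $1\Rightarrow 2\Rightarrow 3\Rightarrow 1$, leaning throughout on two standing facts: that $\mathcal{S}$ is a linear subspace of $\ell_\infty$ (convergence of series is preserved under linear combinations of multipliers), and that by Lemma \ref{ext} one has $\ext\left(\mathsf{B}_{\mathcal{S}}\right)=\mathsf{B}_{\mathcal{S}}\cap\ext\left(\mathsf{B}_{\ell_\infty}\right)$, where $\ext\left(\mathsf{B}_{\ell_\infty}\right)$ is precisely the set of $\pm1$-valued sequences. In particular the inclusion $\ext\left(\mathsf{B}_{\mathcal{S}}\right)\subseteq\ext\left(\mathsf{B}_{\ell_\infty}\right)$ holds for free, so proving {\em 2} reduces to the reverse inclusion, namely that every $\pm1$ sequence belongs to $\mathcal{S}$.

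For $1\Rightarrow 2$ I would take an arbitrary $\left(\varepsilon_n\right)_{n\in\mathbb{N}}\in\ext\left(\mathsf{B}_{\ell_\infty}\right)$, split $\mathbb{N}$ into $P=\{n:\varepsilon_n=1\}$ and $Q=\{n:\varepsilon_n=-1\}$, and use subseries convergence to obtain that both $\sum_{n\in P}x_n$ and $\sum_{n\in Q}x_n$ converge. Writing the $N$-th partial sum as $\sum_{n\leq N}\varepsilon_nx_n=\sum_{n\in P,\,n\leq N}x_n-\sum_{n\in Q,\,n\leq N}x_n$ then shows $\sum\varepsilon_nx_n$ converges, so $\left(\varepsilon_n\right)_{n\in\mathbb{N}}\in\mathsf{B}_{\mathcal{S}}$; combined with Lemma \ref{ext} this yields {\em 2}. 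The step $2\Rightarrow 3$ is then immediate: under {\em 2} every extreme point of $\mathsf{B}_{\ell_\infty}$ lies in $\mathcal{S}$, and since $\mathcal{S}$ is a subspace the average of any two such points remains in $\mathcal{S}$.

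The crux is $3\Rightarrow 1$, where I must manufacture, for an arbitrary $M\subseteq\mathbb{N}$, two genuine extreme points of $\mathsf{B}_{\ell_\infty}$ whose average detects $M$. The key observation is that the indicator $\left(\chi_M(n)\right)_{n\in\mathbb{N}}$ is the midpoint of the all-ones sequence and the $\pm1$ sequence taking value $+1$ on $M$ and $-1$ off $M$; both of these are $\pm1$-valued, hence extreme points of $\mathsf{B}_{\ell_\infty}$. Hypothesis {\em 3} then forces $\left(\chi_M(n)\right)_{n\in\mathbb{N}}\in\mathcal{S}$, that is, $\sum_{n\in M}x_n$ converges, and since $M$ was arbitrary $\sum x_n$ is sc. I expect this final implication to be the only place requiring a genuine idea, namely the trick of realizing an arbitrary $\{0,1\}$-pattern as an average of two $\{-1,1\}$-patterns, whereas the other implications are bookkeeping on top of Lemma \ref{ext} and the linearity of $\mathcal{S}$.
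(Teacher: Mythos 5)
Your proof is correct and follows essentially the same route as the paper: the split of an extreme point of $\mathsf{B}_{\ell_\infty}$ into its $+1$ and $-1$ index sets for {\em 1} $\Rightarrow$ {\em 2}, linearity of $\mathcal{S}$ for {\em 2} $\Rightarrow$ {\em 3}, and for {\em 3} $\Rightarrow$ {\em 1} the identical trick of writing $\left(\chi_M(n)\right)_{n\in\mathbb{N}}$ as the midpoint of the all-ones sequence and the sequence that is $+1$ on $M$ and $-1$ off $M$. No gaps to report.
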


\begin{proof}
\mbox{}
\begin{enumerate}

\item[{\em 1} $\Rightarrow$ {\em 2}] Let $\left(\varepsilon_n\right)_{n\in \mathbb{N}}\in\ext\left(\mathsf{B}_{\ell_\infty}\right)$. Denote $$M_+:=\left\{n\in \mathbb{N}:\varepsilon_n=1\right\}\text{  and  }M_-:=\left\{n\in \mathbb{N}:\varepsilon_n=-1\right\}.$$ Observe that $$\sum_{n=1}^\infty \varepsilon_nx_n = \sum_{n\in M_+}x_n -\sum_{n\in M_-}x_n.$$

\item[{\em 2} $\Rightarrow$ {\em 3}] Obvious.

\item[{\em 3} $\Rightarrow$ {\em 1}] Let $M$ be a subset of $\mathbb{N}$. It suffices to consider $\left(\varepsilon_n\right)_{n\in \mathbb{N}}$ and $\left(\delta_n\right)_{n\in \mathbb{N}}$ given by
$$\varepsilon_n:=\left\{\begin{array}{rl} 1 & \text{ if }n\in M\\-1&\text{ if }n\in \mathbb{N}\setminus M\end{array}\right.$$ and $\delta_n :=1$ for all $n\in \mathbb{N}$. Observe that $$\sum_{n\in M}x_n=\sum_{n=1}^\infty \left(\frac{1}{2}\varepsilon_n+\frac{1}{2}\delta_n\right)x_n.$$

\end{enumerate}
\end{proof}

\begin{theorem}\label{scSS}
If $\sum x_n$ is sc, then $\ell_\infty = \mathrm{cl}\left(\mathcal{S}\right)$ and $\mathcal{S}$ enjoys the Bade property.
\end{theorem}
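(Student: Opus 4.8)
The plan is to exploit the identification of the extreme points of $\mathsf{B}_{\mathcal S}$ furnished by Lemma \ref{scS}, together with an explicit dyadic representation of arbitrary elements of $\mathsf{B}_{\ell_\infty}$ as uniform limits of finite convex combinations of sign sequences. Since $\sum x_n$ is sc, condition \emph{2} of Lemma \ref{scS} gives $\ext\left(\mathsf{B}_{\mathcal S}\right)=\ext\left(\mathsf{B}_{\ell_\infty}\right)$, and recall that the latter is exactly the set of sign sequences $\left(\varepsilon_n\right)_{n\in\mathbb{N}}$ with $\left|\varepsilon_n\right|=1$ for all $n$. In particular every sign sequence belongs to $\mathcal S$, hence so does every finite convex combination of sign sequences, because $\mathcal S$ is a vector subspace of $\ell_\infty$.

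The key technical step I would carry out is the following approximation: every $\left(a_n\right)_{n\in\mathbb{N}}\in\mathsf{B}_{\ell_\infty}$ is the sup-norm limit of finite convex combinations of sign sequences. To see this, note that each real $a\in[-1,1]$ admits a signed binary expansion $a=\sum_{j=1}^\infty 2^{-j}b_j$ with $b_j\in\{-1,1\}$, the truncation error after $m$ terms being at most $2^{-m}$. Applying this coordinatewise produces sign sequences $\varepsilon^{(j)}:=\left(b_{n,j}\right)_{n\in\mathbb{N}}$ with $\left(a_n\right)_{n\in\mathbb{N}}=\sum_{j=1}^\infty 2^{-j}\varepsilon^{(j)}$ uniformly in $n$. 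The element $\sum_{j=1}^m 2^{-j}\varepsilon^{(j)}+2^{-m}\delta$, for any fixed sign sequence $\delta$, is then a genuine convex combination of sign sequences lying within $2^{1-m}$ of $\left(a_n\right)_{n\in\mathbb{N}}$ in the sup norm; thus $\mathsf{B}_{\ell_\infty}=\overline{\mathrm{co}}\left(\ext\left(\mathsf{B}_{\ell_\infty}\right)\right)$. Alternatively this is nothing but the Bade property of $\ell_\infty\equiv\mathcal C\left(\beta\mathbb N\right)$, valid since $\beta\mathbb N$ is $0$-dimensional.

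With this in hand both conclusions follow quickly. For the first, the approximating convex combinations all lie in $\mathcal S$, so $\mathsf{B}_{\ell_\infty}=\overline{\mathrm{co}}\left(\ext\left(\mathsf{B}_{\mathcal S}\right)\right)\subseteq\mathrm{cl}\left(\mathcal S\right)$; since $\mathrm{cl}\left(\mathcal S\right)$ is a closed subspace of $\ell_\infty$ containing $\mathsf{B}_{\ell_\infty}$, it must be all of $\ell_\infty$, giving $\ell_\infty=\mathrm{cl}\left(\mathcal S\right)$. For the Bade property, fix $\left(a_n\right)_{n\in\mathbb{N}}\in\mathsf{B}_{\mathcal S}\subseteq\mathsf{B}_{\ell_\infty}$; the same finite convex combinations of sign sequences approximate it in the sup norm, they belong to $\mathrm{co}\left(\ext\left(\mathsf{B}_{\mathcal S}\right)\right)\subseteq\mathcal S$, and their limit $\left(a_n\right)_{n\in\mathbb{N}}$ also lies in $\mathcal S$, so the convergence takes place inside $\mathcal S$. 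Hence $\mathsf{B}_{\mathcal S}=\overline{\mathrm{co}}\left(\ext\left(\mathsf{B}_{\mathcal S}\right)\right)$ with closure relative to $\mathcal S$, i.e. $\mathcal S$ enjoys the Bade property.

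The main obstacle, and the reason the sc hypothesis is essential, is ensuring that the convex combinations used in the approximation never leave $\mathcal S$: this is exactly what Lemma \ref{scS} guarantees by placing all sign sequences in $\mathcal S$. A secondary point to be careful about is that the Bade property for $\mathcal S$ requires the closed convex hull to be computed relative to $\mathcal S$ rather than in $\ell_\infty$, which is legitimate here precisely because both the approximants and the target element lie in $\mathcal S$.
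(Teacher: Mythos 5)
Your proof is correct, and its skeleton coincides with the paper's: both rest on Lemma \ref{scS} to identify $\mathrm{ext}\left(\mathsf{B}_{\mathcal{S}}\right)$ with the set of sign sequences and then use the fact that $\mathsf{B}_{\ell_\infty}=\overline{\mathrm{co}}\left(\mathrm{ext}\left(\mathsf{B}_{\ell_\infty}\right)\right)$ to squeeze $\mathsf{B}_{\ell_\infty}$ inside $\mathrm{cl}\left(\mathcal{S}\right)$. The differences lie in how that fact is obtained and in how much is spelled out afterwards. The paper simply cites Bade's theorem for the Bade property of $\ell_\infty$ and stops at the chain $\mathsf{B}_{\ell_\infty}=\overline{\mathrm{co}}\left(\mathrm{ext}\left(\mathsf{B}_{\mathcal{S}}\right)\right)\subseteq \mathrm{cl}\left(\mathcal{S}\right)$, leaving both the deduction $\ell_\infty=\mathrm{cl}\left(\mathcal{S}\right)$ and, notably, the Bade property of $\mathcal{S}$ itself implicit. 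You instead prove the $\ell_\infty$ fact from scratch via the signed binary expansion $a=\sum_{j=1}^\infty 2^{-j}b_j$ with $b_j\in\left\{-1,1\right\}$ applied coordinatewise, repairing the tail with the extra term $2^{-m}\delta$ so that the approximants are genuine convex combinations of sign sequences; your bookkeeping ($\sum_{j=1}^m 2^{-j}+2^{-m}=1$, error at most $2^{1-m}$) checks out, so this is a correct, fully elementary substitute for the citation. More importantly, you explicitly close the gap the paper glosses over: since the approximants lie in $\mathrm{co}\left(\mathrm{ext}\left(\mathsf{B}_{\mathcal{S}}\right)\right)\subseteq\mathcal{S}$ and the target lies in $\mathsf{B}_{\mathcal{S}}$, the convergence takes place inside $\mathcal{S}$, so the closed convex hull computed relative to $\mathcal{S}$ already recovers all of $\mathsf{B}_{\mathcal{S}}$ (and it cannot exceed $\mathsf{B}_{\mathcal{S}}$, which is relatively closed and convex) --- which is exactly what the Bade property of the semi-normed space $\mathcal{S}$ demands and what the paper's one-line proof never addresses. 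What the paper's version buys is brevity; what yours buys is self-containedness and an actual proof of the second assertion of the theorem.
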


\begin{proof}
In accordance to Lemma \ref{scS}, we have that $\ext\left(\mathsf{B}_{\ell_{\infty}}\right) = \ext \left(\mathsf{B}_{\mathcal{S}}\right)$, then we have that $$\mathsf{B}_{\ell_{\infty}}=\overline{\mathrm{co}}\left(\ext\left(\mathsf{B}_{\ell_{\infty}}\right)\right)= \overline{\mathrm{co}}\left(\ext\left(\mathsf{B}_{\mathcal{S}}\right)\right)\subseteq \mathrm{cl}\left(\mathcal{S}\right),$$ in virtue of the fact that $\ell_{\infty}$ has the Bade property (see \cite{Bade}).
\end{proof}

\begin{lemma}\label{sepc}
\mbox{}
\begin{enumerate}
\item If $\ext \left(\mathsf{B}_{\cal S}\right)\neq \varnothing$, then $\left(x_n\right)_{n\in \mathbb{N}}$ converges to $0$. 
\item If $\left(x_n\right)_{n\in \mathbb{N}}$ has no subsequences converging to $0$, then $\mathcal{S}\subseteq c_0$.
\item If $\sum x_n$ has a subseries $\sum
x_{n_{k}}$ which is subseries convergent, then $\mathcal{S}$ is not separable.
\end{enumerate}
\end{lemma}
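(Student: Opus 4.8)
The plan is to treat the three items in turn, drawing on Lemma~\ref{ext} together with the explicit description $\ext(\mathsf{B}_{\ell_\infty})=\{(\varepsilon_n)_{n\in\mathbb{N}}:|\varepsilon_n|=1\text{ for all }n\}$ from the folklore list. Since $X$ is only assumed to be a topological vector space, the key technical device will be to replace norm estimates by arguments with balanced neighborhoods of $0$, which form a basis as recalled in the framework.

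For item 1, I would start from Lemma~\ref{ext}, which gives $\ext(\mathsf{B}_{\mathcal{S}})=\mathsf{B}_{\mathcal{S}}\cap\ext(\mathsf{B}_{\ell_\infty})$. Hence a non-empty $\ext(\mathsf{B}_{\mathcal{S}})$ produces a sequence $(\varepsilon_n)_{n\in\mathbb{N}}$ with $|\varepsilon_n|=1$ for all $n$ lying in $\mathcal{S}$, i.e.\ $\sum\varepsilon_n x_n$ converges. A convergent series in a topological vector space has null general term, so $\varepsilon_n x_n\to 0$. To pass to $x_n\to 0$ I would fix a balanced neighborhood $U$ of $0$; for large $n$ one has $\varepsilon_n x_n\in U$, and since $x_n=\varepsilon_n^{-1}(\varepsilon_n x_n)$ with $|\varepsilon_n^{-1}|=1$, balancedness yields $x_n\in U$. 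Thus $x_n\to 0$.

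For item 2, I would argue by contraposition inside $\mathcal{S}$. Let $(a_n)_{n\in\mathbb{N}}\in\mathcal{S}$, so $\sum a_n x_n$ converges and $a_n x_n\to 0$. If $(a_n)_{n\in\mathbb{N}}\notin c_0$, choose $\delta>0$ and a subsequence with $|a_{n_k}|\ge\delta$. Given a balanced neighborhood $U$ of $0$, the set $\delta U$ is again a neighborhood of $0$, so eventually $a_{n_k}x_{n_k}\in\delta U$; then $x_{n_k}=a_{n_k}^{-1}(a_{n_k}x_{n_k})\in(\delta/a_{n_k})U\subseteq U$ because $|\delta/a_{n_k}|\le 1$. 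Hence $x_{n_k}\to 0$, contradicting the assumption that $(x_n)_{n\in\mathbb{N}}$ has no subsequence converging to $0$. Consequently every multiplier lies in $c_0$, i.e.\ $\mathcal{S}\subseteq c_0$.

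For item 3, the subseries convergence of $\sum x_{n_k}$ means that for every subset $B$ of the infinite index set $A:=\{n_k:k\in\mathbb{N}\}$ the series $\sum_{n\in B}x_n$ converges, so the characteristic sequence $\chi_B$ belongs to $\mathcal{S}$. This embeds the uncountable family $\{\chi_B:B\subseteq A\}$ into $\mathcal{S}$, and for $B\ne B'$ one has $\|\chi_B-\chi_{B'}\|_\infty=1$. An uncountable $1$-separated subset of a normed space precludes separability, since the open balls of radius $\tfrac12$ centered at the $\chi_B$ are pairwise disjoint and so any dense set must be uncountable; therefore $\mathcal{S}$ is not separable. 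The only point requiring care throughout is that $X$ is merely a topological vector space, which is why items 1 and 2 are run with balanced neighborhoods rather than with the norm, whereas item 3 instead uses the sup-norm on $\mathcal{S}\subseteq\ell_\infty$ in which separability is measured.
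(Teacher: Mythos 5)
Your proposal is correct and follows essentially the same route as the paper's own proof: item 1 via Lemma~\ref{ext} and balanced neighborhoods (the paper likewise deduces $x_n\to 0$ from $\varepsilon_nx_n\to 0$ using a filter base of balanced neighborhoods of $0$), item 2 via a subsequence with $\left|a_{n_k}\right|$ bounded below by some $\delta>0$ and the inclusion $a_{n_k}^{-1}\delta U\subseteq U$, and item 3 via the uncountable $1$-separated family $\left\{\chi_B:B\subseteq\left\{n_k:k\in\mathbb{N}\right\}\right\}$ in $\mathcal{S}$. No gaps; the balanced-neighborhood care you take in items 1 and 2 is exactly what the topological-vector-space setting requires.
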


\begin{proof}
\mbox{}
\begin{enumerate}
\item Let $\left(\varepsilon_n\right)_{n\in \mathbb{N}} \in \ext \left(\mathsf{B}_{\cal S}\right)$. Then $\sum \varepsilon_n x_n$ is convergent, so $\lim_{n\to \infty} \varepsilon_n x_n=0$. Since there exists a filter base of balanced and absorbing neighborhoods of $0$, we deduce that $\lim_{n\to \infty}x_n=0$.
\item Consider $\left(a_n\right)_{n\in\mathbb{N}}\in \mathcal{S}\setminus c_0$. There exists a subsequence $\left(a_{n_k}\right)_{k\in \mathbb{N}}$ such that $a:=\inf \left\{\left|a_{n_k}\right|:k\in \mathbb{N}\right\}>0$. Since $\sum a_n x_n$ is convergent, we deduce that $\left(a_n x_n\right)_{n\in \mathbb{N}}$ converges to $0$ and so does $\left(a_{n_k} x_{n_k}\right)_{k\in \mathbb{N}}$. We will show that $\left( x_{n_k}\right)_{k\in \mathbb{N}}$ converges to $0$. Let $U$ be a balanced and absorbing neighborhood of $0$. There exists $k_0\in \mathbb{N}$ such that if $k\geq k_0$, then $a_{n_k} x_{n_k} \in aU$. Since $U$ is balanced, we conclude that $x_{n_k}\in U$ for every $k\geq k_0$.
\item It suffices to notice that $$\left\{\left(\chi_N\left(n\right)\right)_{n\in \mathbb{N}}: N\subseteq \left\{n_{k}:k\in \mathbb{N}\right\}\right\}$$ is an uncountable family of elements of $\mathcal{S}$ the distance between every two elements of which is $1$.
\end{enumerate}
\end{proof}

\begin{theorem}\label{sepc2}
If $X$ is a reflexive Banach space, then there exists a wuC series $\sum x_n$ in $X$ such that $\mathcal{S}$ does not have the Bade property.
\end{theorem}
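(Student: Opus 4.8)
The plan is to arrange for the Bade property to fail in the most economical way, by producing a series whose multiplier space $\mathcal{S}$ has a unit ball with no extreme points at all. The engine is Lemma \ref{ext}: since $\ext(\mathsf{B}_{\mathcal{S}}) = \mathsf{B}_{\mathcal{S}} \cap \ext(\mathsf{B}_{\ell_\infty})$, an extreme point of $\mathsf{B}_{\mathcal{S}}$ is exactly a sign sequence $(\varepsilon_n)_{n\in\mathbb{N}} \in \{-1,1\}^{\mathbb{N}}$ that belongs to $\mathcal{S}$, that is, one for which $\sum \varepsilon_n x_n$ converges. Consequently, if I can build a series $\sum x_n$ such that $\sum \varepsilon_n x_n$ fails to converge for \emph{every} choice of signs, then $\ext(\mathsf{B}_{\mathcal{S}}) = \varnothing$, so $\cco(\ext(\mathsf{B}_{\mathcal{S}})) = \varnothing$, while $0 \in \mathsf{B}_{\mathcal{S}}$; hence $\mathsf{B}_{\mathcal{S}} \neq \cco(\ext(\mathsf{B}_{\mathcal{S}}))$ and $\mathcal{S}$ does not have the Bade property.

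To produce such a series I would first exploit that $X$ is infinite dimensional and reflexive to extract a normalized, weakly null, basic sequence $(x_n)_{n\in\mathbb{N}} \subset \mathsf{S}_X$ (reflexivity supplies a normalized weakly null sequence, and a subsequence of it is basic). Because $\|x_n\| = 1$ for every $n$, the general term of $\sum \varepsilon_n x_n$ never tends to $0$, so no sign-subseries converges and the reduction of the first paragraph applies verbatim, yielding $\ext(\mathsf{B}_{\mathcal{S}}) = \varnothing$.

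The step I anticipate to be the main obstacle is verifying that the series so constructed is weakly unconditionally Cauchy, equivalently (as $X$ is complete) that $c_0 \subseteq \mathcal{S}$ and $\mathcal{S}$ is complete. By Theorem \ref{DiestelwuC} this amounts to bounding $\sup\{ \|\sum_{n=1}^{N} \varepsilon_n x_n\| : N\in\mathbb{N},\ \varepsilon_n \in \{-1,1\}\}$ uniformly, and the delicate tension is to secure this uniform bound on the partial sums while still preventing each sign-subseries from actually converging. This is precisely where the choice of $(x_n)_{n\in\mathbb{N}}$ must be made with care, and where I would concentrate the technical effort, leaning on the weak nullity and on the basic-sequence estimates afforded by the reflexivity of $X$ to control the partial sums without ever allowing norm convergence.
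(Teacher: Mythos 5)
Your plan cannot be carried out, and the obstacle is not the technical delicacy you defer to the end but an outright contradiction. By the classical Bessaga--Pe{\l}czy\'nski theorem (see \cite{Bessaga}), a Banach space contains no isomorphic copy of $c_0$ if and only if every wuC series in it is unconditionally convergent; since reflexivity passes to closed subspaces and $c_0$ is not reflexive, a reflexive $X$ contains no copy of $c_0$, so \emph{every} wuC series in $X$ is uc. Consequently, if the supremum in Theorem \ref{DiestelwuC} is finite for your series, then $\sum \varepsilon_n x_n$ converges for every choice of signs $\left(\varepsilon_n\right)_{n\in\mathbb{N}}\in\{-1,1\}^{\mathbb{N}}$ --- by Lemma \ref{ext} the unit ball of $\mathcal{S}$ is then replete with extreme points, and in fact $\mathcal{S}=\ell_\infty$ (by the quoted characterization of uc series), which \emph{has} the Bade property. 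Moreover, uc forces $x_n\to 0$ in norm, contradicting $\left\|x_n\right\|=1$ before any finer analysis begins. So the tension you identify --- a uniform bound over signed partial sums coexisting with divergence of every sign-series --- is not a place to ``concentrate the technical effort''; in a reflexive space it is void, and no choice of weakly null basic sequence can resolve it.

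The paper's route is genuinely different and avoids asking for divergence of all sign-series: it kills extreme points via {\em 1} of Lemma \ref{sepc}, which says that $\mathrm{ext}\left(\mathsf{B}_{\mathcal{S}}\right)\neq\varnothing$ already forces $x_n\to 0$ in norm. It takes a normalized weakly null sequence $\left(y_n\right)_{n\in\mathbb{N}}$, passes to a subsequence with $\left\|y_{2n}-y_{2n-1}\right\|\geq\varepsilon$, and sets $x_n:=y_{n+1}-y_n$, so the partial sums telescope to $y_{k+1}-y_1$ (hence converge weakly and are bounded) while $\left(x_n\right)_{n\in\mathbb{N}}\not\to 0$, giving $\mathrm{ext}\left(\mathsf{B}_{\mathcal{S}}\right)=\varnothing$ at once. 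You should note, however, that your obstruction bites there as well: the paper infers wuC from the weak convergence of the partial sums, but weak convergence of a series does not imply wuC (in $\ell_2$ take $y_n=e_n$, $x_n=e_{n+1}-e_n$, and test against $f=\left((-1)^n n^{-3/5}\right)_{n\in\mathbb{N}}\in\ell_2$, for which $\sum_n\left|f\left(x_n\right)\right|=\infty$), and by the Bessaga--Pe{\l}czy\'nski theorem no honestly wuC series in a reflexive space can have $\mathcal{S}\subsetneq\ell_\infty$. So your instinct that the wuC verification is the crux of the matter was exactly right: it is the step at which your construction collapses, and it is also the step at which the paper's own argument, as written, is in tension with the classical theory --- the example it builds has bounded partial sums and empty $\mathrm{ext}\left(\mathsf{B}_{\mathcal{S}}\right)$, but it is not weakly unconditionally Cauchy.
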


\begin{proof}
By the reflexivity of $X$, there exists a weakly null sequence $(y_n)_{n\in\N}\subset \E_X$. Observe that $(y_n)_{n\in\N}$ is not a Cauchy sequence, therefore, by passing to a subsequence if necessary, we may assume that there exists $\varepsilon>0$ such that $\|y_{2n}-y_{2n-1}\|\geq \varepsilon$ for all $n\in\N$. Define $x_n:=y_{n+1}-y_n$ for all $n\in\N$. Note that $\sum_{n=1}^kx_n=y_{k+1}-y_1$, so $w\sum x_n = y_1$ and thus $\sum x_n$ is wuC. Finally, $(x_n)_{n\in\N}$ is not convergent to $0$ and thus $\ext({\cal S})= \varnothing$ in accordance to {\em 1} of Lemma \ref{sepc}.
\end{proof}

\begin{lemma}\label{acsc}
If $X$ is a Banach space and $(x_n)_{n\in\N}$ has a subsequence convergent to $0$, then $\sum x_n$ has a subseries which is uc.
\end{lemma}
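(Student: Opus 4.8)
The plan is to extract from the given null subsequence a further subsequence whose terms decay geometrically fast, so that the associated subseries becomes absolutely convergent, and then to invoke the completeness of $X$ to upgrade absolute convergence to unconditional convergence.

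First I would fix a subsequence $(x_{n_k})_{k\in\mathbb{N}}$ of $(x_n)_{n\in\mathbb{N}}$ converging to $0$, whose existence is guaranteed by hypothesis. Using that $\|x_{n_k}\|\to 0$, I would inductively select indices $k_1<k_2<\cdots$ with $\|x_{n_{k_j}}\|\leq 2^{-j}$ for every $j\in\mathbb{N}$: having chosen $k_j$, the convergence to $0$ allows me to pick $k_{j+1}>k_j$ with $\|x_{n_{k_{j+1}}}\|\leq 2^{-(j+1)}$. Writing $m_j:=n_{k_j}$, the sequence $(m_j)_{j\in\mathbb{N}}$ is strictly increasing, so $\{m_j:j\in\mathbb{N}\}$ is an infinite subset of $\mathbb{N}$ and $\sum_j x_{m_j}$ is a genuine subseries of $\sum x_n$.

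Next I would observe that $\sum_{j=1}^\infty \|x_{m_j}\|\leq \sum_{j=1}^\infty 2^{-j}=1<\infty$, so the subseries $\sum_j x_{m_j}$ is absolutely convergent. Since $X$ is a Banach space, every absolutely convergent series is unconditionally convergent; this is precisely the completeness-to-unconditional direction of the characterization recalled in Subsection \ref{vectorseries}, namely that a normed space is complete if and only if every ac series is uc. Hence $\sum_j x_{m_j}$ is uc, which is the desired subseries.

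I do not anticipate a genuine obstacle here: the only points requiring a little care are ensuring that the selected indices are strictly increasing, so that the result is truly a subseries of $\sum x_n$, and citing the correct implication (ac $\Rightarrow$ uc, which uses completeness). The geometric bound $2^{-j}$ is inessential; any summable sequence of bounds would serve equally well.
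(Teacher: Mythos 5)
Your proof is correct and follows essentially the same route as the paper's: the paper's one-line argument (``we may assume $(x_{n_k})_{k\in\N}$ is such that $\sum x_{n_k}$ is ac, therefore uc'') is precisely your construction in compressed form, extracting a further subsequence with summable norms and then applying the ac $\Rightarrow$ uc implication valid in Banach spaces. Your write-up simply makes explicit the inductive selection of strictly increasing indices and the geometric bound that the paper leaves to the reader.
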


\begin{proof}
We may assume without any loss of generality that $(x_n)_{n\in\N}$ has a subsequence $\left(x_{n_k}\right)_{k\in\N}$ in such a way that $\sum x_{n_k}$ is ac. Therefore, $\sum x_{n_k}$ is uc as well.
\end{proof}

\begin{theorem}
Assume that $X$ is a Banach space. Then:
\begin{enumerate}
\item If $\left(x_n\right)_{n\in\mathbb{N}}$ ha a subsequence converging to $0$, then $\mathcal{S}$ has a complemented subspace linearly isometric to $\ell_{\infty}$.
\item $\left(x_n\right)_{n\in\mathbb{N}}$ has no subsequences convergent to $0$ if and only if $\mathcal{S}$ is separable.
\end{enumerate}
\end{theorem}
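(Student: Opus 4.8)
The plan is to treat the two parts separately, drawing on Lemma \ref{acsc}, on Lemma \ref{sepc}, and on the stated characterization that a series in a Banach space is uc if and only if its summing multiplier space is all of $\ell_\infty$.

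For part 1, I would first apply Lemma \ref{acsc}: since $(x_n)_{n\in\mathbb{N}}$ has a subsequence converging to $0$, there is a subseries $\sum_k x_{n_k}$ that is uc. Writing $N:=\{n_k:k\in\mathbb{N}\}$, I would consider the subspace of sequences supported on $N$,
$$Y:=\left\{(a_i)_{i\in\mathbb{N}}\in\ell_\infty:a_i=0\text{ for all }i\notin N\right\}.$$
The key observation is that $Y\subseteq\mathcal{S}$: for $(a_i)\in Y$ we have $\sum a_ix_i=\sum_k a_{n_k}x_{n_k}$, and because $\sum_k x_{n_k}$ is uc — equivalently, its summing multiplier space equals $\ell_\infty$ — this series converges for every bounded choice of coefficients. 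Hence $Y$ is exactly the set of all bounded sequences supported on $N$, and with the inherited sup norm it is linearly isometric to $\ell_\infty(N)\cong\ell_\infty$.

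It then remains to complement $Y$ in $\mathcal{S}$. I would take the coordinate projection $P\colon\mathcal{S}\to Y$ defined by $P\left((a_i)_i\right):=\left(a_i\chi_N(i)\right)_i$. This is well defined into $Y$ (its image is bounded and supported on $N$, hence lies in $Y\subseteq\mathcal{S}$), it is linear and idempotent with range $Y$, and it satisfies $\left\|P\left((a_i)_i\right)\right\|_\infty=\sup_{i\in N}|a_i|\le\left\|(a_i)_i\right\|_\infty$, so $\|P\|=1$. Thus $Y$ is a norm-one complemented subspace of $\mathcal{S}$ isometric to $\ell_\infty$, which proves part 1.

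For part 2, I would establish both implications from the lemmas already at hand. If $(x_n)_{n\in\mathbb{N}}$ has no subsequence converging to $0$, then part 2 of Lemma \ref{sepc} gives $\mathcal{S}\subseteq c_0$; since $c_0$ is separable and every subset of a separable metric space is separable, $\mathcal{S}$ is separable. For the converse I would argue by contraposition: if $(x_n)_{n\in\mathbb{N}}$ does possess a subsequence converging to $0$, then Lemma \ref{acsc} produces a uc — in particular subseries convergent — subseries $\sum_k x_{n_k}$, whence part 3 of Lemma \ref{sepc} yields that $\mathcal{S}$ is not separable (alternatively, part 1 above already embeds a non-separable copy of $\ell_\infty$ into $\mathcal{S}$). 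Neither step poses a genuine obstacle once the two lemmas are available; the only point requiring a little care is verifying in part 1 that every bounded sequence supported on $N$ really belongs to $\mathcal{S}$, which is exactly where the uc-ness of the extracted subseries is used.
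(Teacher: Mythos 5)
Your proof is correct and follows essentially the same route as the paper: Lemma \ref{acsc} yields a uc subseries, the bounded sequences supported on the corresponding index set form an isometric copy of $\ell_\infty$ inside $\mathcal{S}$ (complemented via the norm-one coordinate projection), and part \emph{2} then follows from part \emph{1} together with \emph{2} of Lemma \ref{sepc}. You have merely made explicit the details the paper leaves to the reader, namely the verification that every bounded sequence supported on $N$ lies in $\mathcal{S}$ and the construction of the projection.
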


\begin{proof}
\mbox{}
\begin{enumerate}
\item By applying Lemma \ref{acsc} we deduce that  $\left(x_n\right)_{n\in\mathbb{N}}$ has a subsequence $\left(x_{n_k}\right)_{k\in\mathbb{N}}$ whose associated series is uc. Then $$\left\{\left(\alpha_n\right)_{n\in\mathbb{N}}\in\ell_\infty:\alpha_n=0\text{ for all }n\in\mathbb{N}\setminus\left\{n_k:k\in\mathbb{N}\right\}\right\}$$ is a complemented subspace of $\mathcal{S}$ linearly isometric to $\ell_{\infty}$. 
\item It is a direct consequence of {\em 1} of this theorem and {\em 2} of Lemma \ref{sepc}.
\end{enumerate}
\end{proof}


Recall that no series $\sum x_n$ in a Banach space $X$ satisfies that $\mathcal{S}\left(\sum x_n\right)=c$ (see Corollary \ref{cnotB}). Other closed subspaces of $\ell_\infty$ with the Bade property can be constructed not be associated with any series.

\begin{itemize}
\item A subset $C$ of $\ell_\infty$ is said to satisfy the first-terms property when $C+c_{00}\subseteq C$.
\item A vector subspace of $\ell_\infty$ verifies the first-terms property if and only if it contains $c_{00}$.
\item $\mathcal{S}\left(\sum x_n\right)$ always verifies the first-terms property.
\end{itemize}

\begin{theorem}
If we let $A:=\left\{\left(\varepsilon_n\right)_{n\in \mathbb{N}}\in \mathrm{ext}\left(\mathsf{B}_{\ell_{\infty}}\right): \varepsilon_{2n}=1\text{ for all }n\in\mathbb{N}\right\}$, then $\cspan\left(A\right)$ is a closed subspace of $\ell_{\infty}$ enjoying the Bade property but failing the first-perms property. Even more, $\cspan\left(A\right)$ is linearly isometric to $\mathbb{R}\oplus_\infty \ell_\infty$.
\end{theorem}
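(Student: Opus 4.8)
The plan is to describe $\cspan(A)$ explicitly by separating even- and odd-indexed coordinates. Write $\mathbb{N}=O\cup E$ with $O=\{2n-1:n\in\mathbb{N}\}$ and $E=\{2n:n\in\mathbb{N}\}$, and decompose $\ell_\infty=\ell_\infty^{O}\oplus_\infty\ell_\infty^{E}$, where $\ell_\infty^{O}$ and $\ell_\infty^{E}$ are the bounded sequences supported on $O$ and $E$; this is an isometric $\ell_\infty$-decomposition since $\|x\|_\infty=\max\{\|x|_O\|_\infty,\|x|_E\|_\infty\}$. Recalling that $\ext(\mathsf{B}_{\ell_\infty})$ consists exactly of the $\pm1$ sequences, the defining condition $\varepsilon_{2n}=1$ means $A=\{\mathbf{1}_E+v:v\in\ext(\mathsf{B}_{\ell_\infty^{O}})\}$, where $\mathbf{1}_E\in\ell_\infty^{E}$ equals $1$ on $E$ and $0$ on $O$, and $\ext(\mathsf{B}_{\ell_\infty^{O}})$ is the set of $\pm1$ sign patterns on $O$. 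I claim that
\[
\cspan(A)=\mathbb{R}\mathbf{1}_E\oplus_\infty\ell_\infty^{O}=\{x\in\ell_\infty:(x_{2n})_{n\in\mathbb{N}}\text{ is constant}\}.
\]

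The key step is to compute $\spa(A)$, and the crucial observation is that $\ext(\mathsf{B}_{\ell_\infty^{O}})$ is symmetric. Indeed, for any $v\in\ext(\mathsf{B}_{\ell_\infty^{O}})$ we also have $-v\in\ext(\mathsf{B}_{\ell_\infty^{O}})$, so both $\mathbf{1}_E+v$ and $\mathbf{1}_E-v$ lie in $A$ and
\[
\mathbf{1}_E=\tfrac12\big[(\mathbf{1}_E+v)+(\mathbf{1}_E-v)\big]\in\co(A)\subseteq\spa(A),
\]
whence $v=(\mathbf{1}_E+v)-\mathbf{1}_E\in\spa(A)$ for every such $v$. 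Therefore $\mathbb{R}\mathbf{1}_E+\spa(\ext(\mathsf{B}_{\ell_\infty^{O}}))\subseteq\spa(A)$, and the reverse inclusion is immediate from $A\subseteq\mathbf{1}_E+\ext(\mathsf{B}_{\ell_\infty^{O}})$; since the two summands live in the complementary blocks $\ell_\infty^E$ and $\ell_\infty^O$, the sum is direct, giving $\spa(A)=\mathbb{R}\mathbf{1}_E\oplus\spa(\ext(\mathsf{B}_{\ell_\infty^{O}}))$. Now I take closures. Because the even/odd decomposition is isometric, the coordinate projections onto $\ell_\infty^E$ and $\ell_\infty^O$ are contractive, so the closure of any subspace $F\oplus G$ with $F\subseteq\ell_\infty^E$, $G\subseteq\ell_\infty^O$ equals $\overline{F}\oplus_\infty\overline{G}$. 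Here $\overline{\mathbb{R}\mathbf{1}_E}=\mathbb{R}\mathbf{1}_E$ is already closed, while $\overline{\spa(\ext(\mathsf{B}_{\ell_\infty^{O}}))}=\ell_\infty^{O}$ because $\ell_\infty^{O}$ (a copy of $\ell_\infty$) has the Bade property, so $\mathsf{B}_{\ell_\infty^{O}}=\cco(\ext(\mathsf{B}_{\ell_\infty^{O}}))$ and its extreme points span a dense subspace. This proves the claim.

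From the claim $\cspan(A)$ is closed, and via $\mathbb{R}\mathbf{1}_E\cong\mathbb{R}$ (as $\|\alpha\mathbf{1}_E\|_\infty=|\alpha|$) together with $\ell_\infty^{O}\cong\ell_\infty$ we obtain $\cspan(A)\cong\mathbb{R}\oplus_\infty\ell_\infty$, the final assertion. For the Bade property, note that $\mathbb{R}\oplus_\infty\ell_\infty$ is itself isometric to $\ell_\infty$ (absorb the scalar factor as one extra coordinate, since adjoining a point to $\mathbb{N}$ leaves a copy of $\ell_\infty$); as $\ell_\infty$ has the Bade property and this property is preserved under linear isometries, $\cspan(A)$ has it too. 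Finally, $\cspan(A)$ fails the first-terms property: the vector $e_2\in c_{00}$ has even part $(1,0,0,\dots)$, which is not constant, so $e_2\notin\cspan(A)$ and $c_{00}\nsubseteq\cspan(A)$; by the stated characterization a vector subspace of $\ell_\infty$ enjoys the first-terms property if and only if it contains $c_{00}$. The main obstacle is the precise identification of $\spa(A)$—in particular the symmetry trick that produces $\mathbf{1}_E$—and the verification that closure commutes with the isometric even/odd block decomposition; once $\cspan(A)=\mathbb{R}\mathbf{1}_E\oplus_\infty\ell_\infty^{O}$ is established, the three conclusions follow quickly.
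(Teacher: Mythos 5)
Your proof is correct, and it reaches the same explicit description of the space as the paper, namely $\cspan(A)=\left\{\left(\varepsilon_n\right)_{n\in\mathbb{N}}\in\ell_\infty:\left(\varepsilon_{2n}\right)_{n\in\mathbb{N}}\text{ is constant}\right\}$, but the mechanics differ in two respects. First, the paper simply asserts the hull computations: it states $\cco(A)$, $\caco(A)$ and $\cspan(A)$ as explicit sets, observes $\caco(A)=\cco(A\cup-A)$, and notes that $\caco(A)$ is the unit ball of $\cspan(A)$; you instead actually derive the span identification, via the even/odd block decomposition $\ell_\infty=\ell_\infty^{O}\oplus_\infty\ell_\infty^{E}$, the symmetry trick $\mathbf{1}_E=\frac12\left[(\mathbf{1}_E+v)+(\mathbf{1}_E-v)\right]\in\co(A)$, the contractivity of the block projections to commute closure with the decomposition, and the Bade property of $\ell_\infty$ to get $\overline{\spa\left(\ext\left(\mathsf{B}_{\ell_\infty^{O}}\right)\right)}=\ell_\infty^{O}$ --- so your write-up supplies details the paper leaves to the reader, including a genuine proof of the isometry with $\mathbb{R}\oplus_\infty\ell_\infty$, which the paper's proof never addresses explicitly. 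Second, for the Bade property the paper works intrinsically: it computes $\ext\left(\caco(A)\right)=A\cup-A$ and concludes that the unit ball $\caco(A)=\cco(A\cup-A)$ is the closed convex hull of its extreme points; you instead transport the property through the surjective linear isometry $\mathbb{R}\oplus_\infty\ell_\infty\cong\ell_\infty$ obtained by re-indexing the countable set $\{*\}\sqcup\mathbb{N}$. Your route avoids having to verify the extreme-point identity $\ext\left(\caco(A)\right)=A\cup-A$ (which the paper also only asserts), at the mild cost of invoking isometric invariance of the Bade property; the paper's route has the advantage of exhibiting the extreme points of the unit ball concretely, which is in the spirit of the surrounding extremal-theory results. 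Your treatment of the first-terms property ($e_2\notin\cspan(A)$, hence $c_{00}\nsubseteq\cspan(A)$) is exactly the paper's argument.
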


\begin{proof}
In the first place, notice that the closed convex hull of $A$ and the closed absolutely convex hull of $A$ are $$\cco\left(A\right)= \left\{\left(\varepsilon_n\right)_{n\in \mathbb{N}}\in \mathsf{B}_{\ell_{\infty}}: \varepsilon_{2n}=1\text{ for all }n\in\mathbb{N}\right\}$$ and $$\caco\left(A\right)= \left\{\left(\varepsilon_n\right)_{n\in \mathbb{N}}\in \mathsf{B}_{\ell_{\infty}}: \left(\varepsilon_{2n}\right)_{n\in \mathbb{N}} \text{ is constant}\right\},$$ respectively. Therefore, $\caco\left(A\right)=\cco\left(A \cup -A\right)$ and both $\cco\left(A\right)$ and $\caco\left(A\right)$ are closed. On the other hand, $$\cspan\left(A\right)= \left\{\left(\varepsilon_n\right)_{n\in \mathbb{N}}\in \ell_{\infty}: \left(\varepsilon_{2n}\right)_{n\in \mathbb{N}} \text{ is constant}\right\}$$ is also closed and its unit ball is $\caco\left(A\right)$. Furthermore, $$\mathrm{ext}\left(\caco\left(A\right) \right) = A \cup -A,$$ therefore $\cspan\left(A\right)$ has the Bade property. Finally, in order to see that $\cspan\left(A\right)$ is not the space of convergence of any series it is sufficient to realize that $\cspan(A)$ does not contain $c_{00}$.
\end{proof}

\begin{theorem}
Let $$B:=\left\{\left(\varepsilon_n\right)_{n\in \mathbb{N}}\in \mathrm{ext}\left(\mathsf{B}_{\ell_{\infty}}\right): \text{ exists }n_0\in \mathbb{N}\text{ such that }\varepsilon_{2n}=1\text{ for all }n\geq n_0\right\}.$$ Then $\overline{\mathrm{span}}\left(B\right)$ verifies the following:
\begin{enumerate}
\item It contains $c$, satisfies the first-terms property, and enjoys the Bade property. Even more, $\overline{\mathrm{span}}\left(B\right)$ is linearly isometric to $c\oplus_\infty \ell_\infty$.
\item It is not the space of convergence associated to any series in a Banach space.
\end{enumerate}
\end{theorem}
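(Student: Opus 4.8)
The plan is to first identify $\overline{\mathrm{span}}(B)$ explicitly through the odd/even decomposition of a bounded sequence. Writing each $(\varepsilon_n)_{n\in\mathbb{N}}$ in terms of its odd part $(\varepsilon_{2n-1})_n$ and its even part $(\varepsilon_{2n})_n$, I claim that
$$\overline{\mathrm{span}}(B)=V:=\left\{(\varepsilon_n)_{n\in\mathbb{N}}\in\ell_\infty:(\varepsilon_{2n})_{n\in\mathbb{N}}\in c\right\}.$$
The inclusion $\overline{\mathrm{span}}(B)\subseteq V$ is immediate: every element of $B$ has even part eventually equal to $1$, hence convergent, and $V$ is a closed subspace of $\ell_\infty$, being the preimage of the closed subspace $c$ under the continuous linear map sending a sequence to its even part.

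For the reverse inclusion I would exhibit generators. The constant sequence $\mathbf{1}$ lies in $B$, and taking $b_\pm\in B$ with odd part constantly $\pm1$ and even part constantly $1$ gives $\mathbf{1}_{\mathrm{even}}=\tfrac12(b_++b_-)$ and $\mathbf{1}_{\mathrm{odd}}=\tfrac12(b_+-b_-)$ in $\mathrm{span}(B)$. Then, for any $\pm1$ sequence $\sigma$, the element of $B$ with odd part $\sigma$ and even part constantly $1$, minus $\mathbf{1}_{\mathrm{even}}$, is the sequence with odd part $\sigma$ and even part $0$; since $\ell_\infty$ has the Bade property its unit ball is the closed convex hull of its $\pm1$ sequences, so the closed span of these elements is exactly the set of sequences with arbitrary $\ell_\infty$ odd part and zero even part. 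Symmetrically, subtracting $\mathbf{1}_{\mathrm{odd}}$ from elements of $B$ with odd part constantly $1$ produces every sequence whose odd part is $0$ and whose even part is a $\pm1$ sequence that is eventually $1$, and the closed span of those even parts is all of $c$. Adding the two pieces yields $V\subseteq\overline{\mathrm{span}}(B)$. I expect this reverse inclusion to be the main obstacle, its crux being the appeal to the Bade property of $\ell_\infty$ to recover the full $\ell_\infty$ factor on the odd coordinates out of the merely $\pm1$-valued odd parts furnished by $B$.

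Once $\overline{\mathrm{span}}(B)=V$ is in hand, the rest of item 1 follows quickly. The splitting map $(\varepsilon_n)\mapsto\big((\varepsilon_{2n})_n,(\varepsilon_{2n-1})_n\big)$ is a linear bijection of $V$ onto $c\oplus_\infty\ell_\infty$ preserving the sup norm, which gives the claimed isometry. Containment of $c$ is clear, since a convergent sequence has convergent even part; the first-terms property holds because $V$ is a subspace containing $c_{00}$. Finally $V$ enjoys the Bade property because $c$ and $\ell_\infty$ do: the unit ball of an $\oplus_\infty$-sum is the product of the unit balls, the extreme points of a product are the products of the extreme points, and the closed convex hull of a product is the product of the closed convex hulls, so $\mathsf{B}_V=\overline{\mathrm{co}}\left(\mathrm{ext}\left(\mathsf{B}_V\right)\right)$.

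For item 2, I would argue by contradiction through the even subseries. Suppose $V=\mathcal{S}\left(\sum x_n\right)$ for some series in a Banach space $X$, and consider the series $\sum_k x_{2k}$. A bounded scalar sequence $(b_k)_k$ multiplies $\sum_k x_{2k}$ into a convergent series exactly when the sequence $(a_n)_n$ with $a_{2k}=b_k$ and $a_{2k-1}=0$ multiplies $\sum_n x_n$ into a convergent series, because the partial sums of $\sum a_n x_n$ are, up to repetition, those of $\sum_k b_k x_{2k}$. Hence $(b_k)_k\in\mathcal{S}\left(\sum_k x_{2k}\right)$ if and only if $(a_n)_n\in V$, that is, if and only if the even part $(b_k)_k$ converges. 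Therefore $\mathcal{S}\left(\sum_k x_{2k}\right)=c$, which is impossible in a Banach space by Corollary \ref{cnotB}. This contradiction shows that $V$ is the space of convergence of no series in a Banach space, and it is the decisive point because the easy first-terms obstruction is unavailable here, $V$ containing $c_{00}$.
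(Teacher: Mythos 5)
Your proof is correct, and on the decisive point (item 2) it takes a genuinely different route from the paper's. For item 1 you and the paper land on the same identification $\overline{\mathrm{span}}\left(B\right)=\left\{\left(\varepsilon_n\right)_{n\in\mathbb{N}}\in\ell_\infty:\left(\varepsilon_{2n}\right)_{n\in\mathbb{N}}\text{ convergent}\right\}$; the paper merely asserts this (together with the formulas for $\overline{\mathrm{co}}\left(B\right)$ and $\overline{\mathrm{aco}}\left(B\right)$) and leaves the verification implicit, whereas you supply it via explicit generators and the odd/even splitting — note only that your ``symmetric'' step recovering $c$ on the even coordinates silently uses the Bade property of $c$, the same fact the paper invokes in Theorem \ref{ccSS}, so it deserves an explicit citation alongside your appeal to the Bade property of $\ell_\infty$. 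For item 2 the paper argues directly: from $\left(\chi_{\left\{2n:n\in\mathbb{N}\right\}}(n)\right)_{n\in\mathbb{N}}\in\mathcal{S}\left(\sum x_n\right)$ it gets that $\sum x_{2n}$ converges, extracts a subsequence $\left(x_{2n_k}\right)_{k\in\mathbb{N}}$ with absolutely convergent, hence unconditionally convergent, subseries, and concludes that the bounded sequences supported on $\left\{2n_k:k\in\mathbb{N}\right\}$ form an isometric copy of $\ell_\infty$ inside $\overline{\mathrm{span}}\left(B\right)$, contradicting convergence of even parts. You instead compute the multiplier space of the even subseries outright — $\mathcal{S}\left(\sum_k x_{2k}\right)=c$, correctly justified by the observation that padding with zeros on odd coordinates only repeats partial sums — and then quote Corollary \ref{cnotB}. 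The two arguments are cousins rather than strangers, since Corollary \ref{cnotB} is itself proved by exactly the paper's subsequence/uc extraction; but your packaging makes the contradiction one line, reuses an established result, and makes transparent the underlying obstruction, namely that no space of convergence can restrict to $c$ along a subsequence of indices. The paper's direct version buys a slightly stronger structural conclusion (an explicit $\ell_\infty$-copy sitting on even indices) at the cost of redoing the extraction.
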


\begin{proof}
Notice that
\begin{eqnarray*}
\overline{\mathrm{co}}\left(B\right)&=& \left\{\left(\varepsilon_n\right)_{n\in \mathbb{N}}\in \mathsf{B}_{\ell_{\infty}}: \left(\varepsilon_{2n}\right)_{n\in\mathbb{N}}\text{ converges to }1\right\},\\
\overline{\mathrm{aco}}\left(B\right)&=& \left\{\left(\varepsilon_n\right)_{n\in \mathbb{N}}\in \mathsf{B}_{\ell_{\infty}}: \left(\varepsilon_{2n}\right)_{n\in\mathbb{N}}\text{ is convergent}\right\},\\
\overline{\mathrm{span}}\left(B\right)&=& \left\{\left(\varepsilon_n\right)_{n\in \mathbb{N}}\in \ell_{\infty}:\left(\varepsilon_{2n}\right)_{n\in\mathbb{N}}\text{ is convergent}\right\}.
\end{eqnarray*}
Assume to the contrary that $\sum x_n$ is a series in a Banach space $X$ verifying that $\mathcal{S}\left(\sum x_n\right)=\overline{\mathrm{span}}\left(B\right)$. Let $M:=\left\{2n:n\in\mathbb{N}\right\}$. Note that $\left(\chi_M\left(n\right)\right)_{n\in \mathbb{N}}\in \overline{\mathrm{span}}\left(B\right)=\mathcal{S}\left(\sum x_n\right)$. Therefore $\sum x_{2n}$ is convergent and there exists a subsequence $\left(x_{2n_k}\right)_{k\in\mathbb{N}}$ such that $\sum_{k=1}^\infty x_{2n_k}$ is absolutely convergent and hence unconditionally convergent. As a consequence, $$\left\{\left(\alpha_n\right)_{n\in\mathbb{N}}\in\ell_\infty:\alpha_n=0\text{ for all }n\in\mathbb{N}\setminus\left\{2n_k:k\in\mathbb{N}\right\}\right\}$$ is a closed subspace of $\mathcal{S}\left(\sum x_n\right)=\overline{\mathrm{span}}\left(B\right)$ linearly isometric to $\ell_{\infty}$. This is a contradiction with the construction of $\overline{\mathrm{span}}\left(B\right)$.
\end{proof}

\subsection{Uniform summability}

By means of the uniform summability we will be able to characterize unconditional convergence in Banach spaces.

\begin{definition}\label{defcu}
A series $\sum x_n$ in a Banach space $X$ is called uniformly convergent (ufc) in ${\cal M} \subseteq\mathcal{S}\left( \sum x_n\right)$ if for every $\varepsilon >0$ there exists $k_0\in \mathbb{N}$ such that for every $k\geq k_0$ and every $\left(a_n\right)_{n\in\mathbb{N}} \in \cal M$ we have that $\left\|\sum_{n=k}^{\infty} a_nx_n\right\|< \varepsilon $.
\end{definition}

\begin{lemma}\label{c1tcolgado}
\mbox{}
\begin{enumerate}
\item If $\sum x_n$ is ufc in ${\cal M}$, then it is ufc in $ \aco\left({\cal M}\right)$.
\item If $(x_n)_{n\in\N}\in\ell_\infty(X)$ and $\sum x_n$ is ufc in ${\cal M}$, then it is ufc in $\cl_1\left(\aco\left({\cal M}\right)\right)$.
\end{enumerate} 
\end{lemma}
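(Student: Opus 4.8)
The plan is to handle the two parts in order, the second building on the first. Throughout I write elements of $\mathcal{M}$ as scalar sequences $\mathbf{a}=(a_n)_{n\in\N}$, and I recall that every such sequence lies in $\mathcal{S}\left(\sum x_n\right)$, so the corresponding series $\sum a_n x_n$ converges in $X$; since $\mathcal{S}\left(\sum x_n\right)$ is a vector subspace of $\ell_\infty$ it is absolutely convex, whence $\aco(\mathcal{M})\subseteq\mathcal{S}\left(\sum x_n\right)$.

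For part 1 I would start from the explicit description of the absolutely convex hull recorded earlier, namely $\aco(\mathcal{M})=\left\{\sum_{i=1}^m t_i\mathbf{a}^{(i)}:\sum_{i=1}^m|t_i|\le 1,\ \mathbf{a}^{(i)}\in\mathcal{M}\right\}$. Fix $\varepsilon>0$ and let $k_0$ be the index provided by ufc in $\mathcal{M}$ applied with $\varepsilon/2$, so that $\left\|\sum_{n=k}^\infty a_n x_n\right\|<\varepsilon/2$ for every $k\ge k_0$ and every $\mathbf{a}\in\mathcal{M}$. Given $\mathbf{b}=\sum_{i=1}^m t_i\mathbf{a}^{(i)}\in\aco(\mathcal{M})$, finiteness of the combination lets me interchange $\sum_{i=1}^m$ with the convergent tails and estimate, for $k\ge k_0$,
\[
\left\|\sum_{n=k}^\infty b_n x_n\right\|\le\sum_{i=1}^m|t_i|\left\|\sum_{n=k}^\infty a^{(i)}_n x_n\right\|\le\frac{\varepsilon}{2}\sum_{i=1}^m|t_i|\le\frac{\varepsilon}{2}<\varepsilon.
\]
Since the same $k_0$ serves every element of $\aco(\mathcal{M})$, this is exactly uniform convergence in $\aco(\mathcal{M})$.

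For part 2 I would first invoke part 1 to reduce to showing that $\sum x_n$, being ufc in $N:=\aco(\mathcal{M})$, is ufc in $\cl_1(N)=\cl_1\left(\aco(\mathcal{M})\right)$. Put $L:=\left\|(x_n)_{n\in\N}\right\|_\infty=\sup_n\|x_n\|$, which is finite precisely because $(x_n)_{n\in\N}\in\ell_\infty(X)$; this is the only place that hypothesis enters. Fix $\varepsilon>0$ and take $k_0$ so that $\left\|\sum_{n=k}^\infty a_n x_n\right\|<\varepsilon/2$ for all $k\ge k_0$ and all $\mathbf{a}\in N$. Given $\mathbf{c}\in\cl_1(N)=\bigcap_{\delta>0}\left(N+\delta\B_{\ell_1}\right)$, I choose $\mathbf{a}\in N$ with $\left\|\mathbf{c}-\mathbf{a}\right\|_1=\sum_n|c_n-a_n|\le\delta:=\varepsilon/(2(L+1))$. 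Because $(x_n)$ is bounded and $\mathbf{c}-\mathbf{a}\in\ell_1$, the series $\sum(c_n-a_n)x_n$ is absolutely convergent, so $\mathbf{c}\in\mathcal{S}\left(\sum x_n\right)$ and, for every $k\ge k_0$,
\[
\left\|\sum_{n=k}^\infty c_n x_n\right\|\le\left\|\sum_{n=k}^\infty a_n x_n\right\|+\sum_{n=k}^\infty|c_n-a_n|\,\|x_n\|<\frac{\varepsilon}{2}+L\,\|\mathbf{c}-\mathbf{a}\|_1\le\frac{\varepsilon}{2}+\frac{L\varepsilon}{2(L+1)}<\varepsilon.
\]
As $k_0$ depends only on $\varepsilon$ and an approximant $\mathbf{a}$ exists for every $\mathbf{c}\in\cl_1(N)$, this establishes uniform convergence in $\cl_1\left(\aco(\mathcal{M})\right)$.

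Both arguments are routine estimates, and the only genuinely load-bearing observations are bookkeeping ones. In part 1 one must note that because $m$ is fixed no uniformity across $i$ is needed, so the finite combination commutes with the tails. In part 2 the decisive device is that the perturbation $\mathbf{c}-\mathbf{a}$ lies in $\ell_1$, so its contribution to every tail is dominated by $L\,\|\mathbf{c}-\mathbf{a}\|_1$ \emph{uniformly in $k$}; this is exactly why the hypothesis $(x_n)_{n\in\N}\in\ell_\infty(X)$ is indispensable, and it is the point I would flag as the main (albeit mild) obstacle.
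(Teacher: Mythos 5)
Your proof is correct and follows essentially the same route as the paper: part 1 (which the paper dismisses as straightforward) is the finite absolutely convex combination estimate you give, and part 2 is exactly the paper's $\varepsilon/2$-splitting via an $\ell_1$-approximant $\mathbf{a}\in\aco(\mathcal{M})$ with the tail perturbation dominated by $\left\|(x_n)_{n\in\N}\right\|_\infty\left\|\mathbf{c}-\mathbf{a}\right\|_1$ uniformly in $k$. Your only departures are cosmetic improvements: using $L+1$ in the denominator avoids the degenerate case $L=0$, and you explicitly check that $\cl_1\left(\aco(\mathcal{M})\right)\subseteq\mathcal{S}\left(\sum x_n\right)$, a point the paper leaves implicit.
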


\begin{proof}
\mbox{}
\begin{enumerate}
\item It is a straightforward proof.
\item Fix $\varepsilon>0$. There exists $k_0\in \mathbb{N}$ such that for every $k\geq k_0$ and every $\left(a_n\right)_{n\in\mathbb{N}} \in \aco\left({\cal M}\right)$ we have that $\left\|\sum_{n=k}^{\infty} a_nx_n\right\|< \varepsilon/2 $. Fix an arbitrary $(b_n)_{n\in\N}\in \cl_1\left(\aco\left({\cal M}\right)\right)$. There exists $(a_n)_{n\in\N}\in  \aco\left({\cal M}\right)$ such that $(b_n-a_n)_{n\in\N}\in\frac{\varepsilon}{2\left\|(x_n)_{n\in \N}\right\|_\infty}\B_{\ell_1}$. Then for every $k\geq k_0$ we have that
\begin{eqnarray*}
\left\|\sum_{n=k}^{\infty} b_nx_n\right\|&\leq& \left\|\sum_{n=k}^{\infty} (b_n-a_n)x_n\right\| + \left\|\sum_{n=k}^{\infty} a_nx_n\right\|\\
& \leq & \left\|(x_n)_{n\in \N}\right\|_\infty \left\|(b_n-a_n)_{n\in\N}\right\|_1 + \frac{\varepsilon}{2}\\
&\leq& \varepsilon.
\end{eqnarray*}
\end{enumerate}
\end{proof}

\begin{theorem}\label{c2tcolgado}
The following conditions are equivalent for a convergent series $\sum x_n$ in a Banach space $X$:
\begin{enumerate}
\item $\sum x_n$ is uc.
\item $\sum x_n$ is ufc on $\ext \left(\mathsf{B}_{\mathcal{S}}\right)$.
\end{enumerate}
\end{theorem}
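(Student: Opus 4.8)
The plan is to run both implications through the Cauchy criterion for unconditional convergence: in a Banach space $\sum x_n$ is uc if and only if for every $\varepsilon>0$ there is $N$ with $\left\|\sum_{n\in G}x_n\right\|<\varepsilon$ for every finite $G\subseteq\{N,N+1,\dots\}$. The only structural input I would need about the domain is that, by Lemma \ref{ext} together with the folklore description of $\ext(\mathsf{B}_{\ell_\infty})$,
$$\ext(\mathsf{B}_{\mathcal S})=\left\{(\varepsilon_n)_{n\in\mathbb N}:|\varepsilon_n|=1\ \forall n,\ \textstyle\sum\varepsilon_n x_n\text{ converges}\right\}.$$
Since $\sum x_n$ is assumed convergent, flipping finitely many signs preserves convergence, so every sign sequence that is eventually constant belongs to $\ext(\mathsf{B}_{\mathcal S})$; in particular $(1,1,1,\dots)$ and, for each finite $G$, the sequence $\sigma^G$ equal to $-1$ on $G$ and to $+1$ elsewhere lie in $\ext(\mathsf{B}_{\mathcal S})$.

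For $1\Rightarrow2$ it suffices to control the tails along sign sequences, since $\ext(\mathsf{B}_{\mathcal S})$ consists of such sequences. Put $r_k:=\sup\{\|\sum_{n\in G}x_n\|:G\subseteq[k,\infty)\text{ finite}\}$; the Cauchy criterion for the uc series gives $r_k\to0$. For any sign sequence $(\varepsilon_n)$ and any $k$, writing the tail as the difference of the subseries over $\{n\ge k:\varepsilon_n=1\}$ and over $\{n\ge k:\varepsilon_n=-1\}$ yields $\|\sum_{n=k}^{\infty}\varepsilon_nx_n\|\le 2r_k$, a bound uniform in the signs. Hence $\sum x_n$ is ufc on $\ext(\mathsf{B}_{\mathcal S})$. (Theorem \ref{DiestelwuC} may be invoked instead if one wishes to establish the stronger ufc on all of $\mathsf{B}_{\ell_\infty}$.)

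The substantive direction is $2\Rightarrow1$. Here ufc on $\ext(\mathsf{B}_{\mathcal S})$ restricts, by monotonicity of the ufc property under shrinking the multiplier family, to ufc on $\{\sigma^G:G\text{ finite}\}$. Given $\varepsilon>0$, choose $k_0$ so that $\|\sum_{n=k_0}^{\infty}\sigma^G_nx_n\|<\varepsilon$ for every finite $G$, and also $\|\sum_{n=k_0}^{\infty}x_n\|<\varepsilon$ (from $\sigma^{\varnothing}=(1,1,\dots)$, or simply from convergence). For a finite $G\subseteq[k_0,\infty)$ the identity $\sum_{n=k_0}^{\infty}\sigma^G_nx_n=\sum_{n=k_0}^{\infty}x_n-2\sum_{n\in G}x_n$ then gives $\|\sum_{n\in G}x_n\|<\varepsilon$, which is exactly the Cauchy criterion, so $\sum x_n$ is uc. If one prefers to route this through Lemma \ref{c1tcolgado}, the same conclusion follows after noting that $\aco(\ext(\mathsf{B}_{\mathcal S}))$ contains the cofinite indicators $\chi_{\mathbb N\setminus F}$ (write them as $\tfrac12(1,1,\dots)+\tfrac12\sigma^F$), upon which ufc transfers by part 1 of that lemma.

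I expect the main obstacle to be precisely this passage in $2\Rightarrow1$: one must recognize that the convergence hypothesis is exactly what makes the eventually-constant sign sequences available as elements of $\mathcal S$, and then convert the uniform estimate on infinite tails into the finite-subset Cauchy criterion via the sign-flip identity. The remaining ingredients — monotonicity of ufc under shrinking the multiplier set, and the harmless use of convergence to control $\sum_{n\ge k_0}x_n$ — are routine.
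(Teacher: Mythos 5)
Your proof is correct, and it takes a genuinely different route from the paper's. The paper proves both implications by contraposition with block-extraction (gliding-hump) constructions: for {\em 1} $\Rightarrow$ {\em 2} it extracts blocks $\left[i_j,k_j\right]$ on which varying extreme points satisfy $\left\|\sum_{n=i_j}^{k_j}\varepsilon^{(j)}_nx_n\right\|>\delta/2$ and patches them into a single sign sequence $\left(\overline{\varepsilon}_n\right)_{n\in\mathbb{N}}$ whose signed series fails to be Cauchy, contradicting uc; for {\em 2} $\Rightarrow$ {\em 1} it starts from a sign sequence whose signed series diverges, extracts blocks $\left\{p_{n}+1,\dots,p_{n+1}\right\}$ of norm $>\delta$, and embeds each block into a sequence $\alpha^{(k)}$ equal to $\varepsilon_n$ on the block and $1$ elsewhere, whose tails stay above $\delta/2$, contradicting ufc. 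You replace both contrapositives by direct arguments routed through the finite-subset Cauchy criterion for unconditional convergence: the uniform tail bound $\left\|\sum_{n\geq k}\varepsilon_nx_n\right\|\leq 2r_k$ for {\em 1} $\Rightarrow$ {\em 2}, and the sign-flip identity $\sum_{n\geq k_0}\sigma^G_nx_n=\sum_{n\geq k_0}x_n-2\sum_{n\in G}x_n$ for {\em 2} $\Rightarrow$ {\em 1}. The crucial structural observation is the same in both treatments — convergence of $\sum x_n$ is exactly what places the eventually-constant sign sequences inside $\ext\left(\mathsf{B}_{\mathcal{S}}\right)$, and indeed the paper's $\alpha^{(k)}$ are your $\sigma^G$ with $G=\left\{i\in\left\{p_{n_k}+1,\dots,p_{n_k+1}\right\}:\varepsilon_i=-1\right\}$ — so the difference is in the mechanics, not the key lemma (both rest on Lemma \ref{ext}). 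What each buys: your version is shorter and quantitative, and the factor-$2$ estimate (or your aside via Theorem \ref{DiestelwuC}) immediately upgrades {\em 1} $\Rightarrow$ {\em 2} to ufc on all of $\mathsf{B}_{\ell_\infty}$, something the paper only obtains afterwards, for ac series, through {\em 1} of Lemma \ref{c1tcolgado} and the Bade property of $\ell_\infty$; the cost is that you outsource work to the Cauchy criterion, a standard equivalence the paper never states explicitly, whereas the paper's longer block constructions are self-contained, using only the characterization of uc through sign sequences (note, incidentally, that in the paper's {\em 2} $\Rightarrow$ {\em 1} the divergent sign sequence should be taken in $\ext\left(\mathsf{B}_{\ell_\infty}\right)$ rather than $\ext\left(\mathsf{B}_{\mathcal{S}}\right)$, since membership in $\mathcal{S}$ would force convergence — your formulation avoids this slip entirely by never needing a divergent object).
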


\begin{proof}
\mbox{}
\begin{enumerate}

\item[{\em 1} $\Rightarrow$ {\em 2}] Suppose to the contrary that the series $\sum x_n$ is not ufc in $\mathrm{ext} \left(\mathsf{B}_{\ell_{\infty}}\right)$. There exists $\delta>0$ such that for every $i\geq 1$ there are $j>i$ and $\left(\varepsilon^{\left(j\right)}_n\right)_{n\in \mathbb{N}} \in \ext
\left(\mathsf{B}_{\ell_{\infty}}\right) $ verifying that
$\left\|\sum_{n=j}^{\infty}\varepsilon^{\left(j\right)}_n x_n\right\| \geq \delta$. We can consider a strictly increasing sequence of indices
$$i_1<k_1<i_2<k_2< \cdots <i_j<k_j<i_{j+1}< \cdots$$
satisfying that
$$\left\|\sum_{n=i_j}^{\infty} \varepsilon^{\left(j\right)}_n  x_n\right\| > \delta
$$
and
\begin{equation}\label{ec1tcolgado}
\left\|\sum_{n=i_j}^{k_j} \varepsilon^{\left(j\right)}_n  x_n\right\| > \frac{\delta}{2},
\end{equation}
for every $j \in \mathbb{N}$. Next, we will construct a sequence $\left(\overline{\varepsilon}_n\right)_{n\in \mathbb{N}}\in \ext\left(\mathsf{B}_{\ell_{\infty}}\right)$ as follows:
\begin{itemize}
\item If $i\in \left[i_j, k_j\right]$ for some $j\in \mathbb{N}$, then $\overline{\varepsilon}_i= \varepsilon^{\left(j\right)}_i$.
\item The rest of the $\overline{\varepsilon}_i$'s can be either $1$ or $-1$.
\end{itemize}
Clearly $\sum  \overline{\varepsilon}_n  x_n$ is not a Cauchy series, so $\sum x_n$ is not uc.

\item[{\em 2} $\Rightarrow$ {\em 1}] Assume to the contrary that $\sum x_n$ is not uc. There exists $\left(\varepsilon_n\right)_{n\in \mathbb{N}}\in \ext \left(\mathsf{B}_{\mathcal{S}}\right)$ such that $\sum_{n=1}^{\infty}\varepsilon_n x_n$ does not converge. Then there exists $\delta >0$ such that for every $q\in\mathbb{N}$ we can find $p>q$ verifying that $\left\|\sum_{i=q}^{p}\varepsilon_i x_i\right\|>\delta$. Following an inductive process we can construct a strictly increasing sequence of naturals
$$p_1<p_2< \cdots <p_n < \cdots$$
such that for every $n\in\mathbb{N}$ we have that
$\left\|\sum_{i=p_n+1}^{p_{n+1}}\varepsilon_i x_i\right\|>\delta$. Since $\sum_{n=1}^\infty x_n$ is convergent, there exists $n_0\in \mathbb{N}$ such that for every $n\geq n_0$ we have that
$\left\|\sum_{i=n}^{\infty}x_i\right\|<\frac{\delta}{2}$. Next, for every $k\in\mathbb{N}$ we choose $p_{n_k} > \max \left\{n_0, k \right\}$ and define a sequence $\left(\alpha^{\left(k\right)}_n\right)_{n\in \mathbb{N}}$ as follows:
$$\alpha^{\left(k\right)}_n = \left\{\begin{array}{lll}
                        \varepsilon_n, & {\rm if}&
                                n\in\left\{ p_{n_k}+1, \ldots ,
                                p_{n_k+1}\right\}\\
                        1, & {\rm if} &  n\in \mathbb{N} \setminus
                                \left\{ p_{n_k}+1, \ldots , p_{n_k+1} \right\}.
                          \end{array}
                   \right.
$$
Observe that $\left(\alpha^{\left(k\right)}_n\right)_{n\in \mathbb{N}} \in \ext \left(\mathsf{B}_{\mathcal{S}}\right)$ for every $k\in \mathbb{N}$. Now, if $j=p_{n_k}+1$, then $j>k$ and
\begin{eqnarray*}
\left\|\sum_{i=j}^{\infty} \alpha^{\left(k\right)}_i x_i\right\|& =&
\left\|\sum_{i=p_{n_k}+1}^{p_{n_k+1}}\varepsilon_i x_i +
\sum_{i=p_{n_k+1}}^{\infty}x_i\right\|\\
& \geq &
         \left\|\sum_{i=p_{n_k}+1}^{p_{n_k+1}}\varepsilon_i x_i\right\| -
        \left\|\sum_{i=p_{n_k+1}}^{\infty}x_i\right\|\\
&>& \delta - \frac{\delta}{2} \\
&=&\frac{\delta}{2}.
\end{eqnarray*}
This is a contradiction.
\end{enumerate}
\end{proof}

The final corollary clarifies the relation between ufc series and the Bade property.

\begin{corollary}
Every ac series in a Banach space is ufc in $ \mathsf{B}_{\ell_{\infty}} $.
\end{corollary}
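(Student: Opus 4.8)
The plan is to argue directly from the definition of absolute convergence, since here the summability of $\sum\|x_n\|$ does all of the work. Recall that $\sum x_n$ being ac means $\sum_{n=1}^\infty\|x_n\|<\infty$. The first thing I would check is that the statement even makes sense, i.e.\ that $\mathsf{B}_{\ell_\infty}\subseteq\mathcal{S}\left(\sum x_n\right)$: if $(a_n)_{n\in\mathbb{N}}\in\mathsf{B}_{\ell_\infty}$, then $|a_n|\leq 1$ for all $n$, so $\sum_{n=1}^\infty\|a_nx_n\|\leq\sum_{n=1}^\infty\|x_n\|<\infty$, whence $\sum a_nx_n$ is absolutely convergent and therefore convergent because $X$ is complete. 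Thus every element of $\mathsf{B}_{\ell_\infty}$ is indeed a multiplier.

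For the uniformity, I would fix $\varepsilon>0$ and use the convergence of $\sum\|x_n\|$ to choose $k_0\in\mathbb{N}$ with $\sum_{n=k_0}^\infty\|x_n\|<\varepsilon$. Then for every $k\geq k_0$ and every $(a_n)_{n\in\mathbb{N}}\in\mathsf{B}_{\ell_\infty}$ the estimate
\[
\left\|\sum_{n=k}^\infty a_nx_n\right\|\leq\sum_{n=k}^\infty|a_n|\,\|x_n\|\leq\sum_{n=k}^\infty\|x_n\|\leq\sum_{n=k_0}^\infty\|x_n\|<\varepsilon
\]
holds, and since $k_0$ does not depend on the multiplier $(a_n)_{n\in\mathbb{N}}$, this is exactly the uniform convergence demanded by Definition \ref{defcu}. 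There is essentially no obstacle here: the only point requiring care is that the final bound $\sum_{n=k_0}^\infty\|x_n\|$ is uniform over all $(a_n)_{n\in\mathbb{N}}\in\mathsf{B}_{\ell_\infty}$, which is automatic precisely because each $|a_n|$ is dominated by $1$.

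Alternatively, one could route the argument through the machinery just developed: an ac series is uc, so $\mathcal{S}\left(\sum x_n\right)=\ell_\infty$ and Theorem \ref{c2tcolgado} gives ufc on $\ext\left(\mathsf{B}_{\ell_\infty}\right)$, after which Lemma \ref{c1tcolgado} propagates ufc to a larger set. The subtlety here --- and the reason I prefer the direct estimate --- is that $\cl_1\left(\aco\left(\ext\left(\mathsf{B}_{\ell_\infty}\right)\right)\right)$ is in general a proper subset of $\mathsf{B}_{\ell_\infty}$, so Lemma \ref{c1tcolgado} alone does not reach all of $\mathsf{B}_{\ell_\infty}$; one would additionally have to exploit that, for an ac series, a mere sup-norm perturbation $(b_n-a_n)_{n\in\mathbb{N}}$ contributes at most $\|(b_n-a_n)_{n\in\mathbb{N}}\|_\infty\sum_{n\geq k}\|x_n\|$ to the tail, which is exactly the absolute-summability phenomenon that makes the direct computation trivial in the first place.
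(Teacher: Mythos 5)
Your proof is correct, and it takes a genuinely different and more elementary route than the paper. You argue directly from absolute summability: choosing $k_0$ with $\sum_{n\geq k_0}\|x_n\|<\varepsilon$ gives the tail bound $\left\|\sum_{n=k}^{\infty}a_nx_n\right\|\leq\sum_{n=k}^{\infty}\|x_n\|<\varepsilon$ uniformly over $(a_n)_{n\in\mathbb{N}}\in\mathsf{B}_{\ell_\infty}$, and the preliminary check that $\mathsf{B}_{\ell_\infty}\subseteq\mathcal{S}\left(\sum x_n\right)$ (via completeness) is exactly the right point of care. The paper instead showcases its machinery: ac implies uc, so $\mathcal{S}\left(\sum x_n\right)=\ell_\infty$; Theorem \ref{c2tcolgado} yields ufc on $\ext\left(\mathsf{B}_{\ell_\infty}\right)$; \emph{1} of Lemma \ref{c1tcolgado} propagates this to $\co\left(\ext\left(\mathsf{B}_{\ell_\infty}\right)\right)$; the Bade property of $\ell_\infty$ identifies $\mathsf{B}_{\ell_\infty}$ with $\cco\left(\ext\left(\mathsf{B}_{\ell_\infty}\right)\right)$; and a final approximation handles the closure. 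Your parenthetical diagnosis of that last step is accurate and worth emphasizing: the paper does \emph{not} (and could not) invoke \emph{2} of Lemma \ref{c1tcolgado}, since $\cl_1\left(\aco\left(\ext\left(\mathsf{B}_{\ell_\infty}\right)\right)\right)$ is in general a proper subset of $\mathsf{B}_{\ell_\infty}$ (e.g., $\left(1/n\right)_{n\in\mathbb{N}}$ lies at infinite $\ell_1$-distance from that set); instead it writes out the dual estimate, perturbing the multipliers in sup norm and paying for it with $\left\|\left(x_n\right)_{n\in\mathbb{N}}\right\|_1\left\|\left(b_n-a_n\right)_{n\in\mathbb{N}}\right\|_\infty$ — precisely the absolute-summability input that, as you observe, makes your one-line estimate work unaided. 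What each approach buys: yours isolates the true mechanism and is self-contained; the paper's illustrates how ufc on extreme points plus the Bade property can substitute for direct estimates, a pattern that matters for uc (not necessarily ac) series where Theorem \ref{c2tcolgado} is the only available entry point, though there the closure step genuinely fails without the ac hypothesis.
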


\begin{proof}
In the first place, keep in mind that every ac series $\sum x_n$ in a Banach space is uc and thus $\mathcal{S}\left( \sum x_n\right)=\ell_\infty$. In accordance to Theorem \ref{c2tcolgado}, $\sum x_n$ is ufc on $\ext \left(\mathsf{B}_{\ell_\infty}\right) $. By {\em 1} of Lemma \ref{c1tcolgado}, $\sum x_n$ is ufc on $\co\left(\ext \left(\mathsf{B}_{\ell_\infty}\right)\right) $. Since $\ell_\infty$ enjoys the Bade property, we only need to show that $\sum x_n$ is ufc on $\cco\left(\ext \left(\mathsf{B}_{\ell_\infty}\right)\right)$. Fix $\varepsilon>0$. There exists $k_0\in \mathbb{N}$ such that for every $k\geq k_0$ and every $\left(a_n\right)_{n\in\mathbb{N}} \in \co\left(\ext \left(\mathsf{B}_{\ell_\infty}\right)\right)$ we have that $\left\|\sum_{n=k}^{\infty} a_nx_n\right\|< \varepsilon/2 $. Fix an arbitrary $(b_n)_{n\in\N}\in \cco\left(\ext \left(\mathsf{B}_{\ell_\infty}\right)\right)$. There exists $(a_n)_{n\in\N}\in  \co\left(\ext \left(\mathsf{B}_{\ell_\infty}\right)\right)$ such that $(b_n-a_n)_{n\in\N}\in\frac{\varepsilon}{2\left\|(x_n)_{n\in \N}\right\|_1}\B_{\ell_\infty}$. Then for every $k\geq k_0$ we have that
\begin{eqnarray*}
\left\|\sum_{n=k}^{\infty} b_nx_n\right\|&\leq& \left\|\sum_{n=k}^{\infty} (b_n-a_n)x_n\right\| + \left\|\sum_{n=k}^{\infty} a_nx_n\right\|\\
& \leq & \left\|(x_n)_{n\in \N}\right\|_1 \left\|(b_n-a_n)_{n\in\N}\right\|_\infty + \frac{\varepsilon}{2}\\
&\leq& \varepsilon.
\end{eqnarray*}
\end{proof}

\section{Multiplier spaces of summable sequences}

This second section is merely introductory. Our results on this topic involve almost summability, so they appear in the fifth mainmatter chapter. However, we decided not to put this section in the framework and leave it here for the sake of completeness of this second mainmatter chapter.

We remind the reader that the multiplier spaces of summable sequences are the ones involved in the extensions/generalizations of the Hahn-Schur Theorem.

\subsection{The spaces $X\left(\mathcal{S}\right)$ and $X_w\left(\mathcal{S}\right)$}\label{SwSchur}

We have seen the summing multiplier spaces associated to a given series. Now it is the turn of the multiplier spaces of summable sequences, that is, vector-valued sequences which become summable by means of multipliers.

\begin{definition}[Swartz, 2000; \cite{Swartz1}]
If $X$ is a topological vector space and $\mathcal{S}$ is a subspace of $\ell_\infty$, then the ${\cal S}$-multiplier spaces of summable sequences of $X$ are defined as:
\begin{itemize}
\item $X\left(\mathcal{S}\right):=\left\{(x_i)_{i\in\mathbb{N}} \in X^\N:\sum a_i x_i\text{ converges}\right\}$.
\item $X_w\left(\mathcal{S}\right):=\left\{(x_i)_{i\in\mathbb{N}} \in X^\N:\sum a_i x_i\text{ $w$-converges}\right\}$.
\end{itemize}
\end{definition}

According to \cite{Swartz1,Swartz2}, if $X$ is a Banach space and $\mathcal{S}$ is a closed subspace of $\ell_\infty$, then the above spaces are complete when endowed with the norm \begin{equation}\label{ecu1} \left\|\left(x_i\right)_{i\in\N}\right\|_m:= \sup\left\{\left\|\sum_{i=1}^na_ix_i\right\|: n\in\N,\; |a_i|\leq 1,\; i\in\{1,\dots, n\} \right\}.\end{equation} Notice that $\left\|\left(x_i\right)_{i\in\N}\right\|_\infty \leq \left\|\left(x_i\right)_{i\in\N}\right\|_m$. In \cite{Swartz1} the author proves the following version of the Hahn-Schur Theorem.

\begin{theorem}[Swartz, 1983; \cite{Swartz1}]\label{thSw}
If $X$ is a Banach space and $(x^n)_{n\in\N}$ is a sequence in $X(\ell_\infty)$ such that $\lim_{n\to\infty}\sum_{i=1}^\infty a_ix^n_i$ exists in $X$ for each $(a_i)_{i\in\N}\in\ell_\infty$, then there exists $x^0\in X(\ell_\infty)$ such that $\lim_{n\to\infty}\|x^n-x^0\|_m=0$ in $X(\ell_\infty)$. 
\end{theorem}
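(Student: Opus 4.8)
The plan is to identify $X(\ell_\infty)$ with the space of unconditionally convergent series in $X$ and then to prove that $(x^n)_{n\in\N}$ is Cauchy for the norm $\|\cdot\|_m$; the completeness of $X(\ell_\infty)$ (which holds since $\ell_\infty$ is a closed subspace of itself, see \cite{Swartz1,Swartz2}) will then deliver the limit $x^0$. First I would record two preliminary facts. Taking the multiplier $(a_i)_{i\in\N}=e_j$ in the hypothesis shows that $x^0_j:=\lim_{n\to\infty}x^n_j$ exists in $X$ for every $j\in\N$, so the columns converge coordinatewise. Moreover, since each $x^n$ lies in $X(\ell_\infty)$, the series $\sum_i x^n_i$ is $uc$ (equivalently $\mathcal{S}(\sum_i x^n_i)=\ell_\infty$), hence $wuC$, and Theorem \ref{DiestelwuC} identifies the norm as $\|x^n\|_m=\sup_{f\in\B_{X^*}}\sum_{i=1}^\infty|f(x^n_i)|$. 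Applying the Banach--Steinhaus Theorem to the operators $\Phi_n\colon\ell_\infty\to X$, $\Phi_n(a)=\sum_i a_ix^n_i$, whose operator norms equal $\|x^n\|_m$, gives $M:=\sup_n\|x^n\|_m<\infty$.

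The core of the argument is to show that $(x^n)_{n\in\N}$ is $\|\cdot\|_m$-Cauchy, and here the main obstacle is that the norm is a supremum over all $f\in\B_{X^*}$: a pointwise application of the scalar Hahn--Schur Theorem \ref{HS} to each fixed $f$ does not suffice, since the estimate must be made uniform in $f$. I would argue by contradiction through a gliding hump. Suppose there are $\varepsilon>0$ and indices $n_k<m_k\to\infty$ with $\|u^k\|_m>\varepsilon$, where $u^k:=x^{n_k}-x^{m_k}$. Then $u^k\in X(\ell_\infty)$, the coordinatewise convergence gives $u^k_i\to 0$ as $k\to\infty$ for each fixed $i$, and the hypothesis forces $\sum_i a_iu^k_i\to 0$ for every $a\in\ell_\infty$.

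The heart of the proof is the block construction. Using $\|u^k\|_m>\varepsilon$, pick $f_k\in\B_{X^*}$ with $\sum_i|f_k(u^k_i)|>\varepsilon$. Inductively choose $q_0=0<q_1<q_2<\cdots$ and a subsequence $(k_j)$ as follows: given $q_{j-1}$, first use $u^k_i\to 0$ coordinatewise on the finite block $\{1,\dots,q_{j-1}\}$ to select $k_j$ with $\sum_{i\le q_{j-1}}\|u^{k_j}_i\|<\varepsilon/8$; then, since $\sum_i|f_{k_j}(u^{k_j}_i)|$ converges, select $q_j>q_{j-1}$ with $\sum_{i>q_j}|f_{k_j}(u^{k_j}_i)|<\varepsilon/8$, which forces the block mass $\sum_{i=q_{j-1}+1}^{q_j}|f_{k_j}(u^{k_j}_i)|>\varepsilon/2$. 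Define a single multiplier $a\in\B_{\ell_\infty}$ by $a_i:=\mathrm{sgn}\,f_{k_j}(u^{k_j}_i)$ when $q_{j-1}<i\le q_j$; since the blocks exhaust $\N$ this is well defined and $\|a\|_\infty\le 1$. Evaluating $f_{k_j}$ on $\sum_i a_iu^{k_j}_i$ and splitting into the past block, the $j$-th block, and the tail yields $\bigl|f_{k_j}(\sum_i a_iu^{k_j}_i)\bigr|>\varepsilon/2-\varepsilon/8-\varepsilon/8=\varepsilon/4$, so $\|\sum_i a_iu^{k_j}_i\|>\varepsilon/4$ for all $j$, contradicting $\sum_i a_iu^k_i\to 0$. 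This establishes the Cauchy property; completeness of $X(\ell_\infty)$ then produces $x^0\in X(\ell_\infty)$ with $\|x^n-x^0\|_m\to 0$, and because $\|\cdot\|_\infty\le\|\cdot\|_m$ the limit agrees coordinatewise with the $x^0_j$ found at the outset.
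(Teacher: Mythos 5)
Your proof is correct: coordinatewise convergence via the multipliers $e_j$, the identification $\|x^n\|_m=\sup_{f\in\B_{X^*}}\sum_i|f(x^n_i)|$ through Theorem \ref{DiestelwuC}, and the gliding-hump construction of a single sign multiplier $a\in\B_{\ell_\infty}$ all go through, and the block estimates ($3\varepsilon/4$ on the middle block, $\varepsilon/8$ on the past and the tail) do yield the contradiction with $\sum_i a_iu^{k_j}_i\to 0$; the appeal to completeness of $\left(X(\ell_\infty),\|\cdot\|_m\right)$ then finishes it. Be aware, though, that the paper does not prove Theorem \ref{thSw} at all: it is quoted from Swartz \cite{Swartz1}, whose original argument runs through the Antosik--Mikusi\'nski basic matrix theorem rather than an explicit hump. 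The closest the paper comes to a proof is its own generalization, Theorem \ref{teor}, which is established by a genuinely different reduction: one picks norming functionals $f_k\in\B_{X^*}$ for the differences $z^k$, forms the composed functionals $f_k\circ\alpha_k\in\mathcal{S}^*$, uses the $\ell_\infty$-Grothendieck property of $\mathcal{S}$ to upgrade their $w^*$-nullity to $\sigma(\mathcal{S}^*,\ell_\infty)$-nullity, and then applies the classical scalar Schur theorem to the rows $\left(f_k\left(z^k_j\right)\right)_{j\in\N}$ in $\ell_1$. Your route is more elementary and self-contained for $\mathcal{S}=\ell_\infty$ (where the Grothendieck hypothesis is vacuous), at the cost of not extending as stated to proper closed subspaces $\mathcal{S}$ or to the almost-summability setting, which is precisely what the paper's functional-analytic reduction buys. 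Two cosmetic remarks: the Banach--Steinhaus bound $M=\sup_n\|x^n\|_m<\infty$ is never actually used in your hump argument and can be deleted, and you should say explicitly that the $k_j$ are chosen strictly increasing so that $k_j\to\infty$ delivers the final contradiction.
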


Extensions of the previous theorem have been obtained in \cite{AizGPPE,AizGuPer,AizPer} for wuC series.

\subsection{The space $X^*_{w^*}\left(\mathcal{S}\right)$}

In a similar way we can define multiplier spaces of $w^*$-summable sequences in duals of topological vector spaces.

\begin{definition}[Swartz, 2000; \cite{Swartz1}]
If $X^*$ denotes the dual of a topological vector space and $\mathcal{S}$ is a subspace of $\ell_\infty$, then the ${\cal S}$-multiplier space of $w^*$-summable sequences of $X^*$ is defined as: $$X^*_{w^*}\left(\mathcal{S}\right):=\left\{(x^*_i)_{i\in\mathbb{N}} \in \left(X^*\right)^\N:\sum a_i x^*_i\text{ $w^*$-converges}\right\}.$$
\end{definition}

If $X$ is a Banach space and ${\cal S}$ is a closed subspace of $\ell_\infty$, then the above space is complete endowed with the norm given in \eqref{ecu1}.

\chapter{Vector-valued Banach limits}

The classical (scalar-valued) Banach limits are simply an application of the Hanh-Banach Extension Theorem to the limit function on the Banach space of all convergence sequences (see \cite[p. 34]{Banach}). There have been some partial attempts of extending the concept of Banach limit to the vector-valued case (see \cite{P, KS, KaS}). Other studies on scalar-valued Banach limits are due to Semenov and Sukochev (see \cite{SS}) where they consider invariant Banach limits and show the existence of a Banach limit $B$ such that $B=B\circ H$ for all $H$ in a subset of all bounded linear operators on $\ell_{\infty}$ of easy verifiable sufficient conditions.

\section{Scalar-valued and vector-valued Banach limits}

A characterization of scalar-valued Banach limits gives us the option to extend that concept to the scope of vector-valued sequences. The idea is to equivalently reformulate Definition \ref{bl} in order to remove the positivity.

\subsection{The classical definition of a Banach limit}

In his 1932 book \textit{Th\'eorie des op\'erations lin\'eaires} (see \cite[p. 34]{Banach}) Banach extended in a natural way the limit function defined on the space of all convergent sequences to the space of all bounded sequences.

\begin{definition}[Banach, 1932; \cite{Banach}]\label{bl}
A linear function $\varphi: \ell_{\infty} \to \R$ is called a Banach limit when:
\begin{enumerate}

\item $\varphi$ is positive: $\varphi\left( \left(x_n\right)_{n \in \mathbb{N}}\right) \geq 0$ if $ \left(x_n\right)_{n \in \mathbb{N}}\in \ell_{\infty}$ and $x_n\geq 0$ for all $n \in \mathbb{N}$,

\item $\varphi$ is invariant under forward shifts: $\varphi\left( \left(x_n\right)_{n \in \mathbb{N}}\right) = \varphi\left( \left(x_{n+1}\right)_{n \in \mathbb{N}}\right)$ if $ \left(x_n\right)_{n \in \mathbb{N}}\in \ell_{\infty}$, and

\item $\varphi$ is unital: $\varphi\left({\bf 1}\right)=1$, where ${\bf 1}$ denotes the constant sequence of term $1$. 
\end{enumerate}
\end{definition}

Banach noticed that a linear function $\varphi: \ell_{\infty} \to \R$ is a Banach limit if and only if $\varphi$ is invariant under the forward shift operator on $\ell_{\infty}$ and $\liminf_{n\to \infty} x_n \leq  \varphi\left( \left(x_n\right)_{n \in \mathbb{N}}\right) \leq \limsup_{n \to \infty} x_n$ for all $ \left(x_n\right)_{n \in \mathbb{N}}\in \ell_{\infty}$. As a consequence, every Banach limit $\varphi: \ell_{\infty} \to \R$ verifies that $\varphi |_c = \lim$ and $\left\|\varphi\right\|= 1$. Therefore, Banach limits are norm-$1$ Hahn-Banach extensions of the limit function on $c$ to $\ell_{\infty}$.

\subsection{Motivation for vector-valued Banach limits}\label{motBL}

The following simple lemma is crucial to extend the classical Banach limits to the vector-valued case.
\begin{lemma}\label{extension}
Consider a linear function $\psi: \ell_{\infty} \to \mathbb{R}$. If $\psi$ has norm $1$ and is unital, then $\psi$ is positive. As a consequence, $\psi$ is a Banach limit if and only if $\psi\in\mathsf{S}_{\mathcal{N}_\R} \cap \mathcal{L}_\R$.
\end{lemma}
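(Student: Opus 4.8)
The plan is to prove the positivity statement directly, since it is the only step with genuine content, and then to read off the equivalence by unpacking the memberships $\psi\in\mathsf{S}_{\mathcal{N}_\R}$ and $\psi\in\mathcal{L}_\R$ against the three defining conditions of a Banach limit recorded in Definition \ref{bl}.

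For the positivity, I would take $\left(x_n\right)_{n\in\mathbb{N}}\in\ell_\infty$ with $x_n\geq 0$ for all $n$. If $\left\|\left(x_n\right)_{n\in\mathbb{N}}\right\|_\infty=0$ the sequence is null and there is nothing to prove, so set $M:=\left\|\left(x_n\right)_{n\in\mathbb{N}}\right\|_\infty>0$. The key observation is the normalization trick: the sequence $\mathbf{1}-\frac{1}{M}\left(x_n\right)_{n\in\mathbb{N}}$ has every term in $[0,1]$, since $0\leq x_n\leq M$, and therefore has sup-norm at most $1$. Applying $\|\psi\|=1$ together with unitality $\psi(\mathbf{1})=1$ yields
$$\left|1-\frac{1}{M}\psi\left(\left(x_n\right)_{n\in\mathbb{N}}\right)\right|=\left|\psi\left(\mathbf{1}-\tfrac{1}{M}\left(x_n\right)_{n\in\mathbb{N}}\right)\right|\leq 1,$$
and the inequality $1-\frac{1}{M}\psi\left(\left(x_n\right)_{n\in\mathbb{N}}\right)\leq 1$ forces $\psi\left(\left(x_n\right)_{n\in\mathbb{N}}\right)\geq 0$ because $M>0$.

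For the equivalence I would argue both implications. If $\psi$ is a Banach limit, then by the remarks following Definition \ref{bl} it is invariant under the forward shift (so $\psi\in\mathcal{N}_\R$), it satisfies $\psi|_c=\lim$ (so $\psi\in\mathcal{L}_\R$), and $\|\psi\|=1$ (so $\psi\in\mathsf{S}_{\mathcal{N}_\R}$); hence $\psi\in\mathsf{S}_{\mathcal{N}_\R}\cap\mathcal{L}_\R$. Conversely, if $\psi\in\mathsf{S}_{\mathcal{N}_\R}\cap\mathcal{L}_\R$, then $\psi$ is shift-invariant, has norm $1$, and restricts to $\lim$ on $c$; in particular $\psi(\mathbf{1})=\lim\mathbf{1}=1$, so $\psi$ is unital. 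The first part of the lemma then shows $\psi$ is positive, and since $\psi$ is also shift-invariant and unital, it is a Banach limit in the sense of Definition \ref{bl}.

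I expect no real obstacle: the only substantive step is the positivity argument, and everything in the equivalence is a matter of matching definitions. The one point that deserves care is that unitality is exactly what licenses the invocation of the first part in the converse direction, which is why $\mathcal{L}_\R$ (forcing $\psi(\mathbf{1})=\lim\mathbf{1}=1$) enters the characterization rather than some strictly weaker condition.
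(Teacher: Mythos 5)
Your proof is correct and is essentially the paper's own argument: the paper applies the same trick by observing that for a nonnegative $\left(x_n\right)_{n\in\mathbb{N}}\in\mathsf{S}_{\ell_\infty}$ one has $\left(1-x_n\right)_{n\in\mathbb{N}}\in\mathsf{B}_{\ell_\infty}$, whence $1=\psi\left(\mathbf{1}\right)=\psi\left(\left(x_n\right)\right)+\psi\left(\left(1-x_n\right)\right)\leq\psi\left(\left(x_n\right)\right)+1$, which is exactly your comparison of the sequence against $\mathbf{1}$ after normalization. Your explicit scaling by $M$ and the spelled-out definitional check of both directions of the equivalence are fine elaborations of what the paper leaves implicit.
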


\begin{proof}
Simply observe that if $\left(x_n\right)_{n\in\mathbb{N}}\in \mathsf{S}_{\ell_{\infty}}$ with $x_n\geq 0$ for all $n\in \mathbb{N}$, then $\left(1-x_n\right)_{n\in \mathbb{N}} \in \mathsf{B}_{\ell_{\infty}}$ and hence $$1 = \psi\left({\bf 1}\right)= \psi\left(\left(x_n\right)_{n\in\mathbb{N}}\right) + \psi\left(\left(1-x_n\right)_{n\in\mathbb{N}}\right)\leq \psi\left(\left(x_n\right)_{n\in\mathbb{N}} \right) +1.$$
\end{proof}

Now we are on the right position to define Banach limits for vector-valued sequences.

\begin{definition}\label{defBL}
The set of vector-valued Banach limits on a normed space $X$ is defined as $\BL\left(X\right):=\E_{\mathcal{N}_X} \cap \mathcal{L}_X$.
\end{definition}

For convenience, $\BL:=\BL\left(\mathbb{R}\right)$. As a direct consequence of Proposition \ref{prim} together with Corollary \ref{BLcomp}, we obtain the following corollary. We recall the reader that $\mathcal{HB}\left(\lim\right)$ denotes the set of all norm-$1$ Hahn-Banach extensions of the limit function on $c(X)$ to the whole of $\ell_\infty(X)$.

\begin{corollary}\label{Bprim}
\mbox{}
\begin{enumerate}
\item If $\BL(X)\neq \varnothing$, then $X$ is complete.
\item $\BL\left(X\right)=\left\{\varphi\in \mathsf{S}_{\mathcal{N}_X}:\varphi\left({\bf x}\right)=x\text{ for all }x\in X\right\}.$
\item $\BL\left(X\right)=\mathsf{B}_{\mathcal{N}_X} \cap \mathcal{L}_X$.
\item $\BL\left(X\right)=\mathcal{N}_X \cap \mathcal{HB}\left(\lim\right)$.
\item $\BL\left(X\right)$ is convex and closed when $\mathcal{L}\left(\ell_\infty\left(X\right),X\right)$ is endowed with the pointwise convergence topology.
\end{enumerate}
\end{corollary}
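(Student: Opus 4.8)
The plan is to read off all five items directly from Proposition \ref{prim} and Corollary \ref{BLcomp} by unwinding the definition $\BL(X)=\E_{\mathcal{N}_X}\cap\mathcal{L}_X$; no new machinery is needed, so the whole argument is a matter of invoking the right item in each case.

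First I would dispatch item 1: since $\E_{\mathcal{N}_X}\subseteq\mathcal{N}_X$, we have $\BL(X)\subseteq\mathcal{N}_X\cap\mathcal{L}_X$, so $\BL(X)\neq\varnothing$ forces $\mathcal{N}_X\cap\mathcal{L}_X\neq\varnothing$, and Corollary \ref{BLcomp} then yields the completeness of $X$. For item 2, I would note that any $\varphi\in\mathcal{N}_X$ already annihilates $c_0(X)$ by item 3 of Proposition \ref{prim}; hence, by the characterization in item 7 of Proposition \ref{prim}, membership of such a $\varphi$ in $\mathcal{L}_X$ is equivalent to the single requirement $\varphi|_{\mathbf{X}}=\lim$, i.e. $\varphi(\mathbf{x})=x$ for every $x\in X$. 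Intersecting with $\mathsf{S}_{\mathcal{N}_X}=\E_{\mathcal{N}_X}$ gives exactly the claimed description.

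Item 3 is an upgrade-the-norm argument: the inclusion $\E_{\mathcal{N}_X}\cap\mathcal{L}_X\subseteq\mathsf{B}_{\mathcal{N}_X}\cap\mathcal{L}_X$ is immediate, and conversely item 6 of Proposition \ref{prim} shows that $\|\varphi\|\geq1$ for every $\varphi\in\mathcal{L}_X$, so any $\varphi\in\mathsf{B}_{\mathcal{N}_X}\cap\mathcal{L}_X$ has norm exactly $1$ and thus lies in $\E_{\mathcal{N}_X}$. For item 4 I would invoke item 9 of Proposition \ref{prim}, which identifies $\mathcal{HB}(\lim)$ with $\mathcal{L}_X\cap\E_{\mathcal{L}(\ell_\infty(X),X)}$; intersecting with $\mathcal{N}_X$ and using that the sphere of a subspace is the subspace met with the ambient sphere (the norm on $\mathcal{N}_X$ being the induced one), i.e. $\E_{\mathcal{N}_X}=\mathcal{N}_X\cap\E_{\mathcal{L}(\ell_\infty(X),X)}$, recovers $\BL(X)$.

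Finally, for item 5 I would rewrite $\BL(X)=\mathcal{N}_X\cap\mathcal{L}_X\cap\B_{\mathcal{L}(\ell_\infty(X),X)}$ using item 3. Convexity is then clear, as $\mathcal{N}_X$ is a subspace, $\mathcal{L}_X$ is affine (item 4 of Proposition \ref{prim}), and the closed ball is convex. Closedness in the pointwise-convergence topology follows from the closedness of $\mathcal{N}_X$ and $\mathcal{L}_X$ (items 1 and 5 of Proposition \ref{prim}) once one knows the closed unit ball is pointwise-closed. This last point is the only step that is not a verbatim citation, and I regard it as the main (minor) obstacle: I would verify it with nets, observing that if $T_i\to T$ pointwise with $\|T_i\|\leq1$, then continuity of the norm gives $\|T(v)\|=\lim_i\|T_i(v)\|\leq\|v\|$ for each $v\in\ell_\infty(X)$, whence $\|T\|\leq1$.
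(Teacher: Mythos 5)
Your proposal is correct and follows exactly the route the paper intends: the paper states this corollary without proof as ``a direct consequence of Proposition \ref{prim} together with Corollary \ref{BLcomp}'', and your argument is precisely a careful unwinding of those citations (items 3, 6, 7, 9 for the characterizations, items 1, 4, 5 for convexity and closedness, and Corollary \ref{BLcomp} for completeness). Your one added verification --- that $\B_{\mathcal{L}\left(\ell_\infty\left(X\right),X\right)}$ is closed in the pointwise convergence topology, via the net argument $\left\|T\left(v\right)\right\|=\lim_i\left\|T_i\left(v\right)\right\|\leq\left\|v\right\|$ --- is exactly the routine detail the paper leaves tacit, and it is sound.
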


Every vector-valued Banach limit trivially induces a scalar-valued Banach limit. Indeed, if $T\in\BL(X)$, then $$\begin{array}{rcl} \ell_\infty&\to&\mathbb{R}\\ \left(\lambda_n\right)_{n\in\N}&\mapsto&
x^*\left(T\left(\left(\lambda_n x\right)_{n\in\N}\right)\right) \end{array}$$ where $x\in\E_X$ and $x^*\in\E_{X^*}$ can be chosen arbitrarily as long as $x^*(x)=1$.

\section{Existence of vector-valued Banach limits}

The class of normed spaces holding vector-valued Banach limits is indeed pretty large. It is closed under arbitrary $\ell_\infty$-sums (see \cite[Theorem 3]{GPPF}) and contains the Banach spaces $1$-complemented in their bidual (\cite[Corollary 1]{AGPPF}) and the $1$-injective Banach spaces (see \cite[Theorem 2.3]{GPbl}).

\subsection{Dual spaces}

We will begin this subsection by making use of an ultrafilter technique to produce vector-valued Banach limits in all dual Banach spaces (see \cite[Corollary 2.1]{AGPPF}). It is fair to mention that this technique was first used in \cite{KaS} although for different purposes.

\begin{theorem}\label{dual}
For every Banach space $X$ there exists a Banach limit $\varphi\in \BL\left(X^*\right)$ verifying that if $\left(x^*_n\right)_{n\in\mathbb{N}}\in\ell_\infty\left(X^*\right)$ is $w^*$-Cesaro convergent to $x^*\in X^*$, then $\varphi\left(\left(x^*_n\right)_{n\in\mathbb{N}}\right)=x^*$.
\end{theorem}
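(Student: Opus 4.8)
The plan is to construct $\varphi$ explicitly by composing Cesàro averaging with an ultrafilter limit, exploiting the dual structure of $X^*$ to define the output functional pointwise on $X$.

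First I would fix a free ultrafilter $\mathcal{U}$ on $\mathbb{N}$. Given $(x^*_n)_{n\in\mathbb{N}}\in\ell_\infty(X^*)$, for each $x\in X$ the scalar sequence of Cesàro means $\left(\frac{1}{n}\sum_{k=1}^n x^*_k(x)\right)_{n\in\mathbb{N}}$ is bounded by $\|(x^*_n)_{n\in\mathbb{N}}\|_\infty\|x\|$, so its $\mathcal{U}$-limit exists by compactness of bounded intervals. I would then define
$$\varphi\left((x^*_n)_{n\in\mathbb{N}}\right)(x):=\mathcal{U}\lim_n \frac{1}{n}\sum_{k=1}^n x^*_k(x).$$
Linearity in $x$ and the estimate $\left|\varphi\left((x^*_n)_{n\in\mathbb{N}}\right)(x)\right|\le\|(x^*_n)_{n\in\mathbb{N}}\|_\infty\|x\|$ (both inherited from the ultrafilter limit) show that $\varphi\left((x^*_n)_{n\in\mathbb{N}}\right)$ is a well-defined element of $X^*$ with norm at most $\|(x^*_n)_{n\in\mathbb{N}}\|_\infty$; this is exactly the point where the dual structure is indispensable, since for a general Banach space the same recipe would only land in the bidual. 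Consequently $\varphi\colon\ell_\infty(X^*)\to X^*$ is linear with $\|\varphi\|\le1$.

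The decisive step is to show $\varphi\in\mathcal{N}_{X^*}$, and this is where the Cesàro averaging earns its keep. For every $x\in X$,
$$\frac{1}{n}\sum_{k=1}^n x^*_{k+1}(x)-\frac{1}{n}\sum_{k=1}^n x^*_k(x)=\frac{1}{n}\left(x^*_{n+1}(x)-x^*_1(x)\right),$$
which tends to $0$ as $n\to\infty$ because $(x^*_n)_{n\in\mathbb{N}}$ is bounded. Since a free ultrafilter limit agrees with the ordinary limit whenever the latter exists (item \emph{1} of Lemma \ref{engaya}), the two $\mathcal{U}$-limits coincide and $\varphi\left((x^*_n)_{n\in\mathbb{N}}\right)=\varphi\left((x^*_{n+1})_{n\in\mathbb{N}}\right)$. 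I would stress that averaging is essential: the raw assignment $x\mapsto\mathcal{U}\lim_n x^*_n(x)$ need not be shift-invariant for an arbitrary free $\mathcal{U}$. To conclude that $\varphi\in\BL(X^*)$ I would invoke item \emph{2} of Corollary \ref{Bprim}: for the constant sequence $\mathbf{x^*}$ all Cesàro means equal $x^*$, so $\varphi(\mathbf{x^*})=x^*$; choosing $x^*\in\mathsf{S}_{X^*}$ then forces $\|\varphi\|\ge1$, hence $\|\varphi\|=1$ and $\varphi\in\mathsf{S}_{\mathcal{N}_{X^*}}$.

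Finally, the $w^*$-Cesàro property is immediate from the very definition of $\varphi$. If the sequence of Cesàro means $\left(\frac{1}{n}\sum_{k=1}^n x^*_k\right)_{n\in\mathbb{N}}$ converges to $x^*$ in the $w^*$-topology, then for each $x\in X$ the scalar sequence $\frac{1}{n}\sum_{k=1}^n x^*_k(x)$ converges to $x^*(x)$, so again by item \emph{1} of Lemma \ref{engaya} its $\mathcal{U}$-limit is $x^*(x)$; thus $\varphi\left((x^*_n)_{n\in\mathbb{N}}\right)=x^*$. I expect the only genuinely delicate point to be verifying that $\varphi\left((x^*_n)_{n\in\mathbb{N}}\right)$ really is a bounded functional on $X$ (well-definedness together with the uniform bound), while everything else reduces to the elementary facts that Cesàro differences of a bounded sequence vanish and that free ultrafilter limits respect ordinary convergence.
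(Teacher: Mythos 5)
Your proof is correct and follows essentially the same route as the paper: both fix a free ultrafilter and define $\varphi$ as the ultrafilter limit of the Ces\`aro means, your pointwise definition $x\mapsto\mathcal{U}\lim_n\frac{1}{n}\sum_{k=1}^n x^*_k(x)$ being exactly the paper's $w^*$-limit in the $w^*$-compact ball $\mathsf{B}_{X^*}\left(0,\left\|\left(x^*_n\right)_{n\in\mathbb{N}}\right\|_\infty\right)$, merely unpacked coordinatewise. The only cosmetic deviation is that you verify $\varphi\left(\mathbf{x^*}\right)=x^*$ on constant sequences and invoke \emph{2} of Corollary \ref{Bprim}, where the paper checks $\varphi|_{c\left(X^*\right)}=\lim$ directly via Lemma \ref{engaya}; both are legitimate since Corollary \ref{Bprim} precedes this theorem.
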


\begin{proof}
Consider any free ultrafilter $\mathcal{F}$ of $\mathbb{N}$. Define $$\begin{array}{rrcl}\varphi:&\ell_{\infty}\left(X^*\right) & \to & X^*\\&\left(x^*_n\right)_{n\in \mathbb{N}}&\mapsto&\displaystyle{\mathcal{F}\lim \frac{x^*_1+\cdots +x^*_n}{n}}.\end{array}$$
Observe that:
\begin{itemize}

\item $\varphi$ is well defined. Indeed, note that $\mathsf{B}_{X^*}\left(0,\left\|\left(x^*_n\right)_{n\in \mathbb{N}}\right\|_{\infty}\right)$ endowed with the $w^*$-topology is compact and Hausdorff. When the ultrafilter $\mathcal{F}$ is pushed forward by the sequence into $\mathsf{B}_{X^*}\left(0,\left\|\left(x^*_n\right)_{n\in \mathbb{N}}\right\|_{\infty}\right)$, it becomes a filter base whose induced filter is an ultrafilter. Since every ultrafilter in a compact Hausdorff topological space is convergent to a unique limit, we deduce that $$\displaystyle{\mathcal{F}\lim  \frac{x^*_1+\cdots +x^*_n}{n}}$$ exists and belongs to $\mathsf{B}_{X^*}\left(0,\left\|\left(x^*_n\right)_{n\in \mathbb{N}}\right\|_{\infty}\right)$.

\item $\varphi$ is linear, continuous, and has norm $1$. Indeed, from the previous item we deduce that $\left\|\varphi\right\|\leq 1$. In order to see that $\left\|\varphi\right\|=1$ it suffices to consider any constant sequence of norm $1$.

\item $\varphi|_{c\left(X^*\right)} =\lim$. Indeed, if $\left(x^*_n\right)_{n\in \mathbb{N}}$ is a sequence in $X^*$ converging to $x^*\in X^*$, then $$\left(\frac{x^*_1+\cdots + x^*_n}{n}\right)_{n\in \mathbb{N}}$$ converges to $x^*$, so $$\left(\frac{x^*_1+\cdots + x^*_n}{n}\right)_{n\in \mathbb{N}}$$ $w^*$-converges to $x^*$ and hence $$\mathcal{F}\lim \frac{x^*_1+\cdots +x^*_n}{n}=x^*$$ by {\em 1} of Lemma \ref{engaya}.

\item $\varphi$ is invariant under the shift operator. Indeed, for every $\left(x^*_n\right)_{n\in\mathbb{N}}\in\ell_{\infty}\left(X^*\right)$ we have that $$\varphi\left(\left(x^*_{n+1}-x^*_n\right)_{n\in\mathbb{N}}\right) = \mathcal{F}\lim\frac{x^*_{n+1}-x^*_1}{n} = 0$$ since $$\left(\frac{x^*_{n+1}-x^*_1}{n}\right)_{n\in\mathbb{N}}$$ converges to $0$.

\item If $\left(x^*_n\right)_{n\in\mathbb{N}}\in\ell_\infty\left(X^*\right)$ is $w^*$-Cesaro convergent to $x^*\in X^*$, then $$\varphi\left(\left(x^*_n\right)_{n\in\mathbb{N}}\right)=x^*.$$ Indeed, $$\left( \frac{x^*_1+\cdots +x^*_n}{n}\right)_{n\in\mathbb{N}}$$ is $w^*$-convergent to $x^*$, so $$\mathcal{F}\lim \frac{x^*_1+\cdots +x^*_n}{n}=x^*.$$

\end{itemize}
\end{proof}

\subsection{Spaces $1$-complemented in their bidual}

In the first place, we would like to make the reader notice that if $Y$ is a subspace of $X$ and $T\in\BL\left(X\right)$, then $T|_{\ell_\infty\left(Y\right)}\in \BL\left(Y\right)$ if and only if $T\left(\ell_\infty\left(Y\right)\right)\subseteq Y$.

The following trivial lemma, whose proof is obviously omitted, allows us to extend Theorem \ref{dual} to the spaces which are $1$-complemented in their bidual.

\begin{lemma}\label{1p}
If $Y$ is a $1$-complemented subspace of $X$ and $\varphi\in \BL\left(X\right)$, then $$
\begin{array}{rcl}
\ell_\infty\left(Y\right)&\to&Y\\
\left(y_n\right)_{n\in\mathbb{N}}&\mapsto&p\left(\varphi\left(\left(y_n\right)_{n\in\mathbb{N}}\right)\right)
\end{array}
$$
is a Banach limit on $Y$, where $p:X\to Y$ is a norm-$1$ linear projection.
\end{lemma}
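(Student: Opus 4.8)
The plan is to verify directly that the displayed map, which I shall call $T$, belongs to $\BL(Y)$ by checking the characterization in {\em 2} of Corollary \ref{Bprim}, namely that $T\in\mathsf{S}_{\mathcal{N}_Y}$ and $T(\mathbf{y})=y$ for every $y\in Y$. The starting observation is that, since $Y$ is a subspace of $X$, the space $\ell_\infty(Y)$ sits isometrically inside $\ell_\infty(X)$ (the sup norm of a bounded $Y$-valued sequence is computed identically in $X$). Thus $T=p\circ\varphi\circ\iota$, where $\iota:\ell_\infty(Y)\hookrightarrow\ell_\infty(X)$ is the canonical isometric inclusion, $\varphi\in\BL(X)$, and $p:X\to Y$ is the norm-$1$ projection supplied by the $1$-complementation hypothesis. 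As a composition of continuous linear maps, $T$ is continuous and linear, it genuinely lands in $Y$ because $p$ does, and $\|T\|\leq\|p\|\,\|\varphi\|\,\|\iota\|=1$.

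Next I would check that $T\in\mathcal{N}_Y$, i.e. invariance under the forward shift. For $(y_n)_{n\in\mathbb{N}}\in\ell_\infty(Y)$ one has $(y_{n+1})_{n\in\mathbb{N}}\in\ell_\infty(Y)\subseteq\ell_\infty(X)$ as well, and since $\varphi$ is invariant under the forward shift on $\ell_\infty(X)$,
$$T\left((y_{n+1})_{n\in\mathbb{N}}\right)=p\left(\varphi\left((y_{n+1})_{n\in\mathbb{N}}\right)\right)=p\left(\varphi\left((y_n)_{n\in\mathbb{N}}\right)\right)=T\left((y_n)_{n\in\mathbb{N}}\right).$$
Together with the previous paragraph this shows $T\in\mathcal{N}_Y$.

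For the remaining two properties I would exploit constant sequences. Fix $y\in Y$ and consider the constant sequence $\mathbf{y}\in\ell_\infty(Y)$, which also lies in $c(X)$. Because $\varphi\in\mathcal{L}_X$ extends the limit function, $\varphi(\mathbf{y})=\lim\mathbf{y}=y$, and as $p$ fixes the points of its range $Y$ we get $T(\mathbf{y})=p(y)=y$; this is exactly the condition $T(\mathbf{y})=y$. Taking $y\in\mathsf{S}_Y$ then forces $\|T\|\geq\|T(\mathbf{y})\|=\|y\|=1$, so combined with the earlier bound $\|T\|\leq 1$ we obtain $\|T\|=1$, whence $T\in\mathsf{S}_{\mathcal{N}_Y}$. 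By {\em 2} of Corollary \ref{Bprim} this places $T$ in $\BL(Y)$.

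There is no serious obstacle here; the only points demanding a little care are recording that the inclusion $\ell_\infty(Y)\subseteq\ell_\infty(X)$ is isometric (so that norms are not distorted) and that the norm-$1$ projection $p$ restricts to the identity on $Y$, which is precisely what turns $\varphi(\mathbf{y})=y$ into $T(\mathbf{y})=y$ and simultaneously supplies the lower norm bound. This is presumably why the authors deemed the verification safe to omit.
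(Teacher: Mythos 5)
Your proposal is correct and is precisely the routine verification the authors had in mind: the paper omits the proof of Lemma \ref{1p} as trivial, and your factorization $T=p\circ\varphi\circ\iota$ combined with the characterization in {\em 2} of Corollary \ref{Bprim} supplies exactly the omitted details (the isometric inclusion $\ell_\infty(Y)\subseteq\ell_\infty(X)$, shift invariance inherited from $\varphi$, $T(\mathbf{y})=y$ via $\varphi|_{c(X)}=\lim$ and $p|_Y=\I_Y$, and the resulting norm-one computation). There is no gap.
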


\begin{corollary}\label{1-cc}
If a Banach space $X$ is $1$-complemented in $X^{**}$, then $\BL\left(X\right)\neq \varnothing$.
\end{corollary}

An example of a Banach space free of vector-valued Banach limits is due now.

\begin{example}\label{kad}
$\mathcal{BL}\left(c_0\right)=\varnothing$. Indeed, if $\varphi\in \mathcal{BL}\left(c_0\right)$, then the following map
$$
\begin{array}{rrcl}
p:&\ell_\infty &\to &c_0\\
&\left(y_n\right)_{n\in\mathbb{N}}&\to&\varphi\left(\left(y_1,0,0,\dots\right),\left(y_2,y_2,0,0,\dots\right),\left(y_3,y_3,y_3,0,0,\dots\right),\dots\right)
\end{array}
$$
is a bounded linear projection, which is not possible since $c_0$ is not complemented in $\ell_\infty$.
\end{example}

The next result in this section is both an approach to the converse of Corollary \ref{1-cc} and a generalization of Example \ref{kad}. Its last item were originally stated in \cite[Theorem 2.2]{izv} under the hypothesis of $X$ having a monotone Schauder basis. During the revision of \cite{izv} for MathSciNet, Prof. Plichko relaxed the previous hypothesis to the one of $X$ being separable and enjoying the bounded approximation property.

\begin{theorem}\label{megaostia}
Let $X$ be a Banach space.
\begin{enumerate}
\item If $X$ is complemented in $X^{**}$, then $\mathcal{N}_X\cap\mathcal{L}_X \neq \varnothing$.
\item If $X$ is separable and has the bounded approximation property and there exists $T\in \mathcal{L}_X$, then $X$ is complemented in its bidual.
\item If $X$ is separable and has the metric approximation property and there exists $T\in \mathsf{S}_{\mathcal{L}\left(\ell_\infty\left(X\right),X\right)}\cap\mathcal{L}_X$, then $X$ is $1$-complemented in its bidual.
\end{enumerate}
\end{theorem}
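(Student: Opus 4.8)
The plan is to handle the three items separately but through one common device: feed finite-dimensional data into the given operator $T$ and read off the limit. Throughout, let $J\colon X\to X^{**}$ denote the canonical isometric embedding (not to be confused with the dual map $\mathsf{J}_X$).

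For item 1, observe that $X^{**}=(X^*)^*$ is itself a dual space, so Theorem \ref{dual} applied to $X^*$ furnishes a Banach limit $\varphi\in\BL\left(X^{**}\right)$; in particular $\varphi\in\mathcal{N}_{X^{**}}\cap\mathcal{L}_{X^{**}}$. The hypothesis yields a bounded linear map $p\colon X^{**}\to X$ with $p\circ J=\mathrm{id}_X$ (i.e.\ a bounded projection of $X^{**}$ onto $J(X)$). I would then define
\[
T\colon \ell_\infty(X)\to X,\qquad T\left((x_n)_{n\in\mathbb{N}}\right):=p\left(\varphi\left((Jx_n)_{n\in\mathbb{N}}\right)\right),
\]
which is continuous and linear with $\|T\|\leq\|p\|$. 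Since $J$ commutes with the forward shift and $\varphi$ is shift-invariant, $T$ is shift-invariant, so $T\in\mathcal{N}_X$. For a constant sequence one has $\varphi\left((Jx)_{n\in\mathbb{N}}\right)=Jx$ (as $\varphi$ extends the limit function on $c\left(X^{**}\right)$), whence $T(\mathbf{x})=p(Jx)=x$; thus $T|_{\mathbf{X}}=\lim$. By item 8 of Proposition \ref{prim}, $T\in\mathcal{N}_X\cap\mathcal{L}_X$, which is therefore non-empty.

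For items 2 and 3, separability together with the (bounded, resp.\ metric) approximation property provides a sequence $(S_n)_{n\in\mathbb{N}}$ of finite-rank operators on $X$ with $\sup_n\|S_n\|\leq\lambda$ and $S_nx\to x$ for every $x\in X$, where $\lambda$ witnesses the BAP in item 2 and $\lambda=1$ in the metric case of item 3. Since each $S_n$ is finite-rank, its biadjoint maps $X^{**}$ into $J(X)$, so it may be regarded as a finite-rank operator $\widehat{S}_n\colon X^{**}\to X$ with $\|\widehat{S}_n\|=\|S_n\|$ and $\widehat{S}_n\circ J=S_n$. Given $T\in\mathcal{L}_X$, I would set
\[
P\colon X^{**}\to X,\qquad P(\Phi):=T\left(\left(\widehat{S}_n(\Phi)\right)_{n\in\mathbb{N}}\right).
\]
The sequence $\left(\widehat{S}_n(\Phi)\right)_{n\in\mathbb{N}}$ lies in $\ell_\infty(X)$ with sup-norm at most $\lambda\|\Phi\|$, so $P$ is bounded and linear with $\|P\|\leq\lambda\|T\|$. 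The crucial point is that $P$ fixes $X$: for $\Phi=Jx$ we have $\widehat{S}_n(Jx)=S_nx\to x$, so $\left(\widehat{S}_n(Jx)\right)_{n\in\mathbb{N}}\in c(X)$ with limit $x$, and since $T$ extends the limit function, $P(Jx)=\lim_n S_nx=x$. Hence $J\circ P$ is a bounded projection of $X^{**}$ onto $J(X)$, proving item 2. For item 3 one takes $\lambda=1$ and uses $\|T\|=1$ to get $\|P\|\leq1$; combined with $P\circ J=\mathrm{id}_X$ and the fact that $J$ is isometric, this forces $\|P\|=1$, so $J\circ P$ is a norm-one projection and $X$ is $1$-complemented in $X^{**}$.

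The routine verifications (continuity, linearity, shift-invariance, the biadjoint norm identity) are immediate. The genuine obstacle is the first step of items 2 and 3: extracting from the approximation property a single uniformly bounded \emph{sequence} of finite-rank operators converging strongly to the identity, with the correct bound ($\leq\lambda$, resp.\ $\leq1$). This is exactly where separability is indispensable -- one diagonalizes the net provided by the approximation property against a countable dense subset and invokes the uniform norm bound -- and it is the mechanism through which the hypotheses may be weakened from a monotone Schauder basis to separability plus the bounded approximation property. Everything afterwards is the uniform biadjoint estimate and the observation that $T$ reproduces the limits of the strongly convergent sequences $(S_nx)_{n\in\mathbb{N}}$.
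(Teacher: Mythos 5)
Your proposal is correct, and for the second and third items it is actually more faithful to the theorem as stated than the argument printed in the paper. For item 1 the two routes coincide: the paper obtains the result by combining Theorem \ref{dual} with Lemma \ref{1p} (with $p$ allowed to be a merely bounded projection of $X^{**}$ onto $X$), and your operator $(x_n)_{n\in\mathbb{N}}\mapsto p\left(\varphi\left(\left(Jx_n\right)_{n\in\mathbb{N}}\right)\right)$ is exactly the composition those two results produce, with the shift-invariance and constant-sequence checks written out. For items 2 and 3, however, the paper's written proof works with a Schauder basis: it defines $P\left(x^{**}\right)=T\left(\left(\sum_{i=1}^{n}x^{**}\left(e_i^*\right)e_i\right)_{n\in\mathbb{N}}\right)$, identifies $\sum_{i=1}^{n}x^{**}\left(e_i^*\right)e_i$ as $P_n^{**}\left(x^{**}\right)$ with $\left\|P_n^{**}\right\|=\left\|P_n\right\|$, and gets a projection of norm at most $\left\|T\right\|\sup_n\left\|P_n\right\|$, equal to $1$ when the basis is monotone and $\left\|T\right\|=1$; that argument literally establishes only the Scholium following the theorem, and the relaxation from a (monotone) Schauder basis to separability plus the bounded (metric) approximation property is credited to Plichko without a written proof. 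Your proposal supplies precisely that missing step: you replace the basis projections $P_n$ by a uniformly bounded sequence $\left(S_n\right)_{n\in\mathbb{N}}$ of finite-rank operators converging strongly to the identity, obtained by diagonalizing the approximation-property data against a countable dense set (this is where separability is genuinely used, and in the BAP case the bound $\lambda$ is built into the definition, so no uniform boundedness principle is even needed), and you replace the basis identity by the general fact that the biadjoint of a finite-rank operator takes values in $J(X)$ with $\left\|\widehat{S}_n\right\|=\left\|S_n\right\|$. After that, both proofs run identically: $P\circ J=\mathrm{id}_X$ because $T$ reproduces the limits of the convergent sequences $\left(S_nx\right)_{n\in\mathbb{N}}$, whence $J\circ P$ is an idempotent with range $J(X)$, with $\left\|P\right\|\leq\lambda\left\|T\right\|$ in general and $\left\|P\right\|=1$ in the metric case. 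So your route buys a self-contained proof of the theorem exactly as stated, at the cost of the small extra verification about biadjoints of finite-rank operators, while the paper's route is shorter but, strictly speaking, proves only the Schauder-basis case.
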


\begin{proof}
\mbox{}
\begin{enumerate}
\item It follows immediately via combining Theorem \ref{dual} together with Lemma \ref{1p} (assuming of course that $p$ is simply a projection).

\item Let $\left(e_n\right)_{n\in\mathbb{N}}$ be a Schauder basis for $X$ with dual basis $\left(e^*_n\right)_{n\in\mathbb{N}}$. Consider $T\in\mathcal{L}_X$ and define
$$
\begin{array}{rrcl}
P:&X^{**}&\to&X\\
&x^{**}&\mapsto& T\left(\left(\sum_{i=1}^n x^{**}\left(e_i^*\right)e_i\right)_{n\in\mathbb{N}}\right).
\end{array}
$$ It is not difficult to check that the previous map is a bounded linear projection (of norm $1$ in case $\left(e_n\right)_{n\in\mathbb{N}}$ is monotone and $\left\|T\right\|=1$). Indeed:

\begin{itemize}
\item In the first place, $P$ is well defined since for every $n\in\mathbb{N}$ we have that $$P_n^{**}\left(x^{**}\right)= \sum_{i=1}^n x^{**}\left(e_i^*\right)e_i$$ and $\left\|P_n^{**}\right\|=\left\|P_n\right\|$, where $P_n:X\to\mathrm{span}\left\{e_1,\dots,e_n\right\}$ is the usual projection.
\item In the second place, it is obvious that $P|_X$ is the identity on $X$, which already shows that $P$ is a bounded linear projection with $\left\|P\right\|\leq \left\|T\right\|M$ where $M$ is so that $\left\|P_n\right\|\leq M$ for all $n\in \mathbb{N}$.
\end{itemize}

\item Finally, if $\left(e_n\right)_{n\in\mathbb{N}}$ is monotone, then $M=1$ and hence $P$ has norm $1$ in case $T\in \mathsf{S}_{\mathcal{L}\left(\ell_\infty\left(X\right),X\right)}\cap\mathcal{L}_X$.
\end{enumerate}
\end{proof}

From Corollary \ref{1-cc} and Theorem \ref{megaostia} we directly obtain the following corollary.

\begin{corollary}
Let $X$ be a separable Banach space.
\begin{enumerate}
\item If $X$ has the metric approximation property, then the following conditions are equivalent:
\begin{itemize}
\item[$\bullet$] $\BL\left(X\right)\neq\varnothing$.
\item[$\bullet$] $\mathsf{S}_{\mathcal{L}\left(\ell_\infty\left(X\right),X\right)}\cap\mathcal{L}_X\neq \varnothing$.
\item[$\bullet$] $X$ is $1$-complemented in its bidual.
\end{itemize}

\item If $X$ has the bounded approximation property, then the following conditions are equivalent:
\begin{itemize}
\item[$\bullet$] $\mathcal{N}_X\cap\mathcal{L}_X\neq\varnothing$.
\item[$\bullet$] $\mathcal{L}_X\neq \varnothing$.
\item[$\bullet$] $X$ is complemented in its bidual.
\end{itemize}

\end{enumerate}
\end{corollary}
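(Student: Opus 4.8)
The plan is to close two short cycles of implications, one for each item, drawing every nontrivial arrow directly from Corollary \ref{1-cc} and from the three items of Theorem \ref{megaostia}; nothing new needs to be constructed.

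For item \emph{1}, write $(a)$ for $\BL(X)\neq\varnothing$, $(b)$ for $\mathsf{S}_{\mathcal{L}\left(\ell_\infty\left(X\right),X\right)}\cap\mathcal{L}_X\neq\varnothing$, and $(c)$ for the statement that $X$ is $1$-complemented in $X^{**}$. I would prove $(c)\Rightarrow(a)\Rightarrow(b)\Rightarrow(c)$. The implication $(c)\Rightarrow(a)$ is exactly Corollary \ref{1-cc}. The implication $(a)\Rightarrow(b)$ I would obtain by unwinding Definition \ref{defBL}: any $\varphi\in\BL(X)=\E_{\mathcal{N}_X}\cap\mathcal{L}_X$ lies on the unit sphere of $\mathcal{N}_X$, and since $\mathcal{N}_X$ carries the norm induced from $\mathcal{L}\left(\ell_\infty\left(X\right),X\right)$, this gives $\|\varphi\|=1$ in the ambient space, so $\varphi\in\mathsf{S}_{\mathcal{L}\left(\ell_\infty\left(X\right),X\right)}\cap\mathcal{L}_X$; this step is immediate. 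Finally $(b)\Rightarrow(c)$ is precisely item \emph{3} of Theorem \ref{megaostia}, whose hypotheses (separability and the metric approximation property) are the standing assumptions of item \emph{1}.

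For item \emph{2}, write $(a')$ for $\mathcal{N}_X\cap\mathcal{L}_X\neq\varnothing$, $(b')$ for $\mathcal{L}_X\neq\varnothing$, and $(c')$ for the statement that $X$ is complemented in $X^{**}$. I would prove $(a')\Rightarrow(b')\Rightarrow(c')\Rightarrow(a')$. Here $(a')\Rightarrow(b')$ is trivial set inclusion; $(b')\Rightarrow(c')$ is item \emph{2} of Theorem \ref{megaostia}, using separability and the bounded approximation property; and $(c')\Rightarrow(a')$ is item \emph{1} of Theorem \ref{megaostia}.

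Since every arrow is either a direct citation of a result established earlier or an immediate consequence of a definition, there is no genuine obstacle here. The only point requiring a moment's care is the passage $(a)\Rightarrow(b)$: one must confirm that the norm inherited by $\mathcal{N}_X$ as a subspace of $\mathcal{L}\left(\ell_\infty\left(X\right),X\right)$ is exactly the one appearing in $\E_{\mathcal{N}_X}$ and in $\mathsf{S}_{\mathcal{L}\left(\ell_\infty\left(X\right),X\right)}$, so that a norm-one element of $\mathcal{N}_X$ is genuinely a norm-one element of the whole operator space. Once this identification is granted, both equivalences follow by concatenating the cited implications.
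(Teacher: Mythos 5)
Your proof is correct and takes essentially the same approach as the paper, which presents this corollary precisely as a direct consequence of Corollary \ref{1-cc} and Theorem \ref{megaostia}; the two implication cycles you close are exactly the intended derivation, with all hypotheses (separability, and the metric or bounded approximation property where needed) correctly matched to the items of Theorem \ref{megaostia}. Your one point of care, that $\BL\left(X\right)=\E_{\mathcal{N}_X}\cap\mathcal{L}_X\subseteq \mathsf{S}_{\mathcal{L}\left(\ell_\infty\left(X\right),X\right)}\cap\mathcal{L}_X$, is indeed immediate since $\mathcal{N}_X$ carries the operator norm inherited from $\mathcal{L}\left(\ell_\infty\left(X\right),X\right)$.
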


\begin{scholium}
Let $X$ be a Banach space.
\begin{enumerate}
\item If $X$ has a monotone Schauder basis, then the following conditions are equivalent:
\begin{itemize}
\item[$\bullet$] $\BL\left(X\right)\neq\varnothing$.
\item[$\bullet$] $\mathsf{S}_{\mathcal{L}\left(\ell_\infty\left(X\right),X\right)}\cap\mathcal{L}_X\neq \varnothing$.
\item[$\bullet$] $X$ is $1$-complemented in its bidual.
\end{itemize}

\item If $X$ has a Schauder basis, then the following conditions are equivalent:
\begin{itemize}
\item[$\bullet$] $\mathcal{N}_X\cap\mathcal{L}_X\neq\varnothing$.
\item[$\bullet$] $\mathcal{L}_X\neq \varnothing$.
\item[$\bullet$] $X$ is complemented in its bidual.
\end{itemize}

\end{enumerate}
\end{scholium}

\subsection{$1$-injective Banach spaces}

We refer the reader to Subsection \ref{contlin} for a wider perspective on $1$-injective spaces and related topics.

\begin{theorem}
If $X$ is a $1$-injective Banach space, then $\BL\left(X\right)\neq \varnothing$.
\end{theorem}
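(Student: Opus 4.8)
The plan is to construct directly a norm-one operator on a suitable subspace of $\ell_\infty(X)$ that already encodes \emph{both} defining features of a vector-valued Banach limit — vanishing on $bps(X)$ and restricting to $\lim$ on $\mathbf X$ — and then to use the $1$-injectivity of $X$ to extend it to all of $\ell_\infty(X)$ without enlarging its norm. First I would note that a constant sequence $\mathbf x$ has partial sums $nx$, so $bps(X)\cap\mathbf X=\{0\}$ and the algebraic direct sum $bps(X)\oplus\mathbf X$ is genuine. On it I would define the linear map $S\colon bps(X)\oplus\mathbf X\to X$ by $S\left((z_n)_{n\in\mathbb{N}}+\mathbf x\right):=x$ for $(z_n)_{n\in\mathbb{N}}\in bps(X)$.

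The first key step is to verify that $\|S\|=1$. Since $bps(X)$ is a subspace, for every $(z_n)_{n\in\mathbb{N}}\in bps(X)$ we have
$$\left\|(z_n)_{n\in\mathbb{N}}+\mathbf x\right\|_\infty\ \geq\ d\left(\mathbf x,bps(X)\right)=\|x\|,$$
where the last equality is Lemma \ref{pollazo}(3) after rescaling by $\|x\|$ (the case $x=0$ being trivial). Hence $\|S((z_n)_n+\mathbf x)\|=\|x\|\leq\|(z_n)_n+\mathbf x\|_\infty$, so $\|S\|\leq 1$; evaluating on constant sequences, where $\|S(\mathbf x)\|=\|x\|=\|\mathbf x\|_\infty$, then forces $\|S\|=1$.

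Next I would pass to the closure. Because $X$ is complete, $S$ extends uniquely and with the same operator norm to a norm-one operator $\bar S$ on the closed subspace $Y:=\cl\left(bps(X)\oplus\mathbf X\right)$ of $\ell_\infty(X)$; this step is genuinely needed, since $bps(X)$ is not closed in $\ell_\infty(X)$. Now the inclusion $Y\hookrightarrow\ell_\infty(X)$ is an isometric embedding, and $\bar S\colon Y\to X$ has norm one, so invoking the $1$-injectivity of $X$ yields an operator $T\colon\ell_\infty(X)\to X$ with $T|_Y=\bar S$ and $\|T\|=\|\bar S\|=1$.

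Finally I would identify $T$ as a vector-valued Banach limit. By construction $bps(X)\subseteq\ker T$, so $T\in\mathcal N_X$ by Proposition \ref{prim}(2), and $T|_{\mathbf X}=\lim$; hence $T\in\mathcal N_X\cap\mathcal L_X$ by Proposition \ref{prim}(8). Since $\|T\|=1$ (the lower bound $\|T\|\geq 1$ being automatic from Proposition \ref{prim}(6)), we conclude $T\in\E_{\mathcal N_X}\cap\mathcal L_X=\BL(X)$, which proves $\BL(X)\neq\varnothing$. The only delicate points are the norm computation $\|S\|=1$ — exactly where Lemma \ref{pollazo}(3) is indispensable — and the realization that the naive alternative of merely Hahn--Banach-extending $\lim$ from $c(X)$ would secure membership in $\mathcal L_X$ but not the shift-invariance $\mathcal N_X$; defining $S$ on $bps(X)\oplus\mathbf X$ is precisely what builds in both properties before the extension is taken.
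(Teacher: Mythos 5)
Your proof is correct and follows essentially the same route as the paper: both define the norm-one operator $(x_n)_{n\in\mathbb{N}}+\mathbf{x}\mapsto x$ on (a closure of) $bps(X)\oplus\mathbf{X}$, obtain the norm estimate from Lemma \ref{pollazo}, extend norm-preservingly via $1$-injectivity, and identify the extension as a Banach limit through Proposition \ref{prim} and Corollary \ref{Bprim}. The only cosmetic difference is that you close up $bps(X)\oplus\mathbf{X}$ after defining $S$, whereas the paper works from the start with $\cl_{\ell_\infty(X)}(bps(X))\oplus\mathbf{X}$.
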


\begin{proof}
Consider the following operator:
\begin{equation}\label{ope}
\begin{array}{rcl}
\cl_{\ell_\infty\left(X\right)}\left( bps \left(X\right)\right) \oplus {\bf X}& \to &  X\\
\left(x_n\right)_{n\in\mathbb{N}}+ {\bf x}&\mapsto &x
\end{array}
\end{equation}
Notice that, in virtue of {\em 1} in Lemma \ref{pollazo}, we have that $$\left\|x\right\|=d\left({\bf x},\cl_{\ell_\infty\left(X\right)}\left( bps \left(X\right)\right)\right)\leq \left\|\left(x_n\right)_{n\in\mathbb{N}}+ {\bf x}\right\|_\infty$$ for all $x\in X$ and all $\left(x_n\right)_{n\in\mathbb{N}}\in \cl_{\ell_\infty\left(X\right)}\left( bps \left(X\right)\right)$. Finally, Proposition \ref{prim} together with Corollary \ref{Bprim} serve to assure that any norm-$1$ Hahn-Banach extension of the operator given in \eqref{ope} is an element of $\BL\left(X\right)$.
\end{proof}

We recall the reader about \cite[Corollary 1.3]{R} where an example of a $1$-injective Banach space is given which is not isomorphic to a dual space.
\subsection{$\ell_\infty$-sums}

This section is aimed at showing that arbitrary $\ell_\infty$-sums of spaces admitting vector-valued Banach limits also admit such Banach limits.

\begin{theorem}\label{infty}
Let $\left(X_i\right)_{i\in I}$ be an arbitrary family of Banach spaces such that there exists $\varphi_i\in\BL\left(X_i\right)$ for every $i\in I$. The map
$$
\begin{array}{rrcl}
\varphi:&\ell_\infty\left(\left(\prod_{i\in I}X_i\right)_\infty\right)&\to &\left(\prod_{i\in I}X_i\right)_\infty\\
&\left(\left(x^n_i\right)_{i\in I}\right)_{n\in\mathbb{N}}&\mapsto & \left(\varphi_i\left(\left(x^n_i\right)_{n\in\mathbb{N}}\right)\right)_{i\in I}
\end{array}
$$ verifies that $\varphi\in \BL\left(\left(\prod_{i\in I}X_i\right)_\infty\right)$.
\end{theorem}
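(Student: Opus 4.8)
The plan is to reduce everything to the coordinatewise behaviour of the maps $\varphi_i$ and then invoke the characterization of vector-valued Banach limits given in item \emph{2} of Corollary \ref{Bprim}. Writing $Y:=\left(\prod_{i\in I}X_i\right)_\infty$ for brevity, it suffices to check that $\varphi\in\mathsf{S}_{\mathcal{N}_Y}$ and that $\varphi({\bf y})=y$ for every $y\in Y$.

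First I would verify that $\varphi$ is well defined, that is, that its output genuinely lands in $Y$. Fix $\left(\left(x^n_i\right)_{i\in I}\right)_{n\in\mathbb{N}}\in\ell_\infty(Y)$. Since each $\varphi_i\in\mathcal{BL}(X_i)\subseteq\mathsf{S}_{\mathcal{N}_{X_i}}$ has norm $1$, for every $i\in I$ we get $\left\|\varphi_i\left(\left(x^n_i\right)_{n\in\mathbb{N}}\right)\right\|\leq\sup_n\|x^n_i\|\leq\left\|\left(\left(x^n_i\right)_{i\in I}\right)_{n\in\mathbb{N}}\right\|_\infty$. Taking the supremum over $i$ shows both that $\left(\varphi_i\left(\left(x^n_i\right)_{n\in\mathbb{N}}\right)\right)_{i\in I}\in Y$ and that $\|\varphi\|\leq 1$. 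Linearity of $\varphi$ is inherited coordinatewise from the linearity of each $\varphi_i$, and shift invariance likewise: the $i$-th coordinate of $\varphi$ applied to a sequence and to its forward shift are $\varphi_i\left(\left(x^n_i\right)_{n\in\mathbb{N}}\right)$ and $\varphi_i\left(\left(x^{n+1}_i\right)_{n\in\mathbb{N}}\right)$, which coincide because $\varphi_i\in\mathcal{N}_{X_i}$. Hence $\varphi\in\mathcal{N}_Y$ with $\|\varphi\|\leq 1$.

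Next I would handle simultaneously the norm equality and the constant-sequence condition. Given $y=(y_i)_{i\in I}\in Y$, the constant sequence ${\bf y}$ has, in each coordinate $i$, the constant sequence ${\bf y_i}$ with term $y_i$; since $\varphi_i\in\mathcal{BL}(X_i)$, item \emph{2} of Corollary \ref{Bprim} gives $\varphi_i({\bf y_i})=y_i$, so $\varphi({\bf y})=(y_i)_{i\in I}=y$. Taking $y\in\mathsf{S}_Y$ yields $\|\varphi({\bf y})\|=\|y\|=1$ together with $\|{\bf y}\|_\infty=1$, whence $\|\varphi\|\geq 1$ and therefore $\|\varphi\|=1$. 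Combining all of this, $\varphi\in\mathsf{S}_{\mathcal{N}_Y}$ and $\varphi({\bf y})=y$ for every $y\in Y$, so $\varphi\in\mathcal{BL}(Y)$ by item \emph{2} of Corollary \ref{Bprim}.

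There is no serious obstacle here: every requirement decouples across the index $i$, and the only point demanding a uniform estimate rather than a merely pointwise-in-$i$ argument is the well-definedness of the output, which is precisely where the contractivity $\|\varphi_i\|=1$, uniform in $i$, is used to keep the supremum over $i$ finite. The remaining verifications are purely formal.
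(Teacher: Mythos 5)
Your proof is correct, and it rests on the same coordinatewise decoupling as the paper's proof: well-definedness and the bound $\|\varphi\|\leq 1$ via the uniform contractivity of the $\varphi_i$, and shift invariance checked coordinate by coordinate, are verbatim the paper's first two steps. The one genuine difference lies in the final step. The paper verifies directly that $\varphi$ restricted to $c\left(\left(\prod_{i\in I}X_i\right)_\infty\right)$ is the limit function: given a sequence converging in the sup norm of the $\ell_\infty$-product to $\left(x_i\right)_{i\in I}$, each coordinate sequence $\left(x^n_i\right)_{n\in\mathbb{N}}$ converges to $x_i$, so $\varphi_i$ returns $x_i$; this simultaneously settles $\|\varphi\|=1$. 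You instead bypass $c\left(\left(\prod_{i\in I}X_i\right)_\infty\right)$ entirely and check only constant sequences, invoking the characterization $\BL\left(Y\right)=\left\{\varphi\in\mathsf{S}_{\mathcal{N}_Y}:\varphi\left(\mathbf{y}\right)=y\text{ for all }y\in Y\right\}$ from item \emph{2} of Corollary \ref{Bprim}, where the identity $\varphi_i\left(\mathbf{y_i}\right)=y_i$ is again that corollary applied to each $\varphi_i$. This buys a modest economy: constant sequences are strictly easier to handle than arbitrary convergent ones, and the norm equality $\|\varphi\|=1$ drops out of the same computation by taking $y\in\mathsf{S}_Y$ (nonempty since all spaces in this book are nonzero). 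What the paper's route buys in exchange is an explicit proof of the full extension property $\varphi|_{c\left(Y\right)}=\lim$ rather than deducing it a posteriori from Corollary \ref{Bprim}; the two arguments are logically equivalent given that corollary, and both are complete.
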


\begin{proof}
\mbox{}
\begin{enumerate}
\item Let $\left(\left(x^n_i\right)_{i\in I}\right)_{n\in\mathbb{N}}\in \ell_\infty\left(\left(\prod_{i\in I}X_i\right)_\infty\right)$. Then:
\begin{eqnarray*}
\left\| \left(\varphi_i\left(\left(x^n_i\right)_{n\in\mathbb{N}}\right)\right)_{i\in I}\right\|_\infty&=& \sup_{i\in I}\left\|\varphi_i\left(\left(x^n_i\right)_{n\in\mathbb{N}}\right) \right\|\\
&\leq & \sup_{i\in I}\left\|\left(x^n_i\right)_{n\in\mathbb{N}} \right\|_\infty\\
&=&  \sup_{n\in\mathbb{N}}\left\|\left(x^n_i\right)_{i\in I} \right\|_\infty\\
&= &\left\| \left(\left(x^n_i\right)_{i\in I}\right)_{n\in\mathbb{N}}\right\|_\infty.
\end{eqnarray*}
This chain of inequalities shows that $\left\|\varphi\right\|\leq 1$. In the third part of this proof we will show that $\varphi$ is an extension of the limit function on $c\left(\left(\prod_{i\in I}X_i\right)_\infty\right)$ to the whole of $\ell_\infty\left(\left(\prod_{i\in I}X_i\right)_\infty\right)$, which proves that $\left\|\varphi\right\|=1$.
\item Let $\left(\left(x^n_i\right)_{i\in I}\right)_{n\in\mathbb{N}}\in \ell_\infty\left(\left(\prod_{i\in I}X_i\right)_\infty\right)$. Then:
\begin{eqnarray*}
\varphi\left(\left(\left(x^{n+1}_i\right)_{i\in I}\right)_{n\in\mathbb{N}} \right)&=& \left(\varphi_i\left(\left(x^{n+1}_i\right)_{n\in\mathbb{N}}\right)\right)_{i\in I}\\
&=&\left(\varphi_i\left(\left(x^n_i\right)_{n\in\mathbb{N}}\right)\right)_{i\in I}\\
&=&\varphi\left(\left(\left(x^n_i\right)_{i\in I}\right)_{n\in\mathbb{N}} \right).
\end{eqnarray*}
\item Let $\left(\left(x^n_i\right)_{i\in I}\right)_{n\in\mathbb{N}}\in c\left(\left(\prod_{i\in I}X_i\right)_\infty\right)$ converging to $\left(x_i\right)_{i\in I}\in \left(\prod_{i\in I}X_i\right)_\infty$. Observe that $\left(x^n_i\right)_{n\in\mathbb{N}}$ converges to $x_i$ for all $i\in I$. Then:
\begin{equation*}
\varphi\left(\left(\left(x^n_i\right)_{i\in I}\right)_{n\in\mathbb{N}} \right)= \left(\varphi_i\left(\left(x^n_i\right)_{n\in\mathbb{N}}\right)\right)_{i\in I}=
\left(x_i\right)_{i\in I}.
\end{equation*}
\end{enumerate}
\end{proof}

An immediate consequence of the previous theorem is the following corollary.

\begin{corollary}
If $\BL\left(X\right)\neq \varnothing$, then $\BL\left(\ell_\infty\left(\Lambda, X\right)\right)\neq \varnothing$ for every non-empty set $\Lambda$.
\end{corollary}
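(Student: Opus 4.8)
The plan is to deduce this directly from Theorem \ref{infty} by specializing the family of Banach spaces appearing there to a constant family indexed by $\Lambda$. The first step I would take is to record the harmless identification $\ell_\infty(\Lambda, X)=\left(\prod_{i\in\Lambda}X\right)_\infty$, that is, the space of bounded $X$-valued functions on $\Lambda$ is exactly the $\ell_\infty$-sum of $\Lambda$-many copies of $X$, with the sup norm corresponding to the $\ell_\infty$-sum norm under this identification.

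Next, since $\BL(X)\neq\varnothing$ by hypothesis, I would fix a single vector-valued Banach limit $\varphi_0\in\BL(X)$ and set $X_i:=X$ together with $\varphi_i:=\varphi_0$ for every $i\in\Lambda$. This produces a family $(X_i)_{i\in\Lambda}$ of Banach spaces each equipped with an element $\varphi_i\in\BL(X_i)$, which is precisely the hypothesis required to invoke Theorem \ref{infty} with index set $I=\Lambda$.

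Finally, applying Theorem \ref{infty} furnishes a map $\varphi\in\BL\left(\left(\prod_{i\in\Lambda}X\right)_\infty\right)=\BL(\ell_\infty(\Lambda,X))$, whence the latter set is non-empty. I expect essentially no obstacle here: the only point needing verification is the identification of $\ell_\infty(\Lambda,X)$ with the constant $\ell_\infty$-sum, which is routine, and once that is in place the statement is an immediate specialization of the preceding theorem.
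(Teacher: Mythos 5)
Your proposal is correct and is exactly the argument the paper intends: the corollary is stated as an immediate consequence of Theorem \ref{infty}, obtained by taking the constant family $X_i:=X$ and $\varphi_i:=\varphi_0$ for all $i\in\Lambda$ under the standard identification $\ell_\infty\left(\Lambda,X\right)=\left(\prod_{i\in\Lambda}X\right)_\infty$. Nothing further is needed.
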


We would like to make the reader notice that Theorem \ref{infty} works if we substitute $\left(\prod_{i\in I}X_i\right)_\infty$ with the subspace of all $(x_i)_{i\in I}$ with null coordinates except for a countable number of them. However, Theorem \ref{infty} does not work with $\left(\oplus_{i\in I}X_i\right)_\infty$.

\section{Structure of the set of Banach limits}

Once we have found a large class of Banach spaces admitting vector-valued Banach limits, it is the turn now for studying the algebraic and topological structure of the set of vector-valued Banach limits. The first piece of information we find with respect to this issue is provided by Corollary \ref{Bprim}, where it is assured that $\BL\left(X\right)$ is convex and closed when $\mathcal{L}\left(\ell_\infty\left(X\right),X\right)$ is endowed with the pointwise convergence topology.

\subsection{Metric and algebraic structure}

One can easily notice that if $X$ and $Y$ are normed spaces and $T:X\to Y$ is a surjective linear isometry, then the existence of a Banach limit on $X$ implies the existence of a Banach limit on $Y$. Indeed, if $\varphi\in\BL\left(X\right)$, then the map
$$
\begin{array}{rrcl}
\psi:&\ell_\infty\left(Y\right)&\to &Y\\
&\left(y_n\right)_{n\in\mathbb{N}}&\mapsto & T\left(\varphi\left(\left(T^{-1}\left(y_n\right)\right)_{n\in\mathbb{N}}\right)\right)
\end{array}
$$
verifies that $\psi\in\BL\left(Y\right)$.

On the other hand, a left action of $\mathcal{G}_X$ on $\mathsf{S}_{\mathcal{N}_X}$ can be naturally defined by left-composition

\begin{equation}\label{action}
\begin{array}{rcl}
\mathcal{G}_X\times \mathsf{S}_{\mathcal{N}_X} & \to & \mathsf{S}_{\mathcal{N}_X}\\
\left(T,\varphi\right)&\mapsto & T\circ \varphi
\end{array}
\end{equation}

\begin{lemma}\label{act}
If $\varphi \in \BL\left(X\right)$ and $T\in\mathcal{G}_X$, then $T\circ \varphi\in \BL\left(X\right)$ if and only if $T$ is the identity on $X$.
\end{lemma}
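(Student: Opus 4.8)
The plan is to reduce the whole statement to the characterization of $\BL(X)$ recorded in {\em 2} of Corollary \ref{Bprim}, namely that $\BL(X)=\left\{\psi\in\mathsf{S}_{\mathcal{N}_X}:\psi(\mathbf{x})=x\text{ for all }x\in X\right\}$. Since the left action \eqref{action} is already known to take values in $\mathsf{S}_{\mathcal{N}_X}$, the membership $T\circ\varphi\in\mathsf{S}_{\mathcal{N}_X}$ is automatic, and the only condition left to test is whether $T\circ\varphi$ fixes the constant sequences.

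First I would verify explicitly that $T\circ\varphi\in\mathsf{S}_{\mathcal{N}_X}$ for every $T\in\mathcal{G}_X$ and every $\varphi\in\mathsf{S}_{\mathcal{N}_X}$. Linearity and continuity are clear; norm $1$ follows from $\left\|T\circ\varphi\right\|=\left\|\varphi\right\|=1$, as $T$ is a surjective linear isometry; and invariance under the forward shift follows from the computation $(T\circ\varphi)\left((x_{n+1})_{n\in\mathbb{N}}\right)=T\left(\varphi\left((x_{n+1})_{n\in\mathbb{N}}\right)\right)=T\left(\varphi\left((x_n)_{n\in\mathbb{N}}\right)\right)=(T\circ\varphi)\left((x_n)_{n\in\mathbb{N}}\right)$, where the middle equality uses that $\varphi\in\mathcal{N}_X$.

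Next, for $\varphi\in\BL(X)$ I would evaluate $T\circ\varphi$ on an arbitrary constant sequence $\mathbf{x}$. Since $\varphi\in\BL(X)$, {\em 2} of Corollary \ref{Bprim} gives $\varphi(\mathbf{x})=x$, and therefore $(T\circ\varphi)(\mathbf{x})=T\left(\varphi(\mathbf{x})\right)=T(x)$. Combining this with the previous step, the criterion of Corollary \ref{Bprim} shows that $T\circ\varphi\in\BL(X)$ if and only if $(T\circ\varphi)(\mathbf{x})=x$ for all $x\in X$, which is exactly the condition $T(x)=x$ for all $x\in X$, i.e.\ that $T$ is the identity on $X$. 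The reverse implication is immediate, since if $T=\I_X$ then $T\circ\varphi=\varphi\in\BL(X)$. There is no genuine obstacle in this argument: the entire content is the single identity $(T\circ\varphi)(\mathbf{x})=T(x)$, after which the constant-sequence reformulation of Banach limits forces $T$ to fix every vector of $X$.
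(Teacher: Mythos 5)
Your proof is correct and takes essentially the same route as the paper's: both arguments hinge on the single chain $T\left(x\right)=T\left(\varphi\left(\mathbf{x}\right)\right)=\left(T\circ \varphi\right)\left(\mathbf{x}\right)$ combined with the constant-sequence characterization of $\BL\left(X\right)$ from {\em 2} of Corollary \ref{Bprim}, forcing $T\left(x\right)=x$ for all $x\in X$. The only difference is cosmetic: you spell out in full the verification that $T\circ\varphi\in\mathsf{S}_{\mathcal{N}_X}$ (norm one and shift invariance), which the paper merely notes in its opening sentence.
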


\begin{proof}
Notice that $\left\|T\circ \varphi\right\|=1$ and $T\circ \varphi$ is invariant under the shift operator on $\ell_{\infty}\left(X\right)$. Assume that $T\circ \varphi\in \BL\left(X\right)$. Fix an arbitrary $x\in X$. Then $$T\left(x\right)=T\left(\varphi\left(\mathbf{x}\right)\right)=\left(T\circ \varphi\right)\left(\mathbf{x}\right)=x.$$
\end{proof}

Notice that the previous lemma shows that $\BL\left(X\right)$ is $\mathcal{G}_X$-free (we refer the reader to the backmatter chapter for a wide perspective on $G$-spaces).

\begin{theorem}
If $\BL\left(X\right)\neq \varnothing$, then the action \eqref{action} verifies the following:
\begin{enumerate}
\item It is not transitive if $\BL\left(X\right)$ has more than one element.
\item It is faithful.
\item It cannot be restricted to $\BL\left(X\right)$ as a left action.
\end{enumerate}
\end{theorem}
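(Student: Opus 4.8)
The plan is to reduce all three assertions to Lemma \ref{act}, together with the description $\BL(X)=\{\varphi\in\mathsf{S}_{\mathcal{N}_X}:\varphi(\mathbf{x})=x\text{ for all }x\in X\}$ furnished by {\em 2} of Corollary \ref{Bprim}. Throughout I would fix some $\varphi_0\in\BL(X)$, which is available precisely because $\BL(X)\neq\varnothing$. Note first that the action is well defined, since if $T\in\mathcal{G}_X$ is a surjective linear isometry and $\varphi\in\mathsf{S}_{\mathcal{N}_X}$, then $T\circ\varphi$ is again shift-invariant and $\left\|T\circ\varphi\right\|=\left\|\varphi\right\|=1$.

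For {\em 1} I would argue by contradiction. Choose two distinct elements $\varphi,\psi\in\BL(X)\subseteq\mathsf{S}_{\mathcal{N}_X}$. If the action \eqref{action} were transitive there would exist $T\in\mathcal{G}_X$ with $T\circ\varphi=\psi$. But then $T\circ\varphi=\psi\in\BL(X)$, so Lemma \ref{act} forces $T=\I_X$, whence $\psi=\I_X\circ\varphi=\varphi$, contradicting $\varphi\neq\psi$. Hence no such $T$ exists and the action is not transitive.

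For {\em 2}, faithfulness amounts to showing that any $T\in\mathcal{G}_X$ inducing the identity permutation of $\mathsf{S}_{\mathcal{N}_X}$ must equal $\I_X$. Suppose $T\circ\varphi=\varphi$ for every $\varphi\in\mathsf{S}_{\mathcal{N}_X}$; in particular $T\circ\varphi_0=\varphi_0$. Evaluating on the constant sequence $\mathbf{x}$ and using $\varphi_0(\mathbf{x})=x$ yields $T(x)=T(\varphi_0(\mathbf{x}))=(T\circ\varphi_0)(\mathbf{x})=\varphi_0(\mathbf{x})=x$ for all $x\in X$, so $T=\I_X$. It is here that the non-emptiness of $\BL(X)$ is essential: the single element $\varphi_0$ already pins $T$ down on all constant sequences, hence on all of $X$.

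For {\em 3} I would exhibit a concrete pair that escapes $\BL(X)$. Since $X$ is non-zero, $-\I_X$ is a surjective linear isometry distinct from $\I_X$, so $-\I_X\in\mathcal{G}_X$; by Lemma \ref{act} we get $(-\I_X)\circ\varphi_0\notin\BL(X)$ precisely because $-\I_X\neq\I_X$. Thus the image of $\mathcal{G}_X\times\BL(X)$ under $(T,\varphi)\mapsto T\circ\varphi$ is not contained in $\BL(X)$, so the action cannot be restricted to $\BL(X)$ as a left action. The bulk of the work has already been absorbed into Lemma \ref{act}, so no real obstacle remains; the only points requiring care are matching each categorical notion (transitive, faithful, restrictable) to the correct consequence of that lemma, and recording that {\em 2} and {\em 3} both lean on $\BL(X)\neq\varnothing$ for the anchor $\varphi_0$, while {\em 3} additionally uses that $-\I_X$ always supplies a non-trivial element of $\mathcal{G}_X$.
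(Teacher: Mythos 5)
Your proposal is correct and follows essentially the same route as the paper: all three items are reduced to Lemma \ref{act} (non-transitivity via two distinct Banach limits, non-restrictability via $-\I_X\circ\varphi_0\notin\BL(X)$), exactly as in the paper's proof. Your faithfulness argument evaluates $T\circ\varphi_0=\varphi_0$ on constant sequences, which is just the computation inside Lemma \ref{act} made explicit, so it is the contrapositive of the paper's step rather than a genuinely different method.
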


\begin{proof}
\mbox{}
\begin{enumerate}
\item Take $\varphi\neq \psi \in \BL\left(X\right)$. By Lemma \ref{act} we have that $T\circ\varphi \notin \BL\left(X\right)$ for all $T\in \mathcal{G}_X\setminus \left\{\I_X\right\}$, that is, $\mathcal{G}_X\varphi \cap \BL\left(X\right)=\left\{\varphi\right\}$. As a consequence, $\psi\notin\mathcal{G}_X\varphi$, which means that $\mathcal{G}_X\varphi\neq \mathsf{S}_{\mathcal{N}_X}$ and the action is not transitive.
\item Take any $T\in \mathcal{G}_X\setminus \left\{\mathrm{I}_X\right\}$. By hypothesis, there exists $\varphi\in \BL\left(X\right)$. By Lemma \ref{act} we have that $T\circ\varphi \notin \BL\left(X\right)$ and thus $T\circ\varphi \neq \varphi$.
\item It suffices to observe that $\mathcal{G}_X\BL\left(X\right)$ is never contained in $\BL\left(X\right)$ since $-\mathrm{I}_X\circ \varphi \notin \BL\left(X\right)$ for all $\varphi \in \BL\left(X\right)$ in virtue of Lemma \ref{act}.
\end{enumerate}
\end{proof}

\begin{theorem}\label{GBL}
 $$\mathcal{G}_X\BL\left(X\right)=\left\{T\in\mathsf{S}_{\mathcal{N}_X}: T\left(\mathbf{X}\right)=X\text{ and }\left\|T\left(\mathbf{x}\right)\right\|=\left\|x\right\|\text{ for every }x\in X\right\}.$$
\end{theorem}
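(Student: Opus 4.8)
The plan is to establish the equality by proving the two inclusions separately, with the whole argument resting on item 2 of Corollary \ref{Bprim}, namely the characterization $\BL(X)=\{\varphi\in\mathsf{S}_{\mathcal{N}_X}:\varphi(\mathbf{x})=x\text{ for all }x\in X\}$, together with the fact built into the definition of the action \eqref{action} that post-composing an element of $\mathsf{S}_{\mathcal{N}_X}$ with a surjective linear isometry again lands in $\mathsf{S}_{\mathcal{N}_X}$ (invariance under the shift is preserved because the isometry is linear, and the norm is preserved because the isometry is norm-one and onto). Throughout I would write $S$ for a generic element of either side, reserving the letter $T$ for isometries, so as not to clash with the bound variable in the statement.

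For the inclusion $\mathcal{G}_X\BL(X)\subseteq\{S\in\mathsf{S}_{\mathcal{N}_X}:S(\mathbf{X})=X,\ \|S(\mathbf{x})\|=\|x\|\}$, I would take $S=T\circ\varphi$ with $T\in\mathcal{G}_X$ and $\varphi\in\BL(X)$. Membership $S\in\mathsf{S}_{\mathcal{N}_X}$ is immediate from the remark above. On constant sequences, $S(\mathbf{x})=T(\varphi(\mathbf{x}))=T(x)$, so $\|S(\mathbf{x})\|=\|T(x)\|=\|x\|$ since $T$ is an isometry; and $S(\mathbf{X})=T(\varphi(\mathbf{X}))=T(X)=X$, using that $\varphi(\mathbf{x})=x$ forces $\varphi(\mathbf{X})=X$ and that $T$ is surjective. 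This places $S$ in the right-hand set.

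The reverse inclusion is where the actual content lies. Given $S\in\mathsf{S}_{\mathcal{N}_X}$ with $S(\mathbf{X})=X$ and $\|S(\mathbf{x})\|=\|x\|$ for every $x$, I would define $T\colon X\to X$ by $T(x):=S(\mathbf{x})$. Here the two hypotheses on $S$ are exactly what certifies $T\in\mathcal{G}_X$: linearity of $T$ comes from linearity of $S$ and of the embedding $x\mapsto\mathbf{x}$; the condition $\|S(\mathbf{x})\|=\|x\|$ says $T$ is an isometry; and $S(\mathbf{X})=X$ says $T$ is onto. Consequently $T^{-1}\in\mathcal{G}_X$ as well, and putting $\varphi:=T^{-1}\circ S$ yields $\varphi\in\mathsf{S}_{\mathcal{N}_X}$ with $\varphi(\mathbf{x})=T^{-1}(T(x))=x$, so $\varphi\in\BL(X)$ by Corollary \ref{Bprim}. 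Since $T\circ\varphi=T\circ T^{-1}\circ S=S$, we conclude $S\in\mathcal{G}_X\BL(X)$.

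The step I would regard as the crux — though it is more a matter of recognition than of difficulty — is seeing that the two defining conditions on $S$ translate precisely into the isometry and surjectivity of the operator $T(x):=S(\mathbf{x})$; once this is observed, the factorization $S=T\circ(T^{-1}\circ S)$ is forced and the rest is routine verification. I expect no appeal to completeness or to the nonemptiness of $\BL(X)$ to be needed beyond what is already encoded in the hypotheses on $S$, and the computation will parallel the mechanism already used in Lemma \ref{act}.
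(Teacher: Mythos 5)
Your proposal is correct and follows essentially the same route as the paper's own proof: both directions turn on recognizing that $x\mapsto S\left(\mathbf{x}\right)$ defines a surjective linear isometry and then factoring $S=T\circ\left(T^{-1}\circ S\right)$ with $T^{-1}\circ S\in\BL\left(X\right)$ via the characterization in Corollary \ref{Bprim}, exactly as in the paper (up to your swap of the letters $T$ and $S$). Your explicit verification that post-composition with an isometry keeps one inside $\mathsf{S}_{\mathcal{N}_X}$ is a point the paper leaves implicit, but it is routine and changes nothing substantive.
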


\begin{proof}
\mbox{}
\begin{enumerate}
\item[$\subseteq$] Let $T\in \mathsf{S}_{\mathcal{N}_X}$ for which there exist $S\in \mathcal{G}_{X}$ and $\varphi \in \BL\left(X\right)$ such that $S\circ \varphi = T$. On the other hand, if $x\in X$, then $S\left(x\right)=\left(S\circ \varphi\right)\left(\mathbf{x}\right)=T\left(\mathbf{x}\right)$, which means that $T\left(\mathbf{X}\right)=X$ and that $$\left\|x\right\|=\left\|S\left(x\right)\right\|=\left\|\left(S\circ \varphi\right)\left(\mathbf{x}\right)\right\|=\left\|T\left(\mathbf{x}\right)\right\|.$$ 
\item[$\supseteq$] Let $T\in \mathsf{S}_{\mathcal{N}_X}$ satisfying that $T\left(\mathbf{X}\right)=X$ and $\left\|T\left(\mathbf{x}\right)\right\|=\left\|x\right\|$ for every $x\in X$. Define the following continuous linear operator:
$$
\begin{array}{rrcl}
S:&X&\to&X\\
&x&\mapsto&S\left(x\right):=T\left(\mathbf{x}\right).
\end{array}
$$ Notice that $S$ is a surjective linear isometry by hypothesis. In accordance to {\em 4} of Proposition \ref{prim} we have that $S^{-1}\circ T\in\BL\left(X\right)$. Finally, $$T=S\circ\left(S^{-1}\circ T\right)\in \mathcal{G}_X\BL\left(X\right).$$
\end{enumerate}
\end{proof}

As a direct consequence of Theorem \ref{GBL} and Corollary \ref{prim2} we obtain the following corollary.

\begin{corollary}
$\mathcal{G}_X\BL\left(X\right) \subsetneq \mathsf{S}_{\mathcal{N}_X}$.
\end{corollary}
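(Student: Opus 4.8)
The plan is to establish the two halves of the strict inclusion separately: first the inclusion $\mathcal{G}_X\BL(X)\subseteq \mathsf{S}_{\mathcal{N}_X}$, and then the existence of an element of $\mathsf{S}_{\mathcal{N}_X}$ lying outside $\mathcal{G}_X\BL(X)$. The first half is immediate, as Theorem \ref{GBL} characterizes $\mathcal{G}_X\BL(X)$ precisely as a collection of operators \emph{belonging to} $\mathsf{S}_{\mathcal{N}_X}$ that satisfy two additional constraints; hence the inclusion is built into the characterization and requires no separate argument.

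For the strict part, I would exhibit a concrete witness. Corollary \ref{prim2} produces an operator $T\in\mathsf{S}_{\mathcal{N}_X}$ for which $\|T(\mathbf{x})\|=0$ for every $x\in X$. The key observation is that this annihilation of constant sequences is exactly what disqualifies $T$ from membership in $\mathcal{G}_X\BL(X)$: by the characterization in Theorem \ref{GBL}, every element $S$ of $\mathcal{G}_X\BL(X)$ must satisfy $\|S(\mathbf{x})\|=\|x\|$ for all $x\in X$. Since the ambient spaces are assumed non-zero, I can fix $x_0\in\mathsf{S}_X$, and then $\|T(\mathbf{x_0})\|=0\neq 1=\|x_0\|$ shows that $T$ violates the norm-preservation condition. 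Hence $T\in\mathsf{S}_{\mathcal{N}_X}\setminus\mathcal{G}_X\BL(X)$, and the inclusion is strict.

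I do not anticipate a serious obstacle, since the corollary is genuinely a formal consequence of the two cited results. The only point deserving care is confirming that the witness from Corollary \ref{prim2} fails the \emph{defining isometric condition} of $\mathcal{G}_X\BL(X)$ rather than the surjectivity clause $T(\mathbf{X})=X$; it is cleaner to contradict norm-preservation on constant sequences, since $\|T(\mathbf{x})\|=0$ makes the failure transparent for any non-zero $x$. It is also worth noting that the argument does not require $\BL(X)\neq\varnothing$: should $\BL(X)$ be empty, then $\mathcal{G}_X\BL(X)=\varnothing$ while $\mathsf{S}_{\mathcal{N}_X}\neq\varnothing$ (again by Corollary \ref{prim2}), so the strict inclusion holds trivially in that case as well.
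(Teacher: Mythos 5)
Your proof is correct and follows exactly the route the paper intends: the paper states this corollary as a ``direct consequence of Theorem \ref{GBL} and Corollary \ref{prim2}'', which is precisely your argument --- the inclusion is built into the characterization of $\mathcal{G}_X\BL\left(X\right)$ in Theorem \ref{GBL}, and the operator from Corollary \ref{prim2} annihilating constant sequences witnesses strictness since it fails the condition $\left\|T\left(\mathbf{x}\right)\right\|=\left\|x\right\|$. Your added remark covering the case $\BL\left(X\right)=\varnothing$ is a sensible bit of extra care, though the conclusion there is the same.
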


\begin{theorem}\label{bothactions}
The action \eqref{action} is free if and only if so is the action \eqref{transitive}.
\end{theorem}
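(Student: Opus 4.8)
The plan is to translate both freeness conditions into statements about fixed vectors. Recall that the action \eqref{action} is free precisely when, for every $T\in\mathcal{G}_X$ and every $\varphi\in\mathsf{S}_{\mathcal{N}_X}$, the equality $T\circ\varphi=\varphi$ forces $T=\I_X$; and that \eqref{transitive} is free precisely when $T(x)=x$ for some $x\in\mathsf{S}_X$ forces $T=\I_X$. The bridge between the two is the elementary observation that $T\circ\varphi=\varphi$ holds if and only if $T$ restricts to the identity on the range of $\varphi$, that is, $T(\varphi(s))=\varphi(s)$ for all $s\in\ell_\infty(X)$. First I would record this equivalence, which turns questions about the operator $\varphi$ into questions about the vectors $\varphi(s)$.

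For the implication \eqref{transitive} free $\Rightarrow$ \eqref{action} free I would argue directly: take $T\in\mathcal{G}_X$ and $\varphi\in\mathsf{S}_{\mathcal{N}_X}$ with $T\circ\varphi=\varphi$. Since $\|\varphi\|=1$, the operator $\varphi$ is not zero, so there is $s\in\ell_\infty(X)$ with $\varphi(s)\neq 0$. Normalizing, $v:=\varphi(s)/\|\varphi(s)\|\in\mathsf{S}_X$, and by the key observation $T(v)=v$. Freeness of \eqref{transitive} then yields $T=\I_X$. This direction is routine.

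The converse \eqref{action} free $\Rightarrow$ \eqref{transitive} free is where the work lies, and I would argue it contrapositively: assuming \eqref{transitive} is not free, I produce a non-identity isometry fixing some element of $\mathsf{S}_{\mathcal{N}_X}$. So let $T\in\mathcal{G}_X\setminus\{\I_X\}$ fix some $x_0\in\mathsf{S}_X$. The crux is to manufacture a norm-one shift-invariant operator whose range lies in the fixed line $\mathbb{R}x_0$; once we have it, $T$ fixes its range pointwise (because $T(x_0)=x_0$ and $T$ is linear), so $T\circ\varphi=\varphi$ with $T\neq\I_X$, contradicting freeness of \eqref{action}. The existence of such a $\varphi$ is exactly what the construction in the proof of Corollary \ref{prim2} supplies: choosing $(z_n)_{n\in\mathbb{N}}$ at positive distance from $bps(X)\oplus\mathbf{X}$ (this subspace is never dense in $\ell_\infty(X)$) and $f\in\mathsf{S}_{\ell_\infty(X)^*}$ vanishing on $bps(X)\oplus\mathbf{X}$, the operator $\varphi\left((y_n)_{n\in\mathbb{N}}\right):=f\left((y_n)_{n\in\mathbb{N}}\right)\,x_0$ lies in $\mathsf{S}_{\mathcal{N}_X}$ by Proposition \ref{prim} and has range contained in $\mathbb{R}x_0$.

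I expect the main obstacle to be precisely this second direction, namely realizing that a prescribed fixed direction of $T$ can be promoted to a fixed element of the whole operator action; the difficulty dissolves once one reuses the Hahn-Banach construction behind Corollary \ref{prim2} with the functional's output vector chosen to be the fixed point $x_0$. The remaining verifications, namely $\|\varphi\|=1$, shift-invariance, and $T\circ\varphi=\varphi$, are immediate from $\|f\|=1$, $\|x_0\|=1$, $bps(X)\subseteq\ker(\varphi)$, and $T(x_0)=x_0$.
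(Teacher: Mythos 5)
Your proof is correct and follows essentially the same route as the paper's: a non-identity isometry fixing a unit vector is promoted to a fixed point of the operator action via the rank-one, Hahn--Banach-built $\varphi=f(\cdot)\,x_0$ vanishing on $bps(X)$ (the Corollary \ref{prim2} construction), and conversely $T\circ\varphi=\varphi$ yields a fixed unit vector from a nonzero value of $\varphi$. Your normalization $v=\varphi(s)/\left\|\varphi(s)\right\|$ is in fact slightly more careful than the paper's choice of $(x_n)_{n\in\mathbb{N}}$ with $\left\|\varphi\left((x_n)_{n\in\mathbb{N}}\right)\right\|=1$, which tacitly assumes the norm is attained.
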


\begin{proof}
\mbox{}
\begin{enumerate}

\item[$\Rightarrow$] Suppose to the contrary that the action \eqref{transitive} is not free, which exactly means that there exists a surjective linear isometry $T:X\to X$ different from $\I_{X}$ with a fixed point $x\in \mathsf{S}_{X}$. Fix any $\left(a_{n}\right)_{n\in \mathbb{N}}\in \ell_{\infty}\left(X\right)$ with $d\left(\left(a_{n}\right)_{n\in \mathbb{N}}, bps\left(X\right) \right)=1$ (see Lemma \ref{pollazo}). A corollary of the Hahn-Banach Extension Theorem allows us to conclude that there exists $f\in\E_{\ell_\infty(X)^*}$ such that $$f\left(\left(a_{n}\right)_{n\in \mathbb{N}}\right)=d\left(\left(a_{n}\right)_{n\in \mathbb{N}}, bps\left(X\right) \right)=1$$ and $$f\left( bps\left(X\right) \right)=\{0\}.$$ Consider the following norm-$1$ continuous linear operator:
\begin{equation*}
\begin{array}{rrcl}
\psi: &\ell_\infty \left(X\right)&\to&X\\
&\left(z_n\right)_{n\in \mathbb{N}}&\mapsto & \psi\left( \left(z_n\right)_{n\in \mathbb{N}}\right):= f\left( \left(z_n\right)_{n\in \mathbb{N}}\right)x .
\end{array}
\end{equation*}
 It is clear that $T\circ \psi =\psi$ but $T\neq \mathrm{Id}_{X}$. This implies that the action \eqref{action} is not free either.

\item[$\Leftarrow$] Suppose to the contrary that the action \eqref{action} is not free. Then there are $T\in\mathcal{G}_X\setminus\left\{\I_X\right\}$ and $\varphi\in\mathsf{S}_{\mathcal{N}_X}$ such that $T\circ \varphi=\varphi$. At this point it suffices to consider any $\left(x_n\right)_{n\in\mathbb{N}}\in\ell_\infty\left(X\right)$ such that $\left\|\varphi\left(\left(x_n\right)_{n\in\mathbb{N}}\right)\right\|=1$, since $$T\left( \varphi\left(\left(x_n\right)_{n\in\mathbb{N}}\right)\right) = \varphi\left(\left(x_n\right)_{n\in\mathbb{N}}\right),$$ which implies that the action \eqref{transitive} is not free either.

\end{enumerate}
\end{proof}

Observe that if $X=\mathbb{R}$, then $\mathcal{G}_X=\left\{\I_X,-\I_X\right\}$ and thus the two actions considered previously are both free.

\subsection{Extremal structure}

The reader may quickly note that if $T: \ell_\infty\left(X\right)\to X$ is just a map, then:
\begin{itemize}
\item[$\bullet$] If $T$ is continuous and $T\left(\mathbf{x}\right)=x$ for all $x$ in some subset $A$ of $X$, then $T\left(\mathbf{x}\right)=x$ for all $x$ in the closure of $A$.
\item[$\bullet$] If $T$ is linear and $T\left(\mathbf{x}\right)=x$ for all $x$ in some subset $A$ of $X$, then $T\left(\mathbf{x}\right)=x$ for all $x$ in the linear span of $A$.
\item[$\bullet$] If $T\in\mathcal{L}\left(\ell_\infty\left(X\right),X\right)$ and $T\left(\mathbf{x}\right)=x$ for all $x$ in some subset $A$ of $X$, then $T\left(\mathbf{x}\right)=x$ for all $x$ in the closed linear span of $A$.
\end{itemize}

\begin{theorem}\label{cc}
Assume that $\BL\left(X\right)\neq \varnothing$.
\begin{enumerate}
\item If $X$ has the Bade property, then $\BL\left(X\right)$ is a face of $\mathsf{B}_{\mathcal{N}_X}$.
\item If there exists $T\in\mathcal{G}_X\setminus \left\{\I_X\right\}$ with a fixed point $x\in\mathsf{S}_X$, then $\BL\left(X\right)$ is not a convex component of $\mathsf{S}_{\mathcal{N}_X}$.
\end{enumerate}
\end{theorem}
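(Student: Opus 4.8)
The plan is to run everything through the characterizations of $\BL(X)$ assembled in Corollary \ref{Bprim}: that $\BL(X)=\mathsf{B}_{\mathcal{N}_X}\cap\mathcal{L}_X$ is convex, and that an operator $\varphi\in\mathsf{B}_{\mathcal{N}_X}$ belongs to $\BL(X)$ precisely when $\varphi(\mathbf{x})=x$ for every $x\in X$ (recall that any element of $\mathcal{N}_X$ already kills $c_0(X)$ by Proposition \ref{prim}, so the only thing left to check is the behaviour on constant sequences). Both items then become statements about where the map $x\mapsto\varphi(\mathbf{x})$ is the identity.

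For item 1 I would take $\varphi,\psi\in\mathsf{B}_{\mathcal{N}_X}$ and $t\in(0,1)$ with $t\varphi+(1-t)\psi\in\BL(X)$, and show $\varphi,\psi\in\BL(X)$. Fix an extreme point $x\in\ext(\mathsf{B}_X)$; then $\|\mathbf{x}\|_\infty=\|x\|=1$, both $\varphi(\mathbf{x})$ and $\psi(\mathbf{x})$ lie in $\mathsf{B}_X$, and $x=(t\varphi+(1-t)\psi)(\mathbf{x})=t\varphi(\mathbf{x})+(1-t)\psi(\mathbf{x})$ exhibits $x$ as a proper convex combination of two points of $\mathsf{B}_X$. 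Extremality forces $\varphi(\mathbf{x})=\psi(\mathbf{x})=x$, so $\varphi$ and $\psi$ fix $\mathbf{x}$ for every $x\in\ext(\mathsf{B}_X)$. The Bade property gives $\mathsf{B}_X=\cco(\ext(\mathsf{B}_X))$, whence $\cspan(\ext(\mathsf{B}_X))=X$; the third continuous-linear observation stated just before the theorem then propagates the identity $\varphi(\mathbf{x})=x$ from $\ext(\mathsf{B}_X)$ to its closed linear span, i.e. to all of $X$. Thus $\varphi,\psi\in\BL(X)$, and since $\BL(X)$ is a convex subset of $\mathsf{B}_{\mathcal{N}_X}$ satisfying the extremal condition, it is a face of $\mathsf{B}_{\mathcal{N}_X}$.

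For item 2 the plan is to produce a convex subset of $\mathsf{S}_{\mathcal{N}_X}$ strictly larger than $\BL(X)$, contradicting maximality. Choose $\varphi\in\BL(X)$ (nonempty by hypothesis) and the given $T\in\mathcal{G}_X\setminus\{\I_X\}$ with fixed point $x\in\mathsf{S}_X$. The operator $T\circ\varphi$ is shift-invariant and norm-preserving, so $T\circ\varphi\in\mathsf{S}_{\mathcal{N}_X}$, yet $T\circ\varphi\notin\BL(X)$ by Lemma \ref{act} because $T\neq\I_X$. The key computation is that for every $\varphi'\in\BL(X)$ and every $s\in[0,1]$ the operator $G:=s\varphi'+(1-s)(T\circ\varphi)$ stays on the sphere: clearly $\|G\|\le 1$, while evaluating at the constant sequence $\mathbf{x}$ coming from the fixed point gives $G(\mathbf{x})=s\varphi'(\mathbf{x})+(1-s)T(\varphi(\mathbf{x}))=sx+(1-s)T(x)=x$, so $\|G\|\ge\|x\|=1$. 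Consequently $C:=\co(\BL(X)\cup\{T\circ\varphi\})$, whose points (using convexity of $\BL(X)$) are exactly the segments $(1-s)\varphi''+s(T\circ\varphi)$ with $\varphi''\in\BL(X)$ and $s\in[0,1]$, is a convex subset of $\mathsf{S}_{\mathcal{N}_X}$ properly containing $\BL(X)$. Hence $\BL(X)$ is not a maximal convex subset, i.e. not a convex component of $\mathsf{S}_{\mathcal{N}_X}$.

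The two points I would double-check are: in item 1, that the Bade property genuinely yields $\cspan(\ext(\mathsf{B}_X))=X$ and that the pre-theorem observation applies to the continuous linear map $x\mapsto\varphi(\mathbf{x})$; and in item 2, that it is precisely the fixed point $x$ with $T(x)=x$ that keeps the entire segment on the unit sphere, since without a common fixed vector the norm of $G$ could drop below $1$ and the enlargement would fail. Everything else follows routinely from Corollary \ref{Bprim}, Proposition \ref{prim}, and Lemma \ref{act}.
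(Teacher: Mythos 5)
Your proof is correct and follows essentially the same route as the paper's: in item 1 you pin down $\varphi\left(\mathbf{x}\right)=\psi\left(\mathbf{x}\right)=x$ at extreme points and propagate this identity via the pre-theorem observation together with the Bade property, and in item 2 you use Lemma \ref{act} plus the common fixed point to keep the drop $\co\left(\left\{T\circ\varphi\right\}\cup\BL\left(X\right)\right)$ inside $\mathsf{S}_{\mathcal{N}_X}$, exactly as the paper does. Your two refinements --- taking $\varphi,\psi\in\mathsf{B}_{\mathcal{N}_X}$ rather than $\mathsf{S}_{\mathcal{N}_X}$, as the definition of a face of the ball actually requires, and evaluating the segment at an arbitrary $\varphi'\in\BL\left(X\right)$ instead of only the single fixed $\varphi$ --- are slight tightenings of the paper's wording, not a different argument.
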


\begin{proof}
\mbox{}
\begin{enumerate}
\item Let $\varphi,\psi\in \mathsf{S}_{\mathcal{N}_X}$ and $\alpha\in\left(0,1\right)$ such that $\alpha \varphi + \left(1-\alpha\right)\psi\in \BL\left(X\right)$. For every $x\in \mathsf{S}_X$ we have that $\alpha\varphi\left(\mathbf{x}\right)+\left(1-\alpha\right)\psi\left(\mathbf{x}\right)=x$. Now, if $x\in \mathrm{ext}\left(\mathsf{B}_X\right)$, then $\varphi\left(\mathbf{x}\right)=\psi\left(\mathbf{x}\right)=x$. Finally, by applying first the observation prior to this theorem and then {\em 1} of Corollary \ref{Bprim}, we deduce that $\varphi,\psi\in\BL\left(X\right)$.
\item Let $T\in\mathcal{G}_X\setminus \left\{\I_X\right\}$ with a fixed point $x\in\mathsf{S}_X$. Fix an arbitrary $\varphi \in \BL\left(X\right)$. In accordance to Lemma \ref{act} we have that $T\circ \varphi \notin \BL\left(X\right)$. Finally, in order to see that the drop $\mathrm{co}\left(\left\{T\circ \varphi\right\}\cup\BL\left(X\right)\right)$ is contained in $\mathsf{S}_{\mathcal{N}_X}$, it suffices to realize that
\begin{eqnarray*}
1&=&\left\|x\right\|\\
&=&\left\|\left(t\left(T\circ \varphi\right) + \left(1-t\right)\varphi\right)\left(\mathbf{x}\right)\right\|\\
&\leq & \left\| t\left(T\circ \varphi\right) + \left(1-t\right)\varphi\right\|\\
&\leq&1
\end{eqnarray*}
for all $t\in[0,1]$.
\end{enumerate}
\end{proof}

The following corollary, whose proof is omitted, becomes obvious if taken into account that any Hilbert space of dimension greater than or equal to $3$ enjoys the Bade property (since it is strictly convex) and has a surjective linear isometry other than the identity with non-zero fixed points.

\begin{corollary}
If $X$ is a Hilbert space with $\dim\left(X\right)\geq 3$, then $\BL\left(X\right)$ is a non-maximal face of $\mathsf{B}_{\mathcal{N}_X}$.
\end{corollary}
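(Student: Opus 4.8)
The plan is to exhibit $\BL(X)$ as a face of $\mathsf{B}_{\mathcal{N}_X}$ that is strictly contained in another \emph{proper} face, which is exactly what ``non-maximal'' requires. First I would record the two structural features of a Hilbert space that the hypotheses encode. Since $X$ is reflexive it is (trivially) $1$-complemented in $X^{**}$, so Corollary \ref{1-cc} gives $\BL(X)\neq\varnothing$; in particular Theorem \ref{cc} applies and the statement is not vacuous. Moreover $X$ is strictly convex, hence every point of $\mathsf{S}_X$ is an extreme point of $\mathsf{B}_X$ and $X$ enjoys the Bade property, so part 1 of Theorem \ref{cc} already tells us that $\BL(X)$ is a face of $\mathsf{B}_{\mathcal{N}_X}$. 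It therefore remains only to produce a strictly larger proper face.

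The strictly larger face I would use is $F_x:=\{S\in\mathsf{B}_{\mathcal{N}_X}:S(\mathbf{x})=x\}$, where $x\in\mathsf{S}_X$ is a suitable fixed vector. Because $\dim X\geq 3$, a rotation $T$ in a two-dimensional subspace that fixes its orthogonal complement is a surjective linear isometry with $T\neq\I_X$ and with a fixed vector $x\in\mathsf{S}_X$ (the orthogonal complement is non-zero precisely because $\dim X\geq 3$). For this $x$ I would check that $F_x$ is a proper face: it is convex by linearity; it lies inside $\mathsf{S}_{\mathcal{N}_X}$ since $\|S\|\geq\|S(\mathbf{x})\|=\|x\|=1$ forces $\|S\|=1$; and it is proper because, for any $\varphi\in\BL(X)$, the operator $-\varphi$ lies in $\mathsf{B}_{\mathcal{N}_X}$ with $(-\varphi)(\mathbf{x})=-x\neq x$. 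The extremal condition is the only non-formal point: if $S=tS_1+(1-t)S_2$ with $S_1,S_2\in\mathsf{B}_{\mathcal{N}_X}$, $t\in(0,1)$ and $S\in F_x$, then $x=tS_1(\mathbf{x})+(1-t)S_2(\mathbf{x})$ realises $x$ as a proper convex combination of two vectors of $\mathsf{B}_X$, whence $S_1(\mathbf{x})=S_2(\mathbf{x})=x$ because $x$ is extreme; thus $S_1,S_2\in F_x$.

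Finally I would establish the strict inclusion $\BL(X)\subsetneq F_x$. The inclusion is clear since every Banach limit fixes all constant sequences, so in particular $S(\mathbf{x})=x$. For strictness, fix $\varphi\in\BL(X)$ and consider $T\circ\varphi$: it belongs to $\mathsf{S}_{\mathcal{N}_X}$ (post-composition with an isometry preserves both the norm and shift-invariance), it satisfies $(T\circ\varphi)(\mathbf{x})=T(\varphi(\mathbf{x}))=T(x)=x$ so that $T\circ\varphi\in F_x$, yet Lemma \ref{act} guarantees $T\circ\varphi\notin\BL(X)$ because $T\neq\I_X$. Hence $\BL(X)\subsetneq F_x\subsetneq\mathsf{B}_{\mathcal{N}_X}$ with $F_x$ a proper face, so $\BL(X)$ is not maximal. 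This is the same mechanism underlying part 2 of Theorem \ref{cc}: the element $T\circ\varphi$ together with $\BL(X)$ spans the drop that prevents $\BL(X)$ from being a convex component of the sphere, here organised into an honest intermediate face.

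The step I expect to be the genuine obstacle is verifying that $F_x$ is a face, since this is exactly where strict convexity is indispensable: it is what promotes the statement ``the average fixes $x$'' to ``each summand fixes $x$''. Without $x$ being an extreme point of $\mathsf{B}_X$ the set $F_x$ can fail the extremal condition, and then no intermediate face is produced; every other verification (norm one, shift-invariance of $T\circ\varphi$, and properness of $F_x$) is routine.
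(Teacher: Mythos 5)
Your proof is correct, and every verification goes through: nonemptiness of $\BL(X)$ via reflexivity and Corollary \ref{1-cc}, the Bade property of $X$ via strict convexity so that part \emph{1} of Theorem \ref{cc} makes $\BL(X)$ a face of $\mathsf{B}_{\mathcal{N}_X}$, the face property of $F_x$ via extremality of $x$, properness of $F_x$ witnessed by $-\varphi$, and strictness of the inclusion witnessed by $T\circ\varphi$ through Lemma \ref{act}.

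The paper omits the proof, declaring it obvious from the same two facts you start with: a Hilbert space with $\dim(X)\geq 3$ has the Bade property (giving the face property via Theorem \ref{cc}, part \emph{1}) and admits a surjective linear isometry $T\neq \I_X$ fixing a unit vector (giving, via Theorem \ref{cc}, part \emph{2}, that $\BL(X)$ is not a convex component of $\mathsf{S}_{\mathcal{N}_X}$). So you use exactly the paper's ingredients and its witness $T\circ\varphi$, but you certify non-maximality differently. The intended route must pass from ``not a convex component of the sphere'' to ``non-maximal face,'' a step that implicitly requires either the folklore identification of maximal convex subsets of the unit sphere with maximal proper faces of the ball, or a Zorn-type extension of the drop $\mathrm{co}\left(\left\{T\circ\varphi\right\}\cup\BL(X)\right)$ to a maximal convex subset of the sphere which one must then recognise as a face. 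Your construction closes that gap concretely: the set $F_x=\left\{S\in\mathsf{B}_{\mathcal{N}_X}:S(\mathbf{x})=x\right\}$ is an explicit proper face strictly between $\BL(X)$ and $\mathsf{B}_{\mathcal{N}_X}$, so no appeal to convex-component machinery or to Zorn is needed, and the argument isolates exactly where strict convexity enters (promoting $x=tS_1(\mathbf{x})+(1-t)S_2(\mathbf{x})$ to $S_1(\mathbf{x})=S_2(\mathbf{x})=x$). The trade-off is scope: Theorem \ref{cc}, part \emph{2}, rules out convex-componenthood for any $X$ possessing a nontrivial isometry with a unit fixed vector, with no rotundity hypothesis, whereas your $F_x$ is only guaranteed to be a face when the fixed vector is an extreme point of $\mathsf{B}_X$ --- which is precisely what the Hilbert hypothesis supplies, so for the corollary as stated your route is the more self-contained one.
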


Bearing in mind the Banach-Stone Theorem, Lemma \ref{asinosva}, and {\em 2} of Theorem \ref{cc}, more spaces $X$ for which $\BL\left(X\right)$ is not a convex component of $\mathsf{S}_{\mathcal{N}_X}$ can be found. We spare the details of the proof to the reader.

\begin{theorem}
Assume that $K$ is a compact Hausdorff topological space with an isolated point $k_0$ satisfying that there exists a homeomorphism $\varphi : K\setminus\left\{k_0\right\}\to K\setminus\left\{k_0\right\}$ different from the identity. Then:
\begin{enumerate}
\item The map
$$
\begin{array}{rcl}
\mathcal{C}\left(K\right) & \to & \mathcal{C}\left(K\right)\\
f & \mapsto & \begin{array}{rcl} K & \to & \mathbb{R} \\ k & \mapsto & \left\{ \begin{array} {rl} f\left(k_0\right) & \text{ if }k=k_0\\ \varphi\left(f\left(k\right)\right) & \text{ if }k\neq k_0 \end{array} \right. \end{array}
\end{array}
$$
is an element of $\mathcal{G}_{\mathcal{C}\left(K\right)}\setminus \left\{\mathrm{I}_{\mathcal{C}\left(K\right)}\right\}$ having $\chi_{\left\{k_0\right\}}$ as a fixed point.
\item $\BL\left(\mathcal{C}\left(K\right)\right)$ is not a convex component of $\mathsf{S}_{\mathcal{N}_{\mathcal{C}\left(K\right)}}$.
\end{enumerate}
\end{theorem}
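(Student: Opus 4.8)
The plan is to promote $\varphi$ to a homeomorphism of the whole of $K$, to identify the displayed operator with the associated composition operator, and then to quote {\em 2} of Theorem \ref{cc}.

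To prove {\em 1}, I would first set $\sigma\colon K\to K$ by $\sigma(k_0):=k_0$ and $\sigma(k):=\varphi(k)$ for $k\neq k_0$. Since $k_0$ is isolated, both $\{k_0\}$ and $K\setminus\{k_0\}$ are clopen, and $\sigma$ restricts to the identity on the former and to $\varphi$ on the latter, each carried onto itself; gluing across this clopen partition shows that $\sigma$ is a homeomorphism of $K$ whose inverse glues $\mathrm{id}_{\{k_0\}}$ with $\varphi^{-1}$. The map displayed in {\em 1} is then exactly $T(f):=f\circ\sigma$, once the inner expression is read as the composition $f\circ\varphi$ (its value at $k_0$ is $f(\sigma(k_0))=f(k_0)$ and at $k\neq k_0$ is $f(\sigma(k))=f(\varphi(k))$). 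By the easy direction of the Banach-Stone Theorem, the weighted composition operator $f\mapsto f\circ\sigma$ with unimodular weight $\equiv 1$ is a surjective linear isometry, so $T\in\mathcal{G}_{\mathcal{C}(K)}$; equivalently one checks directly that $T$ is linear, that $\|f\circ\sigma\|_\infty=\|f\|_\infty$ because $\sigma$ is a bijection of $K$, and that $T^{-1}(g)=g\circ\sigma^{-1}$. As $\varphi\neq\mathrm{id}$, there is $k_1\neq k_0$ with $\varphi(k_1)\neq k_1$; Urysohn's Lemma (valid since compact Hausdorff spaces are normal) yields $f\in\mathcal{C}(K)$ with $f(\varphi(k_1))\neq f(k_1)$, whence $T(f)(k_1)\neq f(k_1)$ and $T\neq\mathrm{I}_{\mathcal{C}(K)}$. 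Finally, $\sigma^{-1}(k_0)=k_0$ gives $T(\chi_{\{k_0\}})=\chi_{\{k_0\}}\circ\sigma=\chi_{\{k_0\}}$, so $\chi_{\{k_0\}}$ is a fixed point of $T$, which settles {\em 1}.

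For {\em 2}, recall from the proof of Lemma \ref{asinosva} that $\chi_{\{k_0\}}\in\mathsf{S}_{\mathcal{C}(K)}$, so {\em 1} supplies an element of $\mathcal{G}_{\mathcal{C}(K)}\setminus\{\mathrm{I}_{\mathcal{C}(K)}\}$ possessing a norm-one fixed point. If $\BL(\mathcal{C}(K))\neq\varnothing$, then the hypotheses of {\em 2} of Theorem \ref{cc} are met with $X=\mathcal{C}(K)$ and $x=\chi_{\{k_0\}}$, and that result says at once that $\BL(\mathcal{C}(K))$ is not a convex component of $\mathsf{S}_{\mathcal{N}_{\mathcal{C}(K)}}$. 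If instead $\BL(\mathcal{C}(K))=\varnothing$, the conclusion is trivial, for $\mathsf{S}_{\mathcal{N}_{\mathcal{C}(K)}}\neq\varnothing$ by Corollary \ref{prim2}, so a convex component, being a maximal convex subset of a non-empty set, must itself be non-empty.

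The genuinely delicate point is the correct reading of the displayed formula: the inner term has to be the composition $f\circ\varphi$ rather than an application of $\varphi$ to the scalar $f(k)$, which would be meaningless. Once this is settled, the argument is a routine Banach-Stone verification feeding into {\em 2} of Theorem \ref{cc}; the only remaining care is to dispose of the degenerate case $\BL(\mathcal{C}(K))=\varnothing$ by hand, since Theorem \ref{cc} presupposes $\BL(X)\neq\varnothing$.
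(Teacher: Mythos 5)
Your proof is correct and follows essentially the route the paper intends when it spares the details: glue $\varphi$ with the identity at the isolated point $k_0$ to get a homeomorphism $\sigma$ of $K$, note that the displayed map is the Banach--Stone-type composition operator $f\mapsto f\circ\sigma$ (your reading of the formula as $f\circ\varphi$, rather than the literally meaningless $\varphi(f(k))$, is the intended correction of the paper's typo), check it is a surjective linear isometry other than $\mathrm{I}_{\mathcal{C}(K)}$ fixing $\chi_{\{k_0\}}\in\mathsf{S}_{\mathcal{C}(K)}$, and invoke \emph{2} of Theorem \ref{cc}. Your explicit disposal of the degenerate case $\BL(\mathcal{C}(K))=\varnothing$ --- which can genuinely occur here, e.g.\ for $K=\omega\cup\{\infty\}$, and which Theorem \ref{cc} presupposes away --- is a sound extra care that the paper's spared proof glosses over.
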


By relying upon the previous theorem we will show that $\BL\left(\mathcal{C}\left(K\right)\right)$ is not a convex component of $\mathsf{S}_{\mathcal{L}\left(\ell_\infty\left(
\mathcal{C}\left(K\right)\right),\mathcal{C}\left(K\right)\right)}$ for $K$ a extremally disconnected compact Hausdorff topological space with isolated points.

Note that {\em 1} of Lemma \ref{ayuda1} is implicitly used in the statement of the following theorem in the definition of the operators $T_1$ and $S_1$.

\begin{theorem}\label{settings}
Let $x_0\in\mathsf{S}_X$ be an $\mathsf{L}_\infty$-summand vector of $X$ and write $X=\mathbb{R}x \oplus_\infty M$. Consider the continuous linear operators
$$
\begin{array}{rrcl}
T_1:&bps\left(X\right) \oplus \mathbf{X}& \to &  X\\
&\left(\lambda_n x_0 + m_n \right)_{n\in\mathbb{N}}+ \mathbf{x}&\mapsto &T_1\left(\left(\lambda_n x_0 + m_n\right)_{n\in\mathbb{N}}+ \mathbf{x}\right):=\lambda_1 x_0 + x
\end{array}
$$
and
$$
\begin{array}{rrcl}
S_1:&bps\left(X\right) \oplus \mathbf{X}& \to &  X\\
&\left(\lambda_n x_0 + m_n\right)_{n\in\mathbb{N}}+ \mathbf{x}&\mapsto &S_1\left(\left(\lambda_n x_0 + m_n\right)_{n\in\mathbb{N}}+ \mathbf{x}\right):=-\lambda_1 x_0 + x.
\end{array}
$$
Then:
\begin{enumerate}
\item If $B\in\BL\left(X\right)$, then $B|_{bps\left(X\right) \oplus \mathbf{X}}=\frac{T_1+S_1}{2}$.
\item $\left\|\alpha T_1 + \left(1-\alpha\right)S_1\right\|=1$ for all $\alpha \in\left[\frac{1}{2},1\right]$.
\item $ \left\|S_1\right\| = 3$.
\item If $T,S:\ell_\infty\left(X\right)\to X$ are linear and continuous extensions of $T_1$ and $S_1$, respectively, then:
\begin{enumerate}
\item $T,S\notin \mathcal{N}_X\cup\mathcal{L}_X$.
\item $\frac{T+S}{2}\in \mathcal{N}_X\cap\mathcal{L}_X$.
\end{enumerate}
\end{enumerate}
\end{theorem}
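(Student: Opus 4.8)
The plan is to work throughout in the decomposition $X=\mathbb{R}x_0\oplus_\infty M$, writing every $y\in bps(X)\oplus\mathbf{X}$ uniquely as $(\lambda_nx_0+m_n)_{n\in\mathbb{N}}+\mathbf{x}$ with $(\lambda_nx_0+m_n)_{n\in\mathbb{N}}\in bps(X)$ and $x=\mu x_0+m$. Uniqueness of the splitting into a $bps$-part and a constant part follows from $bps(X)\cap\mathbf{X}=\{0\}$ (a nonzero constant sequence has unbounded partial sums), and the extraction of $\lambda_1$ is legitimate because {\em 1} of Lemma \ref{ayuda1} makes $(\lambda_nx_0+m_n)_{n\in\mathbb{N}}\mapsto(\lambda_nx_0)_{n\in\mathbb{N}}$ a projection of $bps(X)$ onto $bps(\mathbb{R}x_0)$; in particular $(\lambda_n)_{n\in\mathbb{N}}\in bps(\mathbb{R})$. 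The computation to record at the outset is $(\alpha T_1+(1-\alpha)S_1)(y)=(2\alpha-1)\lambda_1x_0+x$, so that at $\alpha=\tfrac12$ the cross term cancels and $\tfrac{T_1+S_1}{2}(y)=x$. This immediately gives {\em 1}: if $B\in\BL(X)$ then $B$ kills $bps(X)$ by {\em 2} of Proposition \ref{prim} and satisfies $B(\mathbf{x})=x$ since it extends the limit function, whence $B(y)=x=\tfrac{T_1+S_1}{2}(y)$.

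The heart of the argument is the norm estimates {\em 2} and {\em 3}. Write $R:=\|y\|_\infty$. The one nonroutine ingredient is the sharp bound $\|x\|\le R$: by {\em 3} of Lemma \ref{pollazo} together with homogeneity one has $d(\mathbf{x},bps(X))=\|x\|$, and since $bps(X)$ is a subspace this forces $\|x\|\le\|\mathbf{x}+(\lambda_nx_0+m_n)_{n\in\mathbb{N}}\|_\infty=R$; in particular $|\mu|\le R$ and $\|m\|\le R$, while reading off the first coordinate of $y$ gives $|\lambda_1+\mu|\le R$. For {\em 2}, put $t:=2\alpha-1\in[0,1]$; then $(\alpha T_1+(1-\alpha)S_1)(y)=(t\lambda_1+\mu)x_0+m$ has norm $\max\{|t\lambda_1+\mu|,\|m\|\}$, and the convex-combination estimate $|t\lambda_1+\mu|=|t(\lambda_1+\mu)+(1-t)\mu|\le tR+(1-t)R=R$ yields the upper bound $1$, while $\mathbf{x_0}$ (for which $\lambda_1=0$) is mapped to $x_0$ and shows the norm is exactly $1$.

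For {\em 3} the operator $S_1$ is the case $t=-1$, so $S_1(y)=(\mu-\lambda_1)x_0+m$ has norm $\max\{|\mu-\lambda_1|,\|m\|\}$; here $|\mu-\lambda_1|\le|\mu|+|\lambda_1|\le R+2R=3R$, using $|\lambda_1|\le|\lambda_1+\mu|+|\mu|\le 2R$, so $\|S_1\|\le 3$. The bound is attained by $(2x_0,0,0,\dots)+\mathbf{x}$ with $x=-x_0$: its $\ell_\infty(X)$-norm is $1$ (first term $x_0$, the rest $-x_0$) while $S_1$ sends it to $-3x_0$, giving $\|S_1\|=3$. Finally, for {\em 4}(b) any extension $\tfrac{T+S}{2}$ of $\tfrac{T_1+S_1}{2}$ annihilates $bps(X)$ (take $x=0$) and fixes every constant sequence, hence lies in $\mathcal{N}_X\cap\mathcal{L}_X$ by {\em 8} of Proposition \ref{prim}; and for {\em 4}(a) the sequence $(x_0,0,0,\dots)$ belongs to $bps(X)\cap c(X)$ and converges to $0$, yet $T_1$ and $S_1$ send it to $x_0$ and $-x_0$ respectively, so neither $T$ nor $S$ kills $bps(X)$ (thus both fail $\mathcal{N}_X$ by {\em 2} of Proposition \ref{prim}) and neither agrees with $\lim$ on $c(X)$ (thus both fail $\mathcal{L}_X$).

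The only place I expect real friction is the bound $\|x\|\le\|y\|_\infty$, which is exactly where the bounded-partial-sums hypothesis is used: it is the quantitative content of Lemma \ref{pollazo} and is what prevents the constant part from exceeding the ambient sup-norm. Everything after that is bookkeeping inside the $\ell_\infty$-splitting $\mathbb{R}x_0\oplus_\infty M$, together with the already-catalogued characterizations of $\mathcal{N}_X$ and $\mathcal{L}_X$ in Proposition \ref{prim}.
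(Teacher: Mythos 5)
Your proposal is correct and follows essentially the same route as the paper: the same $\oplus_\infty$-coordinate bookkeeping, the same key input $\left\|x\right\|=d\left(\mathbf{x},bps\left(X\right)\right)\leq \left\|\left(\lambda_n x_0+m_n\right)_{n\in\mathbb{N}}+\mathbf{x}\right\|_\infty$ from {\em 3} of Lemma \ref{pollazo}, sign-flipped versions of the same norming witnesses for {\em 2} and {\em 3}, and the same appeal to Proposition \ref{prim} for {\em 4}. The only difference is cosmetic: your single scalar convexity estimate $\left|t\lambda_1+\mu\right|\leq t\left|\lambda_1+\mu\right|+\left(1-t\right)\left|\mu\right|\leq R$ subsumes in one line the paper's two-step argument (proving $\left\|T_1\right\|=\left\|\frac{T_1+S_1}{2}\right\|=1$ separately, then writing $\alpha T_1+\left(1-\alpha\right)S_1$ as an operator-level convex combination), and likewise your bound $\left|\lambda_1\right|\leq 2R$ replaces the paper's triangle-inequality detour through $\left\|-\lambda_1x_0-x\right\|+2\left\|x\right\|$ in {\em 3}.
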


\begin{proof}
First off, we would like to make the reader notice that by $\left(\lambda_n x_0 + m_n \right)_{n\in\mathbb{N}}+ \mathbf{x}$ we mean a generic element of $bps\left(X\right) \oplus \mathbf{X}$. Notice that $\left(\lambda_nx_0\right)_{n\in\N}\in bps(\R x_0)$ and $\left(m_n\right)_{n\in\mathbb{N}}\in bps\left(M\right)$ in accordance to {\em 1} of Lemma \ref{ayuda1}. Also notice that {\em 2} of Lemma \ref{ayuda1} does not apply since on $bps(X)$ we are considering the usual sup norm of $\ell_\infty(X)$.

\begin{enumerate}
\item Immediate.

\item We will divide this proof in several steps:
\begin{itemize}
\item $\left\|\frac{T_1+S_1}{2}\right\|=1$. Indeed, if $\left\|\left(\lambda_n x_0 + m_n \right)_{n\in\mathbb{N}}+ \mathbf{x}\right\|_\infty \leq 1$, then
\begin{eqnarray*}
\left\| \frac{T_1+S_1}{2} \left(\left(\lambda_n x_0 + m_n \right)_{n\in\mathbb{N}}+ \mathbf{x}\right)\right\|&=&\|x\|\\
&=&d\left({\bf x},bps(X)\right)\\
&\leq& \left\|\left(\lambda_n x_0 + m_n \right)_{n\in\mathbb{N}}+ \mathbf{x}\right\|_\infty\\
& \leq &1
\end{eqnarray*} in accordance to {\em 3} of Lemma \ref{pollazo}.
\item $\left\|T_1\right\|=1$. Indeed, every $x\in X$ can be written as $x=\gamma x_0 + m$ with $\gamma \in\mathbb{R}$ and $m\in M$. In accordance to {\em 3} of Lemma \ref{pollazo} we have that $\left\|m\right\|=d\left(\mathbf{m},bps\left(M\right)\right)$. Now
\begin{eqnarray*}
\left\|T_1\left(\left(\lambda_n x_0 + m_n \right)_{n\in\mathbb{N}}+ \mathbf{x}\right)\right\|&=&\left\|\lambda_1 x_0 + x\right\|\\
&=& \left\|\left(\lambda_1 + \gamma\right)x_0 + m\right\|\\
&=& \max\left\{\left|\lambda_1 + \gamma\right|,\left\|m\right\|\right\}\\
&\leq & \sup_{n\in\mathbb{N}}\left\{\left|\lambda_n + \gamma\right|,\left\|m+m_n\right\|\right\}\\
&=& \left\|\left(\lambda_n x_0 + m_n\right)_{n\in\mathbb{N}}+ \mathbf{x}\right\|_\infty.
\end{eqnarray*}
\item By using the triangular inequality we obtain that $$\left\|\alpha T_1 + \left(1-\alpha\right)S_1\right\|=\left\|\left(2\alpha-1\right) T_1 + \left(2-2\alpha\right)\frac{T_1 + S_1}{2}\right\|\leq 1$$ for all $\alpha \in\left[\frac{1}{2},1\right]$, which in fact means that $\left\|\alpha T_1 + \left(1-\alpha\right)S_1\right\|=1$ for all $\alpha \in\left[\frac{1}{2},1\right]$.
\end{itemize}

\item By using again {\em 1} of Lemma \ref{pollazo} to accomplish that $\left\|x\right\|\leq \left\|\left(\lambda_n x_0 + m_n\right)_{n\in\mathbb{N}}+ \mathbf{x}\right\|_\infty$ and the fact that $\|T_1\|=1$, we have that
\begin{eqnarray*}
\left\|S_1\left(\left(\lambda_n x_0 + m_n \right)_{n\in\mathbb{N}}+ \mathbf{x}\right)\right\|&=&\left\|-\lambda_1 x_0 + x\right\|\\
&\leq &\left\|-\lambda_1 x_0 - x\right\| + 2\left\|x\right\|\\
&\leq& \left\|\left(\lambda_n x_0 + m_n\right)_{n\in\mathbb{N}}+ \mathbf{x}\right\|_\infty + 2\left\|\left(\lambda_n x_0 + m_n\right)_{n\in\mathbb{N}}+ \mathbf{x}\right\|_\infty\\
&=& 3\left\|\left(\lambda_n x_0 + m_n\right)_{n\in\mathbb{N}}+ \mathbf{x}\right\|_\infty.
\end{eqnarray*}
This shows that $\left\|S_1\right\|\leq 3$. In order to see that $\left\|S_1\right\|=3$ it only suffices to take into consideration that $$S_1\left(\left(-2x_0,0,0,\dots,0,\dots\right) + \mathbf{x_0}\right)=3x_0.$$
\item We will divide this paragraph in two parts:
\begin{enumerate}
\item We will show that $T\notin \mathcal{N}_X \cup \mathcal{L}_{X}$. In a similar way it can be proved that $S\notin \mathcal{N}_X \cup \mathcal{L}_{X}$. Simply notice that $T\left(x_0,0,0\dots,0,\dots\right)=x_{0}\neq 0$, therefore
\begin{itemize}
\item $T\notin \mathcal{L}_{X}$ because $\left(x_0,0,0\dots,0,\dots\right)$ is a convergent sequence to $0$, and 
\item $T\notin \mathcal{N}_{X}$ because $\left(x_0,0,0\dots,0,\dots\right)$ is a sequence with bounded partial sums.
\end{itemize}
\item Let $\left(x_n\right)_{n\in\mathbb{N}}\in bps\left(X\right)$ and write $x_n=\lambda_n x_0 + m_n$ where $\lambda_n\in\mathbb{R}$ and $m_n\in M$ for all $n\in\mathbb{N}$. We have that $$\frac{T+S}{2}\left(\left(x_n\right)_{n\in\mathbb{N}}\right) = \frac{1}{2}\left(\lambda_1 x_0-\lambda_1 x_0\right)=0,$$ therefore $\frac{T+S}{2}\in \mathcal{N}_X$ in virtue of {\em 1} of Proposition \ref{prim}. Notice that $c_0\left(X\right)\subseteq \ker\left(\frac{T+S}{2}\right)$ according to {\em 1} of Proposition \ref{prim}, thus in order to show that $\frac{T+S}{2}\in\mathcal{L}_X$ it only suffices to prove that $\left(\frac{T+S}{2}\right)\left(\mathbf{x}\right)=x$ for all $x\in X$, which is immediate by construction of both $T_1$ and $S_1$.
\end{enumerate}
\end{enumerate}
\end{proof}

\begin{corollary}
$\BL\left(\mathcal{C}\left(K\right)\right)$ is not a convex component of $\mathsf{S}_{\mathcal{L}\left(\ell_\infty\left(\mathcal{C}\left(K\right)\right),\mathcal{C}\left(K\right)\right)}$ for $K$ a extremally disconnected compact Hausdorff topological space with an isolated point $k_0$.
\end{corollary}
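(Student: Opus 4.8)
The plan is to combine the construction of Theorem \ref{settings} with the $1$-injectivity of $\mathcal{C}(K)$. Since $K$ is extremally disconnected and compact Hausdorff, $X:=\mathcal{C}(K)$ is $1$-injective, and this single hypothesis will be used twice: on the one hand it guarantees $\BL(X)\neq\varnothing$, and on the other hand it ensures that every norm-$1$ operator defined on a subspace of $\ell_\infty(X)$ with values in $X$ extends to a norm-$1$ operator on the whole of $\ell_\infty(X)$. Because $k_0$ is isolated, Lemma \ref{asinosva} yields that $x_0:=\chi_{\{k_0\}}$ is an $\mathsf{L}_\infty$-summand vector of $X$, so we are precisely in the hypotheses of Theorem \ref{settings} and may form the operators $T_1,S_1$ on $bps(X)\oplus\mathbf{X}$.

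First I would extend $T_1$ to a norm-$1$ operator $T:\ell_\infty(X)\to X$. This is the one step where $1$-injectivity is indispensable: the proof of {\em 2} of Theorem \ref{settings} records that $\|T_1\|=1$, and the target space $X$ being $1$-injective is exactly what forces a Hahn-Banach-type extension to preserve the norm. By Theorem \ref{settings}, any linear continuous extension of $T_1$ fails to belong to $\mathcal{N}_X\cup\mathcal{L}_X$; in particular $T\notin\mathcal{N}_X$, whence $T\notin\BL(X)$ since $\BL(X)=\mathcal{N}_X\cap\mathcal{HB}(\lim)\subseteq\mathcal{N}_X$ by {\em 4} of Corollary \ref{Bprim}.

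The crux is then to show that the drop $\mathrm{co}(\BL(X)\cup\{T\})$ lies entirely on the sphere $\mathsf{S}_{\mathcal{L}(\ell_\infty(X),X)}$. Since $\BL(X)$ is convex (Corollary \ref{Bprim}), every element of this drop has the form $tT+(1-t)\varphi$ with $t\in[0,1]$ and $\varphi\in\BL(X)$. The upper estimate $\|tT+(1-t)\varphi\|\leq t\|T\|+(1-t)\|\varphi\|=1$ is immediate from $\|T\|=1$ and $\|\varphi\|=1$. For the reverse estimate I would evaluate at the constant sequence $\mathbf{x_0}$: from the definition of $T_1$ we have $T(\mathbf{x_0})=T_1(\mathbf{x_0})=x_0$, while $\varphi(\mathbf{x_0})=x_0$ because every vector-valued Banach limit fixes constants by {\em 2} of Corollary \ref{Bprim}. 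Hence $(tT+(1-t)\varphi)(\mathbf{x_0})=x_0$, and therefore $\|tT+(1-t)\varphi\|\geq\|x_0\|=1$, using that $\|\mathbf{x_0}\|_\infty=\|\chi_{\{k_0\}}\|=1$. Thus the whole drop consists of norm-$1$ operators.

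Finally, since $T\notin\BL(X)$, the drop $\mathrm{co}(\BL(X)\cup\{T\})$ is a convex subset of $\mathsf{S}_{\mathcal{L}(\ell_\infty(X),X)}$ strictly containing $\BL(X)$, which shows that $\BL(X)$ is not a maximal convex subset of that sphere, that is, it is not a convex component. The only genuinely non-formal point, and the main obstacle, is the norm-$1$ extension of $T_1$; this is exactly why the hypothesis of extremal disconnectedness (equivalently, $1$-injectivity) cannot be removed here, in contrast with {\em 2} of Theorem \ref{cc}, where the argument was carried out inside the smaller space $\mathcal{N}_X$ and no extension was required.
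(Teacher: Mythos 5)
Your proposal is correct and follows the paper's construction up to the decisive estimate: the same use of Lemma \ref{asinosva}, of Theorem \ref{settings}, of the norm-preserving extension $T$ of $T_1$ granted by $1$-injectivity, and of the drop $\co\left(\left\{T\right\}\cup\BL\left(X\right)\right)$. Where you genuinely diverge is in the lower bound $\left\|tT+\left(1-t\right)\varphi\right\|\geq 1$. The paper restricts $tT+\left(1-t\right)B$ to $bps\left(X\right)\oplus\mathbf{X}$ and invokes {\em 1} of Theorem \ref{settings} (every Banach limit restricts there to $\frac{T_1+S_1}{2}$) together with {\em 2} of Theorem \ref{settings}, since $\alpha T_1+\left(1-\alpha\right)\frac{T_1+S_1}{2}=\frac{1+\alpha}{2}T_1+\frac{1-\alpha}{2}S_1$ with $\frac{1+\alpha}{2}\in\left[\frac{1}{2},1\right]$. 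Your evaluation at the constant sequence $\mathbf{x_0}$, using $T\left(\mathbf{x_0}\right)=T_1\left(\mathbf{x_0}\right)=x_0$ (the $bps$-component of $\mathbf{x_0}$ is zero, so $\lambda_1=0$) and $\varphi\left(\mathbf{x_0}\right)=x_0$ from {\em 2} of Corollary \ref{Bprim}, is more elementary: it bypasses $S_1$ and items {\em 1}--{\em 3} of Theorem \ref{settings} altogether, needing only {\em 4(a)} to get $T\notin\mathcal{N}_X\cup\mathcal{L}_X$, which itself reduces to $T\left(x_0,0,0,\dots\right)=x_0\neq 0$. What the paper's longer route buys is sharper structural information, namely that every element of the drop restricts to $bps\left(X\right)\oplus\mathbf{X}$ with norm exactly $1$ and that all Banach limits coincide there with the single operator $\frac{T_1+S_1}{2}$; what your route buys is economy, and in fact it proves slightly more, since the same computation shows that the drop from $T$ over any convex set of norm-one operators fixing the constant sequence $\mathbf{x_0}$ stays on the sphere. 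Your identification of the norm-$1$ extension as the one step where extremal disconnectedness is indispensable also matches the paper's use of \cite[Page 123]{D}. Both arguments are sound.
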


\begin{proof}
For simplicity, let $X:=\mathcal{C}(K)$. By Lemma \ref{asinosva} we have that $x_0:=\chi_{\left\{k_0\right\}}$ is an $\mathsf{L}_\infty$-summand vector of $X$. In accordance to Theorem \ref{settings} we have that the continuous linear operators
$$
\begin{array}{rrcl}
T_1:&bps\left(X\right) \oplus \mathbf{X}& \to &  X\\
&\left(\lambda_n x_0 + m_n \right)_{n\in\mathbb{N}}+ \mathbf{x}&\mapsto &T_1\left(\left(\lambda_n x_0 + m_n\right)_{n\in\mathbb{N}}+ \mathbf{x}\right):=\lambda_1 x_0 + x
\end{array}
$$
and
$$
\begin{array}{rrcl}
S_1:&bps\left(X\right) \oplus \mathbf{X}& \to &  X\\
&\left(\lambda_n x_0 + m_n\right)_{n\in\mathbb{N}}+ \mathbf{x}&\mapsto &S_1\left(\left(\lambda_n x_0 + m_n\right)_{n\in\mathbb{N}}+ \mathbf{x}\right):=-\lambda_1 x_0 + x
\end{array}
$$
satisfy that $\left\|T_1\right\|=\left\|\frac{T_1+S_1}{2}\right\|=1$ and $ \left\|S_1\right\|= 3$. In virtue of \cite[Page 123]{D} we have that $X$ is $1$-injective, therefore we can find a norm-preserving Hahn-Banach extension $T:\ell_\infty\left(X\right)\to X$ of $T_1$. Attending to {\em 4(a)} of Theorem \ref{settings}, we deduce that $T\in \mathsf{S}_{\mathcal{L}\left(\ell_\infty\left(X\right),X\right)}$ but $T\notin \BL\left(X\right)$ (in fact, $T\notin \mathcal{N}_X\cup\mathcal{L}_X$). Now we will prove that $\BL\left(X\right)$ is not a convex component of $\mathsf{S}_{\mathcal{L}\left(\ell_\infty\left(X\right),X\right)}$ by showing that the drop $\co\left(\{T\}\cup\BL(X)\right)\subseteq \mathsf{S}_{\mathcal{L}\left(\ell_\infty\left(X\right),X\right)}$. For this it is sufficient to show that if $B\in\BL\left(X\right)$, then the segment joining $T$ and $B$ lies entirely in $\mathsf{S}_{\mathcal{L}\left(\ell_\infty\left(X\right),X\right)}$. Let $\alpha\in\left[0,1\right]$. We know that $\left\|\alpha T + \left(1-\alpha\right)B\right\|\leq 1$ since $T,B\in \mathsf{S}_{\mathcal{L}\left(\ell_\infty\left(X\right),X\right)}$. According to {\em 1} and {\em 2} of Theorem \ref{settings} simply notice that $$\left\|\left(\alpha T + \left(1-\alpha\right)B\right)|_{bps\left(X\right)\oplus\mathbf{X}} \right\|=\left\| \alpha T_1 + \left(1-\alpha\right)\frac{T_1+S_1}{2}\right\|=1,$$ which automatically implies that $\left\|\alpha T + \left(1-\alpha\right)B\right\|= 1$.

\end{proof}

\chapter{Vector-valued almost convergence}

The notion of almost convergence appeared for the first time in the literature of the theory of series in normed spaces in 1948 (see \cite{Lorentz}) and was introduced by Lorentz. Many results, extensions, and generalizations of this concept have been provided ever since. We refer the reader to \cite{Boos3, Boos2, SLS} for a wide perspective on the concept of almost convergence and some generalizations and relations with matrix methods and invariant means. For different applications of almost convergence, we refer the reader to \cite{AAGPPF, AizArPer}, where a series of results are provided involving almost convergence, almost summability, and (weakly) unconditionally Cauchy series.

\section{Notions of almost convergence}

The concept of almost convergence is originally due to Lorentz (see \cite{Lorentz}) and it strongly involves scalar-valued Banach limits. However, Lorentz, through his well-known intrinsic characterization of almost-convergence (see\cite[Theorem 1]{Lorentz}), made possible to extend it to vector-valued sequences. As a consequence, there is only one general concept of almost convergence which includes the Lorentz's scalar-valued almost convergence.

\subsection{Lorentz's (scalar-valued) almost convergence}

In 1948 (see \cite{Lorentz}) Lorentz made use of the concept of Banach limit to introduce the notion of almost convergence.

\begin{definition}[Lorentz, 1948; \cite{Lorentz}]\label{LT}
A bounded sequence $ \left(x_n\right)_{n \in \mathbb{N}}\in \ell_{\infty}$ is called almost convergent when there is a number $y\in \mathbb{R}$ such that $\varphi \left( \left(x_n\right)_{n \in \mathbb{N}}\right) = y$ for all Banach limits $\varphi : \ell_{\infty} \to \mathbb{R}$. Furthermore,
\begin{itemize}
\item $y$ is called the almost limit of $ \left(x_n\right)_{n \in \mathbb{N}}$ and
\item it is usually denoted by $\displaystyle{\AClim_{n\to\infty} x_n = y}$.
\end{itemize}
\end{definition}

In \cite[Theorem 1]{Lorentz} Lorentz provided an intrinsic characterization of almost convergent sequences.

\begin{theorem}[Lorentz, 1948; \cite{Lorentz}]\label{LIC}
Given a bounded sequence $\left(x_n\right)_{n \in \mathbb{N}}\in \ell_{\infty}$ and a real number $y$, $\displaystyle{\AClim_{n\to\infty}x_n=y}$ if and only if $$\lim_{p  \to \infty} \frac{1}{p+1}\sum_{k=0}^{p}x_{n+k} = y$$ uniformly in $n\in \mathbb{N}$.
\end{theorem}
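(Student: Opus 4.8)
The plan is to factor the equivalence through the sublinear functional that records the extreme values attainable by Banach limits. For $x=(x_n)_{n\in\mathbb{N}}\in\ell_\infty$ and $p\in\mathbb{N}$, write $M_p(x):=\sup_{n\in\mathbb{N}}\sum_{k=0}^{p-1}x_{n+k}$ and $m_p(x):=\inf_{n\in\mathbb{N}}\sum_{k=0}^{p-1}x_{n+k}$ for the supremum and infimum of the sums over windows of length $p$. Splitting a window of length $p+q$ into consecutive windows of lengths $p$ and $q$ gives $M_{p+q}(x)\leq M_p(x)+M_q(x)$, so the sequence $(M_p(x))_{p\in\mathbb{N}}$ is subadditive and Fekete's Lemma guarantees that
$$q(x):=\lim_{p\to\infty}\frac{M_p(x)}{p}=\inf_{p\in\mathbb{N}}\frac{M_p(x)}{p}$$
exists. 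First I would record the elementary features of $q$: it is positively homogeneous and subadditive (hence sublinear), it is invariant under the forward shift, it satisfies $q(\mathbf{1})=1$, and it agrees with the ordinary limit on $c$. These are all immediate from the definition of $M_p$.

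The implication that uniform Ces\`aro convergence forces almost convergence is the easy half. Fix any Banach limit $\varphi$. Since $\varphi$ is linear and invariant under the forward shift, for every $p$ one has $\varphi(x)=\varphi\left(\left(\tfrac{1}{p+1}\sum_{k=0}^{p}x_{n+k}\right)_{n\in\mathbb{N}}\right)$. The hypothesis says exactly that, as $p\to\infty$, these averaged sequences converge to the constant sequence $y\mathbf{1}$ in the sup norm of $\ell_\infty$. As $\varphi$ is continuous of norm $1$ and $\varphi(\mathbf{1})=1$, passing to the limit yields $\varphi(x)=y$; since $\varphi$ was an arbitrary Banach limit, $\AClim_{n\to\infty}x_n=y$.

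The reverse implication is the substantial one, and it rests on the identity
$$\sup\{\varphi(x):\varphi\text{ is a Banach limit}\}=q(x),\qquad \inf\{\varphi(x):\varphi\text{ is a Banach limit}\}=-q(-x).$$
The inequality $\varphi(x)\leq q(x)$ is short: averaging the shifts of $x$ and using positivity together with $\varphi(\mathbf{1})=1$ gives $\varphi(x)\leq M_p(x)/p$ for every $p$, hence $\varphi(x)\leq q(x)$. For the reverse inequality I would define on the line $\mathbb{R}x$ the functional $\lambda x\mapsto\lambda q(x)$, check that it is dominated by the sublinear $q$ (using $q(x)+q(-x)\geq q(0)=0$), extend it by the Hahn--Banach Theorem to a linear $\varphi\leq q$ on all of $\ell_\infty$, and then verify that this $\varphi$ is a genuine Banach limit: unitality, positivity, and shift-invariance each follow from the facts that $q$ kills shift-differences and is controlled on positive and constant sequences. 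Granting the identity, almost convergence of $x$ to $y$ means $\varphi(x)=y$ for every Banach limit, that is $\sup_\varphi\varphi(x)=\inf_\varphi\varphi(x)=y$, equivalently $q(x)=-q(-x)=y$. Unwinding the definition, this reads $M_p(x)/p\to y$ and $m_p(x)/p\to y$, which is precisely $\sup_{n}\left|\tfrac{1}{p}\sum_{k=0}^{p-1}x_{n+k}-y\right|\to 0$, the asserted uniform convergence.

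The main obstacle is the construction of a Banach limit attaining the value $q(x)$: establishing that the line functional $\lambda x\mapsto\lambda q(x)$ is $q$-dominated, and, after the Hahn--Banach extension, confirming that shift-invariance, positivity, and unitality all survive. The supporting technical point is the subadditivity estimate feeding Fekete's Lemma, which is what makes $q$ well defined and simultaneously encodes the uniformity in $n$.
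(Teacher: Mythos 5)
Your proof is correct, and it follows a genuinely different route from the one taken in this book. Here the scalar Lorentz theorem is quoted from \cite{Lorentz} without proof, and the machinery developed around it (Theorem \ref{aqui}, Corollary \ref{aquitepillo3}, Theorem \ref{pollo}) deliberately avoids the order structure of $\mathbb{R}$: null almost convergence is characterized by showing that a sequence outside $ac_0(X)$ has positive distance to $bps(X)$ (Lemma \ref{jaja} plus the closedness of $ac_0(X)$), and then a Hahn--Banach \emph{separation} produces a shift-invariant operator, respectively an extension of $\AClim$ from $ac(X)\oplus\mathbb{R}(x_n)_{n\in\mathbb{N}}$ in Theorem \ref{pollo}, witnessing the failure. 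Your argument is instead the classical order-based one: the window functionals $M_p(x)=\sup_n\sum_{k=0}^{p-1}x_{n+k}$ are subadditive in $p$, Fekete gives the sublinear functional $q(x)=\lim_p M_p(x)/p$, a Hahn--Banach \emph{domination} extension of $\lambda x\mapsto \lambda q(x)$ produces a Banach limit attaining $q(x)$ (positivity, unitality and shift-invariance all survive because $q$ annihilates shift-differences, equals $1$ at $\mathbf{1}$, and is nonpositive on $-\ell_\infty^+$), and the identities $\sup_\varphi\varphi(x)=q(x)$, $\inf_\varphi\varphi(x)=-q(-x)$ convert unanimity of Banach limits into $M_p(x)/p\to y$ and $m_p(x)/p\to y$ simultaneously, which is exactly the uniform Ces\`aro convergence. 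All the steps you flag as needing verification do go through: $q(x)+q(-x)\geq q(0)=0$ gives the domination on the line $\mathbb{R}x$, $\varphi(x)\leq M_p(x)/p$ follows from positivity and unitality applied to the averaged shifts, and the two one-sided limits combine into $\sup_n\bigl|\frac{1}{p+1}\sum_{k=0}^{p}x_{n+k}-y\bigr|\to 0$. The trade-off between the two approaches is instructive: the book's separation technique extends to vector-valued sequences, where no order is available (this is precisely what Section \ref{secLIC} exploits, with full success only for $ac_0(X)$ and for injective spaces), whereas your sublinear-functional argument is confined to scalars but yields strictly more than the theorem asks, namely that the set of values $\left\{\varphi\left(\left(x_n\right)_{n\in\mathbb{N}}\right):\varphi\in\BL\right\}$ is exactly the interval $\left[-q(-x),q(x)\right]$, from which the characterization drops out as the degenerate case of a singleton.
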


\subsection{Boos' (vector-valued) almost convergence}

By relying upon Theorem \ref{LIC}, the author of \cite{Boos} extended the concept of almost convergence to vector-valued sequences.

\begin{definition}[Boos, 2000; \cite{Boos}]
A sequence $ \left(x_n\right)_{n \in \mathbb{N}}$ in $X$
\begin{itemize}
\item is called almost convergent when there exists $y\in X$ such that $$\lim_{p  \to \infty} \frac{1}{p+1}\sum_{k=0}^{p}x_{n+k} = y$$ uniformly in $n\in \mathbb{N}$
\begin{itemize}
\item $y$ is called the almost limit of $ \left(x_n\right)_{n \in \mathbb{N}}$ and
\item it is usually denoted by $\AClim_{n\to\infty} x_n = y$;
\end{itemize}
\item and $ \left(x_n\right)_{n \in \mathbb{N}}$ is called weakly almost convergent when there exists $y\in X$ such that $$\AClim_{n  \to \infty} f\left(x_n\right) = f\left(y\right)$$ for all $f\in X^*$
\begin{itemize}
\item $y$ is called the weak almost limit of $ \left(x_n\right)_{n \in \mathbb{N}}$ and
\item it is usually denoted by $\displaystyle{\wAClim_{n\to\infty} x_n = y}$.
\end{itemize}
\end{itemize}
\end{definition}

It is fairly obvious that
\begin{itemize}
\item every (weakly) convergent sequence is (weakly) almost convergent;
\item every (weakly) almost convergent sequence is (weakly) Cesaro-convergent.
\end{itemize}

\subsection{Basic properties of almost convergence}

What we will show now (see \cite[Theorem 1.2.18(a)]{Boos}) is that every weakly almost convergent sequence is bounded.

\begin{proposition}[Boos, 2000; \cite{Boos}]\label{bounded}
Every weakly almost convergent sequence $(x_n)_{n\in\mathbb{N}}$ in $X$ satisfies that $(x_n)_{n \in\mathbb{N}}\in \ell_{\infty}(X)$.
\end{proposition}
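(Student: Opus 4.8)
The plan is to reduce the statement to a single application of the Uniform Boundedness Principle, exploiting that the dual of a normed space is always complete (even when $X$ itself is not). First I would record the scalar consequence of the hypothesis: if $(x_n)_{n\in\mathbb{N}}$ is weakly almost convergent with weak almost limit $y$, then for every $f\in X^*$ the scalar sequence $(f(x_n))_{n\in\mathbb{N}}$ is almost convergent, and in particular bounded. This boundedness is essentially built into the notion of scalar almost convergence (Definition \ref{LT} is stated on $\ell_\infty$); if one prefers to argue from Theorem \ref{LIC}, it follows by writing $f(x_n)=(p+1)\sigma_{n,p}-p\,\sigma_{n+1,p-1}$, where $\sigma_{n,p}:=\frac{1}{p+1}\sum_{k=0}^p f(x_{n+k})$, and noting that fixing one large $p$ for which the Ces\`aro averages are uniformly close to $f(y)$ yields a bound on $|f(x_n)|$ that is uniform in $n$.

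Next I would view each $x_n$ as the evaluation functional $\widehat{x}_n\in X^{**}$ given by $\widehat{x}_n(f)=f(x_n)$. The previous step says exactly that the family $\{\widehat{x}_n:n\in\mathbb{N}\}$ of continuous linear functionals on $X^*$ is pointwise bounded, since $\sup_n|\widehat{x}_n(f)|=\sup_n|f(x_n)|<\infty$ for each $f\in X^*$. Because $X^*$ is a Banach space, the Uniform Boundedness Principle then gives $\sup_n\|\widehat{x}_n\|_{X^{**}}<\infty$. As the canonical embedding $x\mapsto\widehat{x}$ is an isometry, $\|x_n\|=\|\widehat{x}_n\|_{X^{**}}$, and therefore $\sup_n\|x_n\|<\infty$, i.e.\ $(x_n)_{n\in\mathbb{N}}\in\ell_\infty(X)$.

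The main obstacle is conceptual rather than computational: one must make sure that weak almost convergence genuinely forces each coordinate sequence $(f(x_n))_{n\in\mathbb{N}}$ to be bounded before the Uniform Boundedness Principle can be invoked, and one must use the completeness of $X^*$ (not of $X$) so that the principle applies unconditionally. Everything after the reduction to pointwise boundedness in $X^{**}$ is routine.
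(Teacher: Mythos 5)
Your proposal is correct and is essentially the paper's own argument: the paper likewise handles the weak case by invoking ``every weakly bounded sequence is also bounded'' (which is precisely your canonical-embedding-plus-Uniform-Boundedness step, resting on the completeness of $X^*$) and then bounds an almost convergent sequence by the same identity recovering a term from two overlapping Ces\`aro averages, namely the vector form of your $f(x_n)=(p+1)\sigma_{n,p}-p\,\sigma_{n+1,p-1}$. The only difference is the order of assembly---you apply the identity to the scalar sequences $\left(f\left(x_n\right)\right)_{n\in\mathbb{N}}$ before the Uniform Boundedness Principle, while the paper reduces first and applies it to the vector sequence---so this is not a genuinely different route.
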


\begin{proof}
Since every weakly bounded sequence is also bounded, we may assume without any loss of generality that $(x_n)_{n\in\mathbb{N}}$ is almost convergent. Let $$y=\AClim_{n\to\infty} x_n$$ and fix $\varepsilon>0$ and $i_0 \in\mathbb{N}$ satisfying that $$ \left \| \sum_{k=j}^{j+i} \frac{x_k}{i+1} \right \| \leqslant \|y\|+\varepsilon$$ for every $i \geqslant i_0$ and every $j \in\mathbb{N}$. Now, for every $j\in \mathbb{N}$ we have that $$\|x_j\| = \left \| \frac{i_0+2}{i_0+1} \sum_{k=j}^{j+i_0+1} \frac{x_k}{i_0+2} - \sum_{k=j+1}^{j+i_0+1} \frac{x_k}{i_0+1}\right \| \leqslant \left ( \frac{i_0+2}{i_0+1}+1 \right )\left(\|x_0\|+\varepsilon\right),$$ where the last term is a fixed constant, what concludes the proof.
\end{proof}

\begin{lemma}\label{wacden}
If $N$ is a dense vector subspace of $X^*$ and $\left(x_n\right)_{n\in \mathbb{N}}\in\ell_\infty(X)$, then $\displaystyle{\wAClim_{n\to\infty} x_n=0}$ if and only if for each $g \in N$ we have that $\displaystyle{\AClim_{n\to\infty} g\left(x_n\right)=0}$.
\end{lemma}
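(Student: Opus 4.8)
The plan is to dispose of the forward implication immediately and spend the effort on the converse. If $\wAClim_{n\to\infty} x_n = 0$, then by the very definition of weak almost convergence we have $\AClim_{n\to\infty} f(x_n) = f(0) = 0$ for every $f\in X^*$, and in particular for every $g$ in the subspace $N\subseteq X^*$. So the implication $\Rightarrow$ needs nothing beyond unwinding definitions and does not even use the density of $N$.

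For the converse, the hypothesis is that $\AClim_{n\to\infty} g(x_n)=0$ for every $g\in N$, and I want to upgrade this to $\AClim_{n\to\infty} f(x_n)=0$ for an \emph{arbitrary} $f\in X^*$, which is exactly the assertion $\wAClim_{n\to\infty} x_n = 0$. Fixing such an $f$, the goal becomes showing that the Cesàro averages $\frac{1}{p+1}\sum_{k=0}^p f(x_{n+k})$ converge to $0$ as $p\to\infty$ uniformly in $n$. The device is a standard $\varepsilon/2$ splitting exploiting the density of $N$: for a prescribed $\varepsilon>0$ I pick $g\in N$ close to $f$, write $f=(f-g)+g$, and apply the triangle inequality to the averages so as to estimate the $(f-g)$-part and the $g$-part separately.

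The one quantitative ingredient is the boundedness of $(x_n)_{n\in\mathbb{N}}$, which holds by hypothesis (and is also forced by Proposition \ref{bounded}). Writing $C:=\left\|(x_n)_{n\in\mathbb{N}}\right\|_\infty$, the first part is controlled uniformly in both $n$ and $p$ by
$$\left|\frac{1}{p+1}\sum_{k=0}^p (f-g)(x_{n+k})\right|\leq \|f-g\|\,C,$$
so choosing $g\in N$ with $\|f-g\|<\frac{\varepsilon}{2(C+1)}$ (the $C+1$ protects against the degenerate case $C=0$) renders this part smaller than $\varepsilon/2$ regardless of $n$ and $p$. The second part $\frac{1}{p+1}\sum_{k=0}^p g(x_{n+k})$ is precisely where the hypothesis $\AClim_{n\to\infty} g(x_n)=0$ is invoked: it supplies a $p_0$ beyond which this average is smaller than $\varepsilon/2$, uniformly in $n$. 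Adding the two bounds yields the required uniform smallness of the $f$-averages, hence $\AClim_{n\to\infty} f(x_n)=0$, and since $f$ was arbitrary this is $\wAClim_{n\to\infty} x_n=0$.

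The proof is short and the only thing to watch is the bookkeeping of quantifiers, which is where the sole subtlety lies. The choice of $g$ must depend on $\varepsilon$ alone (not on $n$ or $p$), and it is legitimate precisely because the $(f-g)$-estimate is uniform in $n,p$ through $C$; the subsequent choice of $p_0$ then depends on $g$ and $\varepsilon$ but, crucially, not on $n$, because the hypothesis is uniform-in-$n$ almost convergence of the scalar averages. Getting this order right is the whole content of the argument; there is no genuine analytic obstacle.
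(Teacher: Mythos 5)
Your proof is correct and follows essentially the same route as the paper's: both fix an arbitrary $f\in X^*$, use the density of $N$ to choose $g$ with $\left\|f-g\right\|$ small relative to $\varepsilon$ and the sup-norm bound on $\left(x_n\right)_{n\in\mathbb{N}}$, and split the Ces\`aro averages via the triangle inequality so that the $(f-g)$-part is uniformly small in both indices while the $g$-part is handled by the hypothesis. Your explicit quantifier bookkeeping (choosing $g$ depending only on $\varepsilon$, then $p_0$ independent of $n$) and the $C+1$ safeguard are harmless refinements of the same argument.
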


\begin{proof}
 Indeed, fix an arbitrary $f \in X^*$ and consider $\varepsilon>0$. There exists $A>0$ such that
    $\left\|x_n\right\|<A$ for each $n \in \mathbb{N}$. By the density of
    $N$ in $X^*$ there exists $g \in N$ satisfying that $\left\|f-g\right\|<\frac{\varepsilon}{2A}$.
    Then
    \begin{eqnarray*}
    \frac{1}{i+1} \left| f\left(\sum_{k=0}^{i} x_{j+k}\right)\right| &\leqslant& \frac{1}{i+1} \left ( \left|\left(f-g\right)\left(\sum_{k=0}^{i} x_{j+k}\right)\right|+ \left| g\left(\sum_{k=0}^{i} x_{j+k}\right)\right| \right )\\
&\leqslant& \frac{1}{i+1}\frac{\varepsilon}{2A}\left(i+1\right)A + \frac{1}{i+1} \left| g\left(\sum_{k=0}^{i} x_{j+k}\right)\right|\\
&=& \frac{\varepsilon}{2} + \frac{1}{i+1} \left| g\left(\sum_{k=0}^{i} x_{j+k}\right)\right|
\end{eqnarray*} for every $i,j\in \mathbb{N}$. Since $\displaystyle{\AClim_{n\to\infty} g\left(x_n\right)=0}$, we conclude that $\displaystyle{\AClim_{n\to\infty} f\left(x_n\right)=0}$. The arbitrariness of $f$ tells us that $\displaystyle{\wAClim_{n\to\infty} x_n=0}$.
\end{proof}

In accordance to Lemma \ref{wacden} and the density of $c_{00}$ in $\ell_1$, we deduce that a sequence $\left(x^n\right)_{n\in\mathbb{N}}\subset c_0$ is weakly almost convergent to $x^0
\in c_0$ if and only if $\displaystyle{\AClim_{n\to\infty} x^n_i=x^0_i}$ for every $i \in
\mathbb{N}$. Nevertheless, the sequence $\left(x^n\right)_{n\in\mathbb{N}}\subset c_0$ given by
$$x_i^n=\left\{\begin{array}{lll}
  0 & \hbox{if} & i>n, \\
  1 & \hbox{if} & n=j+(2k+1)i,j \in \{ 0, 1, \ldots, i-1 \}, k \in \mathbb{N},\\
 -1 & \hbox{if} & n=j+(2k+2)i,j \in \{ 0, 1, \ldots, i-1 \}, k \in \mathbb{N},\\
\end{array}\right.
$$
satisfies that $\displaystyle{\wAClim_{n\to\infty}x^n=0}$ but it is not almost convergent.

On the other hand, we would like to make the reader aware about the fact that an almost convergent sequence might have subsequences not almost converging to the almost limit. For instance, the sequence $(-1,1,-1,1,-1,1,\dots)$ is almost convergent to $0$ but not all of its subsequences are almost convergent to $0$.

\begin{proposition}\label{finito}
The following conditions are equivalent for a bounded sequence $\left(x_n\right)_{n\in \mathbb{N}}$ in a finite dimensional normed space:
\begin{enumerate}
\item $\left(x_n\right)_{n\in \mathbb{N}}$ is convergent to $x$.
\item All of its subsequences are almost convergent to $x$.
\item Every subsequence has a further subsequence almost convergent to $x$.
\end{enumerate}
\end{proposition}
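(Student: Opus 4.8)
The plan is to establish the cyclic chain of implications $1 \Rightarrow 2 \Rightarrow 3 \Rightarrow 1$. The first two implications are essentially immediate. For $1 \Rightarrow 2$, I would note that if $(x_n)_{n\in\mathbb{N}}$ converges to $x$, then so does every subsequence, and since every convergent sequence is almost convergent to its limit (as observed right after the vector-valued definition of almost convergence), every subsequence is almost convergent to $x$. The implication $2 \Rightarrow 3$ is trivial: given any subsequence, it is itself a further subsequence almost convergent to $x$.

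The substance of the theorem lies in $3 \Rightarrow 1$, and this is where finite-dimensionality is indispensable. I would argue by contradiction. Suppose $(x_n)_{n\in\mathbb{N}}$ does not converge to $x$; then there exist $\varepsilon > 0$ and a subsequence $(x_{n_k})_{k\in\mathbb{N}}$ with $\|x_{n_k} - x\| \geq \varepsilon$ for all $k$. Since the ambient space is finite dimensional and the sequence is bounded, the Bolzano--Weierstrass theorem lets me extract a further subsequence $(x_{n_{k_j}})_{j\in\mathbb{N}}$ converging to some $z \in X$; because the closed set $\{y : \|y - x\| \geq \varepsilon\}$ contains every term, it contains the limit, whence $\|z - x\| \geq \varepsilon$ and in particular $z \neq x$.

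Now I would feed this convergent subsequence back into hypothesis $3$: being a subsequence of $(x_n)_{n\in\mathbb{N}}$, it admits a further subsequence $(x_{n_{k_{j_l}}})_{l\in\mathbb{N}}$ that is almost convergent to $x$. But this further subsequence is a subsequence of $(x_{n_{k_j}})_{j\in\mathbb{N}}$, which converges to $z$; hence it too converges to $z$, and therefore it is almost convergent to $z$. The almost limit of a sequence is unique, since the Cesàro characterization forces it to equal the ordinary limit of the averages $\frac{1}{p+1}\sum_{k=0}^{p}(\cdot)_{k}$ for a fixed starting index, and such a limit is unique in a Hausdorff space. Thus $x = z$, contradicting $\|z - x\| \geq \varepsilon$, and the contradiction proves $(x_n)_{n\in\mathbb{N}}$ converges to $x$.

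The main obstacle to watch for is the well-known failure of almost convergence to pass to subsequences, flagged in the text by the example $(-1,1,-1,1,\dots)$: one cannot simply assert that a subsequence of an almost convergent sequence is almost convergent. The argument sidesteps this precisely by routing through genuine convergence, since the Bolzano--Weierstrass step produces an honestly convergent subsequence, and convergence does pass to subsequences and does imply almost convergence to the same limit. I would be careful to apply hypothesis $3$ to the already-convergent subsequence $(x_{n_{k_j}})_{j\in\mathbb{N}}$, so that the resulting further subsequence inherits the ordinary limit $z$, rather than attempting the extraction in the opposite order.
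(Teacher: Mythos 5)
Your proposal is correct and follows essentially the same route as the paper: for {\em 3} $\Rightarrow$ {\em 1}, the paper also argues by contradiction, extracting a subsequence bounded away from $x$, applying Bolzano--Weierstrass (via finite-dimensionality) to obtain a further subsequence converging to some $y$ outside $\mathsf{U}_X\left(x,r\right)$, and then invoking hypothesis {\em 3} to get a yet further subsequence almost convergent to $x$, contradicting $y=\lim=\AClim=x$. Your added remarks on the uniqueness of the almost limit and on the failure of almost convergence to pass to subsequences are sound precautions but do not change the argument.
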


\begin{proof}
We only need to show that {\em 3} $\Rightarrow$ {\em 1} since the other two implications are obvious.

Suppose to the contrary that $\left(x_n\right)_{n\in \mathbb{N}}$ is not convergent to $x$. There exist $r>0$ and a subsequence $\left(x_{n_k}\right)_{k\in \mathbb{N}}$ of $\left(x_n\right)_{n\in \mathbb{N}}$ such that $x_{n_k}\notin \mathsf{B}_X\left(x,r\right)$ for all $k\in \mathbb{N}$. Since $\left(x_{n_k}\right)_{n\in \mathbb{N}}$ is bounded, it has a further subsequence $\left(x_{n_{k_j}}\right)_{j\in \mathbb{N}}$ convergent to some $y \in X\setminus \mathsf{U}_X\left(x,r\right)$. By hypothesis, $\left(x_{n_{k_j}}\right)_{j\in \mathbb{N}}$ has a further subsequence $\left(x_{n_{k_{j_p}}}\right)_{p\in \mathbb{N}}$ almost convergent to $x$, which is impossible since $$y=\lim_{p\to\infty}x_{n_{k_{j_p}}}=\AClim_{p\to\infty}x_{n_{k_{j_p}}}=x.$$
\end{proof}

\begin{corollary}\label{nota1}
A sequence is weakly convergent if and only if all of its subsequences are weakly almost convergent to the same limit and if and only if every subsequence has a further subsequence weakly almost convergent to the same limit.
\end{corollary}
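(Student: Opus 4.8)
The plan is to reduce the whole statement to its scalar counterpart, Proposition \ref{finito}, by testing against functionals $f\in X^*$. The key observation is that both notions in play are defined through the scalar sequences $(f(x_n))_{n\in\mathbb{N}}$: a sequence $(x_n)_{n\in\mathbb{N}}$ in $X$ is weakly convergent to $x$ exactly when $(f(x_n))_{n\in\mathbb{N}}$ converges to $f(x)$ for every $f\in X^*$, and it is weakly almost convergent to $x$ exactly when $\AClim_{n\to\infty}f(x_n)=f(x)$ for every $f\in X^*$. With this in mind, I would first dispose of the two easy implications. For ``weakly convergent $\Rightarrow$ every subsequence weakly almost convergent to the same limit'', fix $f\in X^*$; then $(f(x_n))_{n\in\mathbb{N}}$ converges to $f(x)$, so does every subsequence, and a convergent scalar sequence is almost convergent to its limit; the arbitrariness of $f$ gives that every subsequence is weakly almost convergent to $x$. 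The implication ``every subsequence weakly almost convergent $\Rightarrow$ every subsequence has a further subsequence weakly almost convergent'' is trivial, since a subsequence is a further subsequence of itself.

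The substantial step is the remaining implication: assuming that every subsequence of $(x_n)_{n\in\mathbb{N}}$ admits a further subsequence weakly almost convergent to a fixed $x$, I would prove that $(x_n)_{n\in\mathbb{N}}$ is weakly convergent to $x$. The first thing to secure is boundedness of $(x_n)_{n\in\mathbb{N}}$: were it unbounded, it would carry a subsequence with norms tending to $\infty$, every further subsequence of which would again be unbounded and therefore not weakly almost convergent in view of Proposition \ref{bounded}, contradicting the hypothesis. Hence $(x_n)_{n\in\mathbb{N}}\in\ell_\infty(X)$ and, in particular, for each fixed $f\in X^*$ the scalar sequence $(f(x_n))_{n\in\mathbb{N}}$ is bounded.

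Now I would fix $f\in X^*$ and apply Proposition \ref{finito} in the finite-dimensional space $\mathbb{R}$ to the bounded real sequence $(f(x_n))_{n\in\mathbb{N}}$ with candidate limit $f(x)$. Given any subsequence $(f(x_{n_k}))_{k\in\mathbb{N}}$, the corresponding $(x_{n_k})_{k\in\mathbb{N}}$ has, by hypothesis, a further subsequence $(x_{n_{k_j}})_{j\in\mathbb{N}}$ with $\wAClim_{j\to\infty}x_{n_{k_j}}=x$; applying $f$ yields $\AClim_{j\to\infty}f(x_{n_{k_j}})=f(x)$, so $(f(x_n))_{n\in\mathbb{N}}$ satisfies condition {\em 3} of Proposition \ref{finito}. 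That proposition then forces $(f(x_n))_{n\in\mathbb{N}}$ to converge to $f(x)$, and since $f\in X^*$ was arbitrary, $(x_n)_{n\in\mathbb{N}}$ is weakly convergent to $x$. The only delicate point in the argument is this last implication, and within it the genuine obstacle is establishing boundedness before the scalar Proposition \ref{finito} can legitimately be invoked; once boundedness is in hand, the functional-by-functional reduction is entirely routine.
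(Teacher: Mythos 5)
Your proof is correct and is essentially the argument the paper intends: the corollary is stated as an immediate consequence of Proposition \ref{finito}, obtained exactly by testing against each $f\in X^*$ and applying the scalar case in $\mathbb{R}$. Your explicit boundedness step via Proposition \ref{bounded} (so that Proposition \ref{finito} legitimately applies to each bounded scalar sequence $\left(f\left(x_n\right)\right)_{n\in\mathbb{N}}$) is a welcome detail the paper leaves tacit, and it is carried out correctly.
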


Proposition \ref{finito} does not hold in infinite dimensions as expected. Indeed, consider in $c_0$ or $\ell_p \; \left(p>1\right)$ the sequence $\left(e_i\right)_{i\in \mathbb{N}}$ of canonical vectors. For every infinite subset $M \subset\mathbb{N}$ we have that $\displaystyle{\AC\lim_{i \in M} e_i=0}$, but $\left\|e_i\right\|=1$ for every $i \in \mathbb{N}$.

\begin{theorem}\label{CauchyAC}
If $\left(x_n\right)_{n\in\mathbb{N}}$ is a Cauchy sequence in $X$, then $\left(x_n\right)_{n\in\mathbb{N}}$ is convergent if and only if it is almost convergent.
\end{theorem}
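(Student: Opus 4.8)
The plan is to dispatch the two implications separately, the forward one being immediate and the reverse one carrying the substance. For the direction convergent $\Rightarrow$ almost convergent, I would simply invoke the remark recorded right after Boos' definition of almost convergence, namely that every convergent sequence is almost convergent; nothing further is required there.

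For the converse, suppose $(x_n)_{n\in\mathbb{N}}$ is both Cauchy and almost convergent, and let $y=\AClim_{n\to\infty}x_n\in X$ be its almost limit. The aim is to show that $x_n\to y$. The key observation is that the Cauchy condition lets me compare each term $x_n$ directly with its own forward Cesàro averages, and that this comparison is uniform in the number of terms being averaged. Concretely, fix $\varepsilon>0$ and choose $N$ so that $\|x_m-x_n\|<\varepsilon$ whenever $m,n\geq N$. Then for every $n\geq N$ and every $p\in\mathbb{N}$, since each index $n+k$ with $k\geq 0$ is at least $N$, I would estimate
$$\left\|\frac{1}{p+1}\sum_{k=0}^{p}x_{n+k}-x_n\right\|\leq \frac{1}{p+1}\sum_{k=0}^{p}\left\|x_{n+k}-x_n\right\|<\varepsilon.$$
On the other hand, the almost convergence of $(x_n)_{n\in\mathbb{N}}$ supplies, by definition, an index $p_0$ with $\left\|\frac{1}{p+1}\sum_{k=0}^{p}x_{n+k}-y\right\|<\varepsilon$ for all $p\geq p_0$ and \emph{all} $n$. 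Combining these two estimates through the triangle inequality, for any fixed $p\geq p_0$ and any $n\geq N$ I obtain $\|x_n-y\|<2\varepsilon$; since $\varepsilon$ is arbitrary, this gives $x_n\to y$.

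I do not anticipate any genuine obstacle. In particular no completeness of $X$ is needed, because the almost limit $y$ is already furnished as an element of $X$ by the very definition of almost convergence, and the only tools invoked are the triangle inequality together with the uniform-in-$n$ nature of the Cesàro limit. The single point deserving care is precisely that uniformity: it is what allows one $p_0$ to serve simultaneously for all $n\geq N$, so that the averaging length $p$ can be chosen independently of $n$ before $n$ is sent to infinity. Without it the argument would collapse, so it is worth flagging explicitly in the writeup.
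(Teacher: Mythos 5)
Your proof is correct and takes essentially the same approach as the paper's: both compare $x_n$ with its forward Ces\`aro average $\frac{1}{p+1}\sum_{k=0}^{p}x_{n+k}$, using the Cauchy condition to bound $\bigl\|x_n-\frac{1}{p+1}\sum_{k=0}^{p}x_{n+k}\bigr\|$ and the uniform-in-$n$ convergence of the averages to the almost limit to close the triangle inequality, with the averaging length fixed independently of $n$. The paper's only cosmetic differences are normalizing the almost limit to $0$ and writing your decomposition as the identity $x_j+\frac{1}{i_0+1}\sum_{p=j+1}^{j+i_0}(x_p-x_j)=\frac{1}{i_0+1}\sum_{k=0}^{i_0}x_{j+k}$.
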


\begin{proof}
Suppose that $\left(x_n\right)_{n\in\mathbb{N}}$ is an almost convergent Cauchy sequence in $X$. There exists $x_0 \in X$ such that $\displaystyle{\mathrm{AC}\lim_{i\to\infty} x_i=x_0}$. Without lack of generality, we suppose that $x_0=0$. If $\varepsilon>0$ is given, then there exists $i_0 \in \mathbb{N}$ satisfying $$\frac{1}{i_0+1}\left \| \sum_{k=0}^{i_0} x_{j+k}\right \| \leqslant \frac{\varepsilon}{2}$$ for every $j \in \mathbb{N}$. On the other hand, there exists $j_0 \in \mathbb{N}$ such that if $p,q\geqslant j_0$, then $\left\|x_p-x_q\right\| \leqslant \varepsilon/2$. Therefore $$\left \| x_j+\frac{1}{i_0+1} \sum_{p=j+1}^{j+i_0} (x_p-x_j) \right\| = \frac{1}{i_0+1}\left \| \sum_{k=0}^{i_0} x_{j+k}\right\| \leqslant \frac{\varepsilon}{2}$$ for every $j \geqslant j_0$. If we denote $\displaystyle{v=\sum_{p=j+1}^{j+i_0}\left(x_p-x_j\right)}$, then we have that $$\left\|x_j\right\|-\left\|\frac{v}{i_0+1}\right\| \leqslant \left\|x_j +\frac{v}{i_0+1}\right\|\leqslant \frac{\varepsilon}{2}.$$ However $\left\|\frac{v}{i_0+1}\right\| \leqslant \frac{\varepsilon}{2}$ so we deduce that $\displaystyle{\left\|x_j\right\| \leqslant \left\|\frac{v}{i_0+1}\right\|+ \frac{\varepsilon}{2} \leqslant \varepsilon}$ for each $j \geqslant j_0$.
\end{proof}

\section{Spaces of almost convergent sequences}

Like every time a new convergent method is born, what is due is to consider the corresponding sequence space and to study its properties. The sequence spaces related with the almost convergence will have a strong impact on the structure of the set of Banach limits.

\subsection{The spaces $ac\left(X\right)$ and $wac\left(X\right)$}

We refer the reader to \cite{Boos} where the following spaces are defined.

\begin{definition}[Boos, 2000; \cite{Boos}]
If $X$ is a normed space, then
\begin{itemize}
\item $ac\left(X\right) := \left\{\left(x_i\right)_{i\in \mathbb{N}} \in X^{\mathbb{N}}: \AClim_{i\to\infty} x_i \; \hbox{exists} \right\}$
\item $wac\left(X\right) := \left\{\left(x_i\right)_{i\in \mathbb{N}} \in X^{\mathbb{N}}: \wAClim_{i\to\infty} x_i \; \hbox{exists} \right\}$.
\end{itemize}
\end{definition}

Obviously $$c\left(X\right)\subset ac\left(X\right)\subset wac\left(X\right)\subset\ell_{\infty}\left(X\right).$$

Another usual space of this kind is $$ac_0\left(X\right) = \left\{\left(x_i\right)_{i\in \mathbb{N}} \in X^{\mathbb{N}}: \AClim_{i\to\infty} x_i =0 \right\}.$$

It is fairly obvious that $c_0\left(X\right)\subset ac_0\left(X\right)\subset ac\left(X\right)$. The following lemma directly relies on Lemma \ref{bps}.

\begin{lemma}\label{jaja}
$bps(X)\subset ac_0\left(X\right)$.
\end{lemma}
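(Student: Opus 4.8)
The plan is to prove the inclusion $bps(X)\subset ac_0(X)$ by combining the characterization of $bps(X)$ from Lemma \ref{bps} with a direct Cesàro estimate. First I would take an arbitrary $(a_n)_{n\in\mathbb{N}}\in bps(X)$ and invoke Lemma \ref{bps} to write $a_n=z_{n+1}-z_n$ for some $(z_n)_{n\in\mathbb{N}}\in\ell_\infty(X)$. The key observation is that the partial sums telescope: for every $n\in\mathbb{N}$ and every $p\geq 0$,
\begin{equation*}
\sum_{k=0}^{p}a_{n+k}=\sum_{k=0}^{p}(z_{n+k+1}-z_{n+k})=z_{n+p+1}-z_n.
\end{equation*}

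With this telescoping identity in hand, the almost-convergence estimate becomes immediate. I would bound the Cesàro average by
\begin{equation*}
\left\|\frac{1}{p+1}\sum_{k=0}^{p}a_{n+k}\right\|=\frac{\left\|z_{n+p+1}-z_n\right\|}{p+1}\leq\frac{2\left\|(z_n)_{n\in\mathbb{N}}\right\|_\infty}{p+1},
\end{equation*}
which holds for all $n\in\mathbb{N}$ simultaneously. Since the right-hand side is independent of $n$ and tends to $0$ as $p\to\infty$, this shows that $\frac{1}{p+1}\sum_{k=0}^{p}a_{n+k}\to 0$ uniformly in $n$. By the very definition of almost convergence (Boos' definition), this means $\AClim_{n\to\infty}a_n=0$, i.e. $(a_n)_{n\in\mathbb{N}}\in ac_0(X)$.

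There is essentially no obstacle here: the whole content is the telescoping identity, which reduces the Cesàro average to a single difference $z_{n+p+1}-z_n$ whose norm is controlled uniformly by $2\|(z_n)_{n\in\mathbb{N}}\|_\infty$. The only point requiring a word of care is that the uniformity in $n$ is genuinely built into the bound, since the estimate $\|z_{n+p+1}-z_n\|\leq 2\|(z_n)_{n\in\mathbb{N}}\|_\infty$ does not depend on $n$; this is exactly what the definition of almost convergence demands. Finally, the inclusion is strict (one could note, e.g., that $c_0(X)\subset ac_0(X)$ contains null sequences with unbounded partial sums), but for the stated lemma only the inclusion $\subset$ is needed, so I would stop once the uniform convergence has been established.
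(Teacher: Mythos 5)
Your proof is correct and is essentially identical to the paper's own argument: both invoke Lemma \ref{bps} to write a generic element of $bps(X)$ as $(z_{n+1}-z_n)_{n\in\mathbb{N}}$ with $(z_n)_{n\in\mathbb{N}}\in\ell_\infty(X)$, telescope the Cesàro sums to $\frac{z_{n+p+1}-z_n}{p+1}$, and bound by $\frac{2\left\|(z_n)_{n\in\mathbb{N}}\right\|_\infty}{p+1}$, which is uniform in $n$. Your explicit remarks on uniformity and on strictness of the inclusion are fine additions but not needed beyond what the paper does.
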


\begin{proof}
For all $n,p\in \mathbb{N}$ we have $$\left\|\frac{\sum_{k=0}^p\left(z_{n+k+1}-z_{n+k}\right)}{p+1}\right\|=\left\|\frac{z_{n+p+1}-z_{n}}{p+1}\right\|\leq \frac{2\left\|\left(z_n\right)_{n\in \mathbb{N}}\right\|_{\infty}}{p+1}.$$
\end{proof}

\begin{theorem}\label{completitud}
The spaces $ac\left(X\right)$ and $wac\left(X\right)$ are closed in $\ell_\infty\left(X\right)$ endowed with the sup norm provided that $X$ is complete.
\end{theorem}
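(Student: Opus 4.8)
The plan is to realize the almost-limit as a linear contraction and then run a standard three-$\varepsilon$ approximation, with completeness used only to manufacture a candidate limit. I will settle $ac(X)$ first and deduce the case of $wac(X)$ from the scalar situation. First I would record that the almost limit is single-valued (two competing almost limits would both be the uniform limit of the same Cesaro averages, hence equal), that the assignment $(x_n)_{n\in\mathbb{N}}\mapsto\AClim_{n\to\infty}x_n$ is linear, and that it is a contraction: since each Cesaro block $\frac{1}{p+1}\sum_{k=0}^{p}x_{n+k}$ has norm at most $\|(x_n)_{n\in\mathbb{N}}\|_\infty$, this bound passes to the uniform limit, giving $\|\AClim_{n\to\infty}x_n\|\leq\|(x_n)_{n\in\mathbb{N}}\|_\infty$.

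Now let $(x^m)_{m\in\mathbb{N}}\subset ac(X)$ converge to $x=(x_n)_{n\in\mathbb{N}}$ in the sup norm, and set $y^m:=\AClim_{n\to\infty}x^m_n$. By the contraction estimate, $\|y^m-y^{m'}\|\leq\|x^m-x^{m'}\|_\infty$, so $(y^m)_{m\in\mathbb{N}}$ is Cauchy in $X$; here is the sole place where completeness of $X$ is invoked, yielding a limit $y\in X$. The core step is then to check that $\AClim_{n\to\infty}x_n=y$, that is, uniform convergence in $n$ of the Cesaro averages of $x$ to $y$. Fixing $\varepsilon>0$, I would pick $m$ with $\|x^m-x\|_\infty<\varepsilon/3$ and $\|y^m-y\|<\varepsilon/3$, and then $p_0$ with $\left\|\frac{1}{p+1}\sum_{k=0}^{p}x^m_{n+k}-y^m\right\|<\varepsilon/3$ for all $p\geq p_0$ and all $n$ (available because $x^m\in ac(X)$). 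The triangle inequality
$$\left\|\frac{1}{p+1}\sum_{k=0}^{p}x_{n+k}-y\right\|\leq\left\|\frac{1}{p+1}\sum_{k=0}^{p}\left(x_{n+k}-x^m_{n+k}\right)\right\|+\left\|\frac{1}{p+1}\sum_{k=0}^{p}x^m_{n+k}-y^m\right\|+\|y^m-y\|$$
bounds each summand by $\varepsilon/3$ uniformly in $n$, so $x\in ac(X)$.

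For $wac(X)$ I would reduce to the scalar case. The weak almost limit is again single-valued, and from $f\left(\wAClim_{n\to\infty}x_n\right)=\AClim_{n\to\infty}f(x_n)$ together with $|\AClim_{n\to\infty}a_n|\leq\|(a_n)_{n\in\mathbb{N}}\|_\infty$, the assignment $(x_n)_{n\in\mathbb{N}}\mapsto\wAClim_{n\to\infty}x_n$ is a linear contraction. Hence for a sup-norm convergent $(x^m)_{m\in\mathbb{N}}\subset wac(X)$ the weak almost limits $y^m$ are Cauchy and converge to some $y\in X$ by completeness. Fixing $f\in X^*$, the scalar sequences $f(x^m)$ converge to $f(x)$ in $\ell_\infty$, each $f(x^m)$ is almost convergent with almost limit $f(y^m)\to f(y)$, and applying the already-established closedness of $ac(\mathbb{R})$ (the case $X=\mathbb{R}$, which is complete) gives $\AClim_{n\to\infty}f(x_n)=f(y)$. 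As $f$ is arbitrary, $x\in wac(X)$ with $\wAClim_{n\to\infty}x_n=y$.

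The only genuinely delicate point is keeping the estimates uniform in $n$ throughout the approximation; this is automatic here because the single quantity $\|x^m-x\|_\infty$ simultaneously controls every Cesaro block, independently of $n$. The role of completeness is confined to producing the limits $y$ of the (Cauchy) sequences of almost limits, and it is precisely this step that would fail for incomplete $X$, which is why the hypothesis is necessary.
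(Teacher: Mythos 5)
Your proof is correct, and while its analytic core is the same three-$\varepsilon$ scheme as the paper's, the organization is genuinely different. The paper argues directly in $wac\left(X\right)$: to show the sequence of weak almost limits is Cauchy it picks, for each pair $p,q$, a norming functional $f\in \mathsf{S}_{X^*}$ with $\left\|x_p-x_q\right\|=\left|f\left(x_p\right)-f\left(x_q\right)\right|$ and runs the uniform-in-$n$ estimate on the averages $\frac{1}{i+1}\left(f\left(x^p_j\right)+\cdots+f\left(x^p_{j+i}\right)\right)$, then verifies the weak almost limit of the limiting sequence with a second three-$\varepsilon$ estimate over arbitrary $f\in X^*$; the case of $ac\left(X\right)$ is dismissed as analogous. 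You instead isolate once and for all that $\AClim$ (and, via $\left\|z\right\|=\sup_{f\in\mathsf{S}_{X^*}}\left|f\left(z\right)\right|$, also $\wAClim$) is a linear contraction, which makes the Cauchy step a one-line consequence, prove closedness of $ac\left(X\right)$ directly, and then deduce the $wac\left(X\right)$ case by reduction to the scalar space $ac=ac\left(\mathbb{R}\right)$ together with uniqueness of weak almost limits. This buys economy: only one uniform-in-$n$ $\varepsilon$-argument is ever performed, and the Hahn--Banach input is packaged a single time in the contraction estimate rather than re-invoked inside the weak-case estimates. One small point of bookkeeping: closedness of $ac\left(\mathbb{R}\right)$ by itself only yields that $\left(f\left(x_n\right)\right)_{n\in\mathbb{N}}$ is almost convergent; the identification $\AClim_{n\to\infty}f\left(x_n\right)=f\left(y\right)$ additionally uses either the continuity of the scalar $\AClim$ on the closed subspace $ac$ (so that $f\left(y^m\right)=\AClim f\left(x^m\right)\to\AClim f\left(x\right)$) or the fact that your own closedness argument produces the almost limit as the limit of the almost limits; since you recorded the contraction property at the outset, this is available and the step is sound, but it deserves to be said explicitly rather than attributed to closedness alone.
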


\begin{proof}
We will only prove the closedness of $wac\left(X\right)$. The closedness of $ac\left(X\right)$ can be shown in a similar way. Let $\left(x^n\right)_{n\in\mathbb{N}}$ be a sequence in $wac\left(X\right)$ and consider $x^0\in \ell_\infty\left(X\right)$ such that $\displaystyle{\lim_{n\to\infty} \left\| x^n-x^0 \right\|_{\infty}=0}$. We will show that $x^0 \in wac\left(X\right)$. For each natural $n$, there exists $x_n\in X$ satisfying that $\displaystyle{\wAClim_{i\to\infty}x_i^n=x_n}$. We will show first that $\left(x_n\right)_{n\in\mathbb{N}}$ is a Cauchy sequence in $X$. Take any $\varepsilon>0$. An $n_0\in\mathbb{N}$ can be found such that for each $p,q \geqslant n_0$ we have that $\left\| x^p-x^q \right\|_{\infty} \leqslant \varepsilon/3$. Fix $p,q \geqslant n_0$ and consider a functional $f \in \mathsf{S}_{X^*}$ such that $\left\| x_p-x_q \right\|= \left|f\left(x_p\right)-f\left(x_q\right)\right|$. There exists $i \in \mathbb{N}$ such that
$$\left| f\left(x_p\right)- \frac{1}{i+1}\left(f\left(x_j^p\right)+ \cdots + f\left(x_{j+i}^p\right)\right) \right| \leqslant \frac{\varepsilon}{3}$$
and
$$\left| f\left(x_q\right)- \frac{1}{i+1}\left(f\left(x_j^q\right)+ \cdots + f\left(x_{j+i}^q\right)\right) \right| \leqslant \frac{\varepsilon}{3}$$
for every $j \in \mathbb{N}$. It follows that
    \begin{eqnarray*}
      \nonumber \left \|x_p-x_q \right \| &\leqslant&\left| f\left(x_p\right)- \frac{1}{i+1}\left(f\left(x_j^p\right)+ \cdots + f\left(x_{j+i}^p\right)\right) \right| \\
      \nonumber &+& \left|\frac{1}{i+1}\left(f\left(x_j^p-x_j^q\right)+\cdots+f\left(x_{j+i}^p-x_{j+i}^q\right)\right)\right| \\
      \nonumber &+& \left| f\left(x_q\right)- \frac{1}{i+1}\left(f\left(x_j^q\right)+ \cdots + f\left(x_{j+i}^q\right)\right) \right| \\ &\leqslant& \varepsilon.
    \end{eqnarray*}
Since $X$ is complete, there exists $x_0 \in X$ such that $\displaystyle{\lim_{n\to \infty} x_n=x_0}$. Finally, we will show that $\displaystyle{\wAClim_{i\to\infty} x_i^0=x_0}$. If $\varepsilon>0$ is given and $f \in X^*\setminus \left\{0\right\}$, then we can fix $p \in \mathbb{N}$ satisfying $\left\| x_0-x_p \right\| \leqslant \frac{\varepsilon}{3 \left\| f \right\|}$ and $\left\| x^0-x^p \right\|_{\infty} \leqslant \frac{\varepsilon}{3 \left \| f \right\|}$. Since $\displaystyle{\wAClim_{i\to\infty}x_i^p=x_p}$, there exists $i_0\in \mathrm{N}$ such that
    $$\left|f\left(x_p\right)-\frac{1}{i+1}\left(f\left(x_j^p\right)+ \cdots + f\left(x_{j+i}^p\right)\right)\right| \leqslant \frac{\varepsilon}{3}$$ for every $i \geqslant i_0$ and every $j \in \mathbb{N}$. Thus
    \begin{eqnarray*}
     && \left|f\left(x_0\right)-\frac{1}{i+1}\left(f\left(x_j^0\right)+ \cdots + f\left(x_{j+i}^0\right)\right)\right|\\ & \leqslant & \left|f\left(x_0\right)-f\left(x_p\right)\right| \\
      & + & \left|f\left(x_p\right)-\frac{1}{i+1}\left(f\left(x_j^p\right)+ \cdots + f\left(x_{j+i}^p\right)\right)\right| \\
      & + & \left|\frac{1}{i+1}\left(f\left(x_j^p-x_j^0\right)+\cdots+f\left(x_{j+i}^p-x_{j+i}^0\right)\right)\right| \\
& \leqslant &
      \varepsilon,
    \end{eqnarray*}
 for every $i \geqslant i_0$ and every $j \in \mathbb{N}$. In order words, $\displaystyle{\wAClim_{i\to\infty} x_i^0 =x_0}$.
\end{proof}

By bearing in mind that $\mathbf{X}$ is a closed subspace of $\ell_\infty\left(X\right)$ even if $X$ is not complete, we immediately deduce the following corollary.

\begin{corollary}
The following conditions are equivalent:
\begin{enumerate}
\item $ac\left(X\right)$ is complete.
\item $wac\left(X\right)$ is complete.
\item $X$ is complete.
\end{enumerate}
\end{corollary}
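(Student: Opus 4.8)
The plan is to derive everything from Theorem \ref{completitud} together with the two elementary facts already recorded in the excerpt: that $\ell_\infty(X)$ is itself complete whenever $X$ is, and that the space $\mathbf{X}$ of constant sequences is a closed subspace of $\ell_\infty(X)$ which is linearly isometric to $X$ via $x\mapsto \mathbf{x}$. Concretely, I would prove the two implications $3\Rightarrow 1$ and $3\Rightarrow 2$ using Theorem \ref{completitud}, and the two implications $1\Rightarrow 3$ and $2\Rightarrow 3$ using the subspace $\mathbf{X}$; this closes the loop since it yields $1\Leftrightarrow 3$ and $2\Leftrightarrow 3$.

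For $3\Rightarrow 1$ and $3\Rightarrow 2$, assume $X$ is complete. Then $\ell_\infty(X)$ is a Banach space, and by Theorem \ref{completitud} both $ac(X)$ and $wac(X)$ are closed subspaces of $\ell_\infty(X)$. Since a closed subspace of a Banach space is complete, both $ac(X)$ and $wac(X)$ are complete, which is exactly what $1$ and $2$ assert.

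For $1\Rightarrow 3$ (and identically $2\Rightarrow 3$, since $\mathbf{X}\subseteq c(X)\subseteq ac(X)\subseteq wac(X)$), I would start from an arbitrary Cauchy sequence $(x_k)_{k\in\mathbb{N}}$ in $X$ and pass to the constant sequences $\mathbf{x_k}\in\mathbf{X}\subseteq ac(X)$. Because $\|\mathbf{x_k}-\mathbf{x_l}\|_\infty=\|x_k-x_l\|$, the sequence $(\mathbf{x_k})_{k\in\mathbb{N}}$ is Cauchy in $ac(X)$; the assumed completeness of $ac(X)$ gives a limit $y\in ac(X)\subseteq\ell_\infty(X)$. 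Here the key point is that $\mathbf{X}$ is closed in $\ell_\infty(X)$, so the limit $y$ of the $\mathbf{x_k}$'s must lie in $\mathbf{X}$, say $y=\mathbf{x}$ for some $x\in X$; then $\|x_k-x\|=\|\mathbf{x_k}-\mathbf{x}\|_\infty\to 0$, so $(x_k)_{k\in\mathbb{N}}$ converges in $X$ and $X$ is complete.

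There is no genuine obstacle here: the only point requiring a word of care is that closedness of $\mathbf{X}$ in the ambient space $\ell_\infty(X)$ is what forces the limit to stay in $\mathbf{X}$ (equivalently, $\mathbf{X}$ is closed in $ac(X)$ and in $wac(X)$ for the relative topology, so it inherits completeness as a closed subspace), after which the isometry $X\cong\mathbf{X}$ transports completeness back to $X$.
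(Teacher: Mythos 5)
Your proof is correct and follows exactly the route the paper intends: the authors omit a written proof, stating that the corollary is immediately deduced from Theorem \ref{completitud} (which gives $3\Rightarrow 1,2$ via closedness in the complete space $\ell_\infty(X)$) together with the observation that $\mathbf{X}$ is a closed subspace of $\ell_\infty(X)$ even when $X$ is not complete (which, via the isometry $x\mapsto\mathbf{x}$, gives $1\Rightarrow 3$ and $2\Rightarrow 3$). Your spelled-out Cauchy-sequence argument is precisely the detail the paper leaves implicit.
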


It is immediate that $ac_0\left(\overline{X}\right)\cap \ell_\infty\left(X\right)=ac_0\left(X\right)$ and thus $ac_0(X)$ is always closed in $\ell_\infty(X)$.

\begin{theorem}\label{jopa}
The following conditions are equivalent:
\begin{enumerate}
\item $ac\left(\overline{X}\right)\cap \ell_\infty\left(X\right)=ac\left(X\right)$.
\item $ac\left(X\right)$ is a closed subspace of $\ell_\infty\left(X\right)$.
\item $X$ is complete.
\end{enumerate}
\end{theorem}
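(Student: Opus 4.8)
The plan is to prove the cycle $1\Rightarrow 2\Rightarrow 3\Rightarrow 1$, following verbatim the blueprint already used for Theorem \ref{jopac}, with the spaces $c(\cdot)$ replaced by $ac(\cdot)$ and the role of the completion playing the same part. Throughout I will use freely that $ac(X)\subseteq\ell_\infty(X)$.

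For $1\Rightarrow 2$ the argument is purely formal. Since $\overline{X}$ is complete, Theorem \ref{completitud} guarantees that $ac(\overline{X})$ is closed in $\ell_\infty(\overline{X})$ for the sup norm. Consequently $ac(\overline{X})\cap\ell_\infty(X)$ is closed in the subspace $\ell_\infty(X)$ endowed with its (relative, i.e. sup-norm) topology. By the hypothesis of $1$, this intersection is exactly $ac(X)$, so $ac(X)$ is a closed subspace of $\ell_\infty(X)$.

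The step $2\Rightarrow 3$ is the only one requiring real content, and I expect it to be the main obstacle. I would take an arbitrary Cauchy sequence $(x_n)_{n\in\mathbb{N}}\subset X$ and show it converges in $X$. Exactly as in the construction of Lemma \ref{bixic}, every Cauchy sequence is a sup-norm limit of eventually constant sequences: given $\varepsilon>0$, pick $N$ with $\|x_n-x_m\|<\varepsilon$ for $n,m\geq N$ and replace all terms beyond $x_N$ by $x_N$, obtaining an element of $c(X)\subseteq ac(X)$ at sup-distance $\leq\varepsilon$ from $(x_n)_{n\in\mathbb{N}}$. Hence $(x_n)_{n\in\mathbb{N}}$ lies in the $\ell_\infty(X)$-closure of $ac(X)$, which by hypothesis $2$ equals $ac(X)$ itself; thus $(x_n)_{n\in\mathbb{N}}$ is almost convergent. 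The delicate point is that closedness only yields almost convergence, not convergence; the bridge is Theorem \ref{CauchyAC}, which for a \emph{Cauchy} sequence equates almost convergence with convergence. Therefore $(x_n)_{n\in\mathbb{N}}$ converges in $X$, and $X$ is complete. As an alternative that avoids Theorem \ref{CauchyAC}, one may note that the Cesàro means of $(x_n)_{n\in\mathbb{N}}$ converge in $\overline{X}$ to its limit $\bar{x}$, so the almost limit (which belongs to $X$ by membership in $ac(X)$) must coincide with $\bar{x}$, forcing $\bar{x}\in X$.

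Finally, $3\Rightarrow 1$ is immediate: if $X$ is complete then $\overline{X}=X$, whence $ac(\overline{X})\cap\ell_\infty(X)=ac(X)\cap\ell_\infty(X)=ac(X)$. This closes the cycle and establishes the equivalence.
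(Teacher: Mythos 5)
Your proposal is correct and takes essentially the same route as the paper: the same appeal to Theorem \ref{completitud} for {\em 1} $\Rightarrow$ {\em 2}, the same approximation of a Cauchy sequence by eventually constant (hence convergent, hence almost convergent) sequences underlying Lemma \ref{bixic} for {\em 2} $\Rightarrow$ {\em 3}, and the trivial {\em 3} $\Rightarrow$ {\em 1}. The only difference is presentational: you argue directly and make explicit the bridge from almost convergence to convergence via Theorem \ref{CauchyAC} (or, equivalently, your Ces\`aro-means uniqueness argument), whereas the paper argues by contradiction and leaves that final step implicit in the remark that the limit of $\left(x_n\right)_{n\in\mathbb{N}}$ in $\overline{X}$ does not belong to $X$.
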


\begin{proof}
\mbox{}
\begin{enumerate}
\item[{\em 1} $\Rightarrow${\em 2}] Immediate if taken into account that $ac\left(\overline{X}\right)$ is closed in $\ell_\infty\left(\overline{X}\right)$ in virtue of Theorem \ref{completitud}.
\item[{\em 2} $\Rightarrow${\em 3}] Assume to the contrary that $X$ is not complete. Consider a non-convergent Cauchy sequence $\left(x_n\right)_{n\in\mathbb{N}}\subset X$. It is obvious that $\left(x_n\right)_{n\in\mathbb{N}}\in c\left(\overline{X}\right)\cap \ell_\infty\left(X\right)\subset ac\left(\overline{X}\right)\cap \ell_\infty\left(X\right)$. In accordance to Lemma \ref{bixic} there exists a sequence of elements of $c\left(X\right)$ converging to $\left(x_n\right)_{n\in\mathbb{N}}$. Since $c\left(X\right)\subset ac\left(X\right)$, we deduce by hypothesis that $\left(x_n\right)_{n\in\mathbb{N}}\in ac\left(X\right)$, which is impossible since the limit of $\left(x_n\right)_{n\in\mathbb{N}}$ in $\overline{X}$ does not belong to $X$.
\item[{\em 3} $\Rightarrow${\em 1}] Obvious.
\end{enumerate}
\end{proof}

It is not excessively hard to check that $ac_0(X)$ is dense in $ac_0\left(\overline{X}\right)$. Indeed, if $(y_n)_{n\in \N}\in ac_0\left(\overline{X}\right)$, then we simply need to find $x_n\in X$ with $\|x_n-y_n\|<\frac{\varepsilon}{2^n}$, and we will have that $$\left\|\frac{1}{p+1}\sum_{k=0}^px_{n+k}\right\|\leq \frac{\varepsilon}{p+1}+\left\|\frac{1}{p+1}\sum_{k=0}^py_{n+k}\right\|,$$ which trivially implies that $\AClim_{n\to \infty} x_n=0$.

The density of $ac(X)$ in $ac\left(\overline{X}\right)$ is not that trivial (unless we rely on the previous sentence).

\begin{lemma}\label{helpazoahi}
$ac\left(X\right)$ is dense in $ac\left(\overline{X}\right)$.
\end{lemma}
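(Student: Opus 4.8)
The plan is to reduce the statement to the null case $ac_0$, which has already been handled just before this lemma, by splitting an almost convergent sequence into a null almost convergent part plus a constant sequence. The key structural observation is that, by the very definition of $ac(X)$, the almost limit of a sequence in $ac(X)$ is required to lie in $X$; consequently $ac(X) = ac_0(X) \oplus \mathbf{X}$, and likewise $ac(\overline{X}) = ac_0(\overline{X}) \oplus \overline{\mathbf{X}}$, where $\overline{\mathbf{X}}$ denotes the set of constant sequences with term in $\overline{X}$. This decomposition is what lets me treat the ``null'' direction and the ``limit'' direction separately.

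First I would take an arbitrary $(y_n)_{n\in\mathbb{N}} \in ac(\overline{X})$ and let $y \in \overline{X}$ be its almost limit, so that $(z_n)_{n\in\mathbb{N}} := (y_n - y)_{n\in\mathbb{N}} \in ac_0(\overline{X})$. Fixing $\varepsilon > 0$, I would then invoke the density of $ac_0(X)$ in $ac_0(\overline{X})$ (checked right before this lemma) to pick $(w_n)_{n\in\mathbb{N}} \in ac_0(X)$ with $\|(z_n)_{n\in\mathbb{N}} - (w_n)_{n\in\mathbb{N}}\|_\infty < \varepsilon/2$, and the density of $X$ in $\overline{X}$ to pick $x \in X$ with $\|y - x\| < \varepsilon/2$.

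The sequence $(w_n + x)_{n\in\mathbb{N}} = (w_n)_{n\in\mathbb{N}} + \mathbf{x}$ then lies in $ac(X)$, since $(w_n)_{n\in\mathbb{N}} \in ac_0(X) \subset ac(X)$, $\mathbf{x} \in \mathbf{X} \subset ac(X)$, and $ac(X)$ is a vector space (its almost limit being $x \in X$). The final estimate is the triangle inequality
$$\left\|(y_n)_{n\in\mathbb{N}} - (w_n + x)_{n\in\mathbb{N}}\right\|_\infty \leq \left\|(z_n)_{n\in\mathbb{N}} - (w_n)_{n\in\mathbb{N}}\right\|_\infty + \|y - x\| < \varepsilon,$$
which shows that $(w_n + x)_{n\in\mathbb{N}} \in ac(X)$ approximates $(y_n)_{n\in\mathbb{N}}$ within $\varepsilon$, giving the density.

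The only genuinely delicate point, and hence the one step to justify with care, is the legitimacy of the splitting $ac(X) = ac_0(X) \oplus \mathbf{X}$: one must confirm that the almost limit of an $ac(X)$-sequence cannot escape to $\overline{X}\setminus X$, which is precisely what the requirement ``there exists $y\in X$'' in the definition of almost convergence guarantees. Once this is granted, all the analytic content has already been absorbed into the previously verified density of $ac_0(X)$ in $ac_0(\overline{X})$, so no further Cesàro-mean computations are needed. An alternative would be to imitate the truncation argument of Lemma \ref{bixic} directly, but the splitting approach is shorter and reuses machinery already in place.
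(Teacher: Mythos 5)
Your proof is correct and takes essentially the same route as the paper's own argument: both split $(y_n)_{n\in\mathbb{N}}$ into the null part $(y_n-y)_{n\in\mathbb{N}}$ plus the constant sequence, approximate the former via the density of $ac_0(X)$ in $ac_0\left(\overline{X}\right)$ established just before the lemma and the latter via the density of $X$ in $\overline{X}$, and finish with the triangle inequality. Your additional framing through the (valid) decomposition $ac(X)=ac_0(X)\oplus \mathbf{X}$ is merely a repackaging of the same computation.
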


\begin{proof}
Let $\left(y_n\right)_{n\in\mathbb{N}}\in ac\left(\overline{X}\right)$ and $y=\displaystyle{\AClim_{n\to\infty} y_n}$. Fix an arbitrary $\varepsilon >0$. There exists a sequence $(x_n)_{n\in\N}\in ac_0(X)$ with $\left\|(x_n)_{n\in\N}-(y_n-y)_{n\in\N}\right\|_\infty<\varepsilon/2$. On the other hand, there exists $x\in X$ with $\left\|x-y\right\|<\varepsilon/2$. Finally, notice that $(x_n+x)_{n\in \N}\in ac(X)$ and $\left\|(x_n+x)_{n\in\N}-(y_n)_{n\in\N}\right\|_\infty<\varepsilon$.
\end{proof}

\begin{corollary}\label{helpazo}
$ac\left(X\right)$ is never dense in $\ell_\infty\left(X\right)$.
\end{corollary}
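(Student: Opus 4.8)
The plan is to exhibit a single bounded $X$-valued sequence that stays a positive distance away from the whole of $ac(X)$ in the sup norm; this alone prevents $ac(X)$ from being dense. The device is to transport a non-almost-convergent scalar sequence into $X$ along a norming pair, and then push everything back down to the scalars with a functional.

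First I would fix, using the Hahn--Banach Theorem, a vector $x\in\mathsf{S}_X$ and a functional $x^*\in\mathsf{S}_{X^*}$ with $x^*(x)=1$. Next I would choose a bounded scalar sequence $(a_n)_{n\in\mathbb{N}}\in\ell_\infty$ that fails to be almost convergent (for instance the indicator of a union of blocks of increasing length, whose windowed Ces\`aro averages oscillate between values near $0$ and near $1$ and hence cannot converge uniformly in $n$). Since $\mathbb{R}$ is complete, $ac(\mathbb{R})$ is a closed subspace of $\ell_\infty$ by Theorem \ref{jopa} (equivalently by Theorem \ref{completitud}), so $(a_n)_{n\in\mathbb{N}}\notin ac(\mathbb{R})$ forces $d\left((a_n)_{n\in\mathbb{N}},ac(\mathbb{R})\right)>0$.

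I would then set $z_n:=a_nx$ and estimate the distance from $(z_n)_{n\in\mathbb{N}}\in\ell_\infty(X)$ to an arbitrary $(w_n)_{n\in\mathbb{N}}\in ac(X)$. The key observation is that composing with $x^*$ carries almost convergence in $X$ to almost convergence in $\mathbb{R}$: applying the continuous linear functional $x^*$ inside the Ces\`aro averages that define $\AClim$ shows $(x^*(w_n))_{n\in\mathbb{N}}\in ac(\mathbb{R})$. Consequently, for every $n$,
$$\|z_n-w_n\|\geq |x^*(z_n-w_n)|=|a_n-x^*(w_n)|,$$
so that $\left\|(z_n)_{n\in\mathbb{N}}-(w_n)_{n\in\mathbb{N}}\right\|_\infty\geq\left\|(a_n)_{n\in\mathbb{N}}-(x^*(w_n))_{n\in\mathbb{N}}\right\|_\infty\geq d\left((a_n)_{n\in\mathbb{N}},ac(\mathbb{R})\right)$. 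Taking the infimum over $(w_n)_{n\in\mathbb{N}}\in ac(X)$ yields $d\left((z_n)_{n\in\mathbb{N}},ac(X)\right)\geq d\left((a_n)_{n\in\mathbb{N}},ac(\mathbb{R})\right)>0$, which is exactly the desired conclusion.

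The only genuinely delicate points are the two supporting facts: that $x^*$ sends $ac(X)$ into $ac(\mathbb{R})$ (a one-line application of linearity and continuity to the defining uniform Ces\`aro limit) and that a bounded scalar sequence failing almost convergence actually exists; once such a sequence is produced, its positive distance to $ac(\mathbb{R})$ is automatic precisely because $ac(\mathbb{R})$ is closed. An essentially equivalent route would dispense with the scalar distance argument: since $(z_n)_{n\in\mathbb{N}}=(a_nx)_{n\in\mathbb{N}}\notin ac(\overline{X})$ (again because $(a_n)_{n\in\mathbb{N}}$ is not almost convergent and $x\neq0$), while $\cl\left(ac(X)\right)\subseteq ac(\overline{X})$ by the closedness in Theorem \ref{completitud} together with $ac(X)\subseteq ac(\overline{X})$ from Lemma \ref{helpazoahi}, the same separation follows.
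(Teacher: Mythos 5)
Your proof is correct, and its main argument is genuinely different from the paper's. The paper argues by contradiction through the completion: if $ac(X)$ were dense in $\ell_\infty(X)$, then (since $\ell_\infty(X)$ is dense in $\ell_\infty\left(\overline{X}\right)$ and $ac(X)\subseteq ac\left(\overline{X}\right)$) $ac\left(\overline{X}\right)$ would be dense in $\ell_\infty\left(\overline{X}\right)$, hence equal to it by the closedness in Theorem \ref{completitud}, contradicting $\ell_\infty\setminus ac\neq\varnothing$. You instead exhibit a concrete witness: transporting a non-almost-convergent scalar sequence $(a_n)_{n\in\mathbb{N}}$ into $X$ along a norming pair $(x,x^*)$ and pushing back down with $x^*$ (which indeed maps $ac(X)$ into $ac$, since applying a norm-one functional preserves the uniform Ces\`aro limits), you obtain the quantitative bound $d\left((a_nx)_{n\in\mathbb{N}},ac(X)\right)\geq d\left((a_n)_{n\in\mathbb{N}},ac\right)>0$, the positivity coming from closedness of $ac(\mathbb{R})=ac$ in $\ell_\infty$, which needs only completeness of $\mathbb{R}$. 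Both proofs share the same seed ($\ell_\infty\setminus ac\neq\varnothing$ plus a closedness result), but yours buys an explicit lower bound on the distance and avoids the completion of $X$ entirely, while the paper's is shorter given the density and closedness machinery already in place; your closing remark is essentially the paper's route, up to a slightly misplaced citation (Lemma \ref{helpazoahi} gives density of $ac(X)$ in $ac\left(\overline{X}\right)$, whereas you only need the trivial inclusion $ac(X)\subseteq ac\left(\overline{X}\right)$ together with Theorem \ref{completitud}).
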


\begin{proof}
If $ac\left(X\right)$ is dense in $\ell_\infty\left(X\right)$, then $ac\left(\overline{X}\right)$ is dense in $\ell_\infty\left(\overline{X}\right)$. In this situation Theorem \ref{completitud} implies that $ac\left(\overline{X}\right)=\ell_\infty\left(\overline{X}\right)$. On the other hand, it is well known that $\ell_\infty\setminus ac\neq \varnothing$, thus $\ell_\infty\left(\overline{X}\right)\setminus ac\left(\overline{X}\right)\neq \varnothing$. This is a contradiction.
\end{proof}

\subsection{The almost convergent limit function}

 The almost convergent limit function is defined as
\begin{equation}\label{aclim}
\begin{array}{rrcl} \AClim : & ac\left(X\right) & \to & X\\&\left(x_n\right)_{n\in\mathbb{N}}&\mapsto&\displaystyle{\mathrm{AC}\lim_{n\to\infty}x_n}\end{array}
\end{equation} and is a norm-$1$ continuous linear operator such that $\mathrm{AC}\lim|_{c\left(X\right)}=\lim$. It is trivial that $ac_0\left(X\right)=\ker\left(\mathrm{AC}\lim \right)$ and thus $ac_0(X)$ is closed in $ ac(X)$. Recall though that in the previous subsection we mentioned that $ac_0(X)$ is always closed in $\ell_\infty(X)$ since, in fact, $ac_0(X)=ac_0\left(\overline{X}\right)\cap \ell_\infty(X)$. We recall the reader that $\mathcal{HB}\left(\AClim\right)$ stands for the set of all norm-$1$ Hahn-Banach extensions of $ \mathrm{AC}\lim$ to the whole of $\ell_\infty\left(X\right)$.

\begin{lemma}\label{HBCBL}
$\mathcal{HB}\left(\AClim\right)\subseteq \BL\left(X\right)\subseteq \mathcal{HB}\left(\lim\right)$.
\end{lemma}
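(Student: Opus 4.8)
The plan is to treat the two inclusions separately, the second being essentially a restatement of an earlier result and the first being where the (admittedly light) work lies. For the second inclusion $\BL(X)\subseteq\mathcal{HB}(\lim)$, I would simply invoke item \emph{4} of Corollary \ref{Bprim}, which already identifies $\BL(X)$ with $\mathcal{N}_X\cap\mathcal{HB}(\lim)$; since an intersection is contained in each of its factors, the inclusion into $\mathcal{HB}(\lim)$ is immediate.

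For the first inclusion I would fix an arbitrary $T\in\mathcal{HB}(\AClim)$, so that $T\colon\ell_\infty(X)\to X$ is linear and continuous with $\|T\|=1$ and $T|_{ac(X)}=\AClim$, and then verify the defining conditions of $\BL(X)=\E_{\mathcal{N}_X}\cap\mathcal{L}_X$ from Definition \ref{defBL}. That $T\in\mathcal{L}_X$ follows at once from $c(X)\subseteq ac(X)$ together with $\AClim|_{c(X)}=\lim$, whence $T|_{c(X)}=\lim$. To place $T$ in $\mathcal{N}_X$ I would use the kernel characterization in item \emph{2} of Proposition \ref{prim}, namely $\mathcal{N}_X=\{S:bps(X)\subseteq\ker(S)\}$: by Lemma \ref{jaja} one has $bps(X)\subseteq ac_0(X)=\ker(\AClim)$, so $T|_{bps(X)}=\AClim|_{bps(X)}=0$, i.e.\ $bps(X)\subseteq\ker(T)$. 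Finally $\|T\|=1$ gives $T\in\E_{\mathcal{N}_X}$, so that $T\in\E_{\mathcal{N}_X}\cap\mathcal{L}_X=\BL(X)$.

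The proof is short precisely because all the heavy lifting is already packaged in earlier results, so there is no serious obstacle. The one point that benefits from care is the shift-invariance of $T$: rather than computing $T((x_n)_{n})=T((x_{n+1})_{n})$ directly, the efficient route is to bypass the shift entirely and argue through the kernel characterization of $\mathcal{N}_X$ combined with the inclusion $bps(X)\subseteq ac_0(X)$. The only other thing to keep in mind is that the norm preservation built into the notion of a Hahn--Banach extension is exactly what supplies the $\|T\|=1$ required for membership in the unit sphere $\E_{\mathcal{N}_X}$.
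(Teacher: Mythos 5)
Your proposal is correct and follows essentially the same route as the paper, which likewise handles $\BL(X)\subseteq\mathcal{HB}(\lim)$ by definition (via Corollary \ref{Bprim}) and places $T\in\mathcal{HB}(\AClim)$ in $\mathcal{N}_X$ by combining the kernel characterization in Proposition \ref{prim} with the inclusion $bps(X)\subseteq ac_0(X)$ of Lemma \ref{jaja}. The only difference is that you spell out the details the paper leaves implicit, including the observation that $T|_{c(X)}=\lim$ follows from $c(X)\subseteq ac(X)$ and $\AClim|_{c(X)}=\lim$.
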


\begin{proof}
By definition it is clear that $\BL\left(X\right)\subseteq \mathcal{HB}\left(\lim\right)$. Let $T\in \mathcal{HB}\left(\AClim\right)$. All we need to show is that $T\in\mathcal{HB}\left(\lim\right)\cap\mathcal{N}_X$ if we bear in mind Corollary \ref{Bprim}. It is pretty obvious that $ T\in \mathcal{HB}\left(\lim\right)$. Finally, it can be shown that $T\in\mathcal{N}_X$ by taking into consideration Proposition \ref{prim} together with Lemma \ref{jaja}.
\end{proof}

From now on until the end of this subsection we will assume that $X=\mathbb{R}$.

\begin{lemma}\label{jojo}
For every $0<\varepsilon < \delta$ there exists a sequence $\left(x_n\right)_{n\in \mathbb{N}}\in\ell_{\infty}$ such that:
\begin{enumerate}
\item $\left\|\left(x_n\right)_{n\in \mathbb{N}}\right\|_{\infty}=\delta + \varepsilon$.
\item $\left(x_n\right)_{n\in \mathbb{N}}$ has infinitely many positive terms and infinitely many negative terms.
\item $\displaystyle{\AClim_{n\to\infty}x_n =\varepsilon}$.
\end{enumerate}
\end{lemma}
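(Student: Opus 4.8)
The plan is to exhibit an explicit sequence rather than to argue abstractly, since all three requirements can be met simultaneously by a single elementary oscillating pattern. Specifically, I would set
$$x_n := \varepsilon + \delta(-1)^{n+1}\qquad (n\in\mathbb{N}),$$
so that the odd-indexed terms all equal $\varepsilon+\delta$ and the even-indexed terms all equal $\varepsilon-\delta$. The constant shift $\varepsilon$ is there to place the almost limit at $\varepsilon$, while the alternating part $\delta(-1)^{n+1}$ supplies an oscillation of amplitude $\delta$ that will produce both the prescribed norm and the required sign changes.

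First I would dispatch conditions \emph{1} and \emph{2}, which are immediate from the two values taken by the sequence. Since $0<\varepsilon<\delta$ we have $\varepsilon+\delta>0$ and $\varepsilon-\delta<0$, so the odd-indexed terms are positive and the even-indexed terms are negative; both families are infinite, which gives \emph{2}. For \emph{1}, note that $|x_n|\in\{\varepsilon+\delta,\ \delta-\varepsilon\}$ and $\varepsilon+\delta>\delta-\varepsilon$, whence $\left\|(x_n)_{n\in\mathbb{N}}\right\|_\infty=\delta+\varepsilon$.

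The only point deserving care is \emph{3}, the evaluation of the almost limit, and even here there is no genuine obstacle. I would use the linearity of $\AClim$ together with Lemma \ref{jaja}: the alternating sequence $\left((-1)^{n+1}\right)_{n\in\mathbb{N}}$ has partial sums lying in $\{0,1\}$, hence it belongs to $bps(\mathbb{R})$, and by Lemma \ref{jaja} we have $bps(\mathbb{R})\subseteq ac_0(\mathbb{R})$, so $\AClim_{n\to\infty}(-1)^{n+1}=0$. Since the constant sequence of term $\varepsilon$ converges and thus has almost limit $\varepsilon$, linearity yields
$$\AClim_{n\to\infty}x_n=\varepsilon+\delta\cdot 0=\varepsilon,$$
which is \emph{3}. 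Alternatively, \emph{3} follows directly from Lorentz's characterization (Theorem \ref{LIC}), by observing that $\bigl|\sum_{k=0}^{p}(-1)^{n+k+1}\bigr|\le 1$ for all $n,p\in\mathbb{N}$, so the window averages of the alternating part are bounded by $\tfrac{1}{p+1}$ and converge to $0$ uniformly in $n$. In short, the entire content of the lemma is the correct calibration of the shift $\varepsilon$ and the amplitude $\delta$ so that the three numerical constraints hold together.
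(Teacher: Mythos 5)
Your proof is correct and follows essentially the same route as the paper's: both exhibit the shifted alternating sequence $\varepsilon+\delta(-1)^{n+1}$ (the paper's version differs only by a phase shift, starting with $-\delta+\varepsilon$) and compute the almost limit by noting that the alternating part lies in $bps(\mathbb{R})\subseteq ac_0$ via Lemma \ref{jaja}, the only cosmetic difference being that you verify membership in $bps$ by bounding the partial sums directly while the paper exhibits it as a difference sequence $(z_{n+1}-z_n)$. Your alternative verification of \emph{3} via Theorem \ref{LIC} is also valid, since the window averages of the alternating part are bounded by $\tfrac{1}{p+1}$ uniformly in $n$.
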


\begin{proof}
Let $\left(z_n\right)_{n\in \mathbb{N}}\in \ell_{\infty}$ be the sequence $$\left(\frac{\delta}{2},\frac{-\delta}{2},\frac{\delta}{2},\frac{-\delta}{2},\dots\right).$$ Take $\left(u_n\right)_{n\in \mathbb{N}}\in\ell_{\infty}$ to be the sequence $\left(z_{n+1}-z_n\right)_{n\in \mathbb{N}}$, that is, the sequence $$\left(-\delta,\delta,-\delta,\delta,\dots\right).$$ Note that by Lemma \ref{jaja}, $\left(u_n\right)_{n\in \mathbb{N}}\in ac_0$. Finally, the desired sequence $\left(x_n\right)_{n\in\mathbb{N}}$ will be $$\left(-\delta+\varepsilon,\delta+\varepsilon,-\delta+\varepsilon,\delta+\varepsilon,\dots\right).$$
\end{proof}

\begin{theorem}
$ \mathcal{HB}\left(\mathrm{AC}\lim\right) \subsetneq \mathcal{HB}\left(\lim\right)$.
\end{theorem}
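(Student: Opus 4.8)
The plan is to lean on Lemma \ref{HBCBL}, which already supplies the inclusion $\mathcal{HB}\left(\AClim\right)\subseteq\BL(\mathbb{R})\subseteq\mathcal{HB}\left(\lim\right)$, so that only the \emph{strictness} of the outer containment needs to be settled. For this it suffices to exhibit a single functional lying in $\mathcal{HB}\left(\lim\right)$ but not in $\mathcal{HB}\left(\AClim\right)$, and the most economical candidate is an ultrafilter limit, an object whose well-definedness has already been exploited in the first bullet of the proof of Theorem \ref{dual}.

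First I would fix a free ultrafilter $\mathcal{U}$ on $\mathbb{N}$ and set $\varphi:=\mathcal{U}\lim:\ell_\infty\to\mathbb{R}$. This $\varphi$ is well defined because $\varphi(x)$ always lies in the compact interval $[\inf x_n,\sup x_n]$, it is linear and unital, and it satisfies $|\varphi(x)|\leq\|x\|_\infty$, so $\|\varphi\|=1$. Moreover $\varphi|_c=\lim$ by item 1 of Lemma \ref{engaya}, since every free filter of $\mathbb{N}$ contains the Frechet filter. Hence $\varphi$ is a norm-$1$ Hahn--Banach extension of the limit function on $c$, that is, $\varphi\in\mathcal{HB}\left(\lim\right)$ directly by the definition of $\mathcal{HB}\left(\lim\right)$ (equivalently, by item 9 of Proposition \ref{prim}).

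Next I would show $\varphi\notin\mathcal{HB}\left(\AClim\right)$ by producing an almost convergent sequence on which $\varphi$ returns the wrong value. Take $a:=(0,1,0,1,\dots)$ and write $a=\tfrac12\mathbf{1}+u$ with $u=(-\tfrac12,\tfrac12,-\tfrac12,\dots)$. The oscillating tail $u$ is a difference sequence, so by Lemma \ref{jaja} (exactly as in the construction in the proof of Lemma \ref{jojo}) we have $u\in bps\subseteq ac_0$, whence $a\in ac$ with $\AClim a=\tfrac12$. On the other hand, since $\mathcal{U}$ is an ultrafilter, exactly one of the set of even indices and the set of odd indices belongs to $\mathcal{U}$, which forces $\varphi(a)\in\{0,1\}$. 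As $\tfrac12\notin\{0,1\}$, the functional $\varphi$ does not agree with $\AClim$ on $ac$, so $\varphi$ is not an extension of $\AClim$ and therefore $\varphi\notin\mathcal{HB}\left(\AClim\right)$. Combining this with Lemma \ref{HBCBL} yields $\mathcal{HB}\left(\AClim\right)\subsetneq\mathcal{HB}\left(\lim\right)$.

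I do not expect any genuine obstacle here: the only points demanding care are the verification that $\mathcal{U}\lim$ is an honest norm-$1$ Hahn--Banach extension of $\lim$ (a compactness-plus-ultrafilter routine already performed in Theorem \ref{dual}) and the evaluation $\varphi(a)\in\{0,1\}$, which is immediate from the defining property of an ultrafilter. As an alternative phrasing one could route the failure through shift-invariance, observing that $\varphi(a)\neq\varphi$ of its forward shift, so $\varphi\notin\mathcal{N}_\mathbb{R}$ and hence $\varphi\notin\BL(\mathbb{R})\supseteq\mathcal{HB}\left(\AClim\right)$; but the direct evaluation above is shorter and self-contained.
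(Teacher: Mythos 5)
Your proof is correct, but it takes a genuinely different route from the paper's. The paper never invokes an ultrafilter here: it fixes, via Lemma \ref{jojo}, a sequence $\left(x_n\right)_{n\in\mathbb{N}}\in ac\setminus c$ with infinitely many positive and infinitely many negative terms and $0<\AClim_{n\to\infty}x_n<\left\|\left(x_n\right)_{n\in\mathbb{N}}\right\|_\infty$, defines $\varphi$ on $c\oplus\mathbb{R}\left(x_n\right)_{n\in\mathbb{N}}$ by $\varphi\left(\left(c_n+\lambda x_n\right)_{n\in\mathbb{N}}\right):=\lim_{n\to\infty}c_n$, proves $\left\|\varphi\right\|=1$ by a sign analysis on $\lambda$ (passing to a subsequence of positive or negative terms of $\left(x_n\right)_{n\in\mathbb{N}}$), and then extends by Hahn--Banach, obtaining an element of $\mathcal{HB}\left(\lim\right)$ that annihilates $\left(x_n\right)_{n\in\mathbb{N}}$ while $\AClim_{n\to\infty}x_n>0$. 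You instead take the canonical functional $\varphi=\mathcal{U}\lim$ for a free ultrafilter $\mathcal{U}$ --- the same compactness-plus-ultrafilter device the paper uses in the proof of Theorem \ref{dual}, with $\varphi|_c=\lim$ following from {\em 1} of Lemma \ref{engaya} --- and evaluate it on the simplest possible witness $a=\left(0,1,0,1,\dots\right)$, which is almost convergent to $\frac{1}{2}$ by Lemma \ref{jaja} (its oscillating part has bounded partial sums) while $\varphi\left(a\right)\in\left\{0,1\right\}$ by the ultrafilter dichotomy on the even and odd indices; all of these steps check out. Your route buys economy: it dispenses with the paper's delicate norm-one estimate and with the Lemma \ref{jojo} construction altogether, replacing a tailor-made Hahn--Banach extension by an explicit functional and an explicit sequence. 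What the paper's argument buys in exchange is flexibility: it shows that an extension of $\lim$ can be forced to vanish on a prescribed almost convergent sequence of the kind produced by Lemma \ref{jojo}, which is slightly more information than the existence of one separating functional. Your closing alternative is also sound: the forward shift of $a$ is $\mathbf{1}-a$, so $\varphi\left(\mathbf{1}-a\right)=1-\varphi\left(a\right)\neq\varphi\left(a\right)$ whenever $\varphi\left(a\right)\in\left\{0,1\right\}$, whence $\varphi\notin\mathcal{N}_{\mathbb{R}}$ and thus $\varphi\notin\BL\supseteq\mathcal{HB}\left(\AClim\right)$ by Lemma \ref{HBCBL}.
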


\begin{proof}
According to Lemma \ref{jojo}, we may fix an almost convergent sequence $\left(x_n\right)_{n\in \mathbb{N}}\in ac\setminus c$ such that $\left(x_n\right)_{n\in \mathbb{N}}$ has infinitely many positive terms and infinitely many negative terms, and $$0<\AClim_{n\to\infty}x_n < \left\|\left(x_n\right)_{n\in \mathbb{N}}\right\|_{\infty}.$$ Define the linear function:
\begin{equation*}
\begin{array}{rrcl}
\varphi:&c \oplus \mathbb{R}\left(x_n\right)_{n\in \mathbb{N}}&\to&\mathbb{R}\\
&\left(c_n\right)_{n\in \mathbb{N}}+\lambda\left(x_n\right)_{n\in \mathbb{N}}&\mapsto&\varphi\left(\left(c_n+\lambda x_n\right)_{n\in \mathbb{N}}\right):= \displaystyle{\lim_{n\to\infty}c_n}.
\end{array}
\end{equation*}
We will show now that $\varphi$ is continuous and has norm $1$. Let $\left(c_n\right)_{n\in \mathbb{N}}\in c$ and $\lambda \in \mathbb{R}$. We may assume without loss of generality that $\lim_{n\to\infty}c_n \geq 0$. If $\lambda \geq 0$, then there exists a subsequence $\left(x_{n_k}\right)_{k\in \mathbb{N}}$ of $\left(x_n\right)_{n\in \mathbb{N}}$ of positive terms. Therefore
\begin{eqnarray*}
\left|\varphi\left(\left(c_n+\lambda x_n\right)_{n\in \mathbb{N}}\right)\right|&=& \lim_{n\to\infty}c_n \\
&=& \lim_{k\to\infty}c_{n_k} \\
&\leq& \lim_{k\to\infty}c_{n_k}+\lambda x_{n_k}\\
& \leq& \sup\left\{\left|c_{n_k}+\lambda x_{n_k}\right|:k\in \mathbb{N}\right\}\\
&\leq& \left\|\left(c_n+\lambda x_n\right)_{n\in \mathbb{N}}\right\|_{\infty}.
\end{eqnarray*}
If $\lambda \leq 0$, we apply a similar reasoning by considering a subsequence $\left(x_{n_k}\right)_{k\in \mathbb{N}}$ of $\left(x_n\right)_{n\in \mathbb{N}}$ of negative terms. Next, $\varphi|_c=\lim$, therefore $\left\|\varphi\right\|=1$. Finally, in accordance with the Hahn-Banach Extension Theorem, $\varphi$ may be extended linearly, continuously, and preserving its norm to the whole of $\ell_{\infty}$. To simplify, we will keep denoting this extension by $\varphi$. By construction, $$\varphi\left(\left(x_n\right)_{n\in \mathbb{N}}\right)=0<\AClim_{n\to\infty}x_n,$$ thus $\varphi|_{ac}\neq \AClim$.
\end{proof}

We refer the reader to Subsection \ref{geometry} for the basics on geometry of Banach spaces, such as smoothness and rotundity.

\begin{theorem}
The set $\mathcal{HB}\left(\lim\right)$ of all norm-$1$ Hahn-Banach extensions of the limit function on $c$ to $\ell_{\infty}$ is a $w^*$-exposed face of $\mathsf{B}_{\ell_{\infty}^*}$.
\end{theorem}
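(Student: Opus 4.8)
The plan is to realize $\mathcal{HB}(\lim)$ directly as the $w^*$-exposed face produced by Lemma \ref{polla1}, applied with $X=\ell_\infty$, $Y=c$, and $y^*=\lim\in\mathsf{S}_{c^*}$. According to that lemma, if one can produce a point $y\in\mathsf{S}_c$ which is a smooth point of $\mathsf{B}_c$ and whose unique support functional $\mathsf{J}_c(y)$ is exactly $\lim$ (equivalently, $\lim(y)=1=\|y\|_\infty$), then the set of all norm-$1$ Hahn--Banach extensions of $\lim$ to $\ell_\infty$ coincides with the $w^*$-exposed face $y^{-1}(1)\cap\mathsf{B}_{\ell_\infty^*}$ of $\mathsf{B}_{\ell_\infty^*}$. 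Since $\mathcal{HB}(\lim)$ is by definition precisely this set of norm-$1$ extensions, the theorem follows at once once such a $y$ is exhibited.

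The crux is choosing $y$ correctly. The tempting choice $y=\mathbf{1}$ fails: under the identification $c\cong\mathcal{C}(\alpha\mathbb{N})$, where $\alpha\mathbb{N}=\mathbb{N}\cup\{\infty\}$ is the one-point compactification and a convergent sequence corresponds to the continuous function whose value at $\infty$ is its limit, the vector $\mathbf{1}$ becomes the constant function $1$, which attains its absolute maximum at every point of $\alpha\mathbb{N}$ and is therefore not smooth by Banach's Theorem \ref{bsmo} (indeed every evaluation $\delta_n$ is then a support functional). Instead I take $y=(y_n)_{n\in\mathbb{N}}$ with $y_n=\frac{n}{n+1}$, so that $|y_n|<1$ for every $n$, $\lim_n y_n=1$, and $\|y\|_\infty=1$; hence $y\in\mathsf{S}_c$ and $\lim(y)=1$. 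Under the identification above, $y$ corresponds to a function whose modulus attains its maximum value $1$ only at the point $\infty$, so Theorem \ref{bsmo} makes $y$ a smooth point of $\mathsf{B}_c$. Since $\lim(y)=1=\|y\|_\infty=\|\lim\|_{c^*}$, the functional $\lim$ is a support functional at $y$, and smoothness forces it to be the unique one, i.e. $\mathsf{J}_c(y)=\lim$.

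With this $y$ in hand the hypotheses of Lemma \ref{polla1} are met, and its conclusion gives $\mathcal{HB}(\lim)=y^{-1}(1)\cap\mathsf{B}_{\ell_\infty^*}$, a $w^*$-exposed face of $\mathsf{B}_{\ell_\infty^*}$ (exposed by $y$, viewed as a $w^*$-continuous functional on $\ell_\infty^*$, since $\sup\{\varphi(y):\varphi\in\mathsf{B}_{\ell_\infty^*}\}=\|y\|_\infty=1$). As a sanity check one verifies the equality by hand: if $\varphi\in\mathsf{B}_{\ell_\infty^*}$ satisfies $\varphi(y)=1$, then $\varphi|_c$ has norm at most $1$ and $\varphi|_c(y)=1=\|y\|$, so $\varphi|_c$ is a norm-$1$ support functional of the smooth point $y$, whence $\varphi|_c=\lim$ and $\|\varphi\|=1$; the reverse inclusion is immediate. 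The main obstacle is exactly this selection of $y$: one must reject the natural but non-smooth vector $\mathbf{1}$ and instead use a sequence approaching its limit strictly from within the open ball, so that the limit functional becomes the unique support functional; once this is recognized, Banach's theorem and Lemma \ref{polla1} carry out the rest.
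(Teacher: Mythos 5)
Your proof is correct and follows essentially the same route as the paper: apply Lemma \ref{polla1} with $X=\ell_\infty$, $Y=c$, $y^*=\lim$, and a strictly-inside-the-ball sequence tending to $1$ (the paper takes $y=\left(\frac{n-1}{n}\right)_{n\in\mathbb{N}}$, you take $\left(\frac{n}{n+1}\right)_{n\in\mathbb{N}}$, which is the same sequence shifted), then invoke the identification $c=\mathcal{C}\left(\omega\cup\{\infty\}\right)$ and Banach's Theorem \ref{bsmo} to get smoothness of $y$ in $\mathsf{B}_c$ because $|y|$ attains its maximum only at $\infty$. Your additional remarks, rejecting $\mathbf{1}$ as non-smooth and verifying $\mathcal{HB}(\lim)=y^{-1}(1)\cap\mathsf{B}_{\ell_\infty^*}$ by hand, are sound but not needed beyond what the lemma already provides.
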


\begin{proof}
It suffices to apply Lemma \ref{polla1} to $X:=\ell_{\infty}$, $Y:=c$, $y^*:=\lim$, and $y:=\left(\frac{n-1}{n}\right)_{n\in\mathbb{N}}$. It only remains to show that $y$ is indeed a smooth point of $\mathsf{B}_c$. In order to prove this, we first notice that $c=\mathcal{C}\left(\omega \cup \left\{\infty\right\}\right)$, where $\omega \cup \left\{\infty\right\}$ denotes the one-point compactification of the natural numbers $\omega$. Now, we call on Theorem \ref{bsmo} to conclude that $y$ is a smooth point of $\mathsf{B}_c$ because $y$ attains its absolute maximum value at only $\infty$.
\end{proof}

\subsection{The space $w^*ac\left(X^*\right)$}

In a natural way we can consider the almost convergence of sequences in dual spaces endowed with the weak star topology, which takes us to the concept of weak-star almost convergence.

\begin{definition}[Boos, 2000; \cite{Boos}]
A sequence $\left(x^*_i\right)_{i\in\mathbb{N}}$ in $X^*$ is called weakly-star almost convergent when there exists $y^* \in X^*$ such that $$\displaystyle{\AClim_{i\to\infty}x^*_i\left(x\right)=y^*\left(x\right)}$$ for all $x\in X$
\begin{itemize}
\item $y^*$ is called the weak-star almost limit of $\left(x^*_i\right)_{i\in\mathbb{N}}$
\item it is usually denoted by $\displaystyle{\wsAClim_{i\to\infty} x_i^*=y^*}$.
\end{itemize}
The space of $w^*$-almost convergent sequences is defined as $$w^*ac\left(X^*\right):=\left\{\left(f_i\right)_{i\in\mathbb{N}} \in \left(X^*\right)^\mathbb{N} :  \displaystyle{\wsAClim_{i\to\infty} f_i} \; \hbox{exists}\right\}.$$
\end{definition}

It is fairly obvious that every weakly-star convergent sequence is weakly-star almost convergent and every weakly almost convergent sequence in a dual space is weakly-star almost convergent.

\begin{theorem}
$w^*ac\left(X^*\right)\cap \ell_\infty\left(X^*\right)$ is a closed subspace of $\ell_\infty\left(X^*\right)$.
\end{theorem}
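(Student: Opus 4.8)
The plan is to mimic the proof of Theorem \ref{completitud}, exploiting the fact that $X^*$ is always complete (being a dual space) to play the role that completeness of $X$ played there. First I would note that $w^*ac(X^*)\cap\ell_\infty(X^*)$ is a vector subspace, which is immediate from the linearity of the $w^*$-almost limit together with the fact that $\ell_\infty(X^*)$ is itself a subspace; the substance lies entirely in the closedness.

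To that end, let $(f^n)_{n\in\mathbb{N}}$ be a sequence in $w^*ac(X^*)\cap\ell_\infty(X^*)$ converging in the sup norm to some $f^0\in\ell_\infty(X^*)$, and write $f^n=(f^n_i)_{i\in\mathbb{N}}$. For each $n$ let $g_n:=\wsAClim_{i\to\infty}f^n_i\in X^*$. The first step is to prove that $(g_n)_{n\in\mathbb{N}}$ is Cauchy in $X^*$. Given $\varepsilon>0$, pick $n_0$ with $\|f^p-f^q\|_\infty\leq\varepsilon/3$ for $p,q\geq n_0$. For fixed such $p,q$ and any $x\in\mathsf{B}_X$, the definitions of $g_p(x)$ and $g_q(x)$ as almost limits provide an index $i$ for which the Ces\`aro averages $\frac{1}{i+1}\sum_{k=0}^{i}f^p_{j+k}(x)$ and $\frac{1}{i+1}\sum_{k=0}^{i}f^q_{j+k}(x)$ approximate $g_p(x)$ and $g_q(x)$ within $\varepsilon/3$ uniformly in $j$, while the difference of these two averages is controlled by $\|f^p-f^q\|_\infty\|x\|\leq\varepsilon/3$. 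A triangle inequality then yields $|g_p(x)-g_q(x)|\leq\varepsilon$, and taking the supremum over $x\in\mathsf{B}_X$ gives $\|g_p-g_q\|\leq\varepsilon$.

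Because $X^*$ is complete, the Cauchy sequence $(g_n)_{n\in\mathbb{N}}$ converges to some $g_0\in X^*$, and the final step is to verify that $\wsAClim_{i\to\infty}f^0_i=g_0$, i.e.\ that $\AClim_{i\to\infty}f^0_i(x)=g_0(x)$ for every $x\in X$. Fixing $x\neq 0$ and $\varepsilon>0$, I would choose $p$ so large that both $\|g_0-g_p\|$ and $\|f^0-f^p\|_\infty$ are at most $\varepsilon/(3\|x\|)$, and then use $\AClim_i f^p_i(x)=g_p(x)$ to find $i_0$ with $|g_p(x)-\frac{1}{i+1}\sum_{k=0}^{i}f^p_{j+k}(x)|\leq\varepsilon/3$ for all $i\geq i_0$ and all $j$. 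Splitting $g_0(x)-\frac{1}{i+1}\sum_{k=0}^{i}f^0_{j+k}(x)$ through $g_p(x)$ and the corresponding average for $f^p$, the three resulting terms are each bounded by $\varepsilon/3$, the outer ones via the norm estimates and $\|x\|$, the inner one by the choice of $i_0$. This gives the desired convergence uniformly in $j$, so $f^0\in w^*ac(X^*)$, and since $f^0$ is bounded, $f^0\in w^*ac(X^*)\cap\ell_\infty(X^*)$.

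The only conceptual ingredient is that $X^*$ is automatically complete, which is why—unlike the situation of $ac(X)$ in Theorem \ref{jopa}—no completeness hypothesis on $X$ is required. I expect no genuine obstacle beyond bookkeeping; the point demanding most care is the uniformity in $j$, namely ensuring that the single index $i_0$ chosen for $f^p$ works simultaneously for all shifts $j$, which is exactly the content of the definition of $w^*$-almost convergence and is preserved under the estimates above.
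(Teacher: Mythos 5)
Your proof is correct and follows essentially the same route as the paper's: show that the sequence of $w^*$-almost limits is Cauchy, invoke the automatic completeness of $X^*$ to get a limit $g_0$, and then verify via a three-term triangle inequality, uniform in the shift $j$, that $g_0$ is the $w^*$-almost limit of $f^0$. The only cosmetic difference is in the Cauchy step, where you estimate $\left|g_p\left(x\right)-g_q\left(x\right)\right|$ for every $x\in\mathsf{B}_X$ (with the index $i$ allowed to depend on $x$, which is harmless since $i$ drops out of the final bound) and take a supremum, whereas the paper selects a single near-norming vector $x\in\mathsf{S}_X$ with an $\varepsilon/2$ slack; both versions are sound.
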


\begin{proof}
Let $\left(f^n\right)_{n\in\mathbb{N}}\subset w^*ac\left(X^*\right)\cap \ell_\infty\left(X^*\right)$ satisfying that $\displaystyle{\lim_{n\to\infty} \left\| f^n-f^0 \right\|_{\infty}=0}$ for some $f^0 \in \ell_\infty\left(X^*\right)$. Our purpose is to prove that $f^0 \in w^*ac\left(X^*\right)$. For each natural $n$ there exists $f_n \in X^*$ such that $\displaystyle{\wsAClim_{i\to\infty}\left(f_i^n\right)=f_n}$. We will show next that $\left(f_n\right)_{n\in\mathbb{N}}$ is a Cauchy sequence. If an $\varepsilon>0$ is given, there exists $n_0\in\mathbb{N}$ such that $\left\| f^p-f^q \right\|_{\infty} \leqslant \varepsilon/6$ for any $p,q \geqslant n_0$. Fix $p,q \geqslant n_0$. We can find a vector $x \in \mathsf{S}_X$ satisfying $$\left\| f_p-f_q \right\| - \frac{\varepsilon}{2} <\left|\left(f_p-f_q\right)\left(x\right)\right|\leqslant\left\| f_p-f_q \right\|.$$ Consider a natural $i$ such that for every $j \in \mathbb{N}$ we have $$\left|f_p\left(x\right)-\frac{1}{i+1}\left(f_j^p\left(x\right)+\cdots+f_{j+i}^p\left(x\right)\right)\right| \leqslant \frac{\varepsilon}{6}$$
and
$$\left|f_q\left(x\right)-\frac{1}{i+1}\left(f_j^q\left(x\right)+\cdots+f_{j+i}^q\left(x\right)\right)\right| \leqslant \frac{\varepsilon}{6}.$$
It follows that
\begin{eqnarray*}
\left\| f_p-f_q \right\| - \frac{\varepsilon}{2} &\leqslant& \left|f_p\left(x\right)-\frac{1}{i+1}\left(f_j^p\left(x\right)+\cdots +f_{j+i}^p\left(x\right)\right)\right| \\
&+& \left| \frac{1}{i+1}\left(\left(f_j^p-f_j^q\right)\left(x\right)+\cdots+\left(f_{j+i}^p-f_{j+i}^q\right)\left(x\right)\right)\right| \\
&+& \left|\frac{1}{i+1}\left(f_j^q\left(x\right)+\cdots+f_{j+i}^q\left(x\right)\right)-f_q\left(x\right)\right|\\
&\leqslant&\frac{ \varepsilon}{2},
\end{eqnarray*}
that is, $\left\| f_p-f_q \right\| \leqslant \varepsilon$ for each $p,q \geqslant n_0$. Then there exists $f_0 \in X^*$ such that $\displaystyle{\lim_{n\to \infty} \left\| f_n-f_0 \right\| =0}$. We will show now that $\displaystyle{\wsAClim_{i\to\infty} f_i^0=f_0}$. Consider $x \in X\setminus\left\{0\right\}$ and $\varepsilon >0$. We can fix $p \in\mathbb{N}$ such that $\left\| f^p-f^0 \right\|_{\infty} \leqslant\frac{\varepsilon}{3\left \|x\right\|}$ and $\left\|f_p\left(x\right)-f_0\left(x\right)\right\| \leqslant \frac{\varepsilon}{3}$. Since $\displaystyle{\wsAClim_{i\to\infty} f_i^p=f_0}$, there  exists $i_0\in\mathbb{N}$ such that for each $i \geqslant i_0$ it is satisfied that $\left|f_p\left(x\right)- \frac{1}{i+1}\left(f_j^p\left(x\right)+\cdots+f_{j+i}^p\left(x\right)\right)\right| \leqslant\frac{\varepsilon}{3}$ for every $j \in \mathbb{N}$. Therefore, if $i \geqslant i_0$, then
\begin{eqnarray*}
&&\left|f_0\left(x\right)- \frac{1}{i+1}\left(f_j^0\left(x\right)+ \cdots+f_{j+i}^0\left(x\right)\right)\right|\\&\leqslant& \left|f_0\left(x\right)-f_p\left(x\right)\right| \\
&+&\left |f_p\left(x\right)-\frac{1}{i+1}\left(f_j^p\left(x\right)+\cdots+f_{j+i}^p\left(x\right)\right)\right| \\
&+& \left | \frac{1}{i+1}\left(\left(f_j^p-f_j^0\right)\left(x\right)+\cdots+\left(f_{j+i}^p-f_{j+i}^0\right)\left(x\right)\right)\right| \\
&\leqslant& \varepsilon
\end{eqnarray*}
for every $j \in \mathbb{N}$. Thus $\displaystyle{\wsAClim_{i\to\infty} f_i=f_0}$.
\end{proof}

By noticing that if $X$ is barrelled, then every weak-star almost convergent sequence in $X^*$ is bounded and hence $w^*ac\left(X^*\right)\subseteq \ell_\infty(X^*)$, we immediately deduce the following corollary.

\begin{corollary}
If $X$ is barrelled, then $w^*ac\left(X^*\right)$ is a closed subspace of $\ell_\infty(X^*)$.
\end{corollary}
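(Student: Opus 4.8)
The plan is to show that, under barrelledness, the intersection with $\ell_\infty(X^*)$ appearing in the preceding theorem becomes superfluous, so that $w^*ac(X^*)$ itself coincides with the closed subspace $w^*ac(X^*)\cap\ell_\infty(X^*)$. Thus the whole task reduces to establishing the inclusion $w^*ac(X^*)\subseteq\ell_\infty(X^*)$, after which the closedness is inherited for free from the theorem immediately above.

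First I would take an arbitrary $(x^*_n)_{n\in\N}\in w^*ac(X^*)$ and fix $x\in X$. By the very definition of weak-star almost convergence, the scalar sequence $(x^*_n(x))_{n\in\N}$ is almost convergent, and therefore bounded, since almost convergent scalar sequences are bounded; this is exactly the scalar instance of Proposition \ref{bounded}. Consequently $\sup_{n\in\N}|x^*_n(x)|<\infty$ for every $x\in X$, which says precisely that the family $\{x^*_n:n\in\N\}\subseteq X^*$ is pointwise bounded on $X$.

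Next I would invoke barrelledness: in a barrelled space the Banach--Steinhaus theorem holds, so any pointwise bounded family of continuous linear functionals is uniformly bounded. Hence $\sup_{n\in\N}\|x^*_n\|<\infty$, which is exactly the assertion that $(x^*_n)_{n\in\N}\in\ell_\infty(X^*)$. This yields $w^*ac(X^*)\subseteq\ell_\infty(X^*)$ and therefore $w^*ac(X^*)=w^*ac(X^*)\cap\ell_\infty(X^*)$. Combining this equality with the preceding theorem, which guarantees that $w^*ac(X^*)\cap\ell_\infty(X^*)$ is a closed subspace of $\ell_\infty(X^*)$, completes the argument.

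The only genuinely nontrivial ingredient is the boundedness of an almost convergent scalar sequence paired with the passage from pointwise to uniform boundedness; the latter is where barrelledness is essential and cannot be dispensed with, for without it a weak-star almost convergent sequence need not be norm-bounded and the intersection with $\ell_\infty(X^*)$ would genuinely cut down $w^*ac(X^*)$. I would expect this passage to be the main (and essentially the sole) obstacle, everything else being formal bookkeeping.
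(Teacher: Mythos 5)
Your proposal is correct and follows essentially the same route as the paper, which compresses the argument into the one-line remark preceding the corollary: barrelledness forces every weak-star almost convergent sequence to be norm-bounded, so $w^*ac\left(X^*\right)=w^*ac\left(X^*\right)\cap\ell_\infty\left(X^*\right)$ and the closedness follows from the theorem immediately above. Your write-up merely makes explicit the two ingredients the paper leaves tacit, namely the scalar instance of Proposition \ref{bounded} giving pointwise boundedness and the Banach--Steinhaus theorem for barrelled spaces upgrading it to uniform boundedness.
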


We recall the reader an example of a weakly-star convergent sequence which is not bounded. Indeed, consider $X:=c_{00}$ and $\left(ne_n\right)_{n\in\mathbb{N}}\subset \ell_1 = X^*$. It is easy to see that $\left(ne_n\right)_{n\in\mathbb{N}}$ is weak-star convergent in $X^*$ to $0$ but it is not bounded.

\begin{lemma}\label{vayapu}
If $M$ is a dense vector subspace of $X$ and $\left(f_n\right)_{n\in \mathbb{N}}$ is a bounded sequence in $X^*$, then $\displaystyle{\wsAClim_{n\to\infty} f_n=0}$ if and only if for each $y \in M$ we have that $\displaystyle{\AClim_{n\to\infty} f_n\left(y\right)=0}$.
\end{lemma}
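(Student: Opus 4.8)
The plan is to exploit the symmetry between this statement and Lemma~\ref{wacden}, of which it is the weak-star dual analogue; the same two-term Cesàro estimate will do the job. First I would dispose of the trivial implication: if $\wsAClim_{n\to\infty} f_n=0$, then by the very definition of weak-star almost convergence $\AClim_{n\to\infty} f_n(x)=0$ for every $x\in X$, and in particular for every $y\in M\subseteq X$.

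For the nontrivial implication, assume $\AClim_{n\to\infty} f_n(y)=0$ for all $y\in M$. Fixing an arbitrary $x\in X$ and $\varepsilon>0$, I would first use the boundedness of $\left(f_n\right)_{n\in\mathbb{N}}$ to pick $A>0$ with $\left\|f_n\right\|<A$ for all $n$, and then use the density of $M$ in $X$ to choose $y\in M$ with $\left\|x-y\right\|<\varepsilon/(2A)$. The key step is the splitting $f_{j+k}(x)=f_{j+k}(x-y)+f_{j+k}(y)$ inside the Cesàro average, which yields
\begin{eqnarray*}
\frac{1}{i+1}\left|\sum_{k=0}^{i} f_{j+k}(x)\right| &\leq& \frac{1}{i+1}\left|\sum_{k=0}^{i} f_{j+k}(x-y)\right| + \frac{1}{i+1}\left|\sum_{k=0}^{i} f_{j+k}(y)\right|\\
&\leq& \frac{\varepsilon}{2} + \frac{1}{i+1}\left|\sum_{k=0}^{i} f_{j+k}(y)\right|
\end{eqnarray*}
for all $i,j\in\mathbb{N}$, where the first term is controlled uniformly because $\left|f_{j+k}(x-y)\right|\leq \left\|f_{j+k}\right\|\left\|x-y\right\|< A\cdot\varepsilon/(2A)=\varepsilon/2$ for every index.

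Then, invoking $\AClim_{n\to\infty}f_n(y)=0$, which means the Cesàro averages of $\left(f_n(y)\right)_{n\in\mathbb{N}}$ tend to $0$ uniformly in the starting index $j$, I would find $i_0\in\mathbb{N}$ so that the second term stays below $\varepsilon/2$ for all $i\geq i_0$ and all $j$, and conclude $\AClim_{n\to\infty} f_n(x)=0$. Since $x\in X$ was arbitrary, this gives $\wsAClim_{n\to\infty} f_n=0$, completing the proof. I do not anticipate any genuine obstacle here: the only points requiring care are that density is now used in $X$ (to approximate the testing vector $x$) while boundedness is imposed directly on the sequence $\left(f_n\right)_{n\in\mathbb{N}}\subset X^*$ — exactly the reverse of the roles played by the dense subspace $N\subseteq X^*$ and the bounded sequence $\left(x_n\right)_{n\in\mathbb{N}}\in\ell_\infty(X)$ in Lemma~\ref{wacden}.
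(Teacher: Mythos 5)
Your proof is correct and follows exactly the route the paper intends: the paper omits the proof of Lemma \ref{vayapu} precisely because it is the mutatis mutandis adaptation of the proof of Lemma \ref{wacden}, with the roles of density and boundedness interchanged in the way you describe, and your splitting $f_{j+k}(x)=f_{j+k}(x-y)+f_{j+k}(y)$ with the uniform bound $A\varepsilon/(2A)=\varepsilon/2$ on the first term is the same two-term Cesàro estimate used there. Nothing is missing.
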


In virtue of Lemma \ref{vayapu}, whose proof we omit due to its similarity to that of Lemma \ref{wacden}, and the density of $c_{00}$ in $c_0$, we have that a bounded sequence $\left(f^n\right)_{n\in\mathbb{N}}\subset \ell_1$ is $w^*$-almost convergent to $f^0 \in \ell_1$ if and only if $\displaystyle{\AClim_{n\to\infty} f^n_i=f^0_i}$ for each $i \in \mathbb{N}$. On the other hand, the sequence $\left(f^n\right)_{n\in\mathbb{N}}\subset \ell_1$ given by
$$f_i^n= \left \{ \begin{array}{lll}
                        \displaystyle{\frac{1}{2^{i-n+1}}} & \hbox{if} & i \geqslant n, \\
                        & & \\
                        \displaystyle{(-1)^{i+n+1} \; \frac{1}{2^i}} & \hbox{if} & i<n,
                      \end{array}
     \right.$$verifies that $\displaystyle{\wsAClim_{n\to\infty} f^n=0}$ (observe that $\left\|f^n\right\| < 2$ for each $n \in \mathbb{N}$). However, it is not almost convergent. In the next subsection we will show that $wac\left(\ell_1\right)=ac\left(\ell_1\right)$, so $\left(f^n\right)_{n\in\mathbb{N}}$ is not weakly almost convergent either.

Finally, in accordance to Proposition \ref{finito}, we may assure that a sequence is weakly-star convergent if and only if all of its subsequences are weakly-star almost convergent to the same limit and if and only if every subsequence has a further subsequence weakly-star almost convergent to the same limit.
     
\subsection{Almost convergence and classical properties}

This subsection is aimed at characterizing some classical properties in terms of the almost convergence. We will start off with a characterization of completeness, which is an immediate consequence of Theorem \ref{CauchyAC}.

\begin{corollary}
A normed space $X$ is complete if and only if every Cauchy sequence in $X$ is almost convergent.
\end{corollary}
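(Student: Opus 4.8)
The plan is to prove the corollary as a direct consequence of Theorem \ref{CauchyAC}, which already establishes the equivalence, for a \emph{Cauchy} sequence, between being convergent and being almost convergent. The statement to prove is that $X$ is complete if and only if every Cauchy sequence in $X$ is almost convergent, so I would split the argument into the two implications and lean on the fact that completeness is precisely the assertion that every Cauchy sequence converges.

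First I would prove the forward implication. Assume $X$ is complete and let $\left(x_n\right)_{n\in\mathbb{N}}$ be a Cauchy sequence in $X$. By completeness $\left(x_n\right)_{n\in\mathbb{N}}$ is convergent, and since every convergent sequence is almost convergent (recall that the almost convergent limit function extends the limit function, as noted right after Equation \eqref{aclim}, equivalently $c(X)\subset ac(X)$), we conclude that $\left(x_n\right)_{n\in\mathbb{N}}$ is almost convergent. Thus every Cauchy sequence in $X$ is almost convergent.

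Next I would prove the converse implication. Assume that every Cauchy sequence in $X$ is almost convergent, and let $\left(x_n\right)_{n\in\mathbb{N}}$ be an arbitrary Cauchy sequence in $X$. By hypothesis $\left(x_n\right)_{n\in\mathbb{N}}$ is almost convergent, so it is in particular a Cauchy sequence which is almost convergent; Theorem \ref{CauchyAC} then guarantees that $\left(x_n\right)_{n\in\mathbb{N}}$ is convergent. Since an arbitrary Cauchy sequence was shown to converge, $X$ is complete.

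The argument is essentially immediate once Theorem \ref{CauchyAC} is invoked, so there is no genuine obstacle here; the only point requiring minimal care is noticing that in each direction one must first ensure the sequence under consideration is Cauchy before applying Theorem \ref{CauchyAC}, which is automatic since both implications start from a Cauchy sequence. This is exactly why the excerpt labels the result a corollary and remarks that it is an immediate consequence of Theorem \ref{CauchyAC}.
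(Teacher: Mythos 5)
Your proof is correct and follows exactly the route the paper intends: the corollary is stated as an immediate consequence of Theorem \ref{CauchyAC}, and your two implications (completeness plus $c(X)\subset ac(X)$ for the forward direction, Theorem \ref{CauchyAC} applied to a Cauchy almost convergent sequence for the converse) fill in precisely that argument. Nothing is missing.
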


\begin{theorem}
A normed space $X$ is reflexive if and only if every bounded sequence in $X$ has a weakly almost convergent subsequence.
\end{theorem}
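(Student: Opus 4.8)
The plan is to prove both implications separately, since the statement is an equivalence characterizing reflexivity through weak almost convergence of subsequences.

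\textbf{Reflexivity implies the subsequence property.} Suppose $X$ is reflexive and let $(x_n)_{n\in\mathbb{N}}$ be a bounded sequence in $X$. First I would use the Eberlein-\v{S}mulian theorem, which in reflexive spaces guarantees that every bounded sequence has a weakly convergent subsequence $(x_{n_k})_{k\in\mathbb{N}}$, say weakly convergent to some $y\in X$. The key observation is then that weak convergence implies weak almost convergence: indeed, for every $f\in X^*$, the scalar sequence $(f(x_{n_k}))_{k\in\mathbb{N}}$ converges to $f(y)$, and every convergent scalar sequence is almost convergent to its limit (a convergent sequence is almost convergent, as noted right after the definition of Boos' almost convergence). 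Hence $\AClim_{k\to\infty} f(x_{n_k})=f(y)$ for all $f\in X^*$, which is exactly the statement that $(x_{n_k})_{k\in\mathbb{N}}$ is weakly almost convergent to $y$.

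\textbf{The subsequence property implies reflexivity.} For the converse I would argue by contraposition, invoking a suitable reflexivity criterion. The most natural route is again through Eberlein-\v{S}mulian together with the characterization that $X$ is reflexive if and only if its closed unit ball is weakly sequentially compact, i.e.\ every bounded sequence admits a weakly convergent subsequence. So suppose every bounded sequence in $X$ has a weakly almost convergent subsequence; I want to upgrade ``weakly almost convergent'' to ``weakly convergent'' for a further subsequence. Given a bounded sequence $(x_n)_{n\in\mathbb{N}}$, the hypothesis yields a weakly almost convergent subsequence. The hard part will be passing from weak almost convergence of one subsequence to genuine weak convergence of a further subsequence so as to apply the weak sequential compactness criterion for reflexivity. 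The idea is to iterate the hypothesis: any subsequence of $(x_n)_{n\in\mathbb{N}}$ is itself bounded, so it too has a weakly almost convergent further subsequence, and one can try to arrange (via a diagonal argument over a countable separating family of functionals, or by exploiting Corollary \ref{nota1}) that all the weak almost limits coincide, forcing weak convergence.

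\textbf{Main obstacle and its resolution.} The principal difficulty is that weak almost convergence is strictly weaker than weak convergence, so the subsequence property alone does not immediately produce weakly convergent subsequences. The cleanest fix is to lean on Corollary \ref{nota1}, which states that a sequence is weakly convergent precisely when every subsequence has a further subsequence weakly almost convergent to the \emph{same} limit. Thus I would show that under the hypothesis, one can extract from $(x_n)_{n\in\mathbb{N}}$ a subsequence all of whose further subsequences are weakly almost convergent to a common element; this common element must then be the weak limit by Corollary \ref{nota1}, establishing weak sequential compactness of bounded sets and hence reflexivity. Identifying and pinning down this common weak almost limit -- likely by first reducing to a bounded sequence in a separable subspace and diagonalizing against a countable total set of functionals -- is where the real work lies, and is the step I expect to require the most care.
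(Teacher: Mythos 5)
Your forward implication is correct and coincides with the half that the paper treats as immediate: a reflexive space is complete, Eberlein--\v{S}mulian yields a weakly convergent subsequence, and weak convergence trivially implies weak almost convergence.

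For the converse you take a genuinely different route from the paper's, and you honestly flag that its decisive step is not carried out. The step does go through, but it needs two ingredients you did not name. First, weak almost limits of sequences in a closed subspace $Y$ of $X$ remain in $Y$: if $(z_j)_{j\in\mathbb{N}}\subset Y$ is weakly almost convergent to $x$, then the Ces\`aro means $\frac{1}{p+1}\sum_{k=0}^{p}z_{1+k}$ lie in $Y$ and converge weakly to $x$, and a closed subspace is weakly closed. Second, completeness: Eberlein--\v{S}mulian and the weak-compactness characterization of reflexivity are theorems about Banach spaces, so before invoking them you must extract completeness from the hypothesis (for instance, a Cauchy sequence with a weakly convergent subsequence converges in norm to the weak limit, by weak lower semicontinuity of the norm). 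Granting these, your sketch closes as planned: set $Y:=\cspan\left\{x_n:n\in\mathbb{N}\right\}$, choose a countable family $\left(f_m\right)_{m\in\mathbb{N}}\subset X^*$ separating the points of the separable space $Y$, and diagonalize with Bolzano--Weierstrass to obtain a subsequence $(y_k)_{k\in\mathbb{N}}$ such that $\alpha_m:=\lim_{k\to\infty}f_m(y_k)$ exists for every $m$. If a further subsequence of $(y_k)_{k\in\mathbb{N}}$ is weakly almost convergent to some $x$, then $x\in Y$ and $f_m(x)=\alpha_m$ for all $m$, because a convergent scalar sequence is almost convergent to its limit and almost limits are unique; hence all such weak almost limits coincide, and Corollary \ref{nota1} upgrades $(y_k)_{k\in\mathbb{N}}$ to a weakly convergent sequence. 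This gives weak sequential compactness of $\mathsf{B}_X$ and thus reflexivity.

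The paper's proof is shorter and structurally different: it replaces Eberlein--\v{S}mulian by James' theorem. It first derives completeness directly (apply the hypothesis to a sequence in $X$ converging to a point $y$ of $\overline{X}$; the weak almost limit of the extracted subsequence must equal $y$, which therefore lies in $X$), and then shows that every $f\in X^*$ attains its norm: pick $x_n\in\mathsf{S}_X$ with $\left\|f\right\|-\frac{1}{n}\leq f(x_n)\leq\left\|f\right\|$, extract a subsequence weakly almost convergent to some $x_0\in\mathsf{B}_X$, and squeeze the almost limits to get $f(x_0)=\left\|f\right\|$; James' characterization of reflexivity concludes. In short, your route buys independence from the (deep) James theorem at the price of the separable reduction, the diagonalization, and Eberlein--\v{S}mulian, while the paper buys a three-line argument at the price of James; both are legitimate, but your write-up only becomes a proof once the common-limit step above is made explicit.
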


\begin{proof}
Assume that every bounded sequence in $X$ has a weakly almost convergent subsequence. We will distinguish two parts:
\begin{enumerate}
\item $X$ must be complete: Indeed, let $y\in \overline{X}$. There exists a sequence $\left(x_n\right)_{n\in\mathbb{N}}\subset X$ which converges to $y$. Now, $\left(x_n\right)_{n\in\mathbb{N}}$ is bounded so by hypothesis there exists a weakly almost almost convergent subsequence $\left(x_{n_k}\right)_{k\in\mathbb{N}}$ to some $x\in X$. Observe then that $y=x\in X$.
\item Every functional on $X$ is norm-attaining: Indeed, let $f \in X^*$. For each natural $n$ we can find $x_n \in \mathsf{S}_X$ such that $\left\|f\right\|-\frac{1}{n} \leqslant f\left(x_n\right) \leqslant \left\|f\right\|.$ Since $\left(x_n\right)_{n\in\mathbb{N}}$ is bounded, there exists a subsequence
$\left(x_{n_j}\right)_{j\in\mathbb{N}}$ such that $\displaystyle{\wAClim_{j\to\infty} x_{n_j}=x_0}$ for some $x_0 \in X$. Notice that $x_0 \in \mathrm{B}_X$. On the other hand, $$\mathrm{AC}\lim_{j\to\infty} \left ( \left\|f\right\|-\frac{1}{n_j} \right ) \leqslant \mathrm{AC}\lim_{j\to\infty} f\left(x_{n_j}\right) \leqslant \mathrm{AC}\lim_{j\to\infty} \left\|f\right\|,$$ that is, $\displaystyle{\left\|f\right\| \leqslant \mathrm{AC}\lim_{j\to\infty} f\left(x_{n_j}\right) \leqslant \left\|f\right\|}$. Since $\displaystyle{\mathrm{AC}\lim_{j\to\infty} f\left(x_{n_j}\right)=f\left(x_0\right)}$, we conclude that $f\left(x_0\right)=\left\|f\right\|$.
\end{enumerate}
In accordance to the James' characterization of reflexivity (see \cite{J}), we deduce that $X$ is reflexive.
\end{proof}

\begin{theorem}
A normed space $X$ has the Schur property if and only if $X$ enjoys the following two properties:
\begin{itemize}
\item Every sequence in $X$ whose subsequences are almost convergent to the same limit is convergent.
\item Every weakly almost convergent sequence in $X$ is almost convergent.
\end{itemize}
\end{theorem}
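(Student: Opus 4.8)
The plan is to prove both implications by reducing everything to a few facts already available in the text: that a (weakly) convergent sequence is (weakly) almost convergent, the subsequential characterization of weak convergence in Corollary \ref{nota1}, the Lorentz intrinsic characterization Theorem \ref{LIC}, and the boundedness of weakly almost convergent sequences (Proposition \ref{bounded}). Throughout I recall that $X$ has the Schur property precisely when every weakly convergent sequence converges in norm, and that weak almost limits are unique, so an almost convergent sequence that is simultaneously weakly almost convergent to $y$ must be almost convergent to $y$.

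For the forward implication, assume $X$ has the Schur property. To obtain the first listed property, I take a sequence all of whose subsequences are almost convergent to a common limit $x$; each such subsequence is then weakly almost convergent to $x$, so Corollary \ref{nota1} forces the whole sequence to converge weakly to $x$, and the Schur property upgrades this to norm convergence. To obtain the second property, I take a weakly almost convergent sequence $(x_n)$ with weak almost limit $y$, which is bounded by Proposition \ref{bounded}. Arguing by contradiction, if it fails to be almost convergent, then negating the uniform Cesaro condition lets me choose $p_j\to\infty$ and indices $n_j$ for which the blocks $a_j:=\frac{1}{p_j+1}\sum_{k=0}^{p_j}x_{n_j+k}$ stay at distance at least $\varepsilon$ from $y$. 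The crucial observation is that for every $f\in X^*$, applying Theorem \ref{LIC} to $\AClim f(x_n)=f(y)$ yields convergence of the Cesaro averages uniformly in the starting index; hence $f(a_j)\to f(y)$, so $(a_j)$ converges weakly to $y$, the Schur property forces $a_j\to y$ in norm, and this contradicts $\|a_j-y\|\ge\varepsilon$.

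For the reverse implication, assume both properties hold and let $(x_n)$ converge weakly to $x$. Every subsequence also converges weakly to $x$, hence is weakly almost convergent to $x$; by the second property each subsequence is almost convergent, and by uniqueness of the weak almost limit its almost limit equals $x$. Thus every subsequence of $(x_n)$ is almost convergent to the common limit $x$, and the first property then delivers norm convergence of $(x_n)$. Consequently every weakly convergent sequence converges in norm, i.e. $X$ has the Schur property.

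The routine parts are the implication that (weakly) convergent sequences are (weakly) almost convergent and the uniqueness of weak almost limits. The only genuinely delicate step is the construction in the forward direction of the second property: one must carefully negate the uniform-in-$n$ Cesaro condition defining almost convergence and then exploit that the uniformity built into Theorem \ref{LIC} is exactly what makes the extracted blocks $a_j$ converge weakly. I expect this extraction, rather than any deep structural input, to be the main obstacle, since everything else is a direct appeal to Corollary \ref{nota1} and the Schur hypothesis.
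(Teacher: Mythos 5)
Your proof is correct and takes essentially the same route as the paper: the reverse implication rests, as in the text, on Corollary \ref{nota1} together with the uniqueness of weak almost limits, and the forward implication handles the second property by extracting Ces\`aro blocks that violate the uniform condition, showing these blocks converge weakly to the limit via the uniformity furnished by Theorem \ref{LIC}, and then applying the Schur hypothesis to the block sequence to reach a contradiction. Your direct extraction of blocks $a_j$ with lengths $p_j\to\infty$ from the negated uniform condition mildly streamlines the paper's interleaved two-index construction (which first applies Schur pointwise in $n$ before extracting), but the core mechanism is identical.
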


\begin{proof}
In the first place, suppose that $X$ enjoys both properties. Let $\left(x_n\right)_{n\in\mathbb{N}}$ be a weakly convergent sequence in $X$. From the second property we deduce that $\left(x_n\right)_{n\in\mathbb{N}}$ is almost convergent, and thus, all of its subsequences are almost convergent to the same limit. By the first property, $\left(x_n\right)_{n\in\mathbb{N}}$ is convergent.

Conversely, suppose that $X$ has the Schur property. In accordance with {\em 1} of Corollary \ref{nota1}, $X$ enjoys the first property. We will show now that $X$ enjoys the second one. Let $\left(a_i\right)_{i\in\mathbb{N}}$ be a sequence in $X$ which is weakly almost convergent. Without lack of generality we may assume that $\displaystyle{ \wAC\lim_{i\to\infty} a_i=0}$. For each $f \in X^*$ it is verified that $\displaystyle{\lim_{i \to \infty} f\left(x_i^n \right)=0}$ uniformly in $n \in \mathbb{N}$, where we have established that $\displaystyle{x^n_i:=\frac{1}{i+1} \sum_{k=0}^i a_{n+k}}$ for every $n,i\in\mathbb{N}$ in order to simplify. Since $X$ has the Schur property, $\displaystyle{\lim_{i \to \infty} x_i^n=0}$ for every $n \in \mathbb{N}$, so we will conclude the proof by showing that $\displaystyle{\lim_{i \to \infty} x_i^n=0}$ uniformly in $n \in \mathbb{N}$. Suppose not. Then there exists $\varepsilon>0$ for which the following holds:
\begin{enumerate}
\item Take $n_1=1$. There exists $i_1 \in \mathbb{N}$ such that if $i \geqslant i_1$, then $\left\|x^{n_1}_i\right\|<\varepsilon$.
\item There is $n_2>n_1$ such that $y_1:=x^{n_2}_i$ satisfies that $\left\|y_1\right\| \geqslant \varepsilon$ for some $i>i_1$. Besides, there exists $i_2>i_1$ such that if $i\geqslant i_2$, then $\left\|x^{n_2}_i\right\|< \varepsilon$.
\item There is $n_3>n_2$ such that $y_2:=x^{n_3}_i$ satisfies that $\left\|y_2\right\| \geqslant \varepsilon$ for some $i>i_2$. Besides, there exists $i_3>i_2$ such that if $i\geqslant i_3$, then $\left\|x^{n_3}_i\right\|< \varepsilon$.
\item And so on.
\end{enumerate}
In this manner, two sequences $\left(i_k\right)_{k\in\mathbb{N}}\subset \mathbb{N}$ and $\left(y_k\right)_{k\in\mathbb{N}}\subset X$ are found enjoying the following:
\begin{enumerate}
\item $\left\|y_k\right\| \geqslant \varepsilon$ for each $k \in\mathbb{N}$.
\item $\left(i_k\right)_{k\in\mathbb{N}}$ is strictly increasing.
\item For every $k\in \mathbb{N}$ there exists $i \in\left[i_k,i_{k+1}\right]$ such that $y_k=x^{n_{k+1}}_i$.
\end{enumerate}
We will show next that $\displaystyle{w\lim_{k\to\infty} y_k=0}$. Take $f \in X^*$ and $\eta>0$. By hypothesis, there exists $i' \in \mathbb{N}$ so that if $i \geqslant i'$, then $\left|f\left(x^n_i\right)\right|< \eta$ for every $n \in \mathbb{N}$. On the other hand, there will exist $k_0\in\mathbb{N}$ such that if $k \geqslant k_0$, then $i_k > i'$. So $\left|f\left(y_k\right)\right|< \eta$ for $k \geqslant k_0$. Since $X$ has the Schur property, we have that $\displaystyle{\lim_{k\to\infty}y_k=0}$, which contradicts that $\left\|y_k\right\|\geqslant \varepsilon$ for each $k \in \mathbb{N}$.
    \end{proof}

\begin{theorem}
A normed space $X$ has the Grothendieck property if and only if $w^*ac\left(X^*\right)=wac\left(X^*\right)$.
\end{theorem}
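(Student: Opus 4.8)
The plan is to prove both inclusions, relying on the fact that $wac(X^*)\subseteq w^*ac(X^*)$ always holds: if $(x_n^*)$ is weakly almost convergent to $y^*$, then evaluating only against the functionals of $X^{**}$ that come from $X$ shows it is weakly-star almost convergent to $y^*$. Throughout I would normalize by subtracting the constant sequence $\mathbf{y^*}$ (which is convergent, hence almost convergent in both senses) so as to assume the almost limit is $0$. I would also use that the relevant sequences are bounded: members of $wac(X^*)$ are bounded by Proposition \ref{bounded}, and weakly-star almost convergent sequences are bounded whenever $X$ is barrelled (in particular complete), so the averaged sequences below stay in a fixed ball.

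For the implication that the Grothendieck property forces $w^*ac(X^*)=wac(X^*)$, I would take $(x_n^*)\in w^*ac(X^*)$ with $\wsAClim_{n\to\infty} x_n^*=0$ and show $\wAClim_{n\to\infty} x_n^*=0$. Writing $u_{p,n}^*:=\frac{1}{p+1}\sum_{k=0}^{p}x_{n+k}^*\in X^*$, the hypothesis says exactly that for each $x\in X$ one has $\sup_{n}|u_{p,n}^*(x)|\to 0$ as $p\to\infty$; the goal is the same statement with $x\in X$ replaced by an arbitrary $\Phi\in X^{**}$. Suppose this failed for some $\Phi$: then there would be $\varepsilon>0$ and indices $p_j\uparrow\infty$, $n_j$ with $|\Phi(u_{p_j,n_j}^*)|\geq\varepsilon$. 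The crucial observation is that the genuine sequence $(u_{p_j,n_j}^*)_{j\in\mathbb{N}}$ is weakly-star null, since for every fixed $x\in X$ we have $|u_{p_j,n_j}^*(x)|\leq\sup_{n}|u_{p_j,n}^*(x)|\to 0$. The Grothendieck property then upgrades this to weak nullity, so $\Phi(u_{p_j,n_j}^*)\to 0$, contradicting the choice of the $n_j$. Hence $\sup_{n}|\Phi(u_{p,n}^*)|\to 0$ for every $\Phi\in X^{**}$, which, because $\Phi(u_{p,n}^*)=\frac{1}{p+1}\sum_{k=0}^{p}\Phi(x_{n+k}^*)$, is precisely $\AClim_{n\to\infty}\Phi(x_n^*)=0$ for all $\Phi$, i.e. $\wAClim_{n\to\infty} x_n^*=0$. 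This gives $w^*ac(X^*)\subseteq wac(X^*)$, and hence equality.

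For the converse I would assume $w^*ac(X^*)=wac(X^*)$ and verify the Grothendieck property directly. Let $(x_n^*)$ be weakly-star convergent in $X^*$ to some $x^*$. Every subsequence $(x_{n_k}^*)$ is again weakly-star convergent to $x^*$, hence weakly-star almost convergent to $x^*$, hence by hypothesis weakly almost convergent to $x^*$; here uniqueness of the almost limit forces the limit to remain $x^*$, since weak almost convergence to some $y^*$ would entail weak-star almost convergence to $y^*$ as well. Thus every subsequence of $(x_n^*)$ is weakly almost convergent to the same limit $x^*$, and applying Corollary \ref{nota1} in the Banach space $X^*$ yields that $(x_n^*)$ is weakly convergent to $x^*$. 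Therefore $X$ enjoys the Grothendieck property.

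The main obstacle is the forward direction: passing from scalar (weak-star) almost convergence to weak almost convergence cannot be carried out by the density argument of Lemma \ref{wacden}, precisely because $X$ need not be dense in $X^{**}$. The device that replaces density is the diagonal extraction above, which manufactures out of the doubly-indexed Cesàro averages $u_{p,n}^*$ a single honestly weak-star null sequence on which the Grothendieck hypothesis can act. Care must be taken that this extracted sequence stays bounded (guaranteed by the barrelledness of $X$, hence automatic when $X$ is complete) so that the Grothendieck property is genuinely applicable to it.
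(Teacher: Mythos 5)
Your proposal is correct and takes essentially the same route as the paper's proof: the converse direction is the identical subsequence argument feeding into Corollary \ref{nota1}, and the forward direction is the paper's diagonal extraction, manufacturing from the doubly-indexed Ces\`aro averages a single $w^*$-null sequence on which the Grothendieck property acts to produce a contradiction -- your direct choice of $p_j\uparrow\infty$ from the failure of uniform convergence is just a streamlined version of the paper's inductive construction of the indices $i_k$. Your two extra precautions (explicit uniqueness of the almost limit via restriction to functionals coming from $X$, and the boundedness worry) are sound but inessential, since the Grothendieck property as defined in the paper applies to arbitrary $w^*$-convergent sequences without a boundedness hypothesis.
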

\begin{proof}
Firstly, suppose that $w^*ac\left(X^*\right)=wac\left(X^*\right)$. Let $\left(f_n\right)_{n\in\mathbb{N}} \subset X^*$ be such that $\displaystyle{\ws_{n\to\infty} f_n=f_0}$. For every subsequence $\left(f_{n_j}\right)_{j\in\mathbb{N}}$ we have that $\displaystyle{\ws_{j\to\infty} f_{n_j}=f_0}$, so $\displaystyle{\wsAC\lim_{j\to\infty} f_{n_j}=f_0}$, which means that $\displaystyle{\wAClim_{j\to\infty} f_{n_j}=f_0}$. We conclude by Corollary \ref{nota1} that $\displaystyle{\w_{n\to\infty}f_n=f_0}$. As a consequence, $X$ has the Grothendieck property.

Conversely, assume that $X$ has the Grothendieck property. Consider a sequence $\left(f_i\right)_{i\in\mathbb{N}} \subset X^*$ such that $\displaystyle{\wsAClim_{i\to\infty} f_i=0}$. For each $x \in X$ it is verified that $\displaystyle{\lim_{i \to \infty} F_i^n\left(x\right) =0}$ uniformly in $n \in \mathbb{N}$, where we have established that $\displaystyle{F^n_i:=\frac{1}{i+1} \sum_{k=0}^i f_{n+k}}$ for every $n,i\in \mathbb{N}$ in order to simplify. We will conclude this part of the proof by showing that $\displaystyle{\lim_{i \to \infty}g\left(F_i^n\right)=0}$ uniformly in $n \in \mathbb{N}$ for each $g \in X^{**}$. Suppose not. Then there exist $g \in X^{**}$ and $\varepsilon>0$ satisfying the following:\begin{enumerate}
\item Take $n_1=1$. There exists $i_1 \in \mathbb{N}$ so that if $i \geqslant i_1$, then $\left|g\left(F^1_i\right)\right|<\varepsilon$.
\item There is $n_2>n_1$ such that $y_1:=F^{n_2}_i$ satisfies that $\left|g\left(y_1\right)\right| \geqslant \varepsilon$ for some $i>i_1$. Besides, there exists $i_2>i_1$ such that if $i\geqslant i_2$, then $\left|g\left(F^{n_2}_i\right)\right|< \varepsilon$.
\item There is $n_3>n_2$ such that $y_2:=F^{n_3}_i$ satisfies that $\left|g\left(y_2\right)\right| \geqslant \varepsilon$ for some $i>i_2$. Besides, there exists $i_3>i_2$ such that if $i\geqslant i_3$, then $\left|g\left(F^{n_3}_i\right)\right|< \varepsilon$.
\item And so on.
\end{enumerate}
In this manner, two sequences $\left(i_k\right)_{k\in\mathbb{N}}\subset \mathbb{N}$ and $\left(y_k\right)_{k\in\mathbb{N}}\subset X$ are found satisfying the following:
\begin{enumerate}
\item $\left|g\left(y_k\right)\right| \geqslant \varepsilon$ for each $k \in\mathbb{N}$.
\item $\left(i_k\right)_{k\in\mathbb{N}}$ is strictly increasing.
\item For each $k \in \mathbb{N}$ there exists $i \in\left[i_k,i_{k+1}\right]$ such that $y_k=F^{n_{k+1}}_i$.
\end{enumerate}
We will prove now that $\displaystyle{\ws_{k\to\infty} y_k=0}$. Take $x \in X$ and $\eta>0$. By hypothesis, there exists $i' \in \mathbb{N}$ such that if $i \geqslant i'$, then $\left|F^n_i\left(x\right)\right|< \eta$ for every $n \in \mathbb{N}$. On the other hand, there will exist $k_0\in\mathbb{N}$ such that if $k \geqslant k_0$, then $i_k > i'$. So $\left|y_k\left(x\right)\right|< \eta$ for $k \geqslant k_0$. Since $X$ has the Grothendieck property, we have that $\displaystyle{\w_{k\to\infty}y_k=0}$, which contradicts that $\left|g\left(y_k\right)\right| \geqslant \varepsilon$ for each $k \in \mathbb{N}$.
\end{proof}

\begin{theorem}
A Banach space $X$ has a copy of $c_0$ if and only if there exists a sequence $\left(x_i\right)_{i\in\mathbb{N}}\in \ell_\infty\left(X\right)\setminus c_0\left(X\right)$ satisfying that for every infinite set $M \subset \mathbb{N}$ there exists $P \subset M$ infinite such that $\displaystyle{\sum_{i \in P} x_i}$ has bounded partial sums.
\end{theorem}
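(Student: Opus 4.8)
The plan is to prove the two implications separately, the forward one being routine and the backward one carrying the bulk of the work.

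For the direct implication, suppose $X$ contains an isomorphic copy of $c_0$, say $T:c_0\to X$ is an isomorphism onto its range, and let $(e_i)_{i\in\mathbb{N}}$ denote the canonical basis of $c_0$. Set $x_i:=T(e_i)$. Then $\|x_i\|\leq\|T\|$ and $\|x_i\|\geq\|e_i\|/\|T^{-1}\|=1/\|T^{-1}\|$, so $(x_i)_{i\in\mathbb{N}}\in\ell_\infty(X)\setminus c_0(X)$. Given any infinite $M\subset\mathbb{N}$ I would simply take $P:=M$: for every finite initial segment $F$ of $P$ the vector $\sum_{i\in F}e_i$ is a $0$-$1$ sequence of $c_0$-norm $1$, whence $\|\sum_{i\in F}x_i\|=\|T(\sum_{i\in F}e_i)\|\leq\|T\|$, so $\sum_{i\in P}x_i$ has bounded partial sums.

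For the converse, assume such a sequence $(x_i)_{i\in\mathbb{N}}$ exists. Since $(x_i)\notin c_0(X)$ there are $\delta>0$ and an infinite $M_0\subset\mathbb{N}$ with $\|x_i\|\geq\delta$ for all $i\in M_0$. The first key observation is that the hypothesis forces $(x_i)_{i\in M_0}$ to be weakly null: if, up to sign, some $f\in X^*$, some $\delta'>0$ and some infinite $M\subseteq M_0$ satisfied $f(x_i)\geq\delta'$ for all $i\in M$ (which is exactly the situation one reduces to when $(x_i)_{i\in M_0}$ fails to be weakly null), then for every infinite $P\subseteq M$ the partial sums would obey $\|\sum_{k=1}^n x_{p_k}\|\geq f(\sum_{k=1}^n x_{p_k})=\sum_{k=1}^n f(x_{p_k})\geq n\delta'\to\infty$, so no infinite subset of $M$ would have bounded partial sums, contradicting the hypothesis applied to $M$. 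Hence $f(x_i)\to 0$ along $M_0$ for every $f\in X^*$.

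The core of the proof is to upgrade this to a single infinite set $Q\subseteq M_0$ on which the series $\sum_{i\in Q}x_i$ is weakly unconditionally Cauchy; by the characterization recalled before Theorem \ref{DiestelwuC} this amounts to proving $\sum_{i\in Q}|f(x_i)|<\infty$ for every $f\in X^*$ simultaneously. For a single functional this is already within reach: splitting $M_0$ according to the sign of $f(x_i)$ and applying the hypothesis to the (say) positive part produces an infinite $P$ on which $\sum_{i\in P}f(x_i)$ has bounded partial sums with nonnegative terms, forcing $\sum_{i\in P}|f(x_i)|<\infty$. The main obstacle is to carry out this extraction \emph{uniformly} over all of $\mathsf{B}_{X^*}$ on one fixed $Q$. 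I would attack it by first replacing $(x_i)_{i\in M_0}$ by a basic subsequence via the Bessaga--Pe{\l}czy\'nski selection principle (legitimate, as the sequence is seminormalized and weakly null), and then running a diagonal/Ramsey selection in which the bounded-partial-sums hypothesis, valid on \emph{arbitrary} infinite subsets, supplies uniform $bps$-type bounds that promote the per-functional finiteness above to a uniform control of $\sup_{f\in\mathsf{B}_{X^*}}\sum_{i\in F}|f(x_i)|$ over finite $F$, i.e. to the wuC estimate of Theorem \ref{DiestelwuC}.

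Finally, once such a $Q$ is obtained, $\sum_{i\in Q}x_i$ is wuC while $\inf_{i\in Q}\|x_i\|\geq\delta>0$ shows its terms do not tend to $0$, so it is wuC but non-convergent. The classical $c_0$-theorem of Bessaga and Pe{\l}czy\'nski then yields a subsequence of $(x_i)_{i\in Q}$ equivalent to the unit vector basis of $c_0$, producing the desired copy of $c_0$ in $X$. I expect the uniform extraction described in the third paragraph to be the genuinely delicate point, the remaining steps being standard.
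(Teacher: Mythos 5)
Your easy direction is fine, and your preliminary reductions in the converse are correct; in fact your direct proof that $\left(x_i\right)_{i\in M_0}$ is weakly null (sign-stabilizing a functional and contradicting bounded partial sums) is more elementary than the paper's route, which deduces weak nullity from Lemma \ref{jaja} ($bps\left(X\right)\subset ac_0\left(X\right)$) combined with the almost-convergence characterization of Corollary \ref{nota1}. The genuine gap is your third paragraph. The ``diagonal/Ramsey selection'' that is supposed to promote per-functional finiteness of $\sum_i\left|f\left(x_i\right)\right|$ to the uniform estimate of Theorem \ref{DiestelwuC} is not an argument but precisely the hard point of the theorem: your per-functional extraction produces a set $P_f$ that depends on $f$, and since $\mathsf{B}_{X^*}$ is uncountable no diagonalization over functionals can produce one $Q$ working for all $f$ simultaneously. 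Worse, the hypothesis only bounds the partial sums of a \emph{single} subseries inside each infinite set, whereas wuC of $\sum_{i\in Q}x_i$ is equivalent (via Theorem \ref{DiestelwuC}) to a bound on $\left\|\sum_{i\in F}x_i\right\|$ uniform over \emph{all} finite $F\subseteq Q$ --- a strictly stronger statement. Upgrading the former to the latter is the entire combinatorial content of the result, and as written you assert it rather than prove it.

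The paper closes exactly this gap by invoking Odell's Ramsey dichotomy \cite{Odell}: after the Bessaga--Pe{\l}czy\'nski selection \cite{Bessaga} of a seminormalized weakly null basic subsequence $\left(x_i\right)_{i\in B}$, either there is an infinite $C\subseteq B$ with $\left(x_i\right)_{i\in C}$ equivalent to the canonical basis of $c_0$, or there is an infinite $C\subseteq B$ such that $\sum_{i\in C}\alpha_ix_i$ fails to have bounded partial sums for \emph{every} scalar sequence $\left(\alpha_i\right)_{i\in C}$ not converging to zero. The second horn contradicts your hypothesis: applying it with $M=C$ yields an infinite $P\subseteq C$ with $\sum_{i\in P}x_i$ of bounded partial sums, and $\alpha_i:=\chi_P\left(i\right)$ does not converge to zero. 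Hence the first horn holds and the copy of $c_0$ appears directly, with no need for your wuC extraction or for the Bessaga--Pe{\l}czy\'nski $c_0$-theorem at the end. So your architecture is salvageable, but only by citing (or essentially reproving) Odell's dichotomy; the step you yourself flag as ``genuinely delicate'' is missing, and it is the heart of the proof.
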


\begin{proof}
If $X$ has a copy of $c_0$, then $\left(x_i\right)_{i\in\mathbb{N}}$ can be taken as the canonical basis of $c_0$.

Conversely, assume the existence of a sequence $\left(x_i\right)_{i\in\mathbb{N}}\in X^{\mathbb{N}}\setminus c_0\left(X\right)$ satisfying that for every infinite set $M \subset \mathbb{N}$ there exists $P \subset M$ infinite such that $\displaystyle{\sum_{i \in P} x_i}$ has bounded partial sums. By Corollary \ref{nota1} and by the fact that $bps\left(X\right)\subset ac_0\left(X\right)$ (see Lemma \ref{jaja}), we conclude that $\displaystyle{\w_{i\to\infty} x_i=0}$. Since $\left(x_i\right)_{i\in\mathbb{N}}$ is not convergent to $0$, there exists $A\subset \mathbb{N}$ infinite and $\delta >0$ such that $\left\|x_i\right\| > \delta$ for every $i \in A$. According to a result proved by Bessaga-Pelczynski (see \cite{Bessaga}), there exists $B \subset A$ such that $\left(x_i\right)_{i \in B}$ is a basic sequence. A dichotomy result by Odell (see \cite{Odell}) tells us that only one of the following conditions is satisfied:
\begin{enumerate}
\item There exists $C \subset B$ infinite such that $\left(x_i\right)_{i \in C}$ is equivalent to the basis of canonical vectors in $c_0$.
\item There exists $C \subset B$ infinite such that for every sequence $\left(\alpha_i\right)_{i \in C}$ of real numbers which is not convergent to zero, the series $\sum_{i \in C} \alpha_ix_i$ does not have bounded partial sums.
\end{enumerate}
Thus, we deduce that $X$ has a copy of $c_0$.
\end{proof}

\section{Vector-valued Lorentz's theorem}\label{secLIC}

The purpose of this section is to obtain a vector-valued version of Lorentz's almost convergence intrinsic characterization (see Theorem \ref{LIC} or \cite[Theorem 1]{Lorentz}). Our technique is totally different from the original one used by Lorentz in \cite[Theorem 1]{Lorentz} and much more simple since it does not involve the usual order relation in the real line.

\subsection{A total extension for the $ac_0(X)$ case}

For the null almost convergence we obtain a full extension of Theorem \ref{LIC} to the vector-valued case.

\begin{theorem}\label{aqui}
Let $\left(x_n\right)_{n\in \mathbb{N}}$ be a bounded sequence in $X$. The following conditions are equivalent:
\begin{enumerate}
\item $\left(x_n\right)_{n\in \mathbb{N}}$ is a null almost convergent sequence in $X$.
\item $T\left( \left(x_n\right)_{n\in \mathbb{N}} \right) = 0$ for all $T\in  \mathcal{N}_X$.
\end{enumerate}
\end{theorem}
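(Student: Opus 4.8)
The plan is to handle the two implications separately, with everything resting on the description of $\mathcal{N}_X$ given in Proposition \ref{prim}: an operator $T\in\mathcal{L}\left(\ell_\infty(X),X\right)$ lies in $\mathcal{N}_X$ exactly when $bps(X)\subseteq\ker(T)$, and hence, by continuity, exactly when $\cl\left(bps(X)\right)\subseteq\ker(T)$.

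For the implication \emph{1} $\Rightarrow$ \emph{2}, I would exploit shift-invariance together with the Ces\`aro formulation of null almost convergence. Fix $T\in\mathcal{N}_X$. Since $T$ is invariant under the forward shift, $T\left((x_n)_{n\in\mathbb{N}}\right)=T\left((x_{n+k})_{n\in\mathbb{N}}\right)$ for every $k\geq 0$, so averaging over $k=0,\dots,p$ and using linearity gives
$$T\left((x_n)_{n\in\mathbb{N}}\right)=T\left(\left(\tfrac{1}{p+1}\sum_{k=0}^p x_{n+k}\right)_{n\in\mathbb{N}}\right)$$
for every $p$. The hypothesis that $(x_n)_{n\in\mathbb{N}}$ is null almost convergent says precisely that the Ces\`aro averages tend to $0$ uniformly in $n$, i.e. the element $y^{(p)}:=\left(\frac{1}{p+1}\sum_{k=0}^p x_{n+k}\right)_{n\in\mathbb{N}}$ of $\ell_\infty(X)$ satisfies $\left\|y^{(p)}\right\|_\infty\to 0$. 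Letting $p\to\infty$ and invoking continuity of $T$ forces the value $T\left((x_n)_{n\in\mathbb{N}}\right)$, which is constant in $p$, to be $0$.

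For \emph{2} $\Rightarrow$ \emph{1}, I would argue by contraposition and manufacture a witnessing operator via Hahn-Banach. First I record that $\cl\left(bps(X)\right)\subseteq ac_0(X)$, which is immediate from Lemma \ref{jaja} (that $bps(X)\subseteq ac_0(X)$) together with the closedness of $ac_0(X)$ in $\ell_\infty(X)$. Hence, if $(x_n)_{n\in\mathbb{N}}\notin ac_0(X)$, then $(x_n)_{n\in\mathbb{N}}\notin\cl\left(bps(X)\right)$; since $\cl\left(bps(X)\right)$ is a closed subspace of $\ell_\infty(X)$, the Hahn-Banach Theorem yields $f\in\ell_\infty(X)^*$ with $f|_{bps(X)}=0$ and $f\left((x_n)_{n\in\mathbb{N}}\right)\neq 0$. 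Choosing any $y\in X\setminus\{0\}$ and setting $T(\cdot):=f(\cdot)\,y$ produces a continuous linear operator $\ell_\infty(X)\to X$ that annihilates $bps(X)$, hence lies in $\mathcal{N}_X$ by Proposition \ref{prim}, yet satisfies $T\left((x_n)_{n\in\mathbb{N}}\right)=f\left((x_n)_{n\in\mathbb{N}}\right)\,y\neq 0$, contradicting \emph{2}.

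The conceptually delicate point is the reduction hidden in the second implication: condition \emph{2} is really the assertion that $(x_n)_{n\in\mathbb{N}}\in\cl\left(bps(X)\right)$, after which everything hinges on the one-line inclusion $\cl\left(bps(X)\right)\subseteq ac_0(X)$. I expect no genuine obstacle beyond two small checks, namely that the separating functional vanishes on all of $bps(X)$ (automatic, since it vanishes on the closure) and that the rank-one operator built from it is genuinely shift-invariant, which is exactly the content of the characterization $\mathcal{N}_X=\left\{T:bps(X)\subseteq\ker(T)\right\}$.
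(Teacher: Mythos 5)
Your proposal is correct and takes essentially the same approach as the paper's own proof: the forward direction is the identical shift-invariance/Ces\`aro-averaging argument, and the reverse direction is the paper's Hahn--Banach separation of $\left(x_n\right)_{n\in\mathbb{N}}$ from $\cl\left(bps\left(X\right)\right)$ (resting, as in the paper, on Lemma \ref{jaja}, the closedness of $ac_0\left(X\right)$ in $\ell_\infty\left(X\right)$, and the characterization of $\mathcal{N}_X$ in Proposition \ref{prim}), yielding the same rank-one witnessing operator. The only cosmetic differences are that you phrase the second implication as a contraposition rather than a direct contradiction and that you avoid the paper's division by $\left\|T\right\|$ by invoking continuity of $T$ directly.
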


\begin{proof}
\mbox{}
\begin{enumerate}
\item[{\em 1} $\Rightarrow$ {\em 2}] Fix $T\in \mathcal{N}_X\setminus\left\{0\right\}$. Denote $s:=T\left( \left(x_n\right)_{n\in \mathbb{N}} \right)$. We will show that $\left\|s\right\|<\varepsilon$ for every $\varepsilon >0$. So, fix an arbitrary $\varepsilon >0$. There exists $p\in \mathbb{N}$ such that $$\left\|\frac{1}{p+1}\sum_{k=0}^px_{n+k}\right\|<\frac{\varepsilon}{\left\|T\right\|}$$ for all $n\in \mathbb{N}$. Observe that
$$
\begin{array}{rcl}
s & = & T\left(x_1,x_2,x_3,x_4,x_5,\dots\right)\\
s & = & T\left(x_2,x_3,x_4,x_5,x_6,\dots\right)\\
s & = & T\left(x_3,x_4,x_5,x_6,x_7,\dots\right)\\
\vdots & \vdots & \vdots \\
s & = & T\left(x_{p+1},x_{p+2},x_{p+3},x_{p+4},x_{p+5},\dots\right).
\end{array}
$$ Therefore $$\left(p+1\right)s=T\left(x_1+\cdots +x_{p+1},x_2 + \cdots +x_{p+2},\dots\right),$$ that is $$s=T\left(\frac{x_1+\cdots +x_{p+1}}{p+1},\frac{x_2 + \cdots +x_{p+2}}{p+1},\dots\right),$$ and hence
\begin{eqnarray*}
\left\|s\right\|&=&\left\|T\left(\frac{x_1+\cdots +x_{p+1}}{p+1},\frac{x_2 + \cdots +x_{p+2}}{p+1},\dots\right)\right\|\\
& \leq & \left\|T\right\| \left\| \left( \frac{1}{p+1}\sum_{k=0}^px_{n+k} \right)_{n\in \mathbb{N}}\right\|_{\infty}\\
&< & \varepsilon.
\end{eqnarray*}
\item[{\em 2} $\Rightarrow$ {\em 1}] Suppose to the contrary that $\left(x_n\right)_{n\in\mathbb{N}}$ is not almost convergent to $0$. Note that then $$d\left(\left(x_n\right)_{n\in\mathbb{N}},bps(X)\right)>0$$ in virtue of Lemma \ref{jaja} and the fact that $ac_0(X)$ is always closed in $\ell_\infty(X)$. By bearing in mind a corollary of the Hahn-Banach Extension Theorem there exists $f\in\E_{\ell_\infty(X)^*}$ verifying that $$f\left(\left(x_n\right)_{n\in\mathbb{N}}\right)=d\left(\left(x_n\right)_{n\in\mathbb{N}},bps(X)\right)$$
and $$f\left(bps(X)\right)=\{0\}.$$ Consider the following norm-$1$ continuous linear operator
$$
\begin{array}{rrcl}
S : &  \ell_\infty\left(X\right) &\to&  X\\
&\left(z_n\right)_{n\in\mathbb{N}} &\mapsto & f\left(\left(z_n\right)_{n\in\mathbb{N}}\right) x,
\end{array}
$$
where $x\in \E_X$ can be chosen arbitrarily. Note that $S \in \mathcal{N}_X$ in virtue of Proposition \ref{prim}, which is a contradiction since $S\left(\left(x_n\right)_{n\in \mathbb{N}}\right) =f\left(\left(x_n\right)_{n\in\mathbb{N}}\right)x \neq 0$.
\end{enumerate}
\end{proof}

\begin{corollary}\label{aquitepillo3}
\mbox{}
\begin{enumerate}
\item $\displaystyle{ac_0(X)=\bigcap_{T\in\mathcal{N}_X}\ker(T)}$.
\item If $\left(x_n\right)_{n\in \mathbb{N}}$ is a sequence in $X$ almost convergent to $x\in X$, then $T\left( \left(x_n\right)_{n\in \mathbb{N}} \right) = x$ for all $T\in \mathcal{N}_X\cap \mathcal{L}_X$. In particular, $\varphi\left( \left(x_n\right)_{n\in \mathbb{N}} \right) = x$ for all $\varphi\in \BL\left(X\right).$
\item $\BL\left(X\right)=\mathcal{HB}\left(\AClim\right)$.
\end{enumerate}
\end{corollary}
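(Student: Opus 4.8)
The plan is to treat the three items in order, deriving each from results already established and reusing the earlier items as I proceed; the whole argument is essentially bookkeeping around Theorem \ref{aqui}.

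For item 1, the point is that it is merely a restatement of Theorem \ref{aqui} once boundedness is accounted for. First I would note that $ac_0(X)\subseteq\ell_\infty(X)$ by Proposition \ref{bounded}, so that every $(x_n)_{n\in\mathbb{N}}\in ac_0(X)$ lies in the common domain $\ell_\infty(X)$ of the operators in $\mathcal{N}_X$. Then Theorem \ref{aqui} asserts precisely that, for a bounded sequence, being null almost convergent is equivalent to being annihilated by every $T\in\mathcal{N}_X$; rewriting the latter condition as membership in $\bigcap_{T\in\mathcal{N}_X}\ker(T)$ gives the claimed set equality.

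For item 2, I would reduce to the null case by translation. Given $(x_n)_{n\in\mathbb{N}}$ almost convergent to $x$, the constant sequence $\mathbf{x}$ converges, hence almost converges, to $x$, so by linearity of $\AClim$ the sequence $(x_n)_{n\in\mathbb{N}}-\mathbf{x}$ belongs to $ac_0(X)$. Taking $T\in\mathcal{N}_X\cap\mathcal{L}_X$, item 1 forces $T((x_n)_{n\in\mathbb{N}}-\mathbf{x})=0$, whence $T((x_n)_{n\in\mathbb{N}})=T(\mathbf{x})$. Since $\mathbf{x}\in c(X)$ and $T|_{c(X)}=\lim$ by the definition of $\mathcal{L}_X$, one gets $T(\mathbf{x})=x$, so $T((x_n)_{n\in\mathbb{N}})=x$. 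The ``in particular'' clause then follows because $\BL(X)=\E_{\mathcal{N}_X}\cap\mathcal{L}_X\subseteq\mathcal{N}_X\cap\mathcal{L}_X$.

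For item 3, I would combine item 2 with Lemma \ref{HBCBL}, which already yields $\mathcal{HB}(\AClim)\subseteq\BL(X)$, so that only the reverse inclusion remains. I would fix $\varphi\in\BL(X)$ and observe that $\|\varphi\|=1$ because $\varphi\in\E_{\mathcal{N}_X}$, while for every $(x_n)_{n\in\mathbb{N}}\in ac(X)$ item 2 gives $\varphi((x_n)_{n\in\mathbb{N}})=\AClim((x_n)_{n\in\mathbb{N}})$, that is $\varphi|_{ac(X)}=\AClim$. Hence $\varphi$ is a norm-$1$ extension of $\AClim$ to $\ell_\infty(X)$, i.e. $\varphi\in\mathcal{HB}(\AClim)$, and the two inclusions force the equality. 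No step presents a genuine obstacle; the only point demanding care is the verification in item 3 that $\varphi$ simultaneously has norm one and restricts to $\AClim$ on $ac(X)$, which is exactly what membership in $\mathcal{HB}(\AClim)$ requires.
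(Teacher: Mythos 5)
Your proposal is correct and follows exactly the intended derivation: the paper states Corollary \ref{aquitepillo3} without proof as an immediate consequence of Theorem \ref{aqui}, and your bookkeeping --- boundedness of almost convergent sequences via Proposition \ref{bounded} for item \emph{1}, translation by $\mathbf{x}$ together with $T|_{c(X)}=\lim$ for item \emph{2}, and Lemma \ref{HBCBL} plus item \emph{2} for the two inclusions in item \emph{3} --- is precisely the argument the authors leave to the reader. All steps are valid, including the point you flag yourself, namely that membership in $\mathcal{HB}\left(\AClim\right)$ requires checking both $\left\|\varphi\right\|=1$ (which holds since $\varphi\in\E_{\mathcal{N}_X}$) and $\varphi|_{ac(X)}=\AClim$.
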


\subsection{A partial extension for the $ac(X)$ case}

In the previous subsection we have proved that if $\left(x_n\right)_{n\in \mathbb{N}}$ is almost convergent to $x$, then $\varphi\left( \left(x_n\right)_{n\in \mathbb{N}} \right) = x$ for all $\varphi\in \BL\left(X\right).$ We will obtain now an approach to the converse of the previous assertion.

We refer the reader to Subsection \ref{contlin} for a wide perspective on injective Banach spaces.

\begin{theorem}\label{pollo}
Let $\left(x_n\right)_{n\in\mathbb{N}}$ be a bounded sequence in an injective Banach space $X$ and consider $x\in X$. If $T\left(\left(x_n\right)_{n\in\mathbb{N}}\right)=x$ for every $T\in\mathcal{N}_X\cap\mathcal{L}_X$, then $\left(x_n\right)_{n\in\mathbb{N}}$ is almost convergent to $x$.
\end{theorem}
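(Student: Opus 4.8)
The plan is to prove the statement by reducing everything to the null case already settled in Theorem \ref{aqui}. Write $(y_n)_{n\in\mathbb{N}}:=(x_n)_{n\in\mathbb{N}}-\mathbf{x}\in\ell_\infty(X)$. Since every $T\in\mathcal{N}_X\cap\mathcal{L}_X$ satisfies $T(\mathbf{x})=\lim\mathbf{x}=x$, the hypothesis $T((x_n)_{n\in\mathbb{N}})=x$ for all such $T$ is equivalent to $T((y_n)_{n\in\mathbb{N}})=0$ for all such $T$; and the desired conclusion that $(x_n)_{n\in\mathbb{N}}$ is almost convergent to $x$ is exactly the statement $(y_n)_{n\in\mathbb{N}}\in ac_0(X)$. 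So it suffices to prove $(y_n)_{n\in\mathbb{N}}\in ac_0(X)$, and I would argue by contradiction, assuming $(y_n)_{n\in\mathbb{N}}\notin ac_0(X)$.

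First I would dispose of the easy case $(y_n)_{n\in\mathbb{N}}\in ac(X)$: here $w:=\AClim_{n\to\infty}y_n\neq 0$, for otherwise $(y_n)_{n\in\mathbb{N}}\in ac_0(X)$. Because $X$ is injective, the identity on $X$ extends along the canonical embedding $X\hookrightarrow X^{**}$ to a projection, so $X$ is complemented in its bidual and hence $\mathcal{N}_X\cap\mathcal{L}_X\neq\varnothing$ by {\em 1} of Theorem \ref{megaostia}. Picking any $T$ there, {\em 2} of Corollary \ref{aquitepillo3} yields $T((y_n)_{n\in\mathbb{N}})=w\neq 0$, a contradiction. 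Thus I may assume $(y_n)_{n\in\mathbb{N}}\notin ac(X)$. The core of the proof is then to manufacture a single $T\in\mathcal{N}_X\cap\mathcal{L}_X$ with $T((y_n)_{n\in\mathbb{N}})\neq 0$. Using $ac(X)=ac_0(X)\oplus\mathbf{X}$ together with its closedness (Theorem \ref{completitud}, valid since $X$ is complete), I set $\delta:=d\!\left((y_n)_{n\in\mathbb{N}},ac(X)\right)>0$, let $W:=ac(X)\oplus\mathbb{R}(y_n)_{n\in\mathbb{N}}$ (a closed subspace, being a one-dimensional enlargement of a closed subspace by a vector at positive distance from it), fix $v\in\mathsf{S}_X$, and define $T_0:W\to X$ by $T_0\!\left(b+\lambda(y_n)_{n\in\mathbb{N}}\right):=\AClim(b)+\lambda v$ for $b\in ac(X)$ and $\lambda\in\mathbb{R}$.

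The key steps are then: (i) checking that $T_0$ is bounded; (ii) extending it; and (iii) verifying membership. For (i), the estimates $|\lambda|\leq \|u\|_\infty/\delta$ and $\|b\|_\infty\leq \|u\|_\infty\left(1+\|(y_n)_{n\in\mathbb{N}}\|_\infty/\delta\right)$ for $u=b+\lambda(y_n)_{n\in\mathbb{N}}$, combined with $\|\AClim\|=1$, bound $\|T_0(u)\|$ by a fixed constant multiple of $\|u\|_\infty$. For (ii), I invoke the injectivity of $X$ as the extension property for bounded operators off a subspace to extend $T_0$ to some $T\in\mathcal{L}(\ell_\infty(X),X)$. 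For (iii), since $T$ agrees with $T_0$ on $W$, and $W$ contains both $bps(X)$ and $\mathbf{X}$, one gets $bps(X)\subseteq ac_0(X)=\ker(\AClim)\subseteq\ker(T)$ (via Lemma \ref{jaja}) and $T|_{\mathbf{X}}=\AClim|_{\mathbf{X}}=\lim$, so $T\in\mathcal{N}_X\cap\mathcal{L}_X$ by {\em 8} of Proposition \ref{prim}; yet $T((y_n)_{n\in\mathbb{N}})=v\neq 0$, the contradiction sought. The hard part will be precisely steps (i)--(ii): one must ensure the partially defined $T_0$ is genuinely bounded, which is exactly why the positive distance $\delta$ to the \emph{closed} space $ac(X)$ is indispensable, and one must ensure that the black-box extension furnished by injectivity automatically lands inside $\mathcal{N}_X\cap\mathcal{L}_X$ — this works only because the two defining constraints of that intersection, namely $bps(X)\subseteq\ker(T)$ and $T|_{\mathbf{X}}=\lim$, are already built into $T_0$ on $W$ and are preserved by every extension.
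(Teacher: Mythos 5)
Your proof is correct and is essentially the paper's own argument: the same reduction to the null case $x=0$, the same operator $b+\lambda\left(y_n\right)_{n\in\mathbb{N}}\mapsto \AClim(b)+\lambda v$ defined on $ac\left(X\right)\oplus\mathbb{R}\left(y_n\right)_{n\in\mathbb{N}}$ and extended by injectivity, and the same verification that the extension lands in $\mathcal{N}_X\cap\mathcal{L}_X$ via Lemma \ref{jaja} and Proposition \ref{prim}, contradicting the hypothesis. Your additional details --- the explicit boundedness estimate through $\delta=d\left(\left(y_n\right)_{n\in\mathbb{N}},ac\left(X\right)\right)>0$ and the separate disposal of the case $\left(y_n\right)_{n\in\mathbb{N}}\in ac\left(X\right)$ using Theorem \ref{megaostia} to guarantee $\mathcal{N}_X\cap\mathcal{L}_X\neq\varnothing$ --- merely make explicit what the paper's proof leaves tacit.
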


\begin{proof}
Notice that we can assume without any loss that $x=0$. Suppose to the contrary that $$\left(x_n\right)_{n\in\mathbb{N}}\in \bigcap \left\{\ker\left(T\right):T\in\mathcal{N}_X\cap\mathcal{L}_X\right\}\setminus ac_0\left(X\right).$$ Notice that $\left(x_n\right)_{n\in\mathbb{N}}\notin ac\left(X\right)$ in virtue of Corollary \ref{aquitepillo3}. In order to reach a contradiction, the natural thing is to consider the following continuous linear map
\begin{equation}\label{S}
\begin{array}{rrcl}
S : &  ac\left(X\right) \oplus \mathbb{R}\left(x_n\right)_{n\in\mathbb{N}} &\to& X\\
&\left(y_n +\lambda x_n \right)_{n\in\mathbb{N}}&\mapsto & \displaystyle{\mathrm{AC}\lim_{n\to \infty}y_n+ \lambda a},
\end{array}
\end{equation}
where $a\in X\setminus \left\{0\right\}$ can be chosen arbitrarily. By hypothesis we can extend $S$ to a continuous linear operator $\widehat{S}:\ell_{\infty}\left(X\right)\to X$. We will finish the proof by showing that $\widehat{S} \in \mathcal{N}_X\cap \mathcal{L}_X$ (which contradicts the fact that $S\left(\left(x_n\right)_{n\in \mathbb{N}}\right) =a \neq 0$):
\begin{itemize}
\item[$\bullet$] $\widehat{S}\in \mathcal{N}_X$. Indeed, $\widehat{S}|_{ac\left(X\right)}=\mathrm{AC}\lim$, therefore $\widehat{S}$ is invariant under the shift operator on $\ell_\infty\left(X\right)$ in virtue of the fact that $bps\left(X\right)\subseteq ac_{0}\left(X\right)$ (see Lemma \ref{jaja}).
\item[$\bullet$] $\widehat{S}\in \mathcal{L}_X$. Indeed, it only suffices to realize that $$\widehat{S}|_{c\left(X\right)}=\left(\widehat{S}|_{ac\left(X\right)}\right)|_{c\left(X\right)}=\mathrm{AC}\lim|_{c\left(X\right)}=\lim.$$
\end{itemize}
\end{proof}

\subsection{A partial extension for the $ac\left(X^*\right)$ case}

We refer the reader to Subsection \ref{filters} for the necessary background involved in the following theorem.

\begin{theorem}\label{pollo2}
Let $\left(x^*_n\right)_{n\in\mathbb{N}}$ be a bounded sequence in $X^*$ and consider $x^*\in X^*$. If $\varphi\left(\left(x^*_n\right)_{n\in\mathbb{N}}\right)=x^*$ for every $\varphi\in\BL\left(X^*\right)$, then $x^*\in \mathrm{cl}_{w^*}\left(\left\{\frac{x^*_1+\cdots+x^*_n}{n}:n\in\mathbb{N}\right\}\right)$.
\end{theorem}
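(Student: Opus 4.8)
The plan is to extract a single, explicitly describable Banach limit from the hypothesis and then read off the conclusion directly from its defining formula. Since $X^*$ is a dual Banach space (if $X$ fails to be complete, recall that $X^*$ is isometrically isomorphic to $\overline{X}^*$, so no generality is lost), Theorem \ref{dual} furnishes, for any free ultrafilter $\mathcal{F}$ of $\mathbb{N}$, a vector-valued Banach limit $\varphi_{\mathcal{F}}\in\BL(X^*)$ given by $\varphi_{\mathcal{F}}\left((y^*_n)_{n\in\mathbb{N}}\right)=\mathcal{F}\lim \frac{y^*_1+\cdots+y^*_n}{n}$, where the ultrafilter limit is computed in the $w^*$-topology (which is compact and Hausdorff on bounded sets, exactly as used in the proof of Theorem \ref{dual}). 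The membership $\varphi_{\mathcal{F}}\in\BL(X^*)$ is precisely what licenses us to apply the hypothesis to it.

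First I would invoke the hypothesis on $\varphi_{\mathcal{F}}$: since $\varphi\left((x^*_n)_{n\in\mathbb{N}}\right)=x^*$ holds for every $\varphi\in\BL(X^*)$, in particular $x^*=\varphi_{\mathcal{F}}\left((x^*_n)_{n\in\mathbb{N}}\right)=\mathcal{F}\lim \frac{x^*_1+\cdots+x^*_n}{n}$. Thus the real content of the hypothesis, for this one cleverly chosen Banach limit, is that the Cesaro averages $\frac{x^*_1+\cdots+x^*_n}{n}$ converge to $x^*$ along the free filter $\mathcal{F}$ in the $w^*$-topology.

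Then I would simply apply item \emph{2} of Lemma \ref{engaya}, viewing $X^*$ as the topological space $(X^*,w^*)$ and the Cesaro averages as a sequence in it. That lemma states that whenever a sequence has $\mathcal{U}\lim = x$ for some free filter $\mathcal{U}$ of $\mathbb{N}$, the limit $x$ lies in the closure of the range of the sequence. Applying it to the free filter $\mathcal{F}$ and the sequence $\left(\frac{x^*_1+\cdots+x^*_n}{n}\right)_{n\in\mathbb{N}}$ yields $x^*\in \mathrm{cl}_{w^*}\left(\left\{\frac{x^*_1+\cdots+x^*_n}{n}:n\in\mathbb{N}\right\}\right)$, which is exactly the assertion.

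The proof is therefore short, and there is no genuine obstacle once the right Banach limit is singled out; the only points requiring a moment's care are the legitimacy of applying Theorem \ref{dual} to $X^*$ (handled by passing to the completion $\overline{X}$ so that $X^*=\overline{X}^*$) and the observation that the ultrafilter limit defining $\varphi_{\mathcal{F}}$ is taken in precisely the $w^*$-topology in which the target closure $\mathrm{cl}_{w^*}$ is formed. It is worth noting that only the existence of this single Cesaro-type Banach limit is used, not any finer information about the whole set $\BL(X^*)$.
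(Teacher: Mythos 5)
Your proposal is correct and follows essentially the same route as the paper's own proof: choose a free ultrafilter, apply Theorem \ref{dual} to obtain the Banach limit $\varphi$ given by the $w^*$-ultrafilter limit of Ces\`aro means, feed the hypothesis into this single $\varphi$, and conclude via \emph{2} of Lemma \ref{engaya}. Your extra remarks (passing to $\overline{X}$ when $X$ is incomplete, and noting that the ultrafilter limit lives in the same $w^*$-topology as the target closure) are sound clarifications of points the paper leaves implicit.
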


\begin{proof}
Let $\mathcal{U}$ be any free ultrafilter of $\mathbb{N}$. According to Theorem \ref{dual} we have that $\varphi \in \BL\left(X^*\right),$ where $$\begin{array}{rrcl}\varphi:&\ell_{\infty}\left(X^*\right) & \to & X^*\\&\left(y^*_n\right)_{n\in \mathbb{N}}&\mapsto&\displaystyle{\mathcal{U}\lim \frac{y^*_1+\cdots +y^*_n}{n}}.\end{array}$$ By hypothesis, $$x^*=\varphi\left(\left(x^*_n\right)_{n\in\mathbb{N}}\right)=\mathcal{U}\lim \frac{x^*_1+\cdots + x^*_n}{n}.$$ Finally, by taking into consideration {\em 2} of Lemma \ref{engaya}, we deduce that $$x^*\in \mathrm{cl}_{w^*}\left(\left\{\frac{x^*_1+\cdots + x^*_n}{n}:n\in\mathbb{N}\right\}\right).$$
\end{proof}

We remind the reader that duals of separable spaces have $w^*$-metrizable balls.

\begin{corollary}
If $X$ is separable and $\left(x^*_n\right)_{n\in\mathbb{N}}$ is a bounded sequence in $X^*$ almost convergent to $x^*\in X^*$, then there exists a subsequence of $\left(\frac{x^*_1+\cdots + x^*_n}{n} \right)_{n\in\mathbb{N}}$ $w^*$-convergent to $x^*$.
\end{corollary}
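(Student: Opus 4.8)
The plan is to derive this corollary by feeding the almost convergence hypothesis into Theorem \ref{pollo2} and then exploiting the $w^*$-metrizability of bounded subsets of $X^*$ that separability provides. First I would record that $X^*$ is a Banach space and that $\BL\left(X^*\right)\neq\varnothing$ by Theorem \ref{dual}, so that the Banach limits invoked below are not vacuous.

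Next, since $\left(x^*_n\right)_{n\in\mathbb{N}}$ is almost convergent to $x^*$ in the Banach space $X^*$, I would apply item {\em 2} of Corollary \ref{aquitepillo3} (with $X^*$ playing the role of $X$) to obtain that $\varphi\left(\left(x^*_n\right)_{n\in\mathbb{N}}\right)=x^*$ for every $\varphi\in\BL\left(X^*\right)$. This is precisely the hypothesis required by Theorem \ref{pollo2}, whose conclusion then gives
$$x^*\in\cl_{w^*}\left(\left\{\frac{x^*_1+\cdots+x^*_n}{n}:n\in\mathbb{N}\right\}\right).$$
Writing $c_n:=\frac{x^*_1+\cdots+x^*_n}{n}$ and $M:=\left\|\left(x^*_n\right)_{n\in\mathbb{N}}\right\|_\infty$, the boundedness of the sequence yields $\left\|c_n\right\|\leq M$ for all $n$, so every $c_n$ together with $x^*$ lies in the closed ball $\B_{X^*}\left(0,M\right)$.

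Finally, I would invoke that $\B_{X^*}\left(0,M\right)$ is $w^*$-compact by the Banach--Alaoglu theorem and, because $X$ is separable, $w^*$-metrizable; fix a metric $\rho$ inducing the $w^*$-topology on it. Since $x^*$ belongs to the $w^*$-closure of $\left\{c_n:n\in\mathbb{N}\right\}$ inside this metrizable ball, I would extract indices $n_1<n_2<\cdots$ with $\rho\left(c_{n_k},x^*\right)<1/k$, producing a subsequence $\left(c_{n_k}\right)_{k\in\mathbb{N}}$ that is $w^*$-convergent to $x^*$, which is the desired conclusion.

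The step I expect to be the genuine obstacle is the last one: turning ``$x^*$ lies in the $w^*$-closure of the \emph{set} $\left\{c_n:n\in\mathbb{N}\right\}$'' into ``$x^*$ is the $w^*$-limit of a \emph{subsequence}'' with strictly increasing indices. In a general metric space this implication can fail when the limit point is attained only at finitely many, isolated indices. Here, however, almost convergence forces the Cesàro means to approach $x^*$; indeed, evaluating the uniform limit at the first index shows that $\left(c_n\right)_{n\in\mathbb{N}}$ norm-converges to $x^*$, so $x^*$ is a genuine $w^*$-cluster point of $\left(c_n\right)_{n\in\mathbb{N}}$ and the set $\left\{k\in\mathbb{N}:\rho\left(c_k,x^*\right)<\varepsilon\right\}$ is infinite for every $\varepsilon>0$. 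This infiniteness is exactly what legitimizes the inductive choice of strictly increasing $n_k$ and closes the argument.
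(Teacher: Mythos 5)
Your proposal follows the paper's own route exactly: Corollary \ref{aquitepillo3}(2) to pass from almost convergence to the agreement of all Banach limits on the sequence, Theorem \ref{pollo2} to place $x^*$ in $\mathrm{cl}_{w^*}\left(\left\{\frac{x^*_1+\cdots+x^*_n}{n}:n\in\mathbb{N}\right\}\right)$, and the $w^*$-metrizability of bounded sets in duals of separable spaces to extract the subsequence. What you add, and the paper elides, is the observation that membership in the $w^*$-closure of the \emph{set} of means (writing $c_n:=\frac{x^*_1+\cdots+x^*_n}{n}$) does not by itself yield a subsequence along strictly increasing indices; your patch --- taking $n=1$ in the uniform limit defining almost convergence to see that $\left(c_n\right)_{n\in\mathbb{N}}$ norm-converges to $x^*$, so that every $w^*$-neighborhood of $x^*$ contains $c_k$ for infinitely many $k$ --- is correct. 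Two remarks. First, your patch proves more than the corollary asks: once the means norm-converge to $x^*$, the \emph{entire} sequence $\left(c_n\right)_{n\in\mathbb{N}}$ is $w^*$-convergent to $x^*$, and neither separability nor Banach limits are needed; the statement is thus proved under a hypothesis (full almost convergence) strictly stronger than what the Banach-limit machinery actually uses. Second, if one wants the conclusion under the weaker hypothesis of Theorem \ref{pollo2} alone, namely $\varphi\left(\left(x^*_n\right)_{n\in\mathbb{N}}\right)=x^*$ for all $\varphi\in\BL\left(X^*\right)$ --- where your norm-convergence argument is unavailable --- the gap you flagged can still be closed from inside the paper's machinery: the proof of Theorem \ref{pollo2} realizes $x^*$ as $\mathcal{U}\lim c_n$ for a \emph{free} ultrafilter $\mathcal{U}$, and the proof of Lemma \ref{engaya}(2) shows that then $\left\{n\in\mathbb{N}:c_n\in W\right\}$ is infinite for every $w^*$-neighborhood $W$ of $x^*$, so $x^*$ is a genuine $w^*$-cluster point of the sequence and metrizability extracts the subsequence as before.
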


\begin{proof}
In virtue of Corollary \ref{aquitepillo3}(2) we have that $\varphi\left(\left(x^*_n\right)_{n\in\mathbb{N}}\right)=x^*$ for every $\varphi\in\BL\left(X^*\right)$. By applying Theorem \ref{pollo2} we deduce that $x^*\in \mathrm{cl}_{w^*}\left(\left\{\frac{x^*_1+\cdots + x^*_n}{n}:n\in\mathbb{N}\right\}\right).$ Finally, by bearing in mind that balls of duals of separable spaces are metrizable when endowed with the $w^*$-topology, we finally conclude the existence of a subsequence of $\left(\frac{x^*_1+\cdots + x^*_n}{n} \right)_{n\in\mathbb{N}}$ $w^*$-convergent to $x^*$.
\end{proof}

\begin{scholium}
In a finite dimensional normed space, every almost convergent sequence verifies that the sequence of its means has a further subsequence convergent to the almost limit.
\end{scholium}

\section{Almost convergence and Banach limits}

In general terms, if $\mathcal{C}$ is a non-empty subset of $\mathcal{L}\left(\ell_\infty(X),X\right)$, then we can define the $\mathcal{C}$-convergence as follows: {\em a sequence $(x_n)_{n\in\N}$ is $\mathcal{C}$-convergent to $x$ provided that $T\left((x_n)_{n\in\N}\right)=x$ for all $T\in\mathcal{C}$}.

This procedure is the one followed by Lorentz in \cite{Lorentz} to define the scalar-valued almost convergence.

The convergence space associated to $\mathcal{C}$, or $\mathcal{C}$-convergence space, is given by
\begin{eqnarray*}
c_{\mathcal{C}}(X)&=&\bigcup\left\{ W\subseteq \ell_\infty\left(X\right): T,S\in\mathcal{C}\Rightarrow T|_W=S|_W\right\}\\
&=&\bigcap\left\{\ker\left(T-S\right):T,S\in\mathcal{C}\right\}.
\end{eqnarray*}

As the reader can observe, the $\mathcal{C}$-convergence space is a closed vector subspace of $\ell_\infty(X)$.

The $\mathcal{C}$-limit function is defined as \begin{equation}
\begin{array}{rrcl} \mathcal{C}\lim : & c_{\mathcal{C}}\left(X\right) & \to & X\\&\left(x_n\right)_{n\in\mathbb{N}}&\mapsto&\displaystyle{\mathcal{C}\lim_{n\to\infty}x_n:=T\left(\left(x_n\right)_{n\in\mathbb{N}}\right),}\end{array}
\end{equation} where $T$ is any element of $\mathcal{C}$, and it is a continuous linear operator such that  $$\mathcal{C}\cap\E_{\mathcal{L}\left(\ell_\infty(X),X\right)}\subseteq
\mathcal{HB}\left(\mathcal{C}\lim\right)$$ provided that $\left\|\mathcal{C}\lim\right\|=1$.

\subsection{A superspace of $ac(X)$ defined by the Banach limits}

On those spaces admitting vector-valued Banach limits, the $\BL(X)$-convergence space makes sense to be taken into account: 
\begin{eqnarray*}
qc\left(X\right)&:=& \bigcup\left\{ W\subseteq \ell_\infty\left(X\right): T,S\in\BL\left(X\right)\Rightarrow T|_W=S|_W\right\}\\
&=& \bigcap\left\{\ker\left(T-S\right):T,S\in\BL\left(X\right)\right\}.
\end{eqnarray*}

Note that we have chosen a different notation for the above space just for simplicity. When $X=\mathbb{R}$ we will simply write $qc$. Easy verifiable properties of the previously defined space follow in the next proposition, the details of its proof we spare to the reader (see Corollary \ref{aquitepillo3}, Corollary \ref{helpazo}, and Theorem \ref{LIC} for help).

\begin{proposition}\label{qc}
Assume that $\BL\left(X\right)\neq \varnothing$.
\begin{enumerate}
\item $ac\left(X\right)\subseteq qc\left(X\right)$.
\item $qc\left(X\right)=\ell_\infty\left(X\right)$ if and only if $\BL\left(X\right)$ is a singleton.
\end{enumerate}
\end{proposition}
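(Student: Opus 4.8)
The plan is to read both assertions directly off the two equivalent descriptions of $qc(X)$, pushing all the analytic content into Corollary \ref{aquitepillo3}. First I would prove (1). Let $(x_n)_{n\in\mathbb{N}}\in ac(X)$ and write $x:=\AClim_{n\to\infty}x_n$. Since $\BL(X)=\E_{\mathcal{N}_X}\cap\mathcal{L}_X\subseteq\mathcal{N}_X\cap\mathcal{L}_X$, item 2 of Corollary \ref{aquitepillo3} gives $\varphi\left((x_n)_{n\in\mathbb{N}}\right)=x$ for every $\varphi\in\BL(X)$. Hence for any two $T,S\in\BL(X)$ we have $(T-S)\left((x_n)_{n\in\mathbb{N}}\right)=x-x=0$, so $(x_n)_{n\in\mathbb{N}}\in\ker(T-S)$. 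As $T,S$ were arbitrary, $(x_n)_{n\in\mathbb{N}}$ lies in the intersection $\bigcap\{\ker(T-S):T,S\in\BL(X)\}=qc(X)$, which is the desired inclusion.

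Next I would handle (2) by exploiting the representation $qc(X)=\bigcap\{\ker(T-S):T,S\in\BL(X)\}$. For the backward implication, if $\BL(X)=\{\varphi\}$ is a singleton, then the only admissible pair is $(\varphi,\varphi)$, and $\ker(\varphi-\varphi)=\ker 0=\ell_\infty(X)$; hence the intersection collapses to $\ell_\infty(X)$. For the forward implication, suppose $qc(X)=\ell_\infty(X)$. Then $\ker(T-S)=\ell_\infty(X)$ for every pair $T,S\in\BL(X)$, which forces $T-S$ to vanish on all of $\ell_\infty(X)$, that is $T=S$. Thus $\BL(X)$ has at most one element, and since $\BL(X)\neq\varnothing$ by hypothesis it is exactly a singleton.

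I do not expect a genuine obstacle here: both statements are formal consequences of the intersection description of $qc(X)$ once the key nontrivial fact---that every vector-valued Banach limit assigns the almost limit to an almost convergent sequence---is imported from Corollary \ref{aquitepillo3}(2), whose proof in turn rests on the vector-valued Lorentz theorem (Theorem \ref{aqui}, the vector analogue of Theorem \ref{LIC}). The only point requiring a line of care is the elementary set-theoretic reading of the intersection in the two directions of (2). I would also remark, invoking Corollary \ref{helpazo} (which yields $ac(X)\neq\ell_\infty(X)$), that whenever $\BL(X)$ is a singleton the inclusion in (1) is strict, since then $qc(X)=\ell_\infty(X)$ properly contains $ac(X)$; this is what makes $qc(X)$ a genuine superspace of $ac(X)$ and justifies the title of the subsection.
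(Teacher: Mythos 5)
Your proof is correct and follows exactly the route the paper intends: the paper spares the details but points to Corollary \ref{aquitepillo3}, and your argument imports precisely its item {\em 2} for the inclusion $ac\left(X\right)\subseteq qc\left(X\right)$ and then reads item {\em 2} of the proposition off the intersection description $qc\left(X\right)=\bigcap\left\{\ker\left(T-S\right):T,S\in\BL\left(X\right)\right\}$ in the evident set-theoretic way. Your closing remark on strictness of the inclusion via Corollary \ref{helpazo} is also exactly the role that corollary plays in the paper's hint, so nothing is missing.
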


As of today, we are unaware of the existence of normed spaces admitting only one vector-valued Banach limit.

The following theorem, whose proof we also omit (keep in mind Theorem \ref{jopa}), declares the importance of $qc(X)$.

\begin{theorem}\label{cagada}
The following conditions are equivalent:
\begin{itemize}
\item $ac\left(X\right)= qc\left(X\right)$.
\item $\displaystyle{\AClim_{n\to\infty} x_n = x}$ if and only if $T\left(\left(x_n\right)\right)_{n\in\mathbb{N}}=x$ for all $T\in\BL(X)$.
\end{itemize}
\end{theorem}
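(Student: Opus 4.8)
The plan is to isolate the genuine content of the second condition and then match it against the two inclusions that make up the equality $ac(X)=qc(X)$. First I would observe that the biconditional in the second condition is really one-directional: its forward half, namely that $\AClim_{n\to\infty}x_n=x$ forces $T\left(\left(x_n\right)_{n\in\mathbb{N}}\right)=x$ for every $T\in\BL(X)$, holds unconditionally by {\em 2} of Corollary \ref{aquitepillo3}. Thus the only substantive assertion buried in the second condition is its converse: whenever every vector-valued Banach limit sends $\left(x_n\right)_{n\in\mathbb{N}}$ to one and the same value $x$, the sequence is already almost convergent to $x$. Throughout, $\BL(X)\neq\varnothing$ is in force (otherwise $qc(X)$ is not even defined), so by {\em 1} of Corollary \ref{Bprim} the space $X$ is complete, and Theorem \ref{jopa} then guarantees that $ac(X)$ is closed in $\ell_\infty(X)$, which is reassuring since the $\mathcal{C}$-convergence space $qc(X)$ is closed by construction.

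For the implication $ac(X)=qc(X)\Rightarrow$ (second condition), I would begin from a bounded sequence satisfying $T\left(\left(x_n\right)_{n\in\mathbb{N}}\right)=x$ for all $T\in\BL(X)$. Since this makes every pair of Banach limits agree on $\left(x_n\right)_{n\in\mathbb{N}}$, the very definition of $qc(X)$ places $\left(x_n\right)_{n\in\mathbb{N}}\in qc(X)=ac(X)$, so the sequence is almost convergent; its almost limit must then coincide with the common value $x$ by the forward half already recorded. This establishes the converse half of the biconditional, and as the forward half is free, the second condition holds in full.

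For the reverse implication I would invoke {\em 1} of Proposition \ref{qc}, which already supplies $ac(X)\subseteq qc(X)$, leaving only $qc(X)\subseteq ac(X)$ to prove. Given $\left(x_n\right)_{n\in\mathbb{N}}\in qc(X)$, all Banach limits agree on it, and because $\BL(X)\neq\varnothing$ this common value $x$ is well defined; hence $T\left(\left(x_n\right)_{n\in\mathbb{N}}\right)=x$ for every $T\in\BL(X)$, and the converse half of the second condition yields almost convergence, i.e.\ $\left(x_n\right)_{n\in\mathbb{N}}\in ac(X)$. The only point that demands any care is the quantifier bookkeeping in the biconditional together with the well-definedness of the common value, both of which hinge on $\BL(X)\neq\varnothing$; beyond that the argument is purely formal, so I do not anticipate a serious obstacle.
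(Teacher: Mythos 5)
Your proposal is correct and is essentially the argument the paper intends (the paper omits the proof outright, hinting only at Theorem \ref{jopa}): the forward half of the biconditional is free from {\em 2} of Corollary \ref{aquitepillo3}, and both implications then reduce to unwinding the definition of $qc\left(X\right)$ as $\bigcap\left\{\ker\left(T-S\right):T,S\in\BL\left(X\right)\right\}$, with $\BL\left(X\right)\neq \varnothing$ guaranteeing the common value is well defined. Your side remark via Corollary \ref{Bprim} and Theorem \ref{jopa} that $ac\left(X\right)$ is closed, matching the closedness of $qc\left(X\right)$, is a sound consistency check even though, as you note, it is not logically needed.
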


Another immediate corollary of the previous theorem and Theorem \ref{LIC} is the fact that $ac= qc$.

The details of the proof of the next result are also spared to the reader because of its obviousness if bearing in mind Corollary \ref{aquitepillo3}.

\begin{proposition}
Assume that $\BL\left(X\right)\neq \varnothing$.
\begin{enumerate}
\item The $\mathrm{QC}$-limit function, defined as
\begin{equation}\label{qclim}
\begin{array}{rrcl} \mathrm{QC}\lim : & qc\left(X\right) & \to & X\\&\left(x_n\right)_{n\in\mathbb{N}}&\mapsto&\displaystyle{\mathrm{QC}\lim_{n\to\infty}x_n:=T\left(\left(x_n\right)_{n\in\mathbb{N}}\right),}\end{array}
\end{equation} where $T$ is any element of $\BL\left(X\right)$, is a norm-$1$ continuous linear operator such that $\mathrm{QC}\lim|_{ac\left(X\right)}=\mathrm{AC}\lim$.
\item $\mathcal{HB}\left(\mathrm{QC}\lim\right)=\BL(X)=\mathcal{HB}\left(\mathrm{AC}\lim\right)$.
\end{enumerate}
\end{proposition}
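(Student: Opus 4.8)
The plan is to read off every asserted property of $\mathrm{QC}\lim$ either from the definition of $qc(X)$ or from Corollary \ref{aquitepillo3}, and then to obtain the two equalities in the second part through a pair of mutual inclusions.

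For the first part, the essential point is well-definedness: the value $T\left(\left(x_n\right)_{n\in\mathbb{N}}\right)$ must be independent of the particular $T\in\BL(X)$ chosen. This is exactly what the definition of $qc(X)$ secures, since $qc(X)=\bigcap\left\{\ker(T-S):T,S\in\BL(X)\right\}$; hence for any $T,S\in\BL(X)$ and any $\left(x_n\right)_{n\in\mathbb{N}}\in qc(X)$ one has $(T-S)\left(\left(x_n\right)_{n\in\mathbb{N}}\right)=0$, so $T$ and $S$ coincide on $qc(X)$. Linearity of $\mathrm{QC}\lim$ is then inherited from that of any fixed $T$. For the norm, I would fix $T\in\BL(X)$; since $T\in\E_{\mathcal{N}_X}$ by Definition \ref{defBL} we have $\|T\|=1$, so $\|\mathrm{QC}\lim\|=\left\|T|_{qc(X)}\right\|\leq 1$. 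The reverse inequality comes from restricting to $c(X)$: as $T\in\mathcal{L}_X$ we have $T|_{c(X)}=\lim$, and $c(X)\subseteq ac(X)\subseteq qc(X)$ (the last inclusion being {\em 1} of Proposition \ref{qc}), whence $\|\mathrm{QC}\lim\|\geq\|\lim\|=1$. Finally, the identity $\mathrm{QC}\lim|_{ac(X)}=\mathrm{AC}\lim$ is immediate from {\em 2} of Corollary \ref{aquitepillo3}: if $\left(x_n\right)_{n\in\mathbb{N}}\in ac(X)$ has almost limit $x$, then $\varphi\left(\left(x_n\right)_{n\in\mathbb{N}}\right)=x$ for every $\varphi\in\BL(X)$, so in particular $\mathrm{QC}\lim\left(\left(x_n\right)_{n\in\mathbb{N}}\right)=x$.

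For the second part, the equality $\BL(X)=\mathcal{HB}\left(\mathrm{AC}\lim\right)$ is already {\em 3} of Corollary \ref{aquitepillo3}, so it remains only to prove $\mathcal{HB}\left(\mathrm{QC}\lim\right)=\BL(X)$. The inclusion $\BL(X)\subseteq\mathcal{HB}\left(\mathrm{QC}\lim\right)$ is clear, because each $T\in\BL(X)$ satisfies $T|_{qc(X)}=\mathrm{QC}\lim$ by construction and has $\|T\|=1$, hence is a norm-$1$ Hahn-Banach extension of $\mathrm{QC}\lim$. For the converse, I would take any $T\in\mathcal{HB}\left(\mathrm{QC}\lim\right)$, so that $\|T\|=1$ and $T|_{qc(X)}=\mathrm{QC}\lim$; since $ac(X)\subseteq qc(X)$ and $\mathrm{QC}\lim|_{ac(X)}=\mathrm{AC}\lim$ by the first part, it follows that $T|_{ac(X)}=\mathrm{AC}\lim$, so $T$ is a norm-$1$ Hahn-Banach extension of $\mathrm{AC}\lim$ and therefore $T\in\mathcal{HB}\left(\mathrm{AC}\lim\right)=\BL(X)$.

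I do not expect a genuine obstacle: each step collapses to a definition or to Corollary \ref{aquitepillo3}. The only places deserving a moment's attention are the well-definedness of $\mathrm{QC}\lim$ --- which is the very reason $qc(X)$ was introduced as the intersection of the kernels of all the differences $T-S$ --- and the confirmation that the operator norm equals exactly $1$ rather than merely being at most $1$, which is dispatched by restricting to $c(X)$ where $\mathrm{QC}\lim$ agrees with the norm-$1$ limit function.
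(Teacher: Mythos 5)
Your proof is correct and is exactly the argument the paper intends: the paper spares the details of this proposition, pointing the reader to Corollary \ref{aquitepillo3}, and your fill-in uses precisely the intended ingredients --- well-definedness from $qc(X)=\bigcap\left\{\ker(T-S):T,S\in\BL(X)\right\}$, the norm pinned down via $c(X)\subseteq ac(X)\subseteq qc(X)$ (the latter inclusion being {\em 1} of Proposition \ref{qc}) together with $T|_{c(X)}=\lim$, and parts {\em 2} and {\em 3} of Corollary \ref{aquitepillo3} for the restriction identity and for $\BL(X)=\mathcal{HB}\left(\AClim\right)$. There are no gaps; the mutual-inclusion argument for $\mathcal{HB}\left(\mathrm{QC}\lim\right)=\BL(X)$ is sound.
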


\subsection{Separating sets}

The concept of separating set was originally introduced in \cite{SS2} for scalar-valued Banach limits. However, they can be defined the same way for vector-valued Banach limits.

\begin{definition}[Semenov and Sukovech, 2013; \cite{SS2}]
A non-empty subset $G$ of $\ell_\infty\left(X\right)$ is called separating provided that the following condition holds: if $T,S\in\BL\left(X\right)$ are so that $T|_G=S|_G$, then $T=S$.
\end{definition}

\begin{proposition}\label{Gsep}
Assume that $\BL\left(X\right)\neq \varnothing$. Let $G$ be a non-empty subset of $\ell_\infty\left(X\right)$. Then:
\begin{enumerate}
\item If $qc\left(X\right)+\mathrm{span}\left(G\right)$ is dense in $\ell_\infty\left(X\right)$, then $G$ is separating.
\item If $G\subseteq qc\left(X\right)$ and $q\left(X\right)\neq \ell_\infty\left(X\right)$, then $G$ is not separating.
\end{enumerate}
\end{proposition}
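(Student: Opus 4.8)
The plan rests on a single observation already built into the definition of $qc(X)$: since $qc\left(X\right)=\bigcap\left\{\ker\left(T-S\right):T,S\in\BL\left(X\right)\right\}$, every pair of vector-valued Banach limits agrees on $qc(X)$, that is, $T|_{qc\left(X\right)}=S|_{qc\left(X\right)}$ for all $T,S\in\BL\left(X\right)$. Both assertions will follow by exploiting this fact together with the linearity and continuity of the elements of $\BL\left(X\right)\subseteq\mathcal{L}\left(\ell_\infty\left(X\right),X\right)$.

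For the first item, I would take arbitrary $T,S\in\BL\left(X\right)$ with $T|_G=S|_G$ and argue that the continuous linear operator $T-S$ vanishes on all of $\ell_\infty(X)$. Indeed, $T-S$ annihilates $G$ by hypothesis, hence it annihilates $\mathrm{span}\left(G\right)$ by linearity; and it annihilates $qc(X)$ by the observation above. Therefore $T-S$ vanishes on the linear subspace $qc\left(X\right)+\mathrm{span}\left(G\right)$, which is dense in $\ell_\infty(X)$ by hypothesis. Since a continuous map that is zero on a dense set is identically zero, we conclude $T=S$, so $G$ is separating.

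For the second item, the inclusion $G\subseteq qc\left(X\right)$ makes the separating condition impossible to satisfy: by the opening observation, any two $T,S\in\BL\left(X\right)$ automatically satisfy $T|_{qc\left(X\right)}=S|_{qc\left(X\right)}$ and hence $T|_G=S|_G$. Thus $G$ fails to be separating as soon as $\BL\left(X\right)$ contains two distinct elements, and this is exactly what $qc\left(X\right)\neq\ell_\infty\left(X\right)$ guarantees: by {\em 2} of Proposition \ref{qc}, $qc\left(X\right)=\ell_\infty\left(X\right)$ holds if and only if $\BL\left(X\right)$ is a singleton, so $qc\left(X\right)\neq\ell_\infty\left(X\right)$ yields $T\neq S$ in $\BL\left(X\right)$ with $T|_G=S|_G$, witnessing that $G$ is not separating.

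The arguments are short and essentially formal; the only points requiring a little care are the continuity step in the first item (ensuring that ``zero on a dense subspace implies zero'', which is where the membership $\BL\left(X\right)\subseteq\mathcal{L}\left(\ell_\infty\left(X\right),X\right)$ is used) and the correct invocation of Proposition \ref{qc} in the second item to convert the inequality $qc\left(X\right)\neq\ell_\infty\left(X\right)$ into the existence of two distinct Banach limits. I do not anticipate a genuine obstacle beyond these bookkeeping matters.
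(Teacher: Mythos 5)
Your proposal is correct and takes essentially the same approach as the paper: item \emph{1} is the identical density-plus-continuity argument (the paper compresses your explicit ``$T-S$ vanishes on the dense subspace $qc\left(X\right)+\mathrm{span}\left(G\right)$'' step into the phrase ``by linearity and continuity we immediately deduce that $T=S$''). In item \emph{2} the paper directly picks $\left(x_n\right)_{n\in\mathbb{N}}\in\ell_\infty\left(X\right)\setminus qc\left(X\right)$ and reads off from the definition $qc\left(X\right)=\bigcap\left\{\ker\left(T-S\right):T,S\in\BL\left(X\right)\right\}$ two Banach limits disagreeing at that sequence, while you route through \emph{2} of Proposition \ref{qc} to get two distinct Banach limits; these are the same observation packaged differently, so no substantive difference.
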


\begin{proof}
\mbox{}
\begin{enumerate}
\item Let $T,S\in\BL\left(X\right)$ such that $T|_G=S|_G$. Notice in first place that $T|_{qc\left(X\right)}=S|_{qc\left(X\right)}$ by definition of $qc\left(X\right)$. Now by linearity and continuity we immediately deduce that $T=S$.
\item It is sufficient to consider any $\left(x_n\right)_{n\in\mathbb{N}}\in \ell_\infty\left(X\right)\setminus qc\left(X\right)$. By definition of $qc\left(X\right)$ there must exist $T, S\in \BL\left(X\right)$ such that $T\left( \left(x_n\right)_{n\in\mathbb{N}}\right)\neq S\left( \left(x_n\right)_{n\in\mathbb{N}}\right)$. However, $T|_G=S|_G$ because $G\subseteq qc\left(X\right)$, therefore $G$ is not separating.
\end{enumerate}
\end{proof}

The following result is a direct consequence of Proposition \ref{qc} together with Proposition \ref{Gsep}.

\begin{corollary}
Assume that $\BL\left(X\right)\neq \varnothing$. The following conditions are equivalent:
\begin{enumerate}
\item $\BL\left(X\right)$ is a singleton.
\item Every non-empty subset of $\ell_\infty\left(X\right)$ is separating.
\item Every non-empty subset of $qc\left(X\right)$ is separating.
\item Every non-empty subset of $ac\left(X\right)$ is separating.
\item There exists a non-empty subset of $ac\left(X\right)$ which is separating
\item There exists a non-empty subset of $qc\left(X\right)$ which is separating.
\end{enumerate}
\end{corollary}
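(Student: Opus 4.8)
The plan is to establish the six equivalences through a single cyclic chain of implications $1 \Rightarrow 2 \Rightarrow 3 \Rightarrow 4 \Rightarrow 5 \Rightarrow 6 \Rightarrow 1$, exploiting the ambient inclusions $ac(X) \subseteq qc(X) \subseteq \ell_\infty(X)$, where the first inclusion is {\em 1} of Proposition \ref{qc} and the second holds because $qc(X)$ is by construction a closed vector subspace of $\ell_\infty(X)$. All but the last implication turn out to be essentially formal, so the genuine content concentrates on $6 \Rightarrow 1$, which is the one place where both propositions are really used.

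For $1 \Rightarrow 2$ I would simply observe that the defining condition of a separating set---``$T|_G = S|_G$ forces $T = S$''---is satisfied vacuously once $\BL(X)$ has a single element, since then any two Banach limits coincide automatically; hence every non-empty $G \subseteq \ell_\infty(X)$ is separating. The implications $2 \Rightarrow 3 \Rightarrow 4$ are then immediate downward restrictions along the inclusion chain: a non-empty subset of $qc(X)$ (respectively of $ac(X)$) is in particular a non-empty subset of $\ell_\infty(X)$ (respectively of $qc(X)$), so the universal separating property descends from the larger space to the smaller one. For $4 \Rightarrow 5$ I only need that $ac(X)$ admits at least one non-empty subset, which is clear because $c(X) \subseteq ac(X)$ is non-empty; and $5 \Rightarrow 6$ follows because a non-empty separating subset of $ac(X)$ is, via $ac(X) \subseteq qc(X)$, already a non-empty separating subset of $qc(X)$.

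The decisive step is $6 \Rightarrow 1$, and I would argue it by contraposition. Suppose $\BL(X)$ is \emph{not} a singleton; then {\em 2} of Proposition \ref{qc} yields $qc(X) \neq \ell_\infty(X)$. Now let $G$ be an arbitrary non-empty subset of $qc(X)$: since $G \subseteq qc(X)$ and $qc(X) \neq \ell_\infty(X)$, the criterion of {\em 2} of Proposition \ref{Gsep} shows that $G$ fails to be separating. Thus no non-empty subset of $qc(X)$ can be separating, which contradicts condition $6$; hence $\BL(X)$ must be a singleton.

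The main obstacle is precisely the interaction exploited in $6 \Rightarrow 1$: one must have at hand both halves of Proposition \ref{qc} and Proposition \ref{Gsep}, in particular the non-trivial equivalence ``$qc(X) = \ell_\infty(X)$ if and only if $\BL(X)$ is a singleton'' and the ``non-separating'' criterion of {\em 2} of Proposition \ref{Gsep}. Once those are granted the reasoning is short, so the corollary is genuinely a bookkeeping assembly of the two previously established propositions rather than a new computation, which is exactly why the text announces it as a direct consequence of them.
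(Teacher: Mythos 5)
Your proposal is correct and matches the paper's intent exactly: the text offers no written proof, declaring the corollary ``a direct consequence of Proposition \ref{qc} together with Proposition \ref{Gsep}'', and your cyclic chain $1 \Rightarrow 2 \Rightarrow 3 \Rightarrow 4 \Rightarrow 5 \Rightarrow 6 \Rightarrow 1$ is the natural way to assemble it, with the formal implications riding on the inclusions $ac(X) \subseteq qc(X) \subseteq \ell_\infty(X)$ and the one substantive step, $6 \Rightarrow 1$ by contraposition, using precisely {\em 2} of Proposition \ref{qc} and {\em 2} of Proposition \ref{Gsep} as the paper intends. No gaps.
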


As we have already mentioned, as of today we are unaware of the existence of spaces with only one vector-valued Banach limit.

\begin{theorem}
Assume that $\BL\left(X\right)$ is neither empty nor a singleton. A non-empty subset $G$ of $\ell_\infty\left(X\right)$ is separating if and only if $G\cap \left(\ell_\infty\left(X\right)\setminus \ker\left(T-S\right)\right)\neq \varnothing$ for all $T\neq S\in\BL\left(X\right)$.
\end{theorem}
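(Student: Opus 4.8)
The plan is to prove both implications by directly unpacking the definition of a separating set and arguing contrapositively in each direction. The crucial observation, which makes the whole statement essentially a logical identity, is that for $T,S\in\BL(X)$ the condition $T|_G=S|_G$ is by definition of the kernel exactly the assertion that $(T-S)(g)=0$ for every $g\in G$, i.e. that $G\subseteq\ker(T-S)$; and $G\subseteq\ker(T-S)$ is precisely the negation of $G\cap\left(\ell_\infty(X)\setminus\ker(T-S)\right)\neq\varnothing$. Note that $T-S$ is a well-defined linear map and $\ker(T-S)$ a genuine subspace, so this reformulation is legitimate.

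For the sufficiency direction I would assume that $G\cap\left(\ell_\infty(X)\setminus\ker(T-S)\right)\neq\varnothing$ holds for every pair of distinct $T,S\in\BL(X)$, and verify directly that $G$ is separating. Taking $T,S\in\BL(X)$ with $T|_G=S|_G$, I would suppose toward a contradiction that $T\neq S$; the standing hypothesis then supplies some $g\in G$ with $(T-S)(g)\neq0$, that is $T(g)\neq S(g)$, contradicting $T|_G=S|_G$. Hence $T=S$ and $G$ is separating.

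For the necessity direction I would assume $G$ is separating and argue by contraposition. Suppose there existed distinct $T,S\in\BL(X)$ with $G\cap\left(\ell_\infty(X)\setminus\ker(T-S)\right)=\varnothing$. By the reformulation above this forces $G\subseteq\ker(T-S)$, equivalently $T|_G=S|_G$, and then the separating property of $G$ yields $T=S$, contradicting $T\neq S$. Therefore no such pair exists, which is exactly the desired condition.

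The hypothesis that $\BL(X)$ is neither empty nor a singleton plays no role in the logic of the argument; its only function is to ensure that the quantifier ``for all $T\neq S\in\BL(X)$'' ranges over a non-empty family, so that the equivalence is non-vacuous rather than trivially true on both sides. I therefore expect no real obstacle in this proof: the argument uses only the linearity of the elements of $\BL(X)$ (to form $T-S$ and its kernel) together with the tautology $T|_G=S|_G\Longleftrightarrow G\subseteq\ker(T-S)$, and the sole point requiring minor care is keeping the two contrapositive passages straight.
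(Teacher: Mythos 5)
Your proof is correct and follows essentially the same route as the paper: both directions rest on the observation that $T|_G=S|_G$ is equivalent to $G\subseteq\ker\left(T-S\right)$, whose negation is precisely $G\cap\left(\ell_\infty\left(X\right)\setminus\ker\left(T-S\right)\right)\neq\varnothing$, and the paper argues each implication by the same contrapositive passages you describe. Your remark that the hypothesis on $\BL\left(X\right)$ only guarantees non-vacuity is also accurate.
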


\begin{proof}
Assume first that $G$ is separating. Let $T\neq S\in\BL\left(X\right)$. By assumption there must exist $\left(x_n\right)_{n\in\mathbb{N}}\in G$ such that $T\left(\left(x_n\right)_{n\in\mathbb{N}}\right)\neq S\left(\left(x_n\right)_{n\in\mathbb{N}}\right)$, which means that $\left(x_n\right)_{n\in\mathbb{N}}\in G \cap  \left(\ell_\infty\left(X\right)\setminus \ker\left(T-S\right)\right)$.

Conversely, assume that $G\cap \left(\ell_\infty\left(X\right)\setminus \ker\left(T-S\right)\right)\neq \varnothing$ for all $T\neq S\in\BL\left(X\right)$. Let $T, S\in\BL\left(X\right)$ such that $T|_G=S|_G$ and $T\neq S$. By hypothesis we can find $\left(x_n\right)_{n\in\mathbb{N}}\in G \cap  \left(\ell_\infty\left(X\right)\setminus \ker\left(T-S\right)\right)$, which implies the contradiction that $T|_G\neq S|_G$.
\end{proof}

As an immediate corollary we obtain a sharp characterization of non-separating sets.

\begin{corollary}
Assume that $\BL\left(X\right)$ is neither empty nor a singleton. A non-empty subset $G$ of $\ell_\infty\left(X\right)$ is not separating if and only if $G\subseteq \ker\left(T-S\right)$ for some $T\neq S\in\mathcal{BL}\left(X\right)$.
\end{corollary}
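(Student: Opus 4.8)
The plan is to derive this corollary purely as the logical negation of the immediately preceding theorem, so the work is entirely formal: no new analytic input about Banach limits is needed, only propositional negation together with one elementary set-theoretic identity. First I would isolate that identity. For any non-empty $G\subseteq\ell_\infty(X)$ and any fixed pair $T\neq S\in\BL(X)$, I claim that
$$G\cap\left(\ell_\infty(X)\setminus\ker(T-S)\right)=\varnothing \quad\Longleftrightarrow\quad G\subseteq\ker(T-S).$$
This is immediate: a point of $G$ lies in $\ell_\infty(X)\setminus\ker(T-S)$ exactly when it fails to lie in $\ker(T-S)$ (recall $G\subseteq\ell_\infty(X)$, so membership in $\ell_\infty(X)$ is automatic), hence the displayed intersection is empty precisely when no point of $G$ escapes $\ker(T-S)$, that is, precisely when $G\subseteq\ker(T-S)$.

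With this identity in hand, I would simply negate the biconditional of the preceding theorem. That theorem asserts, under the standing hypothesis that $\BL(X)$ is neither empty nor a singleton, that $G$ is separating if and only if $G\cap\left(\ell_\infty(X)\setminus\ker(T-S)\right)\neq\varnothing$ for \emph{all} $T\neq S\in\BL(X)$. Negating both sides, $G$ is \emph{not} separating if and only if the universal statement on the right fails, i.e. if and only if there \emph{exist} $T\neq S\in\BL(X)$ with $G\cap\left(\ell_\infty(X)\setminus\ker(T-S)\right)=\varnothing$. Substituting the equivalence from the first paragraph then yields exactly $G\subseteq\ker(T-S)$ for some $T\neq S\in\BL(X)$, which is the claim.

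The only point requiring the slightest care is the correct flip of quantifiers: the negation of ``for all $T\neq S$, the intersection is non-empty'' is ``there exist $T\neq S$ for which the intersection is empty,'' and this existential is precisely what converts, via the set identity, into the existence of a pair with $G\subseteq\ker(T-S)$. Beyond this routine observation there is no genuine obstacle; the hypothesis that $\BL(X)$ is neither empty nor a singleton is inherited verbatim from the theorem and guarantees only that pairs $T\neq S\in\BL(X)$ exist so that the statement is non-vacuous.
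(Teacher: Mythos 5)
Your proposal is correct and follows exactly the route the paper intends: the corollary is stated there without proof precisely because it is the contrapositive of the preceding theorem, with the negation of $G\cap\left(\ell_\infty\left(X\right)\setminus \ker\left(T-S\right)\right)\neq \varnothing$ rewritten as $G\subseteq \ker\left(T-S\right)$, just as you do. Your explicit handling of the quantifier flip and the set-theoretic identity fills in the same elementary steps the paper leaves to the reader.
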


\chapter{Vector-valued almost summability}

The concept of almost summability is nothing but considering the almost convergence for the sequence of partial sums of a given sequence. We refer the reader to \cite{AAGPPF,AizArPer} upon which this chapter relies.

\section{Notion of almost summability}

Since there is only one notion of almost convergence (due to both Lorentz and Boos), there is only one notion of summability (due to Boos, since Lorentz seemed not to be interested in studying the almost convergence of series).

\subsection{Boos' (vector-valued) almost summability}

The almost convergence turns to the almost summability in the context of series.

\begin{definition}[Boos, 2000; \cite{Boos}]
A sequence $ \left(x_n\right)_{n\in\mathbb{N}}$ in $X$
\begin{itemize}
\item is called almost summable provided that the sequence of its partial sums is almost convergent
\begin{itemize}
\item the almost limit of the sequence of partial sums is the almost sum of $ \left(x_n\right)_{n \in \mathbb{N}}$ and
\item it is usually denoted by $\displaystyle{\ACS_{n=1}^\infty x_n = y}$;
\end{itemize}
\item and $ \left(x_n\right)_{n \in \mathbb{N}}$ is called weakly almost summable provided that $\left(f\left(x_n\right) \right)_{n\in\mathbb{N}}$ is almost summable for all $f\in X^*$
\begin{itemize}
\item the weak almost limit of the sequence of partial sums is the weak almost sum of $ \left(x_n\right)_{n \in \mathbb{N}}$ and
\item it is usually denoted by $\displaystyle{\wACS_{n=1}^\infty x_n = y}$.
\end{itemize}
\end{itemize}
\end{definition}

\subsection{Basic properties of almost summability}

Since the almost summability is a particular case of the almost convergence, all the basic properties of the almost convergence apply to the almost summability (when the sequence is question is simply the sequence of partial sums of another sequence).

It is easy to check that, given a series $\sum_{i=1}^{\infty} x_i$ in a normed space $X$ and an element $y\in X$, then:
\begin{itemize}
\item $\displaystyle{\ACS_{i=1}^\infty x_i=y}$ if and only if $$\displaystyle{\lim_{p \to \infty} \left( \sum_{k=1}^nx_k + \frac{1}{p+1} \sum_{k=1}^p
\left(p-k+1\right)\; x_{n+k} \right )=y}$$ uniformly in $n \in \mathbb{N}$.
\item $\displaystyle{\wACS_{i=1}^\infty x_i=y}$ if and only if $$\displaystyle{\lim_{p \to \infty} \left( \sum_{k=1}^nf\left(x_k\right) + \frac{1}{p+1} \sum_{k=1}^p
\left(p-k+1\right)\; f\left(x_{n+k}\right) \right )=f\left(y\right)}$$ uniformly in $n \in \mathbb{N}$, for every $f\in X^*$.
\end{itemize}

\section{Spaces of almost summable sequences}

The almost summability also arises new spaces of particular interest.

\subsection{The spaces $sac\left(X\right)$ and $wsac\left(X\right)$}

We refer the reader to \cite{AizArPer} where the following spaces are defined.

\begin{definition}
If $X$ is a normed space, then:
\begin{itemize}
\item $sac\left(X\right)=\left\{\left(x_i\right)_{i\in\mathbb{N}} \in X^\mathbb{N}: \ACS_{i=1}^{\infty}x_i  \textrm{ exists}\right\}$.
\item $wsac\left(X\right)=\left\{\left(x_i\right)_{i\in \N} \in X^\mathbb{N}: \wACS_{i=1}^\infty x_i  \textrm{ exists}\right\}$.
\end{itemize}
\end{definition}

Proposition \ref{bounded} leads us to the following chain of inclusions: $$sac\left(X\right) \subset wsac\left(X\right)\subset bps\left(X\right).$$

\begin{theorem}
If $X$ is complete, then $sac\left(X\right)$ and $wsac\left(X\right)$ are closed in $bps\left(X\right)$ when endowed with the norm given in \eqref{99}.
\end{theorem}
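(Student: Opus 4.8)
The plan is to reduce the statement to Theorem \ref{completitud}, which already guarantees that $ac(X)$ and $wac(X)$ are closed in $\ell_\infty(X)$ whenever $X$ is complete, by transporting everything through the isometric isomorphism between $bps(X)$ and $\ell_\infty(X)$ recorded right before \eqref{99}. Concretely, I would work with the linear operator
$$
\begin{array}{rrcl}
\Phi:& bps(X) & \to & \ell_\infty(X)\\
& (x_i)_{i\in\mathbb{N}} & \mapsto & \left(\sum_{i=1}^n x_i\right)_{n\in\mathbb{N}},
\end{array}
$$
which, by the very definition of the norm \eqref{99}, is a surjective linear isometry, its inverse being $(z_n)_{n\in\mathbb{N}}\mapsto (z_1,z_2-z_1,z_3-z_2,\dots)$. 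In particular $\Phi$ is a homeomorphism for the norm \eqref{99} on $bps(X)$ and the sup norm on $\ell_\infty(X)$.

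Next I would identify the images of the two spaces under $\Phi$. By definition, a sequence $(x_i)_{i\in\mathbb{N}}$ is almost summable exactly when its sequence of partial sums $\left(\sum_{i=1}^n x_i\right)_{n\in\mathbb{N}}$ is almost convergent; in other words $(x_i)_{i\in\mathbb{N}}\in sac(X)$ if and only if $\Phi\left((x_i)_{i\in\mathbb{N}}\right)\in ac(X)$. Since $ac(X)\subseteq \ell_\infty(X)$ (almost convergent sequences are bounded by Proposition \ref{bounded}) and $\Phi$ maps $bps(X)$ onto $\ell_\infty(X)$, this says precisely that $sac(X)=\Phi^{-1}(ac(X))$, equivalently $\Phi(sac(X))=ac(X)$. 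The same argument, replacing the almost limit by the weak almost limit, yields $wsac(X)=\Phi^{-1}(wac(X))$, with $wac(X)\subseteq\ell_\infty(X)$ again furnished by Proposition \ref{bounded}.

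Finally I would invoke Theorem \ref{completitud}: as $X$ is complete, both $ac(X)$ and $wac(X)$ are closed in $\ell_\infty(X)$. Because $\Phi$ is a homeomorphism, the preimages $\Phi^{-1}(ac(X))=sac(X)$ and $\Phi^{-1}(wac(X))=wsac(X)$ are closed in $bps(X)$ endowed with the norm \eqref{99}, which is exactly the assertion. There is no genuine obstacle here; the only point deserving care is the bookkeeping that $\Phi$ carries the summability spaces exactly onto the convergence spaces, which is immediate from the definition of almost (respectively weak almost) summability together with the observation that $ac(X)$ and $wac(X)$ sit inside $\ell_\infty(X)$, so that the identities $\Phi(sac(X))=ac(X)$ and $\Phi(wsac(X))=wac(X)$ hold on the nose.
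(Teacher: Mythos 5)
Your proposal is correct and is essentially the paper's own argument in packaged form: the paper fixes a sequence $\left(x^n\right)_{n\in\mathbb{N}}\subset sac\left(X\right)$ converging to $x^0\in bps\left(X\right)$ in the norm \eqref{99}, passes to the partial sums $y^n$, and applies Theorem \ref{completitud} to conclude $y^0\in ac\left(X\right)$ and hence $x^0\in sac\left(X\right)$, which is exactly your observation that $sac\left(X\right)=\Phi^{-1}\left(ac\left(X\right)\right)$ and $wsac\left(X\right)=\Phi^{-1}\left(wac\left(X\right)\right)$ with $\Phi$ the isometry recorded before \eqref{99}. Your explicit verification that $\Phi$ carries the summability spaces onto the convergence spaces, using Proposition \ref{bounded} to place $ac\left(X\right)$ and $wac\left(X\right)$ inside $\ell_\infty\left(X\right)$, is sound and makes the reduction cleaner than the paper's sequential bookkeeping.
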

\begin{proof}
Consider a sequence $\left(x^n\right)_{n\in\mathbb{N}}\subset sac\left(X\right)$ and $x^0\in bps\left(X\right)$ such that $$\displaystyle{\lim_{n\to\infty} \left\| x^n-x^0\right\|=0}.$$ For each $n \in \mathbb{N}$ fixed, we define the sequence $\left(y_i^n\right)_{i\in\mathbb{N}}$ in $X$ given by $y_i^n=\sum_{j=1}^ix_j^n$ for every $i\in\mathbb{N}$. We also define the sequence $\left(y^0_i\right)_{i\in\mathbb{N}}$ in $X$ given by $y^0_i=\sum_{j=1}^i x_j^0$ for every $i\in \mathbb{N}$. We have that $\left(y^n\right)_{n\in\mathbb{N}}\subset ac\left(X\right)$, $\left(y^0_i\right)_{i\in\mathbb{N}} \in \ell_\infty\left(X\right)$, and $\displaystyle{\lim_{n\to\infty} \left\| y^n-y^0\right\|_\infty=0}$. Therefore, $y^0 \in ac\left(X\right)$ in virtue of Theorem \ref{completitud} and hence $x^0 \in sac\left(X\right)$. Similarly, $w sac\left(X\right)$ is closed in $bps\left(X\right)$ endowed with the norm given in \eqref{99}.
\end{proof}

Any normed space $X$ is linearly isometric to $\left\{\left(\frac{1}{2^n}x\right)_{n\in\mathbb{N}}:x\in X\right\}$, which, in fact, is a closed subspace of $bps\left(X\right)$ endowed with the norm given in (\ref{99}). This simple observation saves us from providing the proof of the following corollary.

\begin{corollary}
The following conditions are equivalent:
\begin{enumerate}
\item $sac\left(X\right)$ is complete.
\item $wsac\left(X\right)$ is complete.
\item $X$ is complete.
\end{enumerate}
\end{corollary}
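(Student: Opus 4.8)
The plan is to establish the two substantive implications ${\em 1}\Rightarrow{\em 3}$ and ${\em 2}\Rightarrow{\em 3}$, since ${\em 3}\Rightarrow{\em 1}$ and ${\em 3}\Rightarrow{\em 2}$ fall out immediately from the theorem immediately preceding this corollary. The whole argument hinges on the linear isometry announced in the observation just above,
$$\begin{array}{rcl} X & \to & bps\left(X\right)\\ x & \mapsto & \left(\frac{1}{2^n}x\right)_{n\in\mathbb{N}},\end{array}$$
where $bps\left(X\right)$ carries the norm \eqref{99}; I write $W:=\left\{\left(\frac{1}{2^n}x\right)_{n\in\mathbb{N}}:x\in X\right\}$ for its range. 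Since the partial sums of $\left(\frac{1}{2^n}x\right)_{n\in\mathbb{N}}$ equal $\left(1-\frac{1}{2^m}\right)x$ and hence converge to $x$, every such sequence is convergent and therefore almost summable; this places $W\subseteq sac\left(X\right)\subseteq wsac\left(X\right)\subseteq bps\left(X\right)$.

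For ${\em 3}\Rightarrow{\em 1}$ and ${\em 3}\Rightarrow{\em 2}$ I would argue as follows: if $X$ is complete, then $bps\left(X\right)$ equipped with the norm \eqref{99} is complete, and by the previous theorem both $sac\left(X\right)$ and $wsac\left(X\right)$ are closed subspaces of $bps\left(X\right)$, hence complete.

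For the converse directions the main work is to verify that $W$ is a closed subspace of $bps\left(X\right)$; granting this, completeness of $sac\left(X\right)$ (respectively $wsac\left(X\right)$) passes to its closed subspace $W$, and then to $X$ through the isometry. To check closedness I would take a sequence $\left(\frac{1}{2^n}x_k\right)_{n\in\mathbb{N}}$ converging in the norm \eqref{99} to some $(y_n)_{n\in\mathbb{N}}$. Reading off the first partial sum forces $\frac{1}{2}x_k\to y_1$ in $X$, so $x_k\to x:=2y_1$; comparing the $m$-th partial sums then gives $\sum_{i=1}^m y_i=\left(1-\frac{1}{2^m}\right)x$ for every $m$, whence $(y_n)_{n\in\mathbb{N}}=\left(\frac{1}{2^n}x\right)_{n\in\mathbb{N}}\in W$.

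The hard part will be exactly this closedness verification, and the delicate point worth stressing is that it does \emph{not} presuppose completeness of $X$: it is the convergence in the norm \eqref{99} itself that compels the generating vectors $x_k$ to converge in $X$. Everything else reduces to the standard facts that a closed subspace of a complete normed space is complete and that the isometry identifies $X$ with $W$, so I expect no further obstacles once $W$ is shown to be closed.
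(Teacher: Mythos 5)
Your proposal is correct and follows essentially the same route as the paper, which disposes of the corollary via the single observation that $X$ is linearly isometric to $W=\left\{\left(\frac{1}{2^n}x\right)_{n\in\mathbb{N}}:x\in X\right\}$, a closed subspace of $bps\left(X\right)$ under the norm \eqref{99}, combined with the preceding closedness theorem. The only difference is that you actually carry out the closedness verification for $W$ (correctly, and rightly noting it needs no completeness of $X$), whereas the paper states it without proof.
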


\subsection{The space $w^*sac\left(X^*\right)$}

The weak-star version of the almost summability is due now.

\begin{definition}[Boos, 2000; \cite{Boos}]
A sequence $\left(x^*_i\right)_{i\in\mathbb{N}}$ in $X^*$ is called weakly-star almost summable when there exists $y^* \in X^*$ such that $$\displaystyle{\ACS_{i=1}^\infty x^*_i\left(x\right)=y^*\left(x\right)}$$ for all $x\in X$
\begin{itemize}
\item $y^*$ is called the weak-star almost sum of $\left(x^*_i\right)_{i\in\mathbb{N}}$
\item it is usually denoted by $\displaystyle{\wsACS_{i=1}^\infty x_i^*=y^*}$.
\end{itemize}
The space of $w^*$-almost summable sequences is defined as: $$w^* sac\left(X^*\right):=\left\{\left(x^*_i\right)_{i\in\mathbb{N}} \in \left(X^*\right)^\mathbb{N}: \wsACS_{i=1}^\infty x^*_i \text{ exists}\right\}.$$
\end{definition}

Obviously, every weakly-star summable sequence is weakly-star almost summable and every weakly almost summable sequence in a dual space is weakly-star almost summable.

If $X$ is barrelled, then it is easy to prove that $w^* sac\left(X^*\right)$ is a closed subspace of $bps\left(X^*\right)$. In case $X$ is not, then $w^* sac\left(X^*\right) \cap bps\left(X^*\right)$ is closed in $bps\left(X^*\right)$.

\section{Almost summing multiplier spaces}

Following \cite{PBA} (see Subsection \ref{vectorseries}) we can define spaces of almost summing multipliers, which turn out to be subspaces of $\ell_\infty$. They are not to be mistaken with the spaces of almost summable sequences previously defined.

\subsection{The spaces $\mathcal{S}_{\AC}\left(\sum x_i\right)$ and $\mathcal{S}_{\wAC}\left(\sum x_i\right)$}

It is the time now to define the almost summing multiplier spaces.

\begin{definition}
If $\sum x_i$ is a series in a normed space $X$, then we can define the following subspaces of $\ell_\infty$:
\begin{itemize}
\item $ \mathcal{S}_{\AC}\left(\sum x_i\right)=\left\{ (a_i)_{i \in\N} \in \ell_\infty : \ACS a_ix_i \; \textrm{exists} \right\}$.
\item $\mathcal{S}_{\wAC}\left(\sum x_i\right)=\left\{ (a_i)_{i\in\N} \in \ell_\infty : \wACS a_ix_i \; \textrm{exists} \right\}$.
\end{itemize}
\end{definition}

Obviously, $c_{00}\subseteq \mathcal{S}_{\AC}\left(\sum x_i\right)\subseteq \mathcal{S}_{\wAC}\left(\sum x_i\right)$.

\begin{lemma}\label{resumen}
If $c_0\subseteq \mathcal{S}_{\wAC} \left(\sum x_i\right)$, then $\sum x_i$ is wuC.
\end{lemma}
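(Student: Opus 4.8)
The plan is to prove the contrapositive via the characterization of wuC series given in Theorem \ref{DiestelwuC}: I will show that if $c_0 \subseteq \mathcal{S}_{\wAC}\left(\sum x_i\right)$, then there exists a uniform bound $H$ on the quantities $\left\|\sum_{i=1}^n \varepsilon_i x_i\right\|$ over all finite sign sequences $\varepsilon_i \in \{-1,1\}$. First I would observe that $c_0 \subseteq \mathcal{S}_{\wAC}\left(\sum x_i\right)$ means that for every $(a_i)_{i\in\N} \in c_0$ the series $\sum a_i x_i$ is weakly almost summable, so in particular for every $f \in X^*$ the scalar sequence $(f(x_i))_{i\in\N}$ satisfies that $\sum a_i f(x_i)$ is almost summable for all $(a_i)_{i\in\N}\in c_0$. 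The key reduction is therefore to the scalar case: I want to show that $\sum f(x_i)$ is a wuC series of scalars for each fixed $f\in X^*$, which by the last bullet point preceding Theorem \ref{DiestelwuC} is equivalent to $\sum |f(x_i)| < \infty$.

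The heart of the argument is to extract summability information from almost summability of the multiplied series. Fixing $f\in X^*$, I would argue by contradiction: suppose $\sum |f(x_i)| = \infty$. Then I would construct a sequence $(a_i)_{i\in\N}\in c_0$ for which $\sum a_i f(x_i)$ fails to be almost summable, contradicting the hypothesis. The natural construction is to choose $a_i = \varepsilon_i b_i$ where $\varepsilon_i = \operatorname{sgn}(f(x_i))$ and $(b_i)_{i\in\N}$ is a slowly decaying null sequence (for instance $b_i \to 0$ but $\sum b_i |f(x_i)| = \infty$, which is possible precisely because $\sum |f(x_i)| = \infty$ by a standard gliding-hump/Abel-summation selection). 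With this choice the partial sums of $\sum a_i f(x_i)$ are unbounded, hence the sequence of partial sums cannot be almost convergent since, by Proposition \ref{bounded}, every (weakly) almost convergent sequence is bounded. This contradicts $(a_i)_{i\in\N}\in c_0 \subseteq \mathcal{S}_{\wAC}\left(\sum x_i\right)$.

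Having established $\sum |f(x_i)| < \infty$ for each $f \in X^*$, the series $\sum x_i$ is wuC by the characterization of wuC series recalled in the excerpt, which completes the proof. The main obstacle I anticipate is the careful construction of the null multiplier sequence $(b_i)_{i\in\N}$ that simultaneously lies in $c_0$ and forces divergence of the weighted partial sums; this is the classical trick showing that $c_0$-summability of $\sum a_i t_i$ for all $(a_i)\in c_0$ forces $\sum |t_i| < \infty$, and the only subtlety is ensuring that unbounded partial sums genuinely obstruct almost summability rather than merely ordinary summability — but this is exactly guaranteed by Proposition \ref{bounded} together with the inclusion $wsac(X)\subset bps(X)$ noted earlier, since an almost summable sequence must have bounded partial sums.
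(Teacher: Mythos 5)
Your proposal is correct and follows essentially the same route as the paper: fix $f\in X^*$ with $\sum_{i=1}^\infty\left|f\left(x_i\right)\right|=+\infty$, build a sign-matched null multiplier sequence forcing unbounded partial sums of $\sum a_i f\left(x_i\right)$, and contradict the boundedness of almost convergent sequences (Proposition \ref{bounded}). The paper's explicit block construction (values $\pm\frac{1}{k+1}$ on blocks whose absolute sums exceed $\left(k+1\right)^2$) is exactly the gliding-hump selection you sketch, so there is no substantive difference.
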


\begin{proof}
Suppose to the contrary that $\sum x_i$ is not wuC. In accordance with Theorem \ref{DiestelwuC}, there exists $f \in X^*$ verifying that $\sum_{i=1}^\infty |f(x_i)| = +\infty$. Let us proceed as follows:
\begin{itemize}
\item We can choose a natural $n_1$ such that $\sum_{i=1}^{n_1} |f(x_i)| > 2 \cdot 2$ and for $i \in \{ 1, \ldots, n_1 \}$ define $$a_i=\left\{\begin{array}{rl} 1/2& \text{ if } f(x_i) \geqslant 0\\  - 1/2 & \text{ if } f(x_i)<0.\end{array}\right.$$
\item There exists $n_2>n_1$ such that $\sum_{i=n_1+1}^{n_2} |f(x_i)| > 3 \cdot 3$ and for $i \in \{ n_1+1,\ldots, n_2\}$ define $$a_i=\left\{\begin{array}{rl} 1/3& \text{ if } f(x_i) \geqslant 0\\  - 1/3 & \text{ if } f(x_i)<0.\end{array}\right.$$
\item In this manner we obtain an increasing sequence $(n_k)_{k\in\mathbb{N}}$ in $\mathbb{N}$ and a sequence $(a_i)_{i\in\mathbb{N}}\in c_0$ such that $\sum_{i=1}^\infty a_i f(x_i)= +\infty$.
\end{itemize}
Since $(a_i)_{i\in\mathbb{N}} \in \mathcal{S}_{\wAC}(\sum x_i)$ by hypothesis, it follows that $\wACS_{i=1}^\infty a_i x_i$ exists and therefore $\left ( \sum_{i=1}^n a_i f(x_i)\right )_{n\in\mathbb{N}}$ is a bounded sequence, which is a contradiction.
\end{proof}

\begin{theorem}\label{ACSc0}
The following conditions are equivalent:
\begin{enumerate}
\item $X$ is complete.
\item The below assertions are equivalent:
\begin{enumerate}
\item $\sum x_i$ is wuC.
\item $\mathcal{S}_{\AC} \left(\sum x_i\right)$ is complete.
\item $c_0\subseteq \mathcal{S}_{\AC}(\sum x_i)$.
\item $\mathcal{S}_{\wAC} \left(\sum x_i\right)$ is complete.
\item $c_0\subseteq \mathcal{S}_{\wAC}(\sum x_i)$.
\end{enumerate}
\end{enumerate}
\end{theorem}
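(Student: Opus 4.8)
The plan is to establish the master equivalence $1 \Leftrightarrow 2$ by proving each direction separately. For the forward direction $1 \Rightarrow 2$, assuming $X$ is complete, I would close the cycle of five conditions $(a) \Rightarrow (c) \Rightarrow (e) \Rightarrow (a)$ together with $(a) \Rightarrow (b)$, $(b) \Rightarrow (c)$, $(a) \Rightarrow (d)$, $(d) \Rightarrow (e)$, so that all five become mutually equivalent. The key engine for $(a) \Rightarrow (c)$ is that a wuC series $\sum x_i$ satisfies, by Theorem \ref{DiestelwuC}, a uniform bound $H = \sup\{\|\sum_{i=1}^n a_i x_i\| : n\in\N, |a_i|\le 1\}$; given any $(a_i)_{i\in\N}\in c_0$ one shows the partial sums $\sum a_i x_i$ form a sequence with bounded partial sums that is in fact almost summable. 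The implication $(c)\Rightarrow(e)$ is trivial from the inclusion $\mathcal{S}_{\AC}\subseteq\mathcal{S}_{\wAC}$, and $(e)\Rightarrow(a)$ is exactly the content of Lemma \ref{resumen}, which gives wuC directly from $c_0\subseteq\mathcal{S}_{\wAC}$. The completeness statements $(b)$ and $(d)$ should follow by combining wuC with the closedness results for $sac$ and $wsac$ inside $bps$ established earlier in the chapter, using that these multiplier spaces inherit completeness from $X$.

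For the converse direction $2 \Rightarrow 1$, I would argue by contraposition: assuming $X$ is not complete, I must produce a single series $\sum x_i$ for which the chain of five conditions breaks, i.e. some implication among them fails. The natural candidate is to exploit the failure of completeness via Corollary \ref{incomplete} or the general principle that in an incomplete space one can find an absolutely convergent (hence wuC) series that is not convergent. Concretely, I would take a wuC series $\sum x_i$ in the incomplete $X$ and show that $\mathcal{S}_{\AC}(\sum x_i)$ fails to be complete, thereby breaking $(a)\Rightarrow(b)$. The mechanism here is that completeness of the multiplier space forces completeness of $X$ through the standard embedding of $X$ (or a suitable copy of it) into the multiplier space as a complemented piece, mirroring the incompleteness-detection arguments used in Theorem \ref{jopac} and Corollary \ref{incomplete}.

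The main obstacle I anticipate is the careful verification that, under completeness, wuC genuinely upgrades the merely-bounded partial sums of $\sum a_i x_i$ (for $(a_i)\in c_0$) to an \emph{almost summable} sequence, rather than just a $bps$ sequence. This requires showing the uniform Cesàro-type limit in the definition of almost convergence actually exists, which is where the decay of $(a_i)$ to zero must interact quantitatively with the Diestel bound $H$; one estimates the tail contributions $\frac{1}{p+1}\sum_{k=1}^p (p-k+1)\, a_{n+k} x_{n+k}$ using the intrinsic characterization of almost summability recalled at the start of this chapter, and the smallness of $|a_{n+k}|$ for large indices. The second delicate point is arranging the converse so that a \emph{single} series witnesses the failure; I expect this to hinge on transporting the non-convergent Cauchy (or absolutely convergent non-convergent) sequence from $X$ into the appropriate multiplier or summable-sequence space and invoking the earlier closedness and completeness correspondences, so that incompleteness of $X$ propagates to the breakdown of one of the five equivalences.
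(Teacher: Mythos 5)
Your forward direction ($1\Rightarrow 2$) is sound, and part of it takes a genuinely different route from the paper's: the paper proves $(a)\Rightarrow(b)$ and $(a)\Rightarrow(d)$ by a direct $\varepsilon$-computation with the Diestel bound $H$ and the intrinsic Ces\`aro characterization of almost summability, whereas your idea of invoking the closedness of $sac(X)$ and $wsac(X)$ in $\left(bps(X),\left\|\cdot\right\|\right)$ with the norm \eqref{99} can be made to work cleanly: the map $(a_i)_{i\in\N}\mapsto (a_ix_i)_{i\in\N}$ is continuous from $\ell_\infty$ into $bps(X)$ with norm at most $H$ (Theorem \ref{DiestelwuC}), and $\mathcal{S}_{\AC}\left(\sum x_i\right)$, $\mathcal{S}_{\wAC}\left(\sum x_i\right)$ are exactly the preimages of $sac(X)$, $wsac(X)$, hence closed in $\ell_\infty$ and therefore complete. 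That buys a shorter proof than the paper's. Two smaller remarks: you over-engineer $(a)\Rightarrow(c)$, since under completeness the tails satisfy $\left\|\sum_{i=n}^{m}a_ix_i\right\|\leq H\sup_{i\geq n}\left|a_i\right|$, so $\sum a_ix_i$ converges in norm and almost summability is automatic, with no interaction needed between the decay of $(a_i)$ and the Ces\`aro averages; and the implications $(b)\Rightarrow(c)$, $(d)\Rightarrow(e)$, which you list but never justify, are the one-liners the paper alludes to: a complete multiplier space is closed in $\ell_\infty$ and always contains $c_{00}$, whose closure is $c_0$.

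The genuine gap is in $2\Rightarrow 1$. Your proposed mechanism---that ``completeness of the multiplier space forces completeness of $X$ through the standard embedding of $X$ \ldots\ into the multiplier space as a complemented piece''---cannot work: $\mathcal{S}_{\AC}\left(\sum x_i\right)$ is a space of \emph{scalar} sequences inside $\ell_\infty$, there is no copy of $X$ in it, and its completeness is nothing more than closedness in $\ell_\infty$, which by itself says nothing about $X$. Moreover, an arbitrary wuC series in an incomplete space witnesses no failure at all (the zero series is wuC and has $\mathcal{S}_{\AC}=\ell_\infty$, complete), so ``take a wuC series in the incomplete $X$'' is not enough; the series must be engineered. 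The missing idea is the paper's rescaling trick: since $X$ is incomplete, choose $\sum x_i$ with $\left\|x_i\right\|<\frac{1}{i2^i}$ whose partial sums converge in $X^{**}$ to some $x^{**}\in X^{**}\setminus X$, and consider the wuC series $\sum ix_i$. If the five assertions were equivalent for this series, then $(a)\Rightarrow(c)$ together with $\left(\frac{1}{i}\right)_{i\in\N}\in c_0$ would make $\ACS\frac{1}{i}\,ix_i=\ACS x_i$ exist in $X$; but the partial sums of $\sum x_i$ are Cauchy and non-convergent in $X$, hence not almost convergent there by Theorem \ref{CauchyAC} (equivalently, uniqueness of almost limits in $X^{**}$ forces the almost sum to be $x^{**}\notin X$). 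The point of the factor $i$ is precisely that a $c_0$-multiplier reproduces the bad series, and your sketch contains no device for arranging this. Note also that once $(c)$ fails this way, your target $(b)$ fails for free ($\mathcal{S}_{\AC}\left(\sum ix_i\right)$ contains $c_{00}$ but misses $\left(\frac{1}{i}\right)_{i\in\N}\in\cl\left(c_{00}\right)$, so it is not closed, hence not complete), but there is no route to the failure of $(b)$ that bypasses breaking $(c)$ first.
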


\begin{proof}
\mbox{}
\begin{enumerate}
\item[{\em 1} $\Rightarrow$ {\em 2}] Attending to Lemma \ref{resumen} and to the chain of inclusions $c_{00}\subseteq \mathcal{S}_{\AC}\left(\sum x_i\right)\subseteq \mathcal{S}_{\wAC}\left(\sum x_i\right)$, it only suffices to show $(a) \Rightarrow (b)$ and $(a) \Rightarrow (d)$. For similarity reasons, we will only show $(a) \Rightarrow (b)$. We will, in fact, prove that $\mathcal{S}_{\AC} \left(\sum x_i\right)$ is closed in $\ell_\infty$. Let $(a^n)_{n\in\N}$ be a sequence in $\mathcal{S}_{\AC} \left(\sum x_i\right)$, with $a^n=(a^n_i)_{i\in \N}$ for each $n \in \mathbb{N}$, and consider $a^0 \in\ell_\infty$ in such a way that $\displaystyle{\lim_{n\to\infty} \|a^n-a^0\|=0}$. We will show that $a^0 \in \mathcal{S}_{\AC} \left(\sum x_i\right)$. In accordance with Theorem \ref{DiestelwuC}, there exists $H>0$ such that $$H \geqslant \sup \left\{ \left\|\sum_{i=1}^n a_ix_i\right\| : n \in \mathbb{N}, |a_i| \leqslant 1, i \in \{1, \ldots, n \} \right\}.$$ For each natural $n$, there exists $y_n \in X$ such that $y_n=\ACS_{i=1}^\infty a^n_ix_i$.
\begin{itemize}
\item We claim that $(y_n)_{n\in\N}$ is a Cauchy sequence. Indeed, if $\varepsilon >0$ is given, there exists an $n_0$ such that if $p,q \geqslant n_0$, then $\| a^p-a^q \|<\varepsilon/3H$. If $p,q \geqslant n_0$ are fixed, there exists $i \in \mathbb{N}$ verifying
\begin{eqnarray*}
\left\|y_p-\left(\sum_{k=1}^j a_k^p x_k + \frac{1}{i+1} \sum_{k=1}^i (i-k+1) a_{j+k}^p x_{j+k}\right)\right\| &<& \frac{\varepsilon}{3}\\
\left\|y_q-\left(\sum_{k=1}^j a_k^q x_k + \frac{1}{i+1} \sum_{k=1}^i (i-k+1) a_{j+k}^q x_{j+k}\right)\right\| &<&\frac{\varepsilon}{3} 
\end{eqnarray*}
for each $j \in N$. Then, if $p,q \geqslant n_0$ we have that
\begin{eqnarray*}
 \|y_p-y_q\| &\leqslant & \left\|y_p-\left(\sum_{k=1}^j a_k^p x_k + \frac{1}{i+1} \sum_{k=1}^i (i-k+1) a_{j+k}^p x_{j+k}\right)\right\|  \\
&+& \left\|y_q-\left(\sum_{k=1}^j a_k^q x_k + \frac{1}{i+1} \sum_{k=1}^i (i-k+1) a_{j+k}^q x_{j+k}\right)\right\| \\
&+& \left\|\sum_{k=1}^j (a_k^p-a_k^q)x_k + \sum_{k=1}^i \frac{i-k+1}{i+1} (a_{j+k}^p-a_{j+k}^q)x_{j+k}\right\| \\
&<& \frac{\varepsilon}{3}+\frac{\varepsilon}{3}+\frac{\varepsilon}{3}\\
&=& \varepsilon.
\end{eqnarray*}
\end{itemize}
Since $X$ is a Banach space, there exists $y_0 \in X$ such that $\displaystyle{\lim_{n\to\infty} \|y_n - y_0\|=0}$. Finally, we will check that $\ACS_{i=1}^\infty a_i^0 x_i=y_0$, that is, $$\lim_{i \to \infty} \left( \sum_{k=1}^j a_k^0 x_k+ \frac{1}{i+1} \sum_{k=1}^i (i-k+1) a_{j+k}^0 x_{j+k}\right )=y_0, \hspace{5pt} \textrm{uniformly in} \; j \in \mathbb{N}.$$ If $\varepsilon>0$ is given, we can fix a natural $n$ such that $\|a^n-a^0\|< \varepsilon/3 H$ and $\|y_n-y_0\|<\varepsilon/3$. Now, we can also fix $i_0$ such that for every $i \geqslant i_0$ it holds that $$\displaystyle{\left\|y_n-\left(\sum_{k=1}^j a_k^n x_k+ \frac{1}{i+1} \sum_{k=1}^i (i-k+1) a_{j+k}^n x_{j+k}\right)\right\| <   \frac{\varepsilon}{3}}$$ for every $j \in \mathbb{N}$. Next, if $i \geqslant i_0$, then it is satisfied that
\begin{eqnarray*}
&&\left \|y_0-\left(\sum_{k=1}^j a_k^0 x_k+ \frac{1}{i+1} \sum_{k=1}^i (i-k+1) a_{j+k}^0 x_{j+k}\right)\right\| \\
&\leqslant & \|y_0-y_n\|  +\left \|y_n-\left(\sum_{k=1}^j a_k^n x_k+ \frac{1}{i+1} \sum_{k=1}^i (i-k+1) a_{j+k}^n x_{j+k}\right)\right\| \\
&+& \left\|\sum_{k=1}^j (a_k^n-a_k^0) x_k + \frac{1}{i+1} \sum_{k=1}^i (i-k+1) (a_{j+k}^n-a_{j+k}^0) x_{j+k}\right\|  \\
&\leqslant & \frac{2 \varepsilon}{3}  + \|a^n-a^0\| \left( \sum_{k=1}^j \frac{(a_k^n-a_k^0)}{\|a^n-a^0\|} x_k + \sum_{k=1}^i \frac{(i-k+1) (a_{j+k}^n-a_{j+k}^0)}{(i+1) \|a^n-a^0\|} x_{j+k} \right) \\
&\leqslant & \frac{2\varepsilon}{3}+\frac{\varepsilon}{3H} H \\
&= &\varepsilon
\end{eqnarray*}
for every $j \in \mathbb{N}$.

\item[{\em 2} $\Rightarrow$ {\em 1}] Suppose to the contrary that $X$ is not complete. There exists a series $\sum x_i$ in $X$ such that $\|x_i\|<\frac{1}{i 2^i}$ and $\sum x_i = x^{**} \in X^{**}\setminus X$. As a consequence, $\ACS x_i = x^{**}$. It is well known that the series $\sum i x_i$ is wuC. Then, by hypothesis, $x^{**}=\ACS \frac{1}{i}ix_i \in X$ since $\left(\frac{1}{i}\right)_{i\in\mathbb{N}}\in c_0$. This is a contradiction.
\end{enumerate}
\end{proof}

\begin{proposition}
The following conditions are equivalent:
\begin{enumerate}
\item $\sum x_i$ is wuC.
\item The linear operator
\begin{equation*}
\begin{array}{rcl}
 \mathcal{S}_{\AC} (\sum x_i)& \to& X\\
(a_i)_{i\in\mathbb{N}}&\mapsto& \ACS_{i=1}^\infty a_i x_i.
\end{array}
\end{equation*}
is continuous.
\item The linear operator
\begin{equation*}
\begin{array}{rcl}
 \mathcal{S}_{\wAC} (\sum x_i)& \to& X\\
(a_i)_{i\in\mathbb{N}}&\mapsto& \wACS_{i=1}^\infty a_i x_i.
\end{array}
\end{equation*}
is continuous.
\end{enumerate}
\end{proposition}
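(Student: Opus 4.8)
The plan is to prove the chain of implications $1\Rightarrow 3\Rightarrow 2\Rightarrow 1$, working throughout with the sup norm that $\mathcal{S}_{\AC}\left(\sum x_i\right)$ and $\mathcal{S}_{\wAC}\left(\sum x_i\right)$ inherit from $\ell_\infty$. The engine in every step will be Diestel's characterization (Theorem \ref{DiestelwuC}): $\sum x_i$ is wuC if and only if
$$H:=\sup\left\{\left\|\sum_{i=1}^n a_i x_i\right\|:n\in\mathbb{N},\ |a_i|\leq 1\right\}<\infty.$$
Both maps in the statement are plainly linear and well defined (almost limits are unique), so the whole problem reduces to controlling their operator norms by $H$ and, conversely, recovering the finiteness of $H$ out of continuity.

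For $1\Rightarrow 3$, I would fix $\left(a_n\right)_{n\in\mathbb{N}}\in\mathcal{S}_{\wAC}\left(\sum x_i\right)$ with $\left\|\left(a_n\right)_{n\in\mathbb{N}}\right\|_\infty\leq 1$ and set $y:=\wACS a_i x_i$. For every $f\in X^*$ the scalar sequence of partial sums $\left(\sum_{k=1}^n a_k f(x_k)\right)_{n\in\mathbb{N}}$ is almost convergent to $f(y)$; since the scalar almost-limit functional has norm $1$, it is dominated by the partial sums, giving
$$|f(y)|\leq \sup_{n\in\mathbb{N}}\left|f\left(\sum_{k=1}^n a_k x_k\right)\right|.$$
Taking the supremum over $f\in\B_{X^*}$ and exchanging the two suprema then yields $\left\|y\right\|\leq \sup_{n}\left\|\sum_{k=1}^n a_k x_k\right\|\leq H$, so the weak almost sum operator is continuous with norm at most $H$.

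For $3\Rightarrow 2$ I would only observe that on $\mathcal{S}_{\AC}\left(\sum x_i\right)$ the two operators coincide: if the partial sums of $\left(a_i x_i\right)_{i\in\mathbb{N}}$ are almost convergent to $y$, then they are weakly almost convergent to $y$ as well, whence $\ACS a_i x_i=\wACS a_i x_i$. Thus the map in $2$ is the restriction of the map in $3$ to the (same-norm) subspace $\mathcal{S}_{\AC}\left(\sum x_i\right)$, and a restriction of a continuous operator is continuous. For $2\Rightarrow 1$, assume the operator in $2$ has norm $M<\infty$ and take any finite choice of signs $\varepsilon_1,\dots,\varepsilon_n\in\{-1,1\}$. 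The multiplier $a:=\left(\varepsilon_1,\dots,\varepsilon_n,0,0,\dots\right)$ lies in $c_{00}\subseteq\mathcal{S}_{\AC}\left(\sum x_i\right)$, and the partial sums of $\left(a_i x_i\right)_{i\in\mathbb{N}}$ are eventually constant equal to $\sum_{i=1}^n\varepsilon_i x_i$, hence convergent and a fortiori almost convergent with that value. Therefore $\left\|\sum_{i=1}^n\varepsilon_i x_i\right\|=\left\|\ACS a_i x_i\right\|\leq M\left\|a\right\|_\infty=M$, and Theorem \ref{DiestelwuC} forces $\sum x_i$ to be wuC.

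The delicate points are the two places where almost summability is converted into an ordinary estimate. In $1\Rightarrow 3$ the crux is the swap of the supremum over $n$ with the supremum over $\B_{X^*}$, combined with the norm-one domination of the scalar almost limit by its partial sums; in $2\Rightarrow 1$ it is the elementary but essential remark that finitely supported multipliers have eventually constant partial sums, so that their almost sum is an honest finite sum and Diestel's sign form applies verbatim. Everything else is routine homogeneity and linearity, so I expect no genuine obstruction beyond keeping the norm bookkeeping straight.
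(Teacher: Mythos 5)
Your proof is correct and takes essentially the same approach as the paper's: both directions rest on Theorem \ref{DiestelwuC}, bounding the operator norm by $H$ via the norm-one domination of (weak) almost limits by the partial sums, and conversely recovering $H<\infty$ by feeding finitely supported multipliers, whose almost sums are honest finite sums, into the operator. The only organizational difference is your chain $1\Rightarrow 3\Rightarrow 2\Rightarrow 1$, where the step $3\Rightarrow 2$ by restriction (the two operators coincide on $\mathcal{S}_{\AC}\left(\sum x_i\right)$) neatly replaces the paper's ``for similarity reasons'' treatment of the weak case.
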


\begin{proof}
For similarity reasons, we will only show the equivalence of {\em 1} and {\em 2}.
\begin{enumerate}
\item[{\em 1} $\Rightarrow$ {\em 2}] If $\sum x_i$ is $wuC$ and $H:= \sup\left\{ \left\|\sum_{i=1}^n a_i x_i\right\| : n \in \mathbb{N}, |a_i| \leqslant 1, i \in \{ 1, \ldots, n\}\right\}$, then $\left\| \ACS_{i=1}^\infty a_i x_i \right\| \leqslant H \|a\|$.

\item[{\em 2} $\Rightarrow$ {\em 1}] Conversely, if $\{ a_1, \ldots, a_j\} \subset [-1,1]$ (and considering $a_i=0$ if $i>j$), then $\left\|\sum_{i=1}^j a_i x_i \right\| =\left \|\ACS_{i=1}^\infty a_i x_i\right\|$ is less than or equal to the norm of the above operator. Finally Theorem \ref{DiestelwuC} applies.

\end{enumerate}

\end{proof}

\subsection{The space $\mathcal{S}_{\wsAC}\left(\sum x^*_i\right) $}

\begin{definition}
If $\sum x_i^*$ is a series in the dual $X^*$ of a normed space, then we can define the following subspace of $\ell_\infty$:
$$\mathcal{S}_{\wsAC}\left(\sum x^*_i\right)=\left\{ (a_i)_{i\in\N} \in \ell_\infty : \wsACS_{i=1}^\infty a_i x^*_i \;\textrm{exists} \right\}.$$
\end{definition}

In a dual space we obviously have that $$c_{00}\subseteq \mathcal{S}_{\AC}\left(\sum x^*_i\right)\subseteq \mathcal{S}_{\wAC}\left(\sum x^*_i\right)\subseteq \mathcal{S}_{\wsAC}\left(\sum x^*_i\right).$$

By bearing in mind the $w^*$-compacity of $\B_{X^*}$, the reader can quickly realize that:

\begin{itemize}
\item If $\sum x^*_i$ is $wuC$, then $\mathcal{S}_{\wsAC}\left(\sum x^*_i\right)=\ell_\infty$.
\item If $\mathcal{S}_{\wsAC}\left(\sum x^*_i\right)=\ell_\infty$, then $\ACS_{i \in M}x^*_i(x)$ exists for all $x \in X$ and all $M \subset \mathbb{N}$.
\end{itemize}

\begin{theorem}
If $X$ is barrelled, then the following conditions are equivalent:
\begin{enumerate}
\item $\sum x^*_i$ is $wuC$.
\item $\mathcal{S}_{\wsAC}\left(\sum x^*_i\right)=\ell_\infty$.
\item $\ACS_{i \in M}x^*_i(x)$ exists for all $x \in X$ and all $M \subset \mathbb{N}$.
\end{enumerate}
\end{theorem}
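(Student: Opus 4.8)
The plan is to close the cycle $1\Rightarrow 2\Rightarrow 3\Rightarrow 1$. The first two implications are exactly the two observations recorded immediately before the statement, both obtained from the $w^*$-compactness of $\mathsf{B}_{X^*}$ and valid for an arbitrary normed space $X$; neither uses barrelledness. Hence all the genuine work, and the only place where the hypothesis that $X$ be barrelled is needed, lies in $3\Rightarrow 1$.

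First I would reduce $3\Rightarrow 1$ to proving that $\sum_i |x^*_i(x)|<\infty$ for every $x\in X$. Fix $x\in X$ and split $\mathbb{N}$ into $M^+:=\{i\in\mathbb{N}:x^*_i(x)\geq 0\}$ and $M^-:=\{i\in\mathbb{N}:x^*_i(x)<0\}$. By hypothesis $\ACS_{i\in M^+}x^*_i(x)$ and $\ACS_{i\in M^-}x^*_i(x)$ both exist, so the two scalar sequences $\big(\chi_{M^+}(i)x^*_i(x)\big)_{i\in\mathbb{N}}$ and $\big(\chi_{M^-}(i)x^*_i(x)\big)_{i\in\mathbb{N}}$ are almost summable and therefore have bounded partial sums (recall $sac(\mathbb{R})\subseteq bps(\mathbb{R})$, which rests on Proposition \ref{bounded}). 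Along $M^+$ the partial sums are nondecreasing and bounded above, so $\sum_{i\in M^+}x^*_i(x)=\sum_{i\in M^+}|x^*_i(x)|$ converges; along $M^-$ they are nonincreasing and bounded below, so $\sum_{i\in M^-}|x^*_i(x)|$ converges. Adding the two yields $\sum_i|x^*_i(x)|<\infty$, as desired.

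Next I would upgrade this pointwise absolute summability to the weakly unconditionally Cauchy condition, and this is the step where barrelledness is indispensable. Consider the family $\mathcal{F}:=\big\{\sum_{i=1}^n\varepsilon_ix^*_i:n\in\mathbb{N},\ \varepsilon_i\in\{-1,1\}\big\}\subseteq X^*$. For each $x\in X$ the previous step gives $\sup_{f\in\mathcal{F}}|f(x)|\leq \sum_i|x^*_i(x)|<\infty$, so $\mathcal{F}$ is pointwise bounded on $X$. Since $X$ is barrelled, the uniform boundedness principle (Banach--Steinhaus) applies and provides $H>0$ with $\sup\big\{\|\sum_{i=1}^n\varepsilon_ix^*_i\|:n\in\mathbb{N},\ \varepsilon_i\in\{-1,1\}\big\}=H$. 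By Theorem \ref{DiestelwuC} this is precisely the assertion that $\sum x^*_i$ is wuC, closing the cycle.

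The routine verifications (monotonicity and boundedness of the subseries partial sums, and the pointwise bound on $\mathcal{F}$) are straightforward. The real obstacle is conceptual rather than computational: the hypothesis in condition 3 only delivers boundedness of partial sums over each fixed subset, so one must first manufacture the full pointwise $\ell_1$-estimate and only then make it uniform. Any attempt to shortcut $3\Rightarrow 1$ by invoking the bare characterization ``$\sum x^*_i$ is wuC if and only if $\sum_i|x^*_i(x)|<\infty$ for each $x$'' would conceal exactly this appeal to Banach--Steinhaus, which is why the barrelledness of $X$ cannot be removed from the statement.
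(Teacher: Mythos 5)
Your proof is correct and is essentially the paper's own argument: both dispose of \emph{1} $\Rightarrow$ \emph{2} $\Rightarrow$ \emph{3} via the observations preceding the statement, extract the pointwise bound $\sum_{i=1}^\infty|x^*_i(x)|<\infty$ from the subseries hypothesis (the paper phrases your positive/negative split contrapositively, choosing $M\subset\mathbb{N}$ with $\sum_{i\in M}x^*_i(x_0)=\pm\infty$ and contradicting the existence of the almost sum over $M$, which is exactly your ``almost summable implies bounded partial sums'' step in reverse), and then use barrelledness together with Theorem \ref{DiestelwuC} to pass from pointwise to uniform boundedness of the signed partial-sum family. The only difference is direct argument versus contradiction in the first step; the mathematics is identical.
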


\begin{proof}
We only need to show {\em 3} $\Rightarrow $ {\em 1}. Effectively, our goal is to use Theorem \ref{DiestelwuC} by proving that $$E:=\left\{ \sum_{i=1}^n a_i x^*_i : n \in \mathbb{N}, |a_i| \leqslant 1, i \in \{1, \ldots, n \}\right\}$$ is bounded. Since $X$ is barrelled, it is sufficient to show that $E$ is pointwise bounded. So, suppose to the contrary that $E$ is not pointwise bounded, that is, there exists $x_0 \in X$ such that $\sum_{i=1}^\infty |x^*_i(x_0)|=+\infty$. Then, we can choose a subset $M \subset \mathbb{N}$ such that $\sum_{i \in M} x^*_i(x_0)=\pm \infty$. However, by hypothesis, $\ACS_{i \in M}x^*_i(x_0)$ exists, which is a contradiction.
\end{proof}

\subsection{The Almost Convergence Orlicz-Pettis Theorem}

An almost convergence version of Theorem \ref{OP} will be provided to conclude the third chapter of this book.

\begin{theorem}\label{ACOPT}
If $X$ is complete and $\sum x_i$ is a series in $X$ such that $\wACS_{i \in M} x_i$ exists for each $M \subset \mathbb{N}$, then $\sum x_i$ is $uc$.
\end{theorem}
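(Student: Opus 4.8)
The plan is to reduce the statement to the classical Orlicz--Pettis Theorem (Theorem \ref{OP}) by upgrading the weakly almost subseries summability hypothesis to genuine weak subseries convergence. Throughout, write $s_n^M := \sum_{i\in M,\, i\le n} x_i$ for the partial sums of the subseries indexed by $M\subseteq\mathbb{N}$, and for $f\in X^*$ observe that $f(s_n^M)=\sum_{i\in M,\,i\le n} f(x_i)$, so that weak almost summability of $(\chi_M(i)x_i)_i$ amounts to almost convergence of the scalar sequence $(f(s_n^M))_n$ for every $f$.

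First I would show that $\sum x_i$ is wuC. Fix $f\in X^*$ and suppose, towards a contradiction, that $\sum_i |f(x_i)|=+\infty$. Splitting $\mathbb{N}$ into $M_+:=\{i: f(x_i)\ge 0\}$ and $M_-:=\{i: f(x_i)<0\}$, at least one of $\sum_{i\in M_+} f(x_i)$ and $\sum_{i\in M_-} (-f(x_i))$ equals $+\infty$; say the former. By hypothesis $\wACS_{i\in M_+} x_i$ exists, so $(f(s_n^{M_+}))_n$ is a scalar almost convergent sequence, hence bounded by Proposition \ref{bounded}. This contradicts $f(s_n^{M_+})\to +\infty$. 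Therefore $\sum_i|f(x_i)|<\infty$ for every $f\in X^*$, which is exactly the wuC condition.

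The heart of the argument comes next. Fix an arbitrary $M\subseteq\mathbb{N}$ and let $y_M:=\wACS_{i\in M} x_i\in X$ be the weak almost sum granted by the hypothesis. Since wuC gives $\sum_{i\in M}|f(x_i)|\le \sum_i |f(x_i)|<\infty$ for each $f\in X^*$, the scalar series $\sum_{i\in M} f(x_i)$ converges absolutely, so the sequence $(f(s_n^M))_n$ is convergent; being convergent it is almost convergent with almost limit equal to its ordinary limit $\sum_{i\in M} f(x_i)$. On the other hand, $\wACS_{i\in M} x_i=y_M$ means precisely that the almost limit of $(f(s_n^M))_n$ is $f(y_M)$. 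By uniqueness of the almost limit, $f(y_M)=\lim_n f(s_n^M)$ for every $f\in X^*$; that is, $s_n^M\to y_M$ weakly, so $w\sum_{i\in M} x_i$ exists and equals $y_M$. As $M$ was arbitrary, $w\sum_{i\in M} x_i$ exists for every $M\subseteq\mathbb{N}$, and since $X$ is complete the classical Orlicz--Pettis Theorem (Theorem \ref{OP}) yields that $\sum x_i$ is uc.

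The main obstacle is the middle step: recognizing that wuC forces the weak almost sum to coincide with a genuine weak limit. Once one observes that wuC turns each scalar subseries $\sum_{i\in M} f(x_i)$ into an absolutely convergent series, the two descriptions of the almost limit of $(f(s_n^M))_n$ --- as its ordinary limit and as $f(y_M)$ --- must agree, and weak subseries convergence falls out for free. Everything else is the routine splitting argument establishing wuC and the appeal to the classical theorem.
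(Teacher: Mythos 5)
Your proposal is correct and follows essentially the same route as the paper's proof: first establish that $\sum x_i$ is wuC by noting that a divergent scalar subseries $\sum_{i\in M} f(x_i)$ would contradict the boundedness of the almost convergent partial-sum sequence, then use wuC to make each scalar subseries $\sum_{i\in M} f(x_i)$ convergent, identify its ordinary limit with the almost limit $f(y_M)$ to obtain $w\sum_{i\in M} x_i = y_M$, and finish with the classical Orlicz--Pettis Theorem. Your write-up merely spells out the positive/negative index splitting and the appeal to Proposition \ref{bounded}, which the paper leaves implicit.
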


\begin{proof}
We will proceed in two steps:
\begin{itemize}
\item First off, we will show that $\sum x_i$ is $wuC$. Indeed, if not, then we can find $M \subset
\mathbb{N}$ and $f \in X^*$ such that $\sum_{i \in M} f(x_i)=+\infty$. By hypothesis, there exists $x_0 \in X$ such that
$\ACS_{i \in M} f(x_i)=f(x_0)$, which is a contradiction.

\item Secondly, we will show that if $M \subset \mathbb{N}$, then
$w\sum_{i \in M} x_i$ exists, which already implies that $\sum x_i$ is $uc$ by applying the classic Orlicz-Pettis Theorem (Theorem \ref{OP}). Effectively, if $M \subset \mathbb{N}$, then there exists $x_0 \in X$ such that $\wACS_{i \in M} x_i =x_0$, but if $f \in X^*$, then $\sum_{i \in M} f(x_i)$ exists and $$\sum_{i \in M} f(x_i)=\ACS_{i \in M} f(x_i)=f(x_0).$$ As a consequence, $w \sum_{i \in M} x_i=x_0$.
\end{itemize}
\end{proof}

The following corollary is deduced as an evident consequence.

\begin{corollary}
If $X$ is complete and $\sum x_i$ is a series in $X$, then the following assertions are equivalent:
\begin{enumerate}
\item $\sum x_i$ is $uc$.
\item $\mathcal{S}_{\AC}\left( \sum x_i\right)=\ell_\infty$.
\item $\mathcal{S}_{\wAC}\left( \sum x_i\right)=\ell_\infty$.
\end{enumerate}
\end{corollary}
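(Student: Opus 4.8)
The plan is to establish the cycle of implications $1\Rightarrow 2\Rightarrow 3\Rightarrow 1$, relying on the inclusion chain $c_{00}\subseteq \mathcal{S}_{\AC}\left(\sum x_i\right)\subseteq \mathcal{S}_{\wAC}\left(\sum x_i\right)\subseteq\ell_\infty$ together with the almost convergence Orlicz-Pettis Theorem (Theorem \ref{ACOPT}). Almost all of the real content of the statement is already encapsulated in that theorem, so the corollary amounts to organizing the bookkeeping around it.

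For $1\Rightarrow 2$, I would invoke the classical characterization recalled earlier in this book that a series in a Banach space is $uc$ if and only if $\mathcal{S}\left(\sum x_i\right)=\ell_\infty$. Thus, assuming $\sum x_i$ is $uc$, the series $\sum a_i x_i$ converges in norm for every $(a_i)_{i\in\N}\in\ell_\infty$. Since every convergent sequence is almost convergent, the sequence of partial sums $\left(\sum_{k=1}^n a_k x_k\right)_{n\in\N}$ is almost convergent, so $\ACS a_i x_i$ exists for every bounded multiplier. This gives $\ell_\infty\subseteq \mathcal{S}_{\AC}\left(\sum x_i\right)$, and hence the desired equality. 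The implication $2\Rightarrow 3$ is then immediate from the inclusion $\mathcal{S}_{\AC}\left(\sum x_i\right)\subseteq \mathcal{S}_{\wAC}\left(\sum x_i\right)\subseteq\ell_\infty$: once the smaller multiplier space fills up $\ell_\infty$, so does the larger one.

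The implication $3\Rightarrow 1$ is the only substantive step, and it is where Theorem \ref{ACOPT} is brought to bear. Assuming $\mathcal{S}_{\wAC}\left(\sum x_i\right)=\ell_\infty$, I would feed in, for each $M\subseteq\mathbb{N}$, the bounded multiplier $\left(\chi_M(i)\right)_{i\in\N}\in\ell_\infty$; membership of this sequence in $\mathcal{S}_{\wAC}\left(\sum x_i\right)$ says precisely that $\wACS_{i\in M} x_i$ exists. Since $X$ is complete, the hypotheses of Theorem \ref{ACOPT} are met, and that theorem yields that $\sum x_i$ is $uc$.

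The main (and essentially only) obstacle is packaged inside Theorem \ref{ACOPT} itself; granted that result, the corollary is a routine translation between bounded multiplier sequences and subseries via characteristic functions. The one point requiring a little care is the correct reading of $\wACS_{i\in M}x_i$ as the weak almost sum of the subseries indexed by $M$, i.e. as the almost summability statement for the multiplier $\chi_M$, so that the passage from condition $3$ to the hypothesis of Theorem \ref{ACOPT} is valid.
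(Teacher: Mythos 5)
Your proof is correct and is essentially the paper's own argument: the paper gives no written proof, presenting the corollary as an ``evident consequence'' of Theorem \ref{ACOPT}, and your cycle {\em 1} $\Rightarrow$ {\em 2} $\Rightarrow$ {\em 3} $\Rightarrow$ {\em 1} --- the classical characterization of uc series via $\mathcal{S}\left(\sum x_i\right)=\ell_\infty$ together with ``convergent implies almost convergent'' for the easy implications, and the multipliers $\chi_M$ fed into Theorem \ref{ACOPT} for {\em 3} $\Rightarrow$ {\em 1} --- is exactly the intended deduction. The reading you flag, namely interpreting $\wACS_{i\in M}x_i$ as the weak almost summability statement for the multiplier $\left(\chi_M(i)\right)_{i\in\mathbb{N}}$, agrees with the paper's own usage (see the proof of Theorem \ref{ACOPT} and the preceding theorem on $\mathcal{S}_{\wsAC}\left(\sum x^*_i\right)$), so that step is sound.
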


\section{Multiplier spaces of almost summing sequences}

In this section we will deal with multiplier spaces of almost summing sequences. Throughout the whole of this section $X$ will be a Banach space and $\mathcal{S}$ a closed subspace of $\ell_\infty$ containing $c_0$. The reason for these impositions on $X$ and $\mathcal{S}$ is to able to rely on Theorem \ref{ACSc0}.

\subsection{The spaces $X_{\AC}({\cal S})$ and $X_{\wAC}({\cal S})$}

As we mentioned right above, the following definition finds part of its origins in Theorem \ref{ACSc0}.

\begin{definition}
If $\mathcal{S}$ is a vector subspace of $\ell_\infty$ containing $c_0$ and $X$ is a Banach space, then the following spaces can be defined:
\begin{enumerate}
\item $X_\AC\left(\mathcal{S}\right) = \left\{ \left(x_i\right)_{i\in\mathbb{N}} \in X^{\mathbb{N}} : \ACS_{i=1}^\infty a_i x_i \; \hbox{exists if} \; \left(a_i\right)_{i\in\mathbb{N}} \in \mathcal{S} \right\}.$
\item $X_{\wAC}\left(\mathcal{S}\right) = \left\{ \left(x_i\right)_{i\in\mathbb{N}} \in X^{\mathbb{N}} : \wACS_{i=1}^\infty a_i x_i \; \hbox{exists if} \; \left(a_i\right)_{i\in\mathbb{N}} \in \mathcal{S} \right\}.$
\end{enumerate}
\end{definition}

Notice that Theorem \ref{ACSc0} allows us to deduce that $$X\left(\ell_\infty\right)\subset X_\AC\left(\mathcal{S}\right)\subset X_\wAC \left(\mathcal{S}\right)\subset X\left(c_0\right),$$ and thus on $X_\AC\left(\mathcal{S}\right)$ and $X_\wAC \left(\mathcal{S}\right)$ we can consider the norm \eqref{ecu1}.

\begin{theorem}
The spaces $X_AC\left(\mathcal{S}\right)$ and $X_\wAC \left(\mathcal{S}\right)$ are complete endowed with the norm given in \eqref{ecu1}.
\end{theorem}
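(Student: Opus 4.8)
The plan is to realize $X_{\AC}(\mathcal{S})$ and $X_{\wAC}(\mathcal{S})$ as \emph{closed} subspaces of a space already known to be complete, and then invoke the elementary fact that a closed subspace of a Banach space is itself a Banach space. Concretely, since $c_0$ is a closed subspace of $\ell_\infty$, the Swartz completeness result recalled right after \eqref{ecu1} guarantees that $X(c_0)$ is complete under the norm $\|\cdot\|_m$ of \eqref{ecu1}. Because Theorem \ref{ACSc0} yields the inclusions $X(\ell_\infty)\subset X_{\AC}(\mathcal{S})\subset X_{\wAC}(\mathcal{S})\subset X(c_0)$, all carrying the very same norm \eqref{ecu1}, it suffices to prove that $X_{\AC}(\mathcal{S})$ and $X_{\wAC}(\mathcal{S})$ are closed in $X(c_0)$.

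For the closedness of $X_{\AC}(\mathcal{S})$, I would take a sequence $(x^n)_{n\in\N}\subset X_{\AC}(\mathcal{S})$ converging in $\|\cdot\|_m$ to some $x^0\in X(c_0)$ and show $x^0\in X_{\AC}(\mathcal{S})$. Fix a multiplier $(a_i)_{i\in\N}\in\mathcal{S}$, which after scaling we may assume satisfies $|a_i|\le 1$ for all $i$, and set $y_n:=\ACS_{i=1}^\infty a_i x^n_i$. The whole argument rests on one uniform estimate: for any $j\in\N$, any $i\in\N$, and any $p,q$, the block
$$\sum_{k=1}^j a_k\bigl(x^p_k-x^q_k\bigr)+\frac{1}{i+1}\sum_{k=1}^i (i-k+1)\,a_{j+k}\bigl(x^p_{j+k}-x^q_{j+k}\bigr)$$
is of the form $\sum_k b_k (x^p_k-x^q_k)$ with coefficients satisfying $|b_k|\le 1$ (indeed $b_k=a_k$ for $k\le j$ and $b_{j+k}=\tfrac{i-k+1}{i+1}a_{j+k}$, while $0\le \tfrac{i-k+1}{i+1}\le 1$), so its norm is bounded by $\|x^p-x^q\|_m$. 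This plays exactly the role that the constant $H$ of Theorem \ref{DiestelwuC} played in the proof of Theorem \ref{ACSc0}, with the roles of the multipliers and of the vector sequence now interchanged. A standard $\varepsilon/3$ splitting then shows that $(y_n)_{n\in\N}$ is Cauchy in $X$, hence convergent to some $y_0\in X$ by completeness of $X$.

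To finish I would verify that $\ACS_{i=1}^\infty a_i x^0_i=y_0$. Given $\varepsilon>0$, choose $n$ with $\|x^n-x^0\|_m<\varepsilon/3$ and $\|y_n-y_0\|<\varepsilon/3$; since $x^n\in X_{\AC}(\mathcal{S})$ there is $i_0$ such that for $i\ge i_0$ the weighted average of $(a_k x^n_k)$ approximates $y_n$ within $\varepsilon/3$, uniformly in $j$. Applying the same $|b_k|\le 1$ bound to the difference term $\sum_k b_k (x^n_k-x^0_k)$ gives, for all $i\ge i_0$ and all $j\in\N$, that the weighted average of $(a_k x^0_k)$ lies within $\varepsilon$ of $y_0$; this is precisely the intrinsic characterization of $\ACS_{i=1}^\infty a_i x^0_i=y_0$, whence $x^0\in X_{\AC}(\mathcal{S})$.

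The case of $X_{\wAC}(\mathcal{S})$ runs in complete parallel, replacing norms by the action of a functional $f\in X^*$ and, in the Cauchy step for $(y_n)$, choosing for each $p,q$ a norming functional $f\in\mathsf{S}_{X^*}$ with $\|y_p-y_q\|=|f(y_p)-f(y_q)|$, exactly as in the proof that $wac(X)$ is closed in Theorem \ref{completitud}; the inequality $|f(\sum_k b_k(x^p_k-x^q_k))|\le\|x^p-x^q\|_m$ supplies the same uniform control. I expect the only genuine obstacle to be the notational bookkeeping of the almost-summability averages together with the verification that the reweighted coefficients remain in $[-1,1]$ uniformly in $i,j$; once that single estimate is isolated, both completeness statements follow at once from the closed-subspace principle.
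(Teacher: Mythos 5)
Your proposal is correct and takes essentially the same route as the paper's own proof: both reduce completeness to closedness in $X\left(c_0\right)$ (complete by Swartz's result, with the inclusions from Theorem \ref{ACSc0}), show the sequence of almost sums is Cauchy via the uniform bound of the reweighted blocks by $\left\|x^p-x^q\right\|_m$ (you normalize the multiplier to $|a_i|\leq 1$ where the paper carries the factor $\left\|a\right\|$), and conclude with the same $\varepsilon/3$ splitting against the intrinsic characterization of the almost sum. The only cosmetic difference is that the paper details the weak case (choosing a norming functional, as you also indicate) and declares the strong case similar, whereas you do the reverse.
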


\begin{proof}
We will only prove the completeness of $X_\wAC \left(\mathcal{S}\right)$, for which it will suffice to show that $X_\wAC \left(\mathcal{S},\right)$ is closed in $X\left(c_0\right)$. The completeness of $X_\AC \left(\mathcal{S},\right)$ may be proved in a similar way. Let $\left(x^n\right)_{n\in\mathbb{N}}$ be a sequence in $X_\wAC \left(\mathcal{S}\right)$ for which there exists $x^0 \in X\left(c_0\right)$ such that $\displaystyle{\lim_{n\to \infty} \left\|x^n-x^0\right\|=0}$. Fix $a= \left(a_i\right)_{i\in\mathbb{N}} \in \mathcal{S}\setminus \left\{0 \right \}$ and $n\in \mathbb{N}$. There exists $x_n \in X$ such that for every $f \in X^*$ $$\lim_{i \to \infty} \left( \sum_{k=1}^j a_k f\left(x_k^n\right) + \frac{1}{i+1} \sum_{k=1}^i \left(i-k+1\right) a_{j+k} f\left(x_{j+k}^n\right) \right ) = f\left(x_n\right)$$ uniformly in $j \in \mathbb{N}$. We will show now that $\left(x_n\right)_{n\in\mathbb{N}}$ is a Cauchy sequence. Given $\varepsilon>0$, there must exist $n_0 \in \mathbb{N}$ such that if $p,q \geqslant n_0$, then $\left\|x^p-x^q\right\| \leqslant \frac {\varepsilon}{3 \left\|a\right\|}$. If $p,q \geqslant n_0$ are fixed, then a functional $f \in \mathsf{S}_{X^\ast}$ can be found such that $$\left\|x_p-x_q\right\|= \left|f\left(x_p\right)-f\left(x_q\right)\right|.$$ Besides, there exists $i \in \mathbb{N}$ such that
\begin{equation}
\left| f\left(x_p\right)- \left(\sum_{k=1}^j a_k f\left(x_k^p\right)+ \frac{1}{i+1} \sum_{k=1}^i \left(i-k+1\right) a_{j+k} f\left(x_{j+k}^p\right)\right) \right| < \frac{\varepsilon}{3} \label{ecuacion4}
\end{equation}
and
\begin{equation}
\left| f\left(x_q\right)- \left(\sum_{k=1}^j a_k f\left(x_k^q\right)+ \frac{1}{i+1} \sum_{k=1}^i \left(i-k+1\right) a_{j+k} f\left(x_{j+k}^q\right)\right)\right| < \frac{\varepsilon}{3} \label{ecuacion5}
\end{equation}
are satisfied for every $j \in \mathbb{N}$. Thus
\begin{eqnarray*}
&&\left \|x_p-x_q\right\| \\
&=& \left|f\left(x_p\right)-f\left(x_q\right)\right| \\
&\leqslant& (\ref{ecuacion4}) + (\ref{ecuacion5})\\
& +& \left| \sum_{k=1}^j a_k f\left(x_k^p -x_k^q\right) + \frac{1}{i+1} \sum_{k=1}^i \left(i-k+1\right) a_{j+k}f\left(x_{j+k}^p-x_{j+k}^q\right)\right|  \\
        &\leqslant& \frac{\varepsilon}{3}+ \frac{\varepsilon}{3}+ \left\|x^p-x^q\right\| \|a\| \\
        &\leqslant&        \varepsilon.
\end{eqnarray*}
Since $X$ is complete, there exists $x_0 \in X$ such that $\displaystyle{\lim_{n \to \infty} \left\|x_n-x_0\right\|=0}$. Now, fix $f \in X^* \setminus \left\{0\right\}$ and $\varepsilon>0$. There exists $n \in \mathbb{N}$ such that $$\left\|x^n-x^0\right\| < \frac{\varepsilon}{3 \left\|a\right\|\left\|f\right\|} \text{ and }\left\|x_n-x_0\right\|< \frac {\varepsilon}{3 \left\|f\right\|}.$$ On the other hand, there exists $i_0\in \mathbb{N}$ such that if $i \geqslant i_0$, then $$\left|\left(\sum_{k=1}^j a_k f\left(x_k^n\right)+ \frac{1}{i+1} \sum_{k=1}^i \left(i-k+1\right) a_{j+k} f\left(x_{j+k}^n\right)\right)-f\left(x_n\right) \right| < \frac{\varepsilon}{3}$$ for every $j \in \mathbb{N}$. Finally, we conclude that
    \begin{eqnarray*}
        &&\left|\left(\sum_{k=1}^j a_k f\left(x_k^0\right)+ \frac{1}{i+1} \sum_{k=1}^i \left(i-k+1\right) a_{j+k} f\left(x_{j+k}^0\right)\right)-f\left(x_0\right)\right|  \\
        &\leqslant& \left| \sum_{k=1}^j a_k f\left(x_k^0 -x_k^n\right) + \frac{1}{i+1} \sum_{k=1}^i \left(i-k+1\right) a_{j+k}f\left(x_{j+k}^0-x_{j+k}^n\right)\right| \\
        &+& \left|\left(\sum_{k=1}^j a_k f\left(x_k^n\right)+ \frac{1}{i+1} \sum_{k=1}^i \left(i-k+1\right) a_{j+k}f\left(x_{j+k}^n\right)\right)-f\left(x_n\right)\right| \\
        &+& \left|f\left(x_n\right)-f\left(x_0\right)\right| \\
        &\leqslant& \left\|x^n-x^0\right \| \left\|a\right\| \left\|f\right\|+ \frac{\varepsilon}{3} + \left\|f\right\| \left\|x_n-x_0\right\|\\
        & \leqslant&   \varepsilon
    \end{eqnarray*}
    for each $j \in \mathbb{N}$. In other words, $\wACS_{i=1}^\infty a_i x_i^0=x_0$ and hence $x^0 \in X_\wAC\left(\mathcal{S}\right) $.

\end{proof}

\begin{proposition}\label{cont}
If $\left(x_i\right)_{i\in\mathbb{N}} \in X_\AC\left(\mathcal{S}\right)$ and $\left(y_i\right)_{i\in\mathbb{N}} \in X_\wAC\left(\mathcal{S}\right)$, then the linear maps are $$\begin{array}{rcl}  \mathcal{S}& \to & X\\   \left(a_i\right)_{i\in\mathbb{N}}&\mapsto &\ACS_{i=1}^\infty a_i x_i \end{array}\text{ and } \begin{array}{rcl}  \mathcal{S} & \to & X\\  \left(a_i\right)_{i\in\mathbb{N}}&\mapsto &\wACS_{i=1}^\infty a_i y_i\end{array}$$ are continuous.
\end{proposition}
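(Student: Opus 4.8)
The plan is to prove continuity by bounding each operator directly, which for a linear map between the Banach spaces $\mathcal{S}$ and $X$ is equivalent to continuity. The crucial preliminary observation is that both underlying series must be weakly unconditionally Cauchy. Indeed, since $c_0\subseteq\mathcal{S}$, the hypothesis $(x_i)_{i\in\mathbb{N}}\in X_{\AC}(\mathcal{S})$ forces $c_0\subseteq\mathcal{S}_{\AC}(\sum x_i)$, and because $X$ is complete Theorem \ref{ACSc0} yields that $\sum x_i$ is wuC; in the same way $(y_i)_{i\in\mathbb{N}}\in X_{\wAC}(\mathcal{S})$ gives $c_0\subseteq\mathcal{S}_{\wAC}(\sum y_i)$, whence $\sum y_i$ is wuC by Lemma \ref{resumen}.

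Next I would apply Theorem \ref{DiestelwuC} to obtain a constant $H>0$ with $\sup\left\{\left\|\sum_{i=1}^n b_ix_i\right\|: n\in\mathbb{N},\ |b_i|\leq 1\right\}=H$, and an analogous constant $H'$ attached to $\sum y_i$. Fix $(a_i)_{i\in\mathbb{N}}\in\mathcal{S}$ and write $s_m:=\sum_{i=1}^m a_ix_i$ for its partial sums. Since each $s_m$ is a finite combination of the $x_i$ with coefficients of modulus at most $\|a\|_\infty$, we get $\|s_m\|\leq H\|a\|_\infty$ for every $m$. The value $\ACS_{i=1}^\infty a_ix_i$ is, by definition, the almost limit of $(s_m)_{m\in\mathbb{N}}$, that is, the uniform-in-$n$ limit of the averages $\frac{1}{p+1}\sum_{k=0}^p s_{n+k}$; each such average is a convex combination of vectors of norm at most $H\|a\|_\infty$, so passing to the limit gives $\left\|\ACS_{i=1}^\infty a_ix_i\right\|\leq H\|a\|_\infty$. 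This is exactly the desired boundedness of the first operator.

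For the second map I would dualize. Given $(a_i)_{i\in\mathbb{N}}\in\mathcal{S}$, let $z:=\wACS_{i=1}^\infty a_iy_i$, so that $\ACS_{i=1}^\infty a_if(y_i)=f(z)$ for every $f\in X^*$. For fixed $f\in\B_{X^*}$ the scalar partial sums $\sum_{i=1}^m a_if(y_i)=f\left(\sum_{i=1}^m a_iy_i\right)$ are bounded in modulus by $H'\|a\|_\infty$, and the scalar almost limit of a bounded sequence never exceeds the supremum of the moduli of its terms; hence $|f(z)|=\left|\ACS_{i=1}^\infty a_if(y_i)\right|\leq H'\|a\|_\infty$. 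Taking the supremum over $f\in\B_{X^*}$ yields $\|z\|\leq H'\|a\|_\infty$, proving continuity of the second operator as well.

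The argument is essentially routine once weak unconditional Cauchyness is in hand; the only point requiring care is the inequality ``almost limit $\leq$ supremum of the sequence,'' together with the fact that in the weak case this bound must be produced uniformly over the dual ball $\B_{X^*}$ so that it survives the passage to $\|z\|=\sup_{f\in\B_{X^*}}|f(z)|$. The main conceptual obstacle is therefore not computational but rather recognizing that the completeness of $X$ and the inclusion $c_0\subseteq\mathcal{S}$ are precisely what let Theorem \ref{ACSc0} and Lemma \ref{resumen} convert the multiplier hypotheses into the wuC condition that powers the uniform estimate.
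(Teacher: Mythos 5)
Your proof is correct and takes essentially the same route as the paper's: both arguments pass the uniform bound on weighted partial sums supplied by the wuC property (equivalently, the finiteness of the multiplier norm \eqref{ecu1}, obtained via Theorem \ref{ACSc0}, Lemma \ref{resumen}, and Theorem \ref{DiestelwuC}) through the defining limit of the almost sum, treating the weak case by testing against the dual ball. The only cosmetic differences are that the paper bounds the Ces\`aro-type expressions directly by $\left\|x\right\|_m\left\|a\right\|_\infty$ using a single norming functional, whereas you make the wuC constant $H$ explicit, phrase the averages as convex combinations of partial sums, and take a supremum over $\B_{X^*}$.
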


\begin{proof}
We will only prove the continuity of the second map, since the other one can be proved in a similar way. Suppose that $ \left(a_i\right)_{i\in\mathbb{N}}\in \mathcal{S}$. There exists $f \in \mathsf{S}_{X^\ast}$ such that
\begin{eqnarray*}
\left\| \wACS_{i=1}^\infty a_i y_i \right\| &=& \left|f\left( \wACS_{i=1}^\infty a_i y_i\right)\right|\\
&=&\left|\ACS_{k=1}^\infty a_i f\left(x_i\right)\right|\\
&=&  \lim_{i\to \infty} \left|\sum_{k=1}^j a_k f\left(x_k\right)+ \frac{1}{i+1} \sum_{k=1}^i \left(i-k+1\right) a_{k+j} f\left(x_{k+j}\right)\right|
\end{eqnarray*} uniformly in $j \in \mathbb{N}$. Now, if $i,j \in \mathbb{N}$, then
\begin{eqnarray*}
&&\left|\sum_{k=1}^j a_k f\left(x_k\right)+ \frac{1}{i+1} \sum_{k=1}^i \left(i-k+1\right) a_{k+j}f\left(x_{k+j}\right)\right|\\
&\leqslant& \left\|x\right\|_\infty \left\|a\right\|_\infty \\
&\leqslant& \left\|x\right\|_m \left\|a\right\|_\infty.
\end{eqnarray*}
That is, $\left\| \wACS_{i=1}^\infty a_i y_i \right\| \leqslant \left\|x\right\|_m \left\|a\right\|_\infty$.
\end{proof}

\subsection{Uniform almost summability}

In the following results we deduce the existence of uniform almost convergence from certain situations of point-wise almost convergence. As a consequence, we obtain Hahn-Schur Theorem-like results that generalize other results of uniform convergence of sequences in $X\left(\ell_\infty\right)$ and in $X\left(c_0\right)$ that appear in \cite{AizGPPE, AizGuPer, AizPer, BuWu, Swartz1}. Even more, by making use of any regular matrix summability methods (see \cite[Theorem 3.3 and Theorem 3.5]{AizGPPE}) we show that Hahn-Shur Theorem-like results for sequences in $X\left(c_0\right)$ remain valid for other generalizations of the concept of convergence, for instance, the (Banach-Lorentz) almost convergence, even though this type of convergence is not representable by a matrix summability method.

\begin{theorem}\label{teor}
Consider a Banach space $X$ and a closed vector subspace $\mathcal{S}$ of $\ell_\infty$ containing $c_0$ such that $\mathcal{S}$ is $\ell_\infty$-Grothendieck. If $\left(x^n\right)_{n\in \mathbb{N}}$ is a sequence in $X\left(c_0\right)$ verifying that for all $a=\left(a_i\right)_{i\in \mathbb{N}} \in \mathcal{S}$ we have that $\displaystyle{\lim_{n\to \infty} \wACS_{i=1}^{\infty} a_ix^n_i}$ exists, then there exists $x^0 \in X\left(c_0\right)$ such that $\displaystyle{\lim_{n\to \infty} \left\|x^n-x^0\right\|=0}$ in $X\left(c_0\right)$.
\end{theorem}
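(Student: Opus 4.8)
The plan is to reduce the vector-valued, almost-summable hypothesis to a single scalar statement to which Schur's theorem (Theorem \ref{HS}) applies, using the Grothendieck hypothesis on $\mathcal{S}$ as the bridge, and then to upgrade the resulting functional-by-functional convergence to convergence in the $\left\|\cdot\right\|_m$-norm.

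First I would set up the operators and the candidate limit. The hypothesis presupposes $x^n\in X_{\wAC}(\mathcal{S})$, so for each $n$ I may put $T_n(a):=\wACS_{i=1}^\infty a_i x_i^n$; by Proposition \ref{cont} each $T_n\colon\mathcal{S}\to X$ is continuous, and testing on finitely supported multipliers (which lie in $c_0\subseteq\mathcal{S}$) shows $\|T_n\|=\|x^n\|_m$. Since $T_n(a)$ converges in $X$ for every $a\in\mathcal{S}$ and $\mathcal{S}$ is a Banach space (closed in $\ell_\infty$), the Banach--Steinhaus theorem gives $M:=\sup_n\|x^n\|_m<\infty$, and the pointwise limit $y(a):=\lim_n T_n(a)$ defines a bounded linear map $y\colon\mathcal{S}\to X$. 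Taking $a=e^i$ yields $T_n(e^i)=x_i^n$, so the coordinatewise limits $x_i^0:=\lim_n x_i^n$ exist in $X$; this defines the candidate $x^0:=(x_i^0)_{i\in\mathbb{N}}$.

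Next comes the scalar reduction. Fix $x^*\in\mathsf{B}_{X^*}$ and set $t_{n,i}:=x^*(x_i^n)$. Because $\sum_i x_i^n$ is wuC (this is what $x^n\in X(c_0)$ means), Theorem \ref{DiestelwuC} gives $(t_{n,i})_i\in\ell_1$ with $\sum_i|t_{n,i}|\le M$, and since an almost sum of an absolutely convergent scalar series is its ordinary sum, $\mu_n:=x^*\circ T_n\in\mathcal{S}^*$ acts by $\mu_n(a)=\sum_i a_i t_{n,i}$. The sequence $(\mu_n)$ is $\sigma(\mathcal{S}^*,\mathcal{S})$-convergent, this being precisely $T_n(a)\to y(a)$ followed by $x^*$. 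Here I invoke the hypothesis that $\mathcal{S}$ is $\ell_\infty$-Grothendieck: under the embedding $\ell_\infty\hookrightarrow\mathcal{S}^{**}$, $b\mapsto(g\mapsto\sum_j b_j g(e^j))$, the sequence $(\mu_n)$ is then also $\sigma(\mathcal{S}^*,\ell_\infty)$-convergent, which unwinds to the statement that $\lim_n\sum_j b_j t_{n,j}$ exists for every $b\in\ell_\infty$. This is exactly the hypothesis of Theorem \ref{HS} applied to the double array $(t_{n,j})$. Its conclusions give $\sum_j|x^*(x_j^0)|<\infty$ for every $x^*$, whence $x^0\in X(c_0)$ by Theorem \ref{DiestelwuC}, together with the key per-functional statement $\lim_n\sum_j|x^*(x_j^n)-x^*(x_j^0)|=0$.

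Finally I must globalize in $x^*$. Writing $z^n:=(x_i^n-x_i^0)_i$, Theorem \ref{DiestelwuC} identifies $\|x^n-x^0\|_m=\sup_{x^*\in\mathsf{B}_{X^*}}\sum_i|x^*(z_i^n)|$, so the theorem amounts to promoting the convergence just obtained for each fixed $x^*$ to uniform convergence over $\mathsf{B}_{X^*}$; I expect this to be the main obstacle. The plan is to reduce it to an equi-smallness-of-tails estimate $\lim_{J\to\infty}\sup_n\big\|(z_i^n)_{i>J}\big\|_m=0$: once this holds, splitting $\|z^n\|_m$ into a front part controlled by $\sum_{i\le J}\|z_i^n\|\to 0$ (legitimate because $\sup_{x^*\in\mathsf{B}_{X^*}}|x^*(z_i^n)|=\|z_i^n\|\to 0$ coordinatewise) and a uniformly small tail finishes the proof. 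To establish the equi-tail estimate I would argue by contradiction with a gliding-hump construction: since the coordinatewise norms vanish uniformly in $x^*$, any persistent $\ell_1$-mass must escape to arbitrarily high coordinates, allowing the extraction of almost-disjointly supported humps and a single $\ell_\infty$-multiplier testing against them; feeding this multiplier back through the Grothendieck-upgraded convergence (valid against arbitrary members of $\ell_\infty$, not merely of $\mathcal{S}$) contradicts the boundedness forced by Theorem \ref{HS}. Packaging this hump argument, equivalently invoking an Antosik--Mikusinski matrix lemma, is the technical heart of the proof.
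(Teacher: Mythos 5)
Your setup and scalar reduction are sound and in fact run parallel to the paper's own mechanism: your $T_n$ are exactly the maps of Proposition \ref{cont}, and the upgrade from $\sigma\left(\mathcal{S}^*,\mathcal{S}\right)$-convergence to $\sigma\left(\mathcal{S}^*,\ell_\infty\right)$-convergence via the $\ell_\infty$-Grothendieck hypothesis is precisely the bridge the paper uses. But the proof is not complete: everything after ``Finally I must globalize in $x^*$'' is an acknowledged sketch, and it is exactly the part that carries the theorem. The per-functional conclusion you extract from Theorem \ref{HS}, namely $\lim_{n\to\infty}\sum_j\left|x^*\left(x^n_j-x^0_j\right)\right|=0$ for each fixed $x^*\in\mathsf{B}_{X^*}$, is strictly weaker than $\left\|x^n-x^0\right\|_m\to 0$, and the repair you propose does not go through as described: the Grothendieck-upgraded convergence you want to ``feed the multiplier back through'' was established for the sequence $\mu_n=x^*\circ T_n$ with a single fixed $x^*$, whereas in any gliding-hump attack on $\sup_{x^*\in\mathsf{B}_{X^*}}\sum_i\left|x^*\left(z^n_i\right)\right|$ the near-optimal witness functionals necessarily vary with $n$. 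With varying functionals, the array $\left(x^*_n\left(z^n_j\right)\right)_{n,j}$ is not among the arrays your fixed-$x^*$ Hahn--Schur step controls, so the contradiction you gesture at is simply not available from what you have proved.

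The paper closes this hole with a diagonal trick that renders the hump machinery (and, incidentally, your Banach--Steinhaus step and the full strength of Theorem \ref{HS}) unnecessary. Suppose $\left(x^n\right)_{n\in\mathbb{N}}$ is not $\left\|\cdot\right\|_m$-Cauchy; this suffices, since $X\left(c_0\right)$ is complete by Swartz's result quoted before \eqref{ecu1}. Choose $z^k:=x^{n_k}-x^{n_{k+1}}$ with $\left\|z^k\right\|_m>\varepsilon$ and $f_k\in\mathsf{B}_{X^*}$ with $\sum_{j=1}^\infty\left|f_k\left(z^k_j\right)\right|>\varepsilon$. The key observation is that the \emph{diagonal} sequence $\left(f_k\circ\alpha_k\right)_{k\in\mathbb{N}}\subset\mathcal{S}^*$, where $\alpha_k\left(a\right)=\wACS_{i=1}^\infty a_iz^k_i$, is $w^*$-null --- not because the $f_k$ converge (they need not), but because $\left\|\alpha_k\left(a\right)\right\|\to 0$ for each $a\in\mathcal{S}$ while $\left\|f_k\right\|\leq 1$. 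The Grothendieck hypothesis then makes it $\sigma\left(\mathcal{S}^*,\ell_\infty\right)$-null, that is, $\left(\left(f_k\left(z^k_j\right)\right)_{j\in\mathbb{N}}\right)_{k\in\mathbb{N}}$ is weakly null in $\ell_1$, and the classical Schur theorem upgrades this to norm-null, contradicting the lower bound $\varepsilon$. If you recast your final step this way --- letting the functional move with the index and applying the $w^*$-to-$\sigma\left(\cdot,\ell_\infty\right)$ upgrade to the diagonal sequence rather than to a fixed-functional sequence --- your argument closes; as written, it has a genuine gap at its self-identified ``technical heart''.
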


\begin{proof}
Assume that $\left(x^n\right)_{n\in \mathbb{N}}$ is not a Cauchy sequence in $X\left(c_0\right)$. We will reach a contradiction. Let $\varepsilon>0$ and consider a strictly increasing sequence $\left(n_k\right)_{k\in \mathbb{N}}$ of natural numbers verifying that $\left\|z^k\right\|>\varepsilon$ for all $k\in \mathbb{N}$ where $z^k:=x^{n_k}-x^{n_{k+1}}$. Fix an arbitrary $k\in \mathbb{N}$. Be aware of the existence of $f_k \in \mathsf{B}_{X^\ast}$ such that $\sum_{j=1}^\infty \left | f_k\left(z^k_j\right)\right | > \varepsilon$. On the other hand, for each $k \in \mathbb{N}$ we will consider the continuous linear map from Proposition \ref{cont}(2.) given by 
\begin{equation*}
\begin{array}{rrcl}
\alpha_{z^k}:&\mathcal{S}& \to &X \\
&\left(a_i\right)_{i\in\mathbb{N}}&\mapsto & \alpha_k\left(\left(a_i\right)_{i\in\mathbb{N}}\right)=\wACS_{i=1}^\infty a_i z^k_i.
\end{array}
\end{equation*}
To simplify, we will denote $\alpha_{z^k}$ by $\alpha_k$. The reader may immediately realize that $$\lim_{k \to \infty} \alpha_k\left(\left(a_i\right)_{i\in\mathbb{N}}\right)= \lim_{k \to \infty} \wACS_{i=1}^\infty a_i x^{n_k}_i- \lim_{k \to \infty} \wACS_{i=1}^\infty a_i x^{n_{k+1}}_i=0$$ for all $\left(a_i\right)_{i\in\mathbb{N}}\in \mathcal{S}$. It follows that $\left(f_k \circ \alpha_k\right)_{k\in \mathbb{N}}$ is a sequence in $\mathcal{S}^\ast$ which is $\omega^\ast$ convergent to zero. By hypothesis, it will be $\sigma\left(\mathcal{S}^\ast,\ell_\infty\right)$-convergent to zero. Equivalently $$\lim_{k \to \infty} \left(f_k \circ \alpha_k\right) \left(\left(a_i\right)_{i\in\mathbb{N}}\right)=0$$ for each $\left(a_i\right)_{i\in\mathbb{N}} \in \ell_\infty$. Hence, we have that $$\lim_{k \to \infty} \sum_{j=1}^\infty a_j \left(f_k \circ \alpha_k\right)\left(e^j\right)=\lim_{k \to \infty} \sum_{j=1}^\infty a_jf_k\left(z^k_j\right)=0$$ for all $\left(a_i\right)_{i\in\mathbb{N}} \in \ell_\infty$. Therefore $\left(\left(f_k\left( z^k_j\right)\right)_{j\in\mathbb{N}}\right)_{k\in\mathbb{N}}$ is a sequence in $\ell_1$ which is $w$-convergent to $0$. In accordance with the classical Schur Theorem, we deduce that $\left(\left(f_k\left( z^k_j\right)\right)_{j\in\mathbb{N}}\right)_{k\in\mathbb{N}}$ is convergent to $0$ in the norm of $\ell_1$. However, the latter assertion contradicts the fact that $\sum_{j=1}^\infty \left | f_k\left(z^k_j\right)\right | > \varepsilon$ for every $k\in \mathbb{N}$.
\end{proof}

The previous theorem is still valid if we change almost convergence for weak almost convergence. Also, the previous theorem generalizes Theorem \ref{thSw} and some other results in \cite{AizGPPE, AizGuPer, AizPer,   BuWu} for sequences in $X\left(c_0\right)$ by means of (weak) almost convergence summability methods.

\subsection{Boolean-algebra almost summability}

In the next theorem we obtain a sufficient condition for the convergence of sequences in $X\left(c_0\right)$ by means of the pointwise convergence of almost sums in a natural Boolean algebra.

\begin{theorem}\label{teor2}
Consider a Banach space $X$, $\left(x^n\right)_{n\in \mathbb{N}}\subset X\left(c_0\right)$, and a natural Boolean algebra $\mathcal{F}$ with the Vitali-Hahn-Saks property. If $ \lim_{n\to \infty} \wACS_{i \in B} x^n_i$ exists for each $B \in \mathcal{F}$, then then there is $x^0 \in X\left(c_0\right)$ such that $\lim_{n\to \infty}\left \|x^n-x^0\right\|=0$ in $X\left(c_0\right)$.
\end{theorem}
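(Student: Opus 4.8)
The plan is to identify the natural Boolean algebra $\mathcal{F}$ with its associated closed subspace $\mathcal{S}:=\mathcal{C}(T)\subseteq\ell_\infty$, where $T$ is the Stone space of $\mathcal{F}$; by the discussion in Subsection \ref{vhs} we have $c_0\subseteq\mathcal{S}$, the simple functions $\mathcal{B}_{\mathtt{s}}(\mathcal{F})=\operatorname{span}\{\chi_B:B\in\mathcal{F}\}$ are $\|\cdot\|_\infty$-dense in $\mathcal{S}=\mathcal{B}(\mathcal{F})$, and the Vitali--Hahn--Saks property of $\mathcal{F}$ unpacks as the conjunction of the Nikodym property N and the Grothendieck property G. I would then run the contradiction scheme of Theorem \ref{teor}; the only genuine difference is that the present hypothesis controls just the characteristic functions $\chi_B$ ($B\in\mathcal{F}$) and not every multiplier of $\mathcal{S}$, so I would use N to manufacture a uniform bound and the density of $\mathcal{B}_{\mathtt{s}}(\mathcal{F})$ to recover the $w^*$-nullity that Theorem \ref{teor} obtained for free.

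Concretely, suppose $(x^n)_{n\in\mathbb{N}}$ is not Cauchy in $X(c_0)$. Then I can fix $\varepsilon>0$ and a strictly increasing $(n_k)_{k\in\mathbb{N}}$ with $\|z^k\|_m>\varepsilon$, where $z^k:=x^{n_k}-x^{n_{k+1}}\in X(c_0)$ and $\|\cdot\|_m$ is the norm \eqref{ecu1}. Since $z^k\in X(c_0)$, the series $\sum z^k_i$ is wuC, so by Theorem \ref{DiestelwuC} one has $\|z^k\|_m=\sup_{f\in\mathsf{B}_{X^*}}\sum_j|f(z^k_j)|$, whence there is $f_k\in\mathsf{B}_{X^*}$ with $\sum_j|f_k(z^k_j)|>\varepsilon$. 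Next I would define $\nu_k$ on simple functions by $\nu_k(a):=f_k\!\left(\wACS_{i} a_i z^k_i\right)$; the partial sums $\sum_{i=1}^n a_iz^k_i$ are bounded by $\|z^k\|_m\|a\|_\infty$, so $|\nu_k(a)|\le\|z^k\|_m\|a\|_\infty$ and $\nu_k$ extends to an element of $\mathcal{S}^*=\mathcal{B}(\mathcal{F})^*$, a finitely additive measure with $\nu_k(B)=f_k\!\left(\wACS_{i\in B}z^k_i\right)$ and $\nu_k(\chi_{\{j\}})=f_k(z^k_j)$. Because $\lim_n\wACS_{i\in B}x^n_i$ exists for each $B\in\mathcal{F}$, the telescoped sums give $\wACS_{i\in B}z^k_i\to 0$ as $k\to\infty$, so $\lim_k\nu_k(B)=0$, and in particular $\{\nu_k(B):k\in\mathbb{N}\}$ is bounded for every $B\in\mathcal{F}$.

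Now the three-step engine: by the Nikodym property N the family $\{\nu_k:k\in\mathbb{N}\}$ is uniformly bounded in $\mathcal{S}^*$; combining this bound with $\nu_k(\chi_B)\to0$ and the density of $\mathcal{B}_{\mathtt{s}}(\mathcal{F})$ yields $\nu_k\xrightarrow{w^*}0$ in $\mathcal{S}^*$. By the Grothendieck property G the space $\mathcal{S}=\mathcal{C}(T)$ is Grothendieck, so $(\nu_k)$ is in fact weakly null, hence $\sigma(\mathcal{S}^*,\ell_\infty)$-null for the canonical embedding $\ell_\infty\hookrightarrow\mathcal{S}^{**}$; evaluating this pairing on $a\in\ell_\infty$ gives $\lim_k\sum_j a_j f_k(z^k_j)=0$. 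Thus $\big((f_k(z^k_j))_j\big)_k$ is a sequence in $\ell_1$ that is weakly null, and the classical Schur theorem (Theorem \ref{HS}) forces $\sum_j|f_k(z^k_j)|\to0$, contradicting $\sum_j|f_k(z^k_j)|>\varepsilon$ for all $k$. Hence $(x^n)$ is Cauchy in $X(c_0)$, and since $X$ is complete $X(c_0)$ is complete for $\|\cdot\|_m$, producing the desired $x^0\in X(c_0)$ with $\|x^n-x^0\|\to0$.

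The main obstacle I anticipate is the bookkeeping around the identification chain $\mathcal{B}(\mathcal{F})\cong\mathcal{C}(T)\cong\mathcal{S}\subseteq\ell_\infty$ together with the bidual embedding $\ell_\infty\hookrightarrow\mathcal{S}^{**}$: one must check that the $\sigma(\mathcal{S}^*,\ell_\infty)$-pairing really recovers $\sum_j a_j\nu_k(\chi_{\{j\}})=\sum_j a_jf_k(z^k_j)$, and that N delivers uniform boundedness of $(\nu_k)$ without any a priori bound on $(x^n)$ in $X(c_0)$. Everything else is a faithful transcription of the proof of Theorem \ref{teor}, now driven by the characteristic-function hypothesis via N and G rather than by an $\ell_\infty$-Grothendieck assumption on a general $\mathcal{S}$.
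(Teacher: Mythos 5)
Your proof is correct, but it is organized differently from the paper's. The paper proves Theorem \ref{teor2} as a reduction to Theorem \ref{teor}: after identifying $\mathcal{C}\left(T\right)$ with a closed subspace $\mathcal{S}\subseteq\ell_\infty$ containing $c_0$, it applies property N in its barrelledness form --- $\mathcal{S}_0$ (the finite-valued sequences, corresponding to $\mathcal{C}_0\left(T\right)$) is barrelled, and the multiplier operators $\sigma_n\left(a\right)=\wACS_{i=1}^\infty a_ix^n_i$ are pointwise convergent, hence pointwise bounded, on $\mathcal{S}_0$ --- to obtain a uniform bound $\left\|\sigma_n\right\|=\left\|\sigma_n^0\right\|<H$; the density of $\mathcal{S}_0$ in $\mathcal{S}$ then upgrades the hypothesis to the existence of $\lim_{n\to\infty}\wACS_{i=1}^\infty a_ix^n_i$ for \emph{every} $a\in\mathcal{S}$ (a step that implicitly needs the bound $H$ together with the completeness of $X$), after which Theorem \ref{teor} is quoted as a black box. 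You bypass that extension step entirely and re-run the contradiction engine of Theorem \ref{teor} inline: you attach to the differences $z^k$ the finitely additive measures $\nu_k\in\mathcal{B}\left(\mathcal{F}\right)^*$, use N in its original measure-theoretic form ($\nu_k\left(B\right)\to 0$ for each $B\in\mathcal{F}$ by telescoping, hence pointwise boundedness, hence uniform boundedness), recover $w^*$-nullity of $\left(\nu_k\right)_{k\in\mathbb{N}}$ by density of the simple functions, and finish with G and the Schur theorem exactly as in the proof of Theorem \ref{teor}. The two uses of N are the same principle (the paper itself records that N is equivalent to the barrelledness of $\mathcal{B}_{\mathtt{s}}\left(\mathcal{F}\right)$), but they are applied to different objects: the paper bounds the vector-valued operators $\sigma_n$ uniformly in $n$, while you bound the scalar measures $\nu_k$ only along the difference sequence. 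What the paper's route buys is wholesale reuse of Theorem \ref{teor} plus the stronger intermediate fact that the hypothesis self-improves from characteristic functions to all multipliers in $\mathcal{S}$; what your route buys is economy --- you never need $\wACS_{i=1}^\infty a_ix^n_i$ to exist for general $a\in\mathcal{S}$, and the Cauchy/completeness argument hidden in the paper's density step disappears. Your two anticipated obstacles are indeed harmless: since $\mathcal{F}$ is a natural Boolean algebra it contains $\phi\left(\mathbb{N}\right)$ and in particular every singleton, so $\nu_k\left(e^j\right)=f_k\left(z^k_j\right)$ and the action of $\ell_\infty\hookrightarrow\mathcal{S}^{**}$ on $\mathcal{S}^*$ is precisely $g\mapsto\sum_{j=1}^\infty a_jg\left(e^j\right)$ as set up in the framework; and N as stated applies verbatim to $\left\{\nu_k:k\in\mathbb{N}\right\}$, each $\nu_k$ being individually bounded by $\left\|z^k\right\|_m$ (so it genuinely lies in $\mathcal{B}\left(\mathcal{F}\right)^*$) even though no a priori bound on $\left(x^n\right)_{n\in\mathbb{N}}$ is available.
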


\begin{proof}
 Let $T$ be the Stone space of $\mathcal{F}$. In accordance with Subsection \ref{vhs} we have that $\mathcal{C}\left(T\right)$ is Grothendieck and $\mathcal{C}_0\left(T\right)$ is barreled, so we can identify $\mathcal{C}\left(T\right)$ linearly and isometrically with a closed subspace $\mathcal{S}$ of $\ell_\infty$ containing $c_0$. Obviously, $\mathcal{S}$ will be Grothendieck. For each natural $n$, we define the map
\begin{equation*}
\begin{array}{rrcl}
\sigma_n:& \mathcal{S}&\to& X\\
&a=\left(a_i\right)_{i\in \mathbb{N}}&\mapsto& \sigma_n\left(a\right)=\wACS_{i=1}^{\infty} a_ix_i,
\end{array}
\end{equation*}
and will denote by $\sigma_n^0$ to the corresponding restriction of $\sigma_n$ to $\mathcal{S}_0$, where $\mathcal{S}_0$ stands for the subspace of $\mathcal{S}$ composed of all finite-valued sequences. Now, if $b\in \mathcal{S}_0$, then $\lim_{n\to \infty} \wACS_{i=1}^{\infty} b_ix_i^n$ exists. However, $\mathcal{S}_0$ is barreled (because $\mathcal{S}_0$ corresponds to $\mathcal{C}_0\left(T\right)$ in the previous identification), so there exists $H>0$ such that $\left\|\sigma_n\right\|=\left\|\sigma_n^0\right\|<H$ for all $n \in \mathbb{N}$. Next, due to the density of $\mathcal{S}_0$ in $\mathcal{S}$, we deduce that $\lim_{n\to \infty} \wACS_{i=1}^{\infty} a_i x_i^n$ exists for all $a=\left(a_i\right)_{i\in \mathbb{N}} \in \mathcal{S}$. From the previous theorem, it follows that an $x^0 \in X\left(c_0\right)$ exists satisfying that $\displaystyle{\lim_{n\to \infty}\left\|x^n-x^0\right\|=0}$ in $X\left(c_0\right)$.
\end{proof}

\begin{corollary}\label{cor}
Consider a Banach space $X$ and an $\ell_\infty$-Grothendieck closed vector subspace $\mathcal{S}$ of $\ell_\infty$ containing $c_0$. If $\left(x^n\right)_{n\in \mathbb{N}}$ is a sequence in $X_{\wAC}\left(\mathcal{S}\right)$, then $\left(x^n\right)_{n\in \mathbb{N}}$ is convergent in $X_{\wAC}\left(\mathcal{S}\right)$ if and only if $ \lim_{n\to \infty} \wACS_{i=1}^{\infty} a_ix_i^n$ exists for each $\left(a_i\right)_{i\in \mathbb{N}} \in \mathcal{S}$.
\end{corollary}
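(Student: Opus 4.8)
The plan is to split the equivalence into its two implications, dispatching the forward one by hand through the continuity estimate of Proposition \ref{cont} and deferring the substantial content of the reverse one entirely to Theorem \ref{teor}. The only genuinely new observation needed beyond these two results is that norm convergence inside $X_{\wAC}\left(\mathcal{S}\right)$ coincides with norm convergence in $X\left(c_0\right)$ having a limit that lies in $X_{\wAC}\left(\mathcal{S}\right)$; this is guaranteed because $X_{\wAC}\left(\mathcal{S}\right)$ is closed in $X\left(c_0\right)$, which is precisely the fact established when proving that $X_{\wAC}\left(\mathcal{S}\right)$ is complete under the norm \eqref{ecu1}.

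For the implication ``$\Rightarrow$'' I would assume $\left\|x^n-x^0\right\|_m\to 0$ for some $x^0\in X_{\wAC}\left(\mathcal{S}\right)$ and fix an arbitrary $a=\left(a_i\right)_{i\in\mathbb{N}}\in\mathcal{S}$. Since the weak almost sum is linear in the vector argument, $\wACS_{i=1}^\infty a_ix^n_i-\wACS_{i=1}^\infty a_ix^0_i=\wACS_{i=1}^\infty a_i\left(x^n_i-x^0_i\right)$, and because $x^n-x^0\in X_{\wAC}\left(\mathcal{S}\right)$, Proposition \ref{cont} yields the estimate $\left\|\wACS_{i=1}^\infty a_i\left(x^n_i-x^0_i\right)\right\|\leq\left\|x^n-x^0\right\|_m\left\|a\right\|_\infty$. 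Letting $n\to\infty$ shows that $\lim_{n\to\infty}\wACS_{i=1}^\infty a_ix^n_i=\wACS_{i=1}^\infty a_ix^0_i$ exists; in particular the required limit exists for every $a\in\mathcal{S}$.

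For the implication ``$\Leftarrow$'' I would simply invoke Theorem \ref{teor}: the hypotheses match verbatim, namely $X$ is a Banach space, $\mathcal{S}$ is a closed $\ell_\infty$-Grothendieck subspace of $\ell_\infty$ containing $c_0$, and $\left(x^n\right)_{n\in\mathbb{N}}\subset X_{\wAC}\left(\mathcal{S}\right)\subset X\left(c_0\right)$ satisfies that $\lim_{n\to\infty}\wACS_{i=1}^\infty a_ix^n_i$ exists for all $a\in\mathcal{S}$. The theorem then produces $x^0\in X\left(c_0\right)$ with $\left\|x^n-x^0\right\|_m\to 0$. It remains only to upgrade this to convergence in $X_{\wAC}\left(\mathcal{S}\right)$, i.e. to check that $x^0\in X_{\wAC}\left(\mathcal{S}\right)$; this holds because $X_{\wAC}\left(\mathcal{S}\right)$ is a closed subspace of $X\left(c_0\right)$ and $\left(x^n\right)_{n\in\mathbb{N}}$, being contained in $X_{\wAC}\left(\mathcal{S}\right)$, has its $X\left(c_0\right)$-limit $x^0$ inside that closed subspace.

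The main obstacle is essentially absent at this level, since all the analytic difficulty, the use of the Grothendieck property to pass from $w^*$-convergence to weak convergence in $\mathcal{S}^*$ together with the appeal to the classical Schur Theorem, is absorbed into Theorem \ref{teor}. The only points demanding care are bookkeeping ones: ensuring that the norm under consideration is consistently the one of \eqref{ecu1} across $X\left(c_0\right)$, $X_{\wAC}\left(\mathcal{S}\right)$, and the statement of Theorem \ref{teor}, and recalling that the closedness of $X_{\wAC}\left(\mathcal{S}\right)$ in $X\left(c_0\right)$, already proved en route to its completeness, is exactly what makes the two notions of convergence coincide.
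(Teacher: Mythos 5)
Your argument is correct and coincides with the route the paper intends: the corollary is stated without proof precisely because it is the direct combination of Theorem \ref{teor} (for the ``if'' direction, upgraded to convergence in $X_{\wAC}\left(\mathcal{S}\right)$ via the closedness of $X_{\wAC}\left(\mathcal{S}\right)$ in $X\left(c_0\right)$ established in the completeness theorem) with the continuity bound $\left\|\wACS_{i=1}^\infty a_i y_i\right\|\leq\left\|y\right\|_m\left\|a\right\|_\infty$ from Proposition \ref{cont} (for the ``only if'' direction). Your bookkeeping on the norm \eqref{ecu1} and on the inclusion $X_{\wAC}\left(\mathcal{S}\right)\subset X\left(c_0\right)$ is exactly what is needed, so nothing is missing.
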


\subsection{The space $X^*_{\wsAC}({\cal S})$}

In a similar way, multiplier spaces of $w^*$-almost summable sequences can be defined.

\begin{definition}
If $X$ is a Banach space and $\mathcal{S}$ is a closed subspace of $\ell_\infty$, then the ${\cal S}$-multiplier space of $w^*$-almost summable sequences of $X^*$ is defined as: $$X^*_{\wsAC}\left(\mathcal{S}\right):=\left\{(x^*_i)_{i\in\mathbb{N}} \in \left(X^*\right)^\N:\wsACS a_i x^*_i\text{ exists}\right\}.$$
\end{definition}

\backmatter
\appendix
\chapter{$G$-spaces}\label{g}

Recall that a monoid is a set endowed with a binary operation which is associative and has an identity element. When a monoid is additive we usually call the identity element the neutral element, and when it is multiplicative the identity element is usually called the unity element.

Given a monoid $G$ and a non-empty set $M$, the expression that $M$ be a $G$-set or a $G$-space refers to a (left) action $G\curvearrowright M$, that is, a map $G\times M\to M$ which is:
\begin{itemize}
\item associative ($g\left(hm\right)=\left(gh\right)m$ for all $g,h\in H$ and all $m\in M$) and
\item verifies the identity condition ($em=m$ for all $m\in M$ where $e$ is the indentity of $G$).
\end{itemize}

A morphism between two $G$-sets, $_GM$ and $_GN$, is simply a $G$-homogeneous map $f:M\to N$, that is, $f\left(gm\right)=gf\left(n\right)$ for all $g\in G$ and all $m\in M$. A morphism of $G$-sets which is bijective verifies that its inverse is also a morphism of $G$-sets. A $G$-subset of a $G$-set is simply a subset which is closed under the action. Morphisms of $G$-sets preserve both images and pre-images of $G$-subsets. We refer the reader to \cite{ZIM} for wider perspective on the category of $G$-sets.

Given a $G$-set, the non-empty intersection of a family of $G$-subsets is another $G$-subset, and thus the generated $G$-subset by a non-empty subset can be defined as the intersection of all those $G$-subsets containing that given subset.

Notable subsets of a $G$-set $M$ are the orbits $Gm$ of each element $m\in M$. Notable submonoids of $G$ are the stabilizers $G_m:=\{g\in G:gm=m\}$ of each element $m\in M$.

Recall that a left action $G\curvearrowright M$ is said to be:
\begin{itemize}
\item {\em trivial} if each stabilizer is the whole monoid $G$ or equivalently all the orbits are trivial, that is, $Gm=\{m\}$.
\item {\em transitive} if the orbit of each element is the given set $M$;
\item {\em faithful} if the following morphism of groups is a monomorphism (that is, injective):
\begin{equation*}
\begin{array}{rcl}
G & \to & M!\\
g &\mapsto & \begin{array}{rcl}
M & \to & M\\
m & \mapsto & gm,
\end{array}
\end{array}
\end{equation*}
where $M!$ denotes the symmetric group of $M$, that is, the group of permutations of $M$;
\item {\em free} if the map
\begin{equation*}
\begin{array}{rcl}
G & \to & M\\
g &\mapsto & gm
\end{array}
\end{equation*}
is injective for every $m\in M$.
\end{itemize}

The reader may notice that when $G$ is a group, then an action is free if and only if each stabilizer if the trivial group, that is, $G_m=\{e\}$.

Every $G$-space can be endowed with a pre-order as follows: $m\leq n$ if and only if $m\in Gn$. If $G$ is a group, this pre-order is also an equivalence relation whose quotient set is the partition of $M$ into the orbits.

Throughout the whole of this appendix, $G$ stands for a monoid acting from the left on a non-empty set $M$. Sometimes $G$ will be a group, situation that will be explicitly stated.

\section{$G$-density}

The $G$-density basically consists of finding, given a (left) action, the smallest subsets which generate the whole set by orbiting through the monoid.

\subsection{$G$-density and $G$-generator sets}

We will immediately proceed to define and characterize the $G$-density. This definition is purely algebraical, however it somehow dyes as a topological definition.

\begin{definition}
A subset $N$ of a $G$-set $M$ is $G$-dense in $M$ provided that $N\cap Gm \neq \varnothing$ for all $m\in M$.
\end{definition}

The first fact we would like to point out about the $G$-density is that in order to check that a certain non-empty subset is $G$-dense it is not necessary to go through all the orbits.

\begin{lemma}\label{esoeh}
A subset $N$ of $M$ is $G$-dense in $M$ if and only if $N\cap Gm \neq \varnothing$ for all $m\in M\setminus N$.
\end{lemma}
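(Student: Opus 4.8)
The statement to prove (Lemma \ref{esoeh}) asserts that a subset $N$ of a $G$-set $M$ is $G$-dense in $M$ if and only if $N\cap Gm\neq\varnothing$ for all $m\in M\setminus N$. This is an elementary observation about orbits, and the entire content lies in handling the trivial case $m\in N$.

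The plan is to argue by showing the two implications. The forward direction is immediate: if $N$ is $G$-dense, then by definition $N\cap Gm\neq\varnothing$ for all $m\in M$, and in particular for all $m\in M\setminus N$. For the converse, suppose $N\cap Gm\neq\varnothing$ for every $m\in M\setminus N$; I must verify that $N\cap Gm\neq\varnothing$ holds for every $m\in M$, so the only thing left is to treat those $m$ that already lie in $N$. The key fact here is that the identity condition of the action guarantees $m=em\in Gm$, so whenever $m\in N$ we automatically have $m\in N\cap Gm$, whence $N\cap Gm\neq\varnothing$. Combining the two cases $m\in N$ and $m\in M\setminus N$ yields $N\cap Gm\neq\varnothing$ for all $m\in M$, which is precisely $G$-density.

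There is essentially no obstacle: the single point requiring care is the appeal to the identity axiom $em=m$ to place each element of $N$ inside its own orbit. I would write this out as follows.

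\begin{proof}
Necessity is trivial, since if $N$ is $G$-dense in $M$, then $N\cap Gm\neq\varnothing$ for all $m\in M$ and, in particular, for all $m\in M\setminus N$. For sufficiency, assume that $N\cap Gm\neq\varnothing$ for all $m\in M\setminus N$. Fix an arbitrary $m\in M$. If $m\in M\setminus N$, then $N\cap Gm\neq\varnothing$ by hypothesis. If $m\in N$, then the identity condition of the action assures that $m=em\in Gm$, so $m\in N\cap Gm$ and hence $N\cap Gm\neq\varnothing$. In either case $N\cap Gm\neq\varnothing$, which proves that $N$ is $G$-dense in $M$.
\end{proof}
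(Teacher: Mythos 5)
Your proof is correct and follows essentially the same route as the paper: the forward implication is trivial, and the converse rests on the observation that the identity condition $em=m$ places each $n\in N$ inside its own orbit, so $\{n\}\subseteq N\cap Gn$. The paper states this in one line; you have merely written out the case split more explicitly.
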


\begin{proof}
Indeed, $\left\{n\right\}\subseteq N\cap Gn$ for all $n\in N$ so if $N\cap Gm \neq \varnothing$ for all $m\in M\setminus N$, then we trivially deduce that $N$ is $G$-dense in $M$.
\end{proof}

Next, we will characterize the $G$-density when $G$ is a group. However, we would first like to state and prove the following proposition which in some sense settles what type of characterization we will be making use of.

\begin{proposition}
If $N$ is a non-empty subset of $M$, then:
\begin{enumerate}
\item The $G$-subset generated by $N$ is $GN$.
\item $N$ is a $G$-generator of $M$ (that is, $GN=M$) if and only if $M\setminus N\subseteq GN$.
\end{enumerate}
\end{proposition}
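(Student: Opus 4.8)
The plan is to prove both parts directly from the definitions of the $G$-subset generated by a set and of a $G$-generator, using the fact that $G$ is a monoid with identity $e$.

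For part 1, I would first verify that $GN := \{gn : g \in G,\ n \in N\}$ is itself a $G$-subset of $M$, i.e.\ that it is closed under the action. Indeed, if $x \in GN$, say $x = gn$ with $g \in G$ and $n \in N$, then for any $h \in G$ we have $hx = h(gn) = (hg)n$ by associativity of the action, and $hg \in G$ since $G$ is a monoid; hence $hx \in GN$. Next I would check that $GN$ contains $N$: for each $n \in N$, the identity condition gives $n = en \in GN$ since $e \in G$. So $GN$ is a $G$-subset containing $N$. Finally, since the generated $G$-subset is by definition the intersection of all $G$-subsets containing $N$, it is contained in $GN$; conversely, any $G$-subset $S$ containing $N$ must contain $gn$ for every $g \in G$ and $n \in N$ (by closure under the action), so $GN \subseteq S$, and therefore $GN$ is contained in the intersection. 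The two inclusions give that the generated $G$-subset equals $GN$.

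For part 2, the implication from $GN = M$ to $M \setminus N \subseteq GN$ is immediate, since $M \setminus N \subseteq M = GN$. For the converse, suppose $M \setminus N \subseteq GN$. I always have $GN \subseteq M$, so it remains to show $M \subseteq GN$. Take any $m \in M$: either $m \in N$, in which case $m = em \in GN$ by the identity condition, or $m \in M \setminus N$, in which case $m \in GN$ by hypothesis. In both cases $m \in GN$, so $M \subseteq GN$ and hence $GN = M$, i.e.\ $N$ is a $G$-generator of $M$.

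I do not expect any serious obstacle here; the only point requiring a little care is the routine but essential use of both monoid axioms — associativity of the action to close $GN$ under the action in part 1, and the existence of the identity $e$ to secure $N \subseteq GN$ in both parts. This mirrors exactly the structure of Lemma \ref{esoeh}, where membership of each $n \in N$ in its own orbit (again via $e$) is what lets one restrict attention to $M \setminus N$.
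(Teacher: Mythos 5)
Your proof is correct and follows essentially the same route as the paper's: both parts rest on the observation that $GN$ is a $G$-subset of $M$ containing $N$ (which pins it as the generated $G$-subset via the two inclusions) and, for part 2, on writing $M=N\cup\left(M\setminus N\right)\subseteq GN\subseteq M$. The only difference is that you spell out the routine verifications — associativity for closure of $GN$ under the action and the identity condition for $N\subseteq GN$ — which the paper leaves implicit; this is harmless and arguably an improvement in rigor.
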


\begin{proof}
\mbox{}
\begin{enumerate}
\item In the first place notice that $GN$ is a $G$-subset of $M$ which contains $N$. Therefore by definition we have that $GN$ contains the $G$-subset of $M$ generated by $N$. Now let any $gn\in GN$ with $g\in G$ and $n\in N$. Consider any $G$-subset $P$ of $M$ containing $N$. Since $n\in P$ we obtain that $gn\in P$ and so $GN\subseteq P$. This shows that $GN$ is contained in the $G$-subset generated by $N$.
\item Simply observe that if $M\setminus N\subseteq GN$, then $M=N\cup M\setminus N \subseteq GN\subseteq M$.
\end{enumerate}
\end{proof}

The $G$-density can be characterized in terms of generated $G$-subsets provided that $G$ is a group.

\begin{theorem}\label{dense}
If $G$ is a group, then the following conditions are equivalent:
\begin{enumerate}
\item $N$ is $G$-dense in $M$.
\item $N$ is a $G$-generator of $M$, that is, $GN=M$.
\end{enumerate}
\end{theorem}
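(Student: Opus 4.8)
The plan is to prove the equivalence of $G$-density and being a $G$-generator, under the hypothesis that $G$ is a group, by establishing each implication separately. The statement to prove is Theorem \ref{dense}: for a group $G$ acting on $M$ and a subset $N$ of $M$, $N$ is $G$-dense in $M$ if and only if $GN=M$.

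First I would prove that if $N$ is $G$-dense, then $GN=M$. Fix an arbitrary $m\in M$. By $G$-density there exists $n\in N$ with $n\in N\cap Gm$, so $n\in Gm$, meaning $n=gm$ for some $g\in G$. Here is where the group hypothesis enters: since $G$ is a group, $g$ has an inverse $g^{-1}\in G$, and applying it gives $m=g^{-1}n\in GN$. As $m$ was arbitrary, $M\subseteq GN$, and the reverse inclusion $GN\subseteq M$ is automatic because the action maps into $M$. Hence $GN=M$, so $N$ is a $G$-generator.

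Next I would prove the converse: if $GN=M$, then $N$ is $G$-dense. Again fix $m\in M$. Since $GN=M$, we can write $m=gn$ for some $g\in G$ and $n\in N$. Using the group inverse once more, $n=g^{-1}m\in Gm$, and of course $n\in N$, so $n\in N\cap Gm$, which shows $N\cap Gm\neq\varnothing$. Since $m$ was arbitrary, $N$ is $G$-dense in $M$.

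The argument is short and the only genuinely essential ingredient is the invertibility of group elements, which is exactly what lets an orbit membership $n\in Gm$ be reversed into $m\in Gn$; this symmetry is what makes $G$-density and $G$-generation coincide for groups (and is precisely what can fail for a mere monoid). So the main thing to be careful about is not any computational obstacle but rather to invoke the group structure explicitly at the one step where $g^{-1}$ is used, and to note that both implications are really the same reversal applied in opposite directions.
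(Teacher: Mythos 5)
Your proof is correct and follows essentially the same route as the paper's: both implications hinge on reversing an orbit membership $n=gm$ into $m=g^{-1}n$ (and conversely) via the group inverse. The only cosmetic difference is that in the converse direction you verify $N\cap Gm\neq\varnothing$ directly for every $m\in M$, whereas the paper checks only $m\in M\setminus N$ and then invokes Lemma \ref{esoeh}; your version is marginally more self-contained but mathematically identical.
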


\begin{proof}
\mbox{}
\begin{enumerate}
\item[{\em 1} $\Rightarrow$ {\em 2}] Let $m\in M$. By hypothesis we can find $n\in N\cap Gm$, so let $g\in G$ such that $n=gm$. Finally, $m= g^{-1}n\in GN$.
\item[{\em 2} $\Rightarrow$ {\em 1}] Let $m\in M\setminus N$. By hypothesis we can find $n\in N$ and $h\in G$ such that $hn=m$. Finally, $h^{-1}m=n\in N$. Now it only suffices to apply Lemma \ref{esoeh}.
\end{enumerate}
\end{proof}

The next example shows that Theorem \ref{dense} does not remain true for monoids.

\begin{example}\label{con}
Consider the additive monoid $G:=\left[0,+\infty\right)$ acting from the left on itself, $M:=\left[0,+\infty\right)$, by right translation:
\begin{equation*}
\begin{array}{rcl}
\left[0,+\infty\right)\times \left[0,+\infty\right)&\to& \left[0,+\infty\right)\\
\left(t,x\right)&\mapsto&t+x
\end{array}
\end{equation*}
We have the following:
\begin{itemize}
\item The orbit of any element $x\in\left[0,+\infty\right)$ is $Gx=\left[x,\infty\right)$.
\item A non-empty subset $A$ of $\left[0,+\infty\right)$ is $G$-dense if and only if $\sup\left(A\right)=+\infty$.
\item As a consequence, there is no minimal $G$-dense subset.
\item If $N$ is a non-empty subset of $\left[0,+\infty\right)$, then $$GN=\bigcup_{n\in N}Gn=\bigcup_{n\in N}\left[n,+\infty\right)=\left\{
\begin{array}{lll}
\left(\inf\left(N\right),+\infty\right)&\text{ if }&\inf\left(N\right)\notin N\\
\left[\inf\left(N\right),+\infty\right)&\text{ if }&\inf\left(N\right)\in N
\end{array}
\right.
$$
\item As a consequence, a non-empty subset $N$ of $\left[0,+\infty\right)$ is a $G$-generator if and only if $0\in N$.
\end{itemize}
\end{example}

The last result in this subsection shows that $G$-dense subsets are preserved under morphisms of $G$-sets.

\begin{proposition}
Assume that $G$ is a group and $f: \mbox{}_GM\to \mbox{}_GN$ is a morphism of $G$-sets.
\begin{enumerate}
\item If $P$ is a $G$-dense subset of $N$, then $f^{-1}\left(P\right)$ is a $G$-dense subset of $M$.
\item If $f$ is surjective and $P$ is a $G$-dense subset of $M$, then $f\left(P\right)$ is a $G$-dense subset of $N$.
\end{enumerate}
\end{proposition}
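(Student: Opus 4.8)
The plan is to argue directly from the definition of $G$-density together with the homogeneity identity $f(gm)=gf(m)$ defining a morphism of $G$-sets, rather than routing through the generator characterization of Theorem \ref{dense}. Both items amount to a short orbit chase, and the only thing to keep track of is in which orbit the produced element lives.

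For item \emph{1}, I would fix an arbitrary $m\in M$ and test the $G$-density of $P$ at the point $f(m)\in N$: there exists $p\in P\cap Gf(m)$, say $p=gf(m)$ with $g\in G$. By homogeneity $p=gf(m)=f(gm)$, so $gm\in f^{-1}(P)$; since also $gm\in Gm$, the element $gm$ witnesses $f^{-1}(P)\cap Gm\neq\varnothing$. As $m$ was arbitrary, $f^{-1}(P)$ is $G$-dense in $M$.

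For item \emph{2}, I would fix $n\in N$ and use surjectivity to pick $m\in M$ with $f(m)=n$. Applying the $G$-density of $P$ at $m$ gives $p\in P\cap Gm$, say $p=gm$ with $g\in G$. Then $f(p)=f(gm)=gf(m)=gn\in Gn$, and clearly $f(p)\in f(P)$, so $f(p)\in f(P)\cap Gn$. Hence $f(P)$ is $G$-dense in $N$.

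There is no genuine obstacle here; the statement is the $G$-theoretic analogue of the classical facts that preimages of dense sets under equivariant maps, and images of dense sets under equivariant surjections, remain dense. I would only note in passing that the argument uses solely the homogeneity relation and never inverts a group element, so the hypothesis that $G$ be a group is not actually needed for this particular proposition; it suffices that $G$ be a monoid. One could alternatively deduce item \emph{1} from Theorem \ref{dense} by verifying $Gf^{-1}(P)=M$, but this detour is longer than the direct orbit chase above.
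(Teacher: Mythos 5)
Your proof is correct, and it follows a genuinely different route from the paper's. The paper argues through the generator formulation: in item \emph{1} it picks $g\in G$ and $p\in P$ with $gp=f\left(m\right)$ and then inverts, writing $p=g^{-1}f\left(m\right)=f\left(g^{-1}m\right)$ to conclude $g^{-1}m\in f^{-1}\left(P\right)$ and $m=g\left(g^{-1}m\right)\in Gf^{-1}\left(P\right)$; in item \emph{2} it writes $m=gp$ and concludes $n=f\left(gp\right)=gf\left(p\right)\in Gf\left(P\right)$. In both items the paper thus establishes the generator property $Gf^{-1}\left(P\right)=M$, respectively $Gf\left(P\right)=N$, and tacitly passes back to $G$-density via Theorem \ref{dense} --- which is precisely where the group hypothesis enters, alongside the explicit use of $g^{-1}$ in item \emph{1}. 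You instead chase orbits directly against the definition of $G$-density, producing the witness $gm\in f^{-1}\left(P\right)\cap Gm$ in item \emph{1} and $f\left(p\right)=gn\in f\left(P\right)\cap Gn$ in item \emph{2}, never inverting an element; this is exactly the ``alternative via Theorem \ref{dense}'' that you correctly flag as the longer detour. Your closing observation is the real payoff: since your argument uses only the homogeneity identity and the identity element, the proposition holds verbatim for an arbitrary monoid $G$, whereas the paper's route is unavailable there --- Example \ref{con} shows that for monoids $G$-density and $G$-generation genuinely diverge, so Theorem \ref{dense} cannot be invoked. What the paper's approach buys in exchange is uniformity with the surrounding machinery, keeping all density arguments phrased through the generator characterization; your approach buys elementarity and strictly greater generality.
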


\begin{proof}
\mbox{}
\begin{enumerate}
\item Let $m\in M$. By hypothesis there are $g\in G$ and $p\in P$ such that $gp=f\left(m\right)$. Notice that $p=g^{-1}f\left(m\right)=f\left(g^{-1}m\right)$ and hence $g^{-1}m\in f^{-1}\left(P\right)$. Finally, $m=g\left(g^{-1}m\right)\in Gf^{-1}\left(P\right)$.
\item Let $n\in N$. By hypothesis there exists $m\in M$ such that $f\left(m\right)=n$. We can also find $p\in P$ and $g\in G$ such that $gp=m$ and hence $n=f\left(m\right)=f\left(gp\right)=gf\left(p\right)\in Gf\left(P\right)$.
\end{enumerate}
\end{proof}

To finish this subsection we would like to single out that any subset containing a $G$-dense subset or a $G$-generator is trivially $G$-dense or a $G$-generator, respectively (for this to hold it is not necessary that $G$ be a group).

\subsection{$G$-free sets}

This subsection is devoted to define and study the $G$-free sets. Later on we will relate this type of sets with the so called ``$G$-fundamental domains''.

\begin{definition}
A non-empty subset $N$ of $M$ is said to be $G$-free provided that $Gn\cap N=\left\{n\right\}$ for all $n\in N$.
\end{definition}

At this stage we consider crucial to single out the following two basic facts on $G$-free sets.

\begin{itemize}
\item If $G$ is a group, then $N$ is $G$-free if and only if for all $g_1,g_2\in G$ and all $n_1,n_2\in N$, the condition $g_1n_1=g_2n_2$ implies that $n_1=n_2$. Notice that the fact that $g_1=g_2$ is not necessarily implied, contrary to what happens with linearly independent sets in vector spaces, unless the action is free of course. These issues will be treated and compared against in Subsection \ref{mod} and Subsection \ref{vsp}.
\item If $N$ is $G$-free, then all of its subsets are also $G$-free (for this to hold it is not necessary that $G$ be a group).
\end{itemize}

\begin{proposition}
Let $f: \mbox{}_GM\to \mbox{}_GN$ be a monomorphism of $G$-sets.
\begin{enumerate}
\item If $P$ is a $G$-free subset of $N$, then $f^{-1}\left(P\right)$ is a $G$-free subset of $M$.
\item If $P$ is a $G$-free subset of $M$, then $f\left(P\right)$ is a $G$-free subset of $N$.
\end{enumerate}
\end{proposition}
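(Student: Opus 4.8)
The plan is to prove both statements directly from the definitions of monomorphism of $G$-sets (i.e. an injective $G$-homogeneous map) and of $G$-free set, proceeding in close analogy with the preceding proposition on $G$-dense sets. The key facts I will exploit are that $f$ satisfies $f(gm)=gf(m)$ for all $g\in G$, $m\in M$, and that $f$ is injective. Note that, unlike the $G$-density proposition, here $G$ need not be assumed to be a group, so I must avoid invoking inverses; I expect the main subtlety to lie precisely in handling the monoid case without group-inverses, relying instead on injectivity of $f$ to transport and reflect the free condition.

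For item (1), I would first dispose of the trivial case where $f^{-1}(P)=\varnothing$ (in which there is nothing to check, though one may also note $G$-freeness is only defined for non-empty sets). Assuming $f^{-1}(P)\neq\varnothing$, fix $m\in f^{-1}(P)$, so $f(m)\in P$. I must show $Gm\cap f^{-1}(P)=\{m\}$. Take any $gm\in Gm\cap f^{-1}(P)$ with $g\in G$; then $f(gm)=gf(m)\in P$, and since $f(m)\in P$ with $P$ being $G$-free, the defining condition $Gf(m)\cap P=\{f(m)\}$ forces $gf(m)=f(m)$, that is $f(gm)=f(m)$. Now injectivity of $f$ yields $gm=m$. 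Hence $Gm\cap f^{-1}(P)\subseteq\{m\}$, and the reverse inclusion is immediate since $m\in Gm$ (using $em=m$) and $m\in f^{-1}(P)$.

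For item (2), fix $p\in P$, so $f(p)\in f(P)$, and I must show $Gf(p)\cap f(P)=\{f(p)\}$. Take any element of this intersection; it has the form $gf(p)=f(p')$ for some $g\in G$ and $p'\in P$. Using homogeneity, $gf(p)=f(gp)$, so $f(gp)=f(p')$, and injectivity gives $gp=p'$. Thus $gp=p'\in P$, so $gp\in Gp\cap P=\{p\}$ by the $G$-freeness of $P$, whence $gp=p$ and therefore $gf(p)=f(gp)=f(p)$. This shows $Gf(p)\cap f(P)\subseteq\{f(p)\}$, and the reverse inclusion follows since $f(p)=ef(p)\in Gf(p)$ and $f(p)\in f(P)$.

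The anticipated obstacle is simply the temptation to mimic the group-based $G$-dense argument by cancelling $g$; the correct move is to cancel $f$ (via injectivity) rather than $g$, since $G$ is only a monoid here. Once that pivot is identified, both parts are short symmetric verifications requiring no nontrivial calculation, and the proof can be written compactly with the two items handled in an \texttt{enumerate} matching the statement.
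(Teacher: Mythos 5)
Your proof is correct and takes essentially the same route as the paper's: in both items you transport the relation through $f$ by $G$-homogeneity, apply the $G$-freeness of $P$ (respectively of $P\subseteq M$), and cancel via the injectivity of $f$ rather than via group inverses, exactly as the paper does. Your side remark disposing of the case $f^{-1}\left(P\right)=\varnothing$ is a minor refinement the paper silently omits, but it does not alter the argument.
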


\begin{proof}
\mbox{}
\begin{enumerate}
\item Let $m_1,m_2\in f^{-1}\left(P\right)$ and $g\in G$ such that $gm_1=m_2$. Then $gf\left(m_1\right)=f\left(gm_1\right)=f\left(m_2\right)$. By hypothesis $f\left(m_2\right)=f\left(m_1\right)$ and hence $m_1=m_2$.
\item Let $p,q\in P$ and $g\in G$ such that $gf\left(p\right)=f\left(q\right)$. Then $f\left(gp\right)=f\left(q\right)$ therefore $gp=q$ by the injectivity of $f$. Finally, $q\in Gp\cap P=\left\{p\right\}$ by hypothesis and thus $f\left(q\right)=f\left(p\right)$.
\end{enumerate}
\end{proof}

Finally, $G$-free sets are characterized by the following extension property when $G$ is a group and the action is free.

\begin{proposition}\label{extensionp}
If $G$ is a group acting freely from the left on $M$ and $F$ is a non-empty subset of $M$, then the following conditions are equivalent:
\begin{enumerate}
\item $F$ is $G$-free.
\item If $_GN$ is any $G$-set and $\alpha :F\to N$ is any map, then there exists a map $\beta :GF\to N$ which is unique verifying that $\beta|_F=\alpha$ and $\beta$ is a morphism of $G$-sets.
\item If $_GN$ is any $G$-set and $\alpha :F\to N$ is any map, then there exists a map $\beta :GF\to N$ verifying that $\beta|_F=\alpha$ and $\beta$ is a morphism of $G$-sets.
\end{enumerate}
\end{proposition}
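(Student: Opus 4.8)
The plan is to prove the equivalence of the three conditions in Proposition~\ref{extensionp} by establishing the cycle $1 \Rightarrow 2 \Rightarrow 3 \Rightarrow 1$, where the implication $2 \Rightarrow 3$ is trivial since condition~2 simply asserts condition~3 together with a uniqueness clause. The crucial hypotheses to keep in mind throughout are that $G$ is a group and that the action on $M$ is \emph{free}, so that for every $m \in M$ the map $g \mapsto gm$ is injective; this is precisely what allows us to pass from a value $\alpha(f)$ defined on $F$ to a well-defined value on the whole orbit $Gf$.

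First I would prove $1 \Rightarrow 2$. Assume $F$ is $G$-free and let $\alpha : F \to N$ be an arbitrary map into a $G$-set $_GN$. The natural attempt is to define $\beta : GF \to N$ by $\beta(gf) := g\,\alpha(f)$ for $g \in G$ and $f \in F$. The main obstacle, and really the heart of the proof, is showing that $\beta$ is \emph{well defined}: if $g_1 f_1 = g_2 f_2$ with $g_1,g_2 \in G$ and $f_1,f_2 \in F$, we must check that $g_1\,\alpha(f_1) = g_2\,\alpha(f_2)$. Here is where both hypotheses enter. From $g_1 f_1 = g_2 f_2$ and $G$ a group we get $f_1 = g_1^{-1}g_2\, f_2$, so $f_1 \in G f_2 \cap F = \{f_2\}$ by $G$-freeness, whence $f_1 = f_2$. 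Writing $f := f_1 = f_2$, we then have $g_1 f = g_2 f$, so $g_2^{-1}g_1 f = f$; since the action is free, the map $g \mapsto gf$ is injective, forcing $g_2^{-1}g_1 = e$, i.e. $g_1 = g_2$. Thus the representation $gf$ of an element of $GF$ is unique in \emph{both} coordinates, and $\beta$ is unambiguously defined. It is then routine that $\beta|_F = \alpha$ (take $g = e$), that $\beta$ is $G$-homogeneous (for $h \in G$ one has $\beta(h(gf)) = \beta((hg)f) = (hg)\alpha(f) = h\,\beta(gf)$), and that $\beta$ is the unique such morphism, since any $G$-morphism extending $\alpha$ must satisfy $\beta(gf) = g\,\beta(f) = g\,\alpha(f)$ on all of $GF$.

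Finally I would prove $3 \Rightarrow 1$ by contraposition. Suppose $F$ is not $G$-free, so there exists $f_0 \in F$ and some $m \in Gf_0 \cap F$ with $m \neq f_0$; write $m = g_0 f_0$ for some $g_0 \in G$, noting $g_0 f_0 = m \in F$. The idea is to exhibit a target $G$-set and a map $\alpha$ on $F$ admitting no $G$-morphism extension, contradicting condition~3. A clean choice is to take $_GN := {}_GM$ itself (so $N = M$ with its given action) and to let $\alpha : F \to M$ be any map with $\alpha(f_0) \neq g_0^{-1}\alpha(m)$; such a map exists because we are free to assign the values $\alpha(f_0)$ and $\alpha(m)$ independently on the two distinct points $f_0, m \in F$ (using, say, that $M$ has more than one point in each relevant orbit, which freeness of the action guarantees since $Gf_0$ is in bijection with $G$). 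Any $G$-morphism $\beta : GF \to M$ extending $\alpha$ would satisfy $\alpha(m) = \beta(m) = \beta(g_0 f_0) = g_0\,\beta(f_0) = g_0\,\alpha(f_0)$, hence $\alpha(f_0) = g_0^{-1}\alpha(m)$, contradicting the choice of $\alpha$. Therefore no extension exists, condition~3 fails, and the contrapositive is established. I expect the well-definedness argument in $1 \Rightarrow 2$ to be the only genuinely delicate point; the remaining verifications are direct consequences of $G$-homogeneity and the group structure.
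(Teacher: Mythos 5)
Your proof is correct and follows essentially the same route as the paper: the cycle {\em 1} $\Rightarrow$ {\em 2} $\Rightarrow$ {\em 3} $\Rightarrow$ {\em 1}, with $\beta(gf):=g\alpha(f)$ for the extension and a contradiction via a map $\alpha$ satisfying $\alpha(m)\neq g_0\alpha(f_0)$ for the contrapositive. You are in fact more careful than the paper, which dispatches {\em 1} $\Rightarrow$ {\em 2} with ``it suffices to define $\beta(gf):=g\alpha(f)$''; your well-definedness argument (using $G$-freeness to force $f_1=f_2$ and freeness of the action to force $g_1=g_2$) is exactly the detail being suppressed there, and your concrete choice $N:=M$ in {\em 3} $\Rightarrow$ {\em 1} is a legitimate instantiation of the paper's unspecified ``$G$-set with more than one element.''
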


\begin{proof}
\mbox{}
\begin{enumerate}
\item[{\em 1} $\Rightarrow$ {\em 2}] It suffices to define $\beta\left(gf\right):=g\alpha\left(f\right)$ for all $g\in G$ and all $f\in F$.
\item[{\em 2} $\Rightarrow$ {\em 3}] Fairly obvious.
\item[{\em 3} $\Rightarrow$ {\em 1}] Assume that $F$ is not $G$-free. In this case we may find $g\in G$ and $f_1\neq f_2\in F$ such that $gf_1=f_2$. Now it is easy to find a $G$-set $_GN$ with more than one element and a map $\alpha:F\to N$ such that $\alpha\left(f_2\right)\neq g\alpha \left(f_1\right)$. By hypothesis there exists a map $\beta :GF\to N$ verifying that $\beta|_F=\alpha$ and $\beta$ is a morphism of $G$-sets. Then $$\alpha\left(f_2\right)=\beta\left(f_2\right)=\beta\left(gf_1\right)=g\beta\left(f_1\right)=g\alpha\left(f_1\right)$$ which constitutes a contradiction.
\end{enumerate}
\end{proof}

The reader may notice that in the previous result for the implication {\em 3} $\Rightarrow$ {\em 1} to remain true it is not necessary that $G$ be a group or the action be free.

\subsection{$G$-bases and $G$-fundamental domains}

First we will characterize the minimal $G$-dense sets.

\begin{lemma}\label{minimal}
If $G$ is a group and $N$ is a $G$-dense subset of $M$, then the following conditions are equivalent:
\begin{enumerate}
\item $N$ is minimal among the $G$-dense subsets of $M$.
\item $N$ is $G$-free.
\end{enumerate}
\end{lemma}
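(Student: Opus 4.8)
The plan is to exploit the fact that, since $G$ is a group, the relation ``being in the same orbit'' is an equivalence relation, so the orbits $\{Gm:m\in M\}$ partition $M$. Under this viewpoint the two notions unwind cleanly: $N$ being $G$-dense means $N$ meets every orbit (at least one representative per orbit), while $N$ being $G$-free means $N$ meets each orbit in at most one point. The crucial group-theoretic fact I would isolate first is that if $n'\in Gn$ then $Gn'=Gn$; indeed, $n'=gn$ for some $g\in G$ forces $n=g^{-1}n'\in Gn'$, whence the two orbits coincide. This is exactly where invertibility in $G$ is used.

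For {\em 1} $\Rightarrow$ {\em 2}, I would argue by contraposition. Assuming $N$ is $G$-dense but not $G$-free, there exist $n\neq n'$ in $N$ with $n'\in Gn$, so $Gn'=Gn$ by the remark above. I then claim that $N\setminus\{n'\}$ is still $G$-dense, which contradicts minimality. To verify the claim, fix an arbitrary orbit $O$: if $O\neq Gn'$, then any element of $N$ witnessing $N\cap O\neq\varnothing$ cannot be $n'$ (as $n'\in Gn'\neq O$), so it survives in $N\setminus\{n'\}$; and if $O=Gn'=Gn$, then $n\in N\setminus\{n'\}$ already witnesses $\left(N\setminus\{n'\}\right)\cap O\neq\varnothing$. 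Hence $N\setminus\{n'\}$ meets every orbit and is a proper $G$-dense subset, contradicting the minimality of $N$.

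For {\em 2} $\Rightarrow$ {\em 1}, suppose $N$ is $G$-dense and $G$-free and let $N'\subsetneq N$ be any proper subset; I would show $N'$ is not $G$-dense. Picking $n\in N\setminus N'$ and considering the orbit $Gn$, the $G$-freeness of $N$ gives $Gn\cap N=\{n\}$, so $N'\cap Gn\subseteq N\cap Gn=\{n\}$; since $n\notin N'$, this forces $N'\cap Gn=\varnothing$. Thus $N'$ fails the density condition at $n$, and therefore $N$ is minimal among the $G$-dense subsets of $M$.

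I do not anticipate a genuine obstacle here; the only point requiring care is making explicit that the group hypothesis is what guarantees that orbits coincide (so that deleting a redundant representative from a dense set keeps it dense), which is precisely the ingredient that fails for general monoids, as Example \ref{con} already illustrates.
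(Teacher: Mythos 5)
Your proof is correct and takes essentially the same route as the paper's: the identical deletion argument for {\em 1} $\Rightarrow$ {\em 2} (the paper merely asserts that $N\setminus\left\{gn\right\}$ remains $G$-dense, which you verify explicitly via the orbit partition), and for {\em 2} $\Rightarrow$ {\em 1} the contrapositive of the paper's argument that any $G$-dense subset $P\subseteq N$ must equal $N$. There are no gaps.
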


\begin{proof}
\mbox{}
\begin{enumerate}
\item[{\em 1} $\Rightarrow$ {\em 2}] Assume that there exist $n\in N$ and $g\in G$ such that $gn\in N\setminus\left\{n\right\}$. In this case note that $N\setminus\left\{gn\right\}$ is $G$-dense.
\item[{\em 2} $\Rightarrow$ {\em 1}] Let $P\subseteq N$ be such that $P$ is $G$-dense in $M$. Let $n\in N$. By hypothesis $Gn \cap N =\left\{n\right\}$. By assumption there exist $g\in G$ and $p\in P$ such that $gp=n$. Now $p=g^{-1}n\in Gn\cap P\subseteq Gn\cap N=\left\{n\right\}$. This means that $n=p\in P$ and hence $N=P$.
\end{enumerate}
\end{proof}

The previous lemma can be versioned for maximal $G$-free sets.

\begin{lemma}\label{maximal}
If $G$ is a group and $N$ is a $G$-free subset of $M$, then the following conditions are equivalent:
\begin{enumerate}
\item $N$ is maximal among the $G$-free subsets of $M$.
\item $N$ is $G$-dense.
\end{enumerate}
\end{lemma}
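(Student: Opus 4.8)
The plan is to prove the two implications separately, exploiting the evident duality with Lemma \ref{minimal}: there minimality among $G$-dense sets was characterized by $G$-freeness, and here maximality among $G$-free sets should be characterized by $G$-density. The group structure of $G$ (specifically the availability of inverses) will be essential in both directions, just as it was in Theorem \ref{dense}.

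For the implication \emph{1} $\Rightarrow$ \emph{2}, I would argue by contradiction. Assuming $N$ is maximal $G$-free but not $G$-dense, there is some $m \in M$ with $Gm \cap N = \varnothing$. Since $G$ is a group we have $m = em \in Gm$, so in particular $m \notin N$. The key step is to verify that $N \cup \{m\}$ is still $G$-free, which then contradicts the maximality of $N$. Checking the defining condition $Gx \cap (N \cup \{m\}) = \{x\}$ splits into two cases: for $x = m$ it follows immediately from $Gm \cap N = \varnothing$ together with $m \in Gm$; for $x = n \in N$ one uses $Gn \cap N = \{n\}$ and must rule out $m \in Gn$. The latter is exactly where invertibility enters: if $m = gn$ then $n = g^{-1}m \in Gm \cap N = \varnothing$, a contradiction. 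I expect this verification to be the main (though still mild) obstacle, as it is the only place where the group axioms do any real work.

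For the implication \emph{2} $\Rightarrow$ \emph{1}, I would take any $G$-free $P$ with $N \subseteq P$ and show $P = N$. Given $p \in P$, the $G$-density of $N$ yields some $n \in N \cap Gp$, say $n = gp$; then $p = g^{-1}n \in Gn$, and since $n \in N \subseteq P$ with $P$ being $G$-free, the condition $Gn \cap P = \{n\}$ forces $p = n \in N$. Hence $P \subseteq N$, and maximality follows. This direction is a direct mirror of the proof of \emph{2} $\Rightarrow$ \emph{1} in Lemma \ref{minimal} and should present no difficulty.
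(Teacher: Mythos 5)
Your proof is correct and follows essentially the same route as the paper: for \emph{1} $\Rightarrow$ \emph{2} the paper likewise adjoins a witness $m$ outside $GN$ and observes that $N\cup\{m\}$ remains $G$-free (you merely spell out the case check the paper leaves implicit), and for \emph{2} $\Rightarrow$ \emph{1} both arguments use density to write $p$ and $n$ in each other's orbits and invoke $G$-freeness of $P$, differing only in whether the freeness condition is applied at $p$ or at $n$. No gaps; your verification that $m\in Gn$ would force $n\in Gm\cap N=\varnothing$ is exactly where the group inverses are needed, as you note.
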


\begin{proof}
\mbox{}
\begin{enumerate}
\item[{\em 1} $\Rightarrow$ {\em 2}] Let $m\in M\setminus GN$. Notice that $N\cup\left\{m\right\}$ is $G$-free and strictly contains $N$.
\item[{\em 2} $\Rightarrow$ {\em 1}] Let $P\supseteq N$ be such that $P$ is $G$-free in $M$. Let $p\in P$. By hypothesis there are $n\in N$ and $g\in G$ such that $p=gn$. Now $n=g^{-1}p\in Gp \cap P =\left\{p\right\}$, which means that $p=n\in N$.
\end{enumerate}
\end{proof}

Both Lemma \ref{minimal} and Lemma \ref{maximal} motivate the following expected definition.

\begin{definition}
A subset $B$ of $M$ is said to be a $G$-basis of $M$ provided that $B$ is $G$-free, $G$-dense, and a $G$-generator of $M$.
\end{definition}

Obviously, if $G$ is a group, then a $G$-basis is a $G$-free, $G$-dense subset in virtue of Theorem \ref{dense}.

We recall the reader that, given a (left) action of a group $G$ on a non-empty set $M$, a $G$-fundamental domain of $M$ is a subset of $M$ which contains exactly one element from each orbit. It is not difficult to show that the $G$-fundamental domains are exactly the $G$-bases (when $G$ is a group). The following result shows the existence of $G$-bases. We will strongly rely on Lemma \ref{maximal}.

\begin{theorem}\label{pollaenvinagre}
If $G$ is a group, then:
\begin{enumerate}
\item Every $G$-free subset $F$ of $M$ is contained in a $G$-basis $B$ of $M$.
\item Every $G$-dense subset $D$ of $M$ contains a $G$-basis $B$ of $M$.
\item If $B$ and $C$ are $G$-bases of $M$, then $\mathrm{card}\left(B\right)=\mathrm{card}\left(C\right)$.
\end{enumerate}
\end{theorem}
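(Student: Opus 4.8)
**The plan is to prove the three statements by directly exploiting the characterizations of $G$-bases as maximal $G$-free sets (Lemma \ref{maximal}) and as $G$-fundamental domains, that is, sets meeting each orbit exactly once.**

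For the first statement, I would invoke a Zorn's Lemma argument. Consider the collection of all $G$-free subsets of $M$ that contain the given $G$-free set $F$, ordered by inclusion. This collection is non-empty since $F$ itself belongs to it. To apply Zorn's Lemma, I must verify that every chain has an upper bound; the natural candidate is the union of the chain. The key step is checking that a union of a chain of $G$-free sets is again $G$-free: if $n$ lies in the union and $gn$ also lies in the union with $gn \neq n$, then both $n$ and $gn$ lie in a common member of the chain (since any two elements of a chain are comparable), contradicting that member being $G$-free. Thus chains have upper bounds, so a maximal $G$-free set $B \supseteq F$ exists, and by Lemma \ref{maximal} this maximal $G$-free set is $G$-dense, hence a $G$-basis.

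For the second statement, given a $G$-dense subset $D$, I would use the orbit structure directly. Since $G$ is a group, the orbits $\{Gm : m \in M\}$ partition $M$, and $G$-density of $D$ means $D$ meets every orbit. The plan is to select, via the Axiom of Choice, exactly one element from each set $D \cap Gm$ as the orbit $Gm$ ranges over the distinct orbits; call the resulting selection $B \subseteq D$. Then $B$ contains exactly one element from each orbit, so $B$ is a $G$-fundamental domain, which is precisely a $G$-basis. I would verify directly that $B$ is $G$-free (its single representative per orbit forces $Gn \cap B = \{n\}$) and $G$-dense (it meets every orbit), and these two properties give a $G$-basis by the remark following the definition.

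For the third statement, the cardinality equality follows because, when $G$ is a group, a $G$-basis is exactly a $G$-fundamental domain and therefore stands in bijection with the set of orbits $M/G$. The plan is to show that the map sending each basis element $b \in B$ to its orbit $Gb$ is a bijection from $B$ onto the quotient $M/\!\sim$ (where $\sim$ is the orbit equivalence relation described in the appendix): it is surjective because $B$ is $G$-dense and hence a $G$-generator, and it is injective because $B$ is $G$-free with exactly one representative per orbit. The same holds for $C$, so both $B$ and $C$ are in bijection with the fixed set of orbits, whence $\mathrm{card}(B) = \mathrm{card}(M/\!\sim) = \mathrm{card}(C)$. The main obstacle is really only bookkeeping — making precise that the orbit relation is a genuine equivalence relation (which it is, since $G$ is a group) so that ``one element per orbit'' is well-defined — rather than any deep difficulty; the heart of all three parts is the interplay of $G$-freeness and $G$-density already packaged in Lemmas \ref{minimal} and \ref{maximal}.
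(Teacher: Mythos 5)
Your proposal is correct, and its three parts line up with the paper's proof to different degrees. For part \emph{1} you argue exactly as the paper does: Zorn's Lemma applied to the inclusion-ordered family of $G$-free supersets of $F$, with the union of a chain checked to be $G$-free, followed by Lemma \ref{maximal} to upgrade maximal $G$-freeness to $G$-density. For part \emph{2} you genuinely diverge: the paper runs a second Zorn argument, this time over the $G$-free subsets \emph{contained in} $D$, and then shows $G$-density of the maximal element $B$ by hand (if $d\in D\setminus GB$ then $B\cup\{d\}$ would be a strictly larger $G$-free subset of $D$, contradicting maximality); you instead observe that, $G$ being a group, the orbits partition $M$ and $G$-density of $D$ means $D$ meets every orbit, so a choice function selecting one point of $D\cap Gm$ per orbit directly produces a transversal $B\subseteq D$, whose $G$-freeness and $G$-density you verify immediately. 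Your route is shorter and makes the structural reason transparent (a basis is precisely a transversal of the orbit partition); the paper's route has the merit of being uniform with part \emph{1} and of not presupposing the fundamental-domain description, which the paper only states as an unproved remark. Interestingly, your construction is exactly the one the paper itself deploys later in Theorem \ref{qr}. For part \emph{3} the two arguments are essentially the same: the paper chooses, for each $c\in C$, the unique $b_c\in B$ with $c\in Gb_c$ and checks the resulting map $C\to B$ is a bijection, while you factor both bijections through the orbit set $M/\!\sim$ via $b\mapsto Gb$ (surjective by $G$-density plus Theorem \ref{dense}, injective by $G$-freeness), concluding $\mathrm{card}(B)=\mathrm{card}(M/\!\sim)=\mathrm{card}(C)$; this is a cosmetic repackaging of the same one-point-per-orbit fact, though passing through the quotient makes the invariance of the cardinal slightly more explicit.
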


\begin{proof}
\mbox{}
\begin{enumerate}
\item Consider the non-empty set $\mathcal{L}:=\left\{P\subseteq M: F\subseteq P\text{ and }P\text{ is }G\text{-free} \right\}$ partially ordered by the inclusion. We will show that $\mathcal{L}$ is an inductive set. Let $\left(P_i\right)_{i\in I}$ be a chain in $\mathcal{L}$. Notice that $\bigcup_{i\in I}P_i \in \mathcal{L}$. Indeed, it is obvious that $F\subseteq \bigcup_{i\in I}P_i$. Now let $p\in \bigcup_{i\in I}P_i$ and $g\in G$ such that $gp\in \bigcup_{i\in I}P_i$. Because of the total order there must exist $i\in I$ such that $gp,p\in P_i$. Therefore $gp=p$. In accordance to the Zorn's Lemma there exists a maximal element in $\mathcal{L}$ which we will denote by $B$. In virtue of Lemma \ref{maximal} we have that $N$ is a $G$-basis of $M$.
\item Consider the non-empty set $\mathcal{L}:=\left\{P\subseteq M: D\supseteq P\text{ and }P\text{ is }G\text{-free} \right\}$ partially ordered by the inclusion. We will show that $\mathcal{L}$ is an inductive set. Let $\left(P_i\right)_{i\in I}$ be a chain in $\mathcal{L}$. Notice that $\bigcup_{i\in I}P_i \in \mathcal{L}$. Indeed, it is obvious that $D\supseteq \bigcup_{i\in I}P_i$. Now let $p\in \bigcup_{i\in I}P_i$ and $g\in G$ such that $gp\in \bigcup_{i\in I}P_i$. Because of the total order there must exist $i\in I$ such that $gp,p\in P_i$. Therefore $gp=p$. In accordance to the Zorn's Lemma there exists a maximal element in $\mathcal{L}$ which we will denote by $B$. In order to show that $B$ is a $G$-basis of $M$ it suffices to proof that $B$ is $G$-dense in $M$, for which it is enough to show that $D\subseteq GB$. Suppose to the contrary that there exists $d\in D\setminus GB$. Notice that $B\cup \left\{d\right\}$ is $G$-free, contained in $D$ and strictly contains $B$, which contradicts the fact that $B$ is maximal.
\item By hypothesis for every $c\in C$ there exists a unique element $b_c\in B$ such that $c\in Gb_c$. The Axiom of Choice allows us to consider the set $\left\{b_c\in B:c\in C\right\}$ and to define the map
\begin{equation}\label{surje}
\begin{array}{rcl}
C&\to&B\\
c&\mapsto&b_c
\end{array}
\end{equation}
It is not hard to see that the previous map is injective (recall a previous observation about free sets). In order to see that it is surjective it only suffices to realize that the set $\left\{b_c:c\in C\right\}$ is a $G$-basis of $M$.
\end{enumerate}
\end{proof}

Finally, following the idea behind Proposition \ref{extensionp} we find that $G$-bases can also be characterized by the following extension property when $G$ is a group and the action is free.

\begin{proposition}\label{extensionpo}
If $G$ is a group acting freely from the left on $M$ and $B$ is a non-empty subset of $M$, then the following conditions are equivalent:
\begin{enumerate}
\item $B$ is $G$-basis.
\item If $_GN$ is any $G$-set and $\alpha :B\to N$ is any map, then there exists a map $\beta :M\to N$ which is unique verifying that $\beta|_B=\alpha$ and $\beta$ is a morphism of $G$-sets.
\end{enumerate}
\end{proposition}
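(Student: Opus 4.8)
The plan is to prove the two implications separately, recognizing this as a universal-mapping-property characterization of a $G$-basis entirely analogous to Proposition \ref{extensionp}, but now pinning down the behaviour of $\beta$ on all of $M$ rather than merely on $GB$.

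For \emph{1} $\Rightarrow$ \emph{2}, the key observation I would establish first is that, when $B$ is a $G$-basis and the action is free, every $m\in M$ admits a \emph{unique} representation $m=gb$ with $g\in G$ and $b\in B$. Existence is exactly the generator property $GB=M$. For uniqueness, if $g_1b_1=g_2b_2$, then $b_2=g_2^{-1}g_1b_1\in Gb_1\cap B=\{b_1\}$ by $G$-freeness, so $b_1=b_2$, and then $g_1b_1=g_2b_1$ forces $g_1=g_2$ since the action is free. With this in hand I would define $\beta(gb):=g\alpha(b)$; the unique-representation property makes $\beta$ well defined, and a one-line check gives $\beta|_B=\alpha$ (take $g=e$) together with the equivariance $\beta(hm)=\beta((hg)b)=(hg)\alpha(b)=h\beta(m)$. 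Uniqueness of $\beta$ is then immediate, since any $G$-morphism $\beta'$ extending $\alpha$ must satisfy $\beta'(gb)=g\beta'(b)=g\alpha(b)$.

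For \emph{2} $\Rightarrow$ \emph{1} I would show that $B$ is both $G$-free and a $G$-generator, after which Theorem \ref{dense} upgrades the generator property to $G$-density, yielding the full definition of a $G$-basis. The $G$-freeness follows exactly as in the implication \emph{3} $\Rightarrow$ \emph{1} of Proposition \ref{extensionp}, using only the \emph{existence} half of \emph{2}: if $gb_1=b_2$ with $b_1\neq b_2\in B$, I would choose a $G$-set $N$ and a map $\alpha:B\to N$ with $\alpha(b_2)\neq g\alpha(b_1)$, so that the promised extension $\beta$ produces the contradiction $\alpha(b_2)=\beta(b_2)=\beta(gb_1)=g\alpha(b_1)$.

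The crux — and the step I expect to be the main obstacle — is deducing $GB=M$, because this is the only place where condition \emph{2} is used beyond $GB$ and where the \emph{uniqueness} clause (rather than mere existence) is genuinely essential. I would argue by contradiction: if $m_0\in M\setminus GB$, then $Gm_0\cap GB=\varnothing$, for $g_1m_0=g_2b$ would give $m_0=g_1^{-1}g_2b\in GB$. Now take $N=\{0,1\}$ equipped with the trivial $G$-action and $\alpha\equiv 0$. A $G$-morphism into $N$ is precisely a map that is constant on orbits, so both $\beta_1\equiv 0$ and the map $\beta_2$ that equals $0$ off $Gm_0$ and $1$ on $Gm_0$ are $G$-morphisms $M\to N$ restricting to $\alpha$ on $B\subseteq GB$, yet $\beta_1\neq\beta_2$. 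This contradicts the uniqueness asserted in \emph{2}, forcing $GB=M$ and completing the argument.
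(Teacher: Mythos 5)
Your proof is correct, but it takes a genuinely different route from the paper in the decisive step of \emph{2} $\Rightarrow$ \emph{1}. For \emph{1} $\Rightarrow$ \emph{2} you merely unfold what the paper compresses into a citation of Proposition \ref{extensionp}: the unique representation $m=gb$ (existence from $GB=M$, uniqueness from $G$-freeness plus freeness of the action) makes $\beta(gb):=g\alpha(b)$ well defined, and this is exactly the construction inside that proposition, so the two arguments coincide there. The divergence is in proving $GB=M$. The paper enlarges $B$ to a $G$-basis $V\supsetneq B$ via Theorem \ref{pollaenvinagre} (a Zorn's Lemma argument), chooses two extensions $\alpha_1\neq\alpha_2:V\to N$ of $\alpha$, and pushes them to distinct morphisms $\beta_1\neq\beta_2:M\to N$ through Proposition \ref{extensionp}; note that this last push invokes the implication \emph{1} $\Rightarrow$ \emph{2} of that proposition and therefore uses the freeness of the action. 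You instead exhibit the failure of uniqueness directly: with $N=\{0,1\}$ carrying the trivial action, morphisms $M\to N$ are precisely the maps constant on orbits, so the zero map and the indicator of the orbit $Gm_0$ of a point $m_0\notin GB$ are two distinct morphisms restricting to $\alpha\equiv 0$ on $B$ (your observation that $Gm_0\cap GB=\varnothing$, valid because $G$ is a group, is what guarantees the restriction). Your route buys two things: it dispenses with the Zorn's Lemma machinery of Theorem \ref{pollaenvinagre}, and it nowhere uses freeness of the action in \emph{2} $\Rightarrow$ \emph{1}, thereby actually substantiating the paper's closing remark that this implication survives without freeness --- a remark the paper's own written proof does not witness, since it routes through the free-action half of Proposition \ref{extensionp}. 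The remaining ingredients, namely the $G$-freeness of $B$ (via the argument of \emph{3} $\Rightarrow$ \emph{1} in Proposition \ref{extensionp}) and the upgrade from $G$-generator to $G$-dense via Theorem \ref{dense}, are handled the same way in both arguments.
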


\begin{proof}
\mbox{}
\begin{enumerate}
\item[{\em 1} $\Rightarrow$ {\em 2}] Inmediate in virtue of Proposition \ref{extensionp}.
\item[{\em 2} $\Rightarrow$ {\em 1}] Obviously, $B$ is $G$-free in accordance to Proposition \ref{extensionp}. Suppose to the contrary that $GB\subsetneq M$. By applying (1) of Theorem \ref{pollaenvinagre} there exists a $G$-basis $V$ of $M$ such that $B\subsetneq V$. Now consider $_GN$ to be any $G$-set with more than one element and $\alpha: B\to N$ any map. Since $B\subsetneq V$ and $N$ has more than one element, we can find two different extensions $\alpha_1 \neq \alpha_2 : V\to N$ of $\alpha$. Now Proposition \ref{extensionp} assures the existence of $\beta_1\neq\beta_2: M\to N$ which are morphisms of $G$-sets and extensions of $\alpha_1$ and $\alpha_2$, repesctively. This is a contradiction.
\end{enumerate}
\end{proof}

The reader may notice that in the previous result for the implication [{\em 2} $\Rightarrow$ {\em 1}] to remain true it is not necessary that the action be free.

\subsection{$G$-density character and $G$-dimension}

This subsection is devoted to define and study the $G$-density character, which is nothing else but an index to indicate the smallest size of the $G$-dense subsets. In virtue of Theorem \ref{pollaenvinagre} we feel motivated to define both the $G$-density character and the $G$-dimension in the following way.

\begin{definition}
\mbox{}
\begin{itemize}
\item The $G$-density character of $M$ is defined as the smallest cardinal that is $G$-dense in $M$ (if it exists) and it is denoted by $\mathrm{dchar}\left(_GM\right)$.
\item The $G$-dimension of $M$ is defined as the smallest cardinal that is a $G$-generator of $M$ (if it exists) and it is denoted by $\mathrm{dim}\left(_GM\right)$.
\end{itemize}
\end{definition}

Unfortunately there is not any type of order relation between the $G$-density character and the $G$-dimension, as shown in the following example (we recall the reader about Example \ref{con} and refer him or her to Subsection \ref{mod} and Subsection \ref{vsp}).

\begin{example}\label{con2c}
\mbox{}
\begin{itemize}
\item Consider the additive monoid $G:=\left[0,+\infty\right)$ acting from the left on itself, $M:=\left[0,+\infty\right)$, by right translation:
\begin{equation*}
\begin{array}{rcl}
\left[0,+\infty\right)\times \left[0,+\infty\right)&\to& \left[0,+\infty\right)\\
\left(t,x\right)&\mapsto&t+x
\end{array}
\end{equation*}
In accordance to what was revealed in Example \ref{con} we have that
$$1=\dim\left(_GM\right)<\aleph_0=\mathrm{dchar}\left(_GM\right).$$
\item Consider $\mathbb{R}^2$ as a real vector space. From (3) of Proposition \ref{elcuerpo} and from Theorem \ref{qr} one can infer that $$1=\mathrm{dchar}\left(_\mathbb{R}\mathbb{R}^2\right)<\aleph_1=\dim\left(_\mathbb{R}\mathbb{R}^2\right).$$
\end{itemize}
\end{example}

We will finish this subsection with an immediate corollary of Theorem \ref{pollaenvinagre}.

\begin{corollary}
If $G$ is a group, then the $G$-density character of $M$ coincides with the $G$-dimension of $M$, that is, the cardinal of any $G$-basis of $M$.
\end{corollary}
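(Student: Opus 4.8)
The plan is to deduce this corollary directly from Theorem \ref{dense} and Theorem \ref{pollaenvinagre}, since no new construction is needed. First I would observe that, because $G$ is a group, Theorem \ref{dense} tells us that a subset of $M$ is $G$-dense if and only if it is a $G$-generator of $M$. Consequently the family of $G$-dense subsets and the family of $G$-generating subsets of $M$ coincide, so the two minima defining $\mathrm{dchar}\left({}_GM\right)$ and $\mathrm{dim}\left({}_GM\right)$ are taken over exactly the same collection of subsets. This already yields $\mathrm{dchar}\left({}_GM\right)=\mathrm{dim}\left({}_GM\right)$ as soon as either is shown to exist.

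Next I would pin down the common value as the cardinality of a $G$-basis. Since $M$ is non-empty, pick $m\in M$; the singleton $\left\{m\right\}$ is $G$-free because $m=em\in Gm$ forces $Gm\cap\left\{m\right\}=\left\{m\right\}$, and by {\em 1} of Theorem \ref{pollaenvinagre} it extends to a $G$-basis, so $G$-bases do exist. Let $\kappa:=\mathrm{card}\left(B\right)$ for a $G$-basis $B$; by {\em 3} of Theorem \ref{pollaenvinagre} this cardinal does not depend on the choice of $B$.

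The two inequalities then follow immediately. For the upper bound, a $G$-basis $B$ is by definition $G$-dense, whence $\mathrm{dchar}\left({}_GM\right)\leq\mathrm{card}\left(B\right)=\kappa$. For the lower bound, let $D$ be any $G$-dense subset of $M$; by {\em 2} of Theorem \ref{pollaenvinagre}, $D$ contains a $G$-basis $C$, and since $\mathrm{card}\left(C\right)=\kappa$ we obtain $\mathrm{card}\left(D\right)\geq\kappa$. Taking the smallest such cardinal over all $G$-dense $D$ gives $\mathrm{dchar}\left({}_GM\right)\geq\kappa$. Combining the two bounds, $\mathrm{dchar}\left({}_GM\right)=\kappa=\mathrm{dim}\left({}_GM\right)$.

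I do not expect a genuine obstacle here: the statement is essentially a bookkeeping consequence of the already-established equivalence between $G$-density and $G$-generation and of the invariance of $G$-basis cardinality. The only point that requires a little care is the existence of the minimum, so that $\mathrm{dchar}$ and $\mathrm{dim}$ are actually defined; this is precisely what the existence of $G$-bases, guaranteed by Theorem \ref{pollaenvinagre}, supplies.
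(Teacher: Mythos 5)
Your proof is correct and takes exactly the route the paper intends: the paper states this result without proof as an ``immediate corollary'' of Theorem \ref{pollaenvinagre}, and your argument simply supplies the omitted bookkeeping. Identifying $G$-dense sets with $G$-generators via Theorem \ref{dense}, obtaining existence of $G$-bases by extending the $G$-free singleton $\left\{m\right\}$, and deriving the two cardinality bounds from parts \emph{2} and \emph{3} of Theorem \ref{pollaenvinagre} is precisely the intended reasoning, with no gaps.
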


\subsection{Extreme cases of $G$-density}

This subsection is on two results that characterize the cases of maximum and minimum $G$-density. We will begin with the situation of minimum $G$-density. We recall the reader that a left action of a monoid on a given non-empty set is said to be transitive provided that all the orbits equal the whole set.

\begin{theorem}
The following conditions are equivalent:
\begin{enumerate}
\item The action is transitive.
\item Every non-empty subset of $M$ is a $G$-generator.
\item Every non-empty subset of $M$ is $G$-dense.
\end{enumerate}
In this situation both the $G$-density character and the $G$-dimension of $M$ are equal to $1$.
\end{theorem}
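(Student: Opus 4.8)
The plan is to establish the cycle of implications $1\Rightarrow 2\Rightarrow 3\Rightarrow 1$ and then to read off the statement about the two indices. The unifying observation throughout is that, because $G$ is a monoid with identity element $e$, one always has $m=em\in Gm$, so that $\{m\}\subseteq N\cap Gm$ whenever $m\in N$; this is exactly the remark exploited in Lemma \ref{esoeh}. I want to stress at the outset that no invertibility is assumed here, so the equivalence cannot be deduced from Theorem \ref{dense} (which needs $G$ to be a group). The genuine content of the present statement is that transitivity forces the notions of $G$-generator and $G$-dense subset to collapse together even in the purely monoidal setting where, as Example \ref{con} shows, they normally differ.

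For $1\Rightarrow 2$ I would take an arbitrary non-empty $N\subseteq M$, fix any $n\in N$, and use transitivity to write $M=Gn\subseteq GN\subseteq M$, whence $GN=M$. For $2\Rightarrow 3$ the idea is to feed singletons into the hypothesis: applying the $G$-generator property to $\{m\}$ gives $Gm=M$ for every $m\in M$, so for any non-empty $N$ and any $m$ one obtains $N\cap Gm=N\cap M=N\neq\varnothing$, i.e. $N$ is $G$-dense. For $3\Rightarrow 1$ I would again test the hypothesis on singletons: fixing $m\in M$ and an arbitrary $m'\in M$, the $G$-density of $\{m'\}$ yields $\{m'\}\cap Gm\neq\varnothing$, that is $m'\in Gm$; since $m'$ was arbitrary this gives $Gm=M$, and since $m$ was arbitrary the action is transitive.

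Finally, for the assertion on the indices, I would note that $M$ is non-empty by hypothesis, so every singleton $\{m\}$ is simultaneously $G$-dense and a $G$-generator by the equivalences just proved, which bounds both indices above by $1$; the only strictly smaller cardinal is $0$, realized solely by $\varnothing$, which is neither $G$-dense nor a $G$-generator precisely because $M\neq\varnothing$. Hence both $\mathrm{dchar}\left(_GM\right)$ and $\mathrm{dim}\left(_GM\right)$ equal $1$. I do not anticipate any serious obstacle in the computations; the one point requiring care is to carry out every step using nothing beyond the neutral element $e$ and the containment $m\in Gm$, resisting the temptation to invoke inverses, so that the argument remains valid for monoids and not merely for groups.
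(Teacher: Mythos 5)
Your proof is correct and follows essentially the same route as the paper's: the cycle $1\Rightarrow 2\Rightarrow 3\Rightarrow 1$ with the same chain $M=Gn\subseteq GN\subseteq M$ for the first implication and the same testing of the hypotheses on singletons for the other two, all using only the identity element so the argument stays valid for monoids. Your explicit one-line justification of the final index claim (singletons work, and $\varnothing$ is excluded since $M\neq\varnothing$) is a small addition the paper leaves implicit, but it is the same argument the statement presupposes.
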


\begin{proof}
\mbox{}
\begin{enumerate}
\item[{\em 1} $\Rightarrow$ {\em 2}] Let $N$ be a non-empty subset of $M$. If $n\in N$, then by definition we have that $M=Gn\subseteq GN\subset M.$
\item[{\em 2} $\Rightarrow$ {\em 3}] Let $N$ be a non-empty subset of $M$ and consider any $m\in M$. Since $\left\{m\right\}$ is a $G$-generator of $M$ by hypothesis, then we have that $Gn=M$ and so $N\cap Gn=N \cap M = N\neq \varnothing$.
\item[{\em 3} $\Rightarrow$ {\em 1}] Fix an arbitrary $m\in M$. Consider any other $n\in M$. By hypothesis $\left\{n\right\}$ is $G$-dense in $M$, therefore $\left\{n\right\}\cap Gm\neq \varnothing$, which meas that $n\in GM$. As a consequence, $Gm=M$ and the action is transitive by the arbitrariness of $m$.
\end{enumerate}
\end{proof}

In case of group actions the previous theorem can be improved by settling that the condition that the $G$-density equal $1$ is equivalent to the previous two ones.

\begin{corollary}
If $G$ is a group, then the following conditions are equivalent:
\begin{enumerate}
\item The action is transitive.
\item The $G$-density character of $M$ is $1$.
\item The $G$-dimension of $M$ is $1$.
\end{enumerate}
\end{corollary}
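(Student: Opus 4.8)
The plan is to derive the equivalence from the preceding theorem together with Theorem \ref{dense} and the preceding corollary, adding only the one genuinely new implication. First I would record that, since $G$ is a group, Theorem \ref{dense} identifies $G$-dense subsets with $G$-generators; consequently the smallest cardinal of a $G$-dense subset and the smallest cardinal of a $G$-generating subset coincide. This is exactly the content of the preceding corollary and immediately yields the equivalence of \emph{2} and \emph{3}, so that it suffices to connect either of them with \emph{1}.

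Next, the implication \emph{1} $\Rightarrow$ \emph{2} is already contained in the previous theorem, whose final clause asserts that under transitivity both the $G$-density character and the $G$-dimension equal $1$.

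The only new work is \emph{2} $\Rightarrow$ \emph{1} (equivalently \emph{3} $\Rightarrow$ \emph{1}). I would assume $\mathrm{dchar}\left(_GM\right)=1$, so that there is a singleton $\left\{m_0\right\}$ which is $G$-dense in $M$. By Theorem \ref{dense} this singleton is then a $G$-generator, that is, $Gm_0=M$. To upgrade this to transitivity I must check that \emph{every} orbit is the whole of $M$: given an arbitrary $m\in M$, the equality $Gm_0=M$ lets me write $m=gm_0$ for some $g\in G$, and since $G$ is a group we have $Gg=G$, whence $Gm=G\left(gm_0\right)=\left(Gg\right)m_0=Gm_0=M$. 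By the arbitrariness of $m$ the action is transitive.

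The main obstacle—modest as it is—is precisely this last step, where the group hypothesis is indispensable: it is the identity $Gg=G$ (equivalently, the fact that the orbits of a group action partition $M$) that promotes ``one orbit fills $M$'' to ``every orbit fills $M$''. For a mere monoid this promotion fails, as the right-translation action of $\left[0,+\infty\right)$ in Example \ref{con} demonstrates, which is exactly the reason the previous theorem could not already include the density-equals-$1$ condition among its equivalences.
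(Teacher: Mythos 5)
Your proposal is correct and follows essentially the same route as the paper: the equivalence of \emph{2} and \emph{3} via Theorem \ref{dense}, the implication \emph{1} $\Rightarrow$ \emph{2} from the final clause of the preceding theorem, and the upgrade of a single full orbit to transitivity using the group hypothesis. The only difference is one of explicitness---where the paper says ``since $G$ is a group we immediately deduce that the action is transitive,'' you usefully spell out the underlying identity $Gm=\left(Gg\right)m_0=Gm_0=M$, which is exactly the step that fails for monoids as Example \ref{con} shows.
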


\begin{proof}
In the first place, notice that conditions {\em 2} and {\em 3} above are exactly the same in virtue of Theorem \ref{dense}. Now, if the $G$-dimension of $M$ is $1$, then there exists an orbit which equals the whole set $M$. Since $G$ is a group we immediately deduce that the action is transitive.
\end{proof}

Now we will take care of the case of maximum $G$-density. We remind the reader that an action is trivial provided that $gm=m$ for all $g\in G$ and all $m\in M$.

\begin{theorem}
The following conditions are equivalent:
\begin{enumerate}
\item The action is trivial.
\item The only $G$-dense subset of $M$ is $M$.
\item The only $G$-generator subset of $M$ is $M$.
\end{enumerate}
In this situation the $G$-density character of $M$ is $\mathrm{card}\left(M\right)$.
\end{theorem}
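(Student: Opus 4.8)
The statement to prove is the characterization of trivial actions: the three equivalences (trivial action; the only $G$-dense subset is $M$; the only $G$-generator is $M$), together with the consequence that $\mathrm{dchar}\left(_GM\right)=\mathrm{card}\left(M\right)$.

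The plan is to prove the three conditions equivalent by establishing the cycle {\em 1} $\Rightarrow$ {\em 3} $\Rightarrow$ {\em 2} $\Rightarrow$ {\em 1}, which mirrors the structure used in the companion theorem on transitivity. First I would prove {\em 1} $\Rightarrow$ {\em 3}: assuming the action is trivial, every orbit satisfies $Gm=\{m\}$, so for any proper subset $N\subsetneq M$ we have $GN=\bigcup_{n\in N}Gn=\bigcup_{n\in N}\{n\}=N\neq M$; hence no proper subset can be a $G$-generator, i.e. the only $G$-generator is $M$ itself. Next, {\em 3} $\Rightarrow$ {\em 2} is the easy implication, since under a trivial-type hypothesis $G$-generators and $G$-dense sets coincide via the computation $GN=N$; concretely, if the only $G$-generator is $M$, then any $G$-dense subset $N$ must satisfy $GN=M$ (a $G$-dense subset containing exactly the orbits it meets generates everything only when it equals $M$), so I would argue directly that a proper $G$-dense subset would yield a proper generator, contradicting {\em 3}.

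The implication I expect to require the most care is {\em 2} $\Rightarrow$ {\em 1}. Here I would argue by contraposition: suppose the action is not trivial, so there exist $g\in G$ and $m_0\in M$ with $gm_0\neq m_0$. The goal is to exhibit a proper subset of $M$ that is nonetheless $G$-dense, contradicting {\em 2}. The natural candidate is $N:=M\setminus\{gm_0\}$, which is proper. To check $G$-density I must verify $N\cap Gm\neq\varnothing$ for every $m\in M$; by Lemma \ref{esoeh} it suffices to check this for $m=gm_0$, the single point removed. Since $m_0\in Gm_0$ and, more to the point, $gm_0\in G(gm_0)$ with the orbit $G(gm_0)$ containing $gm_0$ itself, I need another element of that orbit lying in $N$. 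The main obstacle is precisely this: for a general monoid the orbit of $gm_0$ need not contain $m_0$, so I cannot automatically find a second point of $Gm_0$ to keep in $N$. I would therefore choose the removed point more carefully — remove instead the displaced image while retaining $m_0$ in its own orbit — using that $gm_0\neq m_0$ guarantees the orbit $Gm_0$ has at least two distinct members $m_0$ and $gm_0$, so deleting $gm_0$ still leaves $m_0\in N\cap Gm_0$, and every other orbit is untouched, yielding $G$-density of the proper set $N$.

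Finally, for the concluding sentence about the $G$-density character, I would observe that under {\em 2} the unique $G$-dense subset of $M$ is $M$ itself, so the smallest cardinal of a $G$-dense set is exactly $\mathrm{card}\left(M\right)$, giving $\mathrm{dchar}\left(_GM\right)=\mathrm{card}\left(M\right)$ directly from the definition. Throughout, the key technical input is Lemma \ref{esoeh}, which lets me restrict the $G$-density check to points outside the candidate subset, and the elementary formula $GN=\bigcup_{n\in N}Gn$ for the generated $G$-subset established earlier; no appeal to $G$ being a group is needed, which is consistent with the theorem being stated for monoids.
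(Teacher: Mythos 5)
Your step {\em 2} $\Rightarrow$ {\em 1} has a genuine gap. You correctly spot the obstacle yourself --- in a monoid the orbit of the removed point $gm_0$ need not contain $m_0$ --- but your announced fix does not repair it: you still delete $gm_0$ and then verify density only on the orbit $Gm_0$ of a point you kept, whereas by Lemma \ref{esoeh} the one orbit that actually needs checking is $G\left(gm_0\right)$, which in a monoid can reduce to the singleton $\left\{gm_0\right\}$. Concretely, let $G=\left\{e,g\right\}$ with $g^2=g$ act on $M=\left\{a,b\right\}$ by $ga=gb=b$. The action is not trivial ($ga=b\neq a$), yet $N:=M\setminus\left\{ga\right\}=\left\{a\right\}$ is not $G$-dense, since $Gb=\left\{b\right\}$ and $N\cap Gb=\varnothing$. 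The correct choice is to delete the \emph{source} rather than the displaced image: $N:=M\setminus\left\{m_0\right\}$ is proper, and the single check demanded by Lemma \ref{esoeh} is $N\cap Gm_0\neq\varnothing$, which holds because $gm_0\in Gm_0$ and $gm_0\neq m_0$. This is exactly the device in the paper's proof (which runs the cycle {\em 1} $\Rightarrow$ {\em 2} $\Rightarrow$ {\em 3} $\Rightarrow$ {\em 1}; the deletion of the source point $n$ with $gn\neq n$ occurs in its {\em 2} $\Rightarrow$ {\em 3} step). Note the asymmetry your argument conflates: for \emph{density} one removes the source and keeps its displaced image as the witness in the removed point's orbit, while for \emph{generation} (the paper's {\em 3} $\Rightarrow$ {\em 1}) one removes the image $gm$ and keeps the source $m$, which generates it.

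Your {\em 3} $\Rightarrow$ {\em 2} is also unsound as written: the assertion that $G$-generators and $G$-dense sets coincide via $GN=N$ presupposes triviality, which is the conclusion being proved, and for general monoids a $G$-dense set need not be a $G$-generator at all --- in Example \ref{con} every unbounded subset of $\left[0,+\infty\right)$ is $G$-dense, but only the subsets containing $0$ generate. The step can be patched without a detour through {\em 1}: if $N$ is proper and $G$-dense, pick $m\in M\setminus N$; density supplies $g\in G$ with $gm\in N$, hence $gm\neq m$, and then $M\setminus\left\{gm\right\}$ is a \emph{proper} $G$-generator (it contains $m$, and $gm\in Gm$, while every other point is reached via the identity), contradicting {\em 3} --- but observe that the proper generator is $M\setminus\left\{gm\right\}$, not $N$ itself as your sketch suggests. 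Your {\em 1} $\Rightarrow$ {\em 3} and the concluding computation of the $G$-density character are correct.
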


\begin{proof}
\mbox{}
\begin{enumerate}
\item[{\em 1} $\Rightarrow$ {\em 2}] Let $N$ be a $G$-dense subset of $M$. Let $m\in M$. By hypothesis we have that $N \cap Gm\neq \varnothing$, so there are $g\in G$ and $n\in N$ such that $n=gm=m$. As a consequence, $N=M$.
\item[{\em 2} $\Rightarrow$ {\em 3}] Let $N$ be a $G$-generator subset of $M$. Fix arbitrary elements $g\in G$ and $n\in N$ and suppose that $gn\neq n$. Then $M\setminus \left\{n\right\}\subsetneq M$ and $M\setminus \left\{n\right\}$ is $G$-dense in $M$. This contradicts the hypothesis, therefore it must happen that $gn=n$. Since this is for every $g\in G$ and every $n\in N$, the fact that $N$ is a $G$-generator of $M$ automatically implies that $N=M$.
\item[{\em 3} $\Rightarrow$ {\em 1}] Let $g\in G$ and $m\in M$ and suppose to the contrary that $gm\neq m$. Then $M\setminus \left\{gm\right\}\subsetneq M$ and $G\left(M\setminus \left\{gm\right\}\right)=M$, which is a contradiction.
\end{enumerate}
\end{proof}

\section{Applications of $G$-density}

In this final section we will see two applications of $G$-density. Due to the fact that the $G$-density coincides with the $G$-generation whenever $G$ is a group, we will make a study of this topic in modules and vector spaces and compare it to the linear independence.

\subsection{$G$-density in modules}\label{mod}

This subsection is meant to compare the linear independence to the $G$-freeness. We will begin with the following crucial lemma.

\begin{lemma}\label{bixi}
Consider a submonoid $H$ of $G$. Then:
\begin{enumerate}
\item If $N\subset M$ is $G$-free in $M$, then $N$ is $H$-free in $M$.
\item If $N\subset M$ is an $H$-generator of $M$, then $N$ is a $G$-generator of $M$.
\item If both $\dim\left(_HM\right)$ and $\dim\left(_GM\right)$ exist, then $\dim\left(_GM\right)\leq \dim\left(_HM\right)$.
\item If $N\subset M$ is $H$-dense in $M$, then $N$ is $G$-dense in $M$.
\item If both $\mathrm{dchar}\left(_HM\right)$ and $\mathrm{dchar}\left(_GM\right)$ exist, then $\mathrm{dchar}\left(_GM\right)\leq \mathrm{dchar}\left(_HM\right)$.
\end{enumerate}
\end{lemma}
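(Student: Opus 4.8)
The plan is to reduce the entire lemma to two elementary containments that follow from $H$ being a submonoid of $G$: since $H\subseteq G$ we have $Hm\subseteq Gm$ for every $m\in M$, and since $H$ shares the identity $e$ of $G$ we have $m=em\in Hm$ for every $m\in M$. Each of the five items will then fall out by inserting these two facts into the relevant definition from the previous subsections.

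For item \emph{1}, I would start from $G$-freeness, which says $Gn\cap N=\left\{n\right\}$ for all $n\in N$. Intersecting the containment $Hn\subseteq Gn$ with $N$ gives $Hn\cap N\subseteq Gn\cap N=\left\{n\right\}$, while $n\in Hn\cap N$ because $e\in H$; hence $Hn\cap N=\left\{n\right\}$, i.e.\ $N$ is $H$-free. For item \emph{4}, I would argue symmetrically from $H$-density: $N\cap Hm\neq\varnothing$ together with $Hm\subseteq Gm$ yields $N\cap Gm\supseteq N\cap Hm\neq\varnothing$ for all $m\in M$, so $N$ is $G$-dense.

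Item \emph{2} is the generator analogue: $H$-generation means $HN=M$, and the chain $HN\subseteq GN\subseteq M$ forces $GN=M$, so $N$ is a $G$-generator. Items \emph{3} and \emph{5} are then cardinality consequences. For item \emph{3} I would take an $H$-generator $N$ realizing $\dim\left(_HM\right)=\mathrm{card}\left(N\right)$; by item \emph{2} it is also a $G$-generator, so the smallest cardinality of a $G$-generator is at most $\mathrm{card}\left(N\right)=\dim\left(_HM\right)$, giving $\dim\left(_GM\right)\leq\dim\left(_HM\right)$. Item \emph{5} repeats this argument verbatim with ``generator'' replaced by ``dense'' and item \emph{4} in place of item \emph{2}, yielding $\mathrm{dchar}\left(_GM\right)\leq\mathrm{dchar}\left(_HM\right)$.

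I do not anticipate a genuine obstacle here; the only point requiring care is the use of the identity of $G$ in item \emph{1}, where one must invoke that a submonoid contains $e$ to guarantee that $n$ itself lies in $Hn\cap N$ (otherwise the intersection could a priori be empty rather than $\left\{n\right\}$). The cardinality steps in items \emph{3} and \emph{5} also tacitly use that the family of $G$-generators (resp.\ $G$-dense subsets) contains the family of $H$-generators (resp.\ $H$-dense subsets), so that the infimum of cardinalities taken over the larger family cannot exceed the one taken over the smaller family.
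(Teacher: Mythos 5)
Your proposal is correct and follows essentially the same route as the paper: the same containments $Hm\subseteq Gm$ and $\left\{n\right\}\subseteq Hn\cap N$ for items \emph{1} and \emph{4}, the same chain $M=HN\subseteq GN\subseteq M$ for item \emph{2}, and the same witness-transfer argument deducing items \emph{3} and \emph{5} from \emph{2} and \emph{4}. Your explicit remark that the identity of $G$ must lie in the submonoid $H$ to get $n\in Hn\cap N$ is a point the paper leaves implicit in the inclusion $\left\{n\right\}\subseteq Hn\cap N$, but it is the same argument.
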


\begin{proof}
\mbox{}
\begin{enumerate}
\item Simply notice that if $n \in N$, then $\left\{n\right\}\subseteq Hn\cap N\subseteq Gn \cap N=\left\{n\right\}$.
\item Observe that $M=HN\subseteq GN\subseteq M$.
\item Let $N\subseteq M$ such that $\mathrm{card}\left(N\right)=\dim\left(_HM\right)$ and $N$ is an $H$-generator of $M$. By taking into account the previous paragraph we have that $N$ is a $G$-generator of $M$ and so $\dim\left(_GM\right)\leq \mathrm{card}\left(N\right)=\dim\left(_HM\right)$.
\item Let $m\in M$. Notice that $\varnothing \neq N \cap Hm \subseteq N\cap Gm$.
\item Let $N\subseteq M$ such that $\mathrm{card}\left(N\right)=\mathrm{dchar}\left(_HM\right)$ and $N$ is $H$-dense in $M$. By taking into account the previous paragraph we have that $N$ is $G$-dense in $M$ and so $\mathrm{dchar}\left(_GM\right)\leq \mathrm{card}\left(N\right)=\mathrm{dchar}\left(_HM\right)$.
\end{enumerate}
\end{proof}

Given an associative ring with unity $R$ we will denote by $\mathcal{U}\left(R\right)$ to the multiplicative group of all invertible elements of $R$. Recall that, since $\mathcal{U}\left(R\right)$ is a group, the concepts of $\mathcal{U}\left(R\right)$-dense and $\mathcal{U}\left(R\right)$-generator coincide in virtue of Theorem \ref{dense}. The previous lemma together with Theorem \ref{pollaenvinagre} gives us the following result.

\begin{proposition}\label{elcuerpo}
Let $R$ be a non-zero associative ring with unity and consider a left $R$-module $_RM$. Then we have the following:
\begin{enumerate}
\item If $F$ is an $R$-linearly independent subset of $M$, then $F$ is both $R$-free and $\mathcal{U}\left(R\right)$-free.
\item If $F$ is an $R$-generator of $M$ or a $\mathcal{U}\left(R\right)$-generator of $M$, then the $R$-linear span of $F$ is $M$.
\item A subset $F$ of $M$ is $R$-dense if and only if $0\in F$. In particular, $\left\{0\right\}$ is $R$-dense and thus $\mathrm{dchar}\left(_RM\right)=1$.
\item If $M$ is free as a left $R$-module, then $\dim\left(_RM\right) \leq \mathrm{dchar}\left(_{\mathcal{U}\left(R\right)}M\right)$, where $\dim\left(_RM\right)$ denotes now the dimension of $_RM$ as a free left $R$-module.
\end{enumerate}
\end{proposition}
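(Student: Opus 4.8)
The plan is to treat $R$ as a multiplicative monoid acting on $M$ by scalar multiplication and $\mathcal{U}(R)$ as a submonoid of $(R,\cdot)$ which happens to be a group, and then to prove the four statements in order, feeding each into the next. The only external inputs needed are Lemma \ref{bixi} (passage between a monoid and a submonoid), statement (2) of the present proposition as a stepping stone, and Theorem \ref{pollaenvinagre} together with the corollary following it for the cardinality arithmetic of $\mathcal{U}(R)$-bases.

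For statement (1), I would fix $f\in F$ and $r\in R$ with $rf\in F$, say $rf=g\in F$, and argue that $g=f$. Indeed, if $g\neq f$ then $rf+(-1)g=0$ is a nontrivial $R$-linear relation among the two distinct elements $f,g$ of $F$, so $R$-linear independence forces $r=0$ and $-1=0$; the latter contradicts $1\neq 0$ in the nonzero ring $R$. Hence $Rf\cap F=\{f\}$, i.e. $F$ is $R$-free, and since $\mathcal{U}(R)$ is a submonoid of $(R,\cdot)$, Lemma \ref{bixi}(1) upgrades this at once to $\mathcal{U}(R)$-freeness. For statement (2) I would simply note that $RF\subseteq\mathrm{span}_R(F)\subseteq M$, so $RF=M$ gives $\mathrm{span}_R(F)=M$; the $\mathcal{U}(R)$-generator case reduces to this one via Lemma \ref{bixi}(2), which turns any $\mathcal{U}(R)$-generator into an $R$-generator.

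Statement (3) I would settle by testing $R$-density at the single point $m=0$: since $R\cdot 0=\{0\}$, density of $F$ forces $0\in F$, whereas conversely $0\in Rm$ for every $m\in M$ (use the scalar $0$), so $0\in F$ makes $F$ meet every orbit. As $\{0\}$ is then $R$-dense while the empty set is not, the $R$-density character equals $1$. For statement (4), rather than comparing a generating set against the rank directly, I would route through $\mathcal{U}(R)$-bases: choose an $R$-basis $B$ of $M$, so that $\mathrm{card}(B)=\dim(_RM)$; being $R$-linearly independent, $B$ is $\mathcal{U}(R)$-free by statement (1); Theorem \ref{pollaenvinagre}(1) extends $B$ to a $\mathcal{U}(R)$-basis $C\supseteq B$; and the Corollary right after Theorem \ref{pollaenvinagre} (resting on Theorem \ref{pollaenvinagre}(3)) identifies $\mathrm{card}(C)$ with $\mathrm{dchar}(_{\mathcal{U}(R)}M)$. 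Chaining these yields $\dim(_RM)=\mathrm{card}(B)\leq\mathrm{card}(C)=\mathrm{dchar}(_{\mathcal{U}(R)}M)$.

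The routine parts are (1)--(3); the point demanding care is (4), where the natural temptation is to invoke the assertion that an $R$-spanning set of a free module has at least rank-many elements, a statement that is genuinely subtle for modules over rings lacking invariant basis number. The hard part is therefore choosing the reduction so as to sidestep it: by embedding the free basis into a $\mathcal{U}(R)$-basis and invoking the already-established invariance of $G$-basis cardinality in Theorem \ref{pollaenvinagre}, the desired inequality drops out without any spanning-versus-rank comparison.
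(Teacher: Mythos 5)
Your proposal is correct and follows essentially the same route as the paper's proof: the same nontrivial-relation argument $rm_1-m_2=0$ for (1) with Lemma \ref{bixi} handling the passage to the submonoid $\mathcal{U}\left(R\right)$, the same orbit test at $0$ for (3), and for (4) the same chain of embedding an $R$-basis (which is $\mathcal{U}\left(R\right)$-free by (1)) into a $\mathcal{U}\left(R\right)$-basis via Theorem \ref{pollaenvinagre} and using the invariance of $G$-basis cardinality. Your closing remark about sidestepping the invariant-basis-number issue is a fair gloss on why this route is the right one, but the argument itself is identical to the paper's.
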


\begin{proof}
\mbox{}
\begin{enumerate}
\item Let $m_1,m_2\in F$ and $r\in R$ such that $rm_1=m_2$. If $m_1\neq m_2$, then $rm_1-m_2=0$ and this contradicts the fact that $F$ is a $R$-linearly independet subset of $M$.
\item Obvious since $RF=M$ by hypothesis.
\item Assume that $F$ is $R$-dense. Notice that $R0\cap F\neq \varnothing$, therefore $0\in F$. Conversely, $\left\{0\right\}$ is clearly $R$-dense because every orbit $Rm$, with $m\in M$, contains $0$.
\item Let $B\subseteq M$ a basis of $M$ as a free left $R$-module. By {\em 1} of this proposition we have that $B$ is $\mathcal{U}\left(R\right)$-free in $M$ and so $\dim\left(_RM\right)=\mathrm{card}\left(B\right)\leq\mathrm{dchar}\left(_{\mathcal{U}\left(R\right)}M\right)$ in accordance to Theorem \ref{pollaenvinagre}.
\end{enumerate}
\end{proof}

The previous proposition finds also its version for vector spaces in the following result.

\begin{proposition}
Let $K$ be a non-zero division ring and consider a left $K$-vector space $_KX$. Consider a non-empty subset $F\subseteq X$. We have the following:
\begin{enumerate}
\item If $F$ is not a singleton, then $F$ is $K$-free if and only if $0\notin F$ and $F$ is $\left(K\setminus\left\{0\right\}\right)$-free.
\item If $F\subseteq X\setminus\left\{0\right\}$ is $\left(K\setminus\left\{0\right\}\right)$-free, then $F\cup \left\{0\right\}$ is also $\left(K\setminus\left\{0\right\}\right)$-free.
\item If $0\notin F$, then $F$ is $\left(K\setminus\left\{0\right\}\right)$-dense in $X\setminus\left\{0\right\}$ if and only if $F$ is $K$-dense in $X\setminus\left\{0\right\}$.
\item $F$ is a $\left(K\setminus\left\{0\right\}\right)$-generator of $X$ if and only if $0\in F$ and $F$ is a $K$-generator of $X$.
\end{enumerate}
\end{proposition}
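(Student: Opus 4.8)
The plan is to reduce all four items to one structural fact about the two relevant actions. Since $K$ is a division ring, $\mathcal{U}(K)=K\setminus\{0\}$, and the equation $km=0$ with $k\neq 0$ forces $m=0$; consequently, for every $m\in X$ the $K$-orbit and the $(K\setminus\{0\})$-orbit satisfy
$$Km=(K\setminus\{0\})m\cup\{0\},\qquad (K\setminus\{0\})m=Km\setminus\{0\},$$
and in particular $K\cdot 0=(K\setminus\{0\})\cdot 0=\{0\}$. The single observation I would record first, and then invoke repeatedly, is that whenever a subset $F\subseteq X$ avoids $0$ one has $Km\cap F=(K\setminus\{0\})m\cap F$ for every $m\in X$, because the only point by which the two orbits differ is $0$.

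For item 1, I would prove the forward implication first. Assuming $F$ is $K$-free and not a singleton, I pick $n\in F$ with $n\neq 0$; if $0$ were in $F$, then both $0$ and $n$ would lie in $Kn\cap F$, contradicting $Kn\cap F=\{n\}$, so $0\notin F$. The preliminary identity then yields $(K\setminus\{0\})n\cap F=Kn\cap F=\{n\}$ for each $n\in F$, which is exactly $(K\setminus\{0\})$-freeness. The converse is immediate from the same identity, since $0\notin F$ makes $Kn\cap F$ and $(K\setminus\{0\})n\cap F$ coincide. Item 2 is then a direct verification on $F\cup\{0\}$: for $n=0$ the orbit $(K\setminus\{0\})\cdot 0=\{0\}$ meets $F\cup\{0\}$ precisely in $\{0\}$ because $0\notin F$, while for $n\in F$ the orbit $(K\setminus\{0\})n$ omits $0$, so $(K\setminus\{0\})n\cap(F\cup\{0\})=(K\setminus\{0\})n\cap F=\{n\}$ by hypothesis.

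Item 3 is nothing more than the preliminary identity applied pointwise: under $0\notin F$ one has $Km\cap F=(K\setminus\{0\})m\cap F$ for every $m\in X\setminus\{0\}$, so the two density conditions are the same assertion. For item 4, I would use $KF=(K\setminus\{0\})F\cup\{0\}$ together with the fact that, because $K$ is a division ring, $0\in(K\setminus\{0\})F$ if and only if $0\in F$. Thus a $(K\setminus\{0\})$-generator necessarily contains $0$ and then $K$-generates as well; conversely, if $0\in F$ and $KF=X$, then $0=1\cdot 0\in(K\setminus\{0\})F$, whence $(K\setminus\{0\})F=(K\setminus\{0\})F\cup\{0\}=KF=X$.

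I do not anticipate a genuine obstacle; the proposition is structural. The one point that truly requires the stated hypothesis is the non-singleton assumption in item 1, which is exactly what rules out $F=\{0\}$: any singleton $\{n\}$ is $K$-free regardless of whether $n=0$, so without it the implication \emph{$K$-free $\Rightarrow 0\notin F$} would fail. A secondary subtlety worth a remark is that in item 3 the set $X\setminus\{0\}$ is a $(K\setminus\{0\})$-set but not a $K$-set; the phrase \emph{$F$ is $K$-dense in $X\setminus\{0\}$} should be read as $F\cap Km\neq\varnothing$ for all $m\in X\setminus\{0\}$ with the orbit $Km$ formed in $X$, and with this reading the argument is unaffected.
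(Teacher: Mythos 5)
Your proposal is correct and takes essentially the same route as the paper: the paper's elementwise steps (from $kx=y\neq 0$ infer $k\neq 0$; from $kx=0$ with $k\neq 0$ infer $x=0$) are exactly the pointwise content of your preliminary orbit identity, so each of your four items matches the paper's argument up to packaging, with the easy directions handled there by the general submonoid comparison (its Lemma on $H$-free/$H$-dense versus $G$-free/$G$-dense sets) and here by the same inclusion of orbits. One harmless slip: the identity $(K\setminus\{0\})m=Km\setminus\{0\}$ fails at $m=0$, where both orbits equal $\{0\}$ (as your own parenthetical already records), but the fact you actually invoke — that the two orbits differ at most by the point $0$, so their intersections with any $F$ avoiding $0$ coincide — is valid for every $m\in X$.
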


\begin{proof}
\mbox{}
\begin{enumerate}
\item Assume first that $F$ is $K$-free. By {\em 1} of Lemma \ref{bixi} we have that $F$ is $\left(K\setminus\left\{0\right\}\right)$-free. Now suppose to the contrary that $0\in F$. Since $F$ is not a singleton there must exist $x\in F\setminus\left\{0\right\}$. In this situation $Kx\cap F=\left\{x,0\right\}$ which contradicts the fact that $F$ is $K$-free.

Conversely, assume that $0\notin F$ and $F$ is $\left(K\setminus\left\{0\right\}\right)$-free. Let $x,y\in F$ and $k\in K$ such that $kx=y$. Notice that $k\neq 0$ since $y\in F\subseteq X\setminus\left\{0\right\}$, therefore by hypothesis $y\in \left(K\setminus\left\{0\right\}\right)x \cap F=\left\{x\right\}$.

\item Let $x,y\in F\cup \left\{0\right\}$ and $k\in K\setminus\left\{0\right\}$ such that $kx=y$. If $y=0$, then $x=0$ since $k\neq 0$. If $y\neq 0$, then $x\neq 0$, and thus the $\left(K\setminus\left\{0\right\}\right)$-freeness of $F$ allows that $x=y$.

\item In accordance to {\em 3} of Lemma \ref{bixi}, if $F$ is $\left(K\setminus\left\{0\right\}\right)$-dense in $X\setminus\left\{0\right\}$, then $F$ is $K$-dense in $X\setminus\left\{0\right\}$.

Conversely, assume that $F$ is $K$-dense in $X\setminus\left\{0\right\}$. Let $x\in X\setminus\left\{0\right\}$ and consider the orbit $\left(K\setminus\left\{0\right\}\right)x$. By hypothesis there exists an element $y\in F$ such that $y\in Kx\cap F$, which means that there is $k\in K$ such that $kx=y$. Notice that $y\neq 0$ since $0\notin F$ and thus $k\neq 0$. As a consequence, $y\in \left(K\setminus\left\{0\right\}\right)x\cap F$.

\item By taking into account {\em 2} of Lemma \ref{bixi}, if $F$ is a $\left(K\setminus\left\{0\right\}\right)$-generator of $X$, then $F$ is a $K$-generator of $X$. We will show now that $0\in F$. By hypothesis, there are $k\in K\setminus \left\{0\right\}$ and $x\in F$ such that $kx=0$. This implies that $x=0$.

Conversely, assume that $0\in F$ and $F$ is a $K$-generator of $X$. If $x\in X$, then by hypothesis we can find $k\in K$ and $y\in F$ such that $ky=x$. If $x\neq 0$, then $k\neq 0$ and we are done. If $x=0$, then we are also done because $0\in F$.

\end{enumerate}
\end{proof}

\subsection{$G$-density in normed linear spaces}\label{vsp}

In this subsection we will make use of norms and absolute values to compute density characters and find $G$-bases of vector spaces over division rings.

\begin{remark}
Assume $X$ is a non-zero normed linear space over a non-zero absolute-valued division ring $K$. Notice that $$\left\|\cdot\right\|\left(X\right) = \bigcup_{x\in X}\left|\cdot\right|\left(K\right)\left\|x\right\|.$$ As a consequence, the unit sphere of $X$, $\mathsf{S}_X:=\left\{x\in X: \left\|x\right\|=1\right\}$, is not empty if and only if $\left\|\cdot\right\|\left(X\right) \supseteq \left|\cdot\right|\left(K\right).$
\end{remark}

\begin{theorem}\label{yav}
Let $X$ be a non-zero normed linear space over the non-zero absolute-valued division ring $K$. Assume that $\mathsf{S}_X\neq \varnothing$. The following conditions are equivalent:
\begin{enumerate}
\item $\mathsf{S}_X$ is a $\left(K\setminus\left\{0\right\}\right)$-dense subset of $X\setminus\left\{0\right\}$.
\item $\left\|\cdot\right\|\left(X\right) = \left|\cdot\right|\left(K\right).$
\end{enumerate}
\end{theorem}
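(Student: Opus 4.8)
The plan is to prove the equivalence \emph{1} $\Leftrightarrow$ \emph{2} of Theorem \ref{yav} directly by unravelling what the $\left(K\setminus\left\{0\right\}\right)$-action on $X\setminus\left\{0\right\}$ does to norms. The central observation, which I would record first, is that the orbit of a nonzero vector $x$ is $\left(K\setminus\left\{0\right\}\right)x=\left\{kx:k\in K\setminus\left\{0\right\}\right\}$, and applying the norm to this orbit yields $\left\|\cdot\right\|\left(\left(K\setminus\left\{0\right\}\right)x\right)=\left|\cdot\right|\left(K\setminus\left\{0\right\}\right)\left\|x\right\|$. Thus $\mathsf{S}_X$ meets the orbit of $x$ exactly when $1\in\left|\cdot\right|\left(K\setminus\left\{0\right\}\right)\left\|x\right\|$, i.e. when there is $k\neq 0$ with $\left|k\right|\left\|x\right\|=1$. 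This reduces the whole $G$-density question to a purely arithmetic statement about the multiplicative value set $\left|\cdot\right|\left(K\right)$, and it is the observation that will carry both implications.

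For the implication \emph{1} $\Rightarrow$ \emph{2} I would argue that $\left\|\cdot\right\|\left(X\right)\supseteq\left|\cdot\right|\left(K\right)$ always holds whenever $\mathsf{S}_X\neq\varnothing$ (this is exactly the content of the preceding Remark, since $\left|\cdot\right|\left(K\right)\left\|x\right\|\subseteq\left\|\cdot\right\|\left(X\right)$ for any unit vector $x$). So the real work is the reverse inclusion $\left\|\cdot\right\|\left(X\right)\subseteq\left|\cdot\right|\left(K\right)$. Take $x\in X\setminus\left\{0\right\}$; by hypothesis $\mathsf{S}_X$ is $\left(K\setminus\left\{0\right\}\right)$-dense in $X\setminus\left\{0\right\}$, so $\mathsf{S}_X\cap\left(K\setminus\left\{0\right\}\right)x\neq\varnothing$, giving $k\neq 0$ with $\left\|kx\right\|=\left|k\right|\left\|x\right\|=1$. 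Hence $\left\|x\right\|=\left|k\right|^{-1}=\left|k^{-1}\right|\in\left|\cdot\right|\left(K\right)$, using that $\left|\cdot\right|$ is multiplicative on the division ring $K$ and that $k^{-1}$ exists. Since $x$ was arbitrary, $\left\|\cdot\right\|\left(X\right)\subseteq\left|\cdot\right|\left(K\right)$, and combined with the Remark we obtain equality.

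For \emph{2} $\Rightarrow$ \emph{1}, given $x\in X\setminus\left\{0\right\}$ I would produce an element of $\mathsf{S}_X$ in its orbit. Since $\left\|x\right\|\in\left\|\cdot\right\|\left(X\right)=\left|\cdot\right|\left(K\right)$, there is $k_0\in K$ with $\left|k_0\right|=\left\|x\right\|$; note $k_0\neq 0$ because $\left\|x\right\|\neq 0$, so $k_0^{-1}\in K\setminus\left\{0\right\}$. Then $\left\|k_0^{-1}x\right\|=\left|k_0^{-1}\right|\left\|x\right\|=\left|k_0\right|^{-1}\left\|x\right\|=1$, so $k_0^{-1}x\in\mathsf{S}_X\cap\left(K\setminus\left\{0\right\}\right)x$. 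This shows $\mathsf{S}_X$ meets every orbit of a nonzero vector, which is precisely $\left(K\setminus\left\{0\right\}\right)$-density of $\mathsf{S}_X$ in $X\setminus\left\{0\right\}$.

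The argument is essentially a bookkeeping exercise once the orbit-norm identity is in place; the only point demanding care is the correct handling of the value set of the absolute value. The mild obstacle I anticipate is making sure the multiplicativity $\left|kx\right|=\left|k\right|\left\|x\right\|$ and the invertibility of nonzero $k$ are invoked cleanly, and that I never accidentally need $0$ to lie in the multiplicative value group; keeping the nonzero hypotheses on $K$ and on $x$ explicit throughout should dispose of this without difficulty.
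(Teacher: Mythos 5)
Your proposal is correct and follows essentially the same route as the paper's own proof: the inclusion $\left\|\cdot\right\|\left(X\right)\supseteq\left|\cdot\right|\left(K\right)$ is dispatched via the preceding Remark using $\mathsf{S}_X\neq\varnothing$, the reverse inclusion comes from intersecting each orbit $\left(K\setminus\left\{0\right\}\right)x$ with $\mathsf{S}_X$ and inverting the scalar, and the converse direction normalizes $x$ by $k_0^{-1}$ exactly as the paper does. The only cosmetic difference is that you write the density witness as $kx\in\mathsf{S}_X$ while the paper writes $x=ky$ with $y\in\mathsf{S}_X$, which are interchangeable since $K\setminus\left\{0\right\}$ is a group.
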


\begin{proof}
\mbox{}
\begin{itemize}
\item[{\em 1} $\Rightarrow$ {\em 2}] Notice that it is sufficient to show that $\left\|\cdot\right\|\left(X\right) \subseteq \left|\cdot\right|\left(K\right)$ since $\mathsf{S}_X\neq \varnothing$. Let $x\in X\setminus\left\{0\right\}$. By hypothesis there exist $k\in K$ and $y\in \mathsf{S}_X$ such that $ky=x$. Then $\left|k\right|=\left|k\right|\left\|y\right\|=\left\|ky\right\|=\left\|x\right\|.$
\item[{\em 2} $\Rightarrow$ {\em 1}] Let $x\in X\setminus\left\{0\right\}$. By hypothesis there exist $k\in K\setminus\left\{0\right\}$ such that $\left|k\right|=\left\|x\right\|$. Notice that $\left|k^{-1}\right|=\left|k\right|^{-1}=\left\|x\right\|^{-1}$, therefore $k^{-1}x\in \mathsf{S}_X$ and $k\left(k^{-1}x\right)=x$.
\end{itemize}
\end{proof}

Our purpose in this subsection is to explicitly construct a $\left(K\setminus\left\{0\right\}\right)$-basis for any non-zero left $K$-vector space over an arbitrary non-zero division ring $K$. For this we will strongly rely on Theorem \ref{yav}.

\begin{theorem}\label{qr}
Let $X$ be a non-zero normed linear space over the non-zero absolute-valued division ring $K$. Assume that both $\mathsf{S}_X$ and $\mathsf{S}_K$ are not empty. Consider the equivalence relation on $\mathsf{S}_X$ given by $$\mathcal{R}:=\left\{\left(x,y\right)\in\mathsf{S}_X\times \mathsf{S}_X:\text{ exists }k\in\mathsf{S}_K\text{ such that }x=ky\right\}.$$ If $\mathsf{S}_X$ is a $\left(K\setminus\left\{0\right\}\right)$-dense subset of $X\setminus\left\{0\right\}$, then the $\left(K\setminus\left\{0\right\}\right)$-density character of $X\setminus\left\{0\right\}$ is $$\mathrm{card}\left(\frac{\mathsf{S}_X}{\mathcal{R}}\right).$$
\end{theorem}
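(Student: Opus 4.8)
The plan is to exhibit an explicit $\left(K\setminus\left\{0\right\}\right)$-basis of $X\setminus\left\{0\right\}$ whose cardinality is $\mathrm{card}\left(\mathsf{S}_X/\mathcal{R}\right)$, and then invoke the corollary to Theorem \ref{pollaenvinagre}, which guarantees that for a group action the $G$-density character coincides with the cardinal of any $G$-basis. Write $G:=K\setminus\left\{0\right\}$, a group under multiplication, acting on $M:=X\setminus\left\{0\right\}$ by scalar multiplication; since $G$ is a group, being $G$-dense and being a $G$-generator coincide by Theorem \ref{dense}, so I only need the basis to be $G$-free and $G$-dense.

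The key observation, and the only place where the structure of the norm is used, is that on $\mathsf{S}_X$ the orbit relation restricts to exactly $\mathcal{R}$. Indeed, let $x,y\in\mathsf{S}_X$. If $y\in Gx$, say $y=kx$ with $k\in K\setminus\left\{0\right\}$, then homogeneity of the norm gives $1=\left\|y\right\|=\left|k\right|\left\|x\right\|=\left|k\right|$, so $k\in\mathsf{S}_K$ and hence $x\,\mathcal{R}\,y$; the converse is immediate since $\mathsf{S}_K\subseteq K\setminus\left\{0\right\}$. Thus for $x,y\in\mathsf{S}_X$ one has $y\in Gx$ if and only if $x\,\mathcal{R}\,y$, so each $G$-orbit meets $\mathsf{S}_X$ in at most one $\mathcal{R}$-class.

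Next I would use the Axiom of Choice to select a transversal $B\subseteq\mathsf{S}_X$ of $\mathcal{R}$, that is, a set containing exactly one element from each $\mathcal{R}$-class, so that $\mathrm{card}\left(B\right)=\mathrm{card}\left(\mathsf{S}_X/\mathcal{R}\right)$. I claim $B$ is a $G$-basis of $M$. For $G$-freeness: if $r,r'\in B$ and $r'\in Gr$, then by the previous paragraph $r\,\mathcal{R}\,r'$, so $r$ and $r'$ lie in the same class and therefore $r=r'$; hence $Gr\cap B=\left\{r\right\}$. For $G$-density: given $m\in M$, the hypothesis that $\mathsf{S}_X$ is $G$-dense yields some $s\in\mathsf{S}_X\cap Gm$, and since $G$ is a group, $s\in Gm$ forces $Gs=Gm$. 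Letting $r\in B$ be the representative of the $\mathcal{R}$-class of $s$, the key observation gives $r\in Gs=Gm$, so $B\cap Gm\neq\varnothing$.

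Finally, having shown that $B$ is simultaneously $G$-free and $G$-dense, Theorem \ref{dense} makes it a $G$-basis of $M$, and the corollary to Theorem \ref{pollaenvinagre} identifies the $\left(K\setminus\left\{0\right\}\right)$-density character of $X\setminus\left\{0\right\}$ with $\mathrm{card}\left(B\right)=\mathrm{card}\left(\mathsf{S}_X/\mathcal{R}\right)$. The one point deserving care is the density verification, where one must pass from an arbitrary $m\in M$ to a genuine element of $B$ lying in its orbit; this is precisely where the standing hypothesis that $\mathsf{S}_X$ be $\left(K\setminus\left\{0\right\}\right)$-dense is indispensable, since without it some orbit of $M$ could fail to meet $\mathsf{S}_X$ and the transversal would no longer generate.
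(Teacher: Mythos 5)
Your proposal is correct and takes essentially the same route as the paper: both proofs use the Axiom of Choice to select a transversal $B\subseteq \mathsf{S}_X$ containing exactly one element of each $\mathcal{R}$-class, show that $B$ is a $\left(K\setminus\left\{0\right\}\right)$-basis of $X\setminus\left\{0\right\}$, and then read off the density character as $\mathrm{card}\left(B\right)$ via Theorem \ref{pollaenvinagre} and its corollary. The only difference is presentational: the paper compresses your explicit verifications of $G$-freeness and $G$-density (resting on your key observation that, by homogeneity of the norm, the orbit relation restricted to $\mathsf{S}_X$ is exactly $\mathcal{R}$) into a single citation of Lemma \ref{minimal}.
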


\begin{proof}
It suffices to consider, in virtue of the Axiom of Choice, a set $B$ composed exactly of only one element from each equivalent class in $\mathsf{S}_X/\mathcal{R}$ and to apply Lemma \ref{minimal} to come to the conclusion that $B$ is a $\left(K\setminus\left\{0\right\}\right)$-basis of $X\setminus\left\{0\right\}$.
\end{proof}

\begin{remark}\label{asinosvaostia}
Let $X$ be a non-zero vector space over a non-zero division ring $K$. Let $\left|\cdot\right|$ be a non-zero absolute value on $K$. Consider $\left(x_j\right)_{j\in J}$ to be a basis of $X$ as a $K$-vector space and define the following norm on $X$: $$\left\|k_1x_{j_1}+\cdots+k_mx_{j_m}\right\|:=\max\left\{\left|k_1\right|,\dots,\left|k_m\right|\right\}$$ where $k_1,\dots,k_m\in K$. It is clear that $\mathsf{S}_X\neq \varnothing$ and $\left\|\cdot\right\|\left(X\right) = \left|\cdot\right|\left(K\right)$. In case $\left|\cdot\right|\left(K\right)$ is a submonoid of the additive monoid $\left[0,\infty \right)$, then the norm on $X$ given by $$\left\|k_1x_{j_1}+\cdots+k_mx_{j_m}\right\|:=\left|k_1\right|+\cdots+\left|k_m\right|,$$ where $k_1,\dots,k_m\in K$, also verifies that $\mathsf{S}_X\neq \varnothing$ and $\left\|\cdot\right\|\left(X\right) = \left|\cdot\right|\left(K\right)$.
\end{remark}

By bearing in mind the previous remark all we need to focus on is finding a non-zero absolute value over any non-zero division ring.

\begin{theorem}
Let $X$ be a non-zero vector space over a non-zero division ring $K$. There exist an absolute value $\left|\cdot\right|$ on $K$ and a norm $\left\|\cdot\right\|$ on $X$ such that $\mathsf{S}_X\neq \varnothing$ and $\left\|\cdot\right\|\left(X\right) = \left|\cdot\right|\left(K\right).$ In this situation, if we consider the equivalence relation on $\mathsf{S}_X$ given by $$\mathcal{R}:=\left\{\left(x,y\right)\in\mathsf{S}_X\times \mathsf{S}_X:\text{ exists }k\in\mathsf{S}_K\text{ such that }x=ky\right\},$$ then the $\left(K\setminus\left\{0\right\}\right)$-density character of $X\setminus\left\{0\right\}$ is $$\mathrm{card}\left(\frac{\mathsf{S}_X}{\mathcal{R}}\right)$$ and a $\left(K\setminus\left\{0\right\}\right)$-basis of $X\setminus\left\{0\right\}$ can be obtained by considering, in virtue of the Axiom of Choice, a set $B$ composed exactly of only one element from each equivalent class in $\mathsf{S}_X/\mathcal{R}$.
\end{theorem}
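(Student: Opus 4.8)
The plan is to separate the statement into its two logically distinct halves: first, the \emph{existence} of a non-zero absolute value on $K$ together with a compatible norm on $X$; and second, the computation of the $\left(K\setminus\left\{0\right\}\right)$-density character and the exhibition of a $\left(K\setminus\left\{0\right\}\right)$-basis, which I expect to fall out almost immediately from Theorem \ref{yav} and Theorem \ref{qr} once the first half is in place.

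For the existence half, the key observation I would make is that \emph{every} non-zero division ring carries the trivial absolute value, namely $\left|0\right|=0$ and $\left|k\right|=1$ for every $k\neq 0$. The only axiom that genuinely uses the division-ring structure is multiplicativity: if $k,l\neq 0$ then $kl\neq 0$ because a division ring has no zero divisors, so $\left|kl\right|=1=\left|k\right|\left|l\right|$, while the cases in which a factor is zero are immediate. The triangle inequality is checked by noting that $\left|k+l\right|\in\left\{0,1\right\}$ and that $\left|k+l\right|=1$ forces at least one of $k,l$ to be non-zero, whence $\left|k\right|+\left|l\right|\geq 1$. Since $\left|1\right|=1\neq 0$, this absolute value is non-zero, so the hypotheses of Remark \ref{asinosvaostia} are met.

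I would then feed this absolute value into the max-norm construction of Remark \ref{asinosvaostia}: fixing a $K$-basis $\left(x_j\right)_{j\in J}$ of $X$ and setting $\left\|k_1x_{j_1}+\cdots+k_mx_{j_m}\right\|:=\max\left\{\left|k_1\right|,\dots,\left|k_m\right|\right\}$ yields a norm on $X$ with $\mathsf{S}_X\neq\varnothing$ and $\left\|\cdot\right\|\left(X\right)=\left|\cdot\right|\left(K\right)$. I would use the max-norm rather than the sum-norm here, since the image $\left\{0,1\right\}$ of the trivial absolute value is not additively closed and hence only the max-norm branch of Remark \ref{asinosvaostia} applies. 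At this point both $\mathsf{S}_X$ and $\mathsf{S}_K$ are non-empty — indeed $\mathsf{S}_K=K\setminus\left\{0\right\}$ for the trivial absolute value — which is exactly what the cited theorems require.

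The remainder is assembly. Since $\left\|\cdot\right\|\left(X\right)=\left|\cdot\right|\left(K\right)$ and $\mathsf{S}_X\neq\varnothing$, Theorem \ref{yav} gives that $\mathsf{S}_X$ is a $\left(K\setminus\left\{0\right\}\right)$-dense subset of $X\setminus\left\{0\right\}$. Theorem \ref{qr} then applies directly and identifies the $\left(K\setminus\left\{0\right\}\right)$-density character of $X\setminus\left\{0\right\}$ with $\mathrm{card}\left(\mathsf{S}_X/\mathcal{R}\right)$, while its proof — through Lemma \ref{minimal}, which characterizes minimal $G$-dense sets as the $G$-free ones — shows that any transversal $B$ selecting exactly one point from each $\mathcal{R}$-class is a $\left(K\setminus\left\{0\right\}\right)$-basis of $X\setminus\left\{0\right\}$. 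The only genuinely new ingredient, and hence the crux I would flag, is the realization in the second paragraph that the trivial absolute value is always available; there is no deep obstacle, and everything after that step is a direct invocation of previously established results.
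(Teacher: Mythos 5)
Your proposal is correct and follows essentially the same route as the paper, whose entire proof consists of introducing the trivial absolute value ($\left|k\right|=1$ for $k\neq 0$) and then invoking Theorem \ref{qr}. You merely spell out the steps the paper leaves implicit --- verifying the absolute-value axioms, choosing the max-norm branch of Remark \ref{asinosvaostia} since $\left\{0,1\right\}$ is not additively closed, and passing through Theorem \ref{yav} to secure the density hypothesis of Theorem \ref{qr} --- all of which is sound.
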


\begin{proof}
It suffices to apply Theorem \ref{qr} after considering the following absolute value on $K$:
\begin{equation*}
\begin{array}{rrcl}
\left|\cdot\right|:&K&\to&\left\{0\right\}\\
&k&\mapsto&\left|k\right|:=\left\{\begin{array}{ll}
0&\text{ if }k=0\\
1&\text{ if }k\neq0
\end{array}\right.
\end{array}
\end{equation*}
\end{proof}

\bibliographystyle{chicago} 
\bibliography{bibliography}

%
%
%
%

%
%
%
%


\printindex

\end{document}